\documentclass[intlimits,12pt,reqno]{amsart}

\usepackage[T1]{fontenc}
\usepackage[utf8]{inputenc}

\usepackage{latexsym,amsthm,amsmath,amssymb,mathrsfs,mathtools,upgreek,stmaryrd}
\usepackage{tikz}
\usetikzlibrary{arrows,calc,patterns}
\usepackage{wrapfig}

\usepackage[mathscr]{eucal}

\usepackage{hyperref}
\usepackage{cleveref}

\setlength{\oddsidemargin}{1pt}
\setlength{\evensidemargin}{1pt}
\setlength{\topmargin}{1pt}       
\setlength{\textheight}{630pt}    
\setlength{\textwidth}{460pt}     

\belowdisplayskip=18pt plus 6pt minus 12pt \abovedisplayskip=18pt
plus 6pt minus 12pt
\parskip 8pt plus 1pt

\newcommand{\cJ}{\mathcal J}
\newcommand{\cM}{\mathcal M}

\newcommand{\Sph}{\mathbb S}

\def\sgn{\operatorname{sgn}}

\newcommand{\bbbr}{\mathbb R}
\newcommand{\bbbc}{\mathbb C}
\newcommand{\bbbb}{\mathbb B}
\newcommand{\bbbs}{\mathbb S}
\newcommand{\bbbn}{\mathbb N}
\newcommand{\bbbh}{\mathbb H}

\newcommand{\R}{\mathbb R}
\newcommand{\Z}{\mathbb Z}
\newcommand{\overbar}[1]{\mkern 1.7mu\overline{\mkern-1.7mu#1\mkern-1.5mu}\mkern 1.5mu}

\newcommand{\eps}{\varepsilon}

\numberwithin{equation}{section}

\def\dist{\operatorname{dist}}
\def\deg{\operatorname{deg}}
\def\lip{\operatorname{Lip}}

\def\ext{\operatorname{ext}}

\newtheorem{theorem}{Theorem}[section]
\newtheorem*{theorem*}{Theorem}
\newtheorem{lemma}[theorem]{Lemma}
\newtheorem{corollary}[theorem]{Corollary}
\newtheorem{proposition}[theorem]{Proposition}

\theoremstyle{definition}
\newtheorem{remark}[theorem]{Remark}
\newtheorem{definition}[theorem]{Definition}

\newcommand{\Ep}{\bigwedge\nolimits}
\def\mvint_#1{\mathchoice
          {\mathop{\vrule width 6pt height 3 pt depth -2.5pt
                  \kern -9pt \intop}\limits_{\kern -3pt #1}}%
          {\mathop{\vrule width 5pt height 3 pt depth -2.6pt
                  \kern -6pt \intop}\nolimits_{#1}}%
          {\mathop{\vrule width 5pt height 3 pt depth -2.6pt
                  \kern -6pt \intop}\nolimits_{#1}}%
          {\mathop{\vrule width 5pt height 3 pt depth -2.6pt
                  \kern -6pt \intop}\nolimits_{#1}}}

\usepackage[colorinlistoftodos,prependcaption,textsize=tiny]{todonotes}

\newcommand{\HI}{\mathcal{H}}
\def\rank{\operatorname{rank}}
\newcommand{\brac}[1]{\left ( #1 \right )}

\newcommand{\lap}{\Delta}

\usepackage{setspace}

\title[H\"older continuous mappings]{H\"older continuous mappings, differential forms and the Heisenberg groups}
\author[Haj\l{}asz]{Piotr Haj\l{}asz}
\thanks{P.H. was supported by NSF grants DMS-2055171 and DMS-2452426.}
\author[Mirra]{Jacob Mirra}
\author[Schikorra]{Armin Schikorra}
\thanks{A.S. was supported by NSF Career DMS-2044898. A.S. is a Humboldt fellow}

\address{Department of Mathematics, University of Pittsburgh, \newline \indent 301 Thackeray Hall, Pittsburgh,
Pennsylvania 15260}
\email{hajlasz@pitt.edu, armin@pitt.edu}

\subjclass[2020]{Primary: 26B35, 53C17, 53C23, 58A10, Secondary: 30L99, 55Q25, 55Q70, 58A14}
\keywords{Heisenberg groups, H\"older mappings, Jacobians, homotopy groups, Gromov's conjecture}

\begin{document}

\sloppy

\begin{abstract}
We develop analysis of H\"older continuous mappings with applications to geometry and topology of the Heisenberg groups. We cover the theory of distributional Jacobians of H\"older continuous mappings and pullbacks of differential forms under H\"older continuous mappings. That includes versions of the change of variables formula and the Stokes theorem for H\"older continuous mappings. The main applications are in the setting of the Heisenberg groups, where we
provide a simple proof of a generalization of the Gromov non-embedding theorem, and new results about the H\"older homotopy groups of the Heisenberg groups.
\end{abstract}

\maketitle
\tableofcontents

\section{Introduction}
\label{S1}
The goal of this paper is to develop differential analysis of H\"older continuous functions and mappings with applications to geometry and topology of the Heisenberg groups. 
While the aim of Introduction is to clarify the paper's objective, it does not provide precise statements or include all proven results. Instead, it provides a general overview of the paper's content.
Since the paper is long, we thought it would be important to explain the main ideas without using complicated and technical terms.

The paper begins with \Cref{pre}. Since this section, gathers preliminary material that will be used in later sections, we describe its content at the end of Introduction. The main theme of the paper starts in \Cref{DJ}.

H\"older continuous functions need not be differentiable at any point. It is also possible to construct, for any $0<\gamma<1$, a strictly increasing, $\gamma$-H\"older continuous function $f:\R\to\R$ such that $f'=0$ almost everywhere,  \cite{salem}. In that case, $f'$ does not satisfy the fundamental theorem of calculus, and hence $f'$ is not the distributional derivative. That also means that even if a H\"older continuous function is differentiable a.e., properties of the derivative do not say much about properties of the function.

On the other hand Young, \cite{Young}, proved that if $f\in C^{0,\alpha}([0,1])$ and $g\in C^{0,\beta}([0,1])$, $\alpha+\beta>1$, then the Stielties integral exists and satisfies:
\begin{equation}
\label{eq174}
\Bigg|\int_0^1 f(x)\, dg(x)\Bigg|\leq C\Vert f\Vert_{C^{0,\alpha}}\Vert g\Vert_{C^{0,\beta}}.
\end{equation}
Here is another approach to this inequality. One can prove that
$$
\Bigg|\int_0^1 f(x)\, g'(x)\, dx\Bigg|\leq C\Vert f\Vert_{C^{0,\alpha}}\Vert g\Vert_{C^{0,\beta}} 
\quad
\text{if } f\in C^{0,\alpha}([0,1]),\ g\in C^\infty([0,1]),
$$
see \Cref{T75}. Now, if $f\in C^{0,\alpha}$ $g\in C^{0,\beta}$, then we can define
\begin{equation}
\label{eq176}
\int_0^1 f(x) g'(x)\, dx:=\lim_{\eps\to 0} \int_0^1 f(x) g_\eps'(x)\, dx,
\end{equation}
where $g_\eps$ is a smooth approximation of $g$ by convolution.  
We need to understand that the expression on the left hand side of \eqref{eq176} is a formal notation since $g'$ is not a function. However, we can interpret $g'$ as a distribution acting on test functions $f\in C^{0,\alpha}$ and the way $g'$ acts on $f$ is defined through the above limit. One can also prove that the distributional integral \eqref{eq176} coincides with the Stielties integral \eqref{eq174}.

The result of Young found numerous applications in stochastic analysis \cite{FH}, but here we are interested in a different direction in which it evolved. 

More recently,
several groups of researchers developed, independently,
a multidimensional version of the Young integral
\begin{equation}
\label{eq177}
\int_\cM f dg_1\wedge\ldots\wedge dg_n, 
\quad 
f\in C^{0,\alpha}(\cM),\ g_i\in C^{0,\beta}(\cM),\ \alpha+n\beta>n,
\end{equation}
where $\cM$ is an $n$-dimensional oriented Riemannian manifold (cf.\ \Cref{T35}). 
Using hard harmonic analysis, Sickel and Youssfi \cite{SY1999} extended the integral \eqref{eq177} to potential spaces that are much more general than H\"older spaces. This result went largely unnoticed in parts of the Geometric Analysis community until Brezis and Nguyen \cite{brezisn1,brezisn2} later rediscovered it in the context of H\"older functions. Additionally, Conti, De Lellis, and Sz\'ekelyhidi \cite{Conti} studied a similar integral in a special case related to the rigidity of isometric immersions and convex integration, while Z\"ust \cite{zust2,zust1} developed the theory in the setting of currents in metric spaces, with applications to the geometry of the Heisenerg groups in mind.

We should also mention that the Onsager conjecture \cite{Onsager} led to another, but related direction of analysis of H\"older continuous functions. Onsager conjectured that weak solutions to the incompressible Euler equations satisfy
the law of conservation of energy if their spatial regularity is above $\frac{1}{3}$-H\"older and 
may violate it if the regularity is below $\frac{1}{3}$-H\"older. While the affirmative (easier) part of the conjecture was proved by Constantin, E, and Titi \cite{CET94} and Eyink \cite{Eyink}, the difficult part, counterexample when $\gamma<\frac{1}{3}$, was constructed by Isett \cite{Isett} using convex integration. Although we will not mention this direction of research in the paper, we want to emphasize that we used ideas from \cite{CET94} in the proof of \Cref{T99}.

Let us briefly mention the approaches to define the generalized integrals \eqref{eq177} given by Z\"ust \cite{zust1,zust2} and Brezis and Nguyen \cite{brezisn1,brezisn2}.

Z\"ust used Young's result \eqref{eq174} and a clever induction argument (induction over the dimension) to define \eqref{eq177}. 
Let $I^n=[0,1]^n$. Young's theorem \eqref{eq174} and a formal integration by parts allows us to define
$$
\int_{I^2} dg_1\wedge dg_2:=\int_{\partial I^2} g_1\, dg_2
\qquad
\text{for } g_1,g_2\in C^{0,\beta}(I^2),\ \textstyle{\beta>\frac{1}{2}}\, .
$$
Then, we define $\int_{I^2}f\, dg_1\wedge dg_2$ using a Riemann sum approximation: Divide $I^2$ into $k^2$ identical squares $\{I_{ki}\}_{i=1}^{k^2}$ with centers $p_{ki}$, and we define
\begin{equation}
\label{eq178}
\int_{I^2} f\, dg_1\wedge dg_2:=
\lim_{k\to\infty}\sum_{i=1}^{k^2} f(p_{ik})\int_{I_{ki}^2} dg_1\wedge dg_2=
\lim_{k\to\infty}\sum_{i=1}^{k^2} f(p_{ik})\int_{\partial I_{ki}^2} g_1\, dg_2.
\end{equation}
It turns out that the Riemann sum \eqref{eq178} converges provided
$$
f\in C^{0,\alpha}(I^2),
\quad
g_1,g_2\in C^{0,\beta}(I^2),
\quad
\alpha+2\beta>2.
$$
Integral \eqref{eq178} allows us to define
$$
\int_{I^3}dg_1\wedge dg_2\wedge dg_3:=\int_{\partial I_3}g_1\, dg_2\wedge dg_3,
\text{ provided } g_1,g_2,g_3\in C^{0,\beta}(I_3),\ \textstyle{\beta>\frac{2}{3}}\, .
$$
It is clear now how to use induction to define \eqref{eq177} when $\cM=I^n$, and the case of a general manifold is obtained through partition of unity and local coordinate systems.

The approach of Brezis and Nguyen is based on a very different idea. The key observation is that there is abounded extension operator
$$
\ext_\cM:C^{0,\alpha}(\cM)\to C^{0,\alpha}\cap W^{1,p}(\cM\times [0,1]),
\text{ provided }  p\in (1,\infty),\ \textstyle{1-\frac{1}{p}<\alpha\leq 1}.
$$
Moreover, we can guarantee that $\ext_\cM u=0$ near $\cM\times\{ 1\}$.
While this result follows from a general theory of traces in the Sobolev $W^{1,p}$ spaces, see \Cref{T6} for a straightforward argument.

Assume now that $f$ and $g_i$ are as in \eqref{eq177} and let $f^\eps$, $g_i^\eps$ be the approximations by convolution. It is easy to see that we can find $p\in (1,\infty)$ and $q\in (1,\infty)$, such that
$$
\textstyle{\alpha>1-\frac{1}{p},
\quad
\beta>1-\frac{1}{q},
\quad
\frac{1}{p}+\frac{n}{q}=1}.
$$
Let $F^\eps=\ext_\cM (f^\eps)$ and $G_i^\eps=\ext_\cM (g_i^\eps)$. Then integration by parts yields
$$
\int_\cM f^\eps \, dg_1^\eps\wedge\ldots\wedge dg_n^\eps=
\int_{\partial(\cM\times [0,1])} F^\eps\, dG_1^\eps\wedge\ldots\wedge dG_n^\eps=
\int_{\cM\times [0,1]} dF^\eps \wedge dG_1^\eps\wedge\ldots\wedge dG_n^\eps.
$$
Since $dF^\eps\in L^p$, $dG_i^\eps\in L^q$, and $\frac{1}{p}+\frac{n}{q}=1$, it follows from the H\"older inequality 
that the integral on the right hand side converges as $\eps\to 0$, and we can define
$$
\int_\cM f dg_1\wedge\ldots\wedge dg_n:=\lim_{\eps\to 0}\int_\cM f^\eps dg_1^\eps\wedge\ldots\wedge dg_n^\eps
=
\int_{\cM\times [0,1]} dF \wedge dG_1\wedge\ldots \wedge dG_n.
$$
For details, see Theorems~\ref{T32} and~\ref{T34}. Note that this can be regarded as a higher dimensional generalization of \eqref{eq176}.

The technique of Brezis and Nguyen is very powerful and it has recently been used and extended for example in \cite{GladbachO,LS19,SVS}. Similar ``compensation by extension'' ideas were also present in earlier works, e.g. \cite{Chanillo01011991,CJS89}.

\Cref{DJ} of our paper is devoted to a detailed study of integrals related to \eqref{eq177}, both in the category of H\"older continuous functions and in a wider category of fractional Sobolev spaces.
Our approach is based on that of Brezis and Nguyen.

Building on the idea of distributional Jacobians \eqref{eq177}, in \Cref{S6}, we study pullbacks of differential forms by H\"older continuous mappings. Since the pullback of a differential form has a Jacobian-like structure, one can believe that 
the generalized integral
\begin{equation}
\label{eq175}
\int_{\cM} f^*\kappa\wedge\tau
:=
\lim_{\eps\to 0}
\int_{\cM} (f^\eps)^*\kappa\wedge\tau
\end{equation}
is well defined if $f\in C^{0,\gamma}(\cM;\R^m)$, $\kappa$ in a $k$-form on $\R^m$ with $C^{0,\alpha}$ coefficients and $\tau$ is an $(n-k)$-form on $\cM$ with $C^{0,\alpha\gamma}$ coefficients, under suitable assumptions about $\alpha$ and $\gamma$, see e.g., \Cref{T79}. 

Pullbacks of differential forms under fractional Sobolev maps have also been studied in a recent paper by Detaille, Mironescu and Xiao \cite{DMX}.

In \Cref{GSC} we apply the theory of Jacobians and pullbacks from Sections~\ref{DJ} and~\ref{S6} to study change of variables and integration by parts.

If $\Omega\subset\R^n$ is a bounded domain and $f:\partial\Omega\to\R^n$ is continuous, then for each $y\in\R^n\setminus f(\partial\Omega)$ one can define the winding number $w(f,y)$, see \Cref{D5}. The winding number is defined in terms of degree and roughly speaking, it says how many times (counting orientation) the image of $\partial\Omega$ under the mapping $f$ wraps around $y$. 

The winding number appears in several formulas related to the change of variables. For example if 
$\Omega$ is a bounded domain with smooth boundary, 
$f\in C^\infty(\overbar{\Omega};\R^n)$, and $u\in C^\infty(\R^n)$, then (see \Cref{T43} and \Cref{T44})
\begin{equation}
\label{eq179}
\int_{\Omega}(u\circ f)\, df_1\wedge\dots\wedge df_n\, dx = \int_{\R^n}u(y)w(f,y)\, dy
\end{equation}
and
\begin{equation}
\label{eq180}
(-1)^{i-1}\int_{\partial\Omega} (u\circ f)\,df_1\wedge\ldots\wedge\widehat{df_i}\wedge\ldots\wedge df_n=
\int_{\R^n}\frac{\partial u}{\partial y_i}(y)w(f,y)\, dy.
\end{equation}
Note that \eqref{eq179} is the classical change of variables and \eqref{eq180} is a version of Green's theorem.

Using the theory of distributional Jacobians mentioned above we extend these and other similar formulas to the case when $f\in C^{0,\gamma}(\Omega;\R^n)$ $\gamma\in\big(\frac{n-1}{n},1\big]$, see \Cref{GSC} for precise statements. Note that under the given assumption the measure of $f(\partial\Omega)$ is zero and hence $w(f,y)$ is defined for almost all $y\in\R^n$.
Related results have been obtained in \cite{Conti,LZ,LP17,LPS21,ZustGAFA}.

For example, as a special case we obtain the following result (Corollaries~\ref{T91} and~\ref{T94}):

If $\gamma=(\gamma^x,\gamma^y)\in C^{0,\alpha}(\Sph^1;\R^2)$, $\alpha\in\big(\frac{1}{2},1\big]$ and if
$f=(f^x,f^y)\in C^{0,\alpha}(\bbbb^2;\R^2)$ is an extension of $\gamma$, then
\begin{equation}
\label{eq182}
\frac{1}{2}\int_\gamma xdy-ydx=
\frac{1}{2}\int_{\Sph^1}\gamma^xd\gamma^y-\gamma^yd\gamma^x=\int_{\bbbb^2}df^x\wedge df^y=
\int_{\bbbr^2} w(f,y)\, dy.
\end{equation}
Note that the integral on the right hand side equals the oriented area enclosed by the curve $\gamma$.

The remaining part of the paper is devoted to analysis of H\"older continuous mappings from Euclidean domains and Riemannian manifolds into the Heisenberg groups.

It is fair to say that the study of  {\em local} geometry of the Heisenberg groups has been initiated and popularized by Gromov in his seminal paper \cite{gromov2}. 
The Heisenberg group provides a {\em local} model for contact manifolds since according to the Darboux theorem, every contact manifold is locally equivalent to the Heisenberg group. While the questions in contact topology are concerned with the {\em global} structure of contact manifolds, questions in the geometry of the Heisenberg group are concerned with the {\em local} structure of contact manifolds.
The local geometry of the Heisenberg group is very complicated. In some directions it is Euclidean while in other directions is is purely fractal. For this reason Gromov \cite[0.5.C,0.5.D,3.1]{gromov2}, initiated an investigation of H\"older continuous maps into the Heisenberg groups and more general sub-Riemannian manifolds: H\"older maps can catch the fractal nature of the space. The remaining sections of the paper are devoted to this program.

\Cref{S7} is a brief overview of the theory of the Heisenberg groups. All results in this section are known and its purpose it to fix notation and list results that are needed later. Let us just mention here that the Heisenberg group is $\bbbh^n=\R^{2n+1}$ equipped with a certain Lie groups structure. 
We denote the coordinates in $\R^{2n+1}$ by $(x_1,y_1,\ldots,x_n,y_n,t)$ and then
$$
X_j=\frac{\partial}{\partial x_j}+ 2y_j\, \frac{\partial}{\partial t},
\quad
Y_j=\frac{\partial}{\partial y_j}- 2x_j\, \frac{\partial}{\partial t},
\quad
T=\frac{\partial}{\partial t},
$$
is a basis off the left invariant vector fields. However, the Heisenberg group is equipped with the $2n$-dimensional {\em horizontal} distribution $H\bbbh^n$,
$$
H_p \bbbh^n = \text{span} \left\{ X_1(p),Y_1(p),\dots,X_n(p),Y_n(p) \right\} \quad \text{for all } p \in \bbbh^n.
$$
The horizontal distribution is nothing else, but a standard {\em contact} structure on $\R^{2n+1}$, because
\begin{equation}
\label{eq183}
H_p \bbbh^n = \operatorname{ker}\upalpha(p) \subset T_p\R^{2n+1}
\quad
\text{where}
\quad
\upalpha = dt + 2 \sum_{j=1}^n (x_j \, dy_j - y_j \, dx_j)
\end{equation}
is a standard contact form.

A Lipschitz curve $\gamma=(\gamma^{x_1},\gamma^{y_1},\ldots,\gamma^{x_n},\gamma^{y_n},\gamma^t):[0,1]\to\bbbr^{2n+1}$ is {\em horizontal} if it is tangent to the horizontal distribution. It is not difficult to check that $\gamma$ is horizontal if and only if for every $s\in [0,1]$,
\begin{equation}
\label{eq181}
\gamma^t(s)-\gamma^t(0)
=
-4\sum_{j=1}^n \Bigg(\frac{1}{2}\int_{\gamma_j|_{[0,s]}} x_j\, dy_j-y_j\, dx_j\Bigg),
\quad
\text{where } \gamma_j=(\gamma^{x_j},\gamma^{y_j}).
\end{equation}
That is, the components $\gamma_j$ can be arbitrary Lipschitz curves, but then the component $\gamma^t$ is uniquely determined from \eqref{eq181}. In particular, if $\gamma$ is a closed curve, then $\gamma^t(1)-\gamma^t(0)=0$ and it follows from the standard Lipschitz case of \eqref{eq182} that the sum of oriented areas bounded by the curves $\gamma_j$ equals zero.

The Heisenberg group is equipped with the Carnot-Carath\'eodory metric $d_{cc}(p,q)$ which is the infimum o lengths of horizontal curves connecting $p,q\in\bbbh^n$. The metric $d_{cc}$ is bi-Lipschitz equivalent to the Kor\'anyi metric $d_K$ defined in \eqref{eq28}, and in what follows the Heisenberg group is regarded as a metric space $(\bbbh^n,d_K)$.
We prefer to work with the Kor\'anyi metric, because unlike the Carnot-Carath\'eodory metric, $d_K(p,q)$ can be computed explicitly.

The metric $d_K$ is not smooth and it has a fractal nature. In fact, for any compact set 
$K\subset\bbbh^n$ there is a constant $C\geq 1$ such that $C^{-1}d_{K}(p,q)\leq |p-q|\leq Cd_{K}(p,q)^{1/2}$
for all $p,q\in K$. Hence the topology induced by the $d_K$
metric is the same as the Euclidean topology, however, the Hausdorff dimension of the 
Heisenberg group equals $2n+2$, while the topological dimension is $2n+1$, so methods from smooth differential geometry cannot be directly applied to the Heisenberg group.

In \Cref{HM} we study H\"older continuous mappings from a Euclidean domain or from a Riemannian manifold into $\bbbh^n$. A Lipschitz mapping $f$ into $\R^{2n+1}$ is Lipschitz as a mapping into $\bbbh^n$ if and only if $f^*\upalpha=0$, where $\upalpha$ is the contact form from \eqref{eq183}. Moreover, Lipschitz mappings into $\bbbh^n$ always satisfy $\rank Df\leq n$, see \Cref{T110}. The main objective of \Cref{HM} is to generalize these results to the case of H\"older continuous mappings into $\bbbh^n$.

Le Donne and Z\"ust \cite{LZ} appear to be the first to use the concept of Jacobians of H\"older continuous mappings \eqref{eq177} in the setting of the Heisenberg groups. 

In this section, we develop this idea further, including the analysis of pullbacks of differential forms.
Another important idea that is heavily used in the section and also in Sections~\ref{S4},~\ref{S:Hodge} and~\ref{NHG} is described next.

If $f\in C^{0,\gamma}(\Omega;\R^{2n+1})$, $\Omega\subset\R^m$, and $\kappa\in\Omega^k(\R^{2n+1})$, then locally we have
\begin{equation}
\label{eq184}
\Vert f_\eps^*\kappa\Vert_\infty\leq C\eps^{k(\gamma-1)},
\end{equation}
see \Cref{T8}. Here $f_\eps$ is the approximation of $f$ by convolution. Estimate \eqref{eq184} easily follows from the definitions of pullback and convolution. Note that estimate \eqref{eq184} diverges to $\infty$, as $\eps\to 0$, if $0<\gamma<1$. 

In particular, \eqref{eq184} is true if $f\in C^{0,\gamma}(\Omega;\bbbh^n)$, because $\operatorname{id}:\bbbh^n\to\R^{2n+1}$ is locally Lipschitz continuous. However, in the case of the contact form $\upalpha$ we have a much better estimate (\Cref{T45}):

If $f\in C^{0,\gamma}(\Omega;\bbbh^n)$, $\Omega\subset\R^m$, then locally we have
\begin{equation}
\label{eq185}
\Vert f_\eps^*\upalpha\Vert_\infty\leq C\eps^{2\gamma-1}.
\end{equation}
This time, the estimate converges to $0$ as $\eps\to 0$, if $\gamma>\frac{1}{2}$.

\Cref{T49}, which follows from the Lefschetz \Cref{T22}, implies that any form $\kappa\in\Omega^k(\R^{2n+1})$, $n+1\leq k\leq 2n+1$, can be decomposed as
\begin{equation}
\label{eq186}
\kappa=\beta\wedge\upalpha+d(\gamma\wedge\upalpha),
\end{equation}
and hence 
$$
f^*_\eps\kappa=f_\eps^*\beta\wedge f_\eps^*\upalpha+d(f_\eps^*\gamma\wedge f^*_\eps\upalpha).
$$
This allows us to get a much better estimate than that from \eqref{eq184}, because while we estimate $f^*_\eps\beta$ and $f_\eps^*\gamma$ using \eqref{eq184}, we can estimate $f_\eps^*\upalpha$ using \eqref{eq185}.

Note that if $f\in C^{0,\gamma}_{\rm loc}(\Omega;\bbbh^n)$, then $f\in C^{0,\gamma}_{\rm loc}(\Omega;\bbbr^{2n+1})$.
Using the above ideas we prove the following characterization of H\"older continuous mappings into $\bbbh^n$ (see Theorems~\ref{T64} and~\ref{T99}).

If $f\in C^{0,\gamma}_{\rm loc}(\Omega;\R^{2n+1})$, $\gamma\in \big(\frac{1}{2},1\big]$, $\Omega\subset\R^m$, then the following conditions are equivalent:
\begin{enumerate}
\item[(a)]
$f\in C^{0,\gamma}_{\rm loc}(\Omega;\bbbh^n)$;
\item[(b)]
$f_\eps^*\upalpha\rightrightarrows 0$ uniformly on compact sets;
\item[(c)]
$f^*\upalpha=0$ in the distributional sense i.e., $\int_\Omega f^*\upalpha\wedge\phi=0$
for all  $\phi\in\Omega_c^{m-1}(\Omega)$.
\end{enumerate}
The integral in (c) is understood in the sense of \eqref{eq175}.
Quite surprisingly, in condition (b) we do not require any estimates for the rate of convergence $f_\eps^*\upalpha\rightrightarrows 0$, but afterwards, we have \eqref{eq185}.

The condition about the rank of the derivative $\rank Df\leq n$, has the following generalization, see \Cref{T47}.

Let $f\in C^{0,\gamma}_{\rm loc}(\Omega;\bbbh^n)$, $\gamma\in\big(\frac{n+1}{n+2},1\big]$, $\Omega\subset\R^m$, $m>n$. Then
\begin{equation}
\label{eq188}
\int_\Omega f^*\kappa\wedge\phi=0
\text{ for all } \kappa\in\Omega^{n+1}(\R^{2n+1}) \text{ and all } 
\phi\in\Omega_c^{m-n-1}(\Omega).
\end{equation}
Note that in the case of smooth mappings, \eqref{eq188} is equivalent to $\rank Df\leq n$.

Finally, note that \eqref{eq181} also holds for H\"older continuous curves in $\bbbh^n$, see \Cref{T67} (cf.\ \cite[Lemma~3.1]{LZ}). This is used to characterize existence of liftings of 
H\"older continuous mappings $f:\Omega\to\R^{2n}$ to H\"older continuous mappings $F:\Omega\to\bbbh^n$, see \Cref{T69}.

Using methods of \Cref{HM}, in \Cref{S4} we prove a generalization of the Gromov non-embedding theorem \cite[3.1.A]{gromov2} due to \cite{HS23}, see Theorems~\ref{T55} and~\ref{T53}. In a special case, the theorem says that there is no mapping $f\in C^{0,\gamma}(\bbbb^{n+1};\bbbh^n)$, $\gamma\in\big(\frac{n+1}{n+2},1\big]$, such that $f|_{\Sph^n}=f|_{\partial\bbbb^{n+1}}$ is a topological embedding (cf.\ \Cref{T97}). A rough idea of the proof (of this special case) is that for such a mapping $f(\Sph^n)$ would bound a ``hole''. The precise meaning of the ``hole'' can be expressed in terms of the Alexander-Poincar\'e duality (cf.\ \Cref{T21} and \Cref{R13}) according to which the de Rham cohomology with compact support of the complement satisfies
\begin{equation}
\label{eq189}
H_c^{n+1}(\R^{2n+1}\setminus f(\Sph^n))=\R.
\end{equation}
Now, if $\eta\in\Omega_c^{n+1}(\R^{2n+1}\setminus f(\Sph^n))$, $d\eta=0$, is a generator of \eqref{eq189}, then the integration by parts yields 
$$
\int_{\bbbb^{n+1}} f^*\eta\neq 0
$$
and this contradicts \Cref{T47} with  $\cM=\bbbb^{n+1}$, $\kappa=\eta$ and $\tau\equiv 1$ (cf.\ \eqref{eq188}). The actual proof of \Cref{T53} is more complicated than that of the special case described here, 
because the result is more general; we cannot refer directly to \Cref{T47}, but we mimic its proof.

The content of \Cref{S4} is somewhat related to the work of Balogh, Kozhevnikov, and Pansu \cite{BKP}. Adapting a technique for integrating differential forms along H\"older maps, developed by Z\"ust \cite{zust2,zust1} and Le Donne and Z\"ust \cite{LZ}, they proved that any $C^{0,\gamma}$ homeomorphism between $\R^{2n+1}$ and $\bbbh^n$ must satisfy $\gamma\leq\frac{n+1}{n+2}$. Their results also apply to more general Carnot groups.

\Cref{S:Hodge} provides background material about the Hodge decomposition needed for the last \Cref{NHG}. This section is devoted to study of the H\"older homotopy groups of the Heisenberg group. While several results about the Lipschitz homotopy groups have already been proved, see \Cref{T111}, very little is known about the H\"older homotopy groups. 
First note that it is very easy to prove that $\pi_k^\gamma(\bbbh^n)=0$ for all $k,n\geq 1$ and $\gamma\in\big(0,\frac{1}{2}\big)$ (see \Cref{T106}), but the case $\gamma>\frac{1}{2}$ is more complicated.

In \Cref{T108} we prove that
$$
\pi_n^\gamma(\bbbh^n)\neq 0
\quad 
\text{ for all  }{\textstyle \gamma\in \big(\frac{n+1}{n+2},1\big]}.
$$
In fact it follows immediately from the special case of the non-embedding theorem mentioned above i.e., \Cref{T97}, and \Cref{T108} should be credited to Gromov. If $n=1$, we obtain $\pi_1^\gamma(\bbbh^1)\neq 0$ for all $\gamma\in \big(\frac{2}{3},1\big]$. This result is sharp, because Wenger and Young \cite[Theorem~1.1]{wengerYoung18} proved that $\pi_1^\gamma(\bbbh^1)=0$ for all $\gamma\in\big(0,\frac{2}{3}\big)$. The case $\gamma=\frac{2}{3}$ is, however, open.

The main result of \Cref{NHG} is \Cref{th:hopfgamma}:
$$
\pi_{4n-1}^\gamma(\bbbh^{2n})\neq 0
\quad
\text{ for all } {\textstyle \gamma\in \big(\frac{4n+1}{4n+2},1\big].}
$$
The proof is difficult. The Lipschitz case $\pi_{4n-1}^{\rm Lip}(\bbbh^{2n})\neq 0$ was proved in \cite{HST14} using a certain generalization of the Hopf invariant that required the use of the Hodge decomposition. The proof presented here follows a similar approach, but now all estimates have to be adapted to the H\"older category. This makes the arguments much more difficult.

Finally, let us describe the content of the preliminary \Cref{pre}.
In Sections~\ref{EADF},~\ref{GCS} and~\ref{symp}, we review exterior algebra and differential forms, including some properties of contact and symplectic forms.  While the material is known, we couldn't locate \Cref{T48} and \Cref{T49} in the literature.

\Cref{Holder} is devoted to a general theory of H\"older continuous functions. Again, the material is well known, but it is not easy to find all  results in one place. Because of an increasing interest in analysis of H\"older continuous functions and mappings, we decided to make this section detailed and self-contained with hope that it will be a useful reference to the readers.

\Cref{Sobolev} reviews the Sobolev spaces $W^{1,p}$ and the fractional Sobolev spaces $W^{1-\frac{1}{p},p}$ that are the boundary trace spaces for $W^{1,p}$ spaces. In \Cref{T19} we state that there are bounded trace and extension operators
$$
\operatorname{Tr}:W^{1,p}(\cM)\to W^{1-\frac{1}{p},p}(\partial \cM)
\quad
\text{and}
\quad
\ext_{\partial\cM}:W^{1-\frac{1}{p},p}(\partial \cM)\to W^{1,p}(\cM).
$$
Unfortunately, no short proofs of this result are known. While there are elementary proofs, all of them are pretty technical. For that reason, we decided to include a proof of a special case of the boundedness of the extension operator in \Cref{S5}:
$$
\ext_{\partial\cM}:C^{0,\gamma}(\partial \cM)\to W^{1,p}(\cM)
\quad
\text{provided } \textstyle{1-\frac{1}{p}<\gamma\leq 1},
$$
see \Cref{T6}, \Cref{T57},  and \Cref{T7}. Since the paper is devoted to analysis of H\"older continuous mappings, this special case is the most important one.

\section{Preliminaries}
\label{pre}
In this section we collect definitions and results related to differential forms, H\"older continuous mappings,  Sobolev spaces and fractional Sobolev spaces needed in the rest of the paper. While most of the material presented here is known, some of the results proved here could be difficult to find in the literature. 
We assume that the reader is familiar with basic theory of differential forms, Riemannian manifolds, and Sobolev spaces. We recall some of the elementary definitions with the purpose of clarifying notation used in the paper.

\subsection{Notation}
\label{S2}
By $C$ we will denote a generic constant whose value may change in a single string of estimates.
If $A$ and $B$ are non-negative quantities, we write $A\lesssim B$ and $A\approx B$ if there is a constant $C\geq 1$ such that $A\leq CB$ and $C^{-1}B\leq A\leq CB$ respectively. This will always be clear from the context on what parameters $C$ depends. We will also write $A\lesssim_{U,V} B$ and $A\approx_{U,V} B$ if the implied constants depend on parameters $U$ and $V$.
If $0\neq a\in\R$, then the sign of $a$ is defined as $\sgn a=1$ if $a>0$ and $\sgn a=-1$ if $a<0$.

$\bbbn$ will denote the set of positive integers, and
$\R^n_+=\{x\in\R^n:\, x_n\geq 0\}$ will denote a half-space.
$A\Subset B$ means that the closure $\overbar{A}$ of $A$ is a compact subset of $B$.
We will also use notation $X\Subset Y$ to denote compact embedding of Banach spaces.
An open ball of radius $R$ will be denoted by $B(x,R)$ or $B^n(x,R)$ and we will also write  $B_R^n=B_R=B(0,R)$. The unit ball and the unit sphere in $\bbbr^n$ will be denoted by $\bbbb^n=B^n(0,1)$ and $\Sph^{n-1}=\partial\bbbb^n$.
The volume of the unit ball in $\mathbb{R}^n$ will be denoted by $\omega_n$.
Uniform convergence will often be denoted by $f_n\rightrightarrows f$.

If $\Omega\subset\R^n$ is a domain, then $f\in C^\infty(\overbar{\Omega})$ means that $f$ can be extended to a function in $C^\infty(\R^n)$.

We say that a bounded domain $\Omega\subset\R^n$ is Lipschitz if its boundary is locally a graph of a Lipschitz function.  A domain is smooth if its boundary is locally a graph of a smooth function. Smooth domains are examples of Riemannian manifolds with boundary.

By a smooth function or a smooth manifold we will mean a $C^\infty$ smooth function or  a manifold.
A closed manifold is a smooth compact manifold without boundary. If $\cM$ is a Riemannian manifold, then $C^{\infty}(\overbar{\cM})$ denotes the class of functions that are smooth up to the boundary. Clearly, if $\partial\cM=\varnothing$, then $C^\infty(\overbar{\cM})=C^\infty(\cM)$.

By $C^{0,\gamma}$, $0<\gamma\leq 1$ we will denote the class of $\gamma$-H\"older continuous functions.
We will also use notation $\lip:=C^{0,1}$ for Lipschitz functions.
Various classes of compactly supported functions will be denoted by $C_c^\infty, C_c^{0,\gamma}$, etc.

De~Rham cohomology and de~Rham cohomology with compact support on an oriented manifold $\cM$ will be dented by $H^k(\cM)$ and $H^k_c(\cM)$ respectively. No other cohomoloy theory will be considered.

If $X$ is a space of real valued function, then $X^n$ stands the space of mappings $f=(f_1,\ldots,f_n)$ such that $f\in X$. For example, $C^\infty(\R^n)^m$ is the class of smooth mappings from $\R^n$ to $\R^m$. If $X$ is a normed space, then $X^n$ is also a normed space with the norm $\Vert f\Vert=\sum_{i=1}^n\Vert f_i\Vert$.

The $L^p$ norm $1\leq p\leq \infty$ of a function $f$ will be denoted by $\Vert f\Vert_p$. If a function $f$ defined in $\Omega\subset\bbbr^n$ is sufficiently smooth, $D^0f=f$ and $D^m f$, $m\in\bbbn$, denotes the vector of all partial derivatives of order $m$, so
$$
|D^m f|:=\Big(\sum_{|\alpha|=m} |D^\alpha f|^2\Big)^{1/2},
\quad
\text{and}
\quad
\Vert f\Vert_{m,p}:=\sum_{k=0}^m\Vert D^k f\Vert_p,
\quad 1\leq p\leq\infty,\ m\in\bbbn,
$$
is the Sobolev norm.
The integral average will be denoted by the barred integral:
\begin{equation}
\label{eq172}
\mvint_E f\, d\mu =\frac{1}{\mu(E)}\int_E f\, d\mu.
\end{equation}

The Jacobian of a mapping $f:\Omega\subset\R^n\to\R^n$ is denoted by
$J_f(x):=\det Df(x)$.

Symbols $\upomega$ and $\upalpha$ will be solely used to denote the standard symplectic and the standard contact forms defined by
$$
\upomega=\sum_{i=1}^{n} dx_i\wedge dy_i
\qquad
\text{and}
\qquad
\upalpha=dt+2\sum_{i=1}^n (x_idy_i-y_idx_i).
$$
By writing $w_1\wedge\ldots\wedge\widehat{w_i}\wedge\ldots\wedge w_k$ we mean that the form $w_i$ is omitted in the wedge product.

\subsection{Exterior algebra and differential forms}
\label{EADF}
The canonical bases of $\R^n$ and $(\R^n)^*$ will be denoted by
$$
\frac{\partial}{\partial x_1},\ldots,\frac{\partial}{\partial x_n}
\qquad
\text{and}
\qquad
dx_1,\ldots,dx_n
$$
respectively.
The $k$-covectors
$$
dx_I:=dx_{i_1}\wedge\ldots\wedge dx_{i_k},
\quad
I=(i_1,\ldots,i_k),\ 
1\leq i_1<\ldots<i_k\leq n
$$
form a basis of $\Ep^k(\R^n)^*$. We will denote the length of $I$ by $|I|=k$. 

The $k$-covectors are identified with alternating multilinear maps on $\R^n\times\ldots\times\R^n$ that on basis elements are defined by
$$
dx_{i_1}\wedge\ldots\wedge dx_{i_k}(v_1,\ldots,v_k)=
\det
\left [\begin{array}{ccc}
dx_{i_1}(v_1) & \ldots & dx_{i_1}(v_k)\\
\vdots & \ddots & \vdots\\
dx_{i_k}(v_1) & \ldots & dx_{i_k}(v_k)
\end{array} \right]=
\det
\left [\begin{array}{ccc}
v_1^{{i_1}} & \ldots & v_k^{{i_1}}\\
\vdots & \ddots & \vdots\\
v_1^{i_k} & \ldots & v_k^{{i_k}}
\end{array} \right],
$$
where
$$
v_j=v^{1}_j\frac{\partial}{\partial x_1}+\ldots+v^{n}_j\frac{\partial}{\partial x_n}.
$$

The wedge product
$$
\wedge: \Ep^k (\mathbb{R}^n)^\ast \times \Ep^\ell (\mathbb{R}^n)^\ast \mapsto \Ep^{k+\ell} (\mathbb{R}^n)^\ast
$$ 
is  a bilinear map defined on bases as
$$
dx_I \wedge dx_J = dx_{i_1} \wedge \ldots \wedge dx_{i_k} \wedge dx_{j_1} \wedge \ldots \wedge dx_{j_\ell},
$$
where
$$
I=(i_1,\ldots,i_k),\ J=(j_1,\ldots,j_\ell),\ 1\leq i_1<\ldots<i_k\leq n,\ 
1\leq j_1<\ldots<j_\ell\leq n.
$$
Here, we identify $dx_i \wedge dx_j = -dx_j\wedge dx_i$; in particular $dx_i \wedge dx_i = 0$.

It is worth noting that for $\alpha \in \Ep^k \R^n$ and $\beta \in \Ep^\ell \R^n$ we have 
$$
 \alpha \wedge \beta = (-1)^{k\ell} \beta \wedge \alpha.
$$
$\Ep^k (\mathbb{R}^n)^\ast$ is endowed with a scalar product defined 
on the basis by
$$
\langle dx_I , dx_J \rangle = \begin{cases}
1 \quad&I=J\\
0 \quad&\text{otherwise.}
\end{cases}
$$
The Hodge star operator $\ast: \Ep^k (\mathbb{R}^n)^\ast \to \Ep^{n-k} (\mathbb{R}^n)^\ast$ is a unique linear isomorphism that satisfies
$$
\alpha \wedge (\ast \beta) = \langle \alpha,\beta\rangle\, dx^1\wedge \ldots \wedge dx^n \qquad \text{for all } \alpha,\beta \in \Ep^k (\mathbb{R}^n)^\ast.
$$
The Hodge star operator can be also computed more explicitly: If $I=(i_1,\ldots,i_k)$, $1 \leq i_1 < i_2 < \ldots < i_k \leq n$, then
\[
 \ast dx_I = \sgn(\sigma(I)) dx_J,
\]
where $J=(j_1,\ldots,j_{n-k})$, $1 \leq j_1 < j_2 < \ldots < j_{n-k} \leq n$, $I \cap J = \varnothing$, 
is the set of complementary indices and $\sgn(\sigma(I))=\pm 1$ is the sign of the permutation
$\sigma(I):(1,2,\ldots,n)\to (i_1,\ldots,i_k,j_1,\ldots,j_{n-k})$.
Moreover we have,
\[
 \ast \ast \eta = (-1)^{k(n-k)} \eta \quad \quad \eta \in \Ep^k (\R^n)^*.
\]
Smooth differential $k$-forms on $\R^{n}$ are smooth sections of $\Ep^k (\R^n)^*$:
$$
\Omega^k(\R^n):=C^\infty\left(\R^n,\Ep^k(\R^n)^*\right).
$$
If $X$ is a space of functions on $\R^n$, then we also consider the space of forms
$\Omega^k X(\R^n)$ with coefficients in $X$ (so by definition $\Omega^k(\R^n)=\Omega^kC^\infty(\R^n)$). If $X$ is a Banach space, the space $\Omega^kX(\bbbr^n)$ is equipped with a Banach norm
$$
\Vert\kappa\Vert_X=\sum_{|I|=k}\Vert \kappa_I\Vert_X,
\quad
\text{for  $\kappa=\sum_{|I|=k}\kappa_I\, dx_I$.}
$$
That includes, for example, forms with  H\"older, $L^p$, Sobolev or fractional Sobolev coefficients
$$
\Omega^k C^{0,\gamma}(\R^n),
\quad
\Omega^k L^p(\R^n),
\quad
\Omega^k W^{1,p}(\R^n),
\quad
\Omega^k W^{1-\frac{1}{p},p}(\R^n).
$$
H\"older, Sobolev or fractional Sobolev spaces will be discussed in Sections~\ref{Holder} and~\ref{Sobolev}.

The exterior derivative $d:\Omega^k(\R^n)\to\Omega^{k+1}(\R^n)$ is a linear operator uniquely determined by
$$
d\brac{f dx_{i_1}\wedge \ldots \wedge dx_{i_k}} = 
\sum_{j=1}^n \partial_{x_j} f dx^j \wedge dx_{i_1}\wedge \ldots \wedge dx_{i_k}.
$$
Clearly, this construction also gives exterior derivatives of forms with other regularity e.g.,
$$
d:\Omega^k C^1(\R^n)\to \Omega^{k+1} C^0(\R^n) 
\quad
\text{or}
\quad
d:\Omega^k W^{1,p}(\R^n)\to \Omega^{k+1} L^p(\R^n).
$$
It is worth noting that for a $k$-form $\alpha$ and an $\ell$-form $\beta$ we have
\begin{equation}\label{eq:dproductrule}
 d (\alpha \wedge \beta) = d\alpha \wedge \beta + (-1)^k \alpha \wedge d \beta.
\end{equation}
The co-differential $\delta: \Omega^k(\R^n) \to \Omega^{k-1}(\R^n)$ is defined as 
$$
\delta = (-1)^{n(k-1)+1}\ast d \ast,
$$
with the obvious generalization to $C^1$ and Sobolev forms. The Laplace-de~Rham operator $\lap: \Omega^k(\R^n) \to \Omega^k(\R^n)$ is defined as 
\[
 \lap = d \delta + \delta d.
\]
One can check that for functions $f: \R^n \to \R$ (i.e. for $k=0$-form) it coincides with the usual Laplace operator.

\subsection{Good coordinate systems}
\label{GCS}
While in \Cref{EADF} forms were defined on $\R^n$, one can use local coordinate systems to define forms (including Sobolv, H\"older etc.) on a smooth oriented manifold $\cM$.
A differential $k$-form $\gamma$ on a smooth oriented $n$-dimensional manifold $\cM$, with or without boundary, can be expressed in local coordinates
$x=(x_1,\ldots,x_n):U\subset\cM\to\R^n$ as
$$
\gamma =\sum_{|I|=k}\gamma_I\, dx_I
\quad
\text{in $U$.}
$$
We will only consider {\em good coordinate systems} $(x,U)$ that satisfy:
\begin{enumerate}
\item If $U\cap\partial\cM=\varnothing$, then $x(U)=\bbbb^n$ and  $x$ extends to a smooth coordinate system in a larger domain $V\Supset U$. 
\item If $U$ is a neighborhood of a boundary point, then $x(U)=\bbbb^n_+:=\bbbb^n\cap\R^n_+$ is the unit half-ball and $x$ extends to a smooth coordinate system in a larger domain $V\Supset U$. 
\item If $\cM$ is oriented, then $(x,U)$ is orientation preserving.
\end{enumerate}
Note that our assumptions will guarantee boundedness of derivatives of all orders.

If a function space $X$ `behaves well' under a change of variables, and $\cM$ is a smooth oriented closed $n$-dimensional manifold,
we say that $\gamma\in \Omega^k X(\cM)$ if for every good coordinate system $(x,U)$, 
we have $(x^{-1})^*\gamma\in\Omega^kX(\bbbb^n)$.
Then we define the norm of 
$\gamma\in \Omega^k X(\cM)$ by
\begin{equation}
\label{eq38}
\Vert\gamma\Vert_X:=\sum_{i=1}^N \Vert (x^{-1}_{i})^*\gamma \Vert_{X(\bbbb^n)},
\end{equation}
where the sum runs over a finite covering $\cM=\bigcup_{i=1}^N U_i$ by
good coordinate systems $\{ (x_i,U_i)\}_{i=1}^N$.
Due to compactness of $\cM$, the norms defined for different coverings are equivalent. 
This approach applies to spaces like $C^{0,\gamma}$, $L^p$, $W^{1,p}$ or $W^{1-\frac{1}{p},p}$.

If $\gamma \in \Omega^k(\cM)$ (or $\gamma\in\Omega^kW^{1,p}(\cM)$ etc.), then we can define the differential $d$, via the local representation in a good coordinate systems $U$
\[
d\gamma =\sum_{|I|=k} d\brac{\gamma_I\, dx_I} = \sum_{i=1}^n \sum_{|I|=k} \partial_{x_i} \gamma_I\, dx_i \wedge dx_I \quad \text{in $U$.}
\]
The Hodge-star operator $\ast$ was defined with respect to the Euclidean structure (scalar product) in $\R^n$. If $\cM$ is an oriented $n$-dimensional Riemannian manifold, then we have the orientation and the scalar product in every tangent space and hence in every cotangent space $T_x^*\cM$, which allows us to define the Hodge-star operator $\ast:\Ep^kT_x^*\cM\to \Ep^{n-k}T_x^*\cM$. Then we define $\delta: \Omega^k(\cM) \to \Omega^{k-1}(\cM)$ and $\lap: \Omega^k(\cM) \to \Omega^k(\cM)$ as in the Euclidean case 
$$
\delta = (-1)^{n(k-1)+1}\ast d \ast,
\qquad
 \lap = d \delta + \delta d.
$$
One can easily extend the above constructions to smooth oriented compact manifolds with boundary.

If $\cM$ is an oriented and compact $n$-dimensional Riemannian manifold without boundary, then we have the itegtation by parts formula
\begin{equation}
\label{eq137}
\int_\cM\langle d\alpha,\beta\rangle=\int_\cM\langle\alpha,\delta\beta\rangle
\quad
\text{for all $\alpha\in \Omega^k(\cM)$ and $\beta\in\Omega^{k+1}(\cM)$}.
\end{equation}

\begin{lemma}[Stokes theorem]
\label{T78}
Let $\cM$ be an oriented smooth manifold, $\dim\cM=n$. If $\omega$ is a compactly supported Lipschitz $(n-1)$-form, then
$$
\int_{\cM}d\omega=\int_{\partial\cM}\omega.
$$
\end{lemma}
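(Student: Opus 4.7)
The plan is to prove this by smooth approximation, reducing to the classical Stokes theorem for smooth compactly supported forms. First I would use a partition of unity subordinate to a finite collection of good coordinate systems covering $\operatorname{supp}(\omega)$, as discussed in \Cref{GCS}. This reduces the statement to the case where $\omega$ is a compactly supported Lipschitz $(n-1)$-form either in $\bbbb^n$ (interior chart, no contribution to $\int_{\partial\cM}\omega$) or in $\bbbb^n_+$ (boundary chart, with possible contribution on $\{x_n=0\}$).

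In each chart I would mollify the coefficients of $\omega$ by standard convolution, obtaining smooth forms $\omega_\eps=\omega\ast\eta_\eps$. By Rademacher's theorem, the coefficients of $\omega$ are differentiable a.e.\ with weak derivatives in $L^\infty$ bounded by the Lipschitz constant, so $d\omega$ is defined a.e.\ and lies in $L^\infty$. Since convolution commutes with the exterior derivative on distributional level, $d\omega_\eps=(d\omega)\ast\eta_\eps$, and the standard Lebesgue differentiation argument yields $\omega_\eps\to\omega$ uniformly on compact sets and $d\omega_\eps\to d\omega$ pointwise a.e., with the uniform bound $\|d\omega_\eps\|_\infty\leq\|d\omega\|_\infty$. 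In the interior chart, for $\eps$ small enough $\omega_\eps$ still has compact support in $\bbbb^n$, so the classical Stokes theorem gives $\int_{\bbbb^n}d\omega_\eps=0$, and passing to the limit yields $\int_{\bbbb^n}d\omega=0$. In the boundary chart, since the good coordinate system extends across $\bbbb^n_+$ into a larger open set $V\Supset \bbbb^n_+$, I can extend the Lipschitz coefficients of $\omega$ to $V$ (for instance via McShane's extension) and then mollify. For $\eps$ smaller than the distance from $\operatorname{supp}(\omega)$ to $\partial\bbbb^n\setminus\{x_n=0\}$, the smooth form $\omega_\eps$ vanishes near the lateral boundary of $\bbbb^n_+$, so the classical Stokes theorem on the half-ball gives
$$
\int_{\bbbb^n_+}d\omega_\eps=\int_{\{x_n=0\}}\omega_\eps.
$$

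Finally I would pass to the limit $\eps\to 0$ on both sides. The left-hand side converges to $\int_{\bbbb^n_+}d\omega$ by dominated convergence, using the uniform $L^\infty$ bound and a.e.\ convergence together with the compact support. The right-hand side converges to $\int_{\{x_n=0\}}\omega$ because $\omega_\eps\to\omega$ uniformly on the compact set $\operatorname{supp}(\omega)\cap\{x_n=0\}$, and these traces are coefficients evaluated on the boundary hyperplane. Summing the contributions over the partition of unity and the charts gives the desired identity. The main obstacle is the boundary chart: one must ensure that extending $\omega$ beyond $\bbbb^n_+$ before mollifying does not affect the limiting values on $\{x_n=0\}$, and that the mollification simultaneously preserves the compact support away from the lateral boundary and produces the correct boundary trace. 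The availability of good coordinate systems that extend into a slightly larger neighborhood (as guaranteed by the definition in \Cref{GCS}) resolves this, since there is uniform room to convolve without disturbing either the support condition or the boundary values.
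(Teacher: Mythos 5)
Your mollification strategy is a valid route, but it is genuinely different from what the paper does. The paper's proof simply observes that the standard chart-by-chart argument for the smooth Stokes theorem (as in Lee) only uses the one-dimensional fundamental theorem of calculus, which holds for Lipschitz --- hence absolutely continuous --- functions, and so goes through verbatim with no approximation step. Your approach instead approximates $\omega$ by smooth forms and passes to the limit in the classical Stokes identity. Both work; the paper's is shorter and more direct, while yours is self-contained in the sense that it only invokes the smooth Stokes theorem as a black box and a standard mollification limit, which is a reasonable tradeoff.

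There is, however, one step in your boundary-chart argument that is not quite right as stated. After extending the coefficients of $\omega$ across $\{x_n=0\}$ via McShane, the extension is \emph{not} supported away from the lateral (spherical) part of $\partial \bbbb^n_+$: the McShane formula only guarantees that the extended function vanishes on $\overline{\bbbb^n_+}\setminus\operatorname{supp}(\omega)$, but on the complementary region $x_n<0$ it is merely controlled by $L\cdot\dist(\cdot,\overline{\bbbb^n_+})$, not zero. Consequently $\omega_\eps$ does not vanish near the lateral boundary; it is only $O(\eps)$ there. As written, the invocation of classical Stokes on the half-ball therefore drops a lateral boundary term. This is easily repaired --- either multiply the McShane extension by a Lipschitz cutoff equal to $1$ on a neighborhood of $\operatorname{supp}(\omega)$ and supported away from the sphere before mollifying, or simply note that the lateral boundary contribution is $O(\eps)$ by the estimate just described and hence disappears in the limit --- but the sentence ``there is uniform room to convolve without disturbing the support condition'' does not by itself address it, since the issue is not the room to convolve but the behavior of the extension outside $\bbbb^n_+$.
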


Since the integration by parts holds for Lipschitz functions, the standard proof (cf.\ \cite[Theorem~16.11]{Lee}) applies verbatim to the case of Lipschitz forms.

\subsection{Symplectic and contact forms}
\label{symp}

\begin{definition}
With the coordinates in $\R^{2n}$ denoted by $(x_1,y_1,\ldots,x_n,y_n)$, the {\em standard symplectic form} is
\begin{equation}
\label{eq128}
\upomega=\sum_{i=1}^n dx_i\wedge dy_i\in\Ep^2(\R^{2n})^*
\quad
\text{that is,}
\quad
\upomega(v,w)=
\sum_{i=1}^n \left( v^{x_i}w^{y_i}- v^{y_i}w^{x_i}\right),
\end{equation}
where
$$
v=\sum_{i=1}^n v^{x_i}\frac{\partial}{\partial x_i} + v^{y_i} \frac{\partial}{\partial y_i}
\quad
\mbox{and}
\quad
w=\sum_{i=1}^n w^{x_i}\frac{\partial}{\partial x_i} + w^{y_i} \frac{\partial}{\partial y_i}.
$$
We also regard $\upomega$ as a differential form
$\upomega\in \Omega^2(\R^{2n})$ with constant coefficients.
\end{definition}

\begin{remark}
In what follows $\upomega$ will always stand for the standard symplectic form.
\end{remark}

\begin{definition}
A vector space $V\subset\R^{2n}$ is {\em isotropic} if $\upomega(v,w)=0$ for all $v,w\in V$.
\end{definition}
\begin{lemma}
\label{T62}
If $V\subset\R^{2n}$ is isotropic, then $\dim V\leq n$.
\end{lemma}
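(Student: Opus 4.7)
The plan is to use non-degeneracy of the standard symplectic form $\upomega$ together with a dimension count for symplectic complements. Non-degeneracy means that the linear map
\[
\Phi\colon \R^{2n}\to (\R^{2n})^*,\qquad \Phi(w)(v) := \upomega(v,w),
\]
is an isomorphism; this is immediate from the explicit formula \eqref{eq128}, since $\Phi\bigl(\tfrac{\partial}{\partial y_i}\bigr)=dx_i$ and $\Phi\bigl(\tfrac{\partial}{\partial x_i}\bigr)=-dy_i$, so $\Phi$ maps the standard basis of $\R^{2n}$ to (up to signs) the dual basis.

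The first step is to introduce the symplectic orthogonal
\[
V^{\upomega}:=\{w\in\R^{2n}\,:\,\upomega(v,w)=0 \text{ for all } v\in V\}
\]
and observe that the restriction map $r\colon (\R^{2n})^*\to V^*$ is surjective. Composing $r\circ\Phi\colon\R^{2n}\to V^*$ yields a surjective linear map (since $r$ and $\Phi$ are both surjective) whose kernel is precisely $V^{\upomega}$. The rank-nullity theorem then gives
\[
\dim V^{\upomega}=2n-\dim V.
\]

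The second step is to note that the hypothesis that $V$ is isotropic is, by definition, exactly the inclusion $V\subseteq V^{\upomega}$. Combining this with the dimension formula yields
\[
\dim V\leq \dim V^{\upomega}=2n-\dim V,
\]
whence $\dim V\leq n$, as claimed.

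There is no real obstacle here; the only thing one has to be careful about is to argue the surjectivity of $r\circ\Phi$ (equivalently, to justify the dimension formula for $V^{\upomega}$), and this follows from the non-degeneracy of $\upomega$ together with the fact that any linear functional on $V$ extends to a linear functional on $\R^{2n}$.
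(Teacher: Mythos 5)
Your argument is correct, and it takes a genuinely different route from the paper's. The paper introduces the compatible complex structure $\cJ$ (rotating $\partial/\partial x_i\mapsto\partial/\partial y_i$, $\partial/\partial y_i\mapsto-\partial/\partial x_i$), rewrites the symplectic form as $\upomega(v,w)=-\langle v,\cJ w\rangle$, and observes that isotropy forces $\cJ V$ into the Euclidean orthogonal complement of $V$, giving $2\dim V=\dim V+\dim(\cJ V)\le 2n$. You instead work with the symplectic orthogonal $V^{\upomega}$, deduce $\dim V^{\upomega}=2n-\dim V$ from non-degeneracy of $\upomega$ via rank--nullity, and use the defining inclusion $V\subseteq V^{\upomega}$. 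These are essentially dual pictures of the same dimension count: with the paper's $\cJ$, your $V^{\upomega}$ is exactly $\cJ^{-1}(V^{\perp})$, and the paper's statement $\cJ V\perp V$ is the same inclusion $V\subseteq V^{\upomega}$ in disguise. The paper's version is geometrically suggestive and avoids defining $V^{\upomega}$; your version is the standard symplectic-linear-algebra argument, which is slightly more self-contained (it uses only non-degeneracy, without choosing a compatible inner product) and generalizes directly to arbitrary symplectic vector spaces. Both are fully correct; the choice is a matter of taste.
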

\begin{proof}
Let $\cJ:\R^{2n}\to \R^{2n}$ be an isomorphism defined by
$$ 
\cJ\left(\frac{\partial}{\partial x_i} \right) = \frac{\partial}{\partial y_i}, 
\qquad 
\cJ\left(\frac{\partial}{\partial y_i} \right) = -\frac{\partial}{\partial x_i}  
\quad\text{for $i=1,\hdots,n$.}
$$
It is easy to check that
$\upomega(v,w)=-\langle v,\cJ w\rangle$
for $v,w\in\R^{2n}$,
where $\langle\cdot,\cdot\rangle$ is the standard scalar product in $\R^{2n}$.
If $V\subset\R^{2n}$ is isotropic, 
then for all $v,w\in V$, $0=\upomega(v,w)=-\langle v,\cJ w\rangle$. Therefore,
$\cJ$ maps $V$ to the orthogonal complement of $V$ so 
$$
2\dim V=\dim V+\dim (\cJ V)\leq 2n, 
\qquad
\dim V\leq n.
$$
\end{proof}
\begin{remark}
If we identify $T\R^{2n}$ with $T\bbbc^n$ by
$$
T\R^{2n}\ni\Big(\frac{\partial}{\partial x_1},\,\frac{\partial}{\partial y_1},\ldots,\frac{\partial}{\partial x_n},\,\frac{\partial}{\partial y_n}\Big)\mapsto
\Big(\frac{\partial}{\partial x_1}+i\,\frac{\partial}{\partial y_1},\ldots,\frac{\partial}{\partial x_n}+i\,\frac{\partial}{\partial y_n}\Big)\in T\bbbc^n,
$$
then $\cJ$ corresponds to the multiplication by $i$.
\end{remark}

\begin{definition}
Given $w\in\R^n$ and $1\leq k\leq n$, we define the {\em interior product}
$$
\iota_w:\Ep^k(\R^n)^*\to \Ep^{k-1}(\R^n)^*
\quad
\text{by}
\quad
\iota_w(\xi)(v_1,\ldots,v_{k-1})=\xi(w,v_1,\ldots,v_{k-1}).
$$
\end{definition}
\begin{lemma}
\label{T10}
If $\xi\in\Ep^k(\R^n)^*$, $\eta\in\Ep^\ell(\R^n)^*$ and $v\in\R^n$, then
$$
\iota_v(\xi\wedge\eta)=\iota_v(\xi)\wedge\eta+(-1)^k\xi\wedge\iota_v(\eta).
$$
\end{lemma}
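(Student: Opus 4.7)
The plan is to establish this graded Leibniz rule by reducing to wedges of $1$-forms and then applying a direct cofactor expansion. By bilinearity of both sides in $\xi$ and $\eta$, I only need to verify the identity when $\xi$ and $\eta$ are wedges of basis covectors $dx_{i_1}\wedge\cdots$, and more generally it suffices to take
$$
\xi=\alpha_1\wedge\cdots\wedge\alpha_k,\qquad \eta=\alpha_{k+1}\wedge\cdots\wedge\alpha_{k+\ell},
$$
where each $\alpha_j\in(\R^n)^*$.

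The key intermediate step is the explicit cofactor-type formula
$$
\iota_v(\alpha_1\wedge\cdots\wedge\alpha_m)=\sum_{i=1}^{m}(-1)^{i-1}\alpha_i(v)\,\alpha_1\wedge\cdots\wedge\widehat{\alpha_i}\wedge\cdots\wedge\alpha_m.
$$
This is obtained by taking the determinantal definition of the wedge product applied to $(v,v_1,\ldots,v_{m-1})$ and expanding the resulting $m\times m$ determinant along its first column, in which the entries are $\alpha_i(v)$. The remaining $(m-1)\times (m-1)$ minors are precisely the evaluations of $\alpha_1\wedge\cdots\wedge\widehat{\alpha_i}\wedge\cdots\wedge\alpha_m$ at $(v_1,\ldots,v_{m-1})$, with the sign $(-1)^{i-1}$ coming from the cofactor.

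With this formula in hand, I apply it with $m=k+\ell$ to $\xi\wedge\eta=\alpha_1\wedge\cdots\wedge\alpha_{k+\ell}$ and split the sum at the index $k$:
$$
\iota_v(\xi\wedge\eta)=\sum_{i=1}^{k}(-1)^{i-1}\alpha_i(v)\,\alpha_1\wedge\cdots\widehat{\alpha_i}\cdots\wedge\alpha_{k+\ell}+\sum_{i=k+1}^{k+\ell}(-1)^{i-1}\alpha_i(v)\,\alpha_1\wedge\cdots\widehat{\alpha_i}\cdots\wedge\alpha_{k+\ell}.
$$
The first sum factors as $\iota_v(\xi)\wedge\eta$ using the same cofactor formula applied to $\xi$. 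In the second sum I substitute $j=i-k$, so $(-1)^{i-1}=(-1)^k(-1)^{j-1}$, and pulling $\xi$ past the omitted $\alpha_i$ (which sits to the right of all $k$ factors of $\xi$) produces exactly $(-1)^k\,\xi\wedge\iota_v(\eta)$.

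The only real bookkeeping hazard is the sign in the second sum; the factor $(-1)^k$ arises both from moving $\xi$ through the interior-product operator (which has degree $-1$) and, equivalently in this computation, from the reindexing $(-1)^{i-1}=(-1)^k(-1)^{j-1}$. Once this sign is verified, the identity follows immediately. No induction or choice of basis beyond splitting into wedges of $1$-forms is needed.
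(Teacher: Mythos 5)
Your proof is correct. The paper itself does not prove this lemma: it simply cites \cite[Lemma~14.13]{Lee} and \cite[Proposition~2.12]{warner}. You supply the content that those references contain, and your argument is the standard one: reduce by bilinearity to decomposable covectors, establish the cofactor expansion
$$
\iota_v(\alpha_1\wedge\cdots\wedge\alpha_m)=\sum_{i=1}^{m}(-1)^{i-1}\alpha_i(v)\,\alpha_1\wedge\cdots\wedge\widehat{\alpha_i}\wedge\cdots\wedge\alpha_m
$$
by expanding the defining determinant along the column containing $\alpha_i(v)$, and then split the sum at index $k$. The reindexing $(-1)^{i-1}=(-1)^{k}(-1)^{j-1}$ for $i=k+j$ yields the sign in the second term, and since $\xi=\alpha_1\wedge\cdots\wedge\alpha_k$ already sits at the front of each summand no additional transposition sign is incurred when factoring it out. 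One very minor remark on exposition: your parenthetical suggestion that the $(-1)^k$ arises from "pulling $\xi$ past the omitted $\alpha_i$" is not literally what happens, since the omitted factor is already to the right of $\xi$ and the factoring itself is sign-free; but you also give the correct reindexing explanation, so the argument stands.
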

For a proof, see \cite[Lemma~14.13]{Lee} or \cite[Proposition~2.12]{warner}.

If $\gamma$ is a differential form, then we will write:
$$
\gamma^k=\underbrace{\gamma\wedge\ldots\wedge\gamma}_{k}.
$$

We have
\begin{equation}
\label{eq101}
\upomega^n=\underbrace{\upomega\wedge\ldots\wedge\upomega}_n=n!\, dx_1\wedge dy_1\wedge\ldots\wedge dx_n\wedge dy_n.
\end{equation}
\Cref{T10} and a simple induction argument imply that
\begin{equation}
\label{eq102}
\iota_v(\upomega^k)=k\upomega^{k-1}\wedge\iota_v(\upomega),
\quad
\text{for $v\in\R^{2n}$.}
\end{equation}
\begin{lemma}
\label{T22}
For $1\leq k\leq n$, the {\em Lefschetz operator}
$$
L^k:\Ep^{n-k}(\R^{2n})^*\to \Ep^{n+k}(\R^{2n})^*,
\quad
L^k(\xi)=\xi\wedge\upomega^k
$$
is an isomorphism.
\end{lemma}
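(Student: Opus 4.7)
The plan is to reduce the claim to injectivity of $L^k$ and then exploit the $\mathfrak{sl}_2$-representation structure on $\bigwedge^\bullet(\R^{2n})^*$ carried by the Lefschetz operator and its dual.

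First, I would observe that $\dim \bigwedge^{n-k}(\R^{2n})^* = \binom{2n}{n-k} = \binom{2n}{n+k} = \dim\bigwedge^{n+k}(\R^{2n})^*$, so by linear algebra it suffices to show that $L^k$ is injective. To this end, I would introduce two additional operators: the \emph{dual Lefschetz operator}
$$
\Lambda:\bigwedge\nolimits^p(\R^{2n})^*\to\bigwedge\nolimits^{p-2}(\R^{2n})^*,\qquad \Lambda=\sum_{i=1}^n\iota_{\partial/\partial y_i}\circ\iota_{\partial/\partial x_i},
$$
obtained by contracting with the Poisson bivector dual to $\upomega$, and the \emph{degree operator} $H$ that acts on $\bigwedge^p(\R^{2n})^*$ as multiplication by $p-n$.

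The core step would be to verify the $\mathfrak{sl}_2$-commutation relations
$$
[H,L]=2L,\qquad [H,\Lambda]=-2\Lambda,\qquad [\Lambda,L]=H.
$$
The first two relations are immediate because $L$ raises degree by $2$ and $\Lambda$ lowers degree by $2$, so both commutators with $H$ are forced by bookkeeping the grading. The third relation is the nontrivial one, and I would establish it by evaluating both sides on a basis monomial $dx_I\wedge dy_J$ using the interior-product/wedge derivation identity \Cref{T10}: each of the $n$ summands $\iota_{\partial/\partial y_i}\iota_{\partial/\partial x_i}$ contributes exactly the expected term when one tracks whether $dx_i$ and $dy_i$ appear in $dx_I\wedge dy_J$, and summing over $i$ yields precisely $(|I|+|J|-n)\mathrm{id}$, matching $H$.

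Granting these relations, $\bigwedge^\bullet(\R^{2n})^*$ becomes a finite-dimensional $\mathfrak{sl}_2(\R)$-representation, with $\bigwedge^{p}(\R^{2n})^*$ equal to the weight space of $H$ for the weight $p-n$. By the standard classification of finite-dimensional representations of $\mathfrak{sl}_2$, for each integer $k\geq 0$ the raising operator $L^k$ restricts to an isomorphism from the $(-k)$-weight space onto the $(+k)$-weight space of every irreducible summand containing both, and hence of the whole representation. Since $\bigwedge^{n-k}(\R^{2n})^*$ is the $(-k)$-weight space and $\bigwedge^{n+k}(\R^{2n})^*$ is the $(+k)$-weight space, $L^k$ is an isomorphism between them, completing the proof.

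The main obstacle is the verification of $[\Lambda,L]=H$; it is a careful but routine coordinate computation, made slightly delicate by the signs inherent in the interior product and by the need to pair up each $dx_i$-slot with its companion $dy_i$-slot correctly.
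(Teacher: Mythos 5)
Your proof is correct, but it takes a genuinely different route from the paper. You invoke the Hodge--Lefschetz $\mathfrak{sl}_2$-triple: you introduce the dual Lefschetz operator $\Lambda$ and the degree operator $H$, verify the commutation relations $[H,L]=2L$, $[H,\Lambda]=-2\Lambda$, $[\Lambda,L]=H$, and then deduce the isomorphism from the classification of finite-dimensional $\mathfrak{sl}_2$-representations (the raising operator $L^k$ sends the $(-k)$-weight space isomorphically onto the $(+k)$-weight space). This is precisely the ``standard'' proof that the paper acknowledges in \Cref{R1} (citing Huybrechts) but deliberately sets aside. The paper instead uses a backwards induction due to Calabi: $L^n$ is an isomorphism by the volume-form identity \eqref{eq101}, and the inductive step $L^k$ injective $\Rightarrow$ $L^{k-1}$ injective follows by applying $\iota_v$ to $\xi\wedge\upomega^k=0$ and using the interior-product derivation rule \Cref{T10} together with the identity $\iota_v(\upomega^k)=k\upomega^{k-1}\wedge\iota_v(\upomega)$ from \eqref{eq102}. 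Your approach buys more structure---it yields the full Lefschetz decomposition into primitive forms essentially for free, and makes the hard Lefschetz theorem transparent---but it requires the nontrivial coordinate verification of $[\Lambda,L]=H$ and the representation-theoretic input. The paper's inductive argument is shorter, entirely elementary, and uses only tools (\Cref{T10}, \eqref{eq102}) already in play; both dimension counts and the reduction to injectivity are shared. Either proof is complete; yours just pays a higher upfront cost for a more conceptual payoff.
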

\begin{remark}
\label{R1}
The result is known. Standard proofs involve representation theory of $\mathfrak{sl}(2)$, see \cite[Proposition~1.2.30]{huybrechts}. The proof presented below, due to Calabi, is very elementary, see \cite[Proposition~1.1a]{Bryant} and \cite{mathoverflow1}.
\end{remark}
\begin{proof}
Since the spaces $\Ep^{n-k}(\R^{2n})^*$ and $\Ep^{n+k}(\R^{2n})^*$ have equal dimensions, it suffices to show that $L^k$ is injective. We will use a backwards induction starting with $k=n$. If $k=n$, then \eqref{eq101} implies that
$$
L^n:\Ep^0(\R^{2n})^*=\R\to \Ep^{2n}(\R^{2n})^*
$$
is an isomorphism and hence it is injective. 

Suppose that the result is true for $k$. We need to prove it is true for $k-1$. Thus 
we need to prove that if
$L^{k-1}(\xi)=\xi\wedge\upomega^{k-1}=0$ for some
$\xi\in\Ep^{n-(k-1)}(\R^{2n})^*$, then $\xi=0$.

Clearly, $\xi\wedge\upomega^k=0$. If $v\in\R^{2n}$, then \Cref{T10} and \eqref{eq102} yield
$$
0=\iota_v(\xi\wedge\upomega^k)=
\iota_v(\xi)\wedge\upomega^k\pm \xi\wedge\iota_v(\upomega^k)=
\iota_v(\xi)\wedge\upomega^k\pm k\underbrace{\xi\wedge\upomega^{k-1}}_{0}\wedge\iota_v(\upomega)
$$
so $L^k(\iota_v(\xi))=\iota_v(\xi)\wedge\upomega^k=0$ and hence $\iota_v(\xi)=0$ by the injectivity of $L^k$ (induction hypothesis).
Since $\iota_v(\xi)=0$ for all $v\in\R^{2n}$, it follows that $\xi=0$. The proof is complete.
\end{proof}

\begin{corollary}
\label{T23}
The exterior product with the symplectic form is an isomorphism:
$$
L^1(\cdot)=
\cdot\wedge\upomega:\Ep^{n-1}(\R^{2n})^*\to\Ep^{n+1}(\R^{2n})^*.
$$
\end{corollary}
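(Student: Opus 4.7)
The corollary is essentially an immediate special case of \Cref{T22}. My plan is simply to apply \Cref{T22} with $k=1$. The operator $L^1:\Ep^{n-1}(\R^{2n})^*\to\Ep^{n+1}(\R^{2n})^*$ defined by $L^1(\xi)=\xi\wedge\upomega^1=\xi\wedge\upomega$ matches exactly the statement of \Cref{T22} for the parameter $k=1$, which falls within the admissible range $1\leq k\leq n$ since $n\geq 1$.

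Therefore the proof is a one-line deduction: by \Cref{T22} applied with $k=1$, $L^1$ is an isomorphism. There is no independent argument to give, and no obstacle of any kind, since the work has already been done in the lemma (via the Calabi-style backwards induction from $k=n$ down to $k=1$, which precisely covers the case we need).

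If desired, I might remark that this corollary is what gets directly used later in the paper (e.g., in the decomposition \eqref{eq186} of forms $\kappa\in\Omega^k(\R^{2n+1})$ with $n+1\leq k\leq 2n+1$ into pieces involving the contact form $\upalpha$, since near the top degree one needs precisely the surjectivity of wedging with $\upomega$ once); stating it separately highlights the key case of the Lefschetz isomorphism that the later sections invoke. But no additional argument is needed beyond the explicit specialization $k=1$ in \Cref{T22}.
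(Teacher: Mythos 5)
Your proof is correct and matches the paper exactly: Corollary~\ref{T23} is stated as an immediate specialization of Lemma~\ref{T22} to $k=1$, with no separate argument given. Nothing further is needed.
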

\begin{definition}
\label{def:alpha}
The {\em standard contact form} on $\R^{2n+1}$ with coordinates $(x_1,y_1,\ldots,x_n,y_n,t)$ is
$$
\upalpha=dt+2\sum_{j=1}^n(x_jdy_j-y_jdx_j)\in \Omega^1(\R^{2n+1}).
$$
\end{definition}
\begin{remark}
In what follows $\upalpha$ will always stand for the standard contact form.
\end{remark}

Coordinates in $\R^{2n}$ and $\R^{2n+1}$ will be denoted by
$$
(x_1,y_1,\ldots,x_n,y_n)
\quad
\text{and}
\quad
(x_1,y_1,\ldots,x_n,y_n,t)
$$
respectively so
\begin{equation}
\label{eq103}
\{dx_I\wedge dy_J:\, |I|+|J|=k\}
\end{equation}
is a basis of $\Ep^k(\R^{2n})^*$ and
\begin{equation}
\label{eq104}
\{dx_I\wedge dy_J,\ dx_{I'}\wedge dy_{J'}\wedge dt:\, |I|+|J|=k,\ |I'|+|J'|=k-1\}
\end{equation}
is a basis of $\Ep^k(\R^{2n+1})^*$.

We have a natural embedding 
\begin{equation}
\label{eq105}
\Ep^k(\R^{2n})^*\subset\Ep^{k}(\R^{2n+1})^*
\end{equation} 
which maps the basis \eqref{eq103} onto a subset of the basis \eqref{eq104} by the identity map. With this embedding elements of $\Ep^k(\R^{2n})^*$ are regarded as elements of $\Ep^{k}(\R^{2n+1})^*$ that do not contain $dt$.

In particular, at every point $p\in\R^{2n+1}$ we can identify
$$
d\upalpha(p)=4\sum_{i=1}^n dx_i\wedge dy_i\in \Ep^2(\R^{2n+1})^*
\quad
\text{with}
\quad
4\upomega\in \Ep^2(\R^{2n})^*\subset \Ep^2(\R^{2n+1})^*.
$$

\begin{proposition}
\label{T48}
If $\kappa\in \Omega^k(\R^{2n+1})$, $n+1\leq k\leq 2n$, then there are smooth forms
$$
\beta\in \Omega^{k-1}(\R^{2n+1})
\quad
\text{and}
\quad
\gamma\in \Omega^{2n-k}(\R^{2n+1})
$$
that do not contain components with $dt$, and satisfy
$$
\kappa=\beta\wedge\upalpha+\gamma\wedge(d\upalpha)^{k-n}.
$$
Moreover, if $\kappa(p)=0$, then $\beta(p)=0$ and $\gamma(p)=0$.
\end{proposition}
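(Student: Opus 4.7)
The plan is to first decompose $\kappa$ into its $dt$-free and $dt$-containing parts, convert the $dt$ into $\upalpha$ using the defining formula for $\upalpha$, and then apply the Lefschetz isomorphism (\Cref{T22}) pointwise to the remaining $dt$-free piece in order to extract $\gamma$.

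First I would use the basis \eqref{eq104} to write $\kappa$ uniquely as $\kappa = \eta_1 + \eta_2 \wedge dt$, where $\eta_1 \in \Omega^k(\R^{2n+1})$ and $\eta_2 \in \Omega^{k-1}(\R^{2n+1})$ are the smooth forms collecting, respectively, the coefficients of basis elements of type $dx_I \wedge dy_J$ (with $|I|+|J|=k$) and of type $dx_{I'} \wedge dy_{J'}$ (with $|I'|+|J'|=k-1$, after stripping off $dt$). By construction neither $\eta_1$ nor $\eta_2$ contains $dt$, and this decomposition is pointwise unique.

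Next, substitute $dt = \upalpha - \theta$ with $\theta := 2\sum_j(x_j\, dy_j - y_j\, dx_j)$, which is a smooth $dt$-free $1$-form. This yields
$$\kappa = (\eta_1 - \eta_2 \wedge \theta) + \eta_2 \wedge \upalpha.$$
Set $\beta := \eta_2$ and $\tilde\eta := \eta_1 - \eta_2 \wedge \theta$, so that $\tilde\eta$ is a smooth $dt$-free $k$-form and $\kappa = \tilde\eta + \beta \wedge \upalpha$.

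To extract $\gamma$, observe that via the embedding \eqref{eq105} we have $\tilde\eta(p) \in \Ep^k(\R^{2n})^*$ at every point $p$. Setting $m := k-n$, so that $1 \leq m \leq n$, \Cref{T22} provides a linear isomorphism $L^m : \Ep^{2n-k}(\R^{2n})^* \to \Ep^k(\R^{2n})^*$ given by $L^m(\xi) = \xi \wedge \upomega^m$. Define $\tilde\gamma(p) := (L^m)^{-1}(\tilde\eta(p))$. Because $(L^m)^{-1}$ is a fixed linear isomorphism acting on the smoothly varying coefficients of $\tilde\eta$, the resulting $\tilde\gamma$ is smooth and $dt$-free, and satisfies $\tilde\eta = \tilde\gamma \wedge \upomega^{k-n}$. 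Since $d\upalpha = 4\upomega$, we have $(d\upalpha)^{k-n} = 4^{k-n}\upomega^{k-n}$, so $\gamma := 4^{-(k-n)}\tilde\gamma$ gives the decomposition $\kappa = \beta \wedge \upalpha + \gamma \wedge (d\upalpha)^{k-n}$.

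For the \emph{moreover} clause, pointwise uniqueness of $\kappa = \eta_1 + \eta_2 \wedge dt$ forces $\eta_1(p) = \eta_2(p) = 0$ whenever $\kappa(p) = 0$, so $\beta(p) = 0$ and also $\tilde\eta(p) = 0$; then the bijectivity of $L^m$ gives $\gamma(p) = 0$. The argument is essentially bookkeeping; the only mild point of care is tracking which forms live in $\Ep^k(\R^{2n})^*$ versus $\Ep^k(\R^{2n+1})^*$ so that \Cref{T22}, stated for $\R^{2n}$, applies to the $dt$-free piece $\tilde\eta$.
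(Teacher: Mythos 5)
Your proof is correct and follows essentially the same route as the paper: both reduce to the unique pointwise decomposition of $\kappa$ into a $dt$-free part plus a part containing $\upalpha$, then apply the Lefschetz isomorphism from \Cref{T22} to the $dt$-free piece. The only cosmetic difference is that you arrive at that decomposition by writing $\kappa$ in the standard basis and then substituting $dt = \upalpha - \theta$, whereas the paper works directly in the pointwise basis $\{dx_1,dy_1,\ldots,dx_n,dy_n,\upalpha(p)\}$; these are the same change of basis expressed two ways.
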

\begin{proof}
Since
$$
dx_1\wedge dy_1\wedge\ldots\wedge dx_n\wedge dy_n\wedge \upalpha(p)=
dx_1\wedge dy_1\wedge\ldots\wedge dx_n\wedge dy_n\wedge dt\neq 0,
$$
we conclude that for every $p\in\R^{2n+1}$,
$\{dx_1,dy_1,\ldots,dx_n,dy_n,\upalpha(p)\}$
is a basis of $T^*_p\R^{2n+1}=\Ep^1 T^*_p\R^{2n+1}$.
This basis induces a basis of $\Ep^k T_p^*\R^{2n+1}$ consisting of the elements of the form
$$
dx_I\wedge dy_J  
\qquad
\text{and}
\qquad
dx_{I'}\wedge dy_{J'}\wedge\upalpha(p),  
$$
where $|I|+|J|=k$ and $|I'|+|J'|=k-1$ so we have a unique decomposition
$$
\kappa(p)=\beta(p)\wedge \upalpha(p)+\delta(p),
$$
where 
$\beta(p)$ belongs to the span of $dx_{I'}\wedge dy_{J'}$, $|I'|+|J'|=k-1$, and
$\delta(p)$, belongs to the span of elements $dx_I\wedge dy_J$, $|I|+|J|=k$, that do not contain $dt$. 
Since the decomposition is unique, if $\kappa(p)=0$, then $\beta(p)=0$ and $\delta(p)=0$. This and Lemma~\ref{T22} will also imply that $\gamma(p)=0$.

Since $dx_I\wedge dy_J$, $|I|+|J|=k$ is a basis of 
$\Ep^k(\R^{2n})^*\subset\Ep^k(\R^{2n+1})^*$ (see \eqref{eq105}) and coefficients of $\delta(p)$ smoothly depend on $p$, we have that
$$
\delta\in C^\infty\left(\R^{2n+1},\Ep^k(\R^{2n})^*\right)\subset
C^\infty\left(\R^{2n+1},\Ep^k(\R^{2n+1})^*\right)=\Omega^k(\R^{2n+1}).
$$
According to \Cref{T22},
$$
L^{k-n}:\Ep^{2n-k}(\R^{2n})^*\to\Ep^k(\R^{2n})^*
\quad
\text{and hence}
\quad
(L^{k-n})^{-1}:\Ep^k(\R^{2n})^*\to\Ep^{2n-k}(\R^{2n})^*
$$
are isomorphisms. Therefore,
$$
\gamma:=4^{n-k}(L^{k-n})^{-1}\circ\delta\in
C^\infty\left(\R^{2n+1},\Ep^{2n-k}(\R^{2n})^*\right)\subset
\Omega^{2n-k}(\R^{2n+1})
$$
is a smooth form that belongs to the span of $dx_I\wedge dy_J$, $|I|+|J|=2n-k$.
Now the definition of the Lefschetz operator yields
$$
\delta=4^{k-n}L^{k-n}\circ\gamma=4^{k-n}\gamma\wedge\upomega^{k-n}=\gamma\wedge (d\upalpha)^{k-n}.
$$
In the last equality we used the identification of 
$$
4^{k-n}\upomega^{k-n}\in \Ep^{2k-2n}(\R^{2n})^*\subset \Ep^{2k-2n}(\R^{2n+1})^*
\quad
\text{with}
\quad
(d\upalpha)^{k-n}\in \Ep^{2k-2n}(\R^{2n+1})^*.
$$
The proof is complete.
\end{proof}
\begin{corollary}
\label{T49}
If $\kappa\in \Omega^k(\R^{2n+1})$, $n+1\leq k\leq 2n+1$, then there are smooth differential forms
$$
\beta,\beta'\in \Omega^{k-1}(\R^{2n+1})
\quad
\text{and}
\quad
\gamma,\gamma'\in \Omega^{k-2}(\R^{2n+1}) 
$$
such that
$$
\kappa=\beta\wedge\upalpha+\gamma\wedge d\upalpha=
\beta'\wedge\upalpha+d(\gamma'\wedge\upalpha).
$$
Moreover, if $\kappa(p)=0$, then $\beta(p)=0$ and $\gamma(p)=0$ and if $\kappa=0$ in an open set $U$, then $\beta'=0$ and $\gamma'=0$ in $U$.
\end{corollary}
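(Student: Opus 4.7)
The plan is to derive both decompositions directly from \Cref{T48} (applied when $n+1\le k\le 2n$), handle the top-degree case $k=2n+1$ by a short direct computation, and then convert the first decomposition into the second using the Leibniz rule \eqref{eq:dproductrule}.

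For $n+1\le k\le 2n$, I would start by invoking \Cref{T48} to write
\[
 \kappa=\beta\wedge\upalpha+\tilde\gamma\wedge(d\upalpha)^{k-n},
\]
with $\beta\in\Omega^{k-1}(\R^{2n+1})$ and $\tilde\gamma\in\Omega^{2n-k}(\R^{2n+1})$ containing no $dt$. Since $k-n\ge 1$ I can peel off one factor of $d\upalpha$ and define $\gamma:=\tilde\gamma\wedge(d\upalpha)^{k-n-1}$, which has degree $(2n-k)+2(k-n-1)=k-2$, so that $\kappa=\beta\wedge\upalpha+\gamma\wedge d\upalpha$. For the remaining case $k=2n+1$ the form is top-degree, hence equals $f\,dx_1\wedge dy_1\wedge\ldots\wedge dx_n\wedge dy_n\wedge dt$ for some $f\in C^\infty(\R^{2n+1})$; since wedging $dx_1\wedge dy_1\wedge\ldots\wedge dx_n\wedge dy_n$ with $\upalpha$ kills the $(x_j\,dy_j-y_j\,dx_j)$ contributions (each $dx_j$ and $dy_j$ already appears), I just take $\beta:=f\,dx_1\wedge dy_1\wedge\ldots\wedge dx_n\wedge dy_n$ and $\gamma:=0$.

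To pass from the first decomposition to the second I would use \eqref{eq:dproductrule}: for any $\gamma'\in\Omega^{k-2}(\R^{2n+1})$,
\[
 d(\gamma'\wedge\upalpha)=d\gamma'\wedge\upalpha+(-1)^{k-2}\gamma'\wedge d\upalpha=d\gamma'\wedge\upalpha+(-1)^{k}\gamma'\wedge d\upalpha,
\]
so setting $\gamma':=(-1)^k\gamma$ and $\beta':=\beta-(-1)^k d\gamma$ gives
\[
 \beta'\wedge\upalpha+d(\gamma'\wedge\upalpha)=(\beta-(-1)^kd\gamma)\wedge\upalpha+d\gamma'\wedge\upalpha+\gamma\wedge d\upalpha=\beta\wedge\upalpha+\gamma\wedge d\upalpha=\kappa,
\]
as required. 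Both $\beta'$ and $\gamma'$ are automatically smooth of the stated degrees.

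Finally I would verify the vanishing clauses. For the pointwise claim about $(\beta,\gamma)$: \Cref{T48} guarantees $\beta(p)=0$ and $\tilde\gamma(p)=0$ whenever $\kappa(p)=0$, and since $\gamma(p)=\tilde\gamma(p)\wedge(d\upalpha)^{k-n-1}(p)$, evaluation at $p$ yields $\gamma(p)=0$ (the $k=2n+1$ case is trivial with $\gamma\equiv 0$). For the open-set claim about $(\beta',\gamma')$: if $\kappa\equiv 0$ on $U$, the pointwise statement applied at every $p\in U$ gives $\beta\equiv 0$ and $\gamma\equiv 0$ on $U$, hence $d\gamma\equiv 0$ on $U$, so $\gamma'=(-1)^k\gamma$ and $\beta'=\beta-(-1)^k d\gamma$ both vanish on $U$. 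The only subtle point—and the one place one must resist claiming pointwise vanishing for $\beta'$—is that $\beta'$ involves $d\gamma$, which is why an open neighborhood of zeros of $\kappa$ is needed for the second decomposition while a single point suffices for the first.
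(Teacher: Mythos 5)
Your proposal is correct and follows essentially the same route as the paper: invoke \Cref{T48} for $n+1\le k\le 2n$ and peel off one $d\upalpha$, treat $k=2n+1$ by a direct computation, and use the Leibniz rule to convert the first decomposition into the second. The only (cosmetic) difference is in the case $k=2n+1$: you put everything into the $\beta\wedge\upalpha$ term with $\gamma=0$ (noting $dx_1\wedge dy_1\wedge\cdots\wedge dx_n\wedge dy_n\wedge\upalpha=dx_1\wedge dy_1\wedge\cdots\wedge dx_n\wedge dy_n\wedge dt$), whereas the paper sets $\beta=0$ and places the content in $\gamma\wedge d\upalpha$ with $\gamma=f\,\upalpha\wedge(d\upalpha)^{n-1}$; both choices satisfy the required vanishing clause, and your remark about why the $\beta'$-clause requires an open neighborhood rather than a single point is exactly the right observation.
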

\begin{proof}
If $n+1\leq k\leq 2n$, then \Cref{T48} yields
$$
\kappa=\beta\wedge\upalpha +\tilde{\gamma}\wedge (d\upalpha)^{k-n} =
\beta\wedge\upalpha + \underbrace{(\tilde{\gamma}\wedge (d\upalpha)^{k-n-1})}_{\gamma}\wedge d\upalpha.
$$
If $k=2n+1$, then $\kappa$ is proportional to the volume form $(d\upalpha)^n\wedge \upalpha$ so there is a smooth function $f$ such that
$$
\kappa=f(d\upalpha)^n\wedge\upalpha=
0\wedge\upalpha +\underbrace{f\upalpha\wedge(d\upalpha)^{n-1}}_{\gamma}\wedge d\upalpha.
$$
This proves existence of the decomposition
$\kappa=\beta\wedge\upalpha+\gamma\wedge d\upalpha$ which also yields
$$
\kappa=\big(\beta-(-1)^{k-2}d\gamma\big)\wedge\upalpha+d\big((-1)^{k-2}\gamma\wedge\upalpha\big):=
\beta'\wedge\upalpha+d(\gamma'\wedge\upalpha).
$$
The proof is complete.
\end{proof}

\subsection{The space of H\"older continuous functions}
\label{Holder}
In this section we collect basic properties of spaces of H\"older continuous functions.
While all results presented here are known, we decided to provide details, because
some of the results are difficult to find and, secondly, 
good understanding of the material presented here plays an important role throughout the paper.
\begin{definition}
\label{d1}
Let $X$ and $Y$ be metric spaces and $0<\gamma\leq 1$. 
A mapping $f:X\to Y$ is $\gamma$-H\"older continuous if
\begin{equation}
\label{eq122}
[f]_{\gamma}=[f]_{C^{0,\gamma}}:=\sup\left\{\frac{d_Y(f(x),f(y))}{d_X(x,y)^\gamma}:\, x\neq y\right\}<\infty.
\end{equation}
The space of all $\gamma$-H\"older continuous mappings $f:X\to Y$ will be denoted by $C^{0,\gamma}(X;Y)$. 

$C^{0,\gamma}_{\rm b}(X;Y)$ is the space of bounded $\gamma$-H\"older continuous mappings i.e., the image of every mapping is contained in a ball in $Y$.

The space $C^{0,\gamma+}(X;Y)$ is a space of mappings $f\in C^{0,\gamma}(X;Y)$ such that for every compact set $K\subset X$ we have
$$
\lim_{t\to 0^+}\ \sup\left\{\frac{d_Y(f(x),f(y))}{d_X(x,y)^\gamma}:\ x,y\in K,\ 0<d_X(x,y)\leq t\right\}= 0.
$$

Fix $A\subset X$. Then for $\eps>0$, we define
\begin{equation}
\label{eq129}
[f]_{\gamma,\eps,A}:=\sup\left\{\frac{d_Y(f(x),f(y))}{d_X(x,y)^\gamma}:\, x,y\in A, \ 0<d_X(x,y)< \eps\right\},
\end{equation}
and we set $[f]_{\gamma,\eps}:=[f]_{\gamma,\eps,X}$.

Clearly $[f]_{\gamma,\eps,A}\leq [f]_{\gamma,\infty,X}=[f]_{\gamma,\infty}=[f]_{\gamma}$. 
Observe also that 
$\lim_{\eps\to 0} \, [f]_{\gamma,\eps,A}=0$
if  $f\in C^{0,\gamma+}(X;Y)$, and  $A\Subset X$ is a relatively compact subset of $X$.

Note that
$C_{\rm b}^{0,\gamma}(X;\R^m)$ is a Banach space with respect to the norm
$$
\Vert f\Vert_{C^{0,\gamma}}:=\Vert f\Vert_\infty+[f]_{\gamma}.
$$
We equip $C_{\rm b}^{0,\gamma+}(X;\R^m)$ with the norm of $C_{\rm b}^{0,\gamma}(X;\R^m)$.
In the special case when $m=1$ i.e., when $Y=\R$, we use notation $C^{0,\gamma}(X)$, $C^{0,\gamma+}(X)$, $C_{\rm b}^{0,\gamma}(X)$, $C_{\rm b}^{0,\gamma+}(X)$.
\end{definition}

The space $C_{\rm b}^{0,\gamma}(X)$ is a real Banach algebra with respect to the pointwise multiplication. Namely we have:
\begin{lemma}
\label{T27}
$C_{\rm b}^{0,\gamma}(X)$ is a real Banach space. Moreover if $f,g\in C_{\rm b}^{0,\gamma}(X)$, then
$fg\in C_{\rm b}^{0,\gamma}(X)$ and
$\Vert fg\Vert_{C^{0,\gamma}}\leq \Vert f\Vert_{C^{0,\gamma}}\Vert g\Vert_{C^{0,\gamma}}$.
\end{lemma}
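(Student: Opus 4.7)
The plan is to handle the two assertions separately: first that $C_{\rm b}^{0,\gamma}(X)$ is a Banach space, and then that it is closed under pointwise products with the stated submultiplicative estimate. Neither step requires new ideas beyond standard normed-space bookkeeping, but one has to be careful to propagate the Hölder seminorm through a limit.

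For completeness, I would take a Cauchy sequence $\{f_n\}\subset C_{\rm b}^{0,\gamma}(X)$ with respect to $\Vert\cdot\Vert_{C^{0,\gamma}}=\Vert\cdot\Vert_\infty+[\cdot]_\gamma$. Since $\Vert f_n-f_m\Vert_\infty\leq \Vert f_n-f_m\Vert_{C^{0,\gamma}}$, the sequence is Cauchy in the sup norm, hence converges uniformly to some bounded function $f:X\to\R$. To see that $f\in C_{\rm b}^{0,\gamma}(X)$ and that convergence also holds in the Hölder seminorm, I would fix $x\neq y$, apply
\[
\frac{|(f_n-f_m)(x)-(f_n-f_m)(y)|}{d_X(x,y)^\gamma}\leq [f_n-f_m]_\gamma,
\]
and let $m\to\infty$ using pointwise convergence, which yields $[f_n-f]_\gamma\leq \liminf_{m\to\infty}[f_n-f_m]_\gamma$. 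In particular $[f]_\gamma\leq [f_{n_0}]_\gamma+[f-f_{n_0}]_\gamma<\infty$ for any sufficiently large $n_0$, and the right-hand side of the displayed inequality tends to $0$ as $n\to\infty$ by the Cauchy property, giving $\Vert f_n-f\Vert_{C^{0,\gamma}}\to 0$. This passage to the limit in the seminorm is the only point that is not entirely automatic; everything else is formal.

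For the Banach algebra estimate, I would start from the pointwise identity $f(x)g(x)-f(y)g(y)=f(x)\bigl(g(x)-g(y)\bigr)+g(y)\bigl(f(x)-f(y)\bigr)$, divide by $d_X(x,y)^\gamma$, and take sup to get
\[
[fg]_\gamma\leq \Vert f\Vert_\infty[g]_\gamma+\Vert g\Vert_\infty[f]_\gamma.
\]
Combining with the trivial bound $\Vert fg\Vert_\infty\leq \Vert f\Vert_\infty\Vert g\Vert_\infty$, I obtain
\[
\Vert fg\Vert_{C^{0,\gamma}}\leq \Vert f\Vert_\infty\bigl(\Vert g\Vert_\infty+[g]_\gamma\bigr)+[f]_\gamma\Vert g\Vert_\infty\leq \bigl(\Vert f\Vert_\infty+[f]_\gamma\bigr)\Vert g\Vert_{C^{0,\gamma}},
\]
which is the desired submultiplicative bound.

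Honestly, there is no serious obstacle here: the lemma is a template result for any pointwise-defined Hölder function space. The only place where a careless treatment could fail is the step in which one recovers $[f]_\gamma<\infty$ and $[f_n-f]_\gamma\to 0$ from the Cauchy condition; the correct move is to freeze $x,y$ and let $m\to\infty$ inside the difference quotient, using only pointwise convergence, rather than trying to interchange a limit with the supremum over $(x,y)$.
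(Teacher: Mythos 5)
Your proof is correct and follows essentially the same route as the paper: the algebra estimate is obtained from the identical pointwise splitting $f(x)g(x)-f(y)g(y)=f(x)(g(x)-g(y))+g(y)(f(x)-f(y))$, and the completeness argument you supply is the standard one the paper declares "easy and left to the reader."
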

\begin{proof}
The proof that $C_{\rm b}^{0,\gamma}(X)$ is a Banach space is easy and left to the reader. Since
$|(fg)(x)-(fg)(y)|\leq\Vert f\Vert_\infty|g(x)-g(y)|+|f(x)-f(y)|\Vert g\Vert_\infty$, it follows that
$$
[fg]_{\gamma}\leq \Vert f\Vert_\infty [g]_{\gamma}+[f]_{\gamma}\Vert g\Vert_\infty.
$$
Therefore
$$
\Vert fg\Vert_{C^{0,\gamma}}\leq \Vert f\Vert_\infty\Vert g\Vert_\infty +
\Vert f\Vert_\infty[g]_{\gamma}+[f]_{\gamma}\Vert g\Vert_\infty\leq \Vert f\Vert_{C^{0,\gamma}}\Vert g\Vert_{C^{0,\gamma}}.
$$
\end{proof}
\begin{corollary}
\label{T28}
If $f_\eps\to f$ and $g_\eps\to g$ in $C_{\rm b}^{0,\gamma}(X)$ as $\eps\to 0$, then $f_\eps g_\eps\to fg$ in $C_{\rm b}^{0,\gamma}(X)$ as $\eps\to 0$.
\end{corollary}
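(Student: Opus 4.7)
The plan is to deduce this immediately from the Banach algebra estimate in \Cref{T27} via the standard telescoping trick used to prove continuity of multiplication in any normed algebra. The key observation is simply that convergent sequences in a normed space are bounded, so $\|g_\eps\|_{C^{0,\gamma}}$ stays bounded as $\eps\to 0$.

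First I would write the telescoping identity
$$
f_\eps g_\eps - fg = (f_\eps - f) g_\eps + f(g_\eps - g).
$$
Then I would apply \Cref{T27} to each term, obtaining
$$
\|f_\eps g_\eps - fg\|_{C^{0,\gamma}} \leq \|f_\eps - f\|_{C^{0,\gamma}}\, \|g_\eps\|_{C^{0,\gamma}} + \|f\|_{C^{0,\gamma}}\, \|g_\eps - g\|_{C^{0,\gamma}}.
$$
Since $g_\eps\to g$ in $C_{\rm b}^{0,\gamma}(X)$, there is a constant $M$ such that $\|g_\eps\|_{C^{0,\gamma}}\leq M$ for all $\eps$. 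Combined with $\|f_\eps-f\|_{C^{0,\gamma}}\to 0$ and $\|g_\eps-g\|_{C^{0,\gamma}}\to 0$, the right hand side tends to zero, which gives the conclusion.

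There is no main obstacle here: this is the standard argument showing that multiplication is continuous in a Banach algebra, and the only substantive input is the submultiplicative estimate provided by \Cref{T27}. I would present it as a short paragraph without further commentary.
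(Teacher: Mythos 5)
Your proof is correct and is essentially the same as the paper's: both use a telescoping decomposition and the submultiplicativity from \Cref{T27}, the only (cosmetic) difference being that you write $f_\eps g_\eps - fg = (f_\eps-f)g_\eps + f(g_\eps-g)$ whereas the paper writes $f_\eps(g_\eps-g)+(f_\eps-f)g$. Your explicit remark that convergent sequences are bounded is a nice touch, but the argument is otherwise identical.
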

\begin{proof}
\Cref{T27} yields    
$\Vert f_\eps g_\eps-fg\Vert_{C^{0,\gamma}}\leq \Vert f_\eps\Vert_{C^{0,\gamma}}\Vert g_\eps-g\Vert_{C^{0,\gamma}}+\Vert f_\eps-f\Vert_{C^{0,\gamma}}\Vert g\Vert_{C^{0,\gamma}}$ and the right hand side converges to $0$ as $\eps\to 0$. 
\end{proof}
\begin{proposition}
\label{T80}
Let $X$ be a metric space. If $f,g\in C_{\rm b}^{0,\gamma}(X;\R^m)$ and $u\in C^{0,\alpha}(\R^m)$, $\alpha,\gamma\in (0,1]$, then
$$
u\circ f\in C^{0,\alpha\gamma}_{\rm b}(X).
$$
Moreover, for any $0<\beta<\alpha$ we have
\begin{equation}
\label{eq83}
\Vert u\circ f-u\circ g\Vert_{C^{0,\beta\gamma}}\lesssim
[u]_{\alpha}\left(\Vert f\Vert_{C^{0,\gamma}}+\Vert g\Vert_{C^{0,\gamma}}\right)^{\beta}
\Vert f-g\Vert_\infty^{\alpha-\beta},
\end{equation}
where the implied constant depends on $\alpha$, $\beta$ and $\gamma$ only.

In particular, the composition operator
\begin{equation}
\label{eq88}
C_{\rm b}^{0,\gamma}(X;\R^m)\ni f\mapsto u\circ f\in C^{0,\beta\gamma}_{\rm b}(X)
\end{equation}
is continuous.
\end{proposition}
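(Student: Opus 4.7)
The plan is to first establish the quantitative estimate \eqref{eq83}, from which all other assertions follow. The key idea is to interpolate between two elementary bounds for the second difference
\[
\Delta(x,y) := \bigl(u\circ f - u\circ g\bigr)(x) - \bigl(u\circ f - u\circ g\bigr)(y).
\]
On the one hand, applying $\alpha$-H\"olderness of $u$ to each bracket separately yields
\[
|\Delta(x,y)|\leq 2[u]_\alpha \Vert f-g\Vert_\infty^\alpha,
\]
a bound with no cancellation in $d_X(x,y)$. On the other hand, regrouping $\Delta(x,y) = (u(f(x))-u(f(y))) - (u(g(x))-u(g(y)))$ and combining $\alpha$-H\"olderness of $u$ with $\gamma$-H\"olderness of $f$ and $g$ gives
\[
|\Delta(x,y)|\leq [u]_\alpha\bigl([f]_\gamma^\alpha + [g]_\gamma^\alpha\bigr)\, d_X(x,y)^{\alpha\gamma},
\]
which uses no smallness of $\Vert f-g\Vert_\infty$. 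The two bounds are complementary and $\beta$ plays the role of the interpolation parameter.

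Since $0<\beta<\alpha$, I will write $|\Delta| = |\Delta|^{\beta/\alpha}\cdot|\Delta|^{1-\beta/\alpha}$, estimate the first factor by the $d_X$-decay bound and the second factor by the $\Vert f-g\Vert_\infty$-decay bound, and multiply. After absorbing the trivial inequality $\bigl([f]_\gamma^\alpha + [g]_\gamma^\alpha\bigr)^{\beta/\alpha}\lesssim (\Vert f\Vert_{C^{0,\gamma}} + \Vert g\Vert_{C^{0,\gamma}})^\beta$, this produces exactly the exponents claimed in \eqref{eq83}: $\beta\gamma$ on $d_X(x,y)$, $\beta$ on $\Vert f\Vert_{C^{0,\gamma}}+\Vert g\Vert_{C^{0,\gamma}}$, and $\alpha-\beta$ on $\Vert f-g\Vert_\infty$, with overall factor $[u]_\alpha$. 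This gives the seminorm part of $\Vert u\circ f-u\circ g\Vert_{C^{0,\beta\gamma}}$. For the $L^\infty$ part I would use $\Vert u\circ f-u\circ g\Vert_\infty\leq [u]_\alpha \Vert f-g\Vert_\infty^\alpha$, split $\Vert f-g\Vert_\infty^\alpha = \Vert f-g\Vert_\infty^{\alpha-\beta}\cdot \Vert f-g\Vert_\infty^\beta$, and bound the last factor by $(\Vert f\Vert_\infty+\Vert g\Vert_\infty)^\beta$.

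Once \eqref{eq83} is established, the remaining claims are immediate. That $u\circ f\in C_{\rm b}^{0,\alpha\gamma}(X)$ I would verify directly: boundedness because $f(X)$ lies in some ball $B\subset \R^m$ on which $u$ is bounded (fix any reference point and use $\alpha$-H\"olderness), and the seminorm bound $[u\circ f]_{\alpha\gamma}\leq [u]_\alpha[f]_\gamma^\alpha$ is straightforward. Continuity of the composition operator \eqref{eq88} follows at once from \eqref{eq83} applied with $g=f_k$: if $f_k\to f$ in $C_{\rm b}^{0,\gamma}(X;\R^m)$, then $\Vert f_k\Vert_{C^{0,\gamma}}$ stays bounded while $\Vert f_k-f\Vert_\infty\to 0$, so the right-hand side tends to zero. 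No step appears to be a genuine obstacle; the only delicate point is picking the correct interpolation exponent $\theta=\beta/\alpha$, and the strict inequality $\beta<\alpha$ is used exactly to keep the exponent $\alpha-\beta$ of $\Vert f-g\Vert_\infty$ positive, which is also precisely what makes the continuity statement work.
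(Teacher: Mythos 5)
Your proposal is correct, and the underlying inputs are the same as the paper's: the bound $|\Delta|\leq 2[u]_\alpha\Vert f-g\Vert_\infty^\alpha$ and the bound $|\Delta|\leq[u]_\alpha([f]_\gamma^\alpha+[g]_\gamma^\alpha)d(x,y)^{\alpha\gamma}$. The difference is how you combine them. The paper picks a threshold $A>0$, estimates $\Delta$ by the second bound (with an extra factor $A^{(\alpha-\beta)\gamma}$) when $d(x,y)<A$ and by the first bound (with an extra factor $A^{-\beta\gamma}$) when $d(x,y)\geq A$, and then optimizes over $A$ to make the two regimes balance; this yields $A=\bigl(\Vert f-g\Vert_\infty/([f]_\gamma+[g]_\gamma)\bigr)^{1/\gamma}$. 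You instead take the geometric mean $|\Delta|=|\Delta|^{\beta/\alpha}|\Delta|^{1-\beta/\alpha}$ directly and plug in the two bounds for the two factors, avoiding the case split and the explicit computation of the optimal threshold. The two arguments are mathematically equivalent — optimizing a max of a power function over a threshold produces exactly the geometric mean — and give the same exponents and the same dependence of the implicit constant on $\alpha,\beta,\gamma$. Your version is slightly more compact; the paper's version makes the optimization step visible, which can be pedagogically clearer. Both handle the zeroth-order term $\Vert u\circ f - u\circ g\Vert_\infty$ the same way, and your reduction of the final continuity statement to \eqref{eq83} with $g=f_k$ is exactly the intended one.
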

\begin{remark}
In fact, we prove a sightly stronger estimate than \eqref{eq83},
$$
\Vert u\circ f-u\circ g\Vert_{C^{0,\beta\gamma}}\lesssim
[u]_{\alpha}\left([f]_\gamma+[g]_\gamma+\Vert f-g\Vert_\infty\right)^{\beta}
\Vert f-g\Vert_\infty^{\alpha-\beta}.
$$
\end{remark}
\begin{proof}
The fact that $u\circ f\in C^{0,\alpha\gamma}_{\rm b}(X)$ follows from  the observation that $u\circ f$ is bounded in the supremum norm (because the image of $f$ is bounded and $u$ is continuous) and the easy estimate
$$
[u\circ f]_{\alpha\gamma}\leq
[u]_{\alpha}[f]_{\gamma}^\alpha.
$$
Clearly, we have
\begin{equation}
\label{eq84}
\Vert u\circ f- u\circ g\Vert_\infty\leq [u]_{\alpha}\Vert f-g\Vert_\infty^\alpha.
\end{equation}
Now, we will estimate $[u\circ f-u\circ g]_{\beta\gamma}$. Let $A>0$. The value of $A$ will be determined later.

If $d(x,y)<A$, then
\begin{equation}
\label{eq85}
\begin{split}
&
\big|\big((u\circ f)-(u\circ g)\big)(x)-\big((u\circ f)-(u\circ g)\big)(y)\big|\\
&\leq
|(u\circ f)(x)-(u\circ f)(y)|+|(u\circ g)(x)-(u\circ g)(y)|\\
&\leq
\left([u\circ f]_{\alpha\gamma}+[u\circ g]_{\alpha\gamma}\right)d(x,y)^{\alpha\gamma}\\
&\lesssim
[u]_{\alpha}\left([f]_{\gamma}+[g]_{\gamma}\right)^\alpha
d(x,y)^{(\alpha-\beta)\gamma}d(x,y)^{\beta\gamma}\\
&\leq
[u]_{\alpha}\left([f]_{\gamma}+[g]_{\gamma}\right)^\alpha
A^{(\alpha-\beta)\gamma}d(x,y)^{\beta\gamma}.
\end{split}
\end{equation}
If $d(x,y)\geq A$, then
\begin{equation}
\label{eq86}
\begin{split}
&
\big|\big((u\circ f)-(u\circ g)\big)(x)-\big((u\circ f)-(u\circ g)\big)(y)\big|\\
&\leq
2\Vert u\circ f-u\circ g\Vert_\infty\leq 2[u]_{\alpha}\Vert f-g\Vert_\infty^\alpha d(x,y)^{-\beta\gamma}d(x,y)^{\beta\gamma}\\
&\lesssim
[u]_{\alpha}\Vert f-g\Vert_\infty^\alpha A^{-\beta\gamma}d(x,y)^{\beta\gamma}.
\end{split}
\end{equation}
Now we choose $A>0$ so that the right hand sides on \eqref{eq85} and \eqref{eq86} are equal i.e., 
$$
A=\left(\frac{\Vert f-g\Vert_\infty}{[f]_{\gamma}+[g]_{\gamma}}\right)^{\frac{1}{\gamma}}.
$$
This yields
\begin{equation}
\label{eq87}
[u\circ f-u\circ g]_{\beta\gamma}\leq 
[u]_{\alpha} \left([f]_{\gamma}+[g]_{\gamma}\right)^{\beta}\Vert f-g\Vert_\infty^{\alpha-\beta}.
\end{equation}
Now, \eqref{eq84} and \eqref{eq87} imply \eqref{eq83}.
\end{proof}
Taking $m=1$, $u(x)=x$ and $\alpha=1$ yields the following results.
\begin{corollary}
\label{T70}
Let $(X,d)$ be a metric space and $0<\gamma\leq 1$. If $(f_n)\subset C_{\rm b}^{0,\gamma}(X)$ is a bounded sequence (with respect to the norm of $C^{0,\gamma}_{\rm b}(X)$) and $f_n\rightrightarrows f$ uniformly, then $f\in C_{\rm b}^{0,\gamma}(X)$ and for any $0<\beta<\gamma$, $f_n\to f$ in $C_{\rm b}^{0,\beta}(X)$ as $n\to\infty$.
\end{corollary}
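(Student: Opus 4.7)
The plan is to deduce \Cref{T70} directly from \Cref{T80}, exactly as the statement preceding the corollary suggests, by specializing to $m=1$, $u(x)=x$ and $\alpha=1$. The only trick is choosing the exponent parameter in \Cref{T80} so that the target space becomes $C^{0,\beta}_{\rm b}(X)$ with the $\beta$ given in \Cref{T70}.

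First I would verify that $f\in C_{\rm b}^{0,\gamma}(X)$ by a pointwise passage to the limit. Let $M:=\sup_n\|f_n\|_{C^{0,\gamma}}<\infty$. For any $x,y\in X$ with $x\neq y$, the pointwise convergence $f_n(x)\to f(x)$, $f_n(y)\to f(y)$ (implied by uniform convergence) together with
$$|f_n(x)-f_n(y)|\leq [f_n]_\gamma\, d(x,y)^\gamma\leq M\, d(x,y)^\gamma$$
yields $|f(x)-f(y)|\leq M\, d(x,y)^\gamma$ in the limit; hence $[f]_\gamma\leq M$. Boundedness of $f$ is immediate since $\|f\|_\infty\leq \|f-f_n\|_\infty+\|f_n\|_\infty\to\|f_n\|_\infty\leq M$ eventually.

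Second, to obtain convergence in $C^{0,\beta}_{\rm b}(X)$ for a given $\beta\in(0,\gamma)$, I would set $\beta':=\beta/\gamma\in(0,1)$ and apply \Cref{T80} to the pair $f_n,f\in C^{0,\gamma}_{\rm b}(X)$ with $u(x)=x$, $\alpha=1$ (so $[u]_\alpha=1$ and $\beta'<\alpha$), in which case the target exponent in \eqref{eq83} is $\beta'\gamma=\beta$. This gives
$$\|f_n-f\|_{C^{0,\beta}}\lesssim \bigl(\|f_n\|_{C^{0,\gamma}}+\|f\|_{C^{0,\gamma}}\bigr)^{\beta/\gamma}\|f_n-f\|_\infty^{1-\beta/\gamma},$$
where the implied constant depends only on $\beta$ and $\gamma$. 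By step one and the boundedness hypothesis, the first factor is uniformly bounded in $n$; since $1-\beta/\gamma>0$ and $\|f_n-f\|_\infty\to 0$, the right-hand side tends to zero.

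There is essentially no obstacle here beyond the bookkeeping of exponents: the main point is to recognize that choosing $\beta'=\beta/\gamma$ in \Cref{T80} converts its "$\beta\gamma$" target into the desired "$\beta$", and that estimate \eqref{eq83} already controls the full $C^{0,\beta}$ norm (supremum plus seminorm), so no separate argument for uniform convergence is needed beyond what is built into the hypothesis.
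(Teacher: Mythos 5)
Your proof is correct and follows exactly the route the paper indicates (specializing \Cref{T80} to $m=1$, $u=\operatorname{id}$, $\alpha=1$); you have just made explicit the relabeling $\beta\mapsto\beta/\gamma$ needed to turn the $C^{0,\beta\gamma}$ of \eqref{eq83} into the $C^{0,\beta}$ of the corollary, together with the preliminary check that $f\in C^{0,\gamma}_{\rm b}(X)$ so that \Cref{T80} is applicable. No gap.
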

\begin{corollary}
\label{T77}
Let $X$ be a metric space and let $\gamma\in (0,1]$. If $(f_n)\subset C_{\rm b}^{0,\gamma}(X)$ is a bounded sequence (with respect to the norm of $C_{\rm b}^{0,\gamma}(X)$) that is pointwise convergent on a dense subset of $X$, then there is $f\in C_{\rm b}^{0,\gamma}(X)$, such that for every compact (or more generally, totally bounded) set $K\subset X$ and every $0<\beta<\gamma$, $f_n\to f$ in $C^{0,\beta}_{\rm b}(K)$.
\end{corollary}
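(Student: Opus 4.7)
\textbf{Proof proposal for \Cref{T77}.} The plan is to first extend the pointwise limit on the dense set to a function $f\in C_{\rm b}^{0,\gamma}(X)$ by using the uniform H\"older bound, then promote pointwise convergence on the dense subset to uniform convergence on every compact set via equicontinuity, and finally invoke \Cref{T70} on each compact $K$ to upgrade uniform convergence to convergence in $C^{0,\beta}_{\rm b}(K)$.

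Set $M:=\sup_n\Vert f_n\Vert_{C^{0,\gamma}}<\infty$ and let $D\subset X$ be the dense subset on which $(f_n)$ converges pointwise. Define $f_0(x):=\lim_n f_n(x)$ for $x\in D$. Passing to the limit in $|f_n(x)-f_n(y)|\leq M\,d(x,y)^\gamma$ and $|f_n(x)|\leq M$, valid for all $x,y\in X$, yields $|f_0(x)-f_0(y)|\leq M\,d(x,y)^\gamma$ and $|f_0(x)|\leq M$ for $x,y\in D$. Since a uniformly continuous function on a dense subset of a metric space admits a unique continuous extension to the whole space, and H\"older estimates pass to the extension, we obtain $f\in C^{0,\gamma}_{\rm b}(X)$ with $\Vert f\Vert_\infty\leq M$, $[f]_\gamma\leq M$, and $f|_D=f_0$.

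Now fix a compact $K\subset X$ and $\eps>0$. Choose $\delta>0$ with $4M\delta^\gamma<\eps/3$. Cover $K$ by finitely many balls $B(z_1,\delta),\ldots,B(z_N,\delta)$ with $z_i\in K$, and by density choose $y_i\in D$ with $d(z_i,y_i)<\delta$. By pointwise convergence on $D$, there is $n_0$ such that $|f_n(y_i)-f(y_i)|<\eps/3$ for all $n\geq n_0$ and all $i\in\{1,\ldots,N\}$. For an arbitrary $x\in K$, pick $i$ with $d(x,z_i)<\delta$, so that $d(x,y_i)<2\delta$. Then for $n\geq n_0$,
$$
|f_n(x)-f(x)|\leq |f_n(x)-f_n(y_i)|+|f_n(y_i)-f(y_i)|+|f(y_i)-f(x)|<\eps,
$$
using the H\"older bounds $[f_n]_\gamma,[f]_\gamma\leq M$ in the first and third terms. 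This gives $f_n\rightrightarrows f$ uniformly on $K$. Applying \Cref{T70} on the metric space $K$ to the bounded sequence $(f_n|_K)\subset C^{0,\gamma}_{\rm b}(K)$, we conclude that $f_n|_K\to f|_K$ in $C^{0,\beta}_{\rm b}(K)$ for every $\beta\in(0,\gamma)$, as required.

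The only mildly subtle point is that the dense set $D$ need not meet $K$, so the covering balls cannot be centered in $D\cap K$; this is handled by centering them at $z_i\in K$ and then shifting to nearby $y_i\in D$, absorbing the shift into the uniform H\"older control. Everything else is a routine application of equicontinuity combined with \Cref{T70}.
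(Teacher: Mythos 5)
Your proof is correct and follows essentially the same route as the paper: equicontinuity plus density is used to obtain uniform convergence on compact sets, after which \Cref{T70} is invoked. The only cosmetic difference is that the paper defines $f$ as the pointwise limit at every point of $X$ (which exists by equicontinuity and density), whereas you construct $f$ by continuous extension from $D$ and then verify uniform convergence on compacts directly via the $\varepsilon/3$ argument; both are standard renditions of the same idea.
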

\begin{proof}
Since the family $(f_n)$ is equicontinuous, convergence on a dense subset of $X$ implies convergence at every point of $X$. Denote the limit function by $f$. Clearly, $f\in C_{\rm b}^{0,\gamma}(X)$. If $K\subset X$ is compact (or totaly bounded), then the pointwise convergence on $K$ of the equicontinuous family $(f_n)$ implies the uniform convergence on $K$ and the result follows from \Cref{T70}.
\end{proof}

In the next result we will need the following general version of the Arzel\`a-Ascoli theorem, see
\cite[Theorem~1.4.9]{papa}.
\begin{lemma}[Arzel\`a-Ascoli]
\label{T2}
Let $X$ be a separable metric space and let $(f_n)$ be a bounded and equicontinuous sequence of real valued
functions defined on $X$. Then, there is a subsequence $(f_{n_k})$ that is pointwise convergent on $X$ to a uniformly continuous function and the convergence if uniform on every compact (or more generally, every totally bounded) subset of $X$.
\end{lemma}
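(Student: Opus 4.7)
The plan is to carry out the classical diagonal extraction argument, and then upgrade pointwise convergence on a countable dense set to uniform convergence on compact sets via equicontinuity.

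First, using separability of $X$, I would fix a countable dense subset $D=\{x_j\}_{j=1}^{\infty}\subset X$. Since the sequence $(f_n)$ is bounded in the supremum norm, the sequence of real numbers $(f_n(x_1))$ is bounded, so by Bolzano-Weierstrass there is a subsequence $(f_n^{(1)})$ of $(f_n)$ for which $f_n^{(1)}(x_1)$ converges. From $(f_n^{(1)})$ I would extract a further subsequence $(f_n^{(2)})$ for which $f_n^{(2)}(x_2)$ converges, and so on inductively. Setting $g_k:=f_k^{(k)}$ produces a diagonal subsequence that converges at every point of $D$.

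Next, I would upgrade pointwise convergence on $D$ to uniform convergence on compact sets. Fix a compact set $K\subset X$ and $\varepsilon>0$. By equicontinuity of $(f_n)$, there is $\delta>0$ such that $|f_n(x)-f_n(y)|<\varepsilon/3$ whenever $d(x,y)<\delta$, uniformly in $n$. Since $K$ is totally bounded, I can cover $K$ by finitely many balls $B(y_i,\delta/2)$, $i=1,\ldots,N$, with centers $y_i\in K$; by density of $D$, each $y_i$ is within $\delta/2$ of some $x_{j_i}\in D$, so $K\subset\bigcup_{i=1}^{N} B(x_{j_i},\delta)$. Since $g_k$ converges at each of the finitely many points $x_{j_1},\ldots,x_{j_N}$, there is $k_0$ such that $|g_k(x_{j_i})-g_\ell(x_{j_i})|<\varepsilon/3$ for all $k,\ell\geq k_0$ and all $i$. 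For any $x\in K$, pick $i$ with $d(x,x_{j_i})<\delta$; then
\[
|g_k(x)-g_\ell(x)|\leq |g_k(x)-g_k(x_{j_i})|+|g_k(x_{j_i})-g_\ell(x_{j_i})|+|g_\ell(x_{j_i})-g_\ell(x)|<\varepsilon,
\]
so $(g_k)$ is uniformly Cauchy on $K$, hence uniformly convergent on $K$.

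Finally, I would note that since the conclusion above holds for every compact $K$, the same subsequence $(g_k)$ converges uniformly on every compact subset of $X$ (no further diagonalization over compact sets is needed, because the extraction was performed once at the start). The main subtlety is the second step: one has to exploit the equicontinuity uniformly in $n$ and use total boundedness of compact subsets of a metric space to reduce uniform control on $K$ to control at finitely many points of $D$. Everything else is bookkeeping.
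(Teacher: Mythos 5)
Your proof is correct and follows the same strategy as the paper: diagonal extraction over a countable dense set followed by using equicontinuity to upgrade convergence to uniform convergence on compact sets. The paper's version is a terse sketch (first upgrading to pointwise convergence everywhere, then to uniform convergence on compacta), while you give the $\varepsilon/3$ and total-boundedness details directly, but the argument is the same.
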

\begin{proof}
In a more standard version of the theorem it is assumed that $X$ is compact, but the proof in this more general case is almost the same and similar to the proof of \Cref{T77}. 
First, using a diagonal argument we find a subsequence $(f_{n_k})$ that is convergent on a countable and dense subset of $X$. Since the family is equicontinuous, convergence on a dense subset implies pointwise convergence at every point of $X$. If $K\subset X$ is compact (or totally bounded), then pointwise convergence on $K$ of the equicontinuous family $(f_{n_k})$ implies the uniform convergence of $(f_{n_k})$ on $K$.
\end{proof}
\begin{corollary}
\label{T1}
Let $X$ be a separable metric space and $0<\gamma\leq 1$. If
$(f_n)\subset C_{\rm b}^{0,\gamma}(X)$ is a bounded sequence (in the norm of $C^{0,\gamma}_{\rm b}$), then there is a function $f\in C_{\rm b}^{0,\gamma}(X)$ and a subsequence
$(f_{n_k})$ such that for any compact (or more generally, totally bounded) set $K\subset X$ and any $0<\beta<\gamma$,
$\Vert f_{n_k}-f\Vert_{C^{0,\beta}(K)}\to 0$ as $k\to\infty$.
\end{corollary}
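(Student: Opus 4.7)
The plan is to assemble this from the tools already established: the Arzelà--Ascoli theorem (\Cref{T2}) to extract a subsequence, and \Cref{T70} applied compact set by compact set to upgrade uniform convergence to $C^{0,\beta}$-convergence.

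First, I would observe that boundedness of $(f_n)$ in $C^{0,\gamma}_{\rm b}(X)$ gives two things at once: uniform boundedness in the supremum norm, and equicontinuity (because $|f_n(x)-f_n(y)|\leq M\, d(x,y)^\gamma$ for $M:=\sup_n [f_n]_\gamma$). Since $X$ is separable, \Cref{T2} applies and produces a subsequence $(f_{n_k})$ that converges uniformly on every compact subset of $X$; call the pointwise limit $f$.

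Next I would check that $f\in C_{\rm b}^{0,\gamma}(X)$. The supremum bound passes to the pointwise limit, so $\Vert f\Vert_\infty\leq \sup_n\Vert f_n\Vert_\infty$. For the H\"older seminorm, for any $x\neq y$ the inequality
\[
|f_{n_k}(x)-f_{n_k}(y)|\leq M\, d(x,y)^\gamma
\]
passes to the (pointwise) limit, yielding $[f]_\gamma\leq M$. Thus $f\in C_{\rm b}^{0,\gamma}(X)$.

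Finally, fix a compact set $K\subset X$ and $\beta\in(0,\gamma)$. The restricted sequence $(f_{n_k}|_K)$ is bounded in $C_{\rm b}^{0,\gamma}(K)$ (its $C^{0,\gamma}_{\rm b}(K)$-norm is dominated by the $C^{0,\gamma}_{\rm b}(X)$-norm) and converges uniformly on $K$ to $f|_K$. Therefore \Cref{T70}, applied with $K$ in place of $X$, gives $\Vert f_{n_k}-f\Vert_{C^{0,\beta}(K)}\to 0$ as $k\to\infty$. This is the desired conclusion.

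There is essentially no serious obstacle here; the only thing to be careful about is that \Cref{T2} delivers only locally uniform convergence (which is all that is required), while \Cref{T70} is stated for uniform convergence on the whole space $X$. The mild subtlety is that one cannot apply \Cref{T70} globally on $X$ (since $X$ need not be compact and we do not have global uniform convergence), so one must apply it compact-by-compact, as above.
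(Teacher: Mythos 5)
Your argument is correct and follows the paper's own proof precisely: extract a locally uniformly convergent subsequence via Arzel\`a--Ascoli (\Cref{T2}), note that the limit inherits the $C_{\rm b}^{0,\gamma}$ bounds, and then invoke \Cref{T70} on each compact set. The extra detail you supply about why \Cref{T70} must be applied compact-by-compact rather than globally is accurate and is implicitly what the paper does.
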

\begin{proof}
It follows from \Cref{T2} that there
is a subsequence $(f_{n_k})$ that converges uniformly to a function $f$ on every compact (totally bounded) set $K\subset X$. 
Clearly, $f\in C_{\rm b}^{0,\gamma}(X)$. Now, \Cref{T70} yields the convergence
$\Vert f_{n_k}-f\Vert_{C^{0,\beta}(K)}\to 0$ as $k\to\infty$ for any $0<\beta<\gamma$.
\end{proof}

If $u\in C^{0,1}$, \Cref{T80} yields continuity of the composition operator \eqref{eq88} for all $\beta<1$. In order to prove continuity with $\beta=1$, i.e., continuity in $C^{0,\gamma}_{\rm b}(X)$, we need stronger assumptions about $u$.

\begin{definition}
\label{d3}
A continuous, non-decreasing, and concave function $\omega:[0,\infty)\to [0,\infty)$, such that $\omega(0)=0$ is a modulus
of continuity of a function $f:X\to\bbbr$ defined on a metric space $(X,d)$ if
\begin{equation}
\label{eq21}
|f(x)-f(y)|\leq\omega(d(x,y))
\quad
\text{for all $x,y\in X$.}
\end{equation}
\end{definition}
\begin{lemma}
\label{T98}
If $\omega_1:[0,\infty)\to [0,\infty)$ is non-decreasing, bounded and $\lim_{t\to 0^+}\omega_1(t)=0$, then there is a continuous bounded concave function $\omega:[0,\infty)\to [0,\infty)$, $\omega(0)=0$, such that $\omega_1(t)\leq \omega(t)$ for all $f\in [0,\infty)$.
\end{lemma}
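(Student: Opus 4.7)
The plan is to take $\omega$ to be the concave envelope (least concave majorant) of $\omega_1$, defined explicitly by
$$\omega(t) := \inf\{at + b : a, b \in \R \text{ and } as + b \geq \omega_1(s) \text{ for all } s \geq 0\}.$$
Setting $M := \sup \omega_1 < \infty$, the constant affine function $L(s) = M$ is admissible, so $\omega$ is finite and $\omega \leq M$. I would then dispatch the routine facts: as an infimum of affine functions $\omega$ is concave; any admissible pair $(a,b)$ must have $a \geq 0$ (else $as + b \to -\infty$ as $s \to \infty$, contradicting $\omega_1 \geq 0$), so $\omega$ is non-decreasing; by construction $\omega \geq \omega_1 \geq 0$; and the hypotheses that $\omega_1 \geq 0$ is non-decreasing with $\omega_1(0^+) = 0$ force $\omega_1(0) = 0$.

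The main step will be to verify $\omega(0) = 0$ together with right-continuity at $0$; once this is done, continuity on $(0,\infty)$ is automatic for a finite concave function on an open interval. Given $\varepsilon > 0$, I would use $\omega_1(0^+) = 0$ to choose $\delta > 0$ with $\omega_1(s) \leq \varepsilon/2$ for $s \in [0,\delta]$, and then consider the affine function
$$L_\varepsilon(s) := \tfrac{\varepsilon}{2} + \tfrac{M}{\delta}\, s.$$
This $L_\varepsilon$ dominates $\omega_1$ on all of $[0,\infty)$: it is $\geq \varepsilon/2$ on $[0,\delta]$ and $\geq M$ on $[\delta,\infty)$. Hence $L_\varepsilon$ is admissible in the infimum, so $\omega(t) \leq L_\varepsilon(t)$ for every $t \geq 0$. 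Evaluating at $t = 0$ gives $\omega(0) \leq \varepsilon/2$, and sending $\varepsilon \to 0$ yields $\omega(0) = 0$; the same pointwise bound gives $\limsup_{t \to 0^+} \omega(t) \leq \varepsilon/2$ for every $\varepsilon > 0$, hence $\omega(0^+) = 0 = \omega(0)$.

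The hard part will not be the algebra but exactly this continuity-at-zero step, because a concave function on $[0,\infty)$ may in general satisfy $\omega(0^+) > \omega(0)$, so the conclusion really requires the low-intercept affine majorant $L_\varepsilon$ extracted from the modulus-type hypothesis $\omega_1(0^+) = 0$. That single construction simultaneously delivers $\omega(0) = 0$ and right-continuity at $0$, and I do not foresee any further obstruction.
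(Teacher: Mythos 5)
Your proposal is correct and takes essentially the same approach as the paper: both define $\omega$ as the infimum of affine majorants of $\omega_1$ and prove $\omega(0)=0=\omega(0^+)$ by constructing, from $\omega_1(0^+)=0$, an admissible affine function with arbitrarily small intercept. The only cosmetic difference is that the paper optimizes the slope via $\alpha=\sup_{s>0}(\omega_1(s)-\beta)/s$ whereas you use the explicit slope $M/\delta$, which is a touch more elementary but yields the same bound.
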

\begin{proof}
Let
\begin{equation}
\label{eq22}
\omega(t):=\inf\{\alpha t+\beta:\, \text{$\omega_1(s)\leq\alpha s+\beta$ for all $s\in [0,\infty)$}\}.
\end{equation}
It is easy to see that $\omega$ is concave and $\omega\geq\omega_1$, so $\omega$ is nonnegative.
Moreover, concavity implies continuity of $\omega$ on $(0,\infty)$, see
\cite[Theorem~A,~p.4]{RV}.
Since $\alpha\geq 0$ in \eqref{eq22}, $\omega$ is non-decreasing. It remains to show that $\lim_{t\to 0^+}\omega(t)=0$.
For $0<\beta<\sup_{s>0}\omega_1(s)$ define $\alpha=\sup_{s>0}(\omega_1(s)-\beta)/s$. Note that $\alpha$ is positive and finite, because $\omega_1(s)-\beta<0$ for small $s$.
Clearly, $\omega_1(s)\leq\alpha s+\beta$ for all $s>0$.
Hence $\omega(t)\leq\alpha t+\beta$, so $0\leq\limsup_{t\to 0^+}\omega(t)\leq \beta$. Since $\beta>0$ can be arbitrarily small, we conclude that $\lim_{t\to 0^+}\omega(t)=0$.
\end{proof}
The following result is well known.
\begin{proposition}
\label{T29}
Every bounded and uniformly continuous function on a metric space has a modulus of continuity.
\end{proposition}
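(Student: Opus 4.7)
The plan is to build a candidate modulus directly from $f$ and then invoke the preceding Lemma~\ref{T98} to upgrade it to a concave one. Specifically, given a bounded and uniformly continuous $f : X \to \mathbb{R}$, I would define
$$
\omega_1(t) := \sup\bigl\{|f(x)-f(y)| : x,y \in X,\ d(x,y) \leq t\bigr\},
\quad t \in [0,\infty).
$$
The three properties needed to apply Lemma~\ref{T98} are essentially immediate: monotonicity in $t$ is obvious from taking the supremum over a larger set; boundedness follows from $\omega_1(t) \leq 2\|f\|_\infty < \infty$ (this is the only place where the hypothesis that $f$ is bounded enters); and $\lim_{t \to 0^+} \omega_1(t) = 0$ is precisely the definition of uniform continuity of $f$.

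Having checked these hypotheses, Lemma~\ref{T98} produces a continuous, bounded, non-decreasing, concave function $\omega : [0,\infty) \to [0,\infty)$ with $\omega(0) = 0$ and $\omega_1 \leq \omega$ pointwise. Then for any $x,y \in X$, the definition of $\omega_1$ gives
$$
|f(x) - f(y)| \leq \omega_1(d(x,y)) \leq \omega(d(x,y)),
$$
which is exactly \eqref{eq21}, so $\omega$ is a modulus of continuity for $f$ in the sense of Definition~\ref{d3}.

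There is no real obstacle here, since the concavification step has already been isolated in Lemma~\ref{T98}; the only subtle point worth emphasizing in the write-up is that both the boundedness hypothesis and the uniform continuity hypothesis are genuinely used (the former to make $\omega_1$ finite-valued so that Lemma~\ref{T98} applies, the latter to guarantee $\omega_1(t) \to 0$ as $t \to 0^+$).
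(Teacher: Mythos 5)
Your proof is correct and follows essentially the same route as the paper: define $\omega_1(t)=\sup\{|f(x)-f(y)|:d(x,y)\leq t\}$, verify it is non-decreasing, bounded, and tends to $0$ at $0^+$, and then invoke Lemma~\ref{T98} to concavify. The only cosmetic difference is that the paper first disposes of the case of constant $f$ separately (taking $\omega\equiv 0$), a hedge against the edge case $\omega_1\equiv 0$ in the proof of Lemma~\ref{T98}; this does not affect the validity of your argument since Lemma~\ref{T98} as stated covers that case.
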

\begin{proof}
If $f$ is constant, then we take $\omega(t)\equiv 0$. Thus assume that $f$ is not constant.
Let
$\omega_1(t)=\sup\{|f(x)-f(y)|:\, d(x,y)\leq t\}$. Since the function $f$ is uniformly continuous and bounded,
the function $\omega_1:[0,\infty)\to [0,\infty)$ is non-decreasing, bounded, and $\lim_{t\to 0^+}\omega_1(t)=0$.
Clearly, $|f(x)-f(y)|\leq \omega_1(d(x,y))$ for all $x,y\in X$, and
the result follows from \Cref{T98}.
\end{proof}

\begin{lemma}
\label{T30}
Assume that $u\in C^1(\R^m)$ is such that $\Vert u\Vert_\infty+\Vert\nabla u\Vert_\infty<\infty$, and $\nabla u$ has the modulus of continuity $\omega$ i.e.,
$$
|\nabla u(x)-\nabla u(y)|\leq\omega(|x-y|)
\quad\text{for all $x,y\in\R^m$.}
$$
\begin{itemize}
\item[(a)] If $f\in C_{\rm b}^{0,\gamma}(X;\R^m)$, $0<\gamma\leq 1$, then $u\circ f\in C_{\rm b}^{0,\gamma}(X)$ and
$$
\Vert u\circ f\Vert_{C^{0,\gamma}}\leq \Vert u\Vert_\infty+\Vert\nabla u\Vert_\infty [f]_{\gamma}.
$$
\item[(b)] If $f,g\in C^{0,\gamma}_{\rm b}(X;\R^m)$, $0<\gamma\leq 1$, then
$$
\Vert (u\circ f)-(u\circ g)\Vert_\infty\leq \Vert\nabla u\Vert_\infty\Vert f-g\Vert_\infty
$$
and
\begin{equation}
\label{eq23}
[(u\circ g)-(u\circ f)]_{\gamma}\leq\omega(\Vert f-g\Vert_\infty)[f]_{\gamma}+
\Vert\nabla u\Vert_\infty [f-g]_{\gamma}.
\end{equation}
Therefore, the mapping
$$
C^{0,\gamma}_{\rm b}(X;\R^m)\ni f\mapsto u\circ f\in C_{\rm b}^{0,\gamma}(X)
$$
is continuous.
\end{itemize}
\end{lemma}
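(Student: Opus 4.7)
My plan for part (a) is straightforward: the sup-norm bound is immediate from boundedness of $u$, while the H\"older seminorm bound follows from the mean value theorem estimate $|u(p)-u(q)|\leq\Vert\nabla u\Vert_\infty|p-q|$ composed with $f$, yielding $[u\circ f]_\gamma\leq\Vert\nabla u\Vert_\infty[f]_\gamma$. The same linear bound, applied pointwise to $p=f(x)$ and $q=g(x)$, gives the sup-norm estimate $\Vert u\circ f-u\circ g\Vert_\infty\leq\Vert\nabla u\Vert_\infty\Vert f-g\Vert_\infty$ in part (b).

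The heart of the proof is the seminorm bound \eqref{eq23}. My plan is to set $H:=(u\circ g)-(u\circ f)$ and write
$$
H(x)-H(y)=\bigl[u(g(x))-u(g(y))\bigr]-\bigl[u(f(x))-u(f(y))\bigr],
$$
representing each bracket by the fundamental theorem of calculus along the line segments $G_s:=g(y)+s(g(x)-g(y))$ and $F_s:=f(y)+s(f(x)-f(y))$, so that
$$
H(x)-H(y)=\int_0^1\bigl[\nabla u(G_s)\cdot(g(x)-g(y))-\nabla u(F_s)\cdot(f(x)-f(y))\bigr]\,ds.
$$
I would then insert $\pm\nabla u(G_s)\cdot(f(x)-f(y))$ to split the integrand into two pieces. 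The first, $\nabla u(G_s)\cdot[(g-f)(x)-(g-f)(y)]$, is pointwise bounded by $\Vert\nabla u\Vert_\infty[f-g]_\gamma d(x,y)^\gamma$. For the second, $[\nabla u(G_s)-\nabla u(F_s)]\cdot(f(x)-f(y))$, I would observe that $G_s-F_s=(1-s)(g(y)-f(y))+s(g(x)-f(x))$ has norm at most $\Vert f-g\Vert_\infty$, so the modulus-of-continuity hypothesis together with monotonicity of $\omega$ gives $|\nabla u(G_s)-\nabla u(F_s)|\leq\omega(\Vert f-g\Vert_\infty)$, and this contribution is at most $\omega(\Vert f-g\Vert_\infty)[f]_\gamma d(x,y)^\gamma$. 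Summing the two bounds yields \eqref{eq23}.

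For the continuity of $f\mapsto u\circ f$ I would simply invoke these bounds with $g=f_n\to f$ in $C^{0,\gamma}_{\rm b}(X;\R^m)$, obtaining
$$
\Vert u\circ f_n-u\circ f\Vert_{C^{0,\gamma}}\leq\Vert\nabla u\Vert_\infty\Vert f_n-f\Vert_\infty+\omega(\Vert f_n-f\Vert_\infty)[f]_\gamma+\Vert\nabla u\Vert_\infty[f_n-f]_\gamma,
$$
which tends to $0$ since $\omega$ is continuous at $0$ with $\omega(0)=0$. The only subtlety I anticipate in the whole argument is the \emph{direction} of the add-and-subtract: the ``$\omega$-difference'' $\nabla u(G_s)-\nabla u(F_s)$ must be paired with $f(x)-f(y)$ rather than with $g(x)-g(y)$, in order to produce $[f]_\gamma$ on the right-hand side of \eqref{eq23}; pairing it the other way would produce $[g]_\gamma$ instead, which would not give a useful continuity statement since it would not vanish as $g\to f$.
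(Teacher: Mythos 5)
Your proof is correct and uses essentially the same argument as the paper: represent each difference by the fundamental theorem of calculus along a line segment, insert the cross term $\pm\nabla u(G_s)\cdot(f(x)-f(y))$, and bound the two pieces by the modulus of continuity of $\nabla u$ and by $\Vert\nabla u\Vert_\infty$ respectively, noting that the two segments stay within $\Vert f-g\Vert_\infty$ of each other.

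One small slip in your closing remark: pairing the $\omega$-difference with $g(x)-g(y)$ would yield $\omega(\Vert f-g\Vert_\infty)[g]_\gamma$, which \emph{does} tend to $0$ as $g\to f$ in $C^{0,\gamma}_{\rm b}$ (since $\omega(\Vert f-g\Vert_\infty)\to 0$ and $[g]_\gamma$ stays bounded), so it would still give the continuity statement. The real reason to choose the pairing you chose is simply that \eqref{eq23}, as stated, has $[f]_\gamma$ on the right-hand side, not $[g]_\gamma$.
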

\begin{remark}
A particularly important case is when 
$\nabla u\in C_{\rm b}^{0,\gamma}$ for some $0<\gamma\leq 1$, i.e. when $\omega(t)=Ct^\gamma$.
\end{remark}
\begin{proof}
(a) The inequality $\Vert u\circ f\Vert_\infty\leq \Vert u\Vert_\infty$ is obvious. To prove the estimate
$[u\circ f]_{\gamma}\leq\Vert\nabla u\Vert_\infty [f]_{\gamma}$ we argue as follows
\begin{align*}
&|(u\circ f)(x)-(u\circ f)(y)|
=
\left|\int_0^1\frac{d}{dt}u(tf(x)+(1-t)f(y))\, dt\right|\\
&\leq 
|f(x)-f(y)|\int_0^1|\nabla u(tf(x)+(1-t)f(y))|\, dt
\leq\Vert\nabla u\Vert_\infty [f]_{\gamma}d(x,y)^\gamma.
\end{align*}
\noindent (b)
The inequality 
$\Vert (u\circ f)-(u\circ g)\Vert_\infty\leq \Vert\nabla u\Vert_\infty\Vert f-g\Vert_\infty$ follows from almost the same argument as the one used in part (a). To prove \eqref{eq23} we use a similar argument, but the estimates are slightly more involved. For simplicity of notation let
$$
\xi_t=tf(x)+(1-t)f(y)
\quad
\text{and}
\quad
\eta_t=tg(x)+(1-t)g(y).
$$
Then
\begin{align*}
&
\big((u\circ f)-(u\circ g)\big)(x)-\big((u\circ f)-(u\circ g)\big)(y)
=
\int_0^1\frac{d}{dt}(u(\xi_t)-u(\eta_t))\, dt\\
&=
\int_0^1\nabla u(\xi_t)(f(x)-f(y))-\nabla u(\eta_t)(g(x)-g(y))\, dt\\
&=
\int_0^1(\nabla u(\xi_t)-\nabla u(\eta_t))(f(x)-f(y))+
\nabla u(\eta_t)((f-g)(x)-(f-g)(y))\, dt
\end{align*}
so
\begin{align*}
&
\big|\big((u\circ f)-(u\circ g)\big)(x)-\big((u\circ f)-(u\circ g)\big)(y)\big|\\
&\leq
\sup_{t\in [0,1]}|\nabla u(\xi_t)-\nabla u(\eta_t)|\, [f]_{\gamma}d(x,y)^\gamma +
\Vert \nabla u\Vert_\infty [f-g]_{\gamma}d(x,y)^\gamma,
\end{align*}
and it remains to observe that
$|\xi_t-\eta_t|\leq \Vert f-g\Vert_\infty$.
\end{proof}
\begin{corollary}
\label{T31}
Let $u\in C^1(\R^m)$ and $0<\gamma\leq 1$. If $f\in C^{0,\gamma}_{\rm b}(X;\R^m)$, then $u\circ f\in C_{\rm b}^{0,\gamma}(X)$.
Moreover, the composition operator $C^{0,\gamma}_{\rm b}(X;\R^m)\ni f\mapsto u\circ f\in C_{\rm b}^{0,\gamma}(X)$ is continuous.
\end{corollary}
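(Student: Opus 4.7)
The plan is to reduce to \Cref{T30} by multiplying $u$ by a smooth cutoff that depends on the image of $f$ (and on a neighborhood of $f$ in the $C^{0,\gamma}_{\rm b}$ norm for the continuity statement). The point is that $f \in C^{0,\gamma}_{\rm b}(X;\R^m)$ is bounded, so $f(X)$ is contained in some ball $B(0,R) \subset \R^m$, and hence the values of $u \circ f$ depend only on $u|_{B(0,R)}$.

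Concretely, given $f$, fix $R>0$ with $\Vert f\Vert_\infty < R$ and choose $\phi \in C_c^\infty(\R^m)$ with $\phi \equiv 1$ on $\overline{B(0, R+2)}$. Set $\tilde u := \phi u \in C^1(\R^m)$. Then $\tilde u$ has compact support, so $\Vert \tilde u\Vert_\infty + \Vert \nabla \tilde u\Vert_\infty < \infty$, and $\nabla \tilde u$ is continuous and compactly supported, hence uniformly continuous and bounded on $\R^m$. By \Cref{T29}, $\nabla \tilde u$ admits a modulus of continuity $\omega$. Since $\tilde u = u$ on $B(0,R+2)\supset f(X)$, we have $u \circ f = \tilde u \circ f$, and part (a) of \Cref{T30} applied to $\tilde u$ gives $u \circ f = \tilde u \circ f \in C^{0,\gamma}_{\rm b}(X)$, proving the first assertion.

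For continuity of the composition operator at $f$, suppose $g_n \to f$ in $C^{0,\gamma}_{\rm b}(X;\R^m)$. Then $\Vert g_n - f\Vert_\infty \to 0$, so for all sufficiently large $n$ we have $\Vert g_n\Vert_\infty < R+1$, which means $g_n(X) \subset B(0,R+2)$ and therefore $u \circ g_n = \tilde u \circ g_n$. Part (b) of \Cref{T30} applied to $\tilde u$ yields
\[
\Vert u\circ g_n - u\circ f\Vert_\infty \leq \Vert \nabla \tilde u\Vert_\infty \Vert g_n - f\Vert_\infty,
\]
\[
[u\circ g_n - u\circ f]_\gamma \leq \omega(\Vert g_n - f\Vert_\infty)[f]_\gamma + \Vert \nabla \tilde u\Vert_\infty [g_n - f]_\gamma,
\]
and both right-hand sides tend to $0$ as $n\to\infty$ since $\omega$ is continuous at $0$ and $g_n \to f$ in $C^{0,\gamma}_{\rm b}$.

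There is no real obstacle here; the only subtlety is making sure the cutoff $\phi$ is chosen to dominate both $f$ and a whole neighborhood of $f$, which is handled by picking $R$ strictly larger than $\Vert f\Vert_\infty$ and $\phi$ identically $1$ on a still larger ball, so that eventually $g_n(X)$ lies in the region where $\tilde u \equiv u$.
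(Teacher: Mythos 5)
Your proof is correct and follows essentially the same approach as the paper: multiply $u$ by a smooth compactly supported cutoff equal to $1$ on a ball containing the image of $f$ (and, for the continuity part, on a slightly larger ball absorbing the images of nearby maps), then invoke \Cref{T29} and \Cref{T30}. The only difference is that you spell out the choice of radii $R$, $R+1$, $R+2$ a bit more explicitly than the paper does.
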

\begin{proof}
By the definition of the class $C^{0,\gamma}_{\rm b}(X;\R^m)$, the mapping $f$ is bounded. 
Let $\varphi\in C_c^\infty(\bbbr^m)$ be equal $1$ on the image of $f$, and let $\tilde{u}=\varphi u$. \Cref{T29} shows that $\tilde{u}$ satisfies the assumptions of \Cref{T30} and hence
$u\circ f=\tilde{u}\circ f\in C_{\rm b}^{0,\gamma}(X)$. If $f_i\to f$ in $C^{0,\gamma}_{\rm b}(X;\R^m)$, then images of all mappings $f,f_i$ are contained in one bounded set. Taking $\varphi\in C_c^\infty(\bbbr^m)$ equal $1$ on that set allows us to use \Cref{T30} and conclude that $u\circ f_i=\tilde{u}\circ f_i\to \tilde{u}\circ f=u\circ f$ in $C_{\rm b}^{0,\gamma}$.
\end{proof}

\begin{definition}
\label{d2}
Fix a mollifier kernel $\eta\in C_c^\infty(\bbbb^n)$, $\eta\geq 0$, $\int_{\bbbr^n}\eta\, dx =1$, and for $\eps>0$ define
$\eta_\eps(x)=\eps^{-n}\eta(x/\eps)$. For $f\in L^1_{\rm loc}(\bbbr^n)$ we define the \emph{$\eps$-mollification} of $f$ by
$$
f_\eps(x):= f*\eta_\eps(x)= \eps^{-n} \int_{\bbbr^n} f(z)\eta\left(\frac{x-z}{\eps}\right)\, dz.
$$
\end{definition}
Clearly, $f_\eps\in C^\infty$. If $f\in L^p(\bbbr^n)$, $1\leq p<\infty$, then 
\begin{equation}
\label{eq3}
\Vert f*\eta_\eps\Vert_p\leq \Vert f\Vert_p
\quad
\text{and}
\quad
\text{ $\Vert f_\eps-f\Vert_p\to 0$ as $\eps\to 0$.}
\end{equation}
Since $\int_{\bbbr^n} \partial \eta_\eps/\partial x_i\, dx =0$, we have a useful identity that will be repeatedly used in later sections
\begin{equation}
\label{eq5}
\begin{split}
\frac{\partial f_\eps}{\partial x_i}(x)
&=
f*\frac{\partial\eta_\eps}{\partial x_i}(x)=\left((f-f(x))*\frac{\partial\eta_\eps}{\partial x_i}\right)(x)\\
&=
\eps^{-n-1}\int_{B_\eps}(f(x-z)-f(x))\frac{\partial\eta}{\partial x_i}\left(\frac{z}{\eps}\right)\, dz.
\end{split}
\end{equation}

The next results will discuss approximation of H\"older functions by smooth ones. 
\begin{proposition}
\label{T3}
If $f\in C_{\rm b}^{0,\gamma}(\R^n)$, $0<\gamma\leq 1$, then
$\sup_{\eps>0}\Vert f_\eps\Vert_{C^{0,\gamma}}\leq\Vert f\Vert_{C^{0,\gamma}}$, 
\begin{equation}
\label{eq4}
\lim_{\eps\to 0}\Vert f_\eps\Vert_{C^{0,\gamma}}=\Vert f\Vert_{C^{0,\gamma}}
\quad
\text{and}
\quad
\lim_{\eps\to 0}\Vert f_\eps-f\Vert_{C^{0,\beta}}=0
\end{equation}
for any $0<\beta<\gamma$.
In particular $f$ can be approximated by smooth functions
in the $C_{\rm b}^{0,\beta}$ norm.
\end{proposition}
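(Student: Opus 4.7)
The plan breaks into three steps corresponding to the three claims.

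First, for the boundedness $\sup_\eps \Vert f_\eps\Vert_{C^{0,\gamma}}\leq\Vert f\Vert_{C^{0,\gamma}}$, I would just move the difference inside the convolution. Writing $f_\eps(x)=\int f(x-z)\eta_\eps(z)\,dz$ and using that $\eta_\eps\geq 0$ integrates to $1$, one gets $\Vert f_\eps\Vert_\infty\leq\Vert f\Vert_\infty$ directly, while
$$
|f_\eps(x)-f_\eps(y)|\leq\int|f(x-z)-f(y-z)|\,\eta_\eps(z)\,dz\leq[f]_\gamma|x-y|^\gamma,
$$
so $[f_\eps]_\gamma\leq[f]_\gamma$. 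Adding the two bounds gives the claim.

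Second, for $\lim_{\eps\to 0}\Vert f_\eps\Vert_{C^{0,\gamma}}=\Vert f\Vert_{C^{0,\gamma}}$, the key observation is the pointwise (in fact uniform) convergence rate
$$
|f_\eps(x)-f(x)|\leq\int|f(x-z)-f(x)|\,\eta_\eps(z)\,dz\leq[f]_\gamma\eps^\gamma,
$$
using again that $\eta_\eps$ is supported in $B_\eps$ and integrates to one. This gives $\Vert f_\eps-f\Vert_\infty\to 0$, hence $\Vert f_\eps\Vert_\infty\to\Vert f\Vert_\infty$. For the seminorm, the uniform convergence on $\R^n$ yields for any fixed $x\neq y$ that $|f_\eps(x)-f_\eps(y)|/|x-y|^\gamma\to|f(x)-f(y)|/|x-y|^\gamma$, so taking suprema before and after gives $[f]_\gamma\leq\liminf_{\eps\to 0}[f_\eps]_\gamma$ (lower semicontinuity). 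Combined with the reverse inequality $[f_\eps]_\gamma\leq[f]_\gamma$ from step~1, this yields convergence of the seminorms and hence of the full norm.

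Third, for $\Vert f_\eps-f\Vert_{C^{0,\beta}}\to 0$ when $\beta<\gamma$, I would run the standard interpolation trick on $g_\eps:=f_\eps-f$. We already know $[g_\eps]_\gamma\leq 2[f]_\gamma$ and $\delta_\eps:=\Vert g_\eps\Vert_\infty\leq[f]_\gamma\eps^\gamma\to 0$. Split by a threshold $A>0$: for $|x-y|\leq A$ the Hölder bound gives $|g_\eps(x)-g_\eps(y)|\leq 2[f]_\gamma|x-y|^\gamma\leq 2[f]_\gamma A^{\gamma-\beta}|x-y|^\beta$, while for $|x-y|\geq A$ the sup bound gives $|g_\eps(x)-g_\eps(y)|\leq 2\delta_\eps\leq 2\delta_\eps A^{-\beta}|x-y|^\beta$. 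Optimizing by choosing $A=\delta_\eps^{1/\gamma}$ (so that the two bounds coincide) yields
$$
[f_\eps-f]_\beta\leq C\,\delta_\eps^{(\gamma-\beta)/\gamma}\longrightarrow 0,
$$
which together with $\Vert g_\eps\Vert_\infty\to 0$ completes the proof.

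I do not expect a genuine obstacle here: each step is a one-line convolution estimate followed by either lower semicontinuity or the interpolation argument. The only subtlety to pay attention to is that one must move the difference \emph{inside} the convolution (not invoke differentiation) in order to transfer the Hölder bound cleanly, and that one needs $\beta<\gamma$ strictly so that the exponent $(\gamma-\beta)/\gamma$ is positive.
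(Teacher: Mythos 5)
Your proof is correct and follows essentially the same route as the paper: the same convolution bound for step one, the same lower-semicontinuity argument for the seminorm in step two, and for step three you simply inline the interpolation estimate that the paper delegates to \Cref{T70} (whose proof, via \Cref{T80} with $u(x)=x$, is exactly your threshold-splitting argument). The only cosmetic difference is that your optimal threshold should be $A=(\delta_\eps/[f]_\gamma)^{1/\gamma}$ rather than $\delta_\eps^{1/\gamma}$ so the two branches actually balance, but this only affects the constant and does not change the conclusion.
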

\begin{remark}
Later we will see that smooth functions are {\em not} dense in $C_{\rm b}^{0,\gamma}(\bbbr^n)$, see \Cref{T4}.
\end{remark}
\begin{proof}
It is easy to see that $\Vert f_\eps\Vert_\infty\leq\Vert f\Vert_\infty$, and  $[f_\eps]_{\gamma}\leq [f]_{\gamma}$, so
$\sup_{\eps>0}\Vert f_\eps\Vert_{C^{0,\gamma}}\leq\Vert f\Vert_{C^{0,\gamma}}$.
It is also easy to see that $f_\eps\rightrightarrows f$ uniformly, and hence $\lim_{\eps\to 0}\Vert f_\eps\Vert_\infty=\Vert f\Vert_\infty$.

For $x\neq y$, we have
$$
\frac{|f(x)-f(y)|}{|x-y|^\gamma}=
\lim_{\eps\to 0} \frac{|f_\eps(x)-f_\eps(y)|}{|x-y|^\gamma}
\leq
\liminf_{\eps\to 0}[f_\eps]_{\gamma} \leq \limsup_{\eps\to 0}[f_\eps]_{\gamma} \leq [f]_{\gamma}.
$$
Taking the supremum over all $x\neq y$, yields $\lim_{\eps\to 0}[f_\eps]_{\gamma}=[f]_{\gamma}$. This and uniform convergence of $f_\eps\rightrightarrows f$ proves the first limit in \eqref{eq4}. Convergence in 
the $C_{\rm b}^{0,\beta}$ norm follows from \Cref{T70}.
\end{proof}

\begin{corollary}
\label{T71}
Let $\cM$ be a compact Riemannian manifold with or without boundary, and let $f\in C_{\rm b}^{0,\gamma}(\cM)$, $0<\gamma\leq 1$. Then there is a sequence $(f_k)\subset C^\infty(\overbar{\cM})$ such that $\sup_k\Vert f_k\Vert_{C^{0,\gamma}}\lesssim\Vert f\Vert_{C^{0,\gamma}}$ and $f_k\rightrightarrows f$ uniformly. Therefore, $f_k\to f$ in $C_{\rm b}^{0,\beta}(\cM)$ for any \mbox{$0<\beta<\gamma$}.
\end{corollary}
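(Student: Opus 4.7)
The plan is to reduce the statement to Proposition~\ref{T3} via a partition of unity on $\cM$ combined with McShane-type extensions in local coordinates. By compactness, cover $\cM$ by finitely many good coordinate charts $\{(x_i, U_i)\}_{i=1}^N$, each extending to a larger domain $V_i \Supset U_i$, and fix a smooth partition of unity $\{\varphi_i\}_{i=1}^N$ subordinate to $\{U_i\}$ with $\operatorname{supp}\varphi_i \Subset U_i$.

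For each $i$, let $F_i := f \circ x_i^{-1} : x_i(V_i) \to \R$. Since $x_i^{-1}$ is bi-Lipschitz on compacta and $\gamma \leq 1$, $F_i \in C_{\rm b}^{0,\gamma}(x_i(V_i))$ with norm bounded by a multiple of $\|f\|_{C^{0,\gamma}(\cM)}$. Because $(s,t) \mapsto |s-t|^\gamma$ is a metric on $\R^n$ for every $0 < \gamma \leq 1$ (as $(a+b)^\gamma \leq a^\gamma + b^\gamma$), the classical McShane formula
\[
\tilde{F}_i(x) := \min\!\Bigl(\|F_i\|_\infty,\ \inf_{z \in x_i(V_i)}\bigl(F_i(z) + [F_i]_\gamma\, |x-z|^\gamma\bigr)\Bigr)
\]
produces an extension $\tilde{F}_i \in C_{\rm b}^{0,\gamma}(\R^n)$ with $\|\tilde{F}_i\|_{C^{0,\gamma}} \lesssim \|f\|_{C^{0,\gamma}(\cM)}$. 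Applying Proposition~\ref{T3} to $\tilde{F}_i$ yields mollifications $\tilde{F}_{i,\eps} := \tilde{F}_i * \eta_\eps \in C^\infty(\R^n)$ satisfying $\sup_\eps \|\tilde{F}_{i,\eps}\|_{C^{0,\gamma}} \leq \|\tilde{F}_i\|_{C^{0,\gamma}}$ together with $\tilde{F}_{i,\eps} \rightrightarrows \tilde{F}_i$ uniformly. Pulling back, $f_{i,\eps} := \tilde{F}_{i,\eps} \circ x_i$ lies in $C^\infty(V_i)$ and is smooth up to and across $\partial\cM$ in boundary charts by the very definition of a good coordinate system, with uniformly bounded $C^{0,\gamma}(V_i \cap \cM)$ norm and with $f_{i,\eps} \rightrightarrows f$ uniformly on $V_i \cap \cM$ (since $\tilde{F}_i = F_i$ on $x_i(V_i)$).

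Setting $f_k := \sum_{i=1}^N \varphi_i f_{i,\eps_k}$ for a null sequence $\eps_k \to 0$ (each summand extended smoothly by zero off $V_i$, which is legitimate because $\operatorname{supp}\varphi_i \Subset U_i \Subset V_i$), each summand lies in $C^\infty(\overline{\cM})$ and hence so does $f_k$. The Banach algebra property from Lemma~\ref{T27}, applied chart-by-chart, delivers $\sup_k \|f_k\|_{C^{0,\gamma}(\cM)} \lesssim \|f\|_{C^{0,\gamma}(\cM)}$, and $\sum_i \varphi_i \equiv 1$ yields $f_k - f = \sum_i \varphi_i (f_{i,\eps_k} - f) \rightrightarrows 0$ uniformly on $\cM$. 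Convergence in $C_{\rm b}^{0,\beta}(\cM)$ for every $0 < \beta < \gamma$ then follows immediately from Corollary~\ref{T70}.

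The only nontrivial ingredient is the H\"older extension across chart boundaries, which is classical via McShane applied to the metric $|\cdot|^\gamma$; the good-coordinate-system convention that $x_i$ extends past $U_i$ is precisely what automates smoothness up to and past $\partial\cM$ when pulling back, so no separate boundary analysis is required. The rest is bookkeeping.
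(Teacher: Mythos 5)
Your proof is correct, but it follows a genuinely different route from the paper's. The paper proves the closed case by first invoking the Whitney embedding theorem to realize $\cM$ as a smooth submanifold of $\R^{2n+1}$, then uses the tubular neighborhood theorem and nearest-point projection $\pi$ to build a single \emph{global} extension operator $Ef=\phi\cdot(f\circ\pi)$ mapping $C_{\rm b}^{0,\gamma}(\cM)$ boundedly into $C_c^{0,\gamma}(U)$ for an open $U\subset\R^{2n+1}$, then applies Proposition~\ref{T3} to $Ef$ and restricts back via the trace $T$; for $\partial\cM\neq\varnothing$ it doubles the manifold across the boundary and reduces to the closed case. You instead work chart by chart: pull $f$ back to Euclidean coordinates, extend locally by McShane's formula using the fact that $|\cdot|^\gamma$ satisfies the triangle inequality, mollify via Proposition~\ref{T3} in each chart, pull back, and glue with a partition of unity, invoking the algebra bound of Lemma~\ref{T27} and then Corollary~\ref{T70}. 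Both routes ultimately ride on Proposition~\ref{T3}, but yours is arguably more elementary (no Whitney embedding, no tubular neighborhood, no manifold doubling) and treats the boundary case in a unified way, since good boundary charts extend past $U_i$ by definition; it is closer in spirit to the construction the paper uses in the proof of Proposition~\ref{T6}. The paper's global construction, on the other hand, is cleaner for bookkeeping and is explicitly designed to be reused verbatim in the proof of Corollary~\ref{T50} (as noted in the remark following the corollary), whereas the local chart approach would need to be adapted again there.

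One small remark on a technical point you handled slightly loosely: in your McShane formula the outer $\min$ with $\|F_i\|_\infty$ controls the extension from above, and you should also note that the inf-convolution is automatically bounded below by $-\|F_i\|_\infty$ (since $F_i(z)+[F_i]_\gamma|x-z|^\gamma\geq -\|F_i\|_\infty$), so $\tilde F_i$ is indeed bounded; this is needed before applying Proposition~\ref{T3}, which is stated for $C_{\rm b}^{0,\gamma}(\R^n)$. This is easy but worth saying since boundedness on the manifold does not automatically transfer to a function defined on all of $\R^n$.
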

\begin{proof}
Assume first that $\partial\cM=\varnothing$.
Using the Whitney embedding theorem we may assume that $\cM$ is smoothly embedded into $\R^{2n+1}$, where $n=\dim\cM$. The new Riemannian metric on $\cM$ inherited from the embedding into $\R^{2n+1}$, will be comparable to the original one due to compactness of $\cM$.

According to the tubular neighborhood theorem, see for instance \cite[\textsection 2.12.3]{Simon96}, there is an open set $U\subset\R^{2n+1}$ containing $\cM$, such that the nearest point projection $\pi:U\to\cM$ is uniquely defined, smooth and Lipschitz. 

Let $\phi\in C_c^\infty(U)$ satisfy $\phi=1$ on $\cM$. Then we can define bounded extension and trace operators
$$
C_{\rm b}^{0,\gamma}(\cM)\stackrel{E}{\longrightarrow} C_c^{0,\gamma}(U)\stackrel{T}{\longrightarrow}C_{\rm b}^{0,\gamma}(\cM)
$$
by
$$
Ef(x)=\phi(x)(f\circ\pi)(x)
\quad
\text{and}
\quad
Tf=f|_{\cM},
$$
so $(T\circ E)f=f$ for $f\in C_{\rm b}^{0,\gamma}(\cM)$.
This allows us to conclude the result as a direct consequence of \Cref{T3}.
Indeed, we approximate $f$ by $T\circ(Ef)_\eps$.

If $\partial\cM\neq\varnothing$, we can glue two copies of $\cM$ along the boundary to and obtain a closed manifold $\widetilde{\cM}$, extend $C_{\rm b}^{0,\gamma}(\cM)$ functions to $\widetilde{\cM}$ and apply the above argument to $\widetilde{\cM}$. 
\end{proof}
\begin{remark}
We will use a similar argument in the proof of \Cref{T50}. 
\end{remark}
\begin{remark}
\label{R8}
If in addition the manifold $\cM$ is oriented,
\Cref{T71} easily extends to the case of approximation of forms $\omega\in\Omega^kC_{\rm b}^{0,\gamma}(\cM)$. We just need to replace the extension operator by 
$E\omega(x):=\phi(x)(\pi^*\omega)(x)\in\Omega^kC_c^{0,\gamma}(U)$ and approximate $E\omega$ by a standard mollification in $\bbbr^{2n+1}$. We leave details to the reader.
\end{remark}
\begin{proposition}
\label{T4}
Let $0<\gamma<1$. Then a function  $f\in C_{\rm b}^{0,\gamma}(\bbbr^n)$ 
can be approximated by a sequence smooth functions $f_k\in C^\infty$ 
so that for every compact set $K\subset\bbbr^n$, $\Vert f_k-f\Vert_{C^{0,\gamma}(K)}\to 0$ as $k\to\infty$, if and only if
$f\in C_{\rm b}^{0,\gamma+}(\bbbr^n)$.
\end{proposition}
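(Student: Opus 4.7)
The plan is to prove both implications separately, handling the easier ``approximable $\Rightarrow C^{0,\gamma+}$'' direction first and then using mollification for the converse, with the main subtlety being how to control the Hölder seminorm of $f_\eps-f$ near the diagonal.

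For the forward implication, assume there exist smooth $f_k$ with $\Vert f_k-f\Vert_{C^{0,\gamma}(K)}\to 0$ for every compact $K$. Fix a compact set $K\subset\bbbr^n$ and $\mu>0$. I would first pick $k$ with $\Vert f-f_k\Vert_{C^{0,\gamma}(K)}<\mu/2$; since $f_k$ is smooth, the gradient bound $M_k=\Vert\nabla f_k\Vert_{L^\infty(K)}<\infty$ gives $|f_k(x)-f_k(y)|\leq M_k|x-y|$, so for $x,y\in K$ with $|x-y|<t$,
$$
\frac{|f(x)-f(y)|}{|x-y|^\gamma}\leq \frac{|(f-f_k)(x)-(f-f_k)(y)|}{|x-y|^\gamma}+M_k|x-y|^{1-\gamma}\leq \frac{\mu}{2}+M_k t^{1-\gamma}.
$$
Choosing $t$ small so that $M_kt^{1-\gamma}<\mu/2$ gives $[f]_{\gamma,t,K}\leq\mu$, and since $\mu$ is arbitrary this yields $f\in C_{\rm b}^{0,\gamma+}(\bbbr^n)$.

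For the converse, I take $f_\eps=f*\eta_\eps$ and need to show $\Vert f_\eps-f\Vert_{C^{0,\gamma}(K)}\to 0$. Uniform convergence $f_\eps\rightrightarrows f$ from Proposition~\ref{T3} handles the sup-norm part. For the seminorm on a compact $K$, I would split according to whether $|x-y|$ is small or large. Fix $\eps_0>0$ and let $K_1$ be the closed $\eps_0$-neighborhood of $K$ (still compact). For $\eps<\eps_0$, $\delta>0$, $x,y\in K$ with $|x-y|<\delta$, the translation $x-z,y-z$ stays in $K_1$ for $|z|\leq\eps$, so the identity $f_\eps(x)-f_\eps(y)=\int\eta_\eps(z)(f(x-z)-f(y-z))\,dz$ gives
$$
|(f_\eps-f)(x)-(f_\eps-f)(y)|\leq 2[f]_{\gamma,\delta,K_1}|x-y|^\gamma.
$$
For $|x-y|\geq\delta$, the trivial bound yields $|(f_\eps-f)(x)-(f_\eps-f)(y)|/|x-y|^\gamma\leq 2\Vert f_\eps-f\Vert_\infty/\delta^\gamma$. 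Combining,
$$
[f_\eps-f]_{\gamma,K}\leq 2[f]_{\gamma,\delta,K_1}+\frac{2\Vert f_\eps-f\Vert_\infty}{\delta^\gamma}.
$$

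Given $\mu>0$, I would first use the hypothesis $f\in C_{\rm b}^{0,\gamma+}$ on the compact set $K_1$ to choose $\delta>0$ with $2[f]_{\gamma,\delta,K_1}<\mu/2$, and then exploit uniform convergence $\Vert f_\eps-f\Vert_\infty\to 0$ to pick $\eps$ small enough that $2\Vert f_\eps-f\Vert_\infty<\mu\delta^\gamma/2$. This produces $[f_\eps-f]_{\gamma,K}<\mu$, completing the argument. The main delicate point—really the only one—is the order of quantifiers in the mollification step: $\delta$ must be chosen \emph{before} $\eps$, since the large-distance estimate blows up as $\delta\to 0$, and this is exactly where the $C^{0,\gamma+}$ hypothesis (as opposed to mere $C^{0,\gamma}$) is essential. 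Without it, $[f]_{\gamma,\delta,K_1}$ need not tend to $0$, consistent with the non-density claim in Remark following Proposition~\ref{T3}.
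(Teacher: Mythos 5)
Your proof is correct and follows essentially the same route as the paper: the forward direction is identical (approximate $f$ by a smooth $f_k$, use the mean value theorem on $f_k$ to get the near-diagonal Hölder bound, then shrink the scale $t$); the backward direction uses mollification with the same quantifier order (choose the scale $\delta$ using the $C^{0,\gamma+}$ hypothesis \emph{before} choosing $\eps$, and handle the off-diagonal case via uniform convergence). Your convolution-identity estimate $|f_\eps(x)-f_\eps(y)|\le\int\eta_\eps(z)|f(x-z)-f(y-z)|\,dz$ is just a more explicit rendering of the paper's observation that $f_\eps(x)$ is a weighted average of $f$ over $B(x,\eps)$.
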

\begin{remark}
If $f\in C_{\rm b}^{0,1/2}(\bbbr)$ equals
$|x|^{1/2}$ in a neighborhood of $0$, then $f\in C_{\rm b}^{0,1/2}\setminus C_{\rm b}^{0,1/2+}$ and hence it cannot be approximated by smooth functions on compact sets in the $C_{\rm b}^{0,1/2}$ norm.
\end{remark}
\begin{remark}
The result is false for $\gamma=1$, because $C^{0,1+}$ consists of constant functions only (partial derivatives are equal zero). In the proof of Proposition~\ref{T4} we use the assumption $\gamma<1$ only in one place when we consider the exponent $1/(1-\gamma)$. 
\end{remark}
\begin{proof}
Suppose that $f\in C_{\rm b}^{0,\gamma}(\bbbr^n)$ can be approximated by a sequence of smooth functions
$f_k\in C^\infty$. We need to show that $f\in C_{\rm b}^{0,\gamma+}(\bbbr^n)$.
Let $B_R=B(0,R)$. Let $\eps>0$ be given. Then for a sufficiently large $k$,
$$
|(f_k-f)(y)-(f_k-f)(x)|\leq\frac{\eps}{2}|x-y|^\gamma
\quad
\text{for all $x,y\in B_R$.}
$$
Let $M=\sup_{B_R}|\nabla f_k|$. The mean value theorem yields
$$
|f(y)-f(x)|\leq\frac{\eps}{2}|x-y|^\gamma +|f_k(y)-f_k(x)|\leq
\left(\frac{\eps}{2}+M|x-y|^{1-\gamma}\right)|x-y|^\gamma
$$
so
$$
|f(y)-f(x)|\leq\eps |x-y|^\gamma
\quad
\text{for all $x,y\in B_R$ satisfying $|x-y|<(\eps/2M)^{1/(1-\gamma)}$.}
$$
This proves that $f\in C_{\rm b}^{0,\gamma+}$.

Suppose now that
$f\in C_{\rm b}^{0,\gamma+}(\bbbr^n)$. We will show that the approximation by mollification $f_t$ has the desired property i.e., 
for every ball $B_R$, $\Vert f_t-f\Vert_{C^{0,\gamma}(B_R)}\to 0$ as $t\to 0$. 
Since $f_t\rightrightarrows f$ uniformly, it remains to estimate the constant in the H\"older estimate of the difference $f_t-f$.
Let $\eps>0$ be given. It follows from the definition of $C^{0,\gamma+}$
that there is $R>\tau>0$ such that if $x,y\in B_{2R}$, $|x-y|<\tau$, then $|f(x)-f(y)|\leq \frac{1}{2}\eps|x-y|^\gamma$. Hence,  
$|f_t(x)-f_t(y)|\leq\frac{1}{2}\eps|x-y|^\gamma$ 
if $0<t<R$ and
$x,y\in B_R$ satisfy $|x-y|<\tau$. This easily follows from the definition of $f_t$, because $f_t(x)$ 
is a weighted average of $f$ on the ball $B(x,t)\subset B_{2R}$. 
Therefore,
\begin{equation}
\label{eq18}
|(f_t-f)(x)-(f_t-f)(y)|\leq \eps |x-y|^\gamma 
\quad
\text{for all $x,y\in B_R$ satisfying $|x-y|<\tau$.}
\end{equation}
Let $0<\delta<R$ be such that $\Vert f_t-f\Vert_\infty<\eps\tau^\gamma/2$ for all $0<t<\delta$. 
If $x,y\in B_R$, $|x-y|\geq\tau$, then 
\begin{equation}
\label{eq19}
|(f_t-f)(x)-(f_t-f)(y)|\leq 2\Vert f_t-f\Vert_\infty<\eps\tau^\gamma\leq\eps |x-y|^\gamma.
\end{equation}
We proved in \eqref{eq18} and \eqref{eq19} that if $0<t<\delta$, then
$$
|(f_t-f)(x)-(f_t-f)(y)|\leq \eps |x-y|^\gamma 
\quad
\text{for all $x,y\in B_R$.}
$$
This proves that $f_t\to f$ in $C_{\rm b}^{0,\gamma}(B_R)$. The proof is complete.
\end{proof}
\begin{corollary}
\label{T4.5}
Let $\cM$ be a connected compact Riemannian manifold with or without boundary and let $0<\beta<\gamma\leq 1$. Then
\begin{enumerate}
\item The embedding $C_{\rm b}^{0,\gamma}(\cM)\Subset C_{\rm b}^{0,\beta}(\cM)$ is compact.
\item Every function $f\in C_{\rm b}^{0,\gamma}(\cM)$ can be approximated by $C^\infty(\cM)$ functions in the $C_{\rm b}^{0,\beta}$ norm.
\item If $0<\gamma<1$, then $f\in C_{\rm b}^{0,\gamma}(\cM)$ can be approximated by $C^{\infty}(\cM)$ functions in the $C_{\rm b}^{0,\gamma}$ norm
if and only if $f\in C_{\rm b}^{0,\gamma+}(\cM)$.
\end{enumerate}
\end{corollary}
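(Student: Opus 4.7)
The proof naturally splits along the three parts, with (1) and (2) following quickly from earlier results and the $C^{0,\gamma}$-approximation direction of (3) carrying the substantive work.

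For part (1), I would apply Corollary~\ref{T1} with $X=\cM$. Since $\cM$ is a compact (hence separable) metric space, any sequence bounded in $C^{0,\gamma}_{\rm b}(\cM)$ has a subsequence converging, on every compact $K\subset \cM$, to some $f\in C^{0,\gamma}_{\rm b}(\cM)$ in the $C^{0,\beta}_{\rm b}(K)$-norm. Taking $K=\cM$ gives convergence in the full $C^{0,\beta}_{\rm b}(\cM)$-norm, which is precisely compactness of the embedding. Part (2) is the statement of Corollary~\ref{T71}.

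For the forward direction of (3), namely that approximability by smooth functions in the $C^{0,\gamma}_{\rm b}(\cM)$-norm forces $f\in C^{0,\gamma+}_{\rm b}(\cM)$, I would transcribe the argument of Proposition~\ref{T4} to the manifold. Given $\eps>0$, pick $k$ with $[f-f_k]_\gamma<\eps/2$. Because $\cM$ is compact and $f_k\in C^\infty(\cM)$, the gradient $\nabla f_k$ (in any fixed finite good-coordinate atlas) is globally bounded by some $M<\infty$, and a geodesic mean-value argument yields
\[
|f(x)-f(y)|\leq\left(\tfrac{\eps}{2}+M\,d(x,y)^{1-\gamma}\right)d(x,y)^\gamma.
\]
Requiring $d(x,y)<(\eps/(2M))^{1/(1-\gamma)}$ (where the assumption $\gamma<1$ is used) gives the defining modulus of $C^{0,\gamma+}_{\rm b}(\cM)$.

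The reverse direction of (3) is the main point. My plan is to reduce to Proposition~\ref{T4} via the extension/trace machinery of the proof of Corollary~\ref{T71}. Assume first $\partial\cM=\varnothing$. Whitney-embed $\cM\hookrightarrow\R^{2n+1}$, take a tubular neighborhood $U\supset\cM$ with smooth Lipschitz projection $\pi:U\to\cM$, and fix $\phi\in C_c^\infty(U)$ with $\phi\equiv 1$ on $\cM$. Define $Ef:=\phi\cdot(f\circ\pi)$, extended by zero outside $U$; this lands in $C^{0,\gamma}_{\rm b}(\R^{2n+1})$. The step I expect to be the main obstacle is verifying that $E$ preserves the $C^{0,\gamma+}$ property: composition with the Lipschitz map $\pi$ gives $f\circ\pi\in C^{0,\gamma+}$ on compact subsets of $U$ (directly from Definition~\ref{d1}, with the $\eps,t$ bookkeeping multiplied through by a power of the Lipschitz constant), and multiplying by the smooth bounded cutoff $\phi$ using a Leibniz-type split $|\phi(x)g(x)-\phi(y)g(y)|\leq \|\phi\|_\infty |g(x)-g(y)|+|g(y)|\mathrm{Lip}(\phi)|x-y|$ preserves $C^{0,\gamma+}$ on all of $\R^{2n+1}$ (the second term contributes $O(|x-y|^{1-\gamma})$ which is $o(1)$). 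Once $Ef\in C^{0,\gamma+}_{\rm b}(\R^{2n+1})$, Proposition~\ref{T4} produces smooth $g_k\to Ef$ in the $C^{0,\gamma}$-norm on any ball $B_R\supset\cM$. Restricting, $g_k|_\cM\in C^\infty(\cM)$, and since the Riemannian metric on $\cM$ is comparable to the ambient Euclidean distance (by compactness, as in the proof of Corollary~\ref{T71}), $g_k|_\cM\to f$ in $C^{0,\gamma}_{\rm b}(\cM)$. For $\partial\cM\neq\varnothing$ I would double across the boundary to obtain a closed manifold $\widetilde\cM$, extend $f$ to $\widetilde f\in C^{0,\gamma+}_{\rm b}(\widetilde\cM)$ (e.g.\ by reflection in a collar, which again preserves $C^{0,\gamma+}$ since the reflection is Lipschitz), apply the closed-manifold case, and restrict.
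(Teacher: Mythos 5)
Your proposal is correct and follows the same route the paper indicates: part (1) from Corollary~\ref{T1}, part (2) from Corollary~\ref{T71}, and part (3) by combining Proposition~\ref{T4} with the extension/restriction machinery from the proof of Corollary~\ref{T71} (Whitney embedding, tubular neighborhood projection, cutoff, and reflection in a collar when $\partial\cM\neq\varnothing$). The paper leaves the details of (3) implicit, and your verification that the extension operator preserves the $C^{0,\gamma+}$ property (composition with Lipschitz, multiplication by a smooth cutoff) fills in exactly the bookkeeping the paper elides.
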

\begin{proof}
(1) follows from \Cref{T1}. (2) follows from \Cref{T71} and (3) is a consequence of \Cref{T4} and the proof of \Cref{T71}.
\end{proof}

\subsection{Sobolev spaces}
\label{Sobolev}
There are many excellent textbooks on Sobolev spaces and we assume that the reader is familiar with the material of Chapter~5 in \cite{EG}.

Let $\Omega\subset\bbbr^n$ be open, $1\leq p\leq \infty$, and $m\in\bbbn$. The Sobolev space $W^{m,p}(\Omega)$ is the space of all functions $f\in L^p(\Omega)$ with distributional derivatives satisfying $D^\alpha f\in L^p(\Omega)$, for all $|\alpha|\leq m$. $W^{m,p}(\Omega)$ equipped with the norm
$\Vert f\Vert_{m,p}:=\sum_{k=0}^m\Vert D^k f\Vert_p$
is a Banach space.

We will be mainly interested in the first order Sobolev space $W^{1,p}(\Omega)$, $\Vert f\Vert_{1,p}=\Vert f\Vert_p+\Vert\nabla f\Vert_p$, as well as in the fractional Sobolev spaces defined below.
If $1\leq p<\infty$, $W^{1,p}_0(\Omega)$ is the closure of $C_c^\infty(\Omega)$ in the Sobolev norm.

By analogy with the case of H\"older continuous functions we will use notation
$$
[f]_{1,p}=\Vert Df\Vert_p.
$$
If $\cM$ is a smooth compact manifold, with or without boundary, we define the Sobolev space $W^{1,p}(\cM)$ as the set of all functions $f$ whose representation in a good coordinate system (Section~\ref{GCS}) belongs to $W^{1,p}$. The space $W^{1,p}(\cM)$ is equipped with a norm defined by  a formula similar to \eqref{eq38}.
It is well known that $W^{1,\infty}(\bbbr^n)$ coincides with the space on bounded Lipschitz functions and clearly the same is true for $W^{1,\infty}(\cM)$.

The next result will be useful.
\begin{lemma}
\label{T61}
If $f,g\in W^{1,n}(\Omega;\R^n)$ and $f-g\in W_0^{1,n}(\Omega;\R^n)$, where $\Omega\subset\R^n$ is open, then
\begin{equation}
\label{eq70}
\int_\Omega df_1\wedge\ldots\wedge df_n=\int_\Omega dg_1\wedge\ldots\wedge dg_n.
\end{equation}
\end{lemma}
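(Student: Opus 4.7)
The plan is to exploit the classical \emph{null Lagrangian} structure of the Jacobian: $df_1\wedge\ldots\wedge df_n$ is an exact form whose primitive involves the first coordinate $f_1$ (or any single coordinate), and then to approximate by smooth functions so the boundary terms vanish.

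First I would handle the smooth case. Suppose $\tilde f,\tilde g\in C^\infty(\Omega;\R^n)\cap W^{1,n}(\Omega;\R^n)$ with $\tilde f-\tilde g\in C_c^\infty(\Omega;\R^n)$. Using the telescoping identity
$$
d\tilde f_1\wedge\ldots\wedge d\tilde f_n-d\tilde g_1\wedge\ldots\wedge d\tilde g_n
=\sum_{k=0}^{n-1} d\tilde g_1\wedge\ldots\wedge d\tilde g_k\wedge d(\tilde f_{k+1}-\tilde g_{k+1})\wedge d\tilde f_{k+2}\wedge\ldots\wedge d\tilde f_n,
$$
and the product rule \eqref{eq:dproductrule} together with the fact that the wedge product of exact forms is closed, each summand can be written as
$$
(-1)^{k}\, d\Bigl((\tilde f_{k+1}-\tilde g_{k+1})\, d\tilde g_1\wedge\ldots\wedge d\tilde g_k\wedge d\tilde f_{k+2}\wedge\ldots\wedge d\tilde f_n\Bigr),
$$
which is the exterior differential of a Lipschitz (actually smooth), compactly supported $(n-1)$-form. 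Applying the Stokes theorem \Cref{T78} yields zero for each such integral, hence $\int_\Omega d\tilde f_1\wedge\ldots\wedge d\tilde f_n=\int_\Omega d\tilde g_1\wedge\ldots\wedge d\tilde g_n$.

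Next I would reduce the general case to the smooth one by approximation. By the Meyers--Serrin theorem choose $g^\eps\in C^\infty(\Omega)\cap W^{1,n}(\Omega;\R^n)$ with $g^\eps\to g$ in $W^{1,n}$. Since $f-g\in W_0^{1,n}(\Omega;\R^n)$, by definition there exist $\varphi^\eps\in C_c^\infty(\Omega;\R^n)$ with $\varphi^\eps\to f-g$ in $W^{1,n}$. Set $f^\eps:=g^\eps+\varphi^\eps$; then $f^\eps\in C^\infty(\Omega;\R^n)$, $f^\eps-g^\eps=\varphi^\eps\in C_c^\infty(\Omega;\R^n)$, and $f^\eps\to f$ in $W^{1,n}$. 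The smooth case gives
$$
\int_\Omega df_1^\eps\wedge\ldots\wedge df_n^\eps=\int_\Omega dg_1^\eps\wedge\ldots\wedge dg_n^\eps
\quad\text{for every }\eps>0.
$$

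The last step is to pass to the limit. Using the same telescoping as above and H\"older's inequality with exponents $(n,n,\ldots,n)$, for any $h,\tilde h\in W^{1,n}(\Omega;\R^n)$ one has
$$
\Bigl\|dh_1\wedge\ldots\wedge dh_n-d\tilde h_1\wedge\ldots\wedge d\tilde h_n\Bigr\|_{L^1(\Omega)}
\lesssim \|D(h-\tilde h)\|_{L^n}\bigl(\|Dh\|_{L^n}+\|D\tilde h\|_{L^n}\bigr)^{n-1}.
$$
Applying this with $(h,\tilde h)=(f^\eps,f)$ and $(g^\eps,g)$ shows that both sides of the smooth identity converge to the corresponding integrals with $f$ and $g$, proving \eqref{eq70}. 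The only real care point is the approximation step, where one must combine a global Meyers--Serrin approximation of $g$ with a $C_c^\infty$ approximation of $f-g$ so that the difference of the approximants stays compactly supported inside $\Omega$; everything else is a direct multilinear continuity estimate.
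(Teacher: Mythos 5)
Your proof is correct and follows essentially the same route as the paper: the identical telescoping decomposition of the difference of Jacobians, the smooth case closed via the compactly supported Stokes theorem (\Cref{T78}), the same combined Meyers--Serrin/$C_c^\infty$ approximation $f^\eps=g^\eps+\varphi^\eps$, and passage to the limit by a Hadamard--H\"older bound. The only cosmetic difference is that you phrase the smooth-case cancellation as $\int_\Omega d\omega=0$ for a compactly supported $(n-1)$-form $\omega$, whereas the paper writes the vanishing boundary integral explicitly; these are interchangeable.
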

\begin{proof}
Assume first that $f,g\in C^\infty(\Omega;\bbbr^n)$ and $f-g\in C_c^\infty(\Omega;\bbbr^n)$.
Since
\begin{equation}
\label{eq71}
df_1\wedge\ldots\wedge df_n-dg_1\wedge\ldots\wedge dg_n=
\sum_{i=1}^n dg_1\wedge\ldots\wedge dg_{i-1}\wedge d(f_i-g_i)\wedge df_{i+1}\wedge\ldots\wedge df_n,
\end{equation}
integration and Stokes' theorem yield
$$
\int_\Omega df_1\wedge\ldots\wedge df_n-dg_1\wedge\ldots\wedge dg_n=
\sum_{i=1}^n(-1)^{i-1}\int_{\partial\Omega}
(f_i-g_i)dg_1\wedge\ldots\wedge dg_{i-1}\wedge df_{i+1}\wedge\ldots\wedge df_n=0,
$$
because $f_i-g_i=0$ in a neighborhood of $\partial\Omega$.

Assume now that  $f,g\in W^{1,n}(\Omega;\R^n)$ and $f-g\in W_0^{1,n}(\Omega;\R^n)$. Let
$g^\eps\in C^\infty(\Omega;\bbbr^n)$ and $\phi^\eps\in C_c^\infty(\Omega;\bbbr^n)$ be such that $g^\eps\to g$ and $\phi^\eps\to f-g$ in $W^{1,n}$ as $\eps\to 0^+$. Then $f^\eps:=g^\eps+\phi^\eps\to f$. 

The result is true for $f^\eps$ and $g^\eps$, and it suffices to prove that the integrals \eqref{eq70} for $f^\eps$ and $g^\eps$ converge to the corresponding integrals for $f$ and $g$.

Applying \eqref{eq71} to $f$ and to $f^\eps$ in place of $g$, and using Hadamard's inequality
$$
|dh_1\wedge\ldots\wedge dh_n|\leq |\nabla h_1|\ldots|\nabla h_n|
$$
(the volume of a parallelepiped in bounded by the product of lengths of the edges), we obtain
\begin{equation}
\label{eq78}
|df_1\wedge\ldots\wedge df_n-df^\eps_1\wedge\ldots\wedge df^\eps_n|\leq
\sum_{i=1}^n |\nabla f^\eps_1|\ldots|\nabla f^\eps_{i-1}|\, |\nabla(f_i-f^\eps_i)|\,|\nabla f_{i+1}|\ldots |\nabla f_n|.
\end{equation}
Since the integral of the right hand side converges to zero by H\"older's inequality, it follows that
\begin{equation}
\label{eq73}
\int_{\bbbr^n} df^\eps_1\wedge\ldots\wedge df^\eps_n\to
\int_{\bbbr^n} df_1\wedge\ldots\wedge df_n
\quad
\text{as $\eps\to 0^+$.}
\end{equation}
A similar argument applies to $g$ and $g^\eps$.
\end{proof}

In the remaining part of this section we will discuss fractional Sobolev spaces. While there are many excellent textbooks discussing the theory of Sobolev spaces,
it is much harder to find a good introduction to the theory of fractional Sobolev spaces. 
We refer the reader to Chapters~3 and~4 of \cite{demengel} and Chapters~14 and~15 of \cite{leoni} for proofs of results stated below.
In Remark~\ref{R7} we will provide detailed references.
\begin{definition}
Let $\cM$ be an $n$-dimensional Riemannan manifold without boundary (in particular $\cM$ can be a domain in $\bbbr^n$).
Let $0<s<1$, $1<p<\infty$. The fractional Sobolev space $W^{s,p}(\cM)$ is the space of all functions $f\in L^p(\cM)$ such that 
$\Vert f\Vert_{s,p}= \Vert f\Vert_p+[f]_{s,p}<\infty$, where
$$
[f]_{s,p}=\left(\int_{\cM}\int_{\cM}\frac{|f(x)-f(y)|^p}{d(x,y)^{n+sp}}\, dx\, dy\right)^{1/p}.
$$
The subspace $W^{s,p}_0(\cM)\subset W^{s,p}(\cM)$ is the closure of $C_c^\infty(\cM)$ in the $W^{s,p}$ norm.
\end{definition}
A word of warning. In the literature there are different notions of ``fractional Sobolev spaces''. Another popular choice are the so-called Bessel-potential spaces. The space that we are considering here is often referred to as Sobolev-Slobodeckij space, Besov space, or the trace space (cf. Lemma~\ref{T19}). The seminorm $[f]_{s,p}$ is often called the Gagliardo-norm.
\begin{lemma}
\label{T58}
Let $\cM=\R^n$ or let $\cM$ be a smooth closed $n$-dimensional Riemannian manifold. Let $0<s<1$, $1<p<\infty$. Then $W^{s,p}(\cM)$ is a Banach space and smooth functions $C^\infty(\cM)\cap W^{s,p}(\cM)$ form a dense subset of $W^{s,p}(\cM)$.  
\end{lemma}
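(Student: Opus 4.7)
There are two independent things to show: completeness of $W^{s,p}(\cM)$, and density of $C^\infty(\cM) \cap W^{s,p}(\cM)$. I will treat the Euclidean case $\cM = \R^n$ first and then reduce the manifold case to it.

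\textbf{Completeness.} Given a Cauchy sequence $(f_k) \subset W^{s,p}(\R^n)$, completeness of $L^p$ yields $f \in L^p(\R^n)$ with $f_k \to f$ in $L^p$. Passing to a subsequence we may assume $f_k \to f$ a.e. Applying Fatou's lemma on $\R^n \times \R^n$ to the nonnegative integrands $|f_k(x)-f_k(y)-f_j(x)+f_j(y)|^p / |x-y|^{n+sp}$ (with $j \to \infty$) gives, for each fixed $k$,
\[
[f_k - f]_{s,p}^p \;\leq\; \liminf_{j \to \infty} [f_k - f_j]_{s,p}^p,
\]
and the right-hand side is small uniformly in $k$ large by the Cauchy property. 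Hence $f_k - f \in W^{s,p}$, so $f \in W^{s,p}$, and $f_k \to f$ in $W^{s,p}$.

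\textbf{Density on $\R^n$.} The key observation is that, writing $F_g(x,y) := \big(g(x)-g(y)\big)/|x-y|^{(n+sp)/p}$, one has $[g]_{s,p} = \|F_g\|_{L^p(\R^n \times \R^n)}$, and for a mollification $g_\eps = g*\eta_\eps$ (see \Cref{d2}) the change of variables $z \mapsto x-z$, $z \mapsto y-z$ inside the convolution gives the translation-averaging identity
\[
F_{g_\eps}(x,y) \;=\; \int_{\R^n} F_g(x-z,\, y-z)\,\eta_\eps(z)\,dz.
\]
Minkowski's integral inequality applied to this identity yields $[g_\eps]_{s,p} \leq [g]_{s,p}$. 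For a given $f \in W^{s,p}(\R^n)$, the same identity applied to $g = f_\eps - f$, together with $\int \eta_\eps = 1$, gives
\[
F_{f_\eps - f}(x,y) \;=\; \int_{\R^n} \bigl[F_f(x-z,\,y-z) - F_f(x,y)\bigr]\,\eta_\eps(z)\,dz,
\]
so Minkowski again yields
\[
[f_\eps - f]_{s,p} \;\leq\; \int_{B_\eps} \eta_\eps(z)\,\bigl\| F_f(\cdot - z,\cdot - z) - F_f \bigr\|_{L^p(\R^n \times \R^n)}\,dz.
\]
Since $F_f \in L^p(\R^n \times \R^n)$ and translations are continuous in $L^p$, the integrand tends to $0$ as $z \to 0$, hence the right-hand side vanishes with $\eps$. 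Combined with $\|f_\eps - f\|_p \to 0$ from \eqref{eq3}, this gives $f_\eps \to f$ in $W^{s,p}(\R^n)$, and $f_\eps \in C^\infty(\R^n) \cap W^{s,p}(\R^n)$.

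\textbf{Density on a closed manifold.} Fix a finite covering $\cM = \bigcup_{i=1}^N U_i$ by good coordinate systems $(x_i, U_i)$ (as in \Cref{GCS}) and a subordinate smooth partition of unity $\{\phi_i\}$. For $f \in W^{s,p}(\cM)$, each $\phi_i f$ is supported in $U_i$, and its coordinate representation $(\phi_i f)\circ x_i^{-1}$ is a compactly supported function in $W^{s,p}(\R^n)$ (the coordinate chart has bounded derivatives of all orders and the local Riemannian and Euclidean distances are comparable, which ensures equivalence of the respective Gagliardo seminorms on each chart). Apply the Euclidean mollification to each coordinate representation to produce $C^\infty$ approximants, pull them back to $\cM$, and sum; the result approximates $f = \sum_i \phi_i f$ in $W^{s,p}(\cM)$, and the approximants are smooth on $\cM$.

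\textbf{Main obstacle.} The nontrivial step is handling the Gagliardo double integral under mollification: it is not $f_\eps$ but the ``quotient function'' $F_f(x,y)$ that must be transported by convolution, and one has to recognise that convolution with $\eta_\eps$ acts on $F_f$ by the \emph{diagonal} translation $(x,y) \mapsto (x-z, y-z)$. Once this is noticed, the rest reduces to Minkowski's integral inequality plus $L^p$-continuity of translations on $\R^n \times \R^n$.
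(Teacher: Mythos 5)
The paper does not actually prove Lemma~\ref{T58}; it defers to \cite{demengel} (Propositions~4.24, 4.27) and \cite{leoni} (see Remark~\ref{R7}). So there is no in-paper proof to compare against. Your argument is correct and self-contained, and it is the standard textbook proof: Fatou on the double integral in $\R^n\times\R^n$ for completeness, and the diagonal-translation identity for the quotient function $F_f$ together with Minkowski's integral inequality and $L^p$-continuity of translations for density. You have identified the right key observation.

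Two small remarks. First, the sentence ``the same identity applied to $g = f_\eps - f$'' is imprecise as written: applying the translation identity to $g = f_\eps - f$ would produce $F_{(f_\eps-f)_\eps}$, not $F_{f_\eps - f}$. What you actually use is linearity ($F_{f_\eps-f}=F_{f_\eps}-F_f$), the identity for $F_{f_\eps}$, and $\int\eta_\eps=1$ to express $F_f$ as a weighted average of itself; the displayed formula is correct, only the description of how it arises should be fixed. Second, the manifold reduction silently uses three facts that, while standard, merit at least a pointer: that multiplication by $\phi_i\in C_c^\infty(U_i)$ is bounded on $W^{s,p}$ (split $|\phi_i(x)-\phi_i(y)|\leq\min(2\|\phi_i\|_\infty,\|\nabla\phi_i\|_\infty|x-y|)$, which uses $0<s<1$); that the zero extension from a chart to $\R^n$ is controlled when the support stays a positive distance from the chart boundary (the cross terms in the Gagliardo integral are then dominated by $\|\cdot\|_p$); and that composition with a diffeomorphism with bounded derivatives and bi-Lipschitz inverse is bounded on $W^{s,p}$ (change of variables in the double integral using comparability of $|x-y|$ and $|\Phi(x)-\Phi(y)|$). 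Since the manifold case is half the statement, these should be made explicit or at least cited.
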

\begin{remark}
\label{R11}
With the same technique one can show that smooth forms $\Omega^k(\cM)$ are dense in $\Omega^kW^{s,p}(\cM)$, $0<s<1$, $1<p<\infty$, $0\leq k\leq n$, see also Remark~\ref{R8}.
\end{remark}
\begin{lemma}
\label{T19}
Let $\cM=\R^n_+$ or
let $\cM$ be a smooth compact $n$-dimensional Riemannian manifolds with boundary, $n\geq 2$. 
Let $1<p<\infty$.
Then, there is a unique bounded linear trace operator
$$
\operatorname{Tr}:W^{1,p}(\cM)\to W^{1-\frac{1}{p},p}(\partial \cM)
$$
such that $\operatorname{Tr}f=f|_{\partial\cM}$ if $f\in C^\infty(\overbar{\cM})$. 
Moreover there is a bounded linear extension operator
\begin{equation}
\label{eq39}
\ext_{\partial\cM}:W^{1-\frac{1}{p},p}(\partial \cM)\to W^{1,p}(\cM)
\end{equation}
such that $\operatorname{Tr}\circ\ext_{\partial\cM}=\operatorname{Id}$ on $W^{1-\frac{1}{p},p}(\partial \cM)$.
\end{lemma}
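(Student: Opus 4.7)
The plan is to reduce everything to the model case $\cM=\R^n_+$ via a partition of unity subordinate to a finite cover by good coordinate systems, flatten the boundary, and then handle the trace and extension operators separately on the half-space. Boundedness of both operators on $\cM$ then follows from the half-space estimates because the change of variables induced by a good chart and a smooth cutoff preserve the $W^{1,p}$ and $W^{1-\frac{1}{p},p}$ norms up to multiplicative constants. Uniqueness of $\mathrm{Tr}$ follows from the density of $C^\infty(\overbar{\cM})\cap W^{1,p}(\cM)$ in $W^{1,p}(\cM)$ (the standard mollification-plus-cutoff argument, or Lemma~\ref{T58} adapted to manifolds with boundary).

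For the trace operator on $\R^n_+$, I would first define $\mathrm{Tr}\,f:=f(\cdot,0)$ on smooth functions and prove the a priori bound $\|f(\cdot,0)\|_{W^{1-1/p,p}(\R^{n-1})}\lesssim \|f\|_{W^{1,p}(\R^n_+)}$, then extend by density. The $L^p$ bound $\|f(\cdot,0)\|_p\lesssim \|f\|_{1,p}$ is immediate from the identity $|f(x',0)|^p=-\int_0^\infty \partial_n\bigl(\varphi(x_n)|f(x',x_n)|^p\bigr)\,dx_n$ for a smooth cutoff $\varphi$ with $\varphi(0)=1$, followed by H\"older. The Gagliardo seminorm is the hard part: for a fixed pair $x',y'\in\R^{n-1}$, setting $t=|x'-y'|$ and writing
\[
f(x',0)-f(y',0)=\bigl(f(x',0)-f(x',t)\bigr)+\bigl(f(x',t)-f(y',t)\bigr)+\bigl(f(y',t)-f(y',0)\bigr),
\]
the two outer terms are controlled by $\int_0^t |\partial_n f|$ along vertical segments, while the middle term is controlled by $\int$ of $|\nabla f|$ along the segment joining $(x',t)$ and $(y',t)$. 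Raising to the $p$-th power, dividing by $|x'-y'|^{n-1+(1-1/p)p}=t^{n-1+p-1}$, integrating in $x',y'$, switching the order of integration, and using Fubini together with Hardy-type estimates gives $[f(\cdot,0)]_{1-1/p,p}^p\lesssim \|\nabla f\|_p^p$.

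For the extension operator on $\R^n_+$, I would define, for $g\in W^{1-1/p,p}(\R^{n-1})$,
\[
Eg(x',x_n):=\int_{\R^{n-1}} \rho\!\left(\frac{x'-y'}{x_n}\right)g(y')\,\frac{dy'}{x_n^{n-1}}\cdot\chi(x_n),
\]
where $\rho\in C_c^\infty(B^{n-1}(0,1))$ has integral one and $\chi\in C_c^\infty([0,\infty))$ equals $1$ near $0$. The identity $\mathrm{Tr}\circ E=\mathrm{Id}$ follows from standard mollifier approximation as $x_n\to 0^+$. To estimate $\|Eg\|_{W^{1,p}}$, compute $\nabla_{x'}Eg$ and $\partial_{x_n}Eg$; crucially, a cancellation argument exploiting $\int \rho=1$ (so $\int \nabla\rho=0$) allows one to rewrite the derivatives as
\[
\partial_i Eg(x',x_n)=\frac{1}{x_n^n}\int_{\R^{n-1}}\partial_i\rho\!\left(\frac{x'-y'}{x_n}\right)\bigl(g(y')-g(x')\bigr)\,dy',
\]
and similarly for $\partial_{x_n}Eg$ (modulo lower-order terms from $\chi$). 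Applying H\"older, the inner integral is bounded by $\bigl(\mvint_{B(x',x_n)}|g(y')-g(x')|^p\,dy'\bigr)^{1/p}x_n^{-1}$, whence
\[
\int_0^\infty\!\!\int_{\R^{n-1}}|\nabla Eg|^p\,dx'\,dx_n\lesssim \int_0^\infty\!\!\int_{\R^{n-1}}\mvint_{B(x',x_n)}\frac{|g(y')-g(x')|^p}{x_n^p}\,dy'\,dx'\,dx_n,
\]
and Fubini in $x_n$ against the weight $dx_n/x_n^p$ (integrating $x_n$ from $|x'-y'|$ to $\infty$) converts this to the Gagliardo integral $[g]_{1-1/p,p}^p$, as desired.

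The main obstacle is the extension estimate: one has to arrange the cancellation carefully so that $\partial_i\rho$ acts on the difference $g(y')-g(x')$ (yielding the Gagliardo quotient after dividing by $x_n$), and then verify the Fubini computation $\int_{|x'-y'|}^\infty x_n^{-p-1}\,dx_n\approx |x'-y'|^{-p}$ combines with the remaining $dx'\,dy'$ integration to produce exactly the weight $|x'-y'|^{-(n-1+p-1)}$ appearing in $[g]_{1-1/p,p}^p$. This is where one sees why the exponent on the boundary is precisely $1-\frac{1}{p}$. Passage from $\R^n_+$ to a general smooth compact manifold with boundary is then routine using good boundary charts and a partition of unity, since both the $W^{1,p}$ norm on $\cM$ and the $W^{1-\frac{1}{p},p}$ norm on $\partial\cM$ are defined via such charts (cf.\ \eqref{eq38}).
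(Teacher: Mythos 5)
The paper does not in fact prove Lemma~\ref{T19}: it defers to the literature (Remark~\ref{R7} cites Demengel--Demengel and Leoni), stating explicitly that no simple proof is known, and only provides a self-contained argument for the weaker extension $C_{\rm b}^{0,\gamma}(\partial\cM)\to W^{1,p}(\cM)$ in \Cref{S5}. So there is no ``paper's own proof'' to compare against. That said, your proposal is a correct outline of the standard proof of the full result, and notably your extension operator
$$
Eg(x',x_n)=\frac{\chi(x_n)}{x_n^{n-1}}\int_{\R^{n-1}}\rho\Big(\frac{x'-y'}{x_n}\Big)g(y')\,dy'
$$
is precisely the Gagliardo extension $\ext_R g(x,t)=\psi(t/R)(g*\eta_t)(x)$ that the paper introduces in Definition~\ref{D4} and uses in Lemmata~\ref{T82},~\ref{T5} and Lemma~\ref{T59} (the latter being the exact half-space bound you re-derive). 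Your cancellation argument $\int\nabla\rho=0$ to rewrite $\nabla_{x'}Eg$ in terms of $g(y')-g(x')$ is the same idea as identity~\eqref{eq5} in the paper.

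Two small points. First, in the final Fubini computation for the extension, the inner $x_n$-integral should carry the weight $x_n^{-(n-1+p)}$, not $x_n^{-p-1}$: the $\mvint_{B(x',x_n)}$ contributes an extra factor $x_n^{-(n-1)}$, and it is $\int_{|x'-y'|}^\infty x_n^{-(n-1+p)}\,dx_n\approx|x'-y'|^{-(n+p-2)}$ that reproduces the Gagliardo kernel $|x'-y'|^{-((n-1)+sp)}$ with $s=1-\frac1p$. (You wrote $x_n^{-p-1}$ but then landed on the correct final exponent, so this is a transcription slip rather than an error.) Second, in the trace estimate it is worth emphasizing that for the vertical pieces $f(x',0)-f(x',t)$ one must \emph{not} apply Jensen before Fubini: one needs to keep $\big(\frac1t\int_0^t|\partial_n f|\big)^p$ intact and invoke the one-dimensional Hardy inequality in the radial variable $r=|x'-y'|$ (as you indicate with ``Hardy-type estimates''), since a premature Jensen produces a divergent logarithm. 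You should also split the Gagliardo seminorm into $|x'-y'|<1$ and $|x'-y'|\ge 1$ and control the far part by the $L^p$ trace bound; otherwise the Hardy argument does not close.

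In summary: the proposal takes the classical Gagliardo route, which is the same construction the paper adopts (but does not fully prove) for the extension operator, supplemented by the standard trace estimate via vertical/horizontal segment decomposition and Hardy's inequality; this is a legitimate and complete plan, modulo the bookkeeping corrections above.
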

In particular, if $\Omega$ is a bounded domain with smooth boundary, then Sobolev functions in $W^{1,p}(\Omega)$, $1<p<\infty$, have a well defined trace on the boundary that belongs to the fractional Sobolev space $W^{1-\frac{1}{p},p}(\partial\Omega)$. 

\begin{remark}
\label{R7}
For a proof of Lemma~\ref{T58}, see Proposition~4.27 and Proposition~4.24 in \cite{demengel}, and for a proof of Lemma~\ref{T19}, see Proposition~3.31 and Theorem~3.9 in \cite{demengel}. 
The reader may also find proofs of both Lemmata~\ref{T58} and~\ref{T19} in \cite{leoni}. 

While the proofs presented in \cite{demengel} are mostly in the Euclidean case, the extension to the case of manifolds is straightforward (use partition of unity and local coordinate systems). In fact \cite[Proposition~3.31]{demengel} deals with the case of traces on the boundary of a smooth domain and the argument needed for the case of manifolds is similar.

While we will not prove the above results, in the next section we will show the construction of the extension operator. 
\end{remark}

\begin{corollary}
\label{T60}
Let $\cM$ be a smooth closed $n$-dimensional Riemannian manifold and $1<p<\infty$, $1-\frac{1}{p}<\gamma\leq 1$. Then the embedding $C_{\rm b}^{0,\gamma}(\cM)\Subset W^{1-\frac{1}{p},p}(\cM)$ is compact.
\end{corollary}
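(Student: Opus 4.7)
The plan has two steps: first establish the continuous embedding $C_{\rm b}^{0,\gamma}(\cM) \hookrightarrow W^{1-\frac{1}{p},p}(\cM)$ for every $\gamma \in (1-\tfrac{1}{p}, 1]$, and then factor through a slightly weaker H\"older space to gain compactness.

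For the continuous embedding, set $s = 1-\tfrac{1}{p}$, so the hypothesis reads $\gamma > s$, equivalently $(\gamma-s)p > 0$. Given $f \in C_{\rm b}^{0,\gamma}(\cM)$, we have $f \in L^p(\cM)$ automatically since $\cM$ is compact and $f$ is bounded. For the Gagliardo seminorm, the bound $|f(x)-f(y)| \leq [f]_\gamma\, d(x,y)^\gamma$ gives
\begin{equation*}
[f]_{s,p}^p \;\leq\; [f]_\gamma^p \int_\cM \int_\cM \frac{d(x,y)^{\gamma p}}{d(x,y)^{n+sp}} \, dx\, dy \;=\; [f]_\gamma^p \int_\cM \int_\cM d(x,y)^{-n+(\gamma-s)p}\, dx\, dy.
\end{equation*}
For every fixed $y \in \cM$, covering $\cM$ by finitely many good coordinate charts and comparing with the radial integral in polar coordinates, one has $\int_\cM d(x,y)^{-n+(\gamma-s)p}\, dx \lesssim \int_0^{\diam \cM} r^{(\gamma-s)p - 1}\, dr < \infty$, with a bound independent of $y$. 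Integrating in $y$ yields $[f]_{s,p} \lesssim [f]_\gamma$, hence $\Vert f\Vert_{s,p} \lesssim \Vert f\Vert_{C^{0,\gamma}}$.

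For compactness, pick any $\beta$ with $1-\tfrac{1}{p} < \beta < \gamma$. By \Cref{T4.5}(1), the embedding $C_{\rm b}^{0,\gamma}(\cM) \Subset C_{\rm b}^{0,\beta}(\cM)$ is compact. Applying the first step with $\beta$ in place of $\gamma$ (legal since $\beta > 1-\tfrac{1}{p}$), the embedding $C_{\rm b}^{0,\beta}(\cM) \hookrightarrow W^{1-\frac{1}{p},p}(\cM)$ is continuous. The composition of a compact operator with a continuous one is compact, so the embedding $C_{\rm b}^{0,\gamma}(\cM) \Subset W^{1-\frac{1}{p},p}(\cM)$ is compact, as desired.

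The only nontrivial point is the convergence of the Gagliardo integral, which is precisely where the hypothesis $\gamma > 1 - 1/p$ enters; once that is established, compactness is a formal consequence of the already proven compact embedding between H\"older spaces.
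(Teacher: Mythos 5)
Your proof is correct and follows essentially the same route as the paper: pick an intermediate exponent $\beta$ (the paper calls it $\gamma'$) with $1-\tfrac{1}{p}<\beta<\gamma$, use the compact embedding $C_{\rm b}^{0,\gamma}(\cM)\Subset C_{\rm b}^{0,\beta}(\cM)$ from \Cref{T4.5}, and compose with the continuous embedding $C_{\rm b}^{0,\beta}(\cM)\subset W^{1-\frac{1}{p},p}(\cM)$. The only difference is that you explicitly verify the Gagliardo seminorm bound (where the paper merely asserts it "easily follows from the definition"), which is a welcome addition and done correctly.
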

\begin{proof}
Let  $1-\frac{1}{p}<\gamma'<\gamma$. Boundedness of the embedding 
$C_{\rm b}^{0,\gamma'}(\cM)\subset W^{1-\frac{1}{p},p}(\cM)$ easily follows from the definition of the fractional Sobolev space so the result follows from Theorem~\ref{T4.5}: 
$C_{\rm b}^{0,\gamma}(\cM)\Subset C_{\rm b}^{0,\gamma'}(\cM)\subset W^{1-\frac{1}{p},p}(\cM)$.
\end{proof}

\subsection{The Gagliardo extension}
\label{S5}
In this section we will explain the construction of an extension operator from Lemma~\ref{T19} in detail.
While we will not prove Lemma~\ref{T19}, we will 
prove a result weaker than \eqref{eq39}, the existence of a bounded extension 
\begin{equation}
\label{eq72}
\ext_{\partial\cM}:C_{\rm b}^{0,\gamma}(\partial\cM)\to W^{1,p}(\cM),
\quad
1-\frac{1}{p}<\gamma\leq 1.
\end{equation}
see Corollary~\ref{T57}. 
\begin{definition}[Gagliardo extension]
\label{D4}
Let $\eta$ be a mollifier kernel as in Definition~\ref{d2} and let $\psi\in C^\infty([0,\infty))$ be a cut-off function
satisfying $0\leq\psi\leq 1$, $\psi(t)=1$ for $0\leq t\leq 1/2$ and $\psi(t)=0$ for $t\geq 3/4$.
For a function $f\in L^1(\bbbr^n)$  we define the extension operator $\ext_R f$ by
\begin{equation}
\label{eq36}
\ext_R f(x,t)=\psi(t/R)(f*\eta_t)(x)
\quad
\text{for $x\in \bbbr^n$ and $t>0$.}
\end{equation}
\end{definition}
Note that $\ext_Rf\in C^\infty(\bbbr^n\times(0,\infty))$. Moreover if $f\in L^1(\R^n)$ equals zero outside the ball $B_R$,
$\ext_Rf$ has compact support inside $B_{2R}\times[0,R)$. 
That is, it does not necessarily vanish at the bottom part of the boundary $B_R\times \{0\}$, but it equals zero in a neighborhood of
the remaining part of the boundary $S_{2R}\times[0,R]\cup B_{2R}\times\{R\}$.

\begin{lemma}
\label{T59}
Let $1<p<\infty$ and let $n\geq 2$, then
$$
\Vert \ext_R f\Vert_{W^{1,p}(\R^n_+)}\lesssim \Vert f\Vert_{W^{1-\frac{1}{p},p}(\R^{n-1})}
\quad
\text{for any $R>0$ and $f\in W^{1-\frac{1}{p},p}(\R^{n-1})$.}
$$
\end{lemma}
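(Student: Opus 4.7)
The plan is to decompose $\|\ext_R f\|_{W^{1,p}(\R^n_+)}^p$ into $\|\ext_R f\|_p^p$, the spatial gradient $\sum_{i=1}^{n-1}\|\partial_{x_i}\ext_R f\|_p^p$, and the transversal derivative $\|\partial_t\ext_R f\|_p^p$, and to estimate each piece separately, applying Definition~\ref{D4} with $n$ replaced by $n-1$ so that $\eta$ and $f$ live on $\R^{n-1}$. The $L^p$ part is immediate: $\psi(\cdot/R)$ is supported in $[0,3R/4]$ and $\|f*\eta_t\|_p\leq \|f\|_p$ by \eqref{eq3}, whence $\|\ext_R f\|_p\lesssim R^{1/p}\|f\|_p\leq R^{1/p}\|f\|_{1-1/p,p}$. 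The two gradient terms both require the cancellation identity from \eqref{eq5}.

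For $i=1,\ldots,n-1$, I would first differentiate under the convolution and use $\int_{\R^{n-1}}\partial_i\eta=0$ to write
\[
\partial_{x_i}(f*\eta_t)(x) = -t^{-n}\int_{B(0,t)}(f(x-z)-f(x))(\partial_i\eta)(z/t)\,dz.
\]
Boundedness of $\partial_i\eta$ combined with Jensen's inequality then yields
\[
|\partial_{x_i}(f*\eta_t)(x)|^p \lesssim t^{-p}\,\mvint_{B(0,t)}|f(x-z)-f(x)|^p\,dz.
\]
Integrating over $x\in\R^{n-1}$ and $t\in (0,3R/4)$ (with $\psi(t/R)^p\leq 1$), using Fubini to swap the order of integration and passing to polar-type coordinates in $z$, one arrives at
\[
\int_0^{3R/4}\!\!\int_{\R^{n-1}}|\partial_{x_i}(f*\eta_t)(x)|^p\,dx\,dt \lesssim \int_{\R^{n-1}}\|f(\cdot-z)-f\|_p^p\int_{|z|}^{3R/4}\frac{dt}{t^{p+n-1}}\,dz \lesssim \int_{\R^{n-1}}\frac{\|f(\cdot-z)-f\|_p^p}{|z|^{n+p-2}}\,dz,
\]
where the inner $t$-integral converges because $p+n-1>1$ is guaranteed by the assumption $n\geq 2$ together with $p>1$. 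Since $(n-1)+(1-\tfrac{1}{p})p=n+p-2$, the last expression is precisely $[f]_{1-1/p,p}^p$.

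For the transversal derivative I split $\partial_t\ext_R f = R^{-1}\psi'(t/R)(f*\eta_t) + \psi(t/R)\,\partial_t(f*\eta_t)$. The first summand, supported in $t\in[R/2,3R/4]$, is controlled in $L^p$ by $R^{1/p-1}\|f\|_p$. For the second, a direct computation gives $\partial_t\eta_t(z)=t^{-n}G(z/t)$ with $G(z):=-(n-1)\eta(z)-\nabla\eta(z)\cdot z$; an integration by parts shows $\int_{\R^{n-1}} G=-(n-1)-\sum_{i}\int z_i\partial_i\eta =0$, so the argument for $\partial_{x_i}$ applies verbatim with $\partial_i\eta$ replaced by $G$, giving the same bound by $[f]_{1-1/p,p}^p$. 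The main obstacle is the bookkeeping of the powers of $t$ so that they match the Gagliardo exponent $s=1-\tfrac{1}{p}$ exactly; once that identification is made, adding the three contributions gives the claimed inequality with an implicit constant depending on $n$, $p$, $R$, $\eta$, and $\psi$.
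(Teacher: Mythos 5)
The paper does not actually prove Lemma~\ref{T59}; it cites \cite[Proposition~3.62]{demengel} and \cite{leoni} and only proves the easier H\"older analogue (Lemma~\ref{T82}, Lemma~\ref{T5}). Your argument is correct and supplies a self-contained proof of the missing lemma.

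Structurally you use the same cancellation mechanism $\int \partial_i\eta = \int G = 0$ that the paper exploits in the proof of Lemma~\ref{T82}, but in place of the pointwise H\"older bound you integrate in $(x,t)$, pass through Jensen, and Fubini the double integral into the Gagliardo kernel. All the steps check out: the bound
$|\partial_{x_i}(f*\eta_t)(x)|^p\lesssim t^{-p}\mvint_{B_t}|f(x-z)-f(x)|^p\,dz$
combined with the two Fubini swaps gives
$\int_{\R^{n-1}}\|f(\cdot-z)-f\|_p^p\,|z|^{-(n+p-2)}\,dz$,
and since $(n-1)+(1-\tfrac1p)p=n+p-2$ this is exactly $[f]_{1-1/p,p}^p$ on $\R^{n-1}$; the condition $p+n-1>1$ needed for the inner $t$-integral is automatic from $p>1$, $n\geq2$. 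The computation $\int_{\R^{n-1}}G = -(n-1)-\sum_{i=1}^{n-1}\int z_i\partial_i\eta = -(n-1)+(n-1)=0$ is also correct, so the $\partial_t$ term really does reduce to the same estimate. Your implicit constant depends on $R$ (through the $L^p$ term $\sim R^{1/p}\|f\|_p$ and the cutoff term $\sim R^{1/p-1}\|f\|_p$), which is the right reading of the lemma — the constant cannot be uniform in $R\to 0$ — and is consistent with the paper's use of the extension (with $R=1$) later on. A minor stylistic point: the inner $t$-integral is a finite integral over a finite interval, so it does not ``converge'' in the usual sense; what the inequality $p+n-1>1$ buys you is the clean bound $|z|^{-(n+p-2)}$ after extending the upper limit to $\infty$. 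This is cosmetic and does not affect the argument.
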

For a proof see \cite[Proposition~3.62]{demengel} and also \cite{leoni}.
Using this result, partition of unity and local coordinates one can construct the extension \eqref{eq39}. Below, we will provide a detailed construction of such an extension, and we will prove boundedness of \eqref{eq72},
see Corollary~\ref{T57}. A similar proof based on Lemma~\ref{T59} leads to \eqref{eq39}. We leave details to the reader.

We will need the following fact.
\begin{lemma}
\label{T81}
Let $\Omega\subset\R^n$ be a bounded Lipschitz domain, $\gamma\in (0,1]$, and $m>0$. If $F\in C^1(\Omega)$ satisfies 
$$
|\nabla F(x)|\leq m\dist (x,\partial\Omega)^{\gamma-1}
\quad
\text{for all }
x\in\Omega,
$$
then $F\in C_{\rm b}^{0,\gamma}(\Omega)$ and $[F]_{\gamma}\leq C(\Omega,\gamma)m$.
\end{lemma}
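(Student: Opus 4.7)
The plan is to bound $|F(x)-F(y)|$ for $x,y \in \Omega$ by integrating the gradient estimate along a path in $\Omega$ whose length is comparable to $d := |x-y|$ and along which $\dist(\cdot,\partial\Omega)^{\gamma-1}$ remains integrable. Boundedness of $F$ then follows from boundedness of $\Omega$: fix any basepoint $x_0\in\Omega$ and apply the H\"older estimate to conclude $|F(x)| \le |F(x_0)| + [F]_\gamma \diam(\Omega)^\gamma$. Write $d_x := \dist(x,\partial\Omega)$ and $d_y := \dist(y,\partial\Omega)$.

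If $d \le \tfrac12 \max(d_x,d_y)$, say $d \le d_y/2$, then every point on the segment $[x,y]$ lies in $\Omega$ at distance at least $d_y - d \ge d$ from $\partial\Omega$. Since $\gamma-1 \le 0$, the hypothesis yields $|\nabla F| \le m d^{\gamma-1}$ along the segment, and the one-dimensional fundamental theorem of calculus gives $|F(x)-F(y)| \le m d^\gamma$ directly.

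Otherwise $d_x,d_y < 2d$, and the direct segment may leave $\Omega$ or come too close to $\partial\Omega$, so I replace it by a piecewise-linear detour. The Lipschitz hypothesis on $\partial\Omega$ supplies a uniform interior cone condition: constants $h_0,\lambda > 0$ depending only on $\Omega$ such that for every $z \in \Omega$ one can select a unit vector $v_z$ (pointing transversely into $\Omega$ in the sense of the nearest Lipschitz boundary chart) with $\dist(z + s v_z, \partial\Omega) \ge \lambda s$ for $0 \le s \le h_0$. Set $\rho := \min(h_0,d)$ and $x^* := x + \rho v_x$, $y^* := y + \rho v_y$, so $|x - x^*|, |y - y^*| \le d$ and $\dist(x^*,\partial\Omega), \dist(y^*,\partial\Omega) \ge \lambda\rho$. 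Integrating along the ``lifting'' segment,
$$|F(x) - F(x^*)| \le m \int_0^\rho (\lambda s)^{\gamma-1}\,ds = \frac{m\lambda^{\gamma-1}}{\gamma}\rho^\gamma \lesssim m d^\gamma,$$
the integral converging thanks to $\gamma > 0$, and analogously for $[y,y^*]$. It remains to control $|F(x^*) - F(y^*)|$: when $\rho = d$, one has $|x^* - y^*| \le 3d$ and both endpoints sit at distance $\ge \lambda d$ from $\partial\Omega$, so (possibly after a short detour inside the relevant Lipschitz chart) the connecting path lies in $\{\dist(\cdot,\partial\Omega) \ge \lambda d/2\}$ and the first case applies; when $\rho = h_0 < d$, then $d \ge h_0$ forces $d^\gamma \gtrsim_\Omega 1$, and I route $x^* \to z_0 \to y^*$ through a fixed ``core'' point $z_0 \in \Omega$ along a uniformly interior path, so $|F(x^*)-F(y^*)| \le C(\Omega,\gamma)m \lesssim m d^\gamma$.

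The main obstacle is geometric rather than analytic: the analytic heart of the argument is the one-dimensional estimate $\int_0^\rho s^{\gamma-1}\,ds = \rho^\gamma/\gamma$, which is immediate once $\gamma > 0$. The nontrivial step is extracting the cone directions $v_z$ uniformly from the finitely many Lipschitz charts of $\partial\Omega$ and verifying that the middle segment $[x^*,y^*]$ (or its short replacement) stays in $\Omega$ uniformly away from $\partial\Omega$. This is precisely where the Lipschitz assumption is used---for a cusped boundary the cone condition fails and the conclusion of the lemma is false.
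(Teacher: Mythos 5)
Your approach is essentially the paper's: both proofs bound $|F(x)-F(y)|$ by integrating $|\nabla F|$ along a path from $x$ to $y$ whose length is $\lesssim |x-y|$ and which stays at distance $\gtrsim \min(t,L-t)$ from $\partial\Omega$, and both then reduce to the one-dimensional integral $\int_0^{L/2} t^{\gamma-1}\,dt \lesssim L^\gamma$. The difference is how that path is produced. The paper simply cites the fact that bounded Lipschitz domains are \emph{uniform domains} (the ``cigar'' condition), which hands you such a curve in one stroke and makes the proof two lines. You instead set out to rebuild the cigar path from the interior cone condition, and this is where your argument has a gap.

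Concretely, in your second case after lifting to $x^* = x+\rho v_x$, $y^* = y + \rho v_y$ with $\rho=d$, you have $|x^*-y^*|\le 3d$ and $\dist(x^*,\partial\Omega),\dist(y^*,\partial\Omega)\ge \lambda d$. But your Case~1 requires $|x^*-y^*|\le \tfrac12\max(\dist(x^*,\partial\Omega),\dist(y^*,\partial\Omega))$, which would need $3d\le \tfrac12\lambda d$, i.e. $\lambda\ge 6$; the cone-aperture constant $\lambda$ coming from a Lipschitz graph is typically small, so Case~1 does not directly apply. The parenthetical ``possibly after a short detour inside the relevant Lipschitz chart'' is precisely the missing construction: one must either subdivide the middle segment into $O(1/\lambda)$ pieces and iterate, or lift further and build a genuine cigar-shaped arc inside the chart (and say something about $x,y$ possibly lying in different charts). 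None of this is hard, but it is the entire content of the uniform-domain property, so at present your proof reduces the lemma to an unproved claim of the same geometric depth. Either carry the subdivision/chart argument to completion, or do what the paper does and invoke the uniformity of bounded Lipschitz domains as a citable fact; the analytic half (Case~1 and the lifting integrals) is correct and matches the paper's computation.
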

\begin{proof}
If $x,y\in\Omega$, then we can find a curve $\gamma:[0,L]\to\Omega$ parametrized by arc-length, such that
$$
L\lesssim |x-y|
\qquad
\text{and}
\qquad
\dist(\gamma(t),\partial\Omega)\gtrsim\min\{t,L-t\}.
$$
Here the implied constants depend on $\Omega$ only.
This is a well known and easy to prove fact. It is also a consequence of the fact that bounded Lipschitz domains are uniform, see e.g., \cite[p. 225]{HM} and references therein. Now we have
\[
\begin{split}
|F(x)-F(y)|
&\leq
\int_0^L|\nabla F(\gamma(t))|\, dt\leq
m\int_0^L \dist(\gamma(t),\partial\Omega)^{\gamma-1}\, dt\\
&\lesssim
m\int_0^L\min \{ t,L-t\}^{\gamma-1}\, dt=
2m\int_0^{L/2}t^{\gamma-1}\, dt=
\frac{2^{1-\gamma}m}{\gamma} L^\gamma\lesssim m|x-y|^\gamma.
\end{split}
\]
The proof is complete.
\end{proof}

The next two results provide basic local estimates for the extension operator in the H\"older case.
\begin{lemma}
\label{T82}
Let $\gamma\in (0,1]$. Assume that a function $f\in C_{\rm b}^{0,\gamma}(\R^n)$ vanishes outside $B_R$. Then
$\ext_Rf\in C^\infty(\R^{n+1}_+)$ satisfies
$$
|D(\ext_Rf)(x,t)|\lesssim [f]_{\gamma}\, t^{\gamma-1}.
$$
\end{lemma}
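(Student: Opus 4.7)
The plan is straightforward: smoothness of $\ext_R f$ on $\R^{n+1}_+$ is immediate from the formula, since $\psi$ is smooth and $x\mapsto (f*\eta_t)(x)$ depends smoothly on $(x,t)$ for $t>0$. So the content is the pointwise bound, which I would establish by separately estimating spatial and time derivatives.

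For the spatial derivatives I would use the key identity \eqref{eq5}. Namely,
\[
\partial_{x_i}(\ext_R f)(x,t)=\psi(t/R)\,t^{-n-1}\int_{B_t}(f(x-z)-f(x))(\partial_{x_i}\eta)(z/t)\,dz,
\]
and the H\"older bound $|f(x-z)-f(x)|\leq [f]_\gamma |z|^\gamma$ together with $|z|\leq t$ on the support of the integrand gives $|\partial_{x_i}(\ext_R f)(x,t)|\lesssim [f]_\gamma t^{\gamma-1}$, uniformly in $R$.

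For $\partial_t(\ext_R f)(x,t)$ I would split
\[
\partial_t(\ext_R f)(x,t)=R^{-1}\psi'(t/R)(f*\eta_t)(x)+\psi(t/R)\,\partial_t(f*\eta_t)(x).
\]
For the second summand, the crucial observation is that $\int_{\R^n}\partial_t\eta_t(z)\,dz=\partial_t\int\eta_t=0$, which lets me insert a $-f(x)$ and estimate, exactly as in \eqref{eq5},
\[
\partial_t(f*\eta_t)(x)=\int_{B_t}(f(x-z)-f(x))\,\partial_t\eta_t(z)\,dz.
\]
A direct computation gives $\partial_t\eta_t(z)=-n t^{-n-1}\eta(z/t)-t^{-n-2}z\cdot\nabla\eta(z/t)$, hence $|\partial_t\eta_t(z)|\lesssim t^{-n-1}$ on $|z|\le t$, and the H\"older estimate yields $|\partial_t(f*\eta_t)(x)|\lesssim [f]_\gamma t^{\gamma-1}$.

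The term I expect to be the minor pitfall is $R^{-1}\psi'(t/R)(f*\eta_t)(x)$: here $\psi'$ introduces an $R^{-1}$ that a priori has nothing to do with $t^{\gamma-1}$. The way out is that $\psi'(t/R)$ is supported on $t\in[R/2,3R/4]$, so on this support $t\approx R$ and $R^{\gamma-1}\approx t^{\gamma-1}$. Combined with the global bound $\Vert f\Vert_\infty\lesssim [f]_\gamma R^\gamma$ — which follows from continuity of $f$, its vanishing outside $B_R$, and the H\"older inequality applied to any $x\in B_R$ and $y\in\partial B_R$ with $|x-y|\leq R$ — we get $|R^{-1}\psi'(t/R)(f*\eta_t)(x)|\lesssim R^{-1}[f]_\gamma R^\gamma\lesssim [f]_\gamma t^{\gamma-1}$ on the support, and zero elsewhere. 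Collecting the three contributions yields the claim.
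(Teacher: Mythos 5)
Your proof is correct and takes essentially the same approach as the paper: it uses the same three observations (subtracting the constant $f(x)$ via $\int\partial_t\eta_t=0$, the bound $\Vert f\Vert_\infty\lesssim[f]_\gamma R^\gamma$ from $f$ vanishing outside $B_R$, and the bound $|\partial_t\eta_t|\lesssim t^{-n-1}\chi_{B_t}$). The only cosmetic difference is that you localize the $\psi'$-term to its support $t\in[R/2,3R/4]$ to get $R^{\gamma-1}\approx t^{\gamma-1}$ there, whereas the paper first obtains the global bound $\lesssim[f]_\gamma(R^{\gamma-1}+t^{\gamma-1})$ and then invokes $\ext_Rf\equiv 0$ for $t\geq R$ to replace $R^{\gamma-1}$ by $t^{\gamma-1}$.
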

\begin{proof}
Since $\int_{\bbbr^n}\partial\eta_t/\partial x_i\, dx =0$ we can subtract the constant $f(x)$ from 
the function $f$ in the following estimate
\begin{equation*}
\begin{split}
\left|\frac{\partial}{\partial x_i}(\ext_R f(x,t))\right|
&=
\left|\psi(t/R)(f-f(x))*\frac{\partial\eta_t}{\partial x_i}(x)\right|
\lesssim
t^{-n-1}\int_{B_t}|f(x-z)-f(x)|\, dz\\
&\lesssim
t^{-n-1}t^n[f]_{\gamma}\, t^\gamma 
= 
t^{\gamma-1}[f]_{\gamma}.
\end{split}
\end{equation*}
The estimate for the partial derivative with respect to $t$ is similar, but slightly more involved. We will need three simple observations
\begin{itemize}
\item When taking the derivative $\partial/\partial t$, we can subtract the constant number $f(x)$;
\item Since $f$ vanishes on the boundary of $B_R$, $\Vert f\Vert_\infty\leq [f]_{\gamma}\,R^\gamma$;
\item We have $|\partial\eta_t/\partial t|\lesssim t^{-n-1}\chi_{B_t}$.
\end{itemize}
Applying the product rule to $\psi(t/R)\cdot(f*\eta_t)(x)$ and using the above observations we have
\begin{align}
\label{eq10}
&\left|\frac{\partial}{\partial t}(\ext_R f(x,t))\right|
\lesssim
R^{-1}|f*\eta_t|(x)+\Big|\frac{\partial}{\partial t}(\underbrace{f*\eta_t-f(x)}_{(f-f(x))*\eta_t})\Big|\\
\nonumber
&\lesssim
R^{-1}\Vert f\Vert_\infty +\Big(|f-f(x)|*\left|\frac{\partial \eta_t}{\partial t}\right|\Big)(x)
\lesssim
[f]_{\gamma}\, R^{\gamma-1}+ t^{-n-1}\int_{B_t}|f(x-z)-f(x)|\, dz\\
\nonumber
&\lesssim 
[f]_{\gamma}(R^{\gamma-1}+t^{\gamma-1}).
\end{align}
Since $\ext_Rf(x,t)=0$ when $t\geq R$, the above estimate improves to
$$
\left|\frac{\partial}{\partial t}(\ext_R f(x,t))\right|\lesssim [f]_{\gamma}\, t^{\gamma-1}.
$$
We proved that the full derivative in $x$ and $t$ satisfies
$|D(\ext_R f(x,t))|\lesssim [f]_{\gamma}\,t^{\gamma-1}$.
\end{proof}

\begin{lemma}
\label{T5}
Let $1<p<\infty$, $1-\frac{1}{p}<\gamma\leq 1$ and $R>0$. Assume that a function $f\in C_{\rm b}^{0,\gamma}(\R^n)$ vanishes outside $B_R$. Then
\begin{align}
\label{eq6}
[\ext_R f]_{C^{0,\gamma}(\R^{n+1}_+)}
&\lesssim [f]_{\gamma},\\
\label{eq7}
[\ext_R f]_{W^{1,p}(\bbbr^{n+1}_+)}
&\lesssim [f]_{\gamma},
\end{align}
where the implied constants depend in particular on $R$ and other given constants.
\end{lemma}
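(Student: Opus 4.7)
The plan is to derive both bounds from the pointwise gradient estimate
\[
|D(\ext_R f)(x,t)|\lesssim [f]_{\gamma}\, t^{\gamma-1}
\]
furnished by \Cref{T82}, combined with two structural observations. First, $\ext_R f$ is compactly supported in $K := \overline{B_{2R}}\times[0,3R/4]$: since $f$ vanishes off $B_R$, the convolution $f*\eta_t$ is supported in $\overline{B_{R+t}}\subset \overline{B_{2R}}$ for $t\le R$, while $\psi(t/R)$ vanishes for $t\ge 3R/4$. Second, the hypothesis $\gamma>1-\tfrac{1}{p}$ is precisely what makes $t\mapsto t^{p(\gamma-1)}$ integrable at the origin.

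The Sobolev estimate \eqref{eq7} is then immediate: integrating the $p$-th power of the gradient bound over $K$ gives
\[
\int_{\R^{n+1}_+} |D(\ext_R f)|^p\,dx\,dt \;\lesssim\; [f]_{\gamma}^p\,|B_{2R}|\int_0^{3R/4} t^{p(\gamma-1)}\,dt,
\]
and the $t$-integral is finite because $p(\gamma-1)+1>0$; the resulting constant depends on $R$, $n$, $p$, $\gamma$.

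For the H\"older estimate \eqref{eq6} I would apply \Cref{T81} on the bounded Lipschitz half-ball $\Omega := B_{3R}\cap\{t>0\}$, which contains $K$ in its interior. For every $(x,t)\in\Omega$ one has $\dist((x,t),\partial\Omega)\le t$, and since $\gamma-1\le 0$ this upgrades to $t^{\gamma-1}\le\dist((x,t),\partial\Omega)^{\gamma-1}$. Combined with \Cref{T82} this yields
\[
|D(\ext_R f)(x,t)|\lesssim [f]_{\gamma}\,\dist((x,t),\partial\Omega)^{\gamma-1},
\]
so \Cref{T81} gives $[\ext_R f]_{C^{0,\gamma}(\overline{\Omega})}\lesssim [f]_{\gamma}$.

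To upgrade this to the H\"older seminorm on $\R^{n+1}_+$, I would exploit the compact support: pairs of points both lying in $\R^{n+1}_+\setminus\overline{\Omega}$ contribute nothing since $\ext_R f$ vanishes there, while for a pair with one point in $K$ and the other in $\R^{n+1}_+\setminus\overline{\Omega}$ the Euclidean separation is bounded below by a positive constant depending only on $R$ (because $K$ sits strictly inside $\Omega$), so the crude bound $\|\ext_R f\|_\infty\le\|f\|_\infty\le [f]_{\gamma}R^\gamma$ (which follows from $f$ vanishing on $\partial B_R$ together with the H\"older inequality) closes the estimate. The only mildly delicate step in the whole argument is this boundary gluing; apart from that the proof is a direct combination of \Cref{T81} and \Cref{T82}, and the hypothesis $\gamma>1-\tfrac{1}{p}$ is actually needed only for the Sobolev estimate.
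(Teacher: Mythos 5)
Your proof is correct and follows essentially the same strategy as the paper's: combine the pointwise gradient bound from \Cref{T82} with \Cref{T81} to obtain the H\"older estimate, and integrate the $p$-th power of that same pointwise bound over the compact support of $\ext_R f$ to obtain the Sobolev estimate, using $\gamma > 1-\tfrac{1}{p}$ exactly to make $t^{p(\gamma-1)}$ integrable at $t=0$. Your choice of a half-ball $\Omega=B_{3R}\cap\{t>0\}$ instead of the paper's cylinder $B_{5R}\times[0,5R]$, and your use of the inequality $\dist((x,t),\partial\Omega)\le t$ in place of the paper's exact equality on the support, are cosmetic; you are also somewhat more careful than the paper in spelling out the gluing step that extends the H\"older bound from $\Omega$ to all of $\R^{n+1}_+$, which the paper leaves implicit.
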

\begin{proof}
Assume that $f\in C_{\rm b}^{0,\gamma}(\R^n)$, and $f=0$ in $\R^n\setminus B_R$.
Note that $\ext_R f$ vanishes outside $B_{2R}\times [0,R]$.

In particular, if $(x,t)\in\Omega$, where $\Omega=B_{5R}\times[0,5R]$ and 
$\ext_R f(x,y)\neq 0$, then $\dist\{(x,t),\partial\Omega\}=t$.
Therefore, Lemma~\ref{T82} shows that the function $F=\ext_R f$ satisfies assumptions of Lemma~\ref{T81} with 
$m\approx [f]_{\gamma}$, and \eqref{eq6} follows directly from Lemma~\ref{T81}.

Now we will prove \eqref{eq7}.
Since $\gamma>1-1/p$, we have
$(\gamma-1)p>-1$, so the function $t^{(\gamma-1)p}$ is integrable near zero and
Lemma~\ref{T82} yields
\begin{equation*}
[\ext_Rf]^p_{W^{1,p}(\R^{n+1}_+)}
=
[\ext_Rf]^p_{W^{1,p}(B_{2R}\times [0,R])}
\lesssim 
\int_0^R\int_{B_{2R}}[f]_{\gamma}^p\, t^{(\gamma-1)p}\, dx\, dt
\lesssim
[f]^p_{\gamma}\,R^{n+1+(\gamma-1)p}.
\end{equation*}
The proof is complete.
\end{proof}
The next result is a consequence of Lemma~\ref{T5}.
\begin{proposition}
\label{T6}
Let $\cM$ be a compact $n$-dimensional Riemannian manifold without boundary and let $1<p<\infty$.
Then, for $1\geq \gamma> 1-\frac{1}{p}$ there is a bounded linear extension operator
$$
\ext_{\cM}: C_{\rm b}^{0,\gamma}(\cM)\to W^{1,p}\cap C_{\rm b}^{0,\gamma}(\cM\times [0,1])
$$
such that the extension of $f\in C_{\rm b}^{0,\gamma}(\cM)$, is $C^\infty$ smooth in $\cM\times (0,1)$, and equal zero in a neighborhood of $\cM\times\{ 1\}$. 
\end{proposition}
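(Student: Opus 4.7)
The plan is to reduce to the Euclidean case already handled by \Cref{T5} via a partition of unity on $\cM$. Choose a finite cover of $\cM$ by good coordinate charts $\{(x_i,U_i)\}_{i=1}^N$ with $x_i(U_i)=\bbbb^n$, and pick $0<r_0<1/2$ together with open sets $V_i\Subset U_i$ satisfying $x_i(V_i)\subset B_{r_0}$ and $\cM=\bigcup_i V_i$. Fix a smooth partition of unity $\{\phi_i\}_{i=1}^N$ subordinate to $\{V_i\}$. For $f\in C_{\rm b}^{0,\gamma}(\cM)$, set $f_i:=(\phi_i f)\circ x_i^{-1}$, extended by zero to all of $\R^n$; then $f_i\in C_{\rm b}^{0,\gamma}(\R^n)$, $f_i\equiv 0$ outside $B_{r_0}$, and $\Vert f_i\Vert_{C^{0,\gamma}}\lesssim \Vert f\Vert_{C^{0,\gamma}(\cM)}$ by \Cref{T27} and the fact that the charts have bounded derivatives.

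Next, apply \Cref{T5} with $R=r_0$ to form $F_i:=\ext_{r_0}f_i$. By \Cref{D4} and the discussion following it, $F_i\in C^\infty(\R^n\times(0,\infty))$ is supported in $B_{2r_0}\times[0,r_0]\subset \bbbb^n\times[0,1)$, and
$$
[F_i]_{C^{0,\gamma}(\R^{n+1}_+)}+[F_i]^p_{W^{1,p}(\R^{n+1}_+)}\lesssim [f_i]_\gamma+[f_i]_\gamma^p\lesssim \Vert f\Vert_{C^{0,\gamma}(\cM)}^p+\Vert f\Vert_{C^{0,\gamma}(\cM)},
$$
with the $L^p$-part of the full Sobolev norm controlled by $\Vert F_i\Vert_\infty\leq \Vert f_i\Vert_\infty$ together with the compact support of $F_i$. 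Because $B_{2r_0}\Subset \bbbb^n$, the pullback $\tilde F_i(p,t):=F_i(x_i(p),t)$ is well-defined on $U_i\times[0,1]$ with support compactly contained in $x_i^{-1}(B_{2r_0})\times[0,r_0]\Subset U_i\times[0,1)$, so extending $\tilde F_i$ by zero produces a function on $\cM\times[0,1]$ which inherits the regularity and norm estimates of $F_i$ up to a fixed multiplicative constant depending only on the (finitely many) fixed charts. Define
$$
\ext_\cM f:=\sum_{i=1}^N\tilde F_i.
$$

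Linearity of $\ext_\cM$ is clear. Each $\tilde F_i$ vanishes for $t\geq 3r_0/4$ since $\psi(t/r_0)=0$ there, so $\ext_\cM f\equiv 0$ in a neighborhood of $\cM\times\{1\}$. Smoothness on $\cM\times(0,1)$ is inherited from each $F_i$. At $t=0$ the standard mollification identity $\lim_{t\to 0^+}(f_i\ast\eta_t)(x)=f_i(x)$, combined with $\psi(0)=1$, yields $\ext_\cM f|_{\cM\times\{0\}}=\sum_i\phi_if=f$, so $\ext_\cM$ is genuinely an extension operator. Summing the per-chart bounds gives
$$
\Vert \ext_\cM f\Vert_{W^{1,p}\cap C_{\rm b}^{0,\gamma}(\cM\times[0,1])}\lesssim \Vert f\Vert_{C^{0,\gamma}(\cM)}.
$$
There is no serious obstacle beyond this bookkeeping; the only delicate point is arranging the scale $r_0<1/2$ with $B_{2r_0}\Subset\bbbb^n$ so that the compact-support hypothesis of \Cref{T5} applies verbatim in each local chart and the resulting local extensions can be unambiguously glued by extension by zero to a function on $\cM\times[0,1]$ that vanishes near the upper face.
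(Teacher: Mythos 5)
Your argument is correct and follows essentially the same route as the paper: a finite good-chart cover with subordinate partition of unity, application of \Cref{T5} to each compactly supported localized piece, pullback through the chart, extension by zero, and summation. The only differences are cosmetic (you take $R=r_0<1/2$ inside $\bbbb^n$ where the paper uses nested balls $B_1\subset B_2\subset B_3$ with $R=1$, and you bound $\Vert\ext_\cM f\Vert_\infty$ directly from $\Vert F_i\Vert_\infty\le\Vert f_i\Vert_\infty$ rather than via the H\"older seminorm and the value at $t=0$).
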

\begin{remark}
\label{R5}
The proof of Proposition~\ref{T6} is based on Lemma~\ref{T5}. If instead, we use Lemma~\ref{T59} we obtain the existence of a bounded extension $\ext_{\cM}:W^{1-\frac{1}{p},p}(\cM)\to W^{1,p}(\cM\times [0,1])$. The construction of the extension operator remains the same.
\end{remark}
The following result is a direct consequence of Proposition~\ref{T6} and the collar neighborhood theorem according to which a neighborhood of $\partial\cM$ is diffeomorphic to $\partial\cM\times [0,1]$.
\begin{corollary}
\label{T57}
Let $\cM$ be a compact $n$-dimensional Riemannian manifold with boundary, $1<p<\infty$ and 
$1\geq \gamma> 1-\frac{1}{p}$. Then, there is a bounded linear extension operator
$$
\ext_{\partial\cM}: C_{\rm b}^{0,\gamma}(\partial\cM)\to W^{1,p}\cap C_{\rm b}^{0,\gamma}(\cM).
$$
\end{corollary}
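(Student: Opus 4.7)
The plan is to reduce the problem to \Cref{T6} via the collar neighborhood theorem, which gives a diffeomorphism $\Phi:\partial\cM\times[0,1]\to U$ onto an open neighborhood $U$ of $\partial\cM$ in $\cM$ with $\Phi(x,0)=x$ for $x\in\partial\cM$. Since $\cM$ is compact, $U$ can be chosen with compact closure in $\cM$, and $\Phi$ together with $\Phi^{-1}$ are bi-Lipschitz (indeed smooth on the compact manifold with corners). In particular, composition with $\Phi$ and $\Phi^{-1}$ preserves the spaces $C_{\rm b}^{0,\gamma}$ and $W^{1,p}$ with control on the norms depending only on $\cM$.

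Given $f\in C_{\rm b}^{0,\gamma}(\partial\cM)$, \Cref{T6} produces an extension
$$
\widetilde{f}\in W^{1,p}\cap C_{\rm b}^{0,\gamma}(\partial\cM\times[0,1])
$$
with $\widetilde{f}|_{\partial\cM\times\{0\}}=f$, which moreover vanishes in a neighborhood of $\partial\cM\times\{1\}$, and which satisfies
$$
\|\widetilde{f}\|_{W^{1,p}(\partial\cM\times[0,1])}+\|\widetilde{f}\|_{C^{0,\gamma}(\partial\cM\times[0,1])}\lesssim \|f\|_{C^{0,\gamma}(\partial\cM)}.
$$
I then define
$$
\ext_{\partial\cM}f(p)=\begin{cases}\widetilde{f}(\Phi^{-1}(p)) & p\in U,\\ 0 & p\in\cM\setminus U.\end{cases}
$$
Since $\widetilde{f}$ vanishes in a neighborhood of $\partial\cM\times\{1\}$, its pushforward under $\Phi$ vanishes in a neighborhood of $\partial U\setminus\partial\cM$, so the two pieces of the definition agree on an open neighborhood of $\partial U\setminus\partial\cM$. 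This ensures that $\ext_{\partial\cM}f$ is well-defined and remains H\"older continuous, and that its weak derivatives (which are locally integrable on $U$ by the Sobolev regularity of $\widetilde{f}$) extend by zero to weak derivatives on all of $\cM$.

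The verification that $\ext_{\partial\cM}f\in W^{1,p}\cap C_{\rm b}^{0,\gamma}(\cM)$ with norm controlled by $\|f\|_{C^{0,\gamma}(\partial\cM)}$ is then a routine change of variables, using that $\Phi^{-1}$ is smooth on the compact set $\overbar{U}$ and that the $W^{1,p}$ and $C^{0,\gamma}$ norms transform with constants depending only on the Lipschitz norms of $\Phi,\Phi^{-1}$. Linearity is immediate from linearity of the extension in \Cref{T6} and of the pullback by $\Phi^{-1}$. The main (modest) obstacle is verifying that extension by zero across $\partial U\setminus\partial\cM$ does not introduce singular derivatives, but this follows from the vanishing of $\widetilde{f}$ near $\partial\cM\times\{1\}$ already built into \Cref{T6}; apart from this, the argument is a bookkeeping exercise in transferring the estimate from the model cylinder to the manifold via a partition of unity if desired.
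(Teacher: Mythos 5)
Your proposal is correct and takes exactly the approach the paper indicates: the paper states the result is "a direct consequence of Proposition~\ref{T6} and the collar neighborhood theorem," and you have simply written out that deduction, applying \Cref{T6} to the closed manifold $\partial\cM$, transporting the resulting extension through the collar diffeomorphism, and extending by zero, which works because the extension from \Cref{T6} vanishes near $\partial\cM\times\{1\}$.
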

\begin{proof}[Proof of Proposition~\ref{T6}]
Denote the Riemannian distance on $\cM$ by $d$.
Let $\varphi_\alpha:B_3\to\cM$, $\alpha=1,2,\ldots,N$ be parametrizations of $\cM$ such that $\bigcup_{\alpha=1}^N\varphi_\alpha(B_1)=\cM$ (here $B_3=B(0,3)$ and $B_1=B(0,1)$ are balls in $\R^n$). Let $\{\psi_\alpha\}_{\alpha=1}^N$ be a partition of unity subordinated to the covering $\{\varphi_\alpha(B_1)\}_{\alpha=1}^N$ of $\cM$.

Given $f\in C^{0,\gamma}(\cM)$, write $f = \sum_{\alpha}{\psi_\alpha f}$.  Note that each of the functions $(\psi_\alpha f)\circ\varphi_\alpha$ has compact support
in $B_1$. 
Note also that
$$
\varphi_\alpha(B_2)\times [0,\infty)\ni (x,t)\stackrel{\Xi_\alpha}{\longmapsto} (\varphi_\alpha^{-1}(x),t)\in B_2\times [0,\infty)
$$
is a diffeomorphism with bounded derivatives. 
Recall that the extension operator $\ext_1$ is defined in $\R^n$ by \eqref{eq36} with $R=1$.
Then define the extension
$$
\ext_{\cM} f(x,t) = \sum_{\alpha:\, x\in\varphi_\alpha(B_2)}(\ext_1((\psi_\alpha f) \circ \varphi_\alpha))  
\circ\Xi_\alpha(x,t)
:= \sum_\alpha \ext_\alpha f,
$$
where
\begin{equation}
\label{eq37}
\ext_\alpha f(x,t)=
\begin{cases}
(\ext_1((\psi_\alpha f) \circ \varphi_\alpha))\circ\Xi_\alpha(x,t)& \text{if $x\in\varphi_\alpha(B_2)$,}\\
0 & \text{if $x\in \cM\setminus\varphi_\alpha(B_2)$.}
\end{cases}
\end{equation}
The diffeomorphism $\Xi_\alpha$ maps $\partial(\varphi_\alpha(B_2))\times [0,\infty)$ onto $\partial B_2\times [0,\infty)$ and $\ext_1((\psi_\alpha f)\circ\varphi_\alpha)$ equals zero in a neighborhood of 
$\partial B_2\times [0,\infty)$ so the expression in the first line of \eqref{eq37} equals zero for $x$ near $\partial(\varphi_\alpha(B_2))$ and so the second line of \eqref{eq23} defines a smooth extension beyond 
$\varphi_\alpha(B_2)$.

Evidently, $\ext_{\cM} f$ is continuous on $\cM\times [0,1]$, agrees with $f$ on $\cM\times \{ 0 \}$, vanishes in a neighborhood of $\cM\times \{ 1\}$ and is
smooth in $\cM\times (0,1)$.

Note that $(\psi_\alpha f)\circ\varphi_\alpha\in C_{\rm b}^{0,\gamma}(B_1)$ has compact support in $B_1$ and
\begin{equation}
\label{eq12}
[(\psi_\alpha f)\circ\varphi_\alpha]_{\gamma}\lesssim \Vert f\Vert_{C^{0,\gamma}}.
\end{equation}
Indeed, for $u,v\in\varphi_\alpha(B_1)$ we have
\begin{equation*}
\begin{split}
|\psi_\alpha(u)f(u)-\psi_\alpha(v)f(v)|
&\leq
\Vert f\Vert_\infty |\psi_\alpha(u)-\psi_\alpha(v)|+\Vert \psi_\alpha\Vert_\infty|f(u)-f(v)|\\
&\lesssim\
(\Vert f\Vert_\infty+[f]_{\gamma})d(u,v)^\gamma
\end{split}
\end{equation*}
and \eqref{eq12} follows, because $d(\varphi_\alpha(x),\varphi_\alpha(y))$ is comparable to $|x-y|$
for $x,y\in B_1$.
Hence $\ext_1((\psi_\alpha f)\circ\varphi_\alpha)$ has support inside $B_2\times[0,1)$ and Lemma~\ref{T5} in concert with \eqref{eq12} yields
\begin{equation}
\label{eq123}
[\ext_1((\psi_\alpha f)\circ\varphi_\alpha)]_{C^{0,\gamma}(\bbbr^{n+1}_+)}+
[\ext_1((\psi_\alpha f)\circ\varphi_\alpha)]_{W^{1,p}(\bbbr^{n+1}_+)}\lesssim \Vert f\Vert_{C^{0,\gamma}}.
\end{equation}
Since $\Xi_\alpha$  is a diffeomorphism with bounded derivatives, it follows that
$$
[\ext_\cM f]_{C^{0,\gamma}(\cM\times [0,1])} +
[\ext_\cM f]_{W^{1,p}(\cM\times [0,1])}
\lesssim \Vert f\Vert_{C^{0,\gamma}}.
$$
We still need to estimate $\Vert\ext_\cM f\Vert_\infty$ and $\Vert\ext_\cM f\Vert_p$.
For $0\leq t\leq 1$ we have
\[
\begin{split}
|(\ext_\cM f)(x,t)|
&\leq 
|(\ext_\cM f)(x,t)-(\ext_\cM f)(x,0)|+|f(x)|\\
&\leq
[\ext_\cM f]_\gamma+\Vert f\Vert_\infty\lesssim \Vert f\Vert_{C^{0,\gamma}}.
\end{split}
\]
Therefore, $\Vert\ext_\cM f\Vert_p\lesssim\Vert\ext_\cM f\Vert_\infty\lesssim\Vert f\Vert_{C^{0,\gamma}}$.
This estimate along with \eqref{eq123} completes the proof.
\end{proof}
Using similar arguments we can also construct an extension operator for forms.
\begin{corollary}
\label{T7}
Let $\cM$ be an oriented, compact, $n$-dimensional Riemannian manifold without boundary and let $1<p<\infty$ and $0\leq k\leq n$. Then there is a bounded linear extension operator
$$
\ext_{\cM}: \Omega^k W^{1-\frac{1}{p},p}(\cM)\to \Omega^k W^{1,p}(\cM\times [0,1])
$$
such that the extension $\ext_{\cM}\kappa$ of any $\kappa\in W^{1-\frac{1}{p},p}$, is $C^\infty$ smooth in $\cM\times (0,1)$, and equal zero in a neighborhood of $\cM\times\{ 1\}$.
Moreover, if
$1\geq \gamma> 1-\frac{1}{p}$ and $0\leq k\leq n$, then
$$
\ext_{\cM}: \Omega^k C_{\rm b}^{0,\gamma}(\cM)\to \Omega^k (W^{1,p}\cap C_{\rm b}^{0,\gamma})((\cM\times [0,1])).
$$
\end{corollary}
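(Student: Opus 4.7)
The plan is to mimic the proof of Proposition~\ref{T6} coefficient-by-coefficient, using the good coordinate systems of Section~\ref{GCS} and the Gagliardo extension operator $\ext_1$ from \eqref{eq36}. The key point is that forms on $\cM$ can be broken up by a partition of unity into pieces supported in one chart, where they become $\Lambda^k$-valued maps on a ball in $\R^n$, and the Gagliardo construction acts componentwise on the coefficients without interacting with the form structure.

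Concretely, let $\{(\varphi_\alpha,B_3)\}_{\alpha=1}^N$ be good orientation-preserving parametrizations with $\cM=\bigcup_\alpha \varphi_\alpha(B_1)$, and let $\{\psi_\alpha\}$ be a smooth partition of unity subordinate to $\{\varphi_\alpha(B_1)\}$. Given $\kappa\in \Omega^k W^{1-\frac{1}{p},p}(\cM)$ (respectively $\Omega^kC_{\rm b}^{0,\gamma}(\cM)$), write in local coordinates
\[
\varphi_\alpha^*(\psi_\alpha\kappa)=\sum_{|I|=k} c_I^\alpha\, dx_I,
\qquad c_I^\alpha\in W^{1-\frac{1}{p},p}(\R^n)\ (\text{resp.\ }C_{\rm b}^{0,\gamma}(\R^n)),
\]
with each $c_I^\alpha$ compactly supported in $B_1$. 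The argument of \eqref{eq12} (multiplying the coefficients of $\kappa$ in the chart by the smooth compactly supported function $\psi_\alpha\circ\varphi_\alpha$) shows that
\[
\sum_{\alpha,I}\Vert c_I^\alpha\Vert_{W^{1-\frac{1}{p},p}(\R^n)}\lesssim \Vert\kappa\Vert_{W^{1-\frac{1}{p},p}},
\qquad
\sum_{\alpha,I}\Vert c_I^\alpha\Vert_{C^{0,\gamma}(\R^n)}\lesssim \Vert\kappa\Vert_{C^{0,\gamma}},
\]
with constants depending only on the chosen charts and partition of unity. Now apply $\ext_1$ (with $R=1$) to each coefficient $c_I^\alpha$; Lemma~\ref{T59} in the fractional Sobolev case and Lemma~\ref{T5} in the H\"older case give that $\ext_1 c_I^\alpha$ is smooth in $B_2\times(0,1)$, supported in $B_2\times[0,1)$, and satisfies the desired $W^{1,p}$ and $C^{0,\gamma}$ estimates.

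Next, using the diffeomorphism $\Xi_\alpha:\varphi_\alpha(B_2)\times[0,\infty)\to B_2\times[0,\infty)$ from the proof of Proposition~\ref{T6}, set
\[
\ext_\alpha\kappa:=\Xi_\alpha^*\!\left(\sum_{|I|=k}(\ext_1 c_I^\alpha)\, dx_I\right)
=\sum_{|I|=k}\!\left((\ext_1 c_I^\alpha)\circ\Xi_\alpha\right)\Xi_\alpha^* dx_I
\]
on $\varphi_\alpha(B_2)\times[0,\infty)$, and extend by zero. Since $\Xi_\alpha$ is a diffeomorphism with bounded derivatives of all orders, pullback preserves the $W^{1,p}$ and $C^{0,\gamma}$ norms up to multiplicative constants. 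Noting that $\Xi_\alpha(x,0)=(\varphi_\alpha^{-1}(x),0)$, the restriction of $\ext_\alpha\kappa$ to $t=0$ equals $(\varphi_\alpha^{-1})^*\varphi_\alpha^*(\psi_\alpha\kappa)=\psi_\alpha\kappa$. Finally define
\[
\ext_\cM\kappa:=\sum_{\alpha=1}^N \ext_\alpha\kappa.
\]
It is smooth in $\cM\times(0,1)$ (each $\ext_\alpha\kappa$ is), vanishes in a neighborhood of $\cM\times\{1\}$ (since the supports of $\ext_1 c_I^\alpha$ are contained in $B_2\times[0,1)$), and restricts on $\cM\times\{0\}$ to $\sum_\alpha\psi_\alpha\kappa=\kappa$. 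Summing the estimates on $\ext_1 c_I^\alpha$ over $\alpha$ and $I$, and adding an $L^p$ bound obtained exactly as at the end of the proof of Proposition~\ref{T6} (from the $C^0$ bound by the $W^{1,p}$ Sobolev embedding is not needed---one uses either $\Vert \ext_\alpha\kappa\Vert_p\lesssim\Vert \ext_\alpha\kappa\Vert_\infty$ in the H\"older case or the $L^p$ boundedness of $\ext_1$ inherent in Lemma~\ref{T59} in the fractional Sobolev case), yields the full norm estimate on $\ext_\cM\kappa$.

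The only mildly delicate point is the bookkeeping with change of coordinates: one needs that the norms defined via good coordinate systems (as in \eqref{eq38}) are equivalent under the maps $\Xi_\alpha$ acting on forms. This is standard because the Jacobian matrices and their inverses are bounded, so the pullback $\Xi_\alpha^*$ is bounded on $\Omega^k W^{1,p}$ and on $\Omega^k C^{0,\gamma}_{\rm b}$, giving the required control on the assembled extension. Everything else is a direct repetition of the scalar proof of Proposition~\ref{T6} applied to each coefficient $c_I^\alpha$.
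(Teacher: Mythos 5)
Your proposal is correct and follows essentially the same route as the paper's proof: break $\kappa$ into pieces $\psi_\alpha\kappa$ supported in a single chart, extend each coordinate coefficient by the Gagliardo operator $\ext_1$, transport back to $\cM\times[0,\infty)$ via the chart diffeomorphism, and sum. The only cosmetic difference is that you make the pullback $\Xi_\alpha^*$ explicit, whereas the paper phrases the same operation as regarding the local coordinate forms $dx_I^\alpha$ as forms on $\cM\times[0,\infty)$ and extending the coefficient functions $\kappa_I^\alpha$; the estimates and the verification of the boundary trace are identical.
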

\begin{proof}
Let $\{\varphi_\alpha\}$, $\{\psi_\alpha\}$ and $\ext_\alpha$
be as in the proof of Proposition~\ref{T6} (see Remark~\ref{R5}). Then 
$\varphi_\alpha^{-1}=(x_1^\alpha,\ldots,x_n^\alpha)$ is a coordinate system in $\varphi_\alpha(B_2)\subset\cM$. This coordinate system extends to the coordinate system 
$$ 
\Xi_\alpha(x,t)=(\varphi_\alpha^{-1}(x),t)=(x_1^\alpha,\ldots,x_n^\alpha,t)
\qquad
\text{on} 
\quad 
\varphi_\alpha(B_2)\times [0,\infty). 
$$
In particular, $dx_I^\alpha$ can be regarded as a differential form on $\varphi_\alpha(B_2)\subset\cM$ as well as a form on  $\varphi_\alpha(B_2)\times [0,\infty)\subset\cM\times [0,\infty)$.

If $\kappa$ is a $k$-form on $\cM$, then
$\kappa=\sum_{\alpha}\psi_\alpha\kappa$. Since $\psi_\alpha\kappa$ has compact support in $\varphi_\alpha(B_1)$, it has a unique representation as
$$
\psi_\alpha\kappa = \sum_{|I|=k} \kappa_I^\alpha\, dx_I^\alpha,
$$
where the coefficients $\kappa_I^\alpha$ are functions with the same regularity as $\kappa$, compactly supported in  $\varphi_\alpha(B_1)$. Now we define the extension as
$$
\ext_{\cM}\kappa=\sum_{\alpha}\sum_{|I|=k}\ext_\alpha(\kappa_I^\alpha)\, dx_I^\alpha,
$$
where $dx_I^\alpha$ are regarded as differential forms on $\cM\times [0,\infty)$.
Since $\ext_\alpha(\kappa_I^\alpha)(x,t)$ vanishes if $x\not\in \varphi_\alpha(B_2)$,
$\ext_\alpha(\kappa_I^\alpha)\, dx_I^\alpha$ is a globally defined form  on $\cM\times [0,\infty)$, that is equal zero if $x\not\in\varphi_\alpha(B_2)$.
\end{proof}

\section{Distributional Jacobian}
\label{DJ}
If functions $f_1,\ldots,f_n$, belong to the fractional Sobolev space $W^{s,p}(\R^n)$, $0<s<1$, $1<p<\infty$, the pointwise Jacobian $df_1\wedge\ldots \wedge df_n$ is not well defined, but we may try to define the {\em distributional Jacobian} acting on $\varphi\in C_c^\infty(\R^n)$ as the limit
$$
\langle\varphi,df_1\wedge\ldots\wedge df_n\rangle=
\int_{\R^n}\varphi\, df_1\wedge\ldots\wedge df_n:=
\lim_{\eps\to 0} \int_{R^n}\varphi\, df_1^\eps\wedge\ldots\wedge df_n^\eps,
$$
where $C^\infty(\R^n)\ni f_i^\eps\to f_i$ in $W^{s,p}(\R^n)$. 
However, it is not obvious under what assumptions about $s$ and $p$, the limit exists and is independent of the choice of a smooth approximation.

Although, we denoted the limit as the integral $\int_{\R^n} \varphi\, df_1\wedge\ldots\wedge df_n$ (assuming it exists and is independent of the choice of a smooth approximation), we need to understand that it is only a formal notation for the distribution acting on the test function $\varphi$, the distribution that is defined as a limit of the true integrals; notation 
$\langle\varphi,df_1\wedge\ldots\wedge df_n\rangle$ is however, a more standard way to denote evaluation of a distribution on a test function. 

In this section we will explore the above idea
in a more general context of differential forms. Our results will generalize some of the results of Brezis and Nguyen  \cite[Theorem~3 and Corollary~1]{brezisn1} and of Z\"ust \cite[Theorem~3.2]{zust1}, \cite{zust2}. See also \cite{Conti}, and \cite{SY1999,LS19} for corresponding results in the abstract harmonic analysis.

\begin{theorem}
\label{T32}
Let $\cM$ be a smooth oriented and closed $n$-dimensional Riemannian manifold. Let
$$
\lambda\in \Omega^\ell W^{1-\frac{1}{q},q}(\cM)
\quad
\text{and}
\quad
\gamma_i\in \Omega^{\ell_i-1}W^{1-\frac{1}{p_i},p_i}\left(\cM\right),
\
i=1,2,\ldots,k,
$$
where $\ell\geq 0$, $\ell_i\geq 1$, $1<q,p_i<\infty$,  $\ell+\ell_1+\ldots+\ell_k=n$, and
\begin{equation}
\label{eq24}
\frac{1}{q}+\frac{1}{p_1}+\ldots+\frac{1}{p_{k}}=1.
\end{equation}
Assume we have approximations
$$
\Omega^\ell W^{1-\frac{1}{q},q}\left(\cM\right)\ni\lambda^\eps\to\lambda\ \text{in $W^{1-\frac{1}{q},q}$}
\quad
\text{and}
\quad
\Omega^{\ell_i-1}\lip\left(\cM\right)\ni \gamma_i^\eps\to \gamma_i\ \text{in $W^{1-\frac{1}{p_i},p_i}$}
$$
as $\eps\to 0$. Then the limit
\begin{equation}
\label{eq31}
\langle \lambda,d\gamma_1\wedge\ldots\wedge d\gamma_k\rangle=
\int_{\cM}\lambda\wedge d\gamma_1\wedge\ldots\wedge d\gamma_{k}:=
\lim_{\eps\to 0}\int_{\cM} \lambda^\eps\wedge d\gamma_1^\eps\wedge\ldots\wedge d\gamma_k^\eps
\end{equation}
exists and it does not depend on a choice of approximations $\lambda^\eps$, $\gamma_i^\eps$.
\end{theorem}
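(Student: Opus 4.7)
The plan is to adopt the Brezis--Nguyen ``compensation by extension'' strategy outlined in the introduction: lift all forms from $\cM$ to the slab $\cM\times[0,1]$, where the generalized H\"older inequality with the conjugate exponents from \eqref{eq24} takes over. Using \Cref{T7}, set
\[
\Lambda^\eps := \ext_\cM \lambda^\eps\in\Omega^\ell W^{1,q}(\cM\times[0,1]),\qquad \Gamma_i^\eps := \ext_\cM \gamma_i^\eps\in\Omega^{\ell_i-1}W^{1,p_i}(\cM\times[0,1]),
\]
and analogously $\Lambda$, $\Gamma_i$ from the limiting $\lambda,\gamma_i$. Each of these extensions is smooth on the open slab $\cM\times(0,1)$, vanishes near $\cM\times\{1\}$, and by boundedness of the extension operators satisfies $\Lambda^\eps\to\Lambda$ in $W^{1,q}$ and $\Gamma_i^\eps\to\Gamma_i$ in $W^{1,p_i}$.

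For each fixed $\eps$, I compare the original integral to one over the slab. On the closed slice $\cM\times[\delta,1]$ all the forms are smooth (for any $\delta\in(0,1)$), so the Leibniz rule gives $d(\Lambda^\eps\wedge d\Gamma_1^\eps\wedge\ldots\wedge d\Gamma_k^\eps)=d\Lambda^\eps\wedge d\Gamma_1^\eps\wedge\ldots\wedge d\Gamma_k^\eps$, and \Cref{T78}, noting that the top face contributes nothing, yields
\[
\int_{\cM\times[\delta,1]} d\Lambda^\eps\wedge d\Gamma_1^\eps\wedge\ldots\wedge d\Gamma_k^\eps
= -\int_{\cM\times\{\delta\}}\Lambda^\eps\wedge d\Gamma_1^\eps\wedge\ldots\wedge d\Gamma_k^\eps.
\]
As $\delta\to 0^+$, the left side converges by dominated convergence (the full-slab integrand lies in $L^1(\cM\times[0,1])$ by generalized H\"older, since $d\Lambda^\eps\in L^q$ and $d\Gamma_i^\eps\in L^{p_i}$ with exponents matched by \eqref{eq24}), while the explicit Gagliardo formula $\Lambda^\eps(x,\delta)=\psi(\delta)(\lambda^\eps*\eta_\delta)(x)$, together with the corresponding formula for the spatial part of $d\Gamma_i^\eps$ restricted to $\cM\times\{\delta\}$, forces the right side to converge to $-\int_\cM\lambda^\eps\wedge d\gamma_1^\eps\wedge\ldots\wedge d\gamma_k^\eps$. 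Hence
\[
\int_\cM\lambda^\eps\wedge d\gamma_1^\eps\wedge\ldots\wedge d\gamma_k^\eps
= -\int_{\cM\times[0,1]} d\Lambda^\eps\wedge d\Gamma_1^\eps\wedge\ldots\wedge d\Gamma_k^\eps.
\]

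Existence of the limit as $\eps\to 0$, and its independence of the approximation, then reduce to a multilinear estimate. Telescoping
\[
d\Lambda^\eps\wedge d\Gamma^\eps_1\wedge\ldots - d\Lambda^{\eps'}\wedge d\Gamma^{\eps'}_1\wedge\ldots
= (d\Lambda^\eps-d\Lambda^{\eps'})\wedge d\Gamma^\eps_1\wedge\ldots + d\Lambda^{\eps'}\wedge(d\Gamma^\eps_1-d\Gamma^{\eps'}_1)\wedge\ldots+\cdots
\]
and applying the generalized H\"older inequality bounds the difference of the slab integrals by a sum of terms of the form $\|\Lambda^\eps-\Lambda^{\eps'}\|_{W^{1,q}}\prod_i\|\Gamma_i^\eps\|_{W^{1,p_i}}$, all of which vanish by the convergence established in the first step; hence the sequence is Cauchy. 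A parallel estimate compares any two admissible approximating sequences $(\lambda^\eps,\gamma_i^\eps)$ and $(\tilde\lambda^\eps,\tilde\gamma_i^\eps)$, whose extensions converge to the same $\Lambda,\Gamma_i$ by linearity and continuity of $\ext_\cM$, forcing their slab integrals to share the same limit. The principal technical obstacle is the passage $\delta\to 0^+$ in the integration-by-parts step, which must be handled carefully because $\lambda^\eps$ has only fractional Sobolev regularity; however, the explicit Gagliardo formula, the Lipschitz regularity of $\gamma_i^\eps$, and the $L^1$-integrability of the slab integrand keep the trace analysis elementary.
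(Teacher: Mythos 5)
Your proposal is correct and uses the same Brezis--Nguyen ``compensation by extension'' strategy as the paper; the only genuine difference is in how you justify the Stokes step, which is the technical heart. The paper introduces a \emph{second} regularization $\lambda^{\eps,\delta},\gamma_i^{\eps,\delta}\in\Omega^\cdot(\cM)$ with $\lambda^{\eps,\delta}\to\lambda^\eps$ in $W^{1-1/q,q}$ and $\gamma_i^{\eps,\delta}\to\gamma_i^\eps$ in $W^{1,p_i}$, then exploits the fact that the Gagliardo extension of $C^\infty$ boundary data is $C^\infty$ up to $\cM\times\{0\}$, so Stokes applies on the whole slab for $\lambda^{\eps,\delta},\gamma_i^{\eps,\delta}$; letting $\delta\to 0$ on both sides via H\"older then produces the identity in one $\eps$-level (the paper's~\eqref{eq124}). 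You instead apply Stokes directly at the $\eps$-level, but only on the truncated slab $\cM\times[\delta,1]$ where $\Lambda^\eps,\Gamma_i^\eps$ are already smooth, and then pass $\delta\to 0^+$ by trace convergence: $\Lambda^\eps(\cdot,\delta)=\psi(\delta)\lambda^\eps\ast\eta_\delta\to\lambda^\eps$ in $L^q$, and the tangential part of $d\Gamma_i^\eps(\cdot,\delta)$ is $\psi(\delta)(d\gamma_i^\eps)\ast\eta_\delta\to d\gamma_i^\eps$ in $L^{p_i}$ (using $d\gamma_i^\eps\in L^\infty\subset L^{p_i}$ since $\gamma_i^\eps$ is Lipschitz and $\cM$ compact), so generalized H\"older closes the slice limit. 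Both routes are rigorous and of comparable difficulty: the paper avoids any trace analysis at the cost of one extra approximation parameter, while you remove that parameter at the cost of an explicit slice computation. The remainder (telescoping, generalized H\"older on the slab, continuity of $\ext_\cM$) matches the paper exactly. The sign discrepancy between your $\int_\cM(\cdots)=-\int_{\cM\times[0,1]}(\cdots)$ and the paper's $\int_\cM(\cdots)=+\int_{\cM\times[0,1]}(\cdots)$ is a harmless orientation convention for $\partial(\cM\times[0,1])$ and does not affect existence or approximation-independence of the limit.
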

\begin{remark}
\label{R10}
Existence of a smooth (and hence Lipschitz) approximation $\gamma_i^\eps\to\gamma_i$ follows from Remark~\ref{R11}. Existence of an approximation $\lambda^\eps\to\lambda$ is obvious, because we can take $\lambda^\eps=\lambda$. Note that in this case we have
$$
\int_{\cM}\lambda\wedge d\gamma_1\wedge\ldots\wedge d\gamma_{k}=
\lim_{\eps\to 0}\int_{\cM} \lambda\wedge d\gamma_1^\eps\wedge\ldots\wedge d\gamma_k^\eps.
$$
\end{remark}
\begin{proof}
Let $0\leq m\leq n$ be an integer and let
$$
\ext_{\cM}:\Omega^m W^{1-\frac{1}{p},p}\left(\cM\right)\to \Omega^m W^{1,p}\big(\cM\times [0,1]\big)
$$
be the extension operator from Corollary~\ref{T7}. If $m=0$, then this is an extension operator on functions.
Recall that $\ext_{\cM}\kappa$ is $C^\infty$ on $\cM\times (0,1)$, it vanishes in a neighborhood of $\cM\times\{1\}$ and it is smooth up to $\cM\times\{0\}$, if $\kappa\in C^\infty$. Moreover, $\ext_{\cM}\kappa$ is defined by the same formula for all $p$, but of course, it depends on $m$. 

Let $\lambda^{\eps,\delta}\in \Omega^\ell(\cM)$, $\lambda^{\eps,\delta}\to\lambda^\eps$ in $W^{1-\frac{1}{q},q}$ as $\delta\to 0$.
Since $\lip(\cM)\subset W^{1,p_i}(\cM)$, we can find a smooth approximation 
$\gamma_i^{\eps,\delta}\in\Omega^{\ell_i-1}(\cM)$ , $\gamma_i^{\eps,\delta}\to\gamma_i^\eps$ in $W^{1,p_i}$. Since $W^{1,p_i}(\cM)\subset W^{1-\frac{1}{p_i},p_i}(\cM)$ (see e.g. \cite[Proposition~28]{GoldsteinH}), we also have that $\gamma_i^{\eps,\delta}\to \gamma_i^\eps$ in $W^{1-\frac{1}{p_i}, p_i}(\cM)$.

Let $\Lambda=\ext_{\cM}\lambda\in W^{1,q}$, $\Gamma_i=\ext_{\cM} \gamma_i\in W^{1,p_i}$ 
and let $\Lambda^\eps$, $\Lambda^{\eps,\delta}$, $\Gamma_i^\eps$ and $\Gamma_i^{\eps,\delta}$ be the extensions of $\lambda^\eps, \lambda^{\eps,\delta}$, $\gamma_i^\eps$ and $\gamma_i^{\eps,\delta}$ respectively. Since the extension operator is continuous, we have
$$
\Lambda^{\eps,\delta}
\xrightarrow{\delta\to 0}\Lambda^\eps,
\quad
\Lambda^\eps\xrightarrow{\eps\to 0}\Lambda \ \text{in $W^{1,q}$}
\quad
\text{and}
\quad
\Gamma_i^{\eps,\delta}\xrightarrow{\eps\to 0}\Gamma_i^\eps,
\quad
\Gamma_i^\eps\xrightarrow{\eps\to 0}\Gamma_i 
\ \text{in $W^{1,p_i}$}.
$$
Observe that
\[
\begin{split}
\int_{\cM} \lambda^{\eps,\delta}\wedge d\gamma_1^{\eps,\delta}\wedge\ldots\wedge d\gamma_k^{\eps,\delta}
&=
\int_{\partial(\cM\times [0,1])}\Lambda^{\eps,\delta}\wedge d\Gamma_1^{\eps,\delta}\wedge\ldots\wedge d\Gamma_{k}^{\eps,\delta}\\
&= 
\int_{\cM\times [0,1]}d\Lambda^{\eps,\delta}\wedge d\Gamma_1^{\eps,\delta}\wedge\ldots\wedge d\Gamma_{k}^{\eps,\delta}.
\end{split}
\]
The first equality follows from the fact that the extension of smooth forms is smooth up to the boundary and that it
vanishes on $\cM\times\{1\}$, which is a part of the boundary of $\cM\times[0,1]$.
The second equality is the Stokes theorem.

Letting $\delta\to 0$ we obtain
\begin{equation}
\label{eq124}
\int_{\cM} \lambda^{\eps}\wedge d\gamma_1^\eps\wedge\ldots\wedge d\gamma^\eps_{k}
=
\int_{\cM\times [0,1]}d\Lambda^{\eps}\wedge d\Gamma_1^\eps\wedge\ldots\wedge d\Gamma_{k}^\eps.
\end{equation}
Indeed, 
$$
\lambda^{\eps,\delta}\to\lambda^\eps \text{ in } L^q,
\quad
d\gamma_i^{\eps,\delta}\to d\gamma_i^\eps \text{ in } L^{p_i},
\quad
d\Lambda^{\eps,\delta}\to d\Lambda^\eps \text{ in } L^q,
\quad
d\Gamma_i^{\eps,\delta}\to d\Gamma_i^\eps \text{ in } L^{p_i},
$$
so \eqref{eq124} is a consequence of \eqref{eq24} and H\"older's inequality, just like in \eqref{eq73}.
Now the same argument shows that
\begin{equation}
\label{eq125}
\int_{\cM} \lambda^{\eps}\wedge d\gamma_1^\eps\wedge\ldots\wedge d\gamma^\eps_{k}
=
\int_{\cM\times [0,1]}d\Lambda^{\eps}\wedge d\Gamma_1^\eps\wedge\ldots\wedge d\Gamma_{k}^\eps
\xrightarrow{\eps\to 0}
\int_{\cM\times [0,1]}d\Lambda \wedge d\Gamma_1 \wedge\ldots\wedge d\Gamma_{k}.
\end{equation}
We proved existence of the limit \eqref{eq31}. It is independent of the approximation, because the right hand side of \eqref{eq125} does not depend on the approximation.
\end{proof}

In the case in which $p_i=p$ for all $i=1,2,\ldots,k$, we have $p>k$ and $q=\frac{p}{p-k}$. In that case we get
\begin{theorem}
\label{T33}
Let $\cM$ be a smooth oriented and closed $n$-dimensional Riemannian manifold. Let
$$
\lambda,\tilde{\lambda}\in \Omega^\ell W^{1-\frac{1}{q},q}\left(\cM\right)
\quad
\text{and}
\quad
\gamma_i,\tilde{\gamma}_i\in \Omega^{\ell_i-1}W^{1-\frac{1}{p},p}\left(\cM\right),\ i=1,2,\ldots,k
$$
where $\ell\geq 0$, $\ell_i\geq 1$,  $\ell+\ell_1+\ldots+\ell_k=n$, 
$1\leq k<p<\infty$,  $q=\frac{p}{p-k}$. Then
the assumptions of Theorem~\ref{T32} are satisfied with $p_i=p$. Moreover,
\begin{align*}
&\left|\,\int_{\cM}\lambda\wedge d\gamma_1\wedge\ldots\wedge d\gamma_k-\int_{\cM}\tilde{\lambda}\wedge d\tilde{\gamma}_1\wedge\ldots\wedge d\tilde{\gamma}_k\right|\\
&\lesssim    
\left(\Vert\lambda-\tilde{\lambda}\Vert_{W^{1-\frac{1}{q},q}}\Vert \gamma\Vert_{W^{1-\frac{1}{p},p}}
+\Vert\tilde{\lambda}\Vert_{W^{1-\frac{1}{q},q}}\Vert \gamma-\tilde{\gamma}\Vert_{W^{1-\frac{1}{p},p}}\right)
\left(\Vert \gamma\Vert_{W^{1-\frac{1}{p},p}}+\Vert \tilde{\gamma}\Vert_{W^{1-\frac{1}{p},p}}\right)^{k-1},
\end{align*}
where $\gamma=(\gamma_1,\ldots,\gamma_k)$ and $\tilde{\gamma}=(\tilde{\gamma}_1,\ldots,\tilde{\gamma}_k)$.
\end{theorem}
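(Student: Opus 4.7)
\textbf{Proof proposal for Theorem~\ref{T33}.}
The plan is to reduce everything to the extension representation already established in the proof of Theorem~\ref{T32}, and then carry out a standard telescoping/H\"older estimate in $\cM\times[0,1]$. First, the hypotheses of Theorem~\ref{T32} with $p_i=p$ are satisfied since $q=p/(p-k)$ gives the conjugacy relation $\frac{1}{q}+\frac{k}{p}=\frac{p-k}{p}+\frac{k}{p}=1$, and smooth approximations of the given forms exist by Remark~\ref{R11}. Hence Theorem~\ref{T32} applies and produces the integrals on both sides of the claimed inequality.

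Next, let $\Lambda=\ext_\cM\lambda$, $\tilde\Lambda=\ext_\cM\tilde\lambda\in \Omega^\ell W^{1,q}(\cM\times[0,1])$ and $\Gamma_i=\ext_\cM\gamma_i$, $\tilde\Gamma_i=\ext_\cM\tilde\gamma_i\in\Omega^{\ell_i-1}W^{1,p}(\cM\times[0,1])$ be the extensions provided by Corollary~\ref{T7}. The argument leading to \eqref{eq125} (taking approximation parameters to zero) yields
\[
\int_\cM\lambda\wedge d\gamma_1\wedge\ldots\wedge d\gamma_k=\int_{\cM\times[0,1]}d\Lambda\wedge d\Gamma_1\wedge\ldots\wedge d\Gamma_k,
\]
and analogously for the tilded quantities. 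Thus it suffices to estimate the difference of these two $(n+1)$-dimensional integrals.

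I would then expand the difference of the two wedge products by the standard telescoping identity
\[
d\Lambda\wedge d\Gamma_1\wedge\ldots\wedge d\Gamma_k-d\tilde\Lambda\wedge d\tilde\Gamma_1\wedge\ldots\wedge d\tilde\Gamma_k
=d(\Lambda-\tilde\Lambda)\wedge d\Gamma_1\wedge\ldots\wedge d\Gamma_k
+\sum_{i=1}^k d\tilde\Lambda\wedge d\tilde\Gamma_1\wedge\ldots\wedge d\tilde\Gamma_{i-1}\wedge d(\Gamma_i-\tilde\Gamma_i)\wedge d\Gamma_{i+1}\wedge\ldots\wedge d\Gamma_k,
\]
which is the same kind of identity as \eqref{eq71} used in the proof of Lemma~\ref{T61}. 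Applying Hadamard's bound on the pointwise comass of a wedge product and then H\"older's inequality with exponents $(q,\underbrace{p,\ldots,p}_{k})$ (which is permissible by \eqref{eq24}), each term is bounded by a product of $L^q$ and $L^p$ norms of the corresponding differentials.

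Finally I would use the boundedness of the extension operator from Corollary~\ref{T7},
\[
\|d\Lambda\|_q+\|d\tilde\Lambda\|_q\lesssim \|\lambda\|_{W^{1-\frac{1}{q},q}}+\|\tilde\lambda\|_{W^{1-\frac{1}{q},q}},\qquad \|d\Gamma_i\|_p\lesssim\|\gamma_i\|_{W^{1-\frac{1}{p},p}},
\]
together with the analogous bound for $d(\Lambda-\tilde\Lambda)$ and $d(\Gamma_i-\tilde\Gamma_i)$, to replace $L^q$ and $L^p$ norms on $\cM\times[0,1]$ by the fractional Sobolev norms on $\cM$. Collecting the telescoping sum using $\|\gamma_i\|_{W^{1-\frac{1}{p},p}}\leq\|\gamma\|_{W^{1-\frac{1}{p},p}}$ (with the convention $\|\gamma\|=\sum_i\|\gamma_i\|$) and factoring out the common $(k-1)$-fold product gives exactly the claimed estimate. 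The only mild obstacle is bookkeeping: handling the signs introduced when permuting forms of different degrees in the telescoping identity, and ensuring that the final factor $(\|\gamma\|+\|\tilde\gamma\|)^{k-1}$ dominates every mixed product $\prod_{j<i}\|\tilde\gamma_j\|\prod_{j>i}\|\gamma_j\|$ that appears; both are purely combinatorial and do not affect the exponents.
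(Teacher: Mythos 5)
Your proposal is correct and follows essentially the same strategy as the paper's proof: telescope the difference of the $k$-fold wedge products (as in \eqref{eq71}), bound each term pointwise by products of $|D\Lambda|$, $|D\Gamma_i|$, etc.\ via Hadamard's inequality, apply H\"older's inequality with exponents $(q,p,\ldots,p)$ on $\cM\times[0,1]$, and then use the boundedness of the extension operator from Corollary~\ref{T7} to replace the $L^q$, $L^p$ norms of the extensions' differentials by the fractional Sobolev norms on $\cM$. The only cosmetic difference is that you work directly with the extensions of the limiting forms $\lambda$, $\gamma_i$ (via \eqref{eq125}) rather than estimating at the level of smooth approximants $\lambda^\eps$, $\gamma_i^\eps$ and passing to the limit at the end; both are valid and yield the same bound.
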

\begin{proof}
Let $\lambda^\eps,\tilde{\lambda}^\eps,\gamma_i^\eps,\tilde{\gamma}_i^\eps$ be smooth approximations and let
$\Lambda^\eps,\tilde{\Lambda}^\eps,\Gamma_i^\eps,\tilde{\Gamma}_i^\eps$ be their extensions to $\cM\times [0,1]$ as in the proof of Theorem~\ref{T32}. Also let $\Gamma^\eps=(\Gamma_1^\eps,\ldots,\Gamma_k^\eps)$ and 
$\tilde{\Gamma}^\eps=(\tilde{\Gamma}_1^\eps,\ldots,\tilde{\Gamma}_k^\eps)$. We have
\begin{align*}
d\Lambda^\eps\wedge d\Gamma_1^\eps\wedge\ldots\wedge d\Gamma_k^\eps
&-
d\tilde{\Lambda}^\eps\wedge d\tilde{\Gamma}_1^\eps\wedge\ldots\wedge
d\tilde{\Gamma}_k^\eps    
=
d(\Lambda^\eps-\tilde{\Lambda}^\eps)\wedge d\Gamma_1^\eps\wedge\ldots\wedge d\Gamma_k^\eps\\
&+
\sum_{j=1}^k d\tilde{\Lambda}^\eps\wedge d\tilde{\Gamma}_1^\eps\wedge\ldots\wedge d\tilde{\Gamma}_{j-1}^\eps\wedge d(\Gamma_j^\eps-\tilde{\Gamma}_j^\eps)\wedge d{\Gamma}_{j+1}^\eps\wedge\ldots\wedge d{\Gamma}_k^\eps.
\end{align*}
Therefore,
\begin{equation}
\label{eq92}
\begin{split}
&|d\Lambda^\eps\wedge d\Gamma_1^\eps\wedge\ldots\wedge d\Gamma_k^\eps-
d\tilde{\Lambda}^\eps\wedge d\tilde{\Gamma}_1^\eps\wedge\ldots\wedge
d\tilde{\Gamma}_k^\eps|\\
&\lesssim 
|D(\Lambda^\eps-\tilde{\Lambda}^\eps)|\, |D\Gamma^\eps|^k+
|D\tilde{\Lambda}^\eps|\, |D(\Gamma^\eps-\tilde{\Gamma}^\eps)|(|D \Gamma^\eps|+|D\tilde{\Gamma}^\eps|)^{k-1}.
\end{split}
\end{equation}
Integration, Stokes' theorem, and H\"older's inequality yield
\begin{align*}
&
\left|\,\int_{\cM}\lambda^\eps\wedge d\gamma^\eps_1\wedge\ldots\wedge d\gamma^\eps_k-
\tilde{\lambda}^\eps\wedge d\tilde{\gamma}^\eps_1\wedge\ldots\wedge d\tilde{\gamma}^\eps_k\right|\\
&=
\left|\, \int_{\cM\times [0,1]}d\Lambda^\eps\wedge d\Gamma_1^\eps\wedge\ldots\wedge d\Gamma_k^\eps-
d\tilde{\Lambda}^\eps\wedge d\tilde{\Gamma}_1^\eps\wedge\ldots\wedge
d\tilde{\Gamma}_k^\eps\right|\\
&\lesssim
\left(\,\int_{\cM\times [0,1]}|D(\Lambda^\eps-\tilde{\Lambda}^\eps)|^q\right)^{1/q}
\left(\,\int_{\cM\times [0,1]}|D\Gamma^\eps|^p\right)^{k/p}\\
&+
\left(\,\int_{\cM\times [0,1]}|D\tilde{\Lambda}^\eps|^q\right)^{1/q}
\left(\,\int_{\cM\times [0,1]}|D(\Gamma^\eps-\tilde{\Gamma}^\eps)|^p\right)^{1/p}
\left(\,\int_{\cM\times [0,1]}|D\Gamma^\eps|^p+|D\tilde{\Gamma}^\eps|^p\right)^{(k-1)/p}\\
&\lesssim
\Vert\lambda^\eps-\tilde{\lambda}^\eps\Vert_{W^{1-\frac{1}{q},q}}
\Vert \gamma^\eps\Vert_{W^{1-\frac{1}{p},p}}^k
+
\Vert\tilde{\lambda}^\eps\Vert_{W^{1-\frac{1}{q},q}}
\Vert \gamma^\eps-\tilde{\gamma}^\eps\Vert_{W^{1-\frac{1}{p},p}}
\left(\Vert \gamma^\eps\Vert_{W^{1-\frac{1}{p},p}}+\Vert \tilde{\gamma}^\eps\Vert_{W^{1-\frac{1}{p},p}}\right)^{k-1}
\end{align*}
and the result follows upon passing to the limit as $\eps\to 0$.
\end{proof}
\begin{corollary}
\label{T51}
Let $\cM$ be a smooth oriented and closed $n$-dimensional Riemannian manifold. Let
$$
\lambda\in \Omega^k W^{1-\frac{1}{q},q}\left(\cM\right)
\quad
\text{and}
\quad
\gamma\in \Omega^\ell W^{1-\frac{1}{p},p}\left(\cM\right),
$$
where $k,\ell\geq 0$, $k+\ell=n-1$, $1<p,q<\infty$ and $p^{-1}+q^{-1}=1$. Then
$$
\int_{\cM}d\lambda\wedge\gamma=(-1)^{k+1}\int_{\cM}\lambda\wedge d\gamma.
$$
\end{corollary}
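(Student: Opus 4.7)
The plan is to reduce the identity to its smooth counterpart and then pass to the limit using Theorem~\ref{T32}. First I would check that both sides of the desired equality are well-defined distributional pairings in the sense of Theorem~\ref{T32}. The right-hand side $\int_\cM \lambda\wedge d\gamma$ fits the framework with one factor $d\gamma_1=d\gamma$, total form degree $k+(\ell+1)=n$, and the Hölder condition $\tfrac{1}{q}+\tfrac{1}{p}=1$. For the left-hand side $\int_\cM d\lambda\wedge\gamma$, I would apply Theorem~\ref{T32} with $\gamma$ in the role of $\lambda$ and $\lambda$ itself in the role of a single $\gamma_1$ of degree $\ell_1-1=k$; the degree sum is $\ell+(k+1)=n$ and again $\tfrac{1}{q}+\tfrac{1}{p}=1$, so the integral is defined as the limit of $\int_\cM \gamma^\eps\wedge d\lambda^\eps$ under any admissible smooth approximation (using Remark~\ref{R11}).

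Next I would fix simultaneous smooth approximations $\lambda^\eps\in\Omega^k(\cM)$ and $\gamma^\eps\in\Omega^\ell(\cM)$ with $\lambda^\eps\to\lambda$ in $W^{1-\frac{1}{q},q}$ and $\gamma^\eps\to\gamma$ in $W^{1-\frac{1}{p},p}$. For the smooth forms, the Leibniz rule \eqref{eq:dproductrule} gives
\begin{equation*}
d(\lambda^\eps\wedge\gamma^\eps)=d\lambda^\eps\wedge\gamma^\eps+(-1)^k\lambda^\eps\wedge d\gamma^\eps,
\end{equation*}
and, since $\cM$ is closed and oriented, the Stokes theorem (Lemma~\ref{T78}) yields $\int_\cM d(\lambda^\eps\wedge\gamma^\eps)=0$. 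Hence, for every $\eps>0$,
\begin{equation*}
\int_\cM d\lambda^\eps\wedge\gamma^\eps=(-1)^{k+1}\int_\cM \lambda^\eps\wedge d\gamma^\eps.
\end{equation*}

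Finally I would pass to the limit as $\eps\to 0$. The right-hand side converges to $(-1)^{k+1}\int_\cM\lambda\wedge d\gamma$ directly by Theorem~\ref{T32}. For the left-hand side, the sign rule gives $d\lambda^\eps\wedge\gamma^\eps=(-1)^{(k+1)\ell}\gamma^\eps\wedge d\lambda^\eps$, and Theorem~\ref{T32} applied to the pair $(\gamma,\lambda)$ (as arranged in Step 1) shows that $\int_\cM \gamma^\eps\wedge d\lambda^\eps$ converges to $\int_\cM \gamma\wedge d\lambda$; reinserting the same sign factor recovers $\int_\cM d\lambda\wedge\gamma$. Combining the two limits yields the claimed identity. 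There is no real obstacle here: the only point requiring attention is that both distributional integrals in the statement are legitimate instances of the construction in Theorem~\ref{T32}, which is exactly what Step 1 verifies; once that bookkeeping is done, the identity is an immediate consequence of smooth Stokes plus stability of the pairing under smooth approximation.
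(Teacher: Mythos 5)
Your proposal is correct and follows exactly the same route as the paper's proof: interpret both sides via Theorem~\ref{T32}, apply the smooth Leibniz rule and Stokes' theorem to the approximating forms, and pass to the limit. The extra bookkeeping you include (verifying that both distributional pairings fit the template of Theorem~\ref{T32} and commuting $\gamma^\eps\wedge d\lambda^\eps$ back to $d\lambda^\eps\wedge\gamma^\eps$ with the correct sign) is left implicit in the paper but is exactly the right justification.
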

\begin{proof}
Under the given assumptions the generalized integrals, defined as limits of integrals of smooth approximations, exist. Since
$$
0=\int_{\cM}d(\lambda^\eps\wedge\gamma^\eps)=\int_{\cM} d\lambda^\eps\wedge\gamma^\eps+
(-1)^k\int_{\cM}\lambda^\eps\wedge d\gamma^\eps,
$$
the result follows upon passing to the limit as $\eps\to 0$.
\end{proof}
Several results stated below are straightforward consequences of Theorems~\ref{T32} and~\ref{T33}. However, we state them explicitly to provide `ready to use' tools.  
\begin{corollary}
\label{T11}
Let $\cM$ be a smooth oriented and closed $n$-dimensional Riemannian manifold. Let
$$
f\in W^{1-\frac{1}{q},q}(\cM),\ g_i\in W^{1-\frac{1}{p_i},p_i}(\cM),
\quad
1<q,p_i<\infty,\ i=1,2,\ldots,n,
$$
where
\begin{equation}
\label{eq201}
\frac{1}{q}+\frac{1}{p_1}+\ldots+\frac{1}{p_n}=1.
\end{equation}
If
$$
\text{$W^{1-\frac{1}{q},q}(\cM)\ni f^\eps\to f$ in $W^{1-\frac{1}{q},q}$}
\quad
\text{and}
\quad
\text{$\lip(\cM)\ni g_i^\eps\to g_i$ in $W^{1-\frac{1}{p_i},p_i}$}
$$
as $\eps\to 0$, then the limit
$$
\langle f,dg_1\wedge\ldots\wedge dg_n\rangle= \int_{\cM} f dg_1\wedge\ldots\wedge dg_n:=\lim_{\eps\to 0} \int_{\cM}f^\eps\, dg_1^\eps\wedge\ldots\wedge dg_n^\eps
$$
exists and it does not depend on the choice of  approximations $f^\eps$, $g_i^\eps$.
\end{corollary}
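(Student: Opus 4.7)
The plan is to recognize Corollary~\ref{T11} as a direct specialization of Theorem~\ref{T32}. In the notation of Theorem~\ref{T32}, I would set $k = n$ (the number of factors in the wedge product), $\ell = 0$, and $\ell_i = 1$ for each $i = 1, \ldots, n$. Then the $\ell$-form $\lambda$ becomes a $0$-form, which I identify with the function $f$, and each $(\ell_i - 1)$-form $\gamma_i$ is also a $0$-form, identified with $g_i$. The dimensional compatibility $\ell + \ell_1 + \ldots + \ell_k = 0 + n = n$ is automatic, and the H\"older-duality condition \eqref{eq24} specializes exactly to the assumption \eqref{eq201}.

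With these identifications, the expression $\lambda \wedge d\gamma_1 \wedge \ldots \wedge d\gamma_k$ from Theorem~\ref{T32} reads $f \wedge dg_1 \wedge \ldots \wedge dg_n$, which is just $f\, dg_1 \wedge \ldots \wedge dg_n$ since $f$ is a $0$-form. The approximation hypotheses of Theorem~\ref{T32} (namely $\lambda^\eps \to \lambda$ in $W^{1-\frac{1}{q},q}$ and Lipschitz $\gamma_i^\eps \to \gamma_i$ in $W^{1-\frac{1}{p_i},p_i}$) are exactly the hypotheses of Corollary~\ref{T11} after translation.

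Therefore Theorem~\ref{T32} applies and yields both the existence of the limit
$$
\lim_{\eps \to 0} \int_{\cM} f^\eps\, dg_1^\eps \wedge \ldots \wedge dg_n^\eps
$$
and its independence of the chosen approximations. Since the argument is a pure specialization, no new estimate is required; the only point worth checking is that a single smooth-form approximation property suffices to span all the $g_i$, but this is covered by Remark~\ref{R10}, which already guarantees existence of Lipschitz approximations for arbitrary $W^{1-\frac{1}{p_i},p_i}$ forms. There is no genuine obstacle here beyond verifying the dimensional and dual-exponent bookkeeping.
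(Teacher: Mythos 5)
Your proposal is correct and is exactly the paper's intended argument: the paper states Corollary~\ref{T11} without proof after remarking that it is a ``straightforward consequence'' of Theorem~\ref{T32}, and your specialization ($k=n$, $\ell=0$, $\ell_i=1$) is the right one. The bookkeeping with $\ell+\ell_1+\cdots+\ell_k=n$ and \eqref{eq24}$\,\leftrightarrow\,$\eqref{eq201} checks out, and the appeal to Remark~\ref{R10} for existence of the Lipschitz approximants is appropriate.
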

\begin{remark}
In this particular instance of Theorem~\ref{T32}, $f$ and $g_i$ are functions, so $dg_1\wedge\ldots\wedge dg_n$ is a generalized distributional Jacobian of a mapping $g=(g_1,\ldots,g_n):\cM\to\R^n$ acting on a function $f$.
\end{remark}
\begin{corollary}
\label{T13}
Let $\cM$ be a smooth oriented and closed $n$-dimensional Riemannian manifold. Let
$$
f,\tilde{f}\in W^{1-\frac{1}{q},q}(\cM),\ g,\tilde{g}\in W^{1-\frac{1}{p},p}(\cM)^n,
\quad
n<p<\infty,\ q=\frac{p}{p-n}.
$$
Then assumptions of Corollary~\ref{T11} are satisfied with $p_i=p$ and the distributional Jacobains 
$dg_1\wedge\ldots\wedge dg_n$ and $d\tilde{g}_1\wedge\ldots\wedge d\tilde{g}_n$ exist. Moreover 
\begin{align*}
&\left|\int_{\cM} fdg_1\wedge\ldots\wedge dg_n-\int_{\cM}\tilde{f}\tilde{g}_1\wedge\ldots\wedge d\tilde{g}_n\right|\\
&\lesssim
\left(\Vert f-\tilde{f}\Vert_{W^{1-\frac{1}{q},q}}\Vert g\Vert_{W^{1-\frac{1}{p},p}}
+\Vert\tilde{f}\Vert_{W^{1-\frac{1}{q},q}}\Vert g-\tilde{g}\Vert_{W^{1-\frac{1}{p},p}}\right)
\left(\Vert g\Vert_{W^{1-\frac{1}{p},p}}+\Vert \tilde{g}\Vert_{W^{1-\frac{1}{p},p}}\right)^{n-1}.
\end{align*}
\end{corollary}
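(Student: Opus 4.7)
The plan is to obtain \Cref{T13} as a direct specialization of \Cref{T33}. I would take $\ell=0$, $k=n$, and $\ell_i=1$ for each $i=1,\ldots,n$ in the notation of \Cref{T33}. Under this assignment the dimensional constraint $\ell+\ell_1+\ldots+\ell_k=n$ becomes $0+n=n$, which is automatic, and the exponent relation $q=\tfrac{p}{p-k}$ reduces to $q=\tfrac{p}{p-n}$, matching the hypothesis $p>n$. Since $f,\tilde f$ are scalar functions, they sit in $\Omega^{0}W^{1-\frac{1}{q},q}(\cM)$, while each $g_i,\tilde g_i\in W^{1-\frac{1}{p},p}(\cM)$ is a $0$-form, hence an element of $\Omega^{\ell_i-1}W^{1-\frac{1}{p},p}(\cM)=\Omega^{0}W^{1-\frac{1}{p},p}(\cM)$. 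Thus all the structural hypotheses of \Cref{T33} are met with $p_i=p$, and the conjunction $\tfrac{1}{q}+\sum_i\tfrac{1}{p_i}=\tfrac{p-n}{p}+\tfrac{n}{p}=1$ matches \eqref{eq24}.

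Next, I would invoke \Cref{T33} (which in turn invokes \Cref{T32}) to conclude that smooth approximations $f^\eps\to f$, $\tilde f^\eps\to\tilde f$ in $W^{1-\frac{1}{q},q}$ and $g_i^\eps\to g_i$, $\tilde g_i^\eps\to\tilde g_i$ in $W^{1-\frac{1}{p},p}$ produce the well-defined distributional Jacobians
\[
\int_{\cM} f\,dg_1\wedge\ldots\wedge dg_n
\quad\text{and}\quad
\int_{\cM}\tilde f\,d\tilde g_1\wedge\ldots\wedge d\tilde g_n,
\]
independent of the chosen approximations. Existence of such smooth approximations is guaranteed by \Cref{T58} and \Cref{R10}.

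For the quantitative estimate, I would read off the inequality of \Cref{T33} with $\lambda=f$, $\tilde\lambda=\tilde f$, $\gamma=(g_1,\ldots,g_n)$, $\tilde\gamma=(\tilde g_1,\ldots,\tilde g_n)$. The stated bound
\[
\bigl(\Vert f-\tilde f\Vert_{W^{1-\frac{1}{q},q}}\Vert g\Vert_{W^{1-\frac{1}{p},p}}+\Vert\tilde f\Vert_{W^{1-\frac{1}{q},q}}\Vert g-\tilde g\Vert_{W^{1-\frac{1}{p},p}}\bigr)\bigl(\Vert g\Vert_{W^{1-\frac{1}{p},p}}+\Vert\tilde g\Vert_{W^{1-\frac{1}{p},p}}\bigr)^{n-1}
\]
is literally the conclusion of \Cref{T33} in this case, once one identifies the $k=n$ factor in the exponent $k-1=n-1$. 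Since the proof is essentially a bookkeeping specialization of a previously established theorem, there is no genuine obstacle; the only mild point to verify is that the particular instance of the index condition \eqref{eq24} reduces correctly, which it does because $\sum_{i=1}^{n}\tfrac{1}{p_i}=\tfrac{n}{p}=1-\tfrac{1}{q}$ by the definition of $q$.
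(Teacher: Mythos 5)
Your specialization of \Cref{T33} with $\ell=0$, $k=n$, $\ell_i=1$, and $p_i=p$ is exactly the argument the paper intends; indeed the paper explicitly labels Corollaries~\ref{T11} and~\ref{T13} as ``straightforward consequences of Theorems~\ref{T32} and~\ref{T33}'' and gives no separate proof. Your bookkeeping — checking $\ell+\sum\ell_i=n$, $q=p/(p-k)$, and the reduction of \eqref{eq24} — is accurate and complete.
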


Since $C_{\rm b}^{0,\alpha}\subset W^{1-\frac{1}{p},p}$ for $1-\frac{1}{p}<\alpha\leq 1$ (by Corollary~\ref{T60}), we can replace the fractional Sobolev spaces in 
the above results by H\"older spaces. However, since smooth mappings are not dense in the H\"older space, we have to relax the approximation condition. This gives the following result.
\begin{theorem}
\label{T34}
Let $\cM$ be a smooth oriented and compact $n$-dimensional Riemannian manifold with or without boundary. Let
$$
\lambda\in \Omega^\ell C_{\rm b}^{0,\alpha}\left(\cM\right)
\quad
\text{and}
\quad
\gamma_i\in \Omega^{\ell_i-1}C_{\rm b}^{0,\beta_i}\left(\cM\right),
\quad
i=1,2,\ldots,k,
$$
where 
$$
\ell+\ell_1+\ldots+\ell_k=n,\
\ell\geq 0,\ \ell_i\geq 1, 
\qquad
\alpha+\beta_1+\ldots+\beta_k>k,\ \alpha,\beta_i\in (0,1].
$$
Assume that $0<\alpha'\leq\alpha$ and $0<\beta_i'\leq\beta_i$ are such that
$\alpha'+\beta_1'+\ldots+\beta_k'>k$ and that
we have approximations
$$
\Omega^\ell C_{\rm b}^{0,\alpha'}\left(\cM\right)\ni\lambda^\eps\to\lambda\ \text{in $C_{\rm b}^{0,\alpha'}$}
\quad
\text{and}
\quad
\Omega^{\ell_i-1}\lip(\cM)\ni \gamma_i^\eps\to \gamma_i\ \text{in $C_{\rm b}^{0,\beta_i'}$}
$$
as $\eps\to 0$. Then the limit
\begin{equation}
\label{eq13}
\langle \lambda,d\gamma_1\wedge\ldots\wedge d\gamma_k\rangle=
\int_{\cM}\lambda\wedge d\gamma_1\wedge\ldots\wedge d\gamma_{k}:=
\lim_{\eps\to 0}\int_{\cM} \lambda^\eps\wedge d\gamma_1^\eps\wedge\ldots\wedge d\gamma_k^\eps
\end{equation}
exists and it does not depend on a choice of approximations $\lambda^\eps$, $\gamma_i^\eps$.
\end{theorem}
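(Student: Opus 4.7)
The plan is to reduce \Cref{T34} to the fractional Sobolev version \Cref{T32} via the continuous embedding of H\"older spaces into fractional Sobolev spaces supplied by \Cref{T60}.

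First I would use the strict inequality $\alpha' + \beta_1' + \ldots + \beta_k' > k$ to choose exponents $q, p_1, \ldots, p_k \in (1, \infty)$ satisfying
\[
\frac{1}{q} + \sum_{i=1}^{k} \frac{1}{p_i} = 1, \qquad \alpha' > 1 - \frac{1}{q}, \qquad \beta_i' > 1 - \frac{1}{p_i}.
\]
Indeed, writing $t_0 = 1 - \alpha'$ and $t_i = 1 - \beta_i'$ for $1 \leq i \leq k$, the hypothesis becomes $\sum_{i=0}^{k} t_i < 1$, so there is room to choose reciprocals $1/q > t_0$ and $1/p_i > t_i$ summing to one.

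Next, \Cref{T60} together with its purely local generalization to differential forms (via the same partition-of-unity argument) yields continuous embeddings $\Omega^\ell C_{\rm b}^{0, \alpha'}(\cM) \hookrightarrow \Omega^\ell W^{1-1/q, q}(\cM)$ and $\Omega^{\ell_i - 1} C_{\rm b}^{0, \beta_i'}(\cM) \hookrightarrow \Omega^{\ell_i - 1} W^{1-1/p_i, p_i}(\cM)$. Consequently the H\"older convergences assumed in the theorem upgrade automatically to the fractional Sobolev convergences required by \Cref{T32}. When $\partial \cM = \varnothing$, \Cref{T32} applies verbatim and yields both the existence of the limit and its independence of the approximating sequences.

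The only genuinely new point, and the main obstacle, is the case $\partial \cM \neq \varnothing$, since \Cref{T32} is stated only for closed manifolds. I would mimic the proof of \Cref{T32} with the extension operator of \Cref{T7} replaced by its analog for manifolds with boundary; Stokes' theorem on $\cM \times [0,1]$ then produces the familiar interior term
\[
\int_{\cM \times [0,1]} d\Lambda^\eps \wedge d\Gamma_1^\eps \wedge \ldots \wedge d\Gamma_k^\eps,
\]
controlled by H\"older's inequality with the exponents $q, p_i$ exactly as in the proof of \Cref{T32}, plus an extra integral over $\partial \cM \times [0,1]$. To absorb this extra piece I would embed $\cM$ as a codimension-$0$ submanifold of its closed double $\widetilde{\cM}$ and extend all data across $\partial \cM$ by reflection (in the spirit of the proof of \Cref{T71}), preserving the H\"older regularity and the convergence of the approximations; a partition of unity on $\cM$ subordinate to interior and boundary charts then reduces the integral to a sum of local pieces, each of which is handled either by the closed-manifold argument (interior charts) or by the half-space Gagliardo extension of \Cref{S5} followed by the Brezis--Nguyen style Stokes argument (boundary charts).
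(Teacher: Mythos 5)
Your reduction of the closed case to \Cref{T32} via the H\"older-to-fractional-Sobolev embedding of \Cref{T60} is exactly the paper's argument, and your choice of exponents $q,p_i$ is the same.

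The boundary case is where your proposal has a gap. You propose to pass to the closed double $\widetilde{\cM}$ and ``extend all data across $\partial\cM$ by reflection''. Reflection across a Lipschitz collar only preserves the H\"older and Lipschitz classes you start with: on $\widetilde{\cM}\setminus\cM$ the extensions $\lambda,\gamma_i$ would remain merely $C^{0,\alpha'}$ and $C^{0,\beta_i'}$, hence only $W^{1-\frac1q,q}$ and $W^{1-\frac1{p_i},p_i}$. That is enough to run \Cref{T32} on $\widetilde{\cM}$, but it leaves you with no way to make sense of $\int_{\widetilde{\cM}\setminus\cM}\lambda\wedge d\gamma_1\wedge\ldots\wedge d\gamma_k$ or to show that $\int_{\widetilde{\cM}\setminus\cM}\lambda^\eps\wedge d\gamma_1^\eps\wedge\ldots\wedge d\gamma_k^\eps$ converges as $\eps\to 0$: the $d\gamma_i$ are not $L^{p_i}$ functions and H\"older's inequality has no grip. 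You would be facing the original problem again, now on the compact manifold with boundary $\overline{\widetilde{\cM}\setminus\cM}$ --- a circular reduction.

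The paper avoids this precisely by \emph{not} reflecting. Instead, it applies (a form-valued variant of) the Gagliardo extension of \Cref{T7} to the \emph{boundary values} on $\partial\cM$ and pushes them into the collar $\partial\cM\times[0,1]\subset\widetilde{\cM}\setminus\cM$, then extends by zero. This produces extensions that are $C^{0,\alpha'}$, Lipschitz, etc.\ on all of $\widetilde{\cM}$ (so \Cref{T32} applies), \emph{and simultaneously} lie in the genuine first-order Sobolev spaces $W^{1,q}$ and $W^{1,p_i}$ on $\widetilde{\cM}\setminus\cM$, with $\gamma_i^\eps\to\gamma_i$ in $W^{1,p_i}(\widetilde{\cM}\setminus\cM)$ since $\gamma>1-\tfrac1p$ makes the extension land in $W^{1,p}$ (Lemma~\ref{T5}). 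That $W^{1,p}$ regularity on the complement is the whole point: it makes $\int_{\widetilde{\cM}\setminus\cM}$ a classical integral whose convergence follows from H\"older's inequality as in~\eqref{eq73}, so one can simply subtract. Replacing the smoothing Gagliardo extension by reflection destroys exactly the regularity gain the argument needs.
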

\begin{remark}
If families $(\lambda^\eps)\subset C_{\rm b}^{0,\alpha}$ and $(\gamma_i^\eps)\subset \lip$, $i=1,2,\ldots,n$ are bounded in $C_{\rm b}^{0,\alpha}$ and $C_{\rm b}^{0,\beta_i}$ respectively, and
$$
\lambda^\eps\rightrightarrows\lambda,
\quad
\gamma_i^\eps\rightrightarrows\gamma_i, \ i=1,2,\ldots,n,
\quad
\text{uniformly as $\eps\to 0$,}
$$
then according to Corollary~\ref{T70} the assumptions of Theorem~\ref{T34} are satisfied and hence \eqref{eq13} holds true. Existence of such a family $(\lambda^\eps)$ is obvious (take $\lambda^\eps=\lambda$) and the existence of families $(\gamma_i^\eps)$ is guaranteed by Corollary~\ref{T71} and Remark~\ref{R8}.
\end{remark}
\begin{remark}
In Theorem~\ref{T32} we assumed that $\cM$ has no boundary, but in Theorem~\ref{T34} we allow $\cM$ to have boundary. This is because the assumptions in Theorem~\ref{T34} are stronger: if forms are H\"older continuous on $\cM$, then their restrictions to the boundary are H\"older continuous too and we can apply the extension operator to the restrictions of the forms to the boundary.
\end{remark}
\begin{proof}
Under the given assumptions we can find $1<q,p_i<\infty$ such that
$$
\alpha'>1-\frac{1}{q},
\quad
\beta_i'>1-\frac{1}{p_i},
\quad
\frac{1}{q}+\frac{1}{p_1}+\ldots+\frac{1}{p_k}=1.
$$
Assume first that $\partial\cM=\varnothing$.
Since by Corollary~\ref{T60},
$C_{\rm b}^{0,\alpha'}\subset W^{1-\frac{1}{q},q}$, $C_{\rm b}^{0,\beta_i'}\subset W^{1-\frac{1}{p_i},p_i}$, the result follows from Theorem~\ref{T32}.

Assume now that $\partial\cM\neq\varnothing$. The proof in this case is elementary yet delicate.

Using the collar neighborhood theorem, we can 
construct a smooth manifold $\widetilde{\cM}$ without boundary, by gluing two copies of $\cM$ along the boundary. 
A neighborhood of the boundary of the `twin manifold' $\widetilde{\cM}\setminus\cM$, is diffeomorphic to $\partial\cM\times[0,1]$, where $\partial\cM\times\{0\}$ corresponds to $\partial\cM$ and $\partial\cM\times\{ 1\}$ is in the interior of the twin manifold. By writing $\partial\cM\times[0,1]$ in the proof below, we will always refer to this neighborhood.

If $(x,U)$, $x(U)=\bbbb^n_+$, is a good coordinate system on $\cM$ near a boundary point in $\partial\cM$, then the coefficients of the forms $\lambda$, $\lambda^\eps$, $\gamma_i$ and $\gamma_i^\eps$  expressed in this coordinate system are H\"older and Lipschitz continuous on
$$
\bbbb^{n-1}=\{x\in\bbbb^n_+:\, x_n=0\}.
$$
They are H\"older continuous with exponents $\alpha'$ and $\beta_i'$ and coefficients of $\gamma_i^\eps$ are Lipschitz continuous. This includes coefficients of components of the forms that involve $dx_n$. 

An obvious modification of the proof of Corollary~\ref{T7} that takes into account extension of coefficients involving $dx_n$, gives us an extension of the forms $\lambda$, $\lambda^\eps$ and $\gamma_i$, $\gamma_i^\eps$ to the forms (denoted by the same symbols) such that
$$
\lambda,\lambda^\eps \in\Omega^\ell C_{\rm b}^{0,\alpha'}(\widetilde{\cM}),
\qquad
\gamma_i,\in \Omega^{\ell_i-1}C_{\rm b}^{0,\beta_i'}(\widetilde{\cM}),
\quad
\gamma_i^\eps\in \Omega^{\ell_i-1}\lip(\widetilde{\cM}),
$$
and
$$
\lambda,\lambda^\eps \in\Omega^\ell W^{1,q}(\widetilde{\cM}\setminus\cM),
\qquad
\gamma_i,\gamma_i^\eps\in \Omega^{\ell_i-1}W^{1,p_i}(\widetilde{\cM}\setminus\cM).
$$
While the proof of Corollary~\ref{T7} extends the forms to $\partial\cM\times[0,1]$ only, the forms vanish near $\partial\cM\times\{1\}$, so we assume that the forms $\lambda$, $\lambda^\eps$, $\gamma_i$, $\gamma_i^\eps$
are equal zero beyond $\partial\cM\times\{1\}$ in the twin manifold $\widetilde{\cM}\setminus\cM$.

Note that if $\gamma_i^\eps$ is $C^\infty$ on $\overbar{\cM}$, then the  extension of $\gamma_i^\eps$ to $\widetilde{\cM}$ is Lipschitz continuous only. While it is $C^\infty$ in $\widetilde{\cM}\setminus\partial\cM$, smoothness at the points of $\partial\cM$ is not guaranteed -- extending smooth functions from a half-space to the whole space is not that easy (see e.g. \cite{Seeley}).

Since the extension operator is bounded, it follows that
\begin{equation} 
\label{eq91}
\lambda^\eps\to\lambda
\text{ in } W^{1,q}
\quad
\text{and}
\quad
\gamma_i^\eps\to \gamma_i
\text{ in } W^{1,p_i}
\quad
\text{ on }
\widetilde{\cM}\setminus \cM.
\end{equation}

Since the extension operator is bounded in the H\"older norms $C_{\rm b}^{0,\alpha'}$ and $C_{\rm b}^{0,\beta_i'}$, convergence in the H\"older norms on $\cM$ implies that
$$
\lambda^\eps\to\lambda
\text{ in } C_{\rm b}^{0,\alpha'}
\quad
\text{and}
\quad
\gamma_i^\eps\to \gamma_i
\text{ in } C_{\rm b}^{0,\beta_i'}
\quad
\text{ on }
\widetilde{\cM}.
$$
and hence
$$
\lambda^\eps\to\lambda
\text{ in } W^{1-\frac{1}{q},q}
\quad
\text{and}
\quad
\lip\ni\gamma_i^\eps\to \gamma_i
\text{ in } W^{1-\frac{1}{p_i},p_i}
\quad
\text{ on }
\widetilde{\cM}.
$$
Therefore, Theorem~\ref{T32} yields that the limit
\begin{equation}
\label{eq89}
\lim_{\eps\to 0} 
\int_{\widetilde{\cM}}\lambda^\eps\wedge d\gamma^\eps_1\wedge\cdots\wedge d\gamma^\eps_k
\end{equation}
exists and is independent of the choice of the original approximations on $\cM$.
Note that the integral
$$
\int_{\widetilde{\cM}\setminus\cM}\lambda\wedge d\gamma_1\wedge\cdots\wedge d\gamma_k
$$
is defined in the classical sense, because the forms belong to suitable Sobolev spaces.
Thus, to prove existence of the limit \eqref{eq13} and its independence of approximations, it suffices to show that
\begin{equation}
\label{eq90}
\int_{\widetilde{\cM}\setminus\cM}\lambda\wedge d\gamma_1\wedge\cdots\wedge d\gamma_k=
\lim_{\eps\to 0}
\int_{\widetilde{\cM}\setminus\cM}\lambda^\eps\wedge d\gamma^\eps_1\wedge\cdots\wedge d\gamma^\eps_k,
\end{equation}
because then, we will have that
$$
\lim_{\eps\to 0}
\int_{\cM}\lambda^\eps\wedge d\gamma^\eps_1\wedge\cdots\wedge d\gamma^\eps_k=
\lim_{\eps\to 0} 
\int_{\widetilde{\cM}}\lambda^\eps\wedge d\gamma^\eps_1\wedge\cdots\wedge d\gamma^\eps_k-
\int_{\widetilde{\cM}\setminus\cM}\lambda\wedge d\gamma_1\wedge\cdots\wedge d\gamma_k.
$$
The proof of \eqref{eq90} uses \eqref{eq91} and is in fact, 
very similar to that of \eqref{eq73}. We leave details to the reader; they may find some additional details in the last part of the proof of Corollary~\ref{T35}, where we estimated differences of integrals over $\widetilde{\cM}\setminus\cM$.
\end{proof}
\begin{corollary}
\label{T35}
Let $\cM$ be a smooth oriented and compact $n$-dimensional Riemannian manifold with or without boundary. Let
$$
\lambda,\tilde{\lambda}\in \Omega^\ell C_{\rm b}^{0,\alpha}\left(\cM\right)
\quad
\text{and}
\quad
\gamma_i,\tilde{\gamma}_i\in \Omega^{\ell_i-1}C_{\rm b}^{0,\beta}\left(\cM\right),
$$
where 
$$
\ell+\ell_1+\ldots+\ell_k=n,\
\ell\geq 0,\ \ell_i\geq 1, 
\qquad
\alpha+k\beta>k,\ \alpha,\beta\in (0,1].
$$
Then the assumptions of Theorem~\ref{T34} are satisfied with $\beta_i=\beta$.
Moreover, 
\begin{align*}
&\left|\,\int_{\cM}\lambda\wedge d\gamma_1\wedge\ldots\wedge d\gamma_k-\int_{\cM}\tilde{\lambda}\wedge d\tilde{\gamma}_1\wedge\ldots\wedge d\tilde{\gamma}_k\right|\\
&\lesssim    
\left(\Vert\lambda-\tilde{\lambda}\Vert_{C^{0,\alpha}}\Vert \gamma\Vert_{C^{0,\beta}}
+\Vert\tilde{\lambda}\Vert_{C^{0,\alpha}}\Vert \gamma-\tilde{\gamma}\Vert_{C^{0,\beta}}\right)
\left(\Vert \gamma\Vert_{C^{0,\beta}}+\Vert \tilde{\gamma}\Vert_{C^{0,\beta}}\right)^{k-1},
\end{align*}
where $\gamma=(\gamma_1,\ldots,\gamma_k)$ and $\tilde{\gamma}=(\tilde{\gamma}_1,\ldots,\tilde{\gamma}_k)$.
\end{corollary}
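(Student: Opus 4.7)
The plan is to reduce Corollary~\ref{T35} to Theorem~\ref{T33} via the continuous embedding of H\"older spaces into fractional Sobolev spaces from Corollary~\ref{T60}, and then handle the boundary case by reflecting to the twin manifold as in the proof of Theorem~\ref{T34}.

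First I would verify that the hypotheses of Theorem~\ref{T34} are met with $\beta_i = \beta$. Because $\alpha + k\beta > k$ is strict, one can pick $\alpha' \in (0,\alpha]$ and $\beta' \in (0,\beta]$ with $\alpha' + k\beta' > k$. The trivial constant sequence $\lambda^\eps \equiv \lambda$ is then an admissible approximation in $C^{0,\alpha'}_{\rm b}$, while Corollary~\ref{T71} together with Remark~\ref{R8} provides smooth (hence Lipschitz) approximations $\gamma_i^\eps \to \gamma_i$ in $C^{0,\beta'}_{\rm b}$. This already ensures that the distributional integral $\int_\cM \lambda\wedge d\gamma_1\wedge\dots\wedge d\gamma_k$ is well defined and independent of the choice of approximations.

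Next I would select Sobolev exponents tailored to Theorem~\ref{T33}. The condition $\alpha + k\beta > k$ rewrites as $k(1-\beta) < \alpha$, so I can choose $p \in (k/\alpha,\, 1/(1-\beta))$ and set $q = p/(p-k)$. This yields $\tfrac{1}{q} + \tfrac{k}{p} = 1$, $\alpha > 1-\tfrac{1}{q} = \tfrac{k}{p}$, and $\beta > 1-\tfrac{1}{p}$. Corollary~\ref{T60} then supplies continuous embeddings
\[
C^{0,\alpha}_{\rm b}(\cM) \hookrightarrow W^{1-\frac{1}{q},q}(\cM),
\qquad
C^{0,\beta}_{\rm b}(\cM) \hookrightarrow W^{1-\frac{1}{p},p}(\cM),
\]
so the right-hand side of Theorem~\ref{T33} is dominated by the asserted H\"older-type quantity. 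When $\partial\cM = \varnothing$, this finishes the proof by direct application of Theorem~\ref{T33} with $p_i = p$.

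To handle $\partial\cM \neq \varnothing$, I would run the twin manifold construction already used in Theorem~\ref{T34}: form the closed manifold $\widetilde{\cM}$ by doubling $\cM$ across $\partial\cM$ and extend every form using the operator from Corollary~\ref{T7}, which is bounded simultaneously in the H\"older norms on $\widetilde{\cM}$ and in the Sobolev norms on $\widetilde{\cM}\setminus\cM$. The defining limit on $\cM$ equals the Theorem~\ref{T33}-limit on $\widetilde{\cM}$ minus the classical Sobolev integral on $\widetilde{\cM}\setminus\cM$. Theorem~\ref{T33} directly controls the $\widetilde{\cM}$-integral, while the $\widetilde{\cM}\setminus\cM$ contribution is handled by the pointwise algebraic identity behind \eqref{eq92} and H\"older's inequality, giving the same structure of estimate. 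Combining the two bounds and using boundedness of the extension operator returns the inequality in terms of the H\"older norms on $\cM$ alone.

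The main obstacle I anticipate is the bookkeeping in the boundary case: one must check that the extension provided by Corollary~\ref{T7} is controlled in the H\"older norm by the H\"older norm on $\cM$ \emph{and} controlled in the $W^{1,q}$/$W^{1,p}$ norm on the collar by the same H\"older norm, so that both halves of the decomposition (limit on $\widetilde{\cM}$ and classical integral on $\widetilde{\cM}\setminus\cM$) deliver bounds of the claimed form; this is exactly the simultaneous boundedness guaranteed by Corollary~\ref{T7}, so the obstacle is conceptual rather than technical.
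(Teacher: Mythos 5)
Your proposal is correct and follows essentially the same route as the paper's own proof: reduce to Theorem~\ref{T33} via the H\"older-to-fractional-Sobolev embedding of Corollary~\ref{T60} (with the same choice of conjugate exponents $p$, $q$), and in the boundary case split the difference over $\widetilde{\cM}$ (handled by Theorem~\ref{T33}) and over $\widetilde{\cM}\setminus\cM$ (handled by the pointwise Hadamard-type estimate as in \eqref{eq92} plus H\"older), relying on the simultaneous H\"older/Sobolev boundedness of the extension to $\widetilde{\cM}$. The only cosmetic difference is that the paper carries intermediate exponents $\alpha',\beta'$ to emphasize that the approximations converge in the weaker H\"older norms, while you fold this into the choice of $p$; this does not change the argument.
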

\begin{proof}
We will use notation from the proof of Theorem~\ref{T34}. Assume that
$$
\alpha\geq\alpha'>1-\frac{1}{q},
\qquad
\beta\geq\beta'>1-\frac{1}{p},
\qquad
\frac{1}{q}+\frac{k}{p}=1,
$$
and assume that the generalized integrals are defined through approximations in $C_{\rm b}^{0,\alpha'}$ and $C_{\rm b}^{0,\beta'}$. Therefore, we can assume that the forms $\gamma$ and $\tilde{\gamma}$ are Lipschitz continuous.

The triangle inequality
$$
\Bigg|\int_{\cM}\heartsuit-\int_{\cM}\diamondsuit\,\Bigg|\leq 
\Bigg|\int_{\widetilde{\cM}}\heartsuit-\int_{\widetilde{\cM}}\diamondsuit\,\Bigg|+
\Bigg|\int_{{\widetilde{\cM}}\setminus\cM}\heartsuit-\int_{{\widetilde{\cM}}\setminus\cM}\diamondsuit\,\Bigg|
$$
allows us to estimate differences of the integrals over $\widetilde{\cM}$ and $\widetilde{\cM}\setminus\cM$ separately.

Since the extension operator constructed in the proof of Theorem~\ref{T34} is bounded in the H\"older norms,
$$
C_{\rm b}^{0,\alpha'}\subset W^{1-\frac{1}{q},q},\ 
C_{\rm b}^{0,\beta'}\subset W^{1-\frac{1}{p},p}
\quad
\text{and}
\quad
\Vert\cdot\Vert_{C^{0,\alpha'}}\lesssim \Vert\cdot\Vert_{C^{0,\alpha}},\ 
\Vert\cdot\Vert_{C^{0,\beta'}}\lesssim \Vert\cdot\Vert_{C^{0,\beta}},
$$
we have that
$$
\Vert\lambda\Vert_{W^{1-\frac{1}{q},q}(\widetilde{\cM})}\lesssim 
\Vert\lambda\Vert_{C^{0,\alpha}(\cM)},
\qquad
\Vert\gamma\Vert_{W^{1-\frac{1}{p},p}(\widetilde{\cM})}\lesssim 
\Vert\gamma\Vert_{C^{0,\beta}(\cM)}
$$
and similar estimates for $\tilde{\lambda}$ and $\tilde{\gamma}$. Now the estimate for the difference of integrals over $\widetilde{\cM}$ follows from Theorem~\ref{T33}.

To estimate the difference of integrals over $\widetilde{\cM}\setminus \cM$ we use the inequality similar to \eqref{eq92}
\[
\begin{split}
&|\lambda\wedge d\gamma_1\wedge\ldots\wedge d\gamma_k-
\tilde{\lambda}\wedge d\tilde{\gamma}_1\wedge\ldots\wedge d\tilde{\gamma}_k|\\
&\leq
|\lambda-\tilde{\lambda}|\, |d\gamma_1\wedge\ldots\wedge d\gamma_k|+
|\tilde{\lambda}|\, 
|d\gamma_1\wedge\ldots\wedge d\gamma_k-
d\tilde{\gamma}_1\wedge\ldots\wedge d\tilde{\gamma}_k|\\
&\lesssim
|\lambda-\tilde{\lambda}|\, |D\gamma|^k+|\tilde{\lambda}|\,|D(\gamma-\tilde{\gamma})|
(|D\gamma|+|D\tilde{\gamma}|)^{k-1}.
\end{split}
\]
Next, we integrate this inequality over $\widetilde{\cM}\setminus\cM$ and estimate the right hand side with the help of H\"older's inequality
\[
\begin{split}
\Bigg|\int_{{\widetilde{\cM}}\setminus\cM}\heartsuit-\int_{{\widetilde{\cM}}\setminus\cM}\diamondsuit\,\Bigg|
&\lesssim
\Vert\lambda-\tilde{\lambda}\Vert_q\Vert D\gamma\Vert_p^k+
\Vert\tilde{\lambda}\Vert_q\Vert D(\gamma-\tilde{\gamma})\Vert_p
(\Vert D\gamma\Vert_p+\Vert D\tilde{\gamma}\Vert_p)^{k-1}\\
&\leq
(\Vert\lambda-\tilde{\lambda}\Vert_q\Vert D\gamma\Vert_p+
\Vert\tilde{\lambda}\Vert_q\Vert D(\gamma-\tilde{\gamma})\Vert_p)
(\Vert D\gamma\Vert_p+\Vert D\tilde{\gamma}\Vert_p)^{k-1}.
\end{split}
\]
Here the norms on the right hand side are computed over $\widetilde{\cM}\setminus\cM$. 
Now the desired estimate in the H\"older norm on $\cM$ follows from the fact that the extensions $\lambda$, $\tilde{\lambda}$ and $\gamma$, $\tilde{\gamma}$ from the boundary $\partial\cM$ to $\widetilde{\cM}\setminus\cM$ are bounded operators from $C_{\rm b}^{0,\alpha'}$ and $C_{\rm b}^{0,\beta'}$ to $W^{1,q}$ and $W^{1,p}$ respectively. It remains only to observe that 
$\Vert\cdot\Vert_{C^{0,\alpha'}(\partial\cM)}\leq \Vert\cdot\Vert_{C^{0,\alpha}(\cM)}$ and similarly for $\beta'$ and $\beta$.
\end{proof}
Let us now state a special case of the result that will be useful in the applications.
\begin{corollary}
\label{T72}
Let $\Omega\subset\R^n$ be a bounded domain with Lipschitz boundary. 
Let 
$f\in C_{\rm b}^{0,\alpha}(\Omega)$, 
$g,\in C_{\rm b}^{0,\beta}(\Omega)^n$, $\alpha,\beta\in (0,1]$,
$\alpha+n\beta>n$.

If $\alpha'\in (0,\alpha]$, $\beta'\in(0,\beta]$, $\alpha'+n\beta'>n$ and
\begin{equation}
\label{eq75}
\text{$C_{\rm b}^{0,\alpha'}(\Omega)\ni f^\eps\to f$ in $C_{\rm b}^{0,\alpha'}$}
\quad
\text{and}
\quad
\text{$\lip({\Omega})^n\ni g^\eps\to g$ in $C_{\rm b}^{0,\beta_i'}$}
\quad
\text{as $\eps\to 0$,}
\end{equation}
then the limit
\begin{equation}
\label{eq76}
\langle f,dg_1\wedge\ldots\wedge dg_n\rangle=
\int_{\Omega} f dg_1\wedge\ldots\wedge dg_n:=\lim_{\eps\to 0} \int_{\Omega}f^\eps\, dg_1^\eps\wedge\ldots\wedge dg_n^\eps
\end{equation}
exists and it does not depend on the choice of approximations $f^\eps$, $g^\eps$.

Moreover, if $f,\tilde{f}\in C_{\rm b}^{0,\alpha}(\Omega)$, 
$g,\tilde{g}\in C_{\rm b}^{0,\beta}(\Omega)^n$, $\alpha,\beta\in (0,1]$,
$\alpha+n\beta>n$, then
\begin{align*}
&\left|\int_{\Omega} fdg_1\wedge\ldots\wedge dg_n-\int_{\Omega}\tilde{f}\tilde{g}_1\wedge\ldots\wedge d\tilde{g}_n\right|\\
&\lesssim
\left(\Vert f-\tilde{f}\Vert_{C^{0,\alpha}}\Vert g\Vert_{C^{0,\beta}}
+\Vert\tilde{f}\Vert_{C^{0,\alpha}}\Vert g-\tilde{g}\Vert_{C^{0,\beta}}\right)
\left(\Vert g\Vert_{C^{0,\beta}}+\Vert \tilde{g}\Vert_{C^{0,\beta}}\right)^{n-1}.
\end{align*}
\end{corollary}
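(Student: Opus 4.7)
The plan is to follow the proof of \Cref{T35}, of which \Cref{T72} is the special case $\ell=0$, $\ell_i=1$, $k=n$, $\beta_i=\beta$, but with the smooth compact Riemannian manifold with boundary $\cM$ replaced by the bounded Lipschitz domain $\Omega\subset\R^n$. Once the extension/Stokes machinery of \Cref{T35} is set up in the Lipschitz setting, the existence of~\eqref{eq76} independently of the approximations and the stability estimate follow formally.

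Concretely, I would first construct a Lipschitz-domain analog of the extension operator of \Cref{T6} and \Cref{T7}, namely a bounded linear operator
\[
\ext_\Omega\colon C_{\rm b}^{0,\gamma}(\Omega)\to\left(W^{1,p}\cap C_{\rm b}^{0,\gamma}\right)\!\left(\Omega\times[0,1]\right),\qquad 1-\tfrac{1}{p}<\gamma\leq 1,
\]
vanishing in a neighborhood of $\Omega\times\{1\}$. This can be produced by first extending via a bounded H\"older extension $E\colon C_{\rm b}^{0,\gamma}(\Omega)\to C_{\rm b}^{0,\gamma}(\R^n)$ (classical for bounded Lipschitz domains via the Whitney or Stein extension), cutting off by a smooth bump supported in a ball containing $\overbar{\Omega}$, and then applying the Gagliardo extension~\eqref{eq36} on $\R^n\times[0,\infty)$. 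The estimates of \Cref{T5} and the proof of \Cref{T6} then go through essentially unchanged.

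With $\ext_\Omega$ in hand, I would run the Brezis--Nguyen argument of \Cref{T32} directly on the Lipschitz domain $\Omega\times(0,1)\subset\R^{n+1}$: choose $1<p,q<\infty$ with $1/q+n/p=1$, $\alpha'>1-1/q$, $\beta'>1-1/p$ (possible precisely because $\alpha'+n\beta'>n$); extend $f^\eps$ and $g_i^\eps$ to $F^\eps$ and $G_i^\eps$ in the appropriate Sobolev spaces on $\Omega\times(0,1)$; apply Stokes' theorem (\Cref{T78}, valid on the Lipschitz domain $\Omega\times(0,1)$ for Lipschitz forms); and use H\"older's inequality with exponents $q,p,\ldots,p$ to pass to the limit $\eps\to 0$, exactly as in~\eqref{eq124}--\eqref{eq125}. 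The stability estimate then comes from the same computation as in \Cref{T33}.

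The main obstacle is that, unlike in \Cref{T34} where the doubling trick eliminates the lateral boundary of $\cM\times(0,1)$, here the boundary of $\Omega\times(0,1)$ also includes the lateral piece $\partial\Omega\times(0,1)$, on which $F^\eps,G_i^\eps$ do not vanish. To control its contribution I would approximate $\Omega$ from inside by smooth subdomains $\Omega_m\uparrow\Omega$ -- chosen as level sets of a regularized distance to $\partial\Omega$, so that the Lipschitz character of $\partial\Omega_m$ is uniformly controlled by that of $\partial\Omega$ -- invoke \Cref{T35} on each $\Omega_m$ with constants uniformly bounded in $m$ (they depend only on $\|E\|$ and on the extension norm from $\partial\Omega_m$, both controlled uniformly), and pass to the limit $m\to\infty$ via a diagonal argument that combines the dominated convergence theorem applied to the smooth approximations $f^\eps\,dg_1^\eps\wedge\cdots\wedge dg_n^\eps$ with the uniform stability estimates on $\Omega_m$.
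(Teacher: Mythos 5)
Your approach is genuinely different from the paper's, and the difference is not just cosmetic. The paper proves \Cref{T72} by extending the forms \emph{outward} from $\Omega$: using that $\Omega$ is locally bi-Lipschitz to $\R^n_+$ near $\partial\Omega$, and that $C_{\rm b}^{0,\gamma}$ and $W^{1,p}$ are bi-Lipschitz invariant, one adapts the form-extension operator of \Cref{T7} (Gagliardo extension in bi-Lipschitz charts) to push $f,g_i$ to compactly supported forms on a ball $B\Supset\overbar\Omega$; then $B$ is embedded into a closed manifold and the rest of the proof of \Cref{T34}/\Cref{T35} applies verbatim, the contribution from $B\setminus\Omega$ being a classical Sobolev integral. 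Nothing else is needed.

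You correctly identify that running Brezis--Nguyen on $\Omega\times(0,1)$ breaks down at the lateral boundary $\partial\Omega\times(0,1)$, and you propose an exhaustion $\Omega_m\uparrow\Omega$ by smooth subdomains. That route is plausible in principle, but two places need more justification than you give. First, the claim that the implicit constant in \Cref{T35} is uniform over the $\Omega_m$: that constant is inherited, through \Cref{T34}, \Cref{T32} and \Cref{T7}, from the coordinate atlas of $\partial\Omega_m$. You assert that it depends only on $\|E\|$ and the Lipschitz character of $\partial\Omega_m$, but \Cref{T7} is stated and proved with smooth charts, so one must check that its bound really only sees the bi-Lipschitz constants and the number of charts. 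That check is exactly the paper's observation, and once it has been made there is no longer any reason to route through $\Omega_m$ at all -- the Gagliardo extension works directly on $\Omega$. Second, the passage to the limit $m\to\infty$ is only gestured at (``dominated convergence combined with the uniform stability estimates''). What is actually needed is a Moore--Osgood-type interchange: uniform-in-$m$ Cauchyness of $\int_{\Omega_m}f^\eps\,dg^\eps$ in $\eps$ (from the asserted uniform bound), combined with $\int_{\Omega_m}f^\eps\,dg^\eps\to\int_\Omega f^\eps\,dg^\eps$ as $m\to\infty$ for each fixed $\eps$. This works, but it has to be said, since for fixed $m$ the convergence $\int_{\Omega\setminus\Omega_m}f^\eps\,dg^\eps\to 0$ is \emph{not} uniform in $\eps$ (the integrand blows up like $\eps^{-n(1-\beta')}$). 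In short, the exhaustion argument can be made to work, but it re-derives the key bi-Lipschitz-chart extension anyway and then adds an avoidable double-limit step; the paper's direct outward extension is shorter and cleaner.
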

\begin{proof}
While in Theorem~\ref{T34} and Corollary~\ref{T35} we assume that the boundary of $\cM$ is smooth, the proof can easily be adapted to the case of bounded domains with Lipschitz boundary. A crucial step in the proof was an extension of forms from $\cM$ to $\partial\cM\times [0,1]\subset\widetilde{\cM}$ with the help of Corollary~\ref{T7}.

Let $B\subset\bbbr^n$ be a ball such that $\Omega\Subset B$. Since $\Omega$ at its boundary points is locally bi-Lipschitz equivalent to $\bbbr^n _{+}$, and  both $C_{\rm b}^{0,\gamma}$ and $W^{1,p}$ spaces are invariant under a bi-Lipschitz change of variables, 
an obvious modification of the proof of Proposition~\ref{T6} allows us to extend the 
functions $f$, $f^\eps$ and $g$, $g^\eps$ from $\Omega$ (in fact, from $\partial\Omega)$
to compactly supported functions in $B$ and $B$ can be embedded into an oriented and closed manifold. The remainder of the proof follows arguments used in the proofs of Theorem~\ref{T34} and Corollary~\ref{T35}.
\end{proof}

\subsection{The one dimensional case}

The next result is a straightforward consequence of Corollary~\ref{T72}. However, since it will play a very important role in the study of the Heisenberg group, we prefer to give an elementary and direct proof.

Recall that the existence of a smooth approximation $f^\eps$ and $g^\eps$ in the statement of Proposition~\ref{T66} is guaranteed by Proposition~\ref{T3}. Indeed, we can always assume that $0<\alpha'<\alpha$ and $0<\beta'<\beta$.
\begin{proposition}
\label{T66}
Let $f\in C_{\rm b}^{0,\alpha}([a,b])$, $g\in C_{\rm b}^{0,\beta}([a,b])$, $0<\alpha,\beta\leq 1$, $\alpha+\beta>1$.
Let $0<\alpha'\leq\alpha$, $0<\beta'\leq\beta$ be such that $\alpha'+\beta'>1$. 
If
$$
\text{$\lip([a,b])\ni f^\eps\to f$ in $C_{\rm b}^{0,\alpha'}([a,b])$}
\quad
\text{and}
\quad
\text{$\lip([a,b])\ni g^\eps\to g$ in $C_{\rm b}^{0,\beta'}([a,b])$,}
$$
then the limit
\begin{equation}
\label{eq50}
\int_a^b f\, dg :=
\lim_{\eps\to 0} \int_a^b f^\eps\, d g^\eps=
\lim_{\eps\to 0} \int_a^b f^\eps(x)(g^\eps)'(x) \, dx
\end{equation}
exists and it does not depend on the choice of the approximation. Moreover, if
$\tilde{f}\in C_{\rm b}^{0,\alpha}([a,b])$, $\tilde{g}\in C_{\rm b}^{0,\beta}([a,b])$, then
\begin{equation}
\label{eq49}   
\left|\int_a^b f\, dg-\int_a^b \tilde{f}\, d\tilde{g}\right|\lesssim
\Vert f-\tilde{f}\Vert_{C^{0,\alpha}}[g]_\beta+
\Vert\tilde{f}\Vert_{C^{0,\alpha}}[g-\tilde{g}]_\beta.
\end{equation}
\end{proposition}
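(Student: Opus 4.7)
The plan is to reduce the proposition to a classical Young-type ``sewing'' estimate for Lipschitz functions, and then to obtain both the existence of the limit \eqref{eq50} and the bound \eqref{eq49} by density. Since each $f^\eps$ and $g^\eps$ is Lipschitz, the integrals $\int_a^b f^\eps\,dg^\eps=\int_a^b f^\eps(x)(g^\eps)'(x)\,dx$ are ordinary Lebesgue integrals, so the task reduces to controlling them purely in terms of H\"older seminorms, uniformly in the Lipschitz constants (which blow up as $\eps\to 0$).

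The core estimate I would prove is: for any Lipschitz $F,G$ on $[a,b]$ and any $\alpha,\beta\in(0,1]$ with $\alpha+\beta>1$,
\begin{equation}
\label{eq:sewing}
\left|\int_s^t F\,dG-F(s)(G(t)-G(s))\right|\leq C_{\alpha,\beta}\,[F]_\alpha\,[G]_\beta\,(t-s)^{\alpha+\beta}
\quad\text{for all }[s,t]\subset[a,b].
\end{equation}
Let $\omega(s,t)$ denote the quantity inside the absolute value. A direct computation gives the Chen-type identity $\omega(s,t)=\omega(s,r)+\omega(r,t)+(F(r)-F(s))(G(t)-G(r))$ for any $r\in(s,t)$. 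Taking $r=(s+t)/2$ and setting $M(h):=\sup\{|\omega(s',t')|:[s',t']\subset[a,b],\,t'-s'\leq h\}$, which is finite because $F,G$ are Lipschitz, one obtains the recursion $M(h)\leq 2M(h/2)+[F]_\alpha\,[G]_\beta\,(h/2)^{\alpha+\beta}$. Dividing by $h^{\alpha+\beta}$ yields a contractive recursion for $M(h)/h^{\alpha+\beta}$ since $2\cdot 2^{-(\alpha+\beta)}<1$, and iteration gives \eqref{eq:sewing}.

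Combining \eqref{eq:sewing} with the trivial bound $|F(s)(G(t)-G(s))|\leq\|F\|_\infty\,[G]_\beta\,(t-s)^\beta$ produces, for Lipschitz $F,G$ on $[a,b]$, the basic Young bound
$$
\left|\int_a^b F\,dG\right|\lesssim\|F\|_{C^{0,\alpha}}\,[G]_\beta,
$$
with implicit constant depending on $\alpha,\beta$ and $b-a$. Splitting
$$
\int_a^b f^\eps\,dg^\eps-\int_a^b f^\delta\,dg^\delta=\int_a^b(f^\eps-f^\delta)\,dg^\eps+\int_a^b f^\delta\,d(g^\eps-g^\delta)
$$
and applying this basic bound with exponents $(\alpha',\beta')$ (permitted since $\alpha'+\beta'>1$) yields
$$
\left|\int_a^b f^\eps\,dg^\eps-\int_a^b f^\delta\,dg^\delta\right|\lesssim\|f^\eps-f^\delta\|_{C^{0,\alpha'}}\,[g^\eps]_{\beta'}+\|f^\delta\|_{C^{0,\alpha'}}\,[g^\eps-g^\delta]_{\beta'}.
$$
Since the approximating sequences converge in $C_{\rm b}^{0,\alpha'}$ and $C_{\rm b}^{0,\beta'}$ respectively, the real sequence $\int_a^b f^\eps\,dg^\eps$ is Cauchy, hence convergent. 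Independence from the choice of approximation is obtained by interleaving any two admissible sequences into a single admissible sequence and reapplying the Cauchy bound.

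For the quantitative estimate \eqref{eq49}, I would take the convolution approximations $f^\eps,\tilde f^\eps,g^\eps,\tilde g^\eps$ from \Cref{T3}. Linearity of convolution and the bounds in that proposition give $\|f^\eps-\tilde f^\eps\|_{C^{0,\alpha}}\leq\|f-\tilde f\|_{C^{0,\alpha}}$, $[g^\eps-\tilde g^\eps]_\beta\leq[g-\tilde g]_\beta$, and the analogous bounds for $\tilde f^\eps$ and $g^\eps$. Applying the Cauchy bound above now with exponents $(\alpha,\beta)$ to the pairs $(f^\eps,g^\eps)$ and $(\tilde f^\eps,\tilde g^\eps)$, and passing to the limit $\eps\to 0$ using the convergence of the integrals from the previous paragraph, yields \eqref{eq49}. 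The main (and essentially only) obstacle is \eqref{eq:sewing} itself: contracting the dyadic recursion requires $\alpha+\beta>1$, which is the precise analytic point at which H\"older regularity compensates for the absence of classical differentiability of $g$.
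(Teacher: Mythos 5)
Your proof is correct, and it takes a genuinely different route to the key Young-type estimate. The paper proves the crucial bound
$\big|\int_a^b F\,dG\big|\lesssim\|F\|_{C^{0,\alpha}}[G]_\beta$
(Corollary~\ref{T75}) by passing to periodic functions, using Parseval's identity and a shift argument to control dyadic blocks of Fourier coefficients, and then desperiodizing by subtracting the linear interpolant from $F$ and $G$; the condition $\alpha+\beta>1$ enters as convergence of $\sum_k 2^{k(1-\alpha-\beta)}$. You instead prove the increment estimate \eqref{eq:sewing} directly via a sewing-lemma argument: the Chen-type additivity defect $\omega(s,t)-\omega(s,r)-\omega(r,t)=(F(r)-F(s))(G(t)-G(r))$ is quadratic in the H\"older seminorms, and the dyadic bisection recursion $M(h)\leq 2M(h/2)+[F]_\alpha[G]_\beta(h/2)^{\alpha+\beta}$ contracts precisely because $2^{1-(\alpha+\beta)}<1$. (You implicitly use the a~priori bound $M(h)\lesssim L_F L_G h^2$ from the Lipschitz hypothesis to kill the $\theta^k N(h/2^k)$ term; it would be worth making that explicit.) The two approaches hit the threshold $\alpha+\beta>1$ at structurally identical points. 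Your sewing route avoids Fourier analysis entirely and produces the localized estimate $[f]_\alpha[g]_\beta(t-s)^{\alpha+\beta}$ in one stroke, which the paper only recovers afterwards as \eqref{eq113}; it is also the argument that generalizes most naturally (it is the one-dimensional shadow of Z\"ust's higher-dimensional construction and of rough path theory). The Fourier argument in the paper is more elementary in the sense of not requiring the recursion machinery, but needs the periodicity reduction and the extra bookkeeping of Corollary~\ref{T75}. Both the Cauchy/interleaving argument for well-definedness and the convolution-linearity argument for \eqref{eq49} are essentially the same as the paper's proof of Proposition~\ref{T66} from \eqref{eq126}; the only caveat worth flagging is that applying Proposition~\ref{T3} on $[a,b]$ tacitly requires first extending $f,\tilde f,g,\tilde g$ to $\R$ (say by constants past the endpoints, which preserves the seminorms and linearity), as the mollifications need room to be defined.
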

\begin{remark}
This result is essentially due to Young \cite{Young} who defined the integral $\int_a^b f\, dg$ of H\"older continuous functions (under assumptions of Proposition~\ref{T66}) as the Stieltjes integral. See also \cite{zust2,zust1} for extension of Young's approach to higher dimensional integrals. However, our approach is different. 
\end{remark}
We will precede the proof by presenting some auxiliary results.
\begin{lemma}
\label{T74}
Let $\alpha,\beta\in (0,1]$, $\alpha+\beta>1$. If $f\in C^{0,\alpha}([a,b])$, $g\in \lip([a,b])$, $f(a)=f(b)$ and $g(a)=g(b)$, then
$$
\Big|\int_a^b f(x)g'(x)\, dx\Big|\leq C(\alpha,\beta)|b-a|^{\alpha+\beta}\, [f]_{\alpha}\,[g]_{\beta}.
$$
\end{lemma}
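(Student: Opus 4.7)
The plan is to use a dyadic refinement of Riemann sums, which is the classical approach underlying Young's integral, and to exploit the telescoping structure that arises when passing from a partition to its midpoint refinement.

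First I would set up dyadic partitions $\mathcal{P}_k$ of $[a,b]$ with mesh $h_k=(b-a)/2^k$ and nodes $x_i^k=a+ih_k$. Associated to $\mathcal{P}_k$ I form the left Riemann sum
\[
S_k=\sum_{i=0}^{2^k-1} f(x_i^k)\bigl(g(x_{i+1}^k)-g(x_i^k)\bigr).
\]
Since $g\in\lip([a,b])$ and $f$ is continuous, a standard fact yields $S_k\to\int_a^b f(x)g'(x)\,dx$ as $k\to\infty$, so it suffices to bound $|S_k|$ uniformly in $k$.

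The key step is the refinement identity. Because $\mathcal{P}_{k+1}$ refines $\mathcal{P}_k$ by adding the midpoint $x_{2j+1}^{k+1}$ of each subinterval $[x_j^k,x_{j+1}^k]=[x_{2j}^{k+1},x_{2j+2}^{k+1}]$, a direct computation (using $f(x_j^k)=f(x_{2j}^{k+1})$ and telescoping $g$) gives
\[
S_{k+1}-S_k=\sum_{j=0}^{2^k-1}\bigl(f(x_{2j+1}^{k+1})-f(x_{2j}^{k+1})\bigr)\bigl(g(x_{2j+2}^{k+1})-g(x_{2j+1}^{k+1})\bigr).
\]
Bounding each factor by the H\"older seminorms gives
\[
|S_{k+1}-S_k|\leq 2^k\,[f]_\alpha [g]_\beta\, h_{k+1}^{\alpha+\beta}
=\,[f]_\alpha[g]_\beta(b-a)^{\alpha+\beta}\,2^{-(\alpha+\beta)}\,2^{-k(\alpha+\beta-1)}.
\]
Because $\alpha+\beta>1$, these estimates form a convergent geometric series; summing over $k$ yields a bound of the form $C(\alpha,\beta)[f]_\alpha[g]_\beta(b-a)^{\alpha+\beta}$.

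Finally, the boundary assumption enters through $S_0=f(a)\bigl(g(b)-g(a)\bigr)=0$, which holds by $g(a)=g(b)$ (the hypothesis $f(a)=f(b)$ plays the symmetric role and would allow the dual argument using right sums). Therefore
\[
|S_k|=|S_k-S_0|\leq\sum_{j=0}^{k-1}|S_{j+1}-S_j|\leq C(\alpha,\beta)\,[f]_\alpha[g]_\beta(b-a)^{\alpha+\beta},
\]
and passing to the limit $k\to\infty$ concludes the proof. There is no real obstacle here: the mild technicality is simply verifying the telescoping identity for $S_{k+1}-S_k$ and checking that the exponent $\alpha+\beta-1>0$ makes the series summable, both of which are routine.
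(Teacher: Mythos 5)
Your proof is correct, and it takes a genuinely different route from the paper. The paper periodizes $f$ and $g$, uses Parseval to control the dyadic blocks of Fourier coefficients of a $C^{0,\gamma}$ function, and sums a Cauchy--Schwarz estimate over frequency shells; this is a harmonic-analysis argument, and it requires \emph{both} endpoint conditions $f(a)=f(b)$ and $g(a)=g(b)$ because both functions need to be extended periodically. Your argument is Young's original real-variable one: verify the telescoping identity
\[
S_{k+1}-S_k=\sum_{j}\bigl(f(x_{2j+1}^{k+1})-f(x_{2j}^{k+1})\bigr)\bigl(g(x_{2j+2}^{k+1})-g(x_{2j+1}^{k+1})\bigr),
\]
bound each summand by $[f]_\alpha[g]_\beta h_{k+1}^{\alpha+\beta}$, and sum the resulting geometric series $\sum_k 2^{-k(\alpha+\beta-1)}$, which converges precisely because $\alpha+\beta>1$. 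Your refinement identity, the bound, and the convergence $S_k\to\int_a^b fg'$ (using that $g'$ is bounded and $f$ uniformly continuous) all check out, and the argument is arguably more elementary than the paper's since it needs no Fourier machinery. One nice byproduct: your proof only uses $g(a)=g(b)$ (to get $S_0=0$), so it actually proves a slightly stronger statement. The parenthetical remark that $f(a)=f(b)$ ``would allow the dual argument using right sums'' is not quite right, however: the right Riemann sum has $T_0=f(b)(g(b)-g(a))$, which still needs $g(a)=g(b)$ to vanish. The genuine dual argument would be to swap the roles of $f$ and $g$ via a discrete Abel summation, producing sums of the form $\sum_i g(x_i)(f(x_{i+1})-f(x_i))$, whose base case vanishes when $f(a)=f(b)$; but since that aside is not used in your proof, it does not affect the conclusion.
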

\begin{proof}
Our proof will be based on elementary estimates of Fourier series.
By a linear change of variables we may assume that $[a,b]=[0,1]$.
Since $f(0)=f(1)$ and $g(0)=g(1)$, we can extend $f$ and $g$ to $1$-periodic functions on $\R$. Note that
\begin{equation}
\label{eq79}
[f]_{C^{0,\alpha}(\R)}\lesssim [f]_{C^{0,\alpha}([0,1])},
\end{equation}
but we do not necessarily have equality, because in the periodic extension of $f$ we have to glue $f$ near $0$ with $f$ near $1$. Similarly $[g]_{C^{0,\beta}(\R)}\lesssim [g]_{C^{0,\beta}([0,1])}$.
In what follows $[\,\cdot\,]_{\alpha}$ and $[\,\cdot\,]_{\beta}$ will stand for $[\,\cdot\,]_{C^{0,\alpha}([0,1])}$ and $[\,\cdot\,]_{C^{0,\beta}([0,1])}$ respectively.

Recall that the Fourier coefficients of $f$ are defined by
$\hat{f}(n)=\int_0^1f(x)e^{-2\pi inx}\, dx$, $n\in\Z$.
For $\tau>0$, let $f_\tau(x):=f(x+\tau)$, so
$$
\hat{f}_\tau(n)=\int_0^1f(x+\tau)e^{-2\pi inx}\, dx=
\int_0^1f(x)e^{-2\pi in(x-\tau)}\, dx=
e^{2\pi in\tau}\hat{f}(n).
$$
Therefore,
$(f-f_\tau)^\wedge(n)=
\hat{f}(n)(1-e^{2\pi in\tau})$,
and Parseval's identity yields
\begin{equation}
\label{eq80}
\sum_{n=-\infty}^\infty |\hat{f}(n)|^2|1-e^{2\pi in\tau}|^2=\Vert f-f_\tau\Vert^2_{L^2([0,1])}\lesssim [f]^2_{\alpha}\,\tau^{2\alpha}.
\end{equation}
Note that we have here inequality $\lesssim$ instead of $\leq$, because of \eqref{eq79}.

Our aim is to use \eqref{eq80} to obtain a good upper bound for $|\hat{f}(n)|^2$. To this end we want to select $\tau$ so that $|1-e^{2\pi in\tau}|\geq 1$. Such $\tau$ has to depend on $n$ and we can get an estimate only for $n$ being in a certain range like $2^k\leq |n|<2^{k+1}$.

Fix an integer $k\geq 0$ and let $\tau=2^{-(k+2)}$. If $2^k\leq n<2^{k+1}$, then
$\pi/2\leq 2\pi n\tau<\pi$. 
If $2^k\leq -n<2^{k+1}$, then $-\pi<2\pi n\tau\leq-\pi/2$. Therefore,
$$
\operatorname{Re} e^{2\pi in\tau}\leq 0,
\quad
\text{and hence}
\quad
|1-e^{2\pi in\tau}|>1,
\quad
\text{when $2^k\leq |n|<2^{k+1}$}.
$$
This and \eqref{eq80} yield
\begin{equation}
\label{eq81}
\sum_{2^k\leq |n|<2^{k+1}}|\hat{f}(n)|^2\leq
\sum_{n=-\infty}^\infty |\hat{f}(n)|^2|1-e^{2\pi in\tau}|^2\lesssim
[f]^2_{\alpha}\,\tau^{2\alpha}\approx
[f]^2_{\alpha}\,2^{-2k\alpha}.
\end{equation}
Similarly,
\begin{equation}
\label{eq82}
\sum_{2^k\leq |n|<2^{k+1}}|\hat{g}(n)|^2\lesssim
[g]^2_{\beta}\,2^{-2k\beta}.
\end{equation}
Recall that $\hat{g'}(n)=2\pi in\hat{g}(n)$. Therefore, Parseval's identity yields
\[
\begin{split}
\Bigg|\int_0^1 f(x)g'(x)\, dx\Bigg|
&\lesssim
\sum_{n=-\infty}^\infty |n|\, |\hat{f}(n)|\, |\hat{g}(n)|
\lesssim
\sum_{k=0}^\infty 2^k \sum_{2^k\leq |n|<2^{k+1}} |\hat{f}(n)|\, |\hat{g}(n)|\\
&\leq
\sum_{k=0}^\infty 2^k [f]_{\alpha}\, 2^{-k\alpha}[g]_{\beta}\, 2^{-k\beta}
\approx 
[f]_{\alpha}\,[g]_{\beta}.
\end{split}
\]
The last inequality is a consequence of \eqref{eq81}, \eqref{eq82} and the Cauchy-Schwarz inequality. Finally, convergence of the last sum follows from the assumption that $\alpha+\beta>1$.
\end{proof}
\begin{corollary}
\label{T75}
Let $\alpha,\beta\in (0,1]$, $\alpha+\beta>1$. If $f\in C^{0,\alpha}([a,b])$ and $g\in\lip([a,b])$, then
\begin{equation}
\label{eq56}
\Bigg|\int_a^b f(x)g'(x)\, dx\Bigg|\leq C(\alpha,\beta,b-a) \Vert f\Vert_{C^{0,\alpha}}[g]_{\beta}.
\end{equation}
If in addition $f(p)=0$ for some $p\in [a,b]$, then
\begin{equation}
\label{eq113}
\Bigg|\int_a^b f(x)g'(x)\, dx\Bigg|\leq C(\alpha,\beta)|b-a|^{\alpha+\beta} [f]_{\alpha}\,[g]_{\beta}.
\end{equation}
\end{corollary}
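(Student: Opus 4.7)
My approach is to reduce both estimates to Lemma~\ref{T74} by a periodic modification of $f$ and $g$. I would prove \eqref{eq113} first and then deduce \eqref{eq56} from it.

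For \eqref{eq113}: since $\int_a^b fg'\,dx$ is invariant under replacing $g$ by $g-g(a)$, I may assume $g(a)=0$, so $\|g\|_\infty\leq[g]_\beta(b-a)^\beta$. Setting $s=(f(b)-f(a))/(b-a)$ and $t=(g(b)-g(a))/(b-a)$, I define the ``periodicized'' functions $\tilde f(x)=f(x)-s(x-a)$ and $\tilde g(x)=g(x)-t(x-a)$, which satisfy $\tilde f(a)=\tilde f(b)$ and $\tilde g(a)=\tilde g(b)$. Using the elementary bounds $|s|\leq[f]_\alpha(b-a)^{\alpha-1}$ and $|t|\leq[g]_\beta(b-a)^{\beta-1}$ together with $|x-y|\leq (b-a)$ I would verify that $[\tilde f]_\alpha\leq 2[f]_\alpha$ and $[\tilde g]_\beta\leq 2[g]_\beta$. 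Applying Lemma~\ref{T74} to $\tilde f,\tilde g$ gives
$$
\Bigg|\int_a^b\tilde f\,\tilde g'\,dx\Bigg|\leq C(\alpha,\beta)(b-a)^{\alpha+\beta}[f]_\alpha[g]_\beta.
$$

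Next, expanding $\tilde f\,\tilde g'=(f-s(x-a))(g'-t)$ and using the identity $\int_a^b(x-a)g'(x)\,dx=(b-a)g(b)-\int_a^b g\,dx$, I would obtain
$$
\int_a^b fg'\,dx=\int_a^b\tilde f\,\tilde g'\,dx+t\int_a^b f\,dx+s(b-a)g(b)-s\int_a^b g\,dx-\tfrac{1}{2}st(b-a)^2.
$$
The hypothesis $f(p)=0$ gives $\|f\|_\infty\leq[f]_\alpha(b-a)^\alpha$, and by the reduction $\|g\|_\infty\leq[g]_\beta(b-a)^\beta$. Plugging in the bounds on $s$ and $t$, each of the four correction terms is at most a constant multiple of $(b-a)^{\alpha+\beta}[f]_\alpha[g]_\beta$, which yields \eqref{eq113}.

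For \eqref{eq56}, I would split $f=f(a)+(f-f(a))$. The constant piece contributes $f(a)(g(b)-g(a))$, bounded by $\|f\|_\infty[g]_\beta(b-a)^\beta$. The remaining piece $f-f(a)$ vanishes at $a$ and has the same H\"older seminorm as $f$, so \eqref{eq113} applies and bounds the remaining integral by $C(\alpha,\beta)(b-a)^{\alpha+\beta}[f]_\alpha[g]_\beta$. Summing the two contributions yields \eqref{eq56} with a constant depending on $\alpha,\beta,b-a$. The only real task is careful bookkeeping of the four boundary terms introduced by periodization; no deeper obstacle arises beyond Lemma~\ref{T74}.
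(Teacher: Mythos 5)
Your proof is correct, and it is essentially the same as the paper's: both reduce to Lemma~\ref{T74} by subtracting from $f$ and $g$ the affine interpolants of their endpoint values (your $\tilde f, \tilde g$ versus the paper's $\phi(x)=f(x)-mx$, $\psi(x)=g(x)-Mx$), and both then estimate the same four correction terms. The only structural difference is the order of deduction: the paper first proves the weaker norm inequality $|\int_0^1 fg'|\lesssim\Vert f\Vert_{C^{0,\alpha}}\Vert g\Vert_{C^{0,\beta}}$ and then derives \eqref{eq56} and \eqref{eq113} from it, while you prove \eqref{eq113} directly — which requires and exploits the sharper slope bound $|s|\leq[f]_\alpha(b-a)^{\alpha-1}$ rather than the paper's $|m|\leq 2\Vert f\Vert_\infty$ — and then obtain \eqref{eq56} by splitting off the constant $f(a)$. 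Both work; your version bypasses the intermediate full-norm inequality and gives the scaling factor $(b-a)^{\alpha+\beta}$ without a change of variables, which is marginally cleaner, at the cost of having to track the boundary terms once more by hand for \eqref{eq56}.
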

\begin{remark}
Note that the dependence on $b-a$ in \eqref{eq56} is not as nice as in \eqref{eq113}. It is due to lack of nice scaling of $\Vert f\Vert_{C^{0,\alpha}}$ with respect to linear transformations.
\end{remark}
\begin{proof}
As before, we may assume that $[a,b]=[0,1]$. The values of the constants in the inequalities \eqref{eq56} and \eqref{eq113} will follow from a simple linear change of variables.
First, observe that it suffices to prove a seemingly weaker inequality
\begin{equation}
\label{eq114}
\Bigg|\int_0^1 f(x)g'(x)\, dx\Bigg|\lesssim \Vert f\Vert_{C^{0,\alpha}}\Vert g\Vert_{C^{0,\beta}}.
\end{equation}
Indeed, if $h\in C^{0,\gamma}([0,1])$ and $h(p)=0$ for some $p\in [0,1]$, then
$$
\Vert h\Vert_\infty\leq [h]_{\gamma}
\qquad
\text{so}
\qquad
\Vert h\Vert_{C^{0,\gamma}}\lesssim [h]_{\gamma}.
$$
Thus, \eqref{eq114} applied to $g(x)-g(0)$ yields \eqref{eq56} and \eqref{eq56} implies \eqref{eq113}, when $f(p)=0$. 

Therefore, if remains to prove \eqref{eq114}.
Let $m:=f(1)-f(0)$ and $M=g(1)-g(0)$, so $|m|\leq 2\Vert f\Vert_\infty$ and $|M|\leq 2\Vert g\Vert_\infty$. Let $\phi(x):=f(x)-mx$ and $\psi(x)=g(x)-Mx$. Then $\phi(0)=\phi(1)$, $\psi(0)=\psi(1)$ and we may apply Lemma~\ref{T74} to the functions $\phi$ and $\psi$. Note that
$$
[\phi]_{\alpha}\leq [f]_{\alpha}+2\Vert f\Vert_\infty\leq 2\Vert f\Vert_{C^{0,\alpha}}.
$$
Similarly, $[\psi]_{\beta}\leq 2\Vert g\Vert_{C^{0,\beta}}$. We have
\[
\begin{split}
&
\Bigg|\int_0^1 f(x)g'(x)\, dx\Bigg|=
\Bigg|\int_0^1(\phi(x)+mx)(\psi(x)+Mx)'\, dx\Bigg|\\
&\leq
\Bigg|\int_0^1\phi(x)\psi'(x)\, dx\Bigg|+
\Bigg|M\int_0^1\phi(x)\, dx\Bigg|+
\Bigg|m\int_0^1x\psi'(x)\, dx\Bigg|+
\Bigg|mM\int_0^1 x\, dx\Bigg|\\
&=
A+B+C+D.
\end{split}
\]
Since the functions $\phi$ and $\psi$ satisfy assumptions of Lemma~\ref{T74}, we have
$$
A\lesssim [\phi]_{\alpha}\,[\psi]_{\beta}\lesssim
\Vert f\Vert_{C^{0,\alpha}}\Vert g\Vert_{C^{0,\beta}}.
$$
Clearly, $B+D\lesssim\Vert f\Vert_\infty\Vert g\Vert_\infty$. To estimate $C$ we use simple integration by parts
$$
C=\Bigg|m\int_0^1 x\psi'(x)\, dx\Bigg|=
|m|\, \Bigg|\psi(1)-\int_0^1\psi(x)\, dx\Bigg|\lesssim\Vert f\Vert_\infty\Vert g\Vert_\infty.
$$
Adding the estimates for $A$, $B$, $C$ and $D$ yields the result.
\end{proof}

\begin{proof}[Proof of Proposition~\ref{T66}]
If $f^\eps,\tilde{f}^\eps,g^\eps,\tilde{g}^\eps\in\lip([a,b])$, then Corollary~\ref{T75} and the traingle inequality  
$$
\Bigg|\int_a^b f^\eps (g^\eps)'\, dx-\int_a^b\tilde{f}^\eps(\tilde{g}^\eps)'\, dx\Bigg|
\leq 
\Bigg|\int_a^b (f^\eps-\tilde{f}^\eps)(g^\eps)'\, dx\Bigg|+
\Bigg|\int_a^b \tilde{f}^\eps(g^\eps-\tilde{g}^\eps)'\, dx\Bigg|
$$
imply that 
\begin{equation}
\label{eq126}
\Bigg|\int_a^b f^\eps\, dg^\eps-\int_a^b \tilde{f}^\eps\, d\tilde{g}^\eps\Bigg|\lesssim
\Vert f^\eps-\tilde{f}^\eps\Vert_{C^{0,\alpha'}}[g^\eps]_{\beta'}+
\Vert\tilde{f}^\eps\Vert_{C^{0,\alpha'}}[g^\eps-\tilde{g}^\eps]_{\beta'}.
\end{equation}
This inequality implies the existence of limit \eqref{eq50}. Passing to the limit as $\eps\to 0$, we see that 
\eqref{eq126} is also true for $f$, $\tilde{f}$, $g$ and $\tilde{g}$. Then \eqref{eq49} is a consequence of $\alpha'\leq\alpha$ and $\beta'\leq\beta$.
\end{proof}

\section{Pullbacks of differential forms}
\label{S6}

In Section~\ref{DJ} we discussed generalized Jacobians of H\"older and fractional Sobolev mappings. If $f$ is a smooth mappings, then 
$df_1\wedge\ldots\wedge df_n=f^*(dy_1\wedge\ldots\wedge dy_n)$, so the Jacobian is a special case of a pullback of a differential form.
In this section we will generalize results of Section~\ref{DJ} to the case of pullbacks of more general forms. While most of the results discussed here  can be generalized to mappings in
$W^{1-\frac{1}{p},p}$, and even to mappings in more general  Triebel-Lizorkin or Besov spaces  \cite{SY1999}, for the sake of simplicity we will assume that the mappings are H\"older continuous.
Section~\ref{S3} contains related material.

To simplify notation, we will often write in this section $\Vert\cdot\Vert_\alpha$ in place of
$\Vert\cdot\Vert_{C^{0,\alpha}}$. This should not lead to a confusion with the $L^p$-norm, because for the H\"older spaces we will always use Greek letters to denote the exponent of H\"older continuity. Thus, for example, in \eqref{eq94}, $\Vert\cdot\Vert_\infty$ is the supremum norm, while other norms are H\"older.

Let $\cM$ be a smooth oriented and compact $n$-dimensional Riemannian manifold with or without boundary. Let $f\in\lip(\cM;\R^m)$.
Any form $\kappa\in\Omega^k C_{\rm b}^{0,\alpha}(\R^m)$, $1\leq k\leq n$, can be written as
$$
\kappa=\sum_{|I|=k}\kappa_I\, dy_I , 
\quad 
\text{where  $\kappa_I\in C_{\rm b}^{0,\alpha}(\R^m)$, so}
\quad
f^*\kappa=\sum_{|I|=k}(\kappa_I\circ f)\, df_I. 
$$
Since in the discussion below, each term can be treated separately, we can assume that
$$
\kappa=\psi\, dy_1\wedge\ldots\wedge dy_k, \ \psi\in C_{\rm b}^{0,\alpha}(\R^m),
\quad
\text{and hence}
\quad
f^*\kappa=(\psi\circ f)\, df_1\wedge\ldots\wedge df_k.
$$
Therefore, for $\tau\in \Omega^{n-k}C^0(\cM)$, we have
$$
\int_{\cM} f^*\kappa\wedge\tau=
(-1)^{k(n-k)}\int_{\cM} (\psi\circ f)\tau\wedge df_1\wedge\ldots \wedge df_k.
$$
Assume now that
$$
f\in C_{\rm b}^{0,\gamma},\ \frac{k}{k+\alpha}<\gamma'\leq\gamma\leq 1
\quad
\text{and}
\quad
\lip({\cM};\R^m)\ni f^\eps\to f \ \text{in $C_{\rm b}^{0,\gamma'}$.}
$$
It follows from Proposition~\ref{T80} that for $0<\beta<\alpha$,
\begin{equation}
\label{eq94}
\Vert \psi\circ f-\psi\circ f^\eps\Vert_{\beta\gamma'}\lesssim
[\psi]_{\alpha}(\Vert f\Vert_{\gamma'}+\Vert f^\eps\Vert_{\gamma'})^{\beta}
\Vert f-f^\eps\Vert_\infty^{\alpha-\beta}.
\end{equation}
If $\tau\in \Omega^{n-k}C_{\rm b}^{0,\alpha\gamma}(\cM)$, then $\Vert\tau\Vert_{\beta\gamma'}\lesssim\Vert\tau\Vert_{\alpha\gamma}$ and Lemma~\ref{T27} yield
\begin{equation}
\label{eq95}
\Vert (\psi\circ f)\tau-(\psi\circ f^\eps)\tau\Vert_{\beta\gamma'}\lesssim
[\psi]_{\alpha}\Vert\tau\Vert_{\alpha\gamma}(\Vert f\Vert_{\gamma'}+\Vert f^\eps\Vert_{\gamma'})^{\beta}
\Vert f-f^\eps\Vert_\infty^{\alpha-\beta},
\end{equation}
so $(\psi\circ f^\eps)\tau\to(\psi\circ f)\tau$ in $C_{\rm b}^{0,\beta\gamma'}$ as $\eps\to 0$.

(Inequality \eqref{eq94} and hence \eqref{eq95} holds true with $\gamma'$ replaced by $\gamma$. The proof remains the same.)

If $\xi\in (\frac{k}{k+\alpha},\gamma')$, then we can find $\beta\in (0,\alpha)$, such that $\alpha\xi=\beta\gamma'$ and hence
$$
(\psi\circ f^\eps)\tau\to(\psi\circ f)\tau
\text{ in } C_{\rm b}^{0,\alpha\xi}
\quad
\text{for }
\frac{k}{k+\alpha}<\xi<\gamma'.
$$
We will apply now Theorem~\ref{T34} (cf.\ Corollary~\ref{T35}) to
$$
\lambda:=(\psi\circ f)\tau,\
{\lambda^\eps}:=(\psi\circ f^\eps)\tau\in C_{\rm b}^{0,\alpha\xi}
\quad
\text{and}
\quad
f_i,f_i^\eps\in C_{\rm b}^{0,\gamma'}
$$
($f_i$ plays a role of $\gamma_i$). 
Since $\alpha\xi+k\gamma'>k$
(because $\xi,\gamma'>\frac{k}{k+\alpha}$), Theorem~\ref{T34} yields the existence of the following limit
and its independence of the choice of a Lipschitz approximation~$f^\eps$:
$$
\int_{\cM}(\psi\circ f)\, df_1\wedge\ldots\wedge df_k\wedge\tau:=
\lim_{\eps\to 0}\int_{\cM}(\psi\circ f^\eps)\, df_1^\eps\wedge\ldots\wedge df_k^\eps\wedge\tau.
$$
We proved the following result:
\begin{theorem}
\label{T79}
Let $\cM$ be a smooth oriented and compact $n$-dimensional Riemnnian manifold with or without boundary. Let 
$$
f\in C_{\rm b}^{0,\gamma}(\cM;\R^m),
\quad
\text{where}
\quad
1\leq k\leq n,\ k\in\bbbn,\ \alpha\in (0,1],\ \frac{k}{k+\alpha}<\gamma\leq 1.
$$
If
$$
\lip({\cM};\R^m)\ni f^\eps\to f
\quad
\text{in $C_{\rm b}^{0,\gamma'}$ for some $\frac{k}{k+\alpha}<\gamma'\leq\gamma$,}
$$
then for any
$$
\kappa\in \Omega^k C_{\rm b}^{0,\alpha}\left(\R^m\right)
\quad
\text{and any}
\quad
\tau\in \Omega^{n-k}C_{\rm b}^{0,\alpha\gamma}\left(\cM\right),
$$
the limit
$$
\langle f^*\kappa,\tau\rangle=
\int_{\cM} f^*\kappa\wedge\tau
:=
\lim_{\eps\to 0}
\int_{\cM} (f^\eps)^*\kappa\wedge\tau
$$
exists and does not depend of the choice of $\gamma'\in\left(\frac{k}{k+\alpha},\gamma\right]$ and a Lipschitz approximation~$f^\eps$.
\end{theorem}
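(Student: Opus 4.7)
The proof plan is to follow essentially the sketch outlined immediately before the statement, casting it as a careful application of Theorem~\ref{T34}. First, by $\R$-linearity of pullback and the wedge product, I reduce to the case of a single-component form
\[
\kappa = \psi\, dy_1\wedge\cdots\wedge dy_k, \qquad \psi\in C_{\rm b}^{0,\alpha}(\R^m),
\]
since each multi-index term can be handled separately. With this reduction,
\[
\int_{\cM} (f^\eps)^*\kappa\wedge\tau
= (-1)^{k(n-k)} \int_{\cM} (\psi\circ f^\eps)\,\tau \wedge df_1^\eps\wedge\cdots\wedge df_k^\eps,
\]
so I want to regard $\lambda^\eps := (\psi\circ f^\eps)\,\tau$ as the ``$\lambda$'' and $f_i^\eps$ as the ``$\gamma_i$'' in Theorem~\ref{T34}.

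The first technical step is to control the convergence $\lambda^\eps\to\lambda := (\psi\circ f)\,\tau$ in a Hölder norm $C_{\rm b}^{0,\alpha\xi}$ for a well-chosen $\xi$. Proposition~\ref{T80} applied with exponents $\alpha$ (for $\psi$) and $\gamma'$ (for $f^\eps$), combined with the fact that $f^\eps\to f$ uniformly on $\cM$, gives
\[
\Vert \psi\circ f^\eps - \psi\circ f\Vert_{C^{0,\beta\gamma'}}
\lesssim [\psi]_\alpha\bigl(\Vert f\Vert_{\gamma'}+\Vert f^\eps\Vert_{\gamma'}\bigr)^{\beta}
\Vert f-f^\eps\Vert_\infty^{\alpha-\beta}
\xrightarrow{\eps\to 0} 0
\]
for any $0<\beta<\alpha$. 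Since $\tau\in C_{\rm b}^{0,\alpha\gamma}\subset C_{\rm b}^{0,\beta\gamma'}$, the Banach-algebra estimate Lemma~\ref{T27} yields $(\psi\circ f^\eps)\tau \to (\psi\circ f)\tau$ in $C_{\rm b}^{0,\beta\gamma'}$.

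The key bookkeeping step is to pick $\xi\in(\tfrac{k}{k+\alpha},\gamma')$ and then choose $\beta\in(0,\alpha)$ so that $\beta\gamma' = \alpha\xi$; this is possible precisely because $\tfrac{k}{k+\alpha}<\xi<\gamma'\le 1$. Then $\lambda^\eps\to\lambda$ in $C_{\rm b}^{0,\alpha\xi}$, while the Lipschitz forms $df_i^\eps$ arise from the convergence $f^\eps\to f$ in $C_{\rm b}^{0,\gamma'}$. The hypothesis for Theorem~\ref{T34} (with $\ell=n-k$, $\ell_i=1$ and $k$ factors) requires
\[
\alpha\xi + \underbrace{\gamma'+\cdots+\gamma'}_{k\text{ times}} > k,
\]
which holds because both $\xi>\tfrac{k}{k+\alpha}$ and $\gamma'>\tfrac{k}{k+\alpha}$ force $\alpha\xi + k\gamma' > \tfrac{k\alpha}{k+\alpha} + \tfrac{k^2}{k+\alpha} = k$. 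Applying Theorem~\ref{T34} then yields existence of the limit together with its independence of the approximating sequence. Finally, independence of the specific choice of $\gamma'\in(\tfrac{k}{k+\alpha},\gamma]$ follows by comparing two such choices $\gamma',\gamma''$: any sequence converging in $C_{\rm b}^{0,\gamma'}$ also converges in $C_{\rm b}^{0,\min(\gamma',\gamma'')}$, which is a valid exponent for the above argument, and the two limits must then coincide by the independence-of-approximation conclusion.

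The main obstacle is the exponent juggling in the previous paragraph: one must choose $\xi$, and then $\beta$, so that the composition estimate \eqref{eq94} converts uniform convergence into convergence in a Hölder norm that is still strong enough to feed into Theorem~\ref{T34}. The rest is essentially bookkeeping, since \Cref{T34} and Proposition~\ref{T80} do all the heavy analytic lifting.
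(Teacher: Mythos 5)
Your proposal is correct and follows the paper's own argument essentially step for step: reduce to a monomial $\kappa = \psi\,dy_1\wedge\cdots\wedge dy_k$, use \Cref{T80} and \Cref{T27} to convert uniform convergence of $f^\eps$ into $C_{\rm b}^{0,\beta\gamma'}$-convergence of $\lambda^\eps=(\psi\circ f^\eps)\tau$, choose $\xi\in(\tfrac{k}{k+\alpha},\gamma')$ and $\beta$ with $\beta\gamma'=\alpha\xi$, check $\alpha\xi+k\gamma'>k$, and invoke \Cref{T34}. Your closing paragraph on independence of $\gamma'$ (compare two choices via $\min(\gamma',\gamma'')$) is a small point the paper leaves implicit, but it is the right way to close that loop.
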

\begin{remark}
\label{R4}
Under the assumptions of Theorem~\ref{T79}, it follows that if $\kappa=\sum_{|I|=k}\kappa_I\, dy_I$, then the distributional pullback satisfies
$$
f^*\kappa = \sum_{|I|=k} (\kappa_I\circ f)\, df_I
$$
in a sense that
$$
\int_{\cM} f^*\kappa\wedge\tau=
\lim_{\eps\to 0}
\sum_{|I|=k} \int_{\cM} (\kappa_I\circ f)\, df^\eps_I\wedge\tau:=
\sum_{|I|=k} \int_{\cM} (\kappa_I\circ f)\, df_I\wedge\tau.
$$
To see this, it suffices to slightly modify the proof of Theorem~\ref{T79} and apply Theorem~\ref{T34} to
$\lambda^\eps=\lambda=(\psi\circ f)\tau$.
\end{remark}

The next result is a counterpart of Corollary~\ref{T35}.
\begin{theorem}
\label{T96}
Let $\cM$ be a smooth oriented and compact $n$-dimensional Riemannian manifold with or without boundary. If
$$
f,\tilde{f}\in C_{\rm b}^{0,\gamma}(\cM;\R^m),
\quad 
1\leq k\leq n, 
\quad
\alpha\in (0,1],
\quad 
\frac{k}{k+\alpha}<\gamma\leq 1
$$
and 
$$
\kappa\in \Omega^kC_{\rm b}^{0,\alpha}\left(\R^m\right),
\qquad
\tau\in \Omega^{n-k}C_{\rm b}^{0,\alpha\gamma}\left(\cM\right),
$$
then 
\begin{equation}
\label{eq98}
\Bigg| \int_{\cM} f^*\kappa\wedge\tau\Bigg|\lesssim[\kappa]_{\alpha}
\Vert\tau\Vert_{C^{0,\alpha\gamma}}
\Vert f\Vert_{C^{0,\gamma}}^{k+\alpha}.
\end{equation}
Moreover,
for any $\beta\in \big(\frac{k(1-\gamma)}{\gamma},\alpha\big)$
\begin{equation}
\label{eq99}
\Bigg| \int_{\cM} f^*\kappa\wedge\tau - \int_{\cM} \tilde{f}^*\kappa\wedge\tau\Bigg|\lesssim
[\kappa]_{\alpha}\Vert\tau\Vert_{C^{0,\alpha\gamma}}
(\Vert f\Vert_{C^{0,\gamma}}+\Vert\tilde{f}\Vert_{C^{0,\gamma}})^{k+\beta}
\Vert f-\tilde{f}\Vert_{C^{0,\gamma}}^{\alpha-\beta}.
\end{equation}
\end{theorem}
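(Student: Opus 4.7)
The plan is to derive \eqref{eq98} from \eqref{eq99} by specialization to $\tilde f\equiv 0$, and then prove \eqref{eq99} by an algebraic decomposition of $f^*\kappa-\tilde f^*\kappa$ estimated term-by-term using \Cref{T35} (continuity of distributional Jacobians) and the composition estimate \Cref{T80}.

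First, setting $\tilde f\equiv 0$ in \eqref{eq99} gives $d\tilde f_i=0$, hence $\tilde f^*\kappa=0$, and the right-hand side collapses to $[\kappa]_\alpha\Vert\tau\Vert_{C^{0,\alpha\gamma}}\Vert f\Vert_{C^{0,\gamma}}^{k+\beta}\Vert f\Vert_{C^{0,\gamma}}^{\alpha-\beta}=[\kappa]_\alpha\Vert\tau\Vert_{C^{0,\alpha\gamma}}\Vert f\Vert_{C^{0,\gamma}}^{k+\alpha}$, which is \eqref{eq98}. So it suffices to prove \eqref{eq99}. Writing $\kappa=\sum_{|I|=k}\kappa_I\,dy_I$, we employ (in the distributional sense of \Cref{R4}) the identity
\[
f^*\kappa-\tilde f^*\kappa=\sum_I(\kappa_I\circ f-\kappa_I\circ\tilde f)\,df_I+\sum_I(\kappa_I\circ\tilde f)(df_I-d\tilde f_I),
\]
where $df_I=df_{i_1}\wedge\cdots\wedge df_{i_k}$, and fix $\beta\in(k(1-\gamma)/\gamma,\alpha)$. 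This choice guarantees both $\beta\gamma+k\gamma>k$ and $\alpha\gamma+k\gamma>k$, which are the integrability conditions needed to invoke \Cref{T35} on each term.

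For the first sum, pair with $\tau$ and set $\lambda^I:=(\kappa_I\circ f-\kappa_I\circ\tilde f)\tau$. The sharper form of \eqref{eq83} stated in the remark immediately following \Cref{T80}, combined with \Cref{T27}, gives
\[
\Vert\lambda^I\Vert_{C^{0,\beta\gamma}}\lesssim[\kappa]_\alpha\Vert\tau\Vert_{C^{0,\alpha\gamma}}\bigl(\Vert f\Vert_{C^{0,\gamma}}+\Vert\tilde f\Vert_{C^{0,\gamma}}\bigr)^\beta\Vert f-\tilde f\Vert_{C^{0,\gamma}}^{\alpha-\beta}.
\]
Then \Cref{T35}, applied with $\tilde\lambda=0$ and $\tilde\gamma=0$ to $\int_\cM\lambda^I\wedge df_{i_1}\wedge\cdots\wedge df_{i_k}$, bounds this integral by $\Vert\lambda^I\Vert_{C^{0,\beta\gamma}}\Vert f\Vert_{C^{0,\gamma}}^k$, producing exactly the $(\Vert f\Vert_{C^{0,\gamma}}+\Vert\tilde f\Vert_{C^{0,\gamma}})^{k+\beta}\Vert f-\tilde f\Vert_{C^{0,\gamma}}^{\alpha-\beta}$ shape needed in \eqref{eq99}.

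For the second sum, set $\mu^I:=(\kappa_I\circ\tilde f)\tau$ and use the difference form of \Cref{T35} in the $d\gamma$-variable:
\[
\Bigl|\int_\cM\mu^I\wedge(df_I-d\tilde f_I)\Bigr|\lesssim\Vert\mu^I\Vert_{C^{0,\alpha\gamma}}\Vert f-\tilde f\Vert_{C^{0,\gamma}}\bigl(\Vert f\Vert_{C^{0,\gamma}}+\Vert\tilde f\Vert_{C^{0,\gamma}}\bigr)^{k-1}.
\]
The main obstacle is to replace the naive bound for $\Vert\mu^I\Vert_{C^{0,\alpha\gamma}}$, which involves $\Vert\kappa\Vert_\infty$, by one that features only the seminorm $[\kappa]_\alpha$: we pick a reference point $p_0\in\cM$ and write $\kappa_I=(\kappa_I-\kappa_I(\tilde f(p_0)))+\kappa_I(\tilde f(p_0))$, so that the shifted composition $(\kappa_I-\kappa_I(\tilde f(p_0)))\circ\tilde f$ vanishes at $p_0$ and hence obeys $\Vert\cdot\Vert_{C^{0,\alpha\gamma}}\lesssim[\kappa]_\alpha\Vert\tilde f\Vert_{C^{0,\gamma}}^\alpha$. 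The residual constant contribution $\kappa_I(\tilde f(p_0))\int_\cM\tau\wedge(df_I-d\tilde f_I)$ is handled by a further application of \Cref{T35}, and the resulting $\Vert f-\tilde f\Vert_{C^{0,\gamma}}^{1}$ factor is reconciled with the target exponent via the elementary inequality $\Vert f-\tilde f\Vert_{C^{0,\gamma}}^{1-(\alpha-\beta)}\leq(\Vert f\Vert_{C^{0,\gamma}}+\Vert\tilde f\Vert_{C^{0,\gamma}})^{1-(\alpha-\beta)}$. Bookkeeping the powers then yields \eqref{eq99}.
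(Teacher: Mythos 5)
Your overall strategy matches the paper's: reduce \eqref{eq98} to \eqref{eq99} via $\tilde f\equiv 0$, pick $\beta\in\big(\frac{k(1-\gamma)}{\gamma},\alpha\big)$, reduce to a single component $\kappa=\psi\,dy_I$, and combine the composition estimate of \Cref{T80} with the difference inequality of \Cref{T35} applied to $\lambda=(\psi\circ f)\tau$ and $\tilde\lambda=(\psi\circ\tilde f)\tau$. Your explicit two-term decomposition of $f^*\kappa-\tilde f^*\kappa$ is exactly what happens inside \Cref{T35} (cf.\ \eqref{eq92}), so this is not a different route, just a more spelled-out version of the same one.

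The gap is in your handling of the constant part of $\kappa$. You correctly flag that $\Vert(\psi\circ\tilde f)\tau\Vert_{C^{0,\alpha\gamma}}$ is controlled only by $\Vert\psi\Vert_{C^{0,\alpha}}$, and you try to repair this by subtracting the constant $\psi(\tilde f(p_0))$; but the residual term $\psi(\tilde f(p_0))\int_\cM\tau\wedge(df_I-d\tilde f_I)$ is then bounded by $|\psi(\tilde f(p_0))|$ times the \Cref{T35} bound, and $|\psi(\tilde f(p_0))|$ is controlled only by $\Vert\psi\Vert_\infty$, not by $[\psi]_\alpha$. The integral $\int_\cM\tau\wedge(df_I-d\tilde f_I)$ does \emph{not} vanish in general when $k<n$: for instance take $\cM$ the flat $2$-torus with coordinates $(\theta_1,\theta_2)$, $n=2$, $k=m=1$, $\kappa=dy$ (constant coefficient, so $[\kappa]_\alpha=0$), $f=\cos(2\pi\theta_1)$, $\tilde f\equiv 0$, $\tau=\sin(2\pi\theta_1)\,d\theta_2$; then $\int_\cM f^*\kappa\wedge\tau=-\pi\neq 0$, which already contradicts \eqref{eq98} as stated. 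Your bookkeeping therefore actually yields \eqref{eq98}--\eqref{eq99} with $\Vert\kappa\Vert_{C^{0,\alpha}}$ in place of $[\kappa]_\alpha$. It is worth noting that the paper's own proof has the identical defect, concealed in \eqref{eq97}: replacing $f$ by $0$ in \eqref{eq96} gives $\Vert(\psi(0)-\psi\circ\tilde f)\tau\Vert_{\beta\gamma}\lesssim[\psi]_\alpha\Vert\tau\Vert_{\alpha\gamma}\Vert\tilde f\Vert_\gamma^\alpha$, not the stated bound on $\Vert(\psi\circ\tilde f)\tau\Vert_{\beta\gamma}$, unless $\psi(0)=0$. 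So the seminorm $[\kappa]_\alpha$ in \Cref{T96} should really be the full norm $\Vert\kappa\Vert_{C^{0,\alpha}}$ whenever $k<n$; with that correction your argument, like the paper's, goes through.
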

\begin{remark}
It follows from $\gamma>\frac{k}{k+\alpha}$ that $\alpha>\frac{k(1-\gamma)}{\gamma}$, so the interval $\big(\frac{k(1-\gamma)}{\gamma},\alpha\big)$ is not empty.
\end{remark}
\begin{proof}
We will prove \eqref{eq99} only as \eqref{eq98} follows directly from \eqref{eq99}.
As before, we may assume that $\kappa=\psi dy_1\wedge\ldots\wedge dy_k$, $\psi\in C_{\rm b}^{0,\alpha}(\R^m)$.

Since inequality \eqref{eq95} is true with $\gamma'$ replaced by $\gamma$, we have 
\begin{equation}
\label{eq96}
\Vert (\psi\circ f)\tau-(\psi\circ \tilde{f})\tau\Vert_{\beta\gamma}\lesssim
[\psi]_{\alpha}\Vert\tau\Vert_{\alpha\gamma}(\Vert f\Vert_{\gamma}+\Vert \tilde{f}\Vert_{\gamma})^{\beta}
\Vert f-\tilde{f}\Vert_\gamma^{\alpha-\beta}.
\end{equation}
We applied here a rough estimate $\Vert\cdot\Vert_\infty\leq\Vert\cdot\Vert_{C^{0,\gamma}}$ to the last factor.
In particular, replacing $f$ by $0$ yields
\begin{equation}
\label{eq97}
\Vert (\psi\circ\tilde{f})\tau\Vert_{\beta\gamma}\lesssim
[\psi]_\alpha\Vert\tau\Vert_{\alpha\gamma}\Vert\tilde{f}\Vert_\gamma^\alpha.
\end{equation}
Let $\beta\in \big(\frac{k(1-\gamma)}{\gamma},\alpha\big)$. It follows that $\delta+k\gamma>k$, where $\delta=\beta\gamma$.

With the obvious change of notation, we can write a special case of Corollary~\ref{T35} as follows:

If $\lambda,\tilde{\lambda}\in \Omega^{n-k}C_{\rm b}^{0,\delta}(\cM)$ and $f_i,\tilde{f}_i\in C_{\rm b}^{0,\gamma}(\cM)$, $i=1,2,\ldots,k$, where $\delta+k\gamma>k$, then
\begin{equation}
\label{eq93}
\begin{split}
&\Bigg|\int_{\cM}\lambda\wedge df_1\wedge\ldots\wedge df_k-
\int_{\cM}\tilde{\lambda}\wedge d\tilde{f}_1\wedge\ldots\wedge d\tilde{f}_k
\Bigg|\\
&\lesssim
(\Vert\lambda-\tilde{\lambda}\Vert_\delta\Vert f\Vert_\gamma+\Vert\tilde{\lambda}\Vert_\delta\Vert f-\tilde{f}\Vert_\gamma)
(\Vert f\Vert_\gamma+\Vert\tilde{f}\Vert_\gamma)^{k-1}.
\end{split}
\end{equation}
Applying this inequality along with \eqref{eq96} and \eqref{eq97} to
$$
\lambda:=(\psi\circ f)\tau
\quad
\text{and}
\quad
\tilde{\lambda}:=(\psi\circ\tilde{f})\tau
$$
leads after simple calculations, to
\[
\begin{split}
&\Bigg|\int_{\cM}(\psi\circ f)\tau\wedge df_1\wedge\ldots\wedge df_k-
\int_{\cM}(\psi\circ \tilde{f})\tau\wedge d\tilde{f}_1\wedge\ldots\wedge d\tilde{f}_k\Bigg|\\
&\lesssim
\Vert\tau\Vert_{\alpha\gamma}[\psi]_\alpha(\Vert f\Vert_\gamma+\Vert\tilde{f}\Vert_\gamma)^{k+\beta}
\Vert f-\tilde{f}\Vert_\gamma^{\alpha-\beta}.
\end{split}
\]
The proof is complete.
\end{proof}
\begin{corollary}
\label{T100}
Let $\cM$ be a smooth oriented $n$-dimensional manifold with or without boundary. Let
$f\in C^{0,\gamma}_{\rm loc}(\cM;\R^m)$, where $\gamma\in\big(\frac{k}{k+1},1\big]$, $k\in \{1,2,\ldots,n\}$. If
$$
C^{\infty}(\cM;\R^m)\ni f^\eps\to f
\quad
\text{pointwise on a dense subset of $\cM$ as }  \eps\to 0, \text{ and} 
$$
$$
\sup_{\eps>0}\Vert f^\eps\Vert_{C_{\rm b}^{0,\gamma}(K)}<\infty
\quad
\text{for every compact set } K\subset\cM,
$$
then for any $\kappa\in\Omega^k(\R^m)$ and any $\tau\in \Omega_c^{n-k}C^{0,\gamma}(\cM)$, the limit
$$
\langle f^*\kappa,\tau\rangle=\int_{\cM} f^*\kappa\wedge\tau:=
\lim_{\eps\to 0^+}\int_{\cM} (f^\eps)^*\kappa\wedge\tau
$$
exists and satisfies
$$
\bigg|\int_{\cM} f^*\kappa\wedge\tau\bigg|\lesssim C(\kappa,U_\tau,f(U_\tau))
\Vert\tau\Vert_{C^{0,\gamma}}\Vert f\Vert^{k+1}_{C_{\rm b}^{0,\gamma}(U_\tau)},
$$
where $U_\tau\Subset\cM$ is any neighborhood of $\operatorname{supp}\tau$. 
\end{corollary}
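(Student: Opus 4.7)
The strategy is to localize, truncate $\kappa$, and then apply Theorems~\ref{T79} and~\ref{T96}. The crucial ingredient is Corollary~\ref{T77}, which converts the pointwise-plus-bounded hypothesis on $(f^\eps)$ into convergence in a H\"older norm just below the exponent $\gamma$.

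First I would localize. Since $\tau$ has compact support, pick a relatively compact open set $V$ with $\operatorname{supp}\tau\subset U_\tau\Subset V\Subset\cM$, chosen so that $\overline V$ is a smooth oriented compact $n$-manifold with boundary (obtainable by taking a finite union of good coordinate domains and smoothing the boundary). All integrals in the statement reduce to integrals over $\overline V$. Because $f$ is continuous on the compact set $\overline V$, $f(\overline V)\subset B$ for some open ball $B\subset\R^m$. Multiplying the smooth coefficients of $\kappa$ by a cutoff $\chi\in C_c^\infty(\R^m)$ with $\chi\equiv 1$ on $2B$ yields a form $\tilde\kappa\in\Omega^k C_{\rm b}^{0,1}(\R^m)$ such that $g^*\tilde\kappa=g^*\kappa$ for every map $g$ with image in $2B$.

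Next I would upgrade the approximation. By hypothesis, $(f^\eps)$ is bounded in $C^{0,\gamma}(\overline V;\R^m)$ and converges pointwise on a dense subset of $\overline V$ (any dense subset of $\cM$ intersected with the open set $V$ is dense in $\overline V$). Applying Corollary~\ref{T77} componentwise gives $f^\eps\to f$ in $C^{0,\gamma'}(\overline V)$ for every $\gamma'\in(0,\gamma)$, and in particular uniformly; hence $f^\eps(\overline V)\subset 2B$ for all $\eps$ small enough, so that $(f^\eps)^*\tilde\kappa=(f^\eps)^*\kappa$ on $\overline V$. Since $\gamma>k/(k+1)$, I fix some $\gamma'\in(k/(k+1),\gamma)$, and the smooth approximations $f^\eps\to f$ in $C^{0,\gamma'}(\overline V)$ then satisfy the hypotheses of Theorem~\ref{T79} applied to $\tilde\kappa$ with $\alpha=1$. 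This yields existence of
\[
\int_{\cM} f^*\kappa\wedge\tau:=\lim_{\eps\to 0}\int_{\overline V}(f^\eps)^*\tilde\kappa\wedge\tau,
\]
independent of the approximation. The quantitative estimate then follows from Theorem~\ref{T96} with $\alpha=1$ and reference map $\tilde f\equiv 0$:
\[
\Bigl|\int_{\cM}f^*\kappa\wedge\tau\Bigr|\lesssim [\tilde\kappa]_1\,\Vert\tau\Vert_{C^{0,\gamma}}\,\Vert f\Vert_{C^{0,\gamma}(\overline V)}^{k+1}.
\]
Since $[\tilde\kappa]_1$ depends only on $\kappa$ and on the ball $B$ (which is itself determined by $\kappa$ and $U_\tau$ once the ambient localization is fixed), all additional powers of $\Vert f\Vert_{C^{0,\gamma}(U_\tau)}$ may be absorbed into the constant $C(\kappa,U_\tau)$, giving the claimed bound.

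The main obstacle is purely organizational: setting up the compact manifold-with-boundary $\overline V$ so that Theorems~\ref{T79} and~\ref{T96} are literally applicable, and verifying that the cutoff modification of $\kappa$ does not alter the pullbacks for small $\eps$. Once these preparations are carried out, the proof reduces to a direct invocation of Corollary~\ref{T77} together with Theorems~\ref{T79} and~\ref{T96}.
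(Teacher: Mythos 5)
Your proof takes essentially the same route as the paper's: localize to a compact submanifold with boundary containing $\operatorname{supp}\tau$, use Corollary~\ref{T77} to upgrade the pointwise-plus-bounded hypothesis on $(f^\eps)$ to $C^{0,\gamma'}$-convergence for $\gamma'\in(k/(k+1),\gamma)$, then invoke Theorems~\ref{T79} and~\ref{T96} with $\alpha=1$. The paper's own proof is a two-sentence pointer to exactly these ingredients, and your truncation of $\kappa$ to a bounded Lipschitz form is a necessary detail that the paper leaves implicit (compare the analogous remark in the proof of Corollary~\ref{T76}).

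One caveat in your concluding paragraph: the ball $B$ is determined by the range $f(\overline V)$, not by $\kappa$ and $U_\tau$ alone, and Theorem~\ref{T96} delivers the power $\Vert f\Vert_{C^{0,\gamma}}^{k+1}$, which a constant genuinely independent of $f$ cannot absorb down to the first power appearing in the stated estimate. This is a looseness already present in the statement as written (the paper's proof does not address it either), but it would be cleaner to either record the $(k+1)$-st power or allow the constant to depend on $\Vert f\Vert_\infty$; as it stands your sentence asserting the reduction to first power is not literally correct.
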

\begin{remark}
In the case of manifolds with boundary we assume that $\tau\in \Omega_c^{n-k}C^{0,\gamma}(\cM)$ has support disjoint from the boundary, so the boundary does not play any role.
\end{remark}
\begin{proof}
Let $U_\tau'\subset U_\tau$ be a compact submanifold of $\cM$ (possibly with boundary) that contains $\operatorname{supp}\tau$ in its interior. Then, the result follows from Theorems~\ref{T79} and~\ref{T96} with $\alpha=1$, applied to $U_\tau'$. While $[\kappa]_1$ need not be finite, $[\kappa]_1$ is finite in a neighborhood of $f(U_\tau)$. We also apply Corollary~\ref{T77} to $f^\eps$.
\end{proof}
\begin{corollary}
\label{T76}
Let $\Omega\subset\R^n$ be a bounded domain with smooth boundary and let $f\in C_{\rm b}^{0,\gamma}(\Omega;\R^n)$, $\gamma\in\big(\frac{n}{n+1},1\big]$. If 
$$
\lip({\Omega};\R^n)\ni f^\eps\to f
\quad
\text{in $C_{\rm b}^{0,\gamma'}$ for some $\frac{n}{n+1}<\gamma'\leq\gamma$,}
$$
then  for any $v\in C^\infty(\R^n)$ and any $g\in C_{\rm b}^{0,\gamma}(\Omega)$ the limit
$$
\int_\Omega (v\circ f)(x)J_f(x)g(x)\, dx:=
\lim_{\eps\to 0}
\int_\Omega (v\circ f^\eps)(x)J_{f^\eps}(x)g(x)\, dx=
\lim_{\eps\to 0} \int_\Omega  (v\circ f)J_{f^\eps}(x)g(x)\, dx
$$
exists and it does not depend on the choice of $\gamma'\in \left(\frac{n}{n+1},\gamma\right]$ and a Lipschitz approximation~$f^\eps$.
\end{corollary}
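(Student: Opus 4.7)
The plan is to read off this statement as a direct specialization of \Cref{T79}, together with \Cref{R4} for the second formulation. The idea is that the integrand $(v\circ f)\, J_f\, g\, dx$ is nothing but a wedge product $(v\circ f)\, df_1\wedge\ldots\wedge df_n\wedge g$, which is the pullback $f^*\kappa\wedge\tau$ for the $n$-form $\kappa = v\, dy_1\wedge\ldots\wedge dy_n$ on $\R^n$ and the $0$-form $\tau = g$ on $\Omega$.

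First I would reduce to the case where $v$ has compact support: since $\Omega$ is bounded and $f\in C_{\rm b}^{0,\gamma}(\Omega;\R^n)$, the image $f(\overline{\Omega})$ is contained in some ball $B\subset\R^n$, and by uniform convergence $f^\eps\to f$, also $f^\eps(\Omega)\subset B'$ for some slightly larger bounded $B'$ and all small $\eps$. Multiplying $v$ by a smooth cut-off equal to $1$ on $B'$ leaves the integrals $\int (v\circ f^\eps)J_{f^\eps}g\, dx$ and $\int(v\circ f)J_{f^\eps}g\, dx$ unchanged, so we may assume $v\in C_c^\infty(\R^n)$. In particular $\kappa = v\, dy_1\wedge\ldots\wedge dy_n\in \Omega^n C_{\rm b}^{0,1}(\R^n)$.

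Next I would apply \Cref{T79} with $k=n$, $m=n$, $\alpha=1$ and $\cM$ taken to be $\overline{\Omega}$. The hypothesis $\gamma>\frac{k}{k+\alpha}=\frac{n}{n+1}$ matches our assumption, and $\tau=g\in C_{\rm b}^{0,\gamma}(\Omega)=\Omega^{n-k}C_{\rm b}^{0,\alpha\gamma}(\overline{\Omega})$ has the required regularity. Although \Cref{T79} is stated for smooth compact Riemannian manifolds, the adaptation to bounded domains with smooth (even Lipschitz) boundary is exactly the one already carried out in \Cref{T72}, using the same extension-to-a-closed-manifold trick. Thus the limit
\[
\int_\Omega f^*\kappa\wedge g
=
\lim_{\eps\to 0}\int_\Omega (f^\eps)^*\kappa\wedge g
=
\lim_{\eps\to 0}\int_\Omega (v\circ f^\eps)\, J_{f^\eps}\, g\, dx
\]
exists and is independent of the Lipschitz approximation $f^\eps$ and of $\gamma'\in\big(\tfrac{n}{n+1},\gamma\big]$, giving the first equality in the corollary.

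For the second equality, I would invoke \Cref{R4}, which asserts that in the distributional pullback one may keep the coefficient $v\circ f$ unregularized and only approximate the differentials: writing $\kappa = v\, dy_1\wedge\ldots\wedge dy_n$,
\[
\int_\Omega f^*\kappa\wedge g
=
\lim_{\eps\to 0}\int_\Omega (v\circ f)\, df_1^\eps\wedge\ldots\wedge df_n^\eps\cdot g
=
\lim_{\eps\to 0}\int_\Omega (v\circ f)\, J_{f^\eps}\, g\, dx.
\]
There is no real obstacle here: everything reduces to verifying that the hypotheses of \Cref{T79} and \Cref{R4} hold with $(\alpha,k,\gamma) = (1,n,\gamma)$ and that the domain version of \Cref{T79} from \Cref{T72} applies. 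The only mild technical point worth stating explicitly is the cut-off of $v$, which is needed because \Cref{T79} is formulated for $\kappa\in \Omega^k C_{\rm b}^{0,\alpha}(\R^m)$ (bounded H\"older norm), not merely for $\kappa$ with smooth coefficients.
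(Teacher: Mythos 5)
Your proof is correct and takes essentially the same approach as the paper: specializing \Cref{T79} and \Cref{R4} with $k=m=n$, $\alpha=1$, $\kappa=v\,dy_1\wedge\ldots\wedge dy_n$, $\tau=g$. The only cosmetic difference is how the non-compact support of $v$ is handled: you multiply $v$ by a cut-off, while the paper's proof just observes that the $f^\eps$ take values in a bounded set on which $v$ is Lipschitz; also, since $\Omega$ has smooth boundary, $\overline\Omega$ is already a compact manifold with boundary, so the detour through \Cref{T72} you mention is not actually needed here.
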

\begin{remark}
Later we will prove that the result is true with $g=1$ and $v\in L^{\infty}$, under the weaker assumption that $\gamma\in\big(\frac{n-1}{n},1\big]$, see Theorem~\ref{T88}.
\end{remark}
\begin{proof}
We will apply Theorem~\ref{T79} and Remark~\ref{R4} with 
$\cM=\Omega$, $m=n=k$, $\alpha=1$, $\kappa=v\,dy_1\wedge\ldots\wedge dy_n$ and $\tau=g$ ($\tau$ as a zero form is a function).
It suffices to observe that
$$
(f^\eps)^*\kappa\wedge\tau=
(v\circ f^\eps)J_{f^\eps}g(x)\, dx_1\wedge\ldots\wedge dx_n.
$$
Since $v$ as an arbitrary smooth function, $\kappa$ does not necessarily belong to $C^{0,\alpha}=\lip$, but this is not a problem since the family $f^\eps$ takes values in a bounded subset of $\R^n$ where $v$ is Lipschitz.
\end{proof}

\section{Green, Stokes and the change of variables formula}
\label{GSC}

The aim of this section is to generalize the change of variables formula involving degree, 
the Green theorem, and the Stokes theorem, to the case of H\"older continuous mappings. 

Let us start with the classical version of the change of variables formula.

Throughout this section we assume that $\Omega\subset\R^n$ is a bounded domain.
\begin{definition}[\mbox{\cite[Definition 1.2]{FG}}] 
\label{defn12}
Let $\Omega\subset\R^n$ be a bounded domain. Assume that $f\in C^\infty(\overbar{\Omega};\R^n)$. Let $y\in \R^n\setminus f(\partial\Omega)$ be a regular value of $f$. We define the \emph{local degree of $f$ at $y$ with respect to $\Omega$} as
$$
\deg(f,\Omega,y)=\sum_{x\in f^{-1}(y)} \sgn J_f(x).
$$
Moreover, if $y\not \in f(\overbar{\Omega})$, we set $\deg(f,\Omega,y)=0$.
\end{definition}
\begin{lemma}[\mbox{\cite[Proposition 1.8]{FG}}]
\label{prop18}
Let $V$ be a connected component of $\R^n\setminus f(\partial\Omega)$ and assume $y_1,y_2\in V$ are regular values of $f$. Then $\deg(f,\Omega,y_1)=\deg(f,\Omega,y_2)$.
\end{lemma}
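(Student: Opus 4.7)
The plan is to use the standard integral representation of the local degree via bump forms. I would show

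\begin{equation*}
\deg(f,\Omega,y) = \int_\Omega \omega(f(x))\, J_f(x)\, dx
\end{equation*}

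for any regular value $y$ and any bump $\omega\in C_c^\infty(\R^n)$ with $\int_{\R^n}\omega=1$ supported in a sufficiently small neighborhood of $y$ (contained in the connected component $V$ of $\R^n\setminus f(\partial\Omega)$ that contains $y$). Once this is established, I would prove that the value of this integral depends only on $V$, not on the particular choice of $\omega$, yielding the conclusion.

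For the first step I would invoke the inverse function theorem at each preimage: since $y$ is regular, $f^{-1}(y)=\{x_1,\hdots,x_N\}$ is finite, and there are pairwise disjoint open neighborhoods $U_j\ni x_j$ such that $f|_{U_j}$ is a diffeomorphism onto a common neighborhood $W\ni y$. Shrinking $W$ so that $\operatorname{supp}\omega\subset W$ and $f^{-1}(\operatorname{supp}\omega)\subset\bigsqcup U_j$, the classical change of variables formula on each $U_j$ gives
\begin{equation*}
\int_\Omega \omega(f(x))\, J_f(x)\, dx = \sum_{j=1}^N \sgn J_f(x_j)\int_{\R^n}\omega\,dy = \deg(f,\Omega,y).
\end{equation*}

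For the second step, given two such bumps $\omega_1,\omega_2$ with support in $V$ and $\int\omega_i=1$, the difference $h:=\omega_1-\omega_2$ is smooth, compactly supported in $V$, and has zero integral. Since $V$ is open and connected, the standard Poincar\'e lemma for compactly supported forms (equivalently, $H^n_c(V)=\R$ with the isomorphism given by integration) produces a smooth vector field $G=(G_1,\hdots,G_n)\in C_c^\infty(V;\R^n)$ with $\operatorname{div} G = h$. Forming
\begin{equation*}
\alpha = \sum_{i=1}^n (-1)^{i-1} G_i\, dy_1\wedge\hdots\wedge\widehat{dy_i}\wedge\hdots\wedge dy_n,
\end{equation*}
we have $d\alpha = h\, dy_1\wedge\hdots\wedge dy_n$. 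Because $\operatorname{supp} G\subset V\subset\R^n\setminus f(\partial\Omega)$, the pullback $f^*\alpha$ is a smooth $(n-1)$-form that vanishes in a neighborhood of $\partial\Omega$. Stokes' theorem (\Cref{T78}) then yields
\begin{equation*}
\int_\Omega h(f(x))\, J_f(x)\, dx = \int_\Omega f^*(d\alpha) = \int_\Omega d(f^*\alpha) = \int_{\partial\Omega} f^*\alpha = 0,
\end{equation*}
so $\int_\Omega \omega_1(f)J_f = \int_\Omega \omega_2(f)J_f$, i.e.\ $\deg(f,\Omega,y_1)=\deg(f,\Omega,y_2)$.

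The only genuine obstacle is the construction of $G$ with $\operatorname{div}G = h$ for $h\in C_c^\infty(V)$ with $\int h=0$; the rest is direct bookkeeping. I would handle this either by quoting the compactly supported Poincar\'e lemma on the connected manifold $V$, or by an elementary inductive construction that successively integrates in each coordinate direction inside a large cube containing $\operatorname{supp} h$ and exhausting pieces via a path connecting the supports of $\omega_1$ and $\omega_2$ through $V$ (using a tube of small balls along the path). Either approach suffices for the proof.
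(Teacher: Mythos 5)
The paper does not prove this lemma; it simply cites it to \cite[Proposition~1.8]{FG}. Your argument is the standard analytic (Heinz/Nagumo-style) proof that Fonseca--Gangbo themselves develop: represent the degree at a regular value as $\int_\Omega(\omega\circ f)\,J_f$ for a small bump $\omega$ supported in $V$, then compare two such bumps by writing their difference as $\operatorname{div}G$ with $G\in C_c^\infty(V;\R^n)$ and applying Stokes to $f^*\alpha$, which vanishes near $\partial\Omega$ because $\operatorname{supp}G\cap f(\partial\Omega)=\varnothing$. The argument is correct, and the one step you defer --- existence of $G$ --- is indeed exactly $H^n_c(V)\cong\R$ for the connected open set $V$, provable either by quoting the compactly supported Poincar\'e lemma or by the elementary chain-of-balls transport along a path inside $V$; both routes are standard and suffice.
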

Lemma~\ref{prop18} allows us to define the local degree at {\em all} points $y\in\R^n\setminus f(\Omega)$, by setting $\deg(f,\Omega,y)=\deg(f,\Omega,y_1)$,
where $y_1$ is any regular value of $f$ lying in the same component of $\R^n\setminus f(\Omega)$ as $y$.

The local degree is a $C^\infty$ homotopy invariant:
\begin{lemma}[\mbox{\cite[Theorem 1.12]{FG}}]
\label{thm112}
If $H:\overbar{\Omega}\times[0,1]\to\R^n$ is a $C^\infty$ mapping such that $H(\cdot,0)=f(\cdot)$, $H(\cdot,1)=g(\cdot)$
and $y\not\in H(\partial\Omega\times[0,1])$,
then $\deg(f,\Omega,y)=\deg(g,\Omega,y)$.
\end{lemma}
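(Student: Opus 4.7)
The plan is to reduce the homotopy invariance of degree to the Stokes theorem via an integral representation of the local degree. The key intermediate step is to show that for any bump $\varphi\in C_c^\infty(\R^n)$ with $\int_{\R^n}\varphi\, dy=1$ whose support lies in the connected component of $\R^n\setminus f(\partial\Omega)$ containing $y$, one has
$$
\deg(f,\Omega,y)=\int_\Omega (\varphi\circ f)\, J_f\, dx=\int_\Omega f^*\omega,
\qquad \omega=\varphi\, dy_1\wedge\dots\wedge dy_n.
$$
First I would prove this formula by taking $\varphi$ concentrated in a small ball around a regular value $y_0$ in the same component as $y$ (which exists by Sard). Since $f^{-1}(y_0)$ is finite and $f$ is a local diffeomorphism at each preimage, the ordinary change of variables on disjoint neighborhoods contributes $\sgn J_f(x)$ for each $x\in f^{-1}(y_0)$, yielding $\deg(f,\Omega,y_0)=\deg(f,\Omega,y)$ (using Lemma~\ref{prop18}).

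Next I would establish independence of the integral from the choice of $\varphi$ (subject to the normalization and support constraint). Given two such bumps $\varphi_1,\varphi_2$ supported in a common component $V$, the form $(\varphi_1-\varphi_2)\, dy_1\wedge\dots\wedge dy_n$ is a closed, compactly supported top form on $V$ with zero total integral. The standard fact that $H^n_c(V)\cong\R$ is realized by integration implies this form is exact: $(\varphi_1-\varphi_2)\, dy_1\wedge\dots\wedge dy_n=d\beta$ for some $\beta\in\Omega_c^{n-1}(V)$. Pulling back and applying the Stokes theorem on $\Omega$ gives $\int_\Omega f^*(\varphi_1-\varphi_2)\, dy=\int_{\partial\Omega} f^*\beta=0$, since $f(\partial\Omega)$ is disjoint from $\operatorname{supp}\beta\subset V$.

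With the representation in hand, the homotopy step is immediate. Since $K:=H(\partial\Omega\times[0,1])$ is compact and avoids $y$, choose $\varphi$ supported in the component of $\R^n\setminus K$ containing $y$; this component is contained in and lies in the correct component of both $\R^n\setminus f(\partial\Omega)$ and $\R^n\setminus g(\partial\Omega)$. Because $\omega$ is top-dimensional on $\R^n$, $d\omega=0$, and so $d(H^*\omega)=H^*d\omega=0$ on $\Omega\times[0,1]$. Stokes' theorem yields
$$
0=\int_{\Omega\times[0,1]}d(H^*\omega)=\int_{\Omega\times\{1\}}H^*\omega-\int_{\Omega\times\{0\}}H^*\omega+\int_{\partial\Omega\times[0,1]}H^*\omega.
$$
The last integral vanishes because $H(\partial\Omega\times[0,1])\subset K$ is disjoint from $\operatorname{supp}\varphi$, so $\varphi\circ H\equiv 0$ there. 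Therefore $\int_\Omega g^*\omega=\int_\Omega f^*\omega$, and by Step~1 both sides equal $\deg(g,\Omega,y)$ and $\deg(f,\Omega,y)$, respectively.

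The main obstacle is the integral representation of the degree and its independence of $\varphi$: the existence of a suitable regular value with finite preimage (Sard plus properness on a compact set) and the cohomological exactness argument for two different normalized bumps in the same component. Once these are in place, the Stokes computation on $\Omega\times[0,1]$ is a one-line calculation, and the boundary term is killed by the support hypothesis on the homotopy.
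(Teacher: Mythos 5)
The paper does not give its own proof of this lemma; it is quoted directly from Fonseca--Gangbo \cite[Theorem~1.12]{FG}, so there is no in-paper argument to compare against. Your proposal is the standard analytic proof of homotopy invariance of degree via the integral representation $\deg(f,\Omega,y)=\int_\Omega f^*\omega$ for a normalized bump $\omega=\varphi\, dy_1\wedge\dots\wedge dy_n$, independence of the representation from $\varphi$ via $H^n_c(V)\cong\R$, and Stokes on $\Omega\times[0,1]$. This is essentially the route taken in Fonseca--Gangbo themselves (and in Milnor), so the content is correct.

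One technical point you gloss over: $\Omega$ is only assumed to be a bounded domain, so $\Omega$ and $\Omega\times[0,1]$ need not be smooth manifolds with boundary, and Stokes cannot be invoked on them directly. The fix is exactly the observation you make implicitly --- that $H^*\omega$ vanishes on a neighborhood $U\times[0,1]$ of $\partial\Omega\times[0,1]$, since $H(\partial\Omega\times[0,1])$ is a compact set missing $\operatorname{supp}\varphi$ and $H$ is continuous on $\overbar{\Omega}\times[0,1]$. One should then either multiply by a cutoff $\chi\in C^\infty_c(\Omega)$ equal to $1$ on $\overbar{\Omega}\setminus U$ (noting $d\chi\wedge H^*\omega=0$) and apply Stokes on a slightly smaller smooth domain $\Omega'\Subset\Omega$ containing $\operatorname{supp}\chi$, or equivalently integrate over a smooth exhaustion. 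The same remark applies to your Step~2 where you apply Stokes on $\Omega$ to $f^*\beta$. With these standard approximations made explicit, the argument is complete.
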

Proposition \ref{thm112} allows us to extend the notion of local degree to continuous mappings:
\begin{definition}[\mbox{\cite[Definition 1.18]{FG}}]\label{defn118}
If $f\in C(\overbar{\Omega};\R^n)$ and $y\not \in f(\partial\Omega)$, we set  $\deg(f,\Omega,y)=\lim_{\eps\to 0}\deg(f_\eps,\Omega,y)$, where $f_\eps\rightrightarrows f$ is a smooth approximation of $f$ that converges uniformly on $\overbar{\Omega}$ as $\eps\to 0$.
\end{definition}
Given $y\not \in f(\partial\Omega)$,
it follows from Lemma~\ref{thm112} that there is $\eps_o>0$ such that for all $0<\eps_1,\eps_2<\eps_o$,
$\deg(f_{\eps_1},\Omega,p)=\deg(f_{\eps_2},\Omega,y)$ so the limit exists. Moreover, the degree
$\deg(f,\Omega,y)$ defined in that way does not depend on the choice of the approximation.

The next lemma easily follows from Lemma~\ref{thm112}.
\begin{lemma}
Let $\Omega\subset\R^n$ be a bounded domain and let
$f:\partial\Omega\to\R^n$ be a continuous mapping. If $y\in\R^n\setminus f(\partial\Omega)$, and $F_1,F_2:\overbar\Omega\to\R^n$ are two continuous extensions of $f$, then
$$
\deg (F_1,\Omega,y)=\deg(F_2,\Omega,y).
$$
\end{lemma}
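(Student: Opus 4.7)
The plan is to reduce to the smooth homotopy invariance (Lemma~\ref{thm112}) by using the straight-line homotopy between the two extensions together with a uniform smoothing argument that keeps $y$ off the image of the boundary during the homotopy.

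First I would set $\delta=\dist(y,f(\partial\Omega))$; this is strictly positive because $\partial\Omega$ is compact (as $\Omega\subset\R^n$ is bounded) and $y\notin f(\partial\Omega)$. Next I would pick smooth approximations $F_i^{\eps}\in C^{\infty}(\overbar{\Omega};\R^n)$ with $\Vert F_i^{\eps}-F_i\Vert_{\infty}<\delta/3$ for $i=1,2$, as provided by Definition~\ref{defn118}, so that in particular $y\notin F_i^{\eps}(\partial\Omega)$ for small $\eps$ and $\deg(F_i^{\eps},\Omega,y)\to\deg(F_i,\Omega,y)$.

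The key step is to check that the straight-line homotopy
$$
H^{\eps}(x,t)=(1-t)F_1^{\eps}(x)+tF_2^{\eps}(x),\qquad (x,t)\in\overbar{\Omega}\times[0,1],
$$
which is smooth, avoids $y$ on $\partial\Omega\times[0,1]$. For $x\in\partial\Omega$ one has $F_1(x)=F_2(x)=f(x)$, hence
$$
|H^{\eps}(x,t)-f(x)|\leq (1-t)|F_1^{\eps}(x)-f(x)|+t|F_2^{\eps}(x)-f(x)|<\delta/3,
$$
so $|H^{\eps}(x,t)-y|\geq |f(x)-y|-\delta/3\geq 2\delta/3>0$. Thus $y\notin H^{\eps}(\partial\Omega\times[0,1])$, and Lemma~\ref{thm112} applies to yield $\deg(F_1^{\eps},\Omega,y)=\deg(F_2^{\eps},\Omega,y)$. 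Passing to the limit $\eps\to 0$ using Definition~\ref{defn118} concludes $\deg(F_1,\Omega,y)=\deg(F_2,\Omega,y)$.

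There is essentially no obstacle here beyond bookkeeping: the only mild point is ensuring the smooth approximations are close enough to $F_1,F_2$ in the sup norm so that the straight-line homotopy between them cannot meet $y$ on $\partial\Omega$, which is handled by the $\delta/3$ choice above.
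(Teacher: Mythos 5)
Your proof is correct and fills in exactly the argument the paper has in mind when it says the lemma ``easily follows from Lemma~\ref{thm112}'': smooth the two extensions, run the straight-line homotopy, and check $y$ stays off the boundary image because both extensions restrict to $f$ there. No discrepancies.
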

Since the degree is independent of the choice of the continuous extension, it depends only on the values of the mapping $f$ on the boundary of the domain, and we define:
\begin{definition}
\label{D5}
Let $\Omega\subset\R^n$ be a bounded domain, $f:\partial\Omega\to\R^n$, a continuous mapping, and $y\in\R^n\setminus f(\partial\Omega)$. Then, we define the {\em winding number of $f$ around $y$} by
$$
w(f,y)=\deg(F,\Omega,y),
$$
where $F:\overbar\Omega\to\R^n$ is any continuous extension of $f$.
\end{definition}

The classical change of variables formula reads as follows, see \cite[Theorem~5.27]{FG}.
\begin{lemma}
\label{T43}
Let $\Omega\subset\R^n$ be a bounded domain with smooth boundary and let $f\in C^\infty(\overbar{\Omega};\R^n)$. If $v\in L^\infty(\R^n)$, then
\begin{equation}
\label{eq405}
\int_{\Omega}(v\circ f)J_f(x)\, dx = \int_{\R^n}v(y)w(f,y)\, dy.
\end{equation}
\end{lemma}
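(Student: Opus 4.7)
The plan is to deduce the formula from the area formula applied to the smooth map $f$, combined with the fact that $f(\partial\Omega)$ has Lebesgue measure zero so that $w(f,y)$ is defined for almost every $y\in\R^n$.

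First, I would reduce to the case where $v\in L^\infty(\R^n)$ has compact support, since $w(f,y)=0$ whenever $y$ lies in the unbounded component of $\R^n\setminus f(\partial\Omega)$, in particular outside $f(\overbar{\Omega})$, and $(v\circ f)J_f$ is supported in $\Omega$ where $f$ is bounded. Both sides of \eqref{eq405} are thus absolute-value dominated by $\|v\|_\infty\int_\Omega |J_f|\,dx<\infty$ on the left and by $\|v\|_\infty\int_{f(\overbar{\Omega})}|w(f,y)|\,dy<\infty$ on the right (the winding number is bounded because it is locally constant and integer-valued on the finitely many bounded components of $\R^n\setminus f(\partial\Omega)$).

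Next, I invoke the classical area formula for smooth mappings: for any nonnegative Borel function $g$ on $\Omega$,
\[
\int_\Omega g(x)|J_f(x)|\,dx=\int_{\R^n}\sum_{x\in f^{-1}(y)\cap\Omega}g(x)\,dy.
\]
Splitting $v\circ f$ into positive and negative parts and applying this identity with $g=(v\circ f)^{\pm}\,\mathbf{1}_{\{\pm J_f>0\}}$ (and reassembling), I obtain the signed version
\[
\int_\Omega (v\circ f)(x)\,J_f(x)\,dx=\int_{\R^n}v(y)\bigg(\sum_{x\in f^{-1}(y)\cap\Omega}\sgn J_f(x)\bigg)\,dy,
\]
where the inner sum is interpreted as $0$ when $f^{-1}(y)\cap\Omega=\varnothing$, and where by Sard's theorem and the smoothness of $f$ the sum is finite for almost every $y$.

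Finally, I identify the bracketed sum with the winding number. Since $\partial\Omega$ is a compact smooth $(n-1)$-manifold and $f$ is smooth, $f(\partial\Omega)$ is a set of Lebesgue measure zero, so for almost every $y\in\R^n$ we have $y\notin f(\partial\Omega)$. For such $y$ that is additionally a regular value of $f$ (again almost every $y$, by Sard), Definition~\ref{defn12} gives
\[
\sum_{x\in f^{-1}(y)\cap\Omega}\sgn J_f(x)=\deg(f,\Omega,y)=w(f,y),
\]
the second equality holding since $f$ itself is a continuous extension of $f|_{\partial\Omega}$ to $\overbar{\Omega}$. Inserting this into the previous display yields \eqref{eq405}. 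The only delicate point is the measure-zero issue at critical values and on $f(\partial\Omega)$, which is handled cleanly by Sard's theorem and the smoothness of $\partial\Omega$; once these are invoked, no further subtlety arises.
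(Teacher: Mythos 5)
Your proof is correct and complete. Note, however, that the paper does not prove this lemma at all: it cites it as a known result from Fonseca--Gangbo (\cite[Theorem~5.27]{FG}), so there is no ``paper proof'' to compare against. What you have done is supply the standard proof via the area formula, and it works. To recap the route you take: the (nonnegative) area formula for the smooth map $f$ on the bounded set $\Omega$ converts $\int_\Omega g\,|J_f|$ into $\int_{\R^n}\sum_{x\in f^{-1}(y)\cap\Omega}g(x)\,dy$; applying this to suitable nonnegative pieces of $(v\circ f)\sgn J_f$ and reassembling (which is legitimate since $(v\circ f)\sgn J_f$ is bounded and $|J_f|\in L^1(\Omega)$, so each piece has a finite integral) yields $\int_\Omega (v\circ f)J_f = \int_{\R^n} v(y)\,N(y)\,dy$ with $N(y)=\sum_{x\in f^{-1}(y)\cap\Omega}\sgn J_f(x)$; finally, Sard's theorem and the fact that $f(\partial\Omega)$ is Lebesgue null guarantee that a.e.\ $y$ is a regular value not lying on $f(\partial\Omega)$, and for such $y$ Definition~\ref{defn12} together with Definition~\ref{D5} (with $F=f$ as the chosen extension) give $N(y)=\deg(f,\Omega,y)=w(f,y)$. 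A small stylistic simplification would be to set $h=(v\circ f)\sgn J_f$, split only $h=h^+-h^-$, and use $h|J_f|=(v\circ f)J_f$; this avoids the double indicator-function decomposition, but your version is equally valid. This area-formula argument is essentially the one used in the cited reference for smooth (and, with more work, Sobolev) maps, so you have in effect reconstructed the textbook proof in the smooth case rather than taken a different route.
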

\begin{remark}
The function $w(f,y)$ is not defined when $y\in f(\partial\Omega)$. However, the $n$-dimensional measure of the set $f(\partial\Omega)$ equals zero, so $w(f,y)$ is defined almost everywhere.
\end{remark}

\begin{remark}
In fact, the change of variables is true for Lipschitz mappings $f$ or even for Sobolev mappings satisfying some additional conditions. It is true also for much more general domains.
However, for our purposes the above statement is sufficient. For a proof and generalizations, see Chapter~5 in \cite{FG}.
\end{remark}
Assume now that
$v=\partial u/\partial y_i$ for some $u\in W^{1,\infty}(\R^n)$. Then 
\begin{align*}
\left(\frac{\partial u}{\partial y_i}\circ f\right) J_f\, dx_1\wedge\ldots\wedge dx_n 
&=
f^*\left(\frac{\partial u}{\partial y_i}\, dy_1\wedge\ldots\wedge dy_n\right)\\
&=
f^*\left((-1)^{i-1}d(u\, dy_1\wedge\ldots\wedge\widehat{dy_i}\wedge\ldots\wedge dy_n)\right)\\
&= 
(-1)^{i-1}d\left((u\circ f) df_1\wedge\ldots\wedge\widehat{df_i}\wedge\ldots\wedge df_n)\right).
\end{align*}
The Stokes theorem (Lemma~\ref{T78}) along with Lemma~\ref{T43} yields
\begin{corollary}
\label{T44}
Let $\Omega\subset\R^n$ be a bounded domain with smooth boundary and let $f=(f_1,\ldots,f_n)\in C^\infty(\partial\Omega;\R^n)$
Then for $u\in W_{\rm loc}^{1,\infty}(\R^n)$ and $i=1,2,\ldots,n$ we have
\begin{equation}
\label{eq406}
(-1)^{i-1}\int_{\partial\Omega} (u\circ f)\,df_1\wedge\ldots\wedge\widehat{df_i}\wedge\ldots\wedge df_n=
\int_{\R^n}\frac{\partial u}{\partial y_i}(y)w(f,y)\, dy.
\end{equation}
\end{corollary}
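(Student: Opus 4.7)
The scheme is essentially laid out in the four displayed lines immediately preceding the statement, so the plan is simply to turn that sketch into a proof and then extend from $C^\infty$ test functions $u$ to $u \in W^{1,\infty}(\R^n)$. First I would fix a smooth extension $F \in C^\infty(\overbar{\Omega};\R^n)$ of $f$ (which exists because $\Omega$ is a smooth bounded domain and $f \in C^\infty(\partial\Omega;\R^n)$). By Definition~\ref{D5}, $w(F,y) = w(f,y)$ for every $y \in \R^n\setminus f(\partial\Omega)$, so $w(f,\cdot)$ may be replaced by $w(F,\cdot)$ throughout.

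Assume first that $u \in C^\infty(\R^n)$. The calculation shown in the excerpt yields the pointwise identity
\[
\left(\tfrac{\partial u}{\partial y_i}\circ F\right) J_F\, dx_1\wedge\ldots\wedge dx_n
= (-1)^{i-1}\, d\!\left((u\circ F)\, dF_1\wedge\ldots\wedge\widehat{dF_i}\wedge\ldots\wedge dF_n\right).
\]
The $(n-1)$-form on the right is smooth on $\overbar{\Omega}$, so Stokes' theorem (\Cref{T78}) gives
\[
(-1)^{i-1}\!\int_{\Omega} d\!\left((u\circ F)\, dF_1\wedge\ldots\wedge\widehat{dF_i}\wedge\ldots\wedge dF_n\right)
= (-1)^{i-1}\!\int_{\partial\Omega}(u\circ f)\, df_1\wedge\ldots\wedge\widehat{df_i}\wedge\ldots\wedge df_n,
\]
using $F|_{\partial\Omega}=f$. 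Applying \Cref{T43} with $v = \partial u/\partial y_i \in L^\infty(\R^n)$ to the left-hand integral and combining with $w(F,\cdot)=w(f,\cdot)$ yields \eqref{eq406} under the smoothness hypothesis on $u$.

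For general $u \in W^{1,\infty}(\R^n)$, I would approximate by mollification: set $u_\delta = u * \eta_\delta \in C^\infty(\R^n)$, so that $u_\delta \rightrightarrows u$ uniformly on the compact set $f(\partial\Omega)$, while $\partial u_\delta/\partial y_i \to \partial u/\partial y_i$ almost everywhere with $\Vert \partial u_\delta/\partial y_i\Vert_\infty \le \Vert \partial u/\partial y_i\Vert_\infty$. The smooth case gives \eqref{eq406} for each $u_\delta$. On the left-hand side, uniform convergence of $u_\delta \circ f \rightrightarrows u\circ f$ on $\partial\Omega$ together with the fixed smooth form $df_1\wedge\ldots\wedge\widehat{df_i}\wedge\ldots\wedge df_n$ passes to the limit immediately. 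On the right-hand side, the area formula applied to the smooth map $F$ gives
\[
\int_{\R^n}|w(f,y)|\, dy \le \int_{\R^n}\#F^{-1}(y)\, dy = \int_{\Omega}|J_F(x)|\, dx < \infty,
\]
so the uniform bound on $\partial u_\delta/\partial y_i$ supplies a dominating function $\Vert u\Vert_{W^{1,\infty}}|w(f,y)|$, and dominated convergence lets me pass $\delta\to 0$ on the right as well.

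The only potentially delicate point is the $L^1$ integrability of $w(f,\cdot)$ needed for the limit on the right; this is handled in one line via the area formula above, since $F$ is smooth. Everything else is a routine assembly of the change of variables formula, the Leibniz identity, Stokes' theorem, and a standard mollification argument.
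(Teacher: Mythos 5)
Your argument is correct, and in the smooth case $u\in C^\infty(\R^n)$ it reproduces the paper's derivation exactly: the Leibniz identity followed by Stokes and Lemma~\ref{T43}. The one place you diverge is the passage from $C^\infty$ to $W^{1,\infty}$, where you introduce a mollification of $u$ and a dominated convergence argument (supported by a correct area-formula bound showing $w(f,\cdot)\in L^1$). The paper avoids this step entirely: Lemma~\ref{T78}, the Stokes theorem it invokes, is stated for \emph{Lipschitz} $(n-1)$-forms, and the form $(u\circ F)\,dF_1\wedge\cdots\wedge\widehat{dF_i}\wedge\cdots\wedge dF_n$ is Lipschitz on $\overbar{\Omega}$ whenever $u\in W^{1,\infty}$ and $F$ is smooth (and the chain of equalities preceding the corollary holds a.e.\ for Lipschitz $u$). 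So the paper applies Stokes and Lemma~\ref{T43} once, directly at the $W^{1,\infty}$ level. Your approximation argument is valid and self-contained, but it is work the cited version of Stokes' theorem was already designed to eliminate; it is worth noticing that \Cref{T78} was formulated for precisely this situation.
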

\begin{remark}
\label{R9}
When $u=y_i$, this is a multidimensional version of Green's theorem, see also Corollary~\ref{T25}.
\end{remark}
Any form $\kappa\in \Omega^{n-1}W_{\rm loc}^{1,\infty}\big(\R^n\big)$ can be written as
$$
\kappa=\sum_{i=1}^n(-1)^{i-1}u_i\, dy_1\wedge\ldots\wedge\widehat{dy_i}\wedge\ldots\wedge dy_n.
$$
Let $U=(u_1,\ldots,u_n)$. Then \eqref{eq406} yields
\begin{equation}
\label{eq8}
\int_{\partial\Omega} f^*\kappa=\int_{\R^n}\operatorname{div} U(y)\, w(f,y)\, dy=
\int_{\R^n} w(f,y)\, d\kappa(y).
\end{equation}
This gives the following version of the Stokes theorem.
\begin{corollary}
\label{T87}
Let $\Omega\subset\R^n$ be a bounded domain with smooth boundary and let $f\in C^\infty(\partial\Omega;\R^n)$. Then for any $\kappa\in \Omega^{n-1}W_{\rm loc}^{1,\infty}\big(\R^n\big)$, we have
\begin{equation}
\label{eq108}
\int_{\partial\Omega} f^*\kappa=\int_{\R^n} w(f,y)\, d\kappa(y).
\end{equation}
\end{corollary}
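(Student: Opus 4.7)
The plan is to reduce the statement to Corollary~\ref{T44} by expanding $\kappa$ in the standard basis of $(n-1)$-forms on $\R^n$. Specifically, I would write
$$
\kappa = \sum_{i=1}^n (-1)^{i-1} u_i\, dy_1\wedge\ldots\wedge\widehat{dy_i}\wedge\ldots\wedge dy_n,
$$
for uniquely determined coefficients $u_i \in W^{1,\infty}(\R^n)$, and set $U=(u_1,\ldots,u_n)$. Using $dy_i\wedge dy_1\wedge\ldots\wedge\widehat{dy_i}\wedge\ldots\wedge dy_n = (-1)^{i-1} dy_1\wedge\ldots\wedge dy_n$, a direct computation gives
$$
d\kappa = (\operatorname{div} U)\, dy_1\wedge\ldots\wedge dy_n,
$$
which is exactly the expression needed to match the right-hand side of \eqref{eq108}.

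Next, since $f\in C^\infty(\partial\Omega;\R^n)$ and each $u_i\in W^{1,\infty}(\R^n)$, the compositions $u_i\circ f$ are Lipschitz on $\partial\Omega$, so
$$
f^*\kappa = \sum_{i=1}^n (-1)^{i-1}(u_i\circ f)\, df_1\wedge\ldots\wedge\widehat{df_i}\wedge\ldots\wedge df_n
$$
is a bounded measurable $(n-1)$-form on $\partial\Omega$ whose integral is well defined. Note that Corollary~\ref{T44} is already stated for $W^{1,\infty}$ functions, so no further approximation is required here.

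Finally, I apply Corollary~\ref{T44} to each summand with $u$ replaced by $u_i$, obtaining
$$
(-1)^{i-1}\int_{\partial\Omega}(u_i\circ f)\, df_1\wedge\ldots\wedge\widehat{df_i}\wedge\ldots\wedge df_n = \int_{\R^n}\frac{\partial u_i}{\partial y_i}(y)\, w(f,y)\, dy,
$$
and sum over $i=1,\ldots,n$. The left-hand side telescopes to $\int_{\partial\Omega} f^*\kappa$, while the right-hand side becomes
$$
\int_{\R^n} (\operatorname{div} U)(y)\, w(f,y)\, dy = \int_{\R^n} w(f,y)\, d\kappa(y),
$$
yielding \eqref{eq108}. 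There is no serious obstacle here: the corollary is a coordinate-free repackaging of the componentwise Green-type identity from Corollary~\ref{T44}, and the only nontrivial bookkeeping is verifying in the first step that the sign $(-1)^{i-1}$ in the expansion of $\kappa$ cancels the sign arising from permuting $dy_i$ to the front of the wedge product.
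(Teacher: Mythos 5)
Your proof is correct and follows exactly the paper's argument: expand $\kappa=\sum_i(-1)^{i-1}u_i\,dy_1\wedge\ldots\wedge\widehat{dy_i}\wedge\ldots\wedge dy_n$, observe $d\kappa=(\operatorname{div}U)\,dy_1\wedge\ldots\wedge dy_n$, and sum Corollary~\ref{T44} over $i$. The paper carries out this same computation in the display preceding the corollary (equation~\eqref{eq8}).
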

\begin{remark}
Let $F:\Omega\to\R^n$ be a smooth extension of $f$. Then \eqref{eq108} takes the form of
$$
\int_{\partial\Omega} F^*\kappa=\int_{\R^n}\deg(F,\Omega,y)\, d\kappa(y),
$$
and it can be regarded as Stokes' theorem for the singular parametric surface $F:\Omega\to\R^n$, where the degree counts the self-intersections of the surface taking into account the orientation.
\end{remark}

The purpose of this section is to generalize Lemma~\ref{T43}, Corollary~\ref{T44} and Corollary~\ref{T87} to the case of H\"older continuous mappings. We will assume that $\Omega\subset\R^n$ is a bounded domain with smooth boundary and $f\in C_{\rm b}^{0,\gamma}(\partial\Omega;\R^n)$ for some $\gamma>\frac{n-1}{n}$. Under these assumptions, the $n$-dimensional measure of $f(\partial\Omega)$ equals zero and hence $w(f,\cdot)$ is defined almost everywhere.

Theorem~\ref{T83} due to Olbermann \cite{Olber}, deals with integrability properties of the function $w(f,\cdot)$. A simpler proof was discovered by De Lellis and Inauen \cite{Delellis} who also proved fractional Sobolev regularity of the function $w(f,\cdot)$. Our proof is similar to that in \cite{Delellis}. See also \cite[Theorem~1.3]{ZustGAFA}.
\begin{theorem}
\label{T83}
If $\Omega$ is a bounded domain with smooth boundary and $f\in C_{\rm b}^{0,\gamma}(\partial\Omega;\R^n)$ for some $\gamma\in\big(\frac{n-1}{n},1\big]$, then
\begin{enumerate}
\item[(a)] 
$\Vert w(f,\cdot)\Vert_{L^p(\R^n)}\lesssim\Vert f\Vert_{C^{0,\gamma}}^{\frac{n}{p}}$
for all  $1<p<\frac{n\gamma}{n-1}$,
where the implied constant depends on $n$, $p$, $\gamma$, and $\Omega$ only.
\item[(b)]
If $f_k\to f$ in $C_{\rm b}^{0,\gamma}$, then $w(f_k,\cdot)\to w(f,\cdot)$ in $L^p(\R^n)$, for all $1\leq p<\frac{n\gamma}{n-1}$.
\end{enumerate}
\end{theorem}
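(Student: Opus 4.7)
The plan is to prove part (a) by $L^p$-$L^{p'}$ duality: it suffices to bound $|\int_{\R^n} g\, w(f,\cdot)\,dy|$ uniformly for $g\in C_c^\infty(\R^n)$ with $\|g\|_{L^{p'}}\le 1$. First, I would pick a smooth approximation $f_\eps$ of $f$ on $\partial\Omega$ with $\|f_\eps\|_{C^{0,\gamma}}\lesssim \|f\|_{C^{0,\gamma}}$ and $f_\eps\rightrightarrows f$ (via Corollary~\ref{T71}). For each test function $g$, I would choose a smooth primitive $\kappa$ with $d\kappa=g\,dy_1\wedge\cdots\wedge dy_n$ using the Newtonian potential $u=N*g$, setting
$$\kappa:=\sum_{i=1}^n(-1)^{i-1}(\partial_i u)\,dy_1\wedge\cdots\wedge\widehat{dy_i}\wedge\cdots\wedge dy_n,$$
so that $d\kappa=(\Delta u)\,dy_1\wedge\cdots\wedge dy_n=g\,dy_1\wedge\cdots\wedge dy_n$. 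By Calder\'on--Zygmund together with Morrey's embedding, $\nabla u$ lies in $C^{0,\alpha}$ on any bounded set with $[\nabla u]_{C^{0,\alpha}}\lesssim\|g\|_{L^{p'}}$ as soon as $\alpha:=1-n/p'\in(0,1)$.

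With this choice, Corollary~\ref{T87} applied to the smooth mapping $f_\eps$ yields
$$\int_{\R^n}g\,w(f_\eps,\cdot)\,dy \;=\; \int_{\partial\Omega} f_\eps^*\kappa.$$
I would then invoke Theorem~\ref{T96} on the closed $(n-1)$-dimensional manifold $\cM=\partial\Omega$ with $k=n-1$ and $\tau\equiv 1$. The hypothesis $\gamma>k/(k+\alpha)$ of Theorem~\ref{T96} translates, after substituting $\alpha=1-n/p'$, precisely into the range $p<n\gamma/(n-1)$, and the algebraic identity $n-1+\alpha=n/p$ transforms the resulting pullback estimate into
$$\Bigl|\int_{\partial\Omega}f_\eps^*\kappa\Bigr|\lesssim [\kappa]_{C^{0,\alpha}}\,\|f_\eps\|_{C^{0,\gamma}}^{n-1+\alpha}\lesssim \|g\|_{L^{p'}}\,\|f\|_{C^{0,\gamma}}^{n/p}.$$
Taking the supremum over admissible $g$ delivers the uniform bound $\|w(f_\eps,\cdot)\|_{L^p(\R^n)}\lesssim\|f\|_{C^{0,\gamma}}^{n/p}$.

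To conclude (a), I would invoke homotopy invariance (Lemma~\ref{thm112}): since $f_\eps\rightrightarrows f$ uniformly, for each $y\notin f(\partial\Omega)$ (a.e.\ $y$) the straight-line homotopy between $f_\eps$ and $f$ avoids $y$ for small $\eps$, so $w(f_\eps,y)=w(f,y)$ eventually, and Fatou's lemma yields $\|w(f,\cdot)\|_{L^p}\le\liminf_{\eps\to 0}\|w(f_\eps,\cdot)\|_{L^p}\lesssim\|f\|_{C^{0,\gamma}}^{n/p}$. For part (b), the same pointwise a.e.\ convergence $w(f_k,\cdot)\to w(f,\cdot)$ holds when $f_k\to f$ in $C^{0,\gamma}$, and part (a) applied with any $p_1\in(p,n\gamma/(n-1))$ (and with $p_1>1$ when $p=1$) gives a uniform $L^{p_1}$-bound on $\{w(f_k,\cdot)\}$. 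Since all $w(f_k,\cdot)$ are supported in a common ball (the images $f_k(\partial\Omega)$ being uniformly bounded), the family $\{|w(f_k,\cdot)-w(f,\cdot)|^p\}$ is uniformly integrable on a set of finite measure, and Vitali's convergence theorem delivers strong $L^p$-convergence for every $p\in[1,n\gamma/(n-1))$.

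The main obstacle I anticipate is the careful bookkeeping of exponents: showing that the choice $\alpha=1-n/p'$ satisfies both $\alpha\in(0,1)$ and the sharp threshold $\gamma>k/(k+\alpha)$ of Theorem~\ref{T96} exactly when $1<p<n\gamma/(n-1)$, so that the full claimed range is attained. A secondary technical point is that $\nabla u$ need not have compact support even when $g$ does, which I would handle by multiplying $\kappa$ by a smooth cutoff equal to $1$ on a large ball containing all images $f_\eps(\partial\Omega)$; this modification leaves $\int_{\partial\Omega}f_\eps^*\kappa$ unchanged while ensuring $\kappa\in\Omega^{n-1}C_b^{0,\alpha}(\R^n)$ as required by Theorem~\ref{T96}.
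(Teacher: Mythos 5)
Your proposal is correct and follows essentially the same route as the paper's proof: both reduce to the smooth case, construct $\kappa$ as a cutoff of the Newtonian primitive of the test function, combine Calder\'on--Zygmund with Morrey to control $[\kappa]_{C^{0,\alpha}}$ by the dual $L^{p'}$ norm, invoke the pullback bound of \Cref{T96} with $k=n-1$, $\tau\equiv 1$, and pass to the limit via the pointwise a.e.\ convergence of the winding number together with a uniform higher-integrability bound. The only surface-level differences are that you use Fatou's lemma for part (a) where the paper runs the full Vitali-type argument of \Cref{T84}, and that you spell out the cutoff needed to put $\kappa$ into $\Omega^{n-1}C_{\rm b}^{0,\alpha}(\R^n)$, a point the paper leaves implicit; neither affects the substance.
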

\begin{remark}
Since $w(f,\cdot)$ equals zero outside a bounded set, it follows that $w(f,\cdot)\in L^1(\R^n)$.
\end{remark}

In the proof we will need three lemmata. All of them are well known.
\begin{lemma}
\label{T84}
Let $\mu$ be a finite measure on a space $X$. Suppose that $f_k\to f$ $\mu$-a.e., and $\sup_k\Vert f_k\Vert_{L^q(X)}<\infty$ for some $1<q<\infty$. Then $f\in L^q(X)$ and $f_k\to f$ in $L^p(X)$ for any $1\leq p<q$.
\end{lemma}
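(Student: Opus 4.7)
The plan is to establish $f\in L^q(X)$ via Fatou's lemma, then obtain $L^p$ convergence by combining Egorov's theorem with the uniform integrability that follows from the $L^q$ bound.

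First I would note that since $f_k\to f$ $\mu$-a.e., Fatou's lemma applied to $|f_k|^q$ gives
\[
\int_X |f|^q\, d\mu \leq \liminf_{k\to\infty} \int_X |f_k|^q\, d\mu \leq \sup_k \Vert f_k\Vert_{L^q}^q < \infty,
\]
so $f\in L^q(X)$ and in particular $\Vert f_k - f\Vert_{L^q}\leq M$ for some $M$ independent of $k$.

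Next, fix $1\leq p<q$ and set $r = q/p > 1$. For any measurable $A\subset X$, H\"older's inequality yields
\[
\int_A |f_k - f|^p\, d\mu \leq \Vert f_k - f\Vert_{L^q}^p\, \mu(A)^{1-p/q} \leq M^p\, \mu(A)^{1-p/q},
\]
which provides uniform integrability of the family $\{|f_k-f|^p\}$ (here we use $\mu(X)<\infty$ so that applying the estimate with $A=X$ is legitimate too).

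Now let $\eps>0$. Since $\mu(X)<\infty$ and $f_k\to f$ $\mu$-a.e., Egorov's theorem gives a measurable set $A_\eps\subset X$ such that $\mu(X\setminus A_\eps)< \eps$ and $f_k\rightrightarrows f$ uniformly on $A_\eps$. Splitting the integral,
\[
\int_X |f_k - f|^p\, d\mu = \int_{A_\eps} |f_k - f|^p\, d\mu + \int_{X\setminus A_\eps} |f_k - f|^p\, d\mu \leq \mu(X)\sup_{A_\eps}|f_k - f|^p + M^p\, \eps^{1-p/q}.
\]
For $k$ large, the first term is smaller than $\eps$ by uniform convergence on $A_\eps$, and the second term is controlled by $M^p\eps^{1-p/q}$. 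Since $1-p/q > 0$ and $\eps>0$ was arbitrary, we conclude $\Vert f_k-f\Vert_{L^p}\to 0$.

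The only mild subtlety is justifying the uniform integrability estimate via H\"older; everything else is a direct application of Fatou and Egorov, so I do not expect a serious obstacle. The argument uses the finiteness of $\mu$ in two essential ways: to apply Egorov and to make $\mu(X\setminus A_\eps)^{1-p/q}$ small.
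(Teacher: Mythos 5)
Your proof is correct and uses the same core ingredients as the paper's — Fatou's lemma for the $L^q$ membership, then Egorov's theorem combined with a H\"older estimate on the small exceptional set to get $L^p$ convergence. The only difference is that you argue directly, letting $k\to\infty$ and then $\eps\to 0$, while the paper phrases the same estimate as a proof by contradiction on a hypothetical bad subsequence; this is purely a matter of presentation.
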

\begin{proof}
Let $M=\sup_k\Vert f_k\Vert_{L^q}$. It follows from Fatou's lemma that $f\in L^q$ and $\Vert f\Vert_{L^q}\leq M$. It remains to prove that $f_k\to f$ in $L^p$. Suppose to the contrary that there is $\eps>0$ and a subsequence such that $\Vert f_{k_\ell}-f\Vert_{L^p}>\eps$. According to Egorov's theorem, there is $E\subset X$, such that
$$
f_{k_\ell}\rightrightarrows f \text{ uniformly on $E$}
\qquad
\text{and}
\qquad
\mu(X\setminus E)<\Big(\frac{\eps}{4M}\Big)^{\frac{pq}{q-p}}.
$$
Minkowski's and H\"older's inequalities yield
\[
\begin{split}
\Vert f_{k_\ell}-f \Vert_{L^p}
&=
\Vert (f_{k_\ell}-f)\chi_E + (f_{k_\ell}-f)\chi_{X\setminus E}\Vert_{L^p}\\
&\leq 
\Bigg(\int_E |f_{k_\ell}-f|^p\, d\mu\Bigg)^{1/p} +
\Bigg(\int_{X\setminus E} |f_{k_\ell}-f|^q\, d\mu\Bigg)^{1/q}\mu(X\setminus E)^{\frac{q-p}{pq}}\\
&\leq 
\Bigg(\int_E |f_{k_\ell}-f|^p\, d\mu\Bigg)^{1/p} +\frac{\eps}{2}<\eps,
\end{split}
\]
for sufficiently large $\ell$, which leads to a contradiction.
\end{proof}
\begin{lemma}
\label{T85}
Let $\phi\in C_c^\infty(\R^n)$. If
\begin{equation}
\label{eq100}
U(x)=\frac{1}{n\omega_n}\int_{\R^n}\frac{x-z}{|x-z|^n}\phi(z)\, dz,
\end{equation}
then $U\in C^\infty(\R^n)$ satisfies $\operatorname{div} U=\phi$. Moreover, 
\begin{equation}
\label{eq106}
\Vert DU\Vert_{L^p(\R^n)}\leq C(n,p)\Vert\phi\Vert_{L^p(\R^n)}
\quad
\text{for all $1<p<\infty$.}
\end{equation}
\end{lemma}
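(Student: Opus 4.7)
The proof breaks naturally into three standard pieces. First, I would recognize $U$ as (minus) the gradient of a Newtonian potential. Let $\Phi$ denote the fundamental solution of $-\lap$ on $\R^n$ (so $\Phi(x) = \tfrac{1}{n(n-2)\omega_n}|x|^{2-n}$ for $n \geq 3$, with the analogous logarithmic expression when $n=2$). A direct computation gives
\[
-\nabla \Phi(y) = \frac{1}{n\omega_n}\,\frac{y}{|y|^n},
\]
so that $U = (-\nabla \Phi) * \phi$.

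Second, setting $u := \Phi * \phi$, I would show that $u \in C^\infty(\R^n)$, that $-\lap u = \phi$, and that $U = -\nabla u$. Smoothness of $u$ and the identity $U = -\nabla u$ follow by moving derivatives onto the smooth factor inside the convolution ($\Phi$ is locally integrable while $\phi \in C_c^\infty$, so $\nabla(\Phi*\phi) = \Phi * \nabla\phi$, and a second integration by parts inside the convolution produces $-\nabla\Phi * \phi$). The identity $-\lap u = \phi$ is the classical Newtonian-potential calculation: apply Green's identities on $\R^n \setminus B(x,\eps)$, use that $\Phi$ is harmonic away from the origin, and let $\eps \to 0$, controlling the surface term on $\partial B(x,\eps)$ with the normalization of $\Phi$ built so that $-\lap \Phi = \delta_0$. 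It then follows that $U \in C^\infty(\R^n)$ and $\operatorname{div} U = -\lap u = \phi$.

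Third, for the $L^p$ bound I would observe that each entry of $DU$ is, up to a sign, $\partial_i \partial_j u$, obtained from $\phi$ by convolution with the kernel $\partial_i \partial_j \Phi$. Away from the origin this kernel is smooth, homogeneous of degree $-n$, and has vanishing mean on the unit sphere (a routine consequence of the fact that $\Delta \Phi = 0$ off the origin); in other words, it is a Calder\'on-Zygmund convolution kernel. The classical Calder\'on-Zygmund theorem then yields
\[
\Vert \partial_i \partial_j u\Vert_{L^p(\R^n)} \leq C(n,p)\Vert \phi\Vert_{L^p(\R^n)}
\quad\text{for all } 1 < p < \infty,
\]
which is precisely \eqref{eq106}. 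The only non-elementary ingredient is this last invocation, and it is the expected main obstacle; but since it is entirely standard, the whole plan essentially reduces to the identification $U = -\nabla(\Phi * \phi)$ plus one quotation of the singular-integral estimate.
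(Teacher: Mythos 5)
Your proof is correct and follows essentially the same route as the paper's: identify $U$ as the gradient of the Newtonian potential $\Gamma*\phi$ (up to the sign convention built into the fundamental solution), use the Poisson equation to get $\operatorname{div}U=\phi$ and smoothness, and invoke the Calder\'on--Zygmund estimate to bound $\|D^2(\Gamma*\phi)\|_{L^p}$ by $\|\phi\|_{L^p}$. The paper simply cites this last step from Gilbarg--Trudinger rather than re-deriving the CZ kernel properties as you do, but the substance is identical.
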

\begin{proof}
Let $\Gamma$ be the fundamental solution of the Laplace equation. Then $\Gamma*\phi\in C^\infty(\R^n)$ solves the Poisson equation 
$\Delta(\Gamma*\phi)=\phi$ and it follows from basic Calder\'on-Zygmund estimates (see \cite[Theorem~9.9]{GilbargT}) that
$$
\Vert D^2(\Gamma*\phi)\Vert_{L^p(\R^n)}\leq C(n,p)\Vert\Delta(\Gamma*\phi)\Vert_{L^p(\R^n)}=\Vert\phi\Vert_{L^p(\R^n)},
\quad
\text{for } 1<p<\infty.
$$
Let $U:=D(\Gamma*\phi)=(D\Gamma)*\phi\in C^\infty(\R^n)$. Clearly $U$ satisfies $\operatorname{div}U=\phi$.
Since
$$
D\Gamma(x)=\frac{1}{n\omega_n}\frac{x}{|x|^n},
$$
$U$ is defined by \eqref{eq100}. We have
$\Vert DU\Vert_{L^p}=\Vert D^2(\Gamma*\phi)\Vert_{L^p}\lesssim\Vert\phi\Vert_{L^p}$.
\end{proof}
The next result is a consequence of Morrey's inequality, see \cite[Section~4.5.3]{EG}.
\begin{lemma}
\label{T86}
If $u\in W^{1,p}_{\rm loc}(\R^n)$, $n<p<\infty$, satisfies $Du\in L^p(\bbbr^n)$, then
$$
|u(y)-u(x)|\leq C(n,p)|x-y|^{1-\frac{n}{p}}\Vert Du\Vert_{L^p(\R^n)} 
\quad
\text{for all } x,y\in\R^n.
$$
\end{lemma}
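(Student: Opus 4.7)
Since $p>n$, the inequality itself will show that $u$ admits a continuous representative, and we can reduce to the smooth case by mollifying $u$ on compact sets and invoking the inequality for mollifications in conjunction with the local $L^p$ convergence of $Du_\eps \to Du$. So I will assume $u \in C^\infty(\R^n)$ with $Du \in L^p(\R^n)$ throughout.

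The first step is the classical radial estimate. For $x \in \R^n$, $r>0$, and any $z \in B(x,r)$, writing $z = x + s\omega$ with $\omega \in \Sph^{n-1}$ and $s \in (0,r)$, the fundamental theorem of calculus gives
\[
|u(z)-u(x)| \;\le\; \int_0^s |Du(x+\tau\omega)|\, d\tau \;\le\; \int_0^r |Du(x+\tau\omega)|\, d\tau.
\]
Multiplying by $s^{n-1}/\tau^{n-1}$ (bounded by $r^{n-1}/\tau^{n-1}$) inside the $\tau$-integral and integrating $s$ from $0$ to $r$ and $\omega$ over $\Sph^{n-1}$, a switch to Cartesian coordinates $y = x+\tau\omega$ yields
\[
\int_{B(x,r)} |u(z)-u(x)|\, dz \;\lesssim\; r\int_{B(x,r)} \frac{|Du(y)|}{|y-x|^{n-1}}\, dy.
\]
Applying H\"older's inequality with exponent $p$ to the right-hand side, and using that $\int_{B(x,r)} |y-x|^{-(n-1)p/(p-1)}\, dy$ is finite exactly because $p>n$ and equals a constant times $r^{n-(n-1)p/(p-1)}$, one obtains after dividing by $|B(x,r)|\approx r^n$ the key mean-value estimate
\begin{equation}\label{eq:morrey-mean}
\bigl|u(x)-u_{B(x,r)}\bigr| \;\le\; C(n,p)\, r^{1-n/p}\, \Vert Du\Vert_{L^p(B(x,r))}.
\end{equation}

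The second step concludes by comparison through a common ball. Given $x,y \in \R^n$ with $r := |x-y|>0$, set $W := B(x,2r) \cap B(y,2r)$, so $|W| \approx r^n$. By \eqref{eq:morrey-mean} applied at $x$ and at $y$ with radius $2r$,
\[
|u(x)-u_{B(x,2r)}| + |u(y)-u_{B(y,2r)}| \;\lesssim\; r^{1-n/p}\,\Vert Du\Vert_{L^p(\R^n)},
\]
while the two mean values over $B(x,2r)$ and $B(y,2r)$ differ from the common mean value $u_W$ by at most a constant times $|W|^{-1}\int_W |u-u_{B(x,2r)}|$ and $|W|^{-1}\int_W|u-u_{B(y,2r)}|$; since $|W|\approx |B(x,2r)|$, these are controlled by the same quantity (one enlarges the integration domain to $B(x,2r)$ or $B(y,2r)$ respectively and applies \eqref{eq:morrey-mean} one more time). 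Adding these bounds via the triangle inequality completes the proof.

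The only genuine work is the derivation of \eqref{eq:morrey-mean}; the crucial point where $p>n$ is used is the convergence of the integral of $|y-x|^{-(n-1)p/(p-1)}$ near $x$, which dictates the H\"older exponent $1-n/p$. I do not anticipate any serious obstacle beyond bookkeeping the constants in the spherical-to-Cartesian change of variables.
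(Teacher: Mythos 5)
The paper offers no proof of this lemma; it simply cites Morrey's inequality (Evans and Gariepy, Section 4.5.3). Your argument is in substance exactly the standard proof that one finds there: control $|u(x)-u_{B(x,r)}|$ by the Riesz potential $\int_{B(x,r)}|Du(y)|\,|y-x|^{1-n}\,dy$, apply H\"older's inequality where $p>n$ is precisely what makes $|y-x|^{(1-n)p/(p-1)}$ integrable near $x$ and yields the exponent $1-n/p$, and then compare $u(x)$ and $u(y)$ through the mean over a common ball of radius comparable to $|x-y|$. Your reduction to smooth $u$ is also fine: since $Du\in L^p(\R^n)$ globally, $Du_\eps=(Du)_\eps\to Du$ in $L^p(\R^n)$, and $u_\eps\to u$ at Lebesgue points, so the estimate for $u_\eps$ survives the limit and simultaneously exhibits a continuous (indeed H\"older) representative of $u$. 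So you are following the same route as the cited source.

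One bookkeeping slip worth fixing: after integrating the radial estimate over $B(x,r)$ in polar coordinates (with Jacobian $s^{n-1}\,ds\,d\omega$), the factor in front of the Riesz potential should be $r^n$, not $r$; you wrote
\[
\int_{B(x,r)}|u(z)-u(x)|\,dz \lesssim r\int_{B(x,r)}\frac{|Du(y)|}{|y-x|^{n-1}}\,dy,
\]
whereas the correct bound has $r^n$ in place of $r$ (the $s$-integral of $s^{n-1}$ over $(0,r)$ contributes $r^n/n$). As stated, dividing by $|B(x,r)|\approx r^n$ would produce a spurious factor $r^{1-n}$ and the key mean-value estimate would not follow. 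With the corrected power $r^n$, everything cancels as you intend and the rest of the argument (in particular the final comparison via the intersection $W=B(x,2r)\cap B(y,2r)$) is sound.
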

\begin{proof}[Proof of Theorem~\ref{T83}]
We will begin with the proof of (a).
Assume first that $f\in C^\infty(\partial\Omega;\R^n)$.
If $\alpha\in(0,1]$ is such that
\begin{equation}
\label{eq107}
\gamma>\frac{n-1}{n-1+\alpha},
\end{equation}
then for any $\kappa\in \Omega^{n-1}C_{\rm b}^{0,\alpha}(\R^n)$, \eqref{eq98} applied to $\cM=\partial\Omega$, yields the estimate
$$
\Bigg|\, \int_{\partial\Omega} f^*\kappa\Bigg|\lesssim
[\kappa]_{\alpha} \Vert f\Vert^{n-1+\alpha}_{C^{0,\gamma}}.
$$
Let $\phi\in C_c^\infty(\R^n)$ and let $U=(u_1,\ldots,u_n)\in C^\infty(\R^n)$ be the solution to $\operatorname{div} U=\phi$ as in Lemma~\ref{T85}. If
$$
\kappa=\sum_{i=1}^n(-1)^{i-1}u_i\, dy_1\wedge\ldots\wedge\widehat{dy_i}\wedge\ldots\wedge dy_n,
$$
then \eqref{eq8} yields
$$
\int_{\partial\Omega} f^*\kappa=\int_{\R^n}\operatorname{div} U(y)\,  w(f,y)\, dy=
\int_{R^n}\phi(y) w(f,y)\, dy,
$$
and hence
\begin{equation}
\label{eq1}
\Bigg| \int_{\R^n}\phi(y)w(f,y)\, dy\Bigg|\lesssim
[U]_{\alpha} \Vert f\Vert^{n-1+\alpha}_{C^{0,\gamma}}.
\end{equation}
Let $q$ be the H\"older conjugate to $p$. Since $1<p<\frac{n}{n-1}$, it follows that $n<q<\infty$.
Let $\alpha=1-\frac{n}{q}$ and note that \eqref{eq107} is satisfied. Lemmata~\ref{T86} and~\ref{T85} yield
$$
[U]_{C^{0,\alpha}(\R^n)}\lesssim\Vert DU\Vert_{L^q(\R^n)}\lesssim\Vert\phi\Vert_{L^q(\R^n)},
$$
and hence \eqref{eq1} implies
$$
\Bigg|\int_{\R^n}\phi(y)w(f,y)\, dy\Bigg|\lesssim\Vert\phi\Vert_{L^q}\Vert f\Vert_{C^{0,\gamma}}^{\frac{n}{p}}.
$$
Taking supremum over all $\phi$ with $\Vert\phi\Vert_{L^q}\leq 1$, proves the  estimate
$\Vert w(f,\cdot)\Vert_{L^p(\R^n)}\lesssim\Vert f\Vert_{C^{0,\gamma}}^{\frac{n}{p}}$, under the assumption that $f\in C^\infty$. That completes the proof of (a) for smooth mappings. 
The general case will be obtained by approximation. 

Thus, assume that $f\in C_{\rm b}^{0,\gamma}(\partial\Omega;\R^n)$. Let $f_k\in C^\infty(\partial\Omega;\R^n)$ be a sequence 
such that $f_k\rightrightarrows f$ uniformly and $\sup_k\Vert f_k\Vert_{C^{0,\gamma}}\lesssim\Vert f\Vert_{C^{0,\gamma}}$, see
Corollary~\ref{T71}. We have,
$$
\Vert w(f_k,\cdot)\Vert_{L^p}\lesssim\Vert f_k\Vert_{C^{0,\gamma}}^{\frac{n}{p}}\lesssim\Vert f\Vert_{C^{0,\gamma}}^{\frac{n}{p}},
$$
and it suffices to prove that $w(f_k,\cdot)\to w(f,\cdot)$ in $L^p(\R^n)$.

Since $w(f_k,y)\to w(f,y)$ pointwise for all $y\in\R^n\setminus f(\partial\Omega)$, it follows that $w(f_k,\cdot)\to w(f,\cdot)$ almost everywhere.

Fix $\tilde{p}$, such that 
$p<\tilde{p}<\frac{n\gamma}{n-1}$.
We proved the inequality in (a) for smooth mappings. Applying it to $f_k$ with $p$ replaced by $\tilde{p}$, we have
$$
\Vert w(f_k,\cdot)\Vert_{L^{\tilde{p}}}\lesssim
\Vert f_k\Vert_{C^{0,\gamma}}^{\frac{n}{\tilde{p}}}\lesssim
\Vert f\Vert_{C^{0,\gamma}}^{\frac{n}{\tilde{p}}}.
$$
Note that there is $R>0$ such that $w(f_k,y)=w(f,y)=0$ for all $k$ and all $y\in \R^n\setminus B_R$. Since $w(f_k,\cdot)\to w(f,\cdot)$ almost everywhere, and the sequence is bounded in $L^{\tilde{p}}$, it follows from Lemma~\ref{T84} that $w(f_k,\cdot)\to w(f,\cdot)$ in $L^p(B_R)$ and hence in $L^p(\R^n)$. That completes the proof of (a).

The proof of part (b) is essentially contained in estimates used in the proof of part (a). Let $\tilde{p}$, and $R$ be as above.

If $f_k\to f$ in $C_{\rm b}^{0,\gamma}$, then $w(f_k,\cdot)\to w(f,\cdot)$ almost everywhere. Since
$$
\Vert w(f_k,\cdot)\Vert_{L^{\tilde{p}}}\lesssim\sup_k\Vert f_k\Vert^{\frac{n}{\tilde{p}}}_{C^{0,{\gamma}}}<\infty,
$$
and $\tilde{p}>p$, it follows from Lemma~\ref{T84} that $w(f_k,\cdot)\to w(f,\cdot)$ in $L^p(B_R)$ and hence in $L^p(\R^n)$.
The proof is complete.
\end{proof}
Compare the next result with Corollary~\ref{T76}. 
\begin{theorem}
\label{T88}
Let $\Omega\subset\R^n$ be a bounded domain with smooth boundary and let $f\in C_{\rm b}^{0,\gamma}(\Omega;\R^n)$, $\gamma\in\big(\frac{n-1}{n},1\big]$. If 
$$
C^\infty(\overbar{\Omega};\R^n)\ni f^\eps\to f
\quad
\text{in $C^{0,\gamma'}$ for some $\frac{n-1}{n}<\gamma'\leq\gamma$,}
$$
then  for any $v\in L^\infty_{\rm loc}(\R^n)$, the limit
\begin{equation}
\label{eq109}
\int_\Omega (v\circ f)(x)J_f(x)\, dx:=
\lim_{\eps\to 0}
\int_\Omega (v\circ f^\eps)(x)J_{f^\eps}(x)\, dx
\end{equation}
exists and it does not depend on the choice of $\gamma'\in \left(\frac{n-1}{n},\gamma\right]$ and a smooth approximation $f^\eps$. Moreover,
\begin{equation}
\label{eq110}
\int_\Omega (v\circ f)(x)J_f(x)\, dx=\int_{\R^n} v(y) w(f,y)\, dy.
\end{equation}
\end{theorem}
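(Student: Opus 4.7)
The plan is to reduce the H\"older statement to the smooth case via Lemma~\ref{T43} and then exploit the $L^1$-continuity of the winding number provided by Theorem~\ref{T83}(b). The argument is essentially a limit passage; no new hard estimate is needed beyond what has already been established.

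First I would fix the data. Since $f^\eps \to f$ in $C^{0,\gamma'}(\overline{\Omega};\R^n)$, the uniform convergence forces the images $f^\eps(\overline{\Omega})$ to lie in a common bounded set $B \subset \R^n$. Consequently the supports of all the winding number functions $w(f^\eps,\cdot)$ and $w(f,\cdot)$ are contained in $B$, and $v\restriction_{B} \in L^\infty(B)$ because $v \in L^\infty_{\rm loc}(\R^n)$. Next, for each fixed $\eps$ the mapping $f^\eps \in C^\infty(\overline{\Omega};\R^n)$, so the classical change of variables formula (Lemma~\ref{T43}) gives
\begin{equation}
\label{eq:plan1}
\int_\Omega (v\circ f^\eps)(x)\, J_{f^\eps}(x)\, dx
= \int_{\R^n} v(y)\, w(f^\eps,y)\, dy.
\end{equation}

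Then I would pass to the limit on the right-hand side of \eqref{eq:plan1}. Restriction to $\partial\Omega$ is continuous from $C^{0,\gamma'}(\overline{\Omega})$ to $C^{0,\gamma'}(\partial\Omega)$, so $f^\eps|_{\partial\Omega} \to f|_{\partial\Omega}$ in $C^{0,\gamma'}(\partial\Omega;\R^n)$. Since $\gamma' > (n-1)/n$, Theorem~\ref{T83}(b) applied with $\gamma$ replaced by $\gamma'$ yields
$$
w(f^\eps,\cdot) \longrightarrow w(f,\cdot) \quad \text{in } L^1(\R^n).
$$
As all these functions vanish outside $B$ and $v$ is bounded on $B$,
$$
\left| \int_{\R^n} v(y)\big(w(f^\eps,y) - w(f,y)\big)\, dy \right|
\leq \Vert v\Vert_{L^\infty(B)}\, \Vert w(f^\eps,\cdot) - w(f,\cdot)\Vert_{L^1(\R^n)} \xrightarrow{\eps\to 0} 0.
$$
Combining this with \eqref{eq:plan1} gives both the existence of the limit defining $\int_\Omega (v\circ f)J_f\, dx$ and the identity \eqref{eq110}. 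Independence of the limit from the choice of $\gamma'\in(\tfrac{n-1}{n},\gamma]$ and of the approximating sequence is automatic: the right-hand side of \eqref{eq110} depends only on $f$, not on any approximation.

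There is essentially no serious obstacle; the only point to double-check is that Theorem~\ref{T83}(b) is applicable at the exponent $\gamma'$ rather than the original $\gamma$, which is allowed because the hypothesis $\gamma' > (n-1)/n$ is precisely the range of Theorem~\ref{T83}. The ability to absorb $v \in L^\infty_{\rm loc}$ (rather than the $L^\infty$ hypothesis of Lemma~\ref{T43}) is gained for free from the uniform support containment in $B$. Finally, I would remark that the proof in fact yields a quantitative bound
$$
\left| \int_\Omega (v\circ f)\, J_f\, dx \right|
\leq \Vert v\Vert_{L^\infty(B)} \Vert w(f,\cdot)\Vert_{L^1(\R^n)}
\lesssim \Vert v\Vert_{L^\infty(B)}\, \Vert f\Vert_{C^{0,\gamma}}^{n},
$$
using Theorem~\ref{T83}(a) with $p=1$, but this is a bonus rather than part of the statement.
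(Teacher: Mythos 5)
Your proof is correct and follows essentially the same route as the paper: apply the classical change of variables (Lemma~\ref{T43}) to each smooth $f^\eps$, then pass to the limit on the right-hand side using the $L^1$-continuity of the winding number from Theorem~\ref{T83}(b), noting that the winding numbers share a compact support on which $v$ is bounded. You spell out a couple of details the paper leaves implicit --- that Theorem~\ref{T83}(b) must be invoked at the exponent $\gamma'$ and that the restriction to $\partial\Omega$ is continuous --- but the argument is the same.
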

\begin{remark}
\label{R6}
The result is true for a larger class of functions $v$ due to higher integrability of the winding number, but we consider the case $v\in L^\infty_{\rm loc}$ only in order to avoid technical difficulties. This case is sufficient for most of the applications. For related results see also 
\cite[Proposition~6]{Conti}, \cite[Lemma~2.2]{LZ},
\cite[Proposition 7.1]{LP17}, and \cite[Lemma 3.1.]{LPS21}, \cite[Theorem~1.3]{ZustGAFA}.
\end{remark}
\begin{proof}
According to Lemma~\ref{T43} and Theorem~\ref{T83}(b),
$$
\int_\Omega (v\circ f^\eps)(x)J_{f^\eps}(x)\, dx=\int_{\R^n}v(y)w(f^\eps,y)\,dy
\to \int_{\R^n} v(y)w(f,y)\, dy,
\quad
\text{as $\eps\to 0$,}
$$
because $w(f^\eps,\cdot)\to w(f,\cdot)$ in $L^1$, the functions vanish outside a certain ball, and $v\in L^\infty$ on that ball. 
We proved that the limit \eqref{eq109} satisfies \eqref{eq110} and hence it does not depend on the choice of $\gamma'$ and approximation.
\end{proof}
\begin{theorem}
\label{T89}
Let $\Omega\subset\R^n$ be a bounded domain with smooth boundary and let $f\in C_{\rm b}^{0,\gamma}(\partial\Omega;\R^n)$, $\gamma\in \big(\frac{n-1}{n},1\big]$.
If 
$$
C^\infty(\partial\Omega;\R^n)\ni f^\eps\to f
\quad
\text{in $C_{\rm b}^{0,\gamma'}$ for some $\frac{n-1}{n}<\gamma'\leq\gamma$,}
$$
Then for any $\kappa\in \Omega^{n-1}W^{1,\infty}_{\rm loc}\big(\R^n\big)$, the limit
\begin{equation}
\label{eq111}
\int_{\partial\Omega} f^*\kappa:=\lim_{\eps\to 0} \int_{\partial\Omega} (f^\eps)^*\kappa
\end{equation}
exists and it does not depend on the choice of $\gamma'\in \left(\frac{n-1}{n},\gamma\right]$ and a smooth approximation $f^\eps$. Moreover,
\begin{equation}
\label{eq112}
\int_{\partial\Omega} f^*\kappa=\int_{\R^n} w(f,y)\, d\kappa(y).
\end{equation}
\end{theorem}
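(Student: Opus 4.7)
The plan is to reduce to \Cref{T87} by smooth approximation, and to pass to the limit on the right-hand side using \Cref{T83}(b). The only technicality is that \Cref{T87} requires $\kappa \in \Omega^{n-1}W^{1,\infty}(\R^n)$ \emph{globally}, whereas here we have only $\kappa \in W^{1,\infty}_{\rm loc}$; I will handle this with a compactly supported cutoff.

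First I would use uniform convergence $f^\eps \rightrightarrows f$ (implied by $C^{0,\gamma'}_{\rm b}$-convergence) to fix $R > 0$ such that $f^\eps(\partial\Omega) \cup f(\partial\Omega) \subset B_R$ for all sufficiently small $\eps$. The winding numbers $w(f^\eps, \cdot)$ and $w(f, \cdot)$ then vanish outside $B_R$. Pick $\phi \in C_c^\infty(\R^n)$ with $\phi \equiv 1$ on $B_R$, and set $\tilde\kappa := \phi\kappa$. Since $\kappa \in W^{1,\infty}_{\rm loc}$ has bounded Lipschitz coefficients on $\operatorname{supp}\phi$, we have $\tilde\kappa \in \Omega^{n-1}W^{1,\infty}(\R^n)$ with compact support. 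Moreover $\tilde\kappa = \kappa$ on $B_R$, which gives two useful identities: pointwise on $\partial\Omega$ one has $(f^\eps)^*\tilde\kappa = (f^\eps)^*\kappa$ (since the pullback at $x$ depends only on the value of the form at $f^\eps(x) \in B_R$), and $d\tilde\kappa = d\kappa$ on $B_R$ (because $d\phi = 0$ there).

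Next, \Cref{T87} applied to each smooth $f^\eps$ and the globally bounded form $\tilde\kappa$ yields
\[
\int_{\partial\Omega} (f^\eps)^*\kappa
= \int_{\partial\Omega} (f^\eps)^*\tilde\kappa
= \int_{\R^n} w(f^\eps, y)\, d\tilde\kappa(y)
= \int_{\R^n} w(f^\eps, y)\, d\kappa(y),
\]
where the last equality uses that $w(f^\eps, \cdot)$ is supported in $B_R$ and $d\tilde\kappa = d\kappa$ there. The key input is now \Cref{T83}(b): because $\gamma' > \frac{n-1}{n}$, we may take $p = 1 < \frac{n\gamma'}{n-1}$, so $w(f^\eps, \cdot) \to w(f, \cdot)$ in $L^1(\R^n)$. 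Combined with $d\kappa \in L^\infty(B_R)$ and the common compact support, this yields
\[
\int_{\R^n} w(f^\eps, y)\, d\kappa(y) \longrightarrow \int_{\R^n} w(f, y)\, d\kappa(y),
\]
which simultaneously establishes \eqref{eq111} and \eqref{eq112}, and gives independence from $\gamma'$ and from the smooth approximation (the limit is intrinsic to $f$ and $\kappa$).

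The main---really only---obstacle is the localization step reducing $W^{1,\infty}_{\rm loc}$ to $W^{1,\infty}$, which the cutoff $\phi$ dispatches; the rest follows routinely by juxtaposing the smooth Stokes identity of \Cref{T87} with the $L^1$-stability of the winding number from \Cref{T83}(b).
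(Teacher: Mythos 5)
Your proof is correct and follows essentially the same route as the paper's: apply \Cref{T87} (via \eqref{eq8}) to each smooth approximation $f^\eps$, then pass to the limit using the $L^1$-stability of the winding number from \Cref{T83}(b). Your explicit cutoff argument, reducing $\kappa\in \Omega^{n-1}W^{1,\infty}_{\rm loc}(\R^n)$ to a compactly supported $W^{1,\infty}$ form, makes precise a localization that the paper handles only implicitly (by remarking that $w(f^\eps,\cdot)$ and $w(f,\cdot)$ vanish outside a fixed ball on which the relevant divergence is bounded).
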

\begin{remark}
Comments in Remark~\ref{R6} apply here as well.
\end{remark}
\begin{proof}
According to Corollary~\ref{T87}, \eqref{eq8}, and Theorem~\ref{T83}(b),
$$
\int_{\partial\Omega} (f^\eps)^*\kappa=
\int_{\R^n}\operatorname{div}U(y)w(f^\eps,y)\, dy\stackrel{\eps\to 0}{\longrightarrow}
\int_{\R^n}\operatorname{div}U(y)w(f,y)\, dy=
\int_{\R^n}w(f,y)\, d\kappa(y), 
$$
because $w(f^\eps,\cdot)\to w(f,\cdot)$ in $L^1$, the functions vanish outside a certain ball and $\operatorname{div}U\in L^\infty$ on that ball.
We proved that the limit \eqref{eq111} satisfies \eqref{eq112} and hence it does not depend on the choice of $\gamma'$ and approximation.
\end{proof}

Applying Theorem~\ref{T89} to 
$\kappa=(-1)^{i-1}y_i\, dy_1\wedge\ldots\wedge\widehat{dy_i}\wedge\ldots\wedge dy_n$ yields
$u=x_i$ and we immediately obtain (cf.\ \cite[Lemma~2.3]{LZ}):
\begin{corollary}
\label{T25}
If $\Omega\subset\R^n$ is a bounded domain with smooth boundary and $f\in C_{\rm b}^{0,\gamma}(\partial\Omega;\R^n)$ for some $\gamma>\frac{n-1}{n}$, then
\begin{equation}
\label{eq127}
(-1)^{i-1}\int_{\partial\Omega} f_i\, df_1\wedge\ldots\wedge\widehat{df_i}\wedge\ldots\wedge df_n=
\int_{\R^n} w(f,y)\, dy
\quad
\text{for $i=1,2,\ldots,n$.}
\end{equation}
In particular we obtain the following generalization of Green's formula
\begin{equation}
\label{eq57}
\frac{1}{n}\int_{\partial\Omega} \sum_{i=1}^n (-1)^{i-1}f_i\, df_1\wedge\ldots\wedge\widehat{df_i}\wedge\ldots\wedge df_n=
\int_{\R^n}w(f,y)\, dy.
\end{equation}
\end{corollary}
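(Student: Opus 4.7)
The plan is to apply Theorem~\ref{T89} to the specific $(n-1)$-form
\[
\kappa_i := (-1)^{i-1}\, y_i\, dy_1\wedge\ldots\wedge\widehat{dy_i}\wedge\ldots\wedge dy_n \in \Omega^{n-1}W^{1,\infty}_{\rm loc}(\R^n),
\]
for each fixed $i\in\{1,\ldots,n\}$. Since the coefficient $y_i$ is smooth (in particular locally Lipschitz), $\kappa_i$ lies in the class of forms allowed by Theorem~\ref{T89}, so the left-hand side of \eqref{eq112} is a well-defined distributional pullback, and the right-hand side makes sense by Theorem~\ref{T83} together with $w(f,\cdot)$ being compactly supported.

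Next I would perform the two routine computations. First, the exterior derivative:
\[
d\kappa_i = (-1)^{i-1}\, dy_i\wedge dy_1\wedge\ldots\wedge\widehat{dy_i}\wedge\ldots\wedge dy_n = (-1)^{i-1}(-1)^{i-1}\, dy_1\wedge\ldots\wedge dy_n = dy_1\wedge\ldots\wedge dy_n,
\]
so that $\int_{\R^n} w(f,y)\, d\kappa_i(y) = \int_{\R^n} w(f,y)\, dy$. Second, the pullback formula at the level of smooth approximations (and hence, by the approximation statement in Theorem~\ref{T89}, at the level of the distributional pullback), gives
\[
f^*\kappa_i = (-1)^{i-1}\, f_i\, df_1\wedge\ldots\wedge\widehat{df_i}\wedge\ldots\wedge df_n.
\]
Combining these two computations with the identity \eqref{eq112} yields \eqref{eq127}.

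Finally, to obtain \eqref{eq57} I would sum the $n$ identities in \eqref{eq127} over $i=1,\ldots,n$ and divide by $n$; the right-hand side is the same for every $i$, so its average equals itself. There is no real obstacle here: all the analytic work (existence of the pullback and the identification with $\int w(f,\cdot)\, d\kappa$) was already done in Theorem~\ref{T89}, which in turn rested on the integrability of $w(f,\cdot)$ from Theorem~\ref{T83} and the extension/approximation machinery from Sections~\ref{pre} and~\ref{S6}. The only thing to check carefully is the sign bookkeeping in the exterior derivative and pullback, which is what motivates placing the factor $(-1)^{i-1}$ inside the definition of $\kappa_i$.
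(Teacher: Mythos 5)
Your proof is correct and follows exactly the paper's approach: the paper itself obtains Corollary~\ref{T25} by applying Theorem~\ref{T89} to the form $\kappa=(-1)^{i-1}y_i\,dy_1\wedge\ldots\wedge\widehat{dy_i}\wedge\ldots\wedge dy_n$, with the same sign check $d\kappa_i=dy_1\wedge\ldots\wedge dy_n$. Summing over $i$ and dividing by $n$ to get \eqref{eq57} is likewise the intended final step.
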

\begin{remark}
As in Theorem~\ref{T89}, the generalized integral on the left hand side of \eqref{eq127} is defined as the limit of integrals of smooth approximations
$$
\int_{\partial\Omega} f_i\, df_1\wedge\ldots\wedge\widehat{df_i}\wedge\ldots\wedge df_n=
\lim_{\eps\to 0}\int_{\partial\Omega} f_i^\eps\, df_1^\eps\wedge\ldots\wedge\widehat{df_i^\eps}\wedge\ldots\wedge df^\eps_n,
$$
where $C^\infty(\partial\Omega;\R^n)\ni f^\eps\to f$ in $C_{\rm b}^{0,\gamma'}$ for some $\gamma'\in (\frac{n-1}{n},\gamma]$.
\end{remark}
\begin{remark}
Note that the right hand side of \eqref{eq57} can be interpreted as the oriented volume enclosed by $f:\partial\Omega\to\R^n$. In particular, if $\partial\Omega$ is connected and $f$ is an embedding, the right hand side of \eqref{eq57} equals $\pm 1$ times the volume enclosed by the surface $f(\partial\Omega)$.
\end{remark}
The next result follows from  Theorem~\ref{T88} applied to $v=1$ and from  Corollary~\ref{T25}.
\begin{corollary}
\label{T93}
If $\Omega\subset\R^n$ is a bounded domain with smooth boundary and $f\in C_{\rm b}^{0,\gamma}(\Omega;\R^n)$ for some $\gamma>\frac{n-1}{n}$, then
$$
\int_\Omega df_1\wedge\ldots\wedge df_n=
\frac{1}{n}\int_{\partial\Omega} \sum_{i=1}^n (-1)^{i-1}f_i\, df_1\wedge\ldots\wedge\widehat{df_i}\wedge\ldots\wedge df_n=
\int_{\R^n}w(f,y)\, dy.
$$
\end{corollary}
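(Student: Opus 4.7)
}
The plan is to deduce both equalities by taking limits of the classical smooth identities along a single smooth approximation of $f$, with the two already-established results (Theorem~\ref{T88} and Corollary~\ref{T25}) doing all the analytic work.

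First I would fix a smooth approximation. Since $\Omega$ is a bounded smooth domain, Corollary~\ref{T71} (applied componentwise) produces $f^\eps\in C^\infty(\overbar\Omega;\R^n)$ with $\sup_\eps\|f^\eps\|_{C^{0,\gamma}}\lesssim\|f\|_{C^{0,\gamma}}$ and $f^\eps\rightrightarrows f$ uniformly. By Corollary~\ref{T70}, for any $\gamma'\in\big(\tfrac{n-1}{n},\gamma\big)$ we then have $f^\eps\to f$ in $C^{0,\gamma'}_{\rm b}(\overbar\Omega;\R^n)$, and consequently the traces satisfy $f^\eps|_{\partial\Omega}\to f|_{\partial\Omega}$ in $C^{0,\gamma'}_{\rm b}(\partial\Omega;\R^n)$. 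Thus a single sequence $f^\eps$ is simultaneously admissible in Theorem~\ref{T88} (on $\Omega$) and in Theorem~\ref{T89} / Corollary~\ref{T25} (on $\partial\Omega$).

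Next I would invoke the two smooth identities for each $f^\eps$. Lemma~\ref{T43} with $v\equiv 1$ gives
\begin{equation*}
\int_\Omega df_1^\eps\wedge\cdots\wedge df_n^\eps \;=\; \int_\Omega J_{f^\eps}(x)\,dx \;=\; \int_{\R^n} w(f^\eps,y)\,dy,
\end{equation*}
while applying Corollary~\ref{T44} with $u(y)=y_i$ and averaging over $i=1,\dots,n$ yields
\begin{equation*}
\frac{1}{n}\int_{\partial\Omega}\sum_{i=1}^n(-1)^{i-1}f_i^\eps\,df_1^\eps\wedge\cdots\wedge\widehat{df_i^\eps}\wedge\cdots\wedge df_n^\eps \;=\; \int_{\R^n} w(f^\eps,y)\,dy.
\end{equation*}
Both smooth identities are standard, so the only remaining task is to pass to the limit $\eps\to 0$.

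Finally I would take the limit term by term. Theorem~\ref{T88} with $v\equiv 1$ shows that the left-hand sides of the first display converge to $\int_\Omega df_1\wedge\cdots\wedge df_n$ and to $\int_{\R^n}w(f,y)\,dy$ respectively (the latter by Theorem~\ref{T83}(b), which gives $w(f^\eps,\cdot)\to w(f,\cdot)$ in $L^1$, using that all winding numbers vanish outside a fixed bounded set). For the boundary display, Theorem~\ref{T89} applied to each of the $n$ forms $\kappa_i=(-1)^{i-1}y_i\,dy_1\wedge\cdots\wedge\widehat{dy_i}\wedge\cdots\wedge dy_n\in\Omega^{n-1}W^{1,\infty}_{\rm loc}(\R^n)$ shows that each boundary integral converges to the corresponding generalized integral as defined for $f|_{\partial\Omega}$, and by Corollary~\ref{T25} the resulting average equals $\int_{\R^n}w(f,y)\,dy$. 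Combining the two limits yields both equalities of the corollary. There is no real obstacle here beyond the bookkeeping check that the approximation-defined integrals on $\Omega$ and on $\partial\Omega$ are independent of the chosen $f^\eps$, which is exactly the content of Theorems~\ref{T88} and~\ref{T89}; the substantive analytic estimates have already been absorbed into those theorems and into Theorem~\ref{T83}.
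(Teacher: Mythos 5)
Your argument is correct, but it redoes work that the paper's cited results already package. The paper's proof is a one-liner: Theorem~\ref{T88} with $v\equiv 1$ already \emph{states} that the generalized Jacobian integral over $\Omega$ equals $\int_{\R^n}w(f,y)\,dy$, and Corollary~\ref{T25} already \emph{states} that the averaged boundary integral equals $\int_{\R^n}w(f,y)\,dy$; chaining these two equalities gives Corollary~\ref{T93}. Your proposal instead unwinds both theorems: you construct $f^\eps$, reprove the smooth identities via Lemma~\ref{T43} and Corollary~\ref{T44}, and then invoke Theorem~\ref{T83}(b) to pass to the limit --- but that is literally the internal proof of Theorem~\ref{T88} and of Theorem~\ref{T89}/Corollary~\ref{T25}, not an application of them. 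In particular, the care you take to arrange a single approximating sequence $f^\eps$ admissible simultaneously on $\Omega$ and on $\partial\Omega$ is unnecessary: the independence-of-approximation clauses in Theorems~\ref{T88} and~\ref{T89} make the two generalized integrals well-defined objects in their own right, and both are already proven equal to the same quantity $\int_{\R^n}w(f,y)\,dy$, so no matching of approximations is needed. None of this is an error, and every step you write is sound; it is simply a longer route to the same place.
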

The special case when $n=2$ and $\Omega=\bbbb^2$, so $\partial\Omega=\bbbs^1$, was originally proved in \cite[Lemma~2.3]{LZ}.
We will write $\gamma$ instead of $f$ to be consistent with notation used in Section~\ref{S7}.
\begin{corollary}
\label{T91}
If $\gamma=(\gamma^x,\gamma^y):\mathbb{S}^1\to\mathbb{R}^2$ is $\alpha$-H\"older continuous for some $\alpha\in (\frac{1}{2},1]$, then
\begin{equation}
\label{eq187}
\frac{1}{2}\int_{\mathbb{S}^1}\gamma^x d\gamma^y-\gamma^y d\gamma^x=\int_{\mathbb{R}^2}w(\gamma,z)\, dz.
\end{equation}
\end{corollary}
We also have
\begin{corollary}
\label{T94}
If $f=(f^x,f^y):\bbbb^2\to\R^2$ is $\alpha$-H\"older continuous for some $\alpha\in (\frac{1}{2},1]$, then
$$
\int_{\bbbb^2} df^x\wedge df^y=\int_{\R^2} w(f,z)\, dz.
$$
\end{corollary}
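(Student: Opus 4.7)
The statement is the $n=2$, $\Omega=\bbbb^2$ instance of Corollary~\ref{T93}. So the plan is essentially to unwind that instantiation and check that the hypotheses are met.

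First I would verify the hypotheses. The unit disc $\bbbb^2 \subset \R^2$ is a bounded domain with smooth boundary $\partial\bbbb^2=\Sph^1$, so the assumption on $\Omega$ in Corollary~\ref{T93} is satisfied. For $n=2$, the threshold $\tfrac{n-1}{n}=\tfrac{1}{2}$, so the hypothesis $\alpha\in\big(\tfrac{1}{2},1\big]$ is exactly $\gamma>\tfrac{n-1}{n}$. By Corollary~\ref{T71} (and Proposition~\ref{T3}), we may fix a smooth approximation $C^\infty(\overbar{\bbbb^2};\R^2)\ni f^\eps\to f$ in $C_{\rm b}^{0,\gamma'}$ for some $\gamma'\in(\tfrac{1}{2},\alpha]$, so the distributional pullback integral $\int_{\bbbb^2}df^x\wedge df^y$ is well defined by Corollary~\ref{T72} (equivalently Theorem~\ref{T79} with $\tau\equiv 1$).

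Then I would simply invoke Corollary~\ref{T93} with $n=2$, $\Omega=\bbbb^2$ and $f=(f^x,f^y)$. The first equality there reads
\[
\int_{\bbbb^2}df^x\wedge df^y
=\frac{1}{2}\int_{\Sph^1}\big(f^x\, df^y-f^y\, df^x\big),
\]
and the second equality reads
\[
\frac{1}{2}\int_{\Sph^1}\big(f^x\, df^y-f^y\, df^x\big)=\int_{\R^2}w(f,y)\,dy,
\]
which is exactly Corollary~\ref{T91} applied to the boundary trace $\gamma=f|_{\Sph^1}$ (which inherits the $\alpha$-H\"older regularity with $\alpha>\tfrac{1}{2}$). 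Chaining these two equalities yields the claim.

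There is no real obstacle here; the only thing worth double-checking is that the boundary integral on $\Sph^1$ obtained by Green-type integration of the interior integral (via Theorem~\ref{T88} / the proof of Corollary~\ref{T93}) coincides with the intrinsically defined generalized Stieltjes integral appearing in Corollary~\ref{T91}. This consistency is guaranteed because both are defined as limits $\int_{\Sph^1} (f^\eps)^x\, d(f^\eps)^y-(f^\eps)^y\, d(f^\eps)^x$ along the same class of smooth approximations of the common trace, and Theorem~\ref{T89} (together with Proposition~\ref{T66}) shows the limit is independent of the approximation. Thus Corollary~\ref{T94} drops out as a direct restatement of Corollary~\ref{T93} in the case $n=2$, $\Omega=\bbbb^2$.
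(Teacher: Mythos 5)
Your proposal is correct and coincides with the paper's (implicit) proof: Corollary~\ref{T94} is presented in the paper precisely as the $n=2$, $\Omega=\bbbb^2$ instance of Corollary~\ref{T93}, with the equality between the first and last terms of that chain giving the claim directly. The extra route through Corollary~\ref{T91} and the consistency check on the boundary integral are harmless but not needed, since Corollary~\ref{T93} already equates $\int_{\bbbb^2}df^x\wedge df^y$ with $\int_{\R^2}w(f,y)\,dy$ outright.
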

Both corollaries will be used in Section~\ref{S7}.

\section{Heisenberg groups}
\label{S7}
\subsection{Preliminaries}
In this section we collect basic definitions and facts from the theory of the Heisenberg groups.

\begin{definition}
The {\em Heisenberg group}  is a Lie group
$\bbbh^n=\bbbc^n\times\bbbr=\bbbr^{2n+1}$ equipped with the group law
\begin{align*}
(z,t)*(z',t')
&=
\left(z+z',t+t'+2\, {\rm Im}\,  \left(\sum_{j=1}^n z_j
\overline{z_j'}\right)\right)\\
&=
\left(x+x',y+y',t+t'+2\sum_{j=1}^n(y_jx_j'-x_jy_j')\right).
\end{align*}
Here and in what follows we will denote coordinates in $\R^{2n+1}$ by
$(x_1,y_1,\ldots,x_n,y_n,t)$.
Also, $t$ will be called the \emph{height} of the point $(z,t)$.
\end{definition}

Note also that if we identify elements $z$ and $z'$ with vectors in $\bbbr^{2n}$, then the
product in the Heisenberg group can be written in terms of the standard symplectic form, see \eqref{eq128}
$$
(z,t)*(z',t')=(z+z',t+t'-2\upomega(z,z')).
$$
A basis of left invariant vector fields is given 
at any point $(x_1,y_1,\dots,x_n,y_n,t) \in \bbbh^n$, by
$$
X_j=\frac{\partial}{\partial x_j}+ 2y_j\, \frac{\partial}{\partial t},
\quad
Y_j=\frac{\partial}{\partial y_j}- 2x_j\, \frac{\partial}{\partial t},
\quad
T=\frac{\partial}{\partial t}
$$
for $j=1,2,\dots,n$.
These left invariant vector fields determine the Lie algebra $\mathfrak{h}^n$. 
It is not hard to see that
$[X_j,Y_j]=-4T$ for $j=1\dots,n$
and all other commutators vanish.

\begin{definition}
The {\em Horizontal distribution} in the Heisenberg group
is a subbundle of the tangent bundle defined by 
$$
H_p \bbbh^n = \text{span} \left\{ X_1(p),Y_1(p),\dots,X_n(p),Y_n(p) \right\} \quad \text{for all } p \in \bbbh^n.
$$
\end{definition}
This is a smooth distribution of 
$2n$-dimensional subspaces in the 
$(2n+1)$-dimensional tangent space $T_p \bbbh^n = T_p \bbbr^{2n+1}$.

The next lemma is straightforward. It shows that the horizontal distribution in $\bbbh^n$ is nothing else but a standard contact structure on $\R^{2n+1}$.
\begin{lemma}
\label{T38}
The horizontal distribution $H\bbbh^n$ is the kernel of a standard contact form
$$
\upalpha = dt + 2 \sum_{j=1}^n (x_j \, dy_j - y_j \, dx_j).
$$
i.e. $H_p \bbbh^n = \operatorname{ker}\upalpha(p) \subset T_p\R^{2n+1}$ for all $p\in \R^{2n+1}$
\end{lemma}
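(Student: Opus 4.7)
The plan is to verify the equality pointwise by evaluating the 1-form $\upalpha$ on the basis vectors of the horizontal distribution, and then to use a dimension count to conclude.

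First, I will note that from the coordinate expression of $\upalpha$ one reads off the values on the coordinate vector fields:
\[
\upalpha\Big(\frac{\partial}{\partial x_j}\Big) = -2y_j,
\quad
\upalpha\Big(\frac{\partial}{\partial y_j}\Big) = 2x_j,
\quad
\upalpha\Big(\frac{\partial}{\partial t}\Big) = 1.
\]
Using linearity, I would then compute
\[
\upalpha(X_j) = \upalpha\Big(\tfrac{\partial}{\partial x_j}\Big) + 2y_j\,\upalpha\Big(\tfrac{\partial}{\partial t}\Big) = -2y_j + 2y_j = 0,
\]
and analogously
\[
\upalpha(Y_j) = \upalpha\Big(\tfrac{\partial}{\partial y_j}\Big) - 2x_j\,\upalpha\Big(\tfrac{\partial}{\partial t}\Big) = 2x_j - 2x_j = 0.
\]
Hence at every $p \in \bbbr^{2n+1}$, each of $X_1(p),Y_1(p),\ldots,X_n(p),Y_n(p)$ lies in $\ker\upalpha(p)$, and so $H_p\bbbh^n \subset \ker\upalpha(p)$.

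For the reverse inclusion I would argue by dimensions. Since $\upalpha(T) = 1 \neq 0$, the covector $\upalpha(p)$ is nonzero, so its kernel is a hyperplane of dimension $2n$ in $T_p\bbbr^{2n+1}$. On the other hand, the vectors $X_1(p),Y_1(p),\ldots,X_n(p),Y_n(p)$ are linearly independent (their projections to $T_p\bbbr^{2n} = \operatorname{span}\{\partial_{x_j},\partial_{y_j}\}$ along $\partial_t$ are precisely $\partial_{x_j},\partial_{y_j}$), so $H_p\bbbh^n$ has dimension $2n$. The inclusion $H_p\bbbh^n \subset \ker\upalpha(p)$ between two $2n$-dimensional subspaces is then an equality, which gives the lemma.

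There is no real obstacle here; the statement is a direct verification of a pairing together with a one-line dimension check. The only thing to be careful about is getting the signs right in the contraction of $\upalpha$ with $\partial/\partial x_j$ and $\partial/\partial y_j$, which is why I would carry out both computations explicitly rather than invoke symmetry.
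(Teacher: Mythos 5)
Your proof is correct, and since the paper itself declares \Cref{T38} ``straightforward'' and gives no argument, your computation is exactly the verification the authors had in mind: you check $\upalpha(X_j)=\upalpha(Y_j)=0$ to get one inclusion and close it with the dimension count $\dim H_p\bbbh^n = 2n = \dim\ker\upalpha(p)$. Nothing to add.
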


\begin{definition}
An absolutely continuous curve $\gamma:[a,b] \to \R^{2n+1}$ 
is a \emph{horizontal curve} if it is almost everywhere tangent to the horizontal distribution i.e.,
$$
\gamma'(s) \in H_{\gamma(s)} \bbbh^n \quad \text{for almost every } s \in [a,b].
$$
\end{definition}
It easily follows from Lemma~\ref{T38} that an absolutely continuous curve
$$
\gamma(s)= (\gamma^{x_1}(s),\gamma^{y_1}(s),\dots,\gamma^{x_n}(s),\gamma^{y_n}(s),\gamma^t(s))
$$
is horizontal if and only if
\begin{equation}
\label{eq59}
(\gamma^t)'(s) = 2\sum_{j=1}^n (\gamma^{y_j}(s)(\gamma^{x_j})'(s) - \gamma^{x_j}(s)(\gamma^{y_j})'(s)) \quad \text{ for almost every } s \in [a,b].
\end{equation}
Let $\gamma_j=(\gamma^{x_j},\gamma^{y_j})$ be the projection of $\gamma$ onto the $x_jy_j$-plane. Integrating \eqref{eq59} yields the following formula for the change of height $\gamma^t$ along the curve $\gamma$:
\begin{equation}
\label{eq60}
\begin{split}
\gamma^t(b)-\gamma^t(a)
&=
2\sum_{j=1}^n \int_a^b \gamma^{y_j}(s)(\gamma^{x_j}(s))'-\gamma^{x_j}(s)(\gamma^{y_j}(s))'\, ds\\
&=
-4\sum_{j=1}^n \Bigg(\frac{1}{2}\int_{\gamma_j} x_j\, dy_j-y_j\, dx_j\Bigg).
\end{split}    
\end{equation}
This formula has a nice geometric interpretation.

If $\gamma$ is a closed horizontal curve, $\gamma(a)=\gamma(b)$, then \eqref{eq60} along with Green's formula that is true for absolutely continuous curves (see, \eqref{eq187}), give
$$
\sum_{j=1}^n\int_{\R^2} w(\gamma_j,y)\, dy=0.
$$

That means, the sum of the oriented areas enclosed by the curves $\gamma_j$ equals zero.
\begin{definition}
\label{d6}
With an absolutely continuous curve $\eta:[a,b]\to\R^2$, that is not necessarily closed, we associate a closed curve $\bar{\eta}:[a-1,b+1]\to\R^2$ defined by
$$
\bar{\eta}(s)=
\begin{cases}
(s-a+1)\eta(a) & \text{if $a-1\leq s\leq a$,}\\
\eta(s)        & \text{if $a\leq s\leq b$,}\\
(b+1-s)\eta(b) & \text{if $b\leq s\leq b+1$.}
\end{cases}
$$
That is, we first follow along a straight line from the origin $0$ to $\eta(a)$, then, we follow the curve $\eta$ and then, we follow a straight line from $\eta(b)$ back to the origin $0$. 
\end{definition}
It is easy to see that
\begin{equation}
\label{eq115}
\frac{1}{2}\int_\eta x\, dy-y\, dx =
\frac{1}{2}\int_{\bar{\eta}} x\, dy-y\, dx=
\int_{\R^2} w(\bar{\eta}, y)\, dy
\end{equation}
equals the oriented area enclosed by the closed curve $\bar{\eta}$. This generalizes Green's formula to the case of non-closed curves.

Now, if $\gamma:[a,b]\to\R^{2n+1}$ is a horizontal curve that is not necessarily closed, then \eqref{eq60} and \eqref{eq115} yield a formula for the change of height
\begin{equation}
\label{eq62}    
\gamma^t(b)-\gamma^t(a)=-4\sum_{j=1}^n \int_{\R^2} w(\bar{\gamma}_j,y)\, dy.
\end{equation}

While the above results are straightforward consequences of the definition of a horizontal curve, we will see later that most of these results generalize to the case of $\alpha$-H\"older continuous curves $\gamma:[a,b]\to\bbbh^n$, provided $\alpha>1/2$.

Every Lie group can be equipped with a left invariant Riemannian metric. We equip the Heisenberg
group with the left invariant metric such that the vectors
$X_1(p), Y_1(p),\ldots,X_n(p),Y_n(p),T(p)$ are orthonormal at every point $p\in\bbbh^n$.
We denote this left invariant metric on $\bbbh^n$ by $g$.

\begin{lemma}
\label{T39}
Every two points in $\bbbh^n$ can be connected by a horizontal curve.
\end{lemma}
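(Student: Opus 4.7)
The plan is to reduce to the case where one of the points is the origin and then construct an explicit horizontal curve in two stages: first move along horizontal vectors to correct the $(x,y)$-projection, then perform a loop in a single $x_jy_j$-plane to correct the height coordinate $t$.

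First, since $\upalpha$ is a left-invariant $1$-form on $\bbbh^n$ (this is where the specific choice of contact form in Definition~\ref{def:alpha} matters), the horizontal distribution $H\bbbh^n = \ker\upalpha$ is left-invariant. Hence, if $\gamma$ is horizontal joining $0$ to $q^{-1}*p$, then $L_q\circ\gamma$ is horizontal joining $q$ to $p$. It therefore suffices to show that every point $p=(x_1,y_1,\ldots,x_n,y_n,t)\in\bbbh^n$ can be joined to the origin by a horizontal curve.

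\textbf{Step 1: Correcting the horizontal coordinates.} Consider the straight line $\gamma_1:[0,1]\to\R^{2n+1}$ whose $xy$-projection is the line segment $s\mapsto (sx_1,sy_1,\ldots,sx_n,sy_n)$. I will define $\gamma_1^t$ so that \eqref{eq59} holds. For each $j$, on the straight-line projection $\gamma_{1,j}(s)=s(x_j,y_j)$ one computes $x_j\,dy_j-y_j\,dx_j = (sx_j)(y_j\,ds)-(sy_j)(x_j\,ds)=0$, so by \eqref{eq60} the height coordinate does not change. Setting $\gamma_1^t\equiv 0$ then gives a horizontal curve from $0$ to the point $p_0:=(x_1,y_1,\ldots,x_n,y_n,0)$.

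\textbf{Step 2: Correcting the height.} It remains to join $p_0$ to $p$ by a horizontal curve. I propose doing this by running a closed loop based at $p_0$ whose $xy$-projection is a circle traversed in the $x_1y_1$-plane, centered at $(x_1,y_1)$, and constant in all other $x_jy_j$-planes for $j\ge 2$. If the loop encloses signed area $A$ in the $x_1y_1$-plane, formula \eqref{eq60} (together with the standard Green formula for the smooth case, which is \eqref{eq57} in the Lipschitz setting) gives a height change of
\[
\Delta t = -4\left(\tfrac12\int_{\gamma_{2,1}} x_1\,dy_1 - y_1\,dx_1\right) = -4A.
\]
Choosing $A=-t/4$ (with radius $r=(|t|/(4\pi))^{1/2}$ and orientation determined by the sign of $t$) yields a height change of exactly $t$. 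Lifting this circle to a horizontal curve $\gamma_2$ starting at $p_0$ thus ends at $p$. The concatenation of $\gamma_1$ and $\gamma_2$ is then a horizontal curve from $0$ to $p$.

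There is no real obstacle here: the content of the lemma is that the components $(\gamma^{x_j},\gamma^{y_j})$ can be prescribed arbitrarily while the constraint \eqref{eq59} integrates to the area-type identity \eqref{eq60}, so in particular height changes can be generated freely by loops in any one coordinate plane. The only point to be verified is the simple algebra in Step 1 and the area computation in Step 2, both of which follow immediately from \eqref{eq60}.
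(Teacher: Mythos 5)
Your construction is correct and fills in exactly the ``direct construction of a piecewise smooth horizontal curve'' that the paper declines to carry out (the paper's ``proof'' is one sentence asserting the result is easy and citing \cite{HajlZ} for geodesics). The reduction by left-translation is justified since $\upalpha$ is dual to the left-invariant frame $X_j,Y_j,T$ and hence itself left-invariant; Step 1's radial segment correctly has zero height change by \eqref{eq59}; and Step 2 correctly uses \eqref{eq60} to generate an arbitrary height change by a planar loop of suitable signed area.

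One small slip in Step 2: the circle cannot be \emph{centered} at $(x_1,y_1)$, since a closed loop based at $p_0$ must have its $x_1y_1$-projection start and end at $(x_1,y_1)$, and a circle of positive radius does not pass through its own center. You want a circle of radius $r=(|t|/(4\pi))^{1/2}$ \emph{passing through} $(x_1,y_1)$ (centered, say, at $(x_1+r,y_1)$), traversed with the orientation dictated by $\sgn t$. The enclosed area and the resulting height change are unaffected, so this is purely a wording fix.
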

Indeed, it is not difficult to construct directly a piecewise smooth horizontal curve connecting any two points, or one can use an explicit construction of a geodesic. For a construction of a geodesic see e.g. \cite{HajlZ} and references therein.

\begin{definition}
Let $\gamma:[a,b] \to \bbbh^n$ be a horizontal curve and write
$$
\gamma'(t) = \sum_{j=1}^n a_j(t)X_j(\gamma(t)) + b_j(t)Y_j(\gamma(t)) \quad \text{ a.e. } t\in [a,b].
$$
The \emph{horizontal length} of $\gamma$
is defined as the length of $\gamma$ with  respect to the metric $g$
$$
\ell_H(\gamma) := \int_a^b |\gamma'(t)|_H \, dt
$$
where
$$
|\gamma'(t)|_H = \sqrt{\langle \gamma'(t),\gamma'(t) \rangle_g} = \sqrt{\sum_{j=1}^n (a_j(t))^2 + (b_j(t))^2}\, .
$$
\end{definition}
\begin{definition}
The \emph{Carnot-Carath\'{e}odory metric} in $\bbbh^n$
is defined for all $p,q\in \bbbh^n$ by
$$
d_{cc}(p,q) = \inf_{\gamma} \{ \ell_H(\gamma) \},
$$
where the infimum is taken over all horizontal curves
connecting $p$ and $q$.
\end{definition}
It follows Lemma~\ref{T39} that $d_{cc}(p,q)<\infty$ for all $p,q\in\bbbh^n$ and hence $d_{cc}$ is a metric in $\bbbh^n$.

The Carnot-Carath\'eodory metric is very far from being Euclidean. In some directions it is Euclidean, but in some other directions it is the square root of the Euclidean metric. In general, we have the following well known comparison of the metrics:
\begin{lemma}
\label{T40}
If $K \subset \bbbr^{2n+1}$ is compact,
then there is a constant $C = C(K) \geq 1$
such that
$$
C^{-1}|p-q| \leq d_{cc}(p,q) \leq C|p-q|^{1/2} 
\quad 
\text{for all $p,q \in K$.}
$$
\end{lemma}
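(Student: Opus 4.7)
The plan is to prove the two inequalities separately. The lower bound is a soft consequence of the boundedness of the horizontal vector fields on compact sets, while the upper bound requires an explicit horizontal curve construction combined with left-invariance of $d_{cc}$.

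For the upper bound $d_{cc}(p,q)\leq C|p-q|^{1/2}$, I would first use left-invariance of $d_{cc}$ (which holds because $X_j, Y_j$ and the Riemannian metric $g$ are left-invariant) to reduce to the case $p=0$: $d_{cc}(p,q)=d_{cc}(0,p^{-1}\ast q)$. A direct computation yields $p^{-1}\ast q=(z_q-z_p,\,t_q-t_p+2\upomega(z_p,z_q))$, and since $\upomega(z_p,z_p)=0$ by antisymmetry, $\upomega(z_p,z_q)=\upomega(z_p,z_q-z_p)$, so $|\upomega(z_p,z_q)|\leq |z_p|\,|z_q-z_p|\lesssim_K |p-q|$. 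Consequently $|p^{-1}\ast q|\lesssim_K |p-q|$, and it suffices to show $d_{cc}(0,(z,t))\leq C|(z,t)|^{1/2}$ for $(z,t)$ in a bounded subset of $\R^{2n+1}$. To this end I would concatenate two explicit horizontal curves: first, the straight line $\gamma_1(s)=(sz,0)$, $s\in[0,1]$, which is horizontal because the vertical integrand in \eqref{eq59} reduces to $2\sum_j (sy_j^z)(x_j^z)-(sx_j^z)(y_j^z)=0$ by antisymmetry, and whose horizontal length equals $|z|$; second, the horizontal lift based at $(z,0)$ of a circle of radius $r=\sqrt{|t|/(4\pi)}$ in the $(x_1,y_1)$-plane, with orientation chosen so that by \eqref{eq60} the signed enclosed area $\pm\pi r^2$ produces the required height increment $t$. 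The horizontal length of this lift equals the Euclidean length of the circle, $2\pi r=\sqrt{\pi|t|}$. Altogether $d_{cc}(0,(z,t))\leq |z|+\sqrt{\pi|t|}$, which on a bounded set is dominated by a constant multiple of $|(z,t)|^{1/2}$.

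For the lower bound $|p-q|\leq C\,d_{cc}(p,q)$, I would enlarge $K$ to a compact set $K'\supset K$ containing a Euclidean $\varepsilon_0$-neighborhood of $K$ for some $\varepsilon_0>0$. Setting $C_{K'}:=\max_{r\in K'}\max_j(|X_j(r)|,|Y_j(r)|)$ with $|\cdot|$ the Euclidean norm, any horizontal curve $\gamma$ contained in $K'$ with $\gamma'=\sum_j a_j X_j(\gamma)+b_j Y_j(\gamma)$ satisfies $|\gamma'|\leq \sqrt{2n}\,C_{K'}\,|\gamma'|_H$ by the Cauchy--Schwarz inequality, and integration gives $|p-q|\leq \ell_E(\gamma)\leq \sqrt{2n}\,C_{K'}\,\ell_H(\gamma)$. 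For $p,q\in K$ with $d_{cc}(p,q)<\varepsilon_0/(2\sqrt{2n}\,C_{K'})$, an almost-minimizing horizontal curve $\gamma$ from $p$ to $q$ with $\ell_H(\gamma)\leq 2d_{cc}(p,q)$ cannot escape $K'$: if $s^*$ were the first time $\gamma$ touches $\partial K'$, the above comparison applied to $\gamma|_{[0,s^*]}$ would yield $\varepsilon_0=|\gamma(s^*)-p|\leq \sqrt{2n}\,C_{K'}\,\ell_H(\gamma)<\varepsilon_0$, a contradiction. Hence $|p-q|\leq 2\sqrt{2n}\,C_{K'}\,d_{cc}(p,q)$. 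For the remaining regime $d_{cc}(p,q)\geq \varepsilon_0/(2\sqrt{2n}\,C_{K'})$, the estimate $|p-q|\leq \diam(K)\leq C'\,d_{cc}(p,q)$ holds trivially by enlarging the constant.

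The principal technical obstacle is confining the nearly minimizing horizontal curves to a prescribed compact neighborhood, which is exactly where the Euclidean-versus-horizontal length comparison is valid; this is handled by the smallness bootstrap above, which uses only the Euclidean boundedness of the horizontal vector fields and avoids any circular dependence on the upper bound. Everything else follows from the explicit coordinate expressions of $X_j, Y_j$ together with the height formula \eqref{eq60}.
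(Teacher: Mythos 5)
Your proof is correct, and it takes a genuinely different route from the paper. The paper does not give a self-contained proof of Lemma~\ref{T40}: it simply remarks that the lower bound ``follows from the definition'' and that the upper bound follows by combining the bi-Lipschitz equivalence $d_{cc}\approx d_K$ (Lemma~\ref{T41}, itself quoted without proof from \cite{CDPT}) with the explicit Kor\'anyi formula~\eqref{eq28}. Your argument avoids the Kor\'anyi metric entirely: for the upper bound you reduce by left-invariance of $d_{cc}$ to $p=0$ (using the antisymmetry of $\upomega$ to show $|p^{-1}\ast q|\lesssim_K|p-q|$) and then estimate $d_{cc}(0,(z,t))\leq |z|+\sqrt{\pi|t|}$ via two explicit horizontal curves, a straight segment and a lifted circle whose enclosed area, by~\eqref{eq60}, produces the required height increment; for the lower bound you compare Euclidean and horizontal length on a fixed compact $\eps_0$-fattening $K'$, which is legitimate because on $K'$ the vector fields $X_j,Y_j$ have bounded Euclidean norm, and then confine almost-minimizing curves to $K'$ by a first-exit-time bootstrap. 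What your route buys is self-containedness — no reference to~\eqref{eq28} or to the unproved Lemma~\ref{T41} — at the cost of a little more work, in particular the confinement argument, which is a genuine subtlety the paper's one-line remark elides (the length comparison $\ell_E(\gamma)\lesssim\ell_H(\gamma)$ is \emph{not} uniform over all of $\bbbh^n$, so one really must keep the competitor curves in a bounded set). What the paper's route buys is brevity, by delegating the hard content to the Kor\'anyi comparison; that is natural given that it uses $d_K$ throughout anyway. Both approaches are sound; yours is the more elementary and the one you would want if $d_K$ had not yet been introduced.
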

The lower bound directly follows from the definition of the $d_{cc}$ metric, while the upper bound can be concluded from Lemma~\ref{T41} and~\eqref{eq28}.

While the metric $d_{cc}$ is difficult to evaluate, there is a bi-Lipschitz equivalent metric that can be computed explicitly.

\begin{definition}
For any $(z,t) \in \bbbh^n$, the \emph{Kor\'{a}nyi norm} $\Vert \cdot \Vert_K$ is
$$
\Vert (z,t) \Vert_K := (|z|^4+t^2)^{1/4}
$$
and the \emph{Kor\'{a}nyi metric} $d_K$ is
$$
d_K(p,q) := \Vert q^{-1} * p \Vert_K.
$$
\end{definition}
While it is non-obvious, one can prove that 
$d_K$ satisfies the triangle inequality and therefore it is a metric, see \cite[p.~320]{KR}.
Moreover, it is bi-Lipschitz equivalent to the Carnot-Carath\'eodory metric,  
\cite[Section~2.1]{CDPT}:
\begin{lemma}
\label{T41}
$d_K$ is a metric. Moreover, the metrics
$d_{cc}$ and $d_K$ are bi-Lipschitz equivalent.
That is, there is a constant $C \geq 1$ such that
$$
C^{-1}d_K(p,q) \leq d_{cc}(p,q) \leq Cd_K(p,q) 
\quad 
\text{for all $p,q\in \bbbh^n$.}
$$
\end{lemma}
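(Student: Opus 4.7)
My plan is to split the statement into two parts: first establish the triangle inequality for $d_K$ (so that $d_K$ is indeed a metric, symmetry and positivity being immediate from the definition), and then prove bi-Lipschitz equivalence with $d_{cc}$ by a homogeneity-and-compactness argument.

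For the triangle inequality I would reduce, via left-invariance of the Korányi norm $N(p) := \Vert p\Vert_K$, to the subadditivity statement $N(p*q) \le N(p) + N(q)$ for all $p,q\in\bbbh^n$. Writing $p=(z,t)$, $q=(z',t')$, this amounts to
\[
\bigl(|z+z'|^4 + (t+t'+2\operatorname{Im}\langle z,z'\rangle)^2\bigr)^{1/4} \le (|z|^4+t^2)^{1/4} + (|z'|^4+t'^2)^{1/4}.
\]
This is Cygan's classical inequality; the proof is a direct (though not entirely transparent) algebraic manipulation that writes both sides as moduli of suitable complex expressions and applies the ordinary triangle inequality in $\bbbc$. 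I would either carry out this computation or simply refer to \cite[Section~2.1]{CDPT}, as the Korányi norm satisfying the triangle inequality is a standard, self-contained fact.

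For the bi-Lipschitz equivalence, the two key observations are that both metrics are \emph{left-invariant} and \emph{1-homogeneous with respect to the Heisenberg dilations} $\delta_r(z,t) = (rz, r^2 t)$, $r>0$. Left-invariance of $d_K$ is immediate from the definition, while for $d_{cc}$ it follows from the left-invariance of the vector fields $X_j, Y_j$ (so left translations send horizontal curves to horizontal curves of the same length). Homogeneity is also straightforward: $\Vert\delta_r(z,t)\Vert_K = r\Vert(z,t)\Vert_K$ by direct computation, and $\delta_r$ sends a horizontal curve $\gamma$ to a horizontal curve whose horizontal length is $r\ell_H(\gamma)$, since the dilation acts by a factor of $r$ on the horizontal directions $X_j,Y_j$. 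Consequently it suffices to produce constants $c,C>0$ with
\[
c \le d_{cc}(0,p) \le C \qquad \text{for all } p \text{ with } \Vert p\Vert_K = 1,
\]
because left-invariance reduces the general comparison to the case $q=0$, and $\delta_{1/\Vert p\Vert_K}$ then rescales an arbitrary $p$ to the Korányi unit sphere.

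To obtain these constants I would invoke compactness. The Korányi unit sphere $S = \{p\in\bbbh^n:\Vert p\Vert_K=1\}$ is a compact subset of $\R^{2n+1}$ in the Euclidean topology. By Lemma~\ref{T39} together with Chow's connectivity argument (piecewise smooth horizontal curves can be constructed joining any two points with controlled length), the function $p\mapsto d_{cc}(0,p)$ is finite and continuous on $\bbbh^n$ in the Euclidean topology, and it vanishes only at $p=0$. Therefore it attains a positive minimum $c$ and a finite maximum $C$ on $S$, which gives the desired bi-Lipschitz bounds. The only real obstacle is the algebraic triangle inequality for $N$, which is genuinely a computation and not a soft argument; the remainder of the proof is a clean homogeneity-plus-compactness package that any left-invariant, dilation-homogeneous, continuous ``distance'' on $\bbbh^n$ would satisfy.
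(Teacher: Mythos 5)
The paper does not prove this lemma; it is stated with a citation to \cite[Section~2.1]{CDPT}, so there is no in-paper argument to compare against. Your proposal is the standard proof and it is correct. A few remarks on the details you would need to supply.

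For the triangle inequality, citing Cygan's inequality (equivalently, \cite[Section~2.1]{CDPT}) is the right move; the algebraic proof is short but genuinely computational, and there is no soft way around it.

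For the bi-Lipschitz equivalence, the reduction via left-invariance and $1$-homogeneity under $\delta_r$ to the compact Korányi unit sphere $S=\{\Vert p\Vert_K=1\}$ is correct, and your verification that $(\delta_r)_\ast X_j = r\, X_j\circ\delta_r$ (hence $\ell_H(\delta_r\circ\gamma)=r\,\ell_H(\gamma)$) is the right way to see homogeneity of $d_{cc}$. The one point where you should be explicit is the continuity of $p\mapsto d_{cc}(0,p)$ in the Euclidean topology, which you need in order to extract the positive minimum on $S$. This is \emph{not} available for free: you cannot invoke the upper bound $d_{cc}(p,q)\lesssim|p-q|^{1/2}$ of \Cref{T40}, because the paper derives that bound \emph{from} the present lemma, and using it here would be circular. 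You correctly gesture at the escape route — construct, for $p,q$ close, a concatenation of short horizontal segments (first adjust the $z$-coordinates along coordinate directions $X_j,Y_j$, then traverse a small horizontal loop to adjust $t$, using $[X_j,Y_j]=-4T$), with horizontal length tending to $0$ as $|p-q|\to 0$. Spelling out that construction, even crudely, is what makes ``continuity of $d_{cc}$'' honest; once that is in place, compactness of $S$ and the vanishing of $d_{cc}(0,\cdot)$ only at $0$ give the two constants, and homogeneity plus left-invariance propagate them to all of $\bbbh^n$.
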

In what follows, we will regard $\bbbh^n$ as a metric space with the Kor\'anyi metric $d_K$. 

One may easily check that for $p=(z,t)$ and $q=(z',t')$ we have
\begin{equation}
\begin{split}
\label{eq28}
d_K(p,q) = \Vert q^{-1}*p \Vert_K &= \left( |z-z'|^4 + \Bigg|t-t'+2\sum_{j=1}^n (x_j'y_j - x_jy_j')\Bigg|^2\, \right)^{1/4}\\
&\approx |z-z'|+\Bigg|t-t'+2\sum_{j=1}^n (x_j'y_j - x_jy_j')\Bigg|^{1/2}\, .
\end{split}
\end{equation}
It will also be convenient to use notation
\begin{equation}
\label{eq32}
d_K(p,q)=(|\pi(p)-\pi(q)|^4+|\varphi(p,q)|^2)^{1/4},
\end{equation}
where $\pi:\R^{2n+1}\to\R^{2n}$, $\pi(z,t)=z$ is the orthogonal projection and
\begin{equation}
\label{eq42}
\varphi(p,q)=t-t'+2\sum_{j=1}^n (x_j'y_j - x_jy_j').
\end{equation}
The Heisenberg group is equipped with dilations $\delta_r:\bbbh^n\to\bbbh^n$, $r\geq 0$, defined by
\begin{equation}
\label{eq147}
\delta_r(x,t)=(rx,r^2t).
\end{equation}
For $r>0$, the dilationas are group authomorphisms, but this fact will play no role here.
The following obvious observation is very useful
\begin{equation}
\label{eq142}
d_K(\delta_r p,\delta_r q)=rd_K(p,q)
\quad
\text{for all } p,q\in\bbbh^n \text{ and } r\geq 0.
\end{equation}

Note that $f\in C^{0,\gamma}(\Omega;\bbbh^n)$, $\Omega\subset\R^m$, means that 
\begin{equation}
\label{eq41}
[f]_{\gamma}=
\sup\left\{\frac{d_K(f(x),f(y))}{|x-y|^\gamma}:\, x,y\in\Omega,\ x\neq y\right\}<\infty.
\end{equation}
Recall that $C_{\rm b}^{0,\gamma}(\Omega;\bbbh^n)$ stands for a subspace of $C^{0,\gamma}(\Omega;\bbbh^n)$ that consists of mappings that are bounded as mappings into $\R^{2n+1}$.

If $f:\Omega\to\bbbh^n$, $\Omega\subset\R^m$, is Lipschitz continuous, then by Lemma~\ref{T40}, $f:\Omega\to\R^{2n+1}$ is locally Lipschitz continuous and hence differentiable a.e. It turns out that at every point $p$ of differentiability of $f$ (and hence almost everywhere), the derivative of $f$ maps the tangent space $T_p\R^m$ to the horizontal space  $H_{f(p)}\bbbh^n$, which is equivalent to the fact that $f^*\upalpha(p)=0$.
This is a consequence of the following more general result, see \cite[Proposition~8.1]{BHW}.
\begin{proposition}
\label{T42}
Suppose that a mapping
$f=(f^{x_1},f^{y_1},\ldots,f^{x_n},f^{y_n},f^t):\Omega\to\bbbh^n$, defined on an open set $\Omega\subset\R^m$
is of class $C^{0,\frac{1}{2}+}_{\rm loc}(\Omega;\bbbh^n)$. If the components
$f^{x_j}, f^{y_j}$ are differentiable at $p_0\in\Omega$ for $j=1,2,\ldots,n$, then
the last component $f^t$ is also differentiable at $p_0$ and
$$
Df^t(p_0) = 2\sum_{j=1}^n
\left( f^{y_j}(p_0)Df^{x_j}(p_0)-f^{x_j}(p_0)Df^{y_j}(p_0)\right)\, ,
$$
i.e. the image of the derivative $Df(p_0)$ lies in the horizontal space $H_{f(p_0)}\bbbh^n$.
\end{proposition}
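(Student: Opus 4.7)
The plan is to isolate the $t$-component of the relation implicit in the Korányi distance. From \eqref{eq28}, \eqref{eq32} and \eqref{eq42} we have $|\varphi(p',q')| \leq d_K(p',q')^2$ for all $p',q' \in \bbbh^n$. Applying this with $p' = f(p)$ and $q' = f(p_0)$ and rearranging gives the key identity
$$
f^t(p) - f^t(p_0) \;=\; -2\sum_{j=1}^n \bigl(f^{x_j}(p_0)\, f^{y_j}(p) - f^{x_j}(p)\, f^{y_j}(p_0)\bigr) + R(p),
$$
where the remainder $R(p) := \varphi(f(p), f(p_0))$ satisfies $|R(p)| \leq d_K(f(p), f(p_0))^2$.

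The essential use of the hypothesis $f \in C^{0,1/2+}_{\rm loc}$ is to conclude that $R(p) = o(|p-p_0|)$. Indeed, by Definition~\ref{d1} there is a function $\eps(t) \to 0$ as $t \to 0^+$ such that $d_K(f(p), f(p_0)) \leq \eps(|p-p_0|)\,|p-p_0|^{1/2}$ for $p$ close to $p_0$; squaring yields $|R(p)| \leq \eps(|p-p_0|)^2 |p-p_0| = o(|p-p_0|)$, as required. The strict inclusion $C^{0,1/2+} \subsetneq C^{0,1/2}$ is crucial here: at the critical exponent $1/2$ one would only obtain $R(p) = O(|p-p_0|)$, which is insufficient for first-order differentiability.

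It then remains to expand the bilinear sum using the assumed differentiability of each $f^{x_j}$ and $f^{y_j}$ at $p_0$. Writing $f^{x_j}(p) = f^{x_j}(p_0) + Df^{x_j}(p_0)(p-p_0) + o(|p-p_0|)$, and similarly for $f^{y_j}$, a straightforward Leibniz-type computation gives
$$
f^{x_j}(p_0)\, f^{y_j}(p) - f^{x_j}(p)\, f^{y_j}(p_0) \;=\; \bigl[f^{x_j}(p_0)\, Df^{y_j}(p_0) - f^{y_j}(p_0)\, Df^{x_j}(p_0)\bigr](p-p_0) + o(|p-p_0|),
$$
since the constant cross-terms $f^{x_j}(p_0)f^{y_j}(p_0)$ cancel and the mixed $O(1)\cdot o(|p-p_0|)$ terms are absorbed into the error. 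Substituting back, summing over $j$, and multiplying by $-2$ produces exactly the stated formula for $Df^t(p_0)$ and simultaneously establishes differentiability of $f^t$ at $p_0$. The only nontrivial ingredient is the vertical-remainder estimate of the second paragraph; everything else is bookkeeping.
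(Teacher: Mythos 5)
Your proof is correct and takes essentially the same approach as the paper's: isolate the vertical remainder $\varphi(f(p),f(p_0))$, bound it by $d_K(f(p),f(p_0))^2 = o(|p-p_0|)$ using the $C^{0,1/2+}$ hypothesis, and then Leibniz-expand the bilinear sum using the differentiability of the horizontal components. The only cosmetic difference is that the paper packages the $o(|p-p_0|)$ estimate via a concave modulus of continuity (through \Cref{T98}) rather than writing it directly from \Cref{d1}, but the content is identical.
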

\begin{remark}
Recall that the class $C^{0,\frac{1}{2}+}$ was defined in Definition~\ref{d1}.
\end{remark}
\begin{proof}
From the assumptions about $f$, there is a 
modulus of continuity $\omega$ such that
for all $p$ in a neighborhood of $p_0$ we have
$d_{\rm{K}}(f(p),f(p_0))\leq |p-p_0|^{1/2}\omega(|p-p_0|)$.
Indeed, if $E$ is a compact closure of a neighborhood of $p_0$ and 
$$
\omega_1(t):=\sup\Bigg\{\frac{d_K(f(p),f(q))}{|p-q|^{1/2}}:\, p,q\in E,\ 0<|p-q|\leq t\Bigg\},
$$
then existence of a modulus of continuity $\omega$ follows from Lemma~\ref{T98}.

Therefore, \eqref{eq28} yields
\[
\begin{split}
&\left|
f^t(p)-f^t(p_0)-2\sum_{j=1}^n \big(f^{y_j}(p_0)(f^{x_j}(p)-f^{x_j}(p_0))-f^{x_j}(p_0)(f^{y_j}(p)-f^{y_j}(p_0))\big)
\right|\\
&=
\left|f^t(p)-f^t(p_0) + 2\sum_{j=1}^n
\big( f^{x_j}(p_0)f^{y_j}(p) - f^{x_j}(p)f^{y_j}(p_0)\big)\right|\\ 
& \leq 
d_K(f(p),f(p_0))^2\leq
|p-p_0|\omega^2(|p-p_0|),
\end{split}
\]
and hence
\begin{eqnarray*}
\lefteqn{\left| f^t(p)-f^t(p_0) - 
2\sum_{j=1}^n \big(f^{y_j}(p_0)Df^{x_j}(p_0) - f^{x_j}(p_0)Df^{y_j}(p_0)\big)(p-p_0)\right|} \\
& \leq &
|p-p_0|\omega^2(|p-p_0|) \\ 
& + &
2\sum_{j=1}^n |f^{y_j}(p_0)|\, |f^{x_j}(p)-f^{x_j}(p_0)-Df^{x_j}(p_0)(p-p_0)|\\
& + &
2\sum_{j=1}^n |f^{x_j}(p_0)|\, |f^{y_j}(p)-f^{y_j}(p_0)-Df^{y_j}(p_0)(p-p_0)|\\
& = &
o(|p-p_0|),
\end{eqnarray*}
because the functions $f^{x_j}$ and $f^{y_j}$ are differentiable at $p_0$. The proof is complete.
\end{proof}
It follows from Proposition~\ref{T42} and the Rademacher theorem, that at almost every point, the derivative of a Lipschitz map $f:\Omega\to\bbbh^n$, $\Omega\subset\bbbr^m$, maps the tangent space to a horizontal subspace, and hence $\operatorname{rank} Df\leq 2n$ almost everywhere. However, a stronger result is true.
\begin{proposition}
\label{T110}
If $f:\Omega\to\bbbh^n$, $\Omega\subset\R^m$ is Lipschitz continuous, then $\operatorname{rank} Df\leq n$ a.e.
\end{proposition}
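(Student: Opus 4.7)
The plan is to derive the rank bound from two facts about the tangent map of $f$ at a.e.\ point: its image lies in the contact hyperplane, and after projecting to $\R^{2n}$ it is isotropic with respect to the symplectic form. By Lemma~\ref{T40}, a Lipschitz map into $(\bbbh^n, d_K)$ is locally Lipschitz into $\R^{2n+1}$, so by Rademacher's theorem $f$ is differentiable a.e.\ in $\Omega$. Since Lipschitz functions clearly lie in $C_{\rm loc}^{0,1/2+}$ (the seminorm $[f]_{\frac{1}{2},\eps,K}$ vanishes like $\eps^{1/2}$), Proposition~\ref{T42} applies and yields $\operatorname{Im}(Df(p_0)) \subset H_{f(p_0)}\bbbh^n$ at every differentiability point $p_0$; equivalently, $f^*\upalpha = 0$ a.e.\ on $\Omega$.

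First I would upgrade this to the pointwise statement $f^*(d\upalpha) = 0$ a.e., where the pullback is defined at every differentiability point by $f^*(d\upalpha)(p_0)(v,w) := d\upalpha(f(p_0))(Df(p_0)v,\, Df(p_0)w)$. I would mollify $f_\eps := f * \eta_\eps$ and invoke the classical identity $f_\eps^*(d\upalpha) = d(f_\eps^*\upalpha)$ for smooth maps. Since $f \in W_{\rm loc}^{1,\infty}$, we have $Df_\eps \to Df$ a.e.\ and in $L_{\rm loc}^p$ for every $p<\infty$, so $f_\eps^*\upalpha \to f^*\upalpha = 0$ in $L_{\rm loc}^p$, whence $d(f_\eps^*\upalpha) \to 0$ in the sense of distributions. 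On the other hand $f_\eps^*(d\upalpha)$, being bilinear in $Df_\eps$, converges pointwise a.e.\ and in $L_{\rm loc}^1$ to $f^*(d\upalpha)$ by dominated convergence (the derivatives are uniformly bounded). Equating the two limits in the distributional sense gives $f^*(d\upalpha) = 0$ a.e.

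With this in hand, the rank bound follows from the linear algebra of \Cref{symp}. Fix a differentiability point $p_0$ and set $V := \operatorname{Im}(Df(p_0))$. By Proposition~\ref{T42}, $V \subset H_{f(p_0)}\bbbh^n$, and the canonical projection $\pi: T_{f(p_0)}\R^{2n+1} \to \R^{2n}$ onto the first $2n$ coordinates restricts to a linear isomorphism on $H_{f(p_0)}\bbbh^n$, because the contact hyperplane is a graph over the $(x,y)$-plane. Under the identification \eqref{eq105}, $d\upalpha = 4\upomega$ as a constant form, so the vanishing $f^*(d\upalpha)(p_0) = 0$ asserts exactly that $\upomega(\pi Df(p_0) v,\, \pi Df(p_0) w) = 0$ for all tangent vectors $v,w$, i.e.\ $\pi(V)$ is an isotropic subspace of $(\R^{2n}, \upomega)$. \Cref{T62} then gives $\dim \pi(V) \leq n$, and since $\pi|_V$ is injective, $\rank Df(p_0) = \dim V = \dim \pi(V) \leq n$.

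The main obstacle is the second paragraph: making precise that $f^*\upalpha = 0$ a.e.\ does entail $f^*(d\upalpha) = 0$ a.e.\ for a merely Lipschitz $f$. The argument is standard but relies critically on $f \in W_{\rm loc}^{1,\infty}$ so that the mollifications control both $f$ and $Df$ well enough to justify commuting $d$ and $f^*$ in the limit; this is where the full strength of Lipschitz (rather than merely $C^{0,1/2+}$) regularity is used. The remaining steps are immediate consequences of \Cref{T42} and \Cref{T62}.
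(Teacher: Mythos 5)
Your proof is correct and follows essentially the same route as the paper: differentiate a.e.\ by Rademacher, apply Proposition~\ref{T42} to get $f^*\upalpha=0$ a.e., take the distributional exterior derivative to conclude $F^*\upomega=0$ a.e.\ where $F=\pi\circ f$, and then invoke Lemma~\ref{T62}. The one place you add value is in spelling out, via mollification and dominated convergence, why $d$ and $f^*$ can be interchanged in the distributional limit — a step the paper compresses into the phrase ``taking the distributional exterior derivative'' — and your final bookkeeping (injectivity of $\pi$ on the contact hyperplane) is a clean reformulation of the paper's observation that $Df^t$ is a pointwise linear combination of the $Df^{x_j},Df^{y_j}$.
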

\begin{proof}
Components of $f$ are locally Lipschitz continuous and
$df^t=2\sum_{j=1}^n(f^{y_j}df^{x_j}-f^{x_j}df^{y_j})$.
Taking the distributional exterior derivative yields
$\sum_{j=1}^n df^{x_j}\wedge df^{y_j}=0$.
That means $F^*\upomega=0$, where $\upomega$ is the standard symplectic form and
$F=(f^{x_1}, f^{y_1},\ldots,f^{x_n},f^{y_n}):\Omega\to\R^{2n}$.
Thus $DF$ maps $T_p\R^m$ into an isotropic subspace of $T_{f(p)}\R^{2n}$ so $\rank DF\leq n$ by Lemma~\ref{T62}. Since $Df^t$ is a linear combination of $Df^{x_j}$ and $Df^{y_j}$, $j=1,2,\ldots,n$, it follows that $\rank Df\leq n$.
\end{proof}

\section{H\"older continuous mappings into the Heisenberg group}
\label{HM}
\subsection{A characterization of H\"older continuous mappings into \texorpdfstring{$\bbbh^n$}{Hn}.}
\label{HM1}
A smooth mapping, $f:\Omega\to\R^{2n+1}$, defined on an open set $\Omega\subset\R^m$, is horizontal as a mapping into $\bbbh^n$, if and only if $f^*\upalpha=0$. The next result, generalizes this characterization to the case of H\"older continuous mappings.

Recall that $\pi:\R^{2n+1}\to\R^{2n}$, $\pi(z,t)=z$ is the projection onto the first $2n$ components.

Note that if $f:\Omega\to\R^{2n+1}$ is $\gamma$-H\"older continuous, then by Lemma~\ref{T40}, it is only (locally) $\gamma/2$-H\"older continuous as a mapping into $\bbbh^n$. In fact, $\gamma/2$-H\"older continuity is the best estimate we can get: the identity map $\operatorname{id}:\R^{2n+1}\to\R^{2n+1}$ is Lipschitz continuous ($\gamma=1$), but $\operatorname{id}:\R^{2n+1}\to\bbbh^n$ is only locally $1/2$-H\"older continuous, see also \cite[Proposition~3.1]{HMR}.

The next result characterizes maps $f:\Omega\to\R^{2n+1}$ that are $\gamma$-H\"older continuous as maps into $\bbbh^n$.
\begin{theorem}
\label{T64}
Let $f:\Omega\to\bbbh^n$ be a continuous map defined on an open set $\Omega\subset\R^m$, and let $1/2<\gamma\leq 1$. Then $f\in C^{0,\gamma}_{\rm loc}(\Omega;\bbbh^n)$ if and only if both of the conditions are satisfied
\begin{itemize}
\item[(a)] $\pi\circ f\in C_{\rm loc}^{0,\gamma}(\Omega;\R^{2n})$,
\item[(b)] $f^*_\eps\upalpha\rightrightarrows 0$ uniformly as $\eps\to 0^+$ on compact subsets of $\Omega$, where $f_\eps=f*\eta_\eps$ is a standard $\eps$-mollification of $f$ (Definition~\ref{d2}).
\end{itemize}
\end{theorem}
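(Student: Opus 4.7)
The plan is as follows.

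For the forward direction, suppose $f\in C^{0,\gamma}_{\rm loc}(\Omega;\bbbh^n)$. Condition (a) is immediate: \eqref{eq32} shows $|\pi(p)-\pi(q)|\leq d_K(p,q)$, so $|\pi(f(x))-\pi(f(y))|\leq d_K(f(x),f(y))\leq [f]_\gamma|x-y|^\gamma$ locally. For (b), I will in fact establish the sharper pointwise bound $\Vert f_\eps^*\upalpha\Vert_{\infty,K}\lesssim\eps^{2\gamma-1}$ on any compact $K\Subset\Omega$ (this is the content of the later \Cref{T45}), which goes to $0$ since $\gamma>\tfrac12$. The key input is that squaring $d_K(f(x-z),f(x))\leq[f]_\gamma|z|^\gamma$ gives, via \eqref{eq28} and \eqref{eq42},
$$
|E(z)|:=\Big|f^t(x-z)-f^t(x)+2\sum_j\big(f^{x_j}(x)f^{y_j}(x-z)-f^{x_j}(x-z)f^{y_j}(x)\big)\Big|\leq[f]_\gamma^2|z|^{2\gamma}.
$$
Writing $(f_\eps^*\upalpha)_i=\partial_if_\eps^t+2\sum_j(f_\eps^{x_j}\partial_if_\eps^{y_j}-f_\eps^{y_j}\partial_if_\eps^{x_j})$ and exploiting $\int\partial_i\eta_\eps=0$ to subtract the constant $f(x)$, a short algebraic rearrangement expresses $(f_\eps^*\upalpha)_i(x)$ as $\int E(z)\partial_i\eta_\eps(z)\,dz$ plus two commutator terms of the shape $(f_\eps^{x_j}-f^{x_j})\partial_if_\eps^{y_j}$. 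Using $|\partial_i\eta_\eps|\lesssim\eps^{-m-1}\chi_{B_\eps}$ together with the standard mollification estimates $|f_\eps^{x_j}-f^{x_j}|\lesssim[\pi\circ f]_\gamma\eps^\gamma$ and $|\partial_if_\eps^{y_j}|\lesssim[\pi\circ f]_\gamma\eps^{\gamma-1}$, each term is $O(\eps^{2\gamma-1})$.

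For the backward direction, assume (a) and (b). From \eqref{eq32} and (a) it suffices to prove $|\varphi(f(p),f(q))|\lesssim|p-q|^{2\gamma}$ locally. Fix $p,q$ close enough that the segment $\gamma(s)=q+s(p-q)$, $s\in[0,1]$, lies in some compact $K\Subset\Omega$, and set $\tilde F_\eps:=f_\eps\circ\gamma$. A direct expansion of $\int_0^1\tilde F_\eps^*\upalpha$ side by side with the definition of $\varphi(\tilde F_\eps(1),\tilde F_\eps(0))$ yields the identity
$$
\varphi(f_\eps(p),f_\eps(q))=\int_\gamma f_\eps^*\upalpha-2\sum_{j=1}^n\oint_{\Gamma_j^\eps}(x_j\,dy_j-y_j\,dx_j),
$$
where $\Gamma_j^\eps$ is the closed curve in $\R^2$ built by following the projection $s\mapsto(f_\eps^{x_j}(\gamma(s)),f_\eps^{y_j}(\gamma(s)))$ and then returning along the straight segment to the starting point. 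The identity is verified by splitting $\oint_{\Gamma_j^\eps}$ into its two pieces: the curved piece contributes the $\int \tilde F_\eps^{x_j}(\tilde F_\eps^{y_j})'-\tilde F_\eps^{y_j}(\tilde F_\eps^{x_j})'$ integral, and a one-line calculation shows the straight-segment piece equals exactly the "boundary" bilinear term of $\varphi$.

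For the first term on the right, condition (b) gives $\big|\int_\gamma f_\eps^*\upalpha\big|\leq|p-q|\cdot\Vert f_\eps^*\upalpha\Vert_{\infty,K}\to 0$ as $\eps\to 0^+$. For the second, note that $\oint_{\Gamma_j^\eps}(x_j\,dy_j-y_j\,dx_j)$ is invariant under translations of $(f_\eps^{x_j},f_\eps^{y_j})$ by constants, so we may normalize $f_\eps^{x_j}(q)=f_\eps^{y_j}(q)=0$; with this normalization $\Vert\Gamma_j^\eps\Vert_\infty\lesssim[\Gamma_j^\eps]_\gamma$, and (a) together with the fact that mollification does not increase H\"older seminorms gives $[\Gamma_j^\eps]_\gamma\lesssim[\pi\circ f]_\gamma|p-q|^\gamma$ uniformly in $\eps$. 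Since $\gamma>\tfrac12$, the one-dimensional Young estimate (\Cref{T75}) applied to each smooth piece of $\Gamma_j^\eps$ yields
$$
\Big|\oint_{\Gamma_j^\eps}(x_j\,dy_j-y_j\,dx_j)\Big|\lesssim\Vert\Gamma_j^\eps\Vert_{C^{0,\gamma}}^2\lesssim[\pi\circ f]_\gamma^2|p-q|^{2\gamma}.
$$
Since $f$ is continuous, $f_\eps\to f$ uniformly on $K$ and so $\varphi(f_\eps(p),f_\eps(q))\to\varphi(f(p),f(q))$; letting $\eps\to 0$ in the identity closes the argument. The main obstacle is the algebraic identity of the previous paragraph: it is exactly what lets the \emph{qualitative} condition (b) combine with the Young-type $|p-q|^{2\gamma}$ bound (drawn purely from (a)) to produce the \emph{sharp} quantitative H\"older estimate in $d_K$.
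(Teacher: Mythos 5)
Your proof is correct and follows essentially the same route as the paper: the forward direction is the paper's \Cref{T45} (you use a single convolution plus commutator terms where the paper uses a double convolution, but the key estimate $|E(z)|=|\varphi(f(x-z),f(x))|\le d_K(f(x-z),f(x))^2$ is identical), and the backward direction is the paper's \Cref{T65}, merely repackaged: your closed-curve identity, after translating $(f_\eps^{x_j}(q),f_\eps^{y_j}(q))$ to the origin so the straight return segment contributes zero, is precisely the paper's algebraic decomposition of $\varphi(f_\eps(p),f_\eps(q))$ into $\int_0^1(f_\eps^*\upalpha)(p_s)(q-p)\,ds$ plus the quadratic integrals controlled by the Young inequality \eqref{eq113}. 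The only minor cosmetic improvement is that you pass to the $\limsup$ in $\eps$ directly rather than introducing the $p,q$-dependent index $i_{pq}$ as the paper does; both yield the same uniform bound $|\varphi(f(p),f(q))|\lesssim[\pi\circ f]_\gamma^2|p-q|^{2\gamma}$.
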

Both of the implications in Theorem~\ref{T64} will be proved as separate results. In fact these results will provide stronger estimates. The implication $\Rightarrow$ in Theorem~\ref{T64} follows from Lemmata~\ref{T40} and~\ref{T41} (this proves (a)) and from the following result \cite[Lemma~2.2]{HS23}: 
\begin{theorem}
\label{T45}
Suppose that
$f\in C^{0,\gamma}(\Omega;\bbbh^n)$, 
where  $\Omega\subset\R^m$ is open and
$0<\gamma\leq 1$.  If $B^m(x_o,2r)\subset\Omega$, then
$$
\Vert f^*_\eps\upalpha\Vert_{L^\infty(B^m(x_o,r))}\leq 
4^\gamma\Vert \nabla\eta\Vert_1[f]_{\gamma,2\eps,B^m(x_o,2r)}^2\,\eps^{2\gamma-1}
\quad
\text{for all $0<\eps<r$,}
$$
where $f_\eps=f*\eta_\eps$ is a standard $\eps$-mollification of $f$.
\end{theorem}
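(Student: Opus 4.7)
The plan is to derive one clean double-integral representation of $f_\eps^*\upalpha$ and then estimate it via the Heisenberg inequality $|\varphi(p,q)| \le d_K(p,q)^2$ read off \eqref{eq28} and \eqref{eq32}, where $\varphi$ is the quantity from \eqref{eq42}. The starting point is the pointwise expansion
$$
(f_\eps^*\upalpha)(e_i)(x) = \partial_i f_\eps^t(x) + 2\sum_{j=1}^n \bigl(f_\eps^{x_j}(x)\,\partial_i f_\eps^{y_j}(x) - f_\eps^{y_j}(x)\,\partial_i f_\eps^{x_j}(x)\bigr).
$$

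The first step is to turn each product on the right-hand side into a double integral. Writing $f_\eps^{x_j}(x) = \int f^{x_j}(x-w)\,\eta_\eps(w)\,dw$ and $\partial_i f_\eps^{y_j}(x) = \int f^{y_j}(x-z)\,\partial_i\eta_\eps(z)\,dz$, Fubini gives
$$
f_\eps^{x_j}(x)\,\partial_i f_\eps^{y_j}(x) - f_\eps^{y_j}(x)\,\partial_i f_\eps^{x_j}(x) = \iint \bigl[f^{x_j}(x-w)f^{y_j}(x-z) - f^{x_j}(x-z)f^{y_j}(x-w)\bigr]\,\eta_\eps(w)\,\partial_i\eta_\eps(z)\,dw\,dz.
$$
Setting $p=f(x-w)$, $q=f(x-z)$ in \eqref{eq42} yields the algebraic identity $2\sum_j(p^{x_j}q^{y_j} - q^{x_j}p^{y_j}) = p^t - q^t - \varphi(p,q)$. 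Summing the previous display over $j$ and substituting, the contribution of $f^t(x-w)$ vanishes by Fubini because $\int \partial_i\eta_\eps = 0$, while the contribution of $f^t(x-z)$ collapses to exactly $\partial_i f_\eps^t(x)$. After this miraculous cancellation with the $\partial_i f_\eps^t$ in the expansion of $(f_\eps^*\upalpha)(e_i)$, one arrives at
$$
(f_\eps^*\upalpha)(e_i)(x) = -\iint \varphi\bigl(f(x-w),\,f(x-z)\bigr)\,\eta_\eps(w)\,\partial_i\eta_\eps(z)\,dw\,dz,
$$
which by linearity in $v$ extends to arbitrary $v\in\R^m$ by replacing $\partial_i\eta_\eps(z)$ with $v\cdot\nabla\eta_\eps(z)$.

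From here the estimate is routine. For $x\in B^m(x_o,r)$ and $w,z$ in the support of $\eta_\eps$, both $x-w$ and $x-z$ lie in $B^m(x_o,2r)\subset\Omega$ and their mutual distance satisfies $|(x-w)-(x-z)| = |w-z| \le 2\eps$. Hence by \eqref{eq28}, \eqref{eq32}, \eqref{eq41}, and \eqref{eq129},
$$
|\varphi(f(x-w),f(x-z))| \le d_K(f(x-w),f(x-z))^2 \le [f]_{\gamma,2\eps,B^m(x_o,2r)}^2\,(2\eps)^{2\gamma} = 4^\gamma [f]_{\gamma,2\eps,B^m(x_o,2r)}^2\,\eps^{2\gamma}.
$$
Pulling this bound out of the double integral and using $\int \eta_\eps = 1$ together with $\int |v\cdot\nabla\eta_\eps(z)|\,dz \le |v|\,\eps^{-1}\|\nabla\eta\|_1$ gives the claimed inequality upon taking the supremum over unit vectors $v$ and over $x\in B^m(x_o,r)$.

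The hard part is spotting the algebraic rewriting that causes every $f^t$-term to drop out in step two; once one recognises that the Heisenberg contact form is precisely tuned so that the double mollification isolates the scalar $\varphi(f(x-w),f(x-z))$, everything else follows from the quadratic improvement $|\varphi|\le d_K^2$, which is what converts the naive exponent $\eps^{\gamma-1}$ into $\eps^{2\gamma-1}$ and forces the appearance of the critical threshold $\gamma=\tfrac{1}{2}$.
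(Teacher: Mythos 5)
Your proof is correct and takes essentially the same route as the paper: both arrive at the identical double-integral representation $(f_\eps^*\upalpha)(x) = \iint \varphi(f(x-z),f(x-w))\,\eta_\eps(w)\,\nabla\eta_\eps(z)\,dw\,dz$ and then apply the quadratic improvement $|\varphi(p,q)| \leq d_K(p,q)^2$. The only cosmetic difference is the route to that formula — you Fubini the product into a double integral and cancel the $f^t$-terms via the algebraic identity for $\varphi$, whereas the paper first subtracts $f^t_\eps(p)$ using $\int\nabla\eta_\eps = 0$ to get $\varphi(f(p-z),f_\eps(p))$ and then expands by affine linearity of $\varphi$ in its second argument; these are the same computation read in opposite directions.
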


The implication $\Leftarrow$ in Theorem~\ref{T64} follows from the next result. 
Note that by Proposition~\ref{T3}, condition (a) in Theorem~\ref{T64} implies that
$$
\sup_{\eps>0}\, [\pi\circ f_\eps]_{C^{0,\gamma}(K;\R^{2n})}<\infty
\quad
\text{if $K\subset\Omega$ is compact.}
$$
\begin{theorem}
\label{T65}
Suppose that $f:\Omega\to\R^{2n+1}$ is a continuous mapping defined on an open set $\Omega\subset\R^{m}$.
Let $1/2<\gamma\leq 1$.
Suppose that $f_i\in C^\infty(\Omega;\R^{2n+1})$ is a sequence of smooth mappings such that for every compact set $K\subset\Omega$:
\begin{itemize}
\item[(a)] $f_i\rightrightarrows f$ and $f_i^*\upalpha\rightrightarrows 0$ uniformly on $K$, 
\item[(b)] 
$
\sup_{i\in\bbbn}\, [\pi\circ f_i]_{C^{0,\gamma}(K;\R^{2n})}<\infty.
$
\end{itemize}
Then $f\in  C_{\rm loc}^{0,\gamma}(\Omega;\bbbh^n)$.
\end{theorem}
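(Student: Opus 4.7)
The plan is to control $d_K(f(x),f(y))$ for $x,y$ in a small convex ball $B\Subset\Omega$ via the comparison $d_K(p,q)\approx |\pi(p)-\pi(q)| + |\varphi(p,q)|^{1/2}$ implicit in \eqref{eq32}, with $\varphi$ as in \eqref{eq42}. The Euclidean piece $|\pi(f(x))-\pi(f(y))|$ is essentially free: assumption (b) combined with \Cref{T70} (using that $f_i\rightrightarrows f$ uniformly on compacts from (a)) gives $\pi\circ f\in C^{0,\gamma}(\overline{B})$ with $[\pi\circ f]_{\gamma}\leq L:=\limsup_i [\pi\circ f_i]_{C^{0,\gamma}(\overline{B})}<\infty$. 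The entire content of the proof is thus to show $|\varphi(f(x),f(y))|\lesssim|x-y|^{2\gamma}$.

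For each smooth $f_i$, I would first derive, via Green's theorem applied to each planar curve $\tilde\gamma_{j,i}:=(f_i^{x_j},f_i^{y_j})\circ\sigma$ (where $\sigma:[0,1]\to B$ parametrizes the segment from $x$ to $y$), the identity
\[
\varphi(f_i(x),f_i(y))\ =\ 4\sum_{j=1}^n A_{j,i}\ -\ \int_\sigma f_i^*\upalpha,
\]
where $A_{j,i}$ is the signed area bounded by $\tilde\gamma_{j,i}$ together with the straight segment closing it. This identity cleanly separates the two ingredients that should vanish or be small: the pullback of the contact form along the segment, and the areas of the planar projections.

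Next, I would pass to the limit $i\to\infty$ with $x,y$ fixed. The term $\int_\sigma f_i^*\upalpha$ is bounded by $\|f_i^*\upalpha\|_{L^\infty(\overline{B})}|x-y|$ and vanishes by~(a); its crude $|x-y|^1$ dependence (rather than $|x-y|^{2\gamma}$) is harmless because the limit is taken at each fixed pair $(x,y)$ before comparing exponents. For the areas, I would translate each $\tilde\gamma_{j,i}$ so that it starts at $0$ (this preserves $A_{j,i}$ and the Hölder seminorms of the coordinate functions) and express $A_{j,i}$ as $\tfrac{1}{2}\int_0^1(\eta^x_{j,i}\,d\eta^y_{j,i}-\eta^y_{j,i}\,d\eta^x_{j,i})$, with both integrands having the vanishing-endpoint property and seminorms $\leq[\pi\circ f_i]_{\gamma}|x-y|^\gamma$. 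Since $\gamma>1/2$, \Cref{T75} (estimate~\eqref{eq113}) applies with $\alpha=\beta=\gamma$ and yields $|A_{j,i}|\lesssim L^2|x-y|^{2\gamma}$, uniformly in large $i$. Passing to the limit and using continuity of $\varphi$ with $f_i(x)\to f(x)$, $f_i(y)\to f(y)$, gives the desired bound $|\varphi(f(x),f(y))|\lesssim L^2|x-y|^{2\gamma}$.

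The main obstacle is identifying the correct Young-integral structure of the area $A_{j,i}$: one must exploit the translation invariance of area to reduce to curves vanishing at an endpoint, which is precisely the setting in which estimate~\eqref{eq113} is available, and the hypothesis $\gamma>1/2$ enters solely (and necessarily) in this Young-type step. Everything else is routine: Green's theorem, an $L^\infty$ bound on a Riemann integral, and taking a limit of the continuous function $\varphi$.
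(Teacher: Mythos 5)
Your proposal is correct and follows essentially the same approach as the paper. The Green's-theorem identity $\varphi(f_i(x),f_i(y))=4\sum_j A_{j,i}-\int_\sigma f_i^*\upalpha$ is the same algebraic decomposition the paper obtains by inserting $(f_i^*\upalpha)(p_s)$ into the path-integral expansion of $\varphi(f_i(q),f_i(p))$ along the segment, and both arguments then bound the resulting Young integrals (your areas, the paper's $\int_0^1(f_i^{x_j}(p)-f_i^{x_j}(p_s))\frac{d}{ds}f_i^{y_j}(p_s)\,ds$, which are the same up to sign) via estimate~\eqref{eq113} and dispose of the contact-form term using the uniform convergence in hypothesis~(a).
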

\begin{proof}[Proof of Theorem~\ref{T45}]
Recall that $\varphi(p,q)$ was defined in \eqref{eq42}. 
Let $p\in B^m(x_o,r)$ and let $\eps\in (0,r)$.
In what follows we will identify $(f^*_\eps\upalpha)(p)$ with the vector (of equal length):
\begin{equation*}
\begin{split}
&(f^*_\eps\upalpha)(p)=\nabla f^t_\eps(p)+2\sum_{j=1}^n (f_\eps^{x_j}(p)\nabla f_\eps^{y_j}(p)-f_\eps^{y_j}(p)\nabla f_\eps^{x_j}(p))=\\
&
\eps^{-m-1}\int_{B_\eps}
\left( \big(f^t(p-z)-f_\eps^t(p)+
2\sum_{j=1}^n(f_\eps^{x_j}(p)f^{y_j}(p-z)-f_\eps^{y_j}(p)f^{x_j}(p-z)\big)\right)\nabla\eta
\left(\frac{z}{\eps}\right)\, dz\\
&=
\eps^{-m-1}\int_{B_\eps} \varphi(f(p-z),f_\eps(p))\nabla\eta\left(\frac{z}{\eps}\right)\, dz\\
&=
\eps^{-m-1}\int_{B_\eps}\int_{B_\eps} \varphi(f(p-z),f(p-w))\eta_\eps(w)\nabla\eta\left(\frac{z}{\eps}\right)\, dw\, dz.
\end{split}    
\end{equation*}
In the second equality we used the fact that
$\int_{B_\eps}f^t_\eps(p)\nabla\eta\left(\frac{z}{\eps}\right)\, dz=0$.
Easy verification of the last equality is left to the reader.
Note that \eqref{eq32} yields
$$
|\varphi(f(p-z),f(p-w))|\leq d_K(f(p-z),f(p-w))^2\leq [f]_{\gamma,2\eps,B^m(x_o,2r)}^2(2\eps)^{2\gamma}.
$$
Therefore,
$$
|(f_\eps^*\upalpha)(p)|\leq 
[f]_{\gamma,2\eps,B^m(x_o,2r)}^2(2\eps)^{2\gamma}\eps^{-1}\eps^{-m}\int_{B_\eps}\left|\nabla\eta\left(\frac{z}{\eps}\right)\right|\, dz=
4^\gamma\Vert\nabla\eta\Vert_1[f]^2_{\gamma,2\eps,B^m(x_o,2r)}\eps^{2\gamma-1}.
$$
The proof is complete.
\end{proof}

\begin{proof}[Proof of Theorem~\ref{T65}]
Given a closed ball $\bar{B}\Subset\Omega$, it suffices to prove that there is $C>0$ 
such that for any $p,q\in\bar{B}$, $p\neq q$, there is 
$i_{pq}\in\bbbn$ such that
\begin{equation}
\label{eq43}
d_K(f_i(q),f_i(p))\leq C|q-p|^\gamma
\quad
\text{for all $i\geq i_{pq}$.}
\end{equation}
Note that in estimate \eqref{eq43}, $i_{pq}$ is allowed to depend on the choice of $p$ and $q$. However, the constant $C$ is independent of $p,q\in\bar{B}$, but it depends on $\bar{B}$.

Once we prove \eqref{eq43}, the $\gamma$-H\"older continuity of $f:\bar{B}\to \bbbh^n$ will follow upon passing to the limit as $i\to\infty$.

Recall that (see \eqref{eq32})
\begin{equation}
\label{eq45}
d_K(f_i(q),f_i(p))\approx |\pi(f_i(q))-\pi(f_i(p))|+
|\varphi(f_i(q),f_i(p))|^{1/2}.
\end{equation}
Clearly,
\begin{equation}
\label{eq46}
|\pi(f_i(q))-\pi(f_i(p))|\leq  C|q-p|^\gamma
\quad
\text{for all $p,q\in\bar{B}$ and all $i$},
\end{equation}
by the assumption (b).
It remains to estimate $|\varphi(f_i(q),f_i(p))|$. With the notation
$$
f_i=(f_i^{x_1},f_i^{y_1},\ldots,f_i^{x_n},f_i^{y_n},f_i^t)
\quad
\text{and}
\quad
p_s=p+s(q-p),
$$
we have
\begin{equation*}
\begin{split}
&\varphi(f_i(q),f_i(p)) 
=
f_i^t(q)-f_i^t(p)+2\sum_{j=1}^n\left(f_i^{x_j}(p)f_i^{y_j}(q)-f_i^{x_j}(q)f_i^{y_j}(p)\right)\\
&=
f_i^t(q)-f_i^t(p)+2\sum_{j=1}^n \left(f_i^{x_j}(p)(f_i^{y_j}(q)-f_i^{y_j}(p))-f_i^{y_j}(p)(f_i^{x_j}(q)-f_i^{x_j}(p))\right)\\
&=
\int_0^1 df_i^t(p_s)(q-p)\, ds+
2\sum_{j=1}^n \int_0^1 (f_i^{x_j}(p)df_i^{y_j}(p_s)-f_i^{y_j}(p)df_i^{x_j}(p_s))(q-p)\, ds=:\heartsuit.
\end{split}    
\end{equation*}
Since
$$
(f_i^*\upalpha)(p_s)=df_i^t(p_s)+2\sum_{j=1}^n (f_i^{x_j}(p_s)df_i^{y_j}(p_s)-f_i^{y_j}(p_s)df_i^{x_j}(p_s)),
$$
we have
\begin{equation*}
\begin{split}
\heartsuit
&=
\int_0^1 (f_i^*\upalpha)(p_s)(q-p)\, ds\\ 
&+
2\sum_{j=1}^n\int_0^1\big((f_i^{x_j}(p)-f_i^{x_j}(p_s))df_i^{y_j}(p_s)-(f^{y_j}_i(p)-f_i^{y_j}(p_s))df_i^{x_j}(p_s)\big)(q-p)\, ds.
\end{split}
\end{equation*}
Therefore,
\begin{equation}
\label{eq68}
\begin{split}
|\varphi(f_i(q),f_i(p))|
&\leq
\sup_{\bar{B}} |f_i^*\upalpha|\, |q-p|
+
2\sum_{j=1}^n\Bigg|\int_0^1 (f_i^{x_j}(p)-f_i^{x_j}(p_s))\frac{d}{ds}f_i^{y_j}(p_s)\, ds\Bigg|\\
&+
2\sum_{j=1}^n\Bigg|\int_0^1 (f_i^{y_j}(p)-f_i^{y_j}(p_s))\frac{d}{ds}f_i^{x_j}(p_s)\, ds\Bigg|.
\end{split}    
\end{equation}
The integrals in the last two sums can be estimated with the help of inequality \eqref{eq113} in Corollary~\ref{T75}.
To this end, we need to estimate the semi-norm $[\,\cdot\,]_{\gamma}$ i.e., the constant in the $\gamma$-H\"older continuity of
\begin{equation}
\label{eq44}
[0,1]\ni s\mapsto f_i^{x_j}(p_s)
\quad
\text{and}
\quad
[0,1]\ni s\mapsto f_i^{y_j}(p_s).
\end{equation}
Using \eqref{eq46}, we obtain that 
$$
|f_i^{x_j}(p_{s_1})-f_i^{x_j}(p_{s_2})|\lesssim 
|p_{s_1}-p_{s_2}|^\gamma=|s_1-s_2|^\gamma|q-p|^\gamma
\quad
\text{so}
\quad[f_i^{x_j}(p_s)]_{\gamma}\lesssim |q-p|^\gamma.
$$
Similar estimate holds for $f_i^{y_j}$. Since $\gamma>1/2$,  \eqref{eq113} yields 
\begin{equation}
\label{eq47}
|\varphi(f_i(q),f_i(p))|\leq
\sup_{\bar{B}} |f_i^*\upalpha|\, |q-p| +C|q-p|^{2\gamma}.
\end{equation}
Applying uniform convergence $f_i^*\upalpha\rightrightarrows 0$ on $\bar{B}$, 
given $p,q\in\bar{B}$, $p\neq q$, we can find $i_{pq}$ such that
\begin{equation}
\label{eq48}
\sup_{\bar{B}} |f_i^*\upalpha|\leq |q-p|^{2\gamma-1}
\quad
\text{for $i\geq i_{pq}$.}
\end{equation}
Note that $i_{pq}$ depends on the choice of $p$ and $q$.
In concert, \eqref{eq45}, \eqref{eq46}, \eqref{eq47} and \eqref{eq48} complete the proof of \eqref{eq43}.
\end{proof}

The next result, Theorem~\ref{T99}, characterizes H\"older continuous maps into $\bbbh^n$ as H\"older continuous maps into $\R^{2n+1}$ for which the pullback of the contact form $f^*\upalpha=0$ equals zero in the distributional sense.
We will precede the statement with some constructions needed in the proof.

Let $\kappa\in\Omega^k(\R^N)$, $k\in\{1,2,\ldots,m\}$ be a smooth $k$-form. If $\Omega\subset\R^m$ is open and $f\in C_{\rm loc}^{0,\gamma}(\Omega;\R^N)$, $\frac{k}{k+1}<\gamma\leq 1$, then the pullback $f^*\kappa$ is a well defined distribution (called also a $k$-current)
\begin{equation}
\label{eq130}
\langle f^*\kappa,\phi\rangle=\int_\Omega f^*\kappa\wedge\phi
\quad
\text{acting on } \phi\in \Omega_c^{m-k}(\Omega),
\end{equation}
see Corollary~\ref{T100}. In particular if $f\in C^{0,\gamma}_{\rm loc}(\Omega;\R^{2n+1})$, $\frac{1}{2}<\gamma\leq 1$, then $f^*\upalpha$ is a well defined distribution acting on $\Omega_c^{m-1}(\Omega)$.

If $f\in C^{0,\gamma}_{\rm loc}(\Omega;\bbbh^n)$, $\frac{1}{2}<\gamma\leq 1$, then 
$f\in C^{0,\gamma}_{\rm loc}(\Omega;\R^{2n+1})$ and $f^*\upalpha=0$ in the distributional sense \eqref{eq130}, because of Theorem~\ref{T64}(b).
The main objective of Theorem~\ref{T99} is the converse implication that $f^*\upalpha=0$ (which is seemingly weaker than the condition (b) in Theorem~\ref{T64}) along with the H\"older continuity into $\R^{2n+1}$, implies that $f\in C^{0,\gamma}_{\rm loc}(\Omega;\bbbh^n)$.

We also need to explain the notion of approximation of the distribution $f^*\upalpha$ by mollification.
If $\tau\in\Omega^1(\R^m)$, $\tau(p)=\sum_{j=1}^m\tau_j(p)\, dp_j$, then we define
$$
\tau_\eps(p):=\sum_{j=1}^m(\tau_j*\eta_\eps)(p)\, dp_j.
$$
It is easy to check that 
\[
\begin{split}
\tau_\eps(p)
&=
\sum_{j=1}^m (-1)^{j-1}\bigg(\int_{\R^m}
\eta_{\eps}(p-z)\tau(z)\wedge dz_1\wedge\ldots\wedge\widehat{dz_j}\wedge\ldots\wedge dz_m\bigg)\, dp_j\\
&=
\sum_{j=1}^m \bigg(\int_{\R^m}
\eta_{\eps}(p-z)\tau(z)\wedge (*dz_j)\bigg)\, dp_j.
\end{split} 
\]
We used here the Hodge star operator to abbreviate notation
$$
(-1)^{j-1}dz_1\wedge\ldots\wedge\widehat{dz_j}\wedge\ldots\wedge dz_m=*dz_j.
$$
Therefore, if $f\in C^{0,\gamma}_{\rm loc}(\Omega;\R^N)$, $\frac{1}{2}<\gamma\leq 1$ and $\kappa\in \Omega^1(\R^N)$, we define
\begin{equation}
\label{eq131}
(f^*\kappa)_\eps(p):=\sum_{j=1}^m \bigg(\int_{\R^m}
\eta_{\eps}(p-z)(f^*\kappa)(z)\wedge (*dz_j)\bigg)\, dp_j.
\end{equation}
Here, the integral in understood in the distributional sense and is defined through the approximations $f_\delta=f*\eta_\delta\to f$ as $\delta\to 0$.
Clearly, \eqref{eq131} is well defined for
$$
p\in \Omega_\eps:=\{p\in\Omega:\, \dist(p,\partial\Omega)>\eps\},
$$
because this guarantees that
$$
z\mapsto\eta_\eps(p-z)(*dz_j)=
(-1)^{j-1}\eta_\eps(p-z)dz_1\wedge\ldots\wedge\widehat{dz_j}\wedge\ldots\wedge dz_m\in \Omega_c^{m-1}(\Omega).
$$
Note that in the case of the contact form we have
$$
(f^*\upalpha)_\eps(p)=
df^t_\eps(p)+2\sum_{i=1}^n \big(f^{x_i}df^{y_i}-f^{y_i}df^{x_i}\big)_\eps(p),
$$
because $(df^t)_\eps=df^t_\eps$. Indeed, 
\[
\begin{split}
(df^t)_\eps(p)
&=
\lim_{\delta\to 0} \sum_{j=1}^m \bigg(\int_{\R^m}
\eta_{\eps}(p-z)d(f^t*\eta_\delta)(z)\wedge (*dz_j)\bigg)\, dp_j.\\
&=
\lim_{\delta\to 0} \sum_{j=1}^m \partial_j \big(f^t*\eta_\delta\big)*\eta_\eps(p)\, dp_j=
\lim_{\delta\to 0} \sum_{j=1}^m \partial_j \big(f^t*\eta_\eps\big)*\eta_\delta(p)\, dp_j=
df_\eps^t(p).
\end{split}
\]
The proof of the following result employs arguments similar to those used in the proof of easier (affirmative) direction of the Onsager conjecture \cite{CET94}.
\begin{theorem}
\label{T99}
Let $\frac{1}{2}<\gamma\leq 1$, let $\Omega\subset\R^m$ be open, and let $f\in C^{0,\gamma}_{\rm loc}(\Omega;\R^{2n+1})$. Then $f\in C^{0,\gamma}_{\rm loc}(\Omega;\bbbh^n)$ if and only if $f^*\upalpha=0$ in the distributional sense i.e., 
$$
\langle f^*\upalpha,\phi\rangle = \int_\Omega f^*\upalpha\wedge\phi=0
\quad
\text{for all } \phi\in\Omega_c^{m-1}(\Omega).
$$
\end{theorem}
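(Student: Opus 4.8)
\emph{Structure.} The plan is to prove the two implications separately; the forward one is immediate, the converse is the substance. If $f\in C^{0,\gamma}_{\rm loc}(\Omega;\bbbh^n)$, then $f\in C^{0,\gamma}_{\rm loc}(\Omega;\R^{2n+1})$ by \Cref{T40} and \Cref{T41}, and \Cref{T64}(b) (equivalently \Cref{T45}) gives $f^*_\eps\upalpha\rightrightarrows 0$ uniformly on compact subsets of $\Omega$; since $\langle f^*\upalpha,\phi\rangle=\lim_{\eps\to0}\int_\Omega f^*_\eps\upalpha\wedge\phi$ for every $\phi\in\Omega^{m-1}_c(\Omega)$ and the convergence is uniform on $\operatorname{supp}\phi$, the pairing vanishes. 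For the converse I would apply \Cref{T65} to the mollifications $f_i:=f_{\eps_i}=f*\eta_{\eps_i}$ along an arbitrary sequence $\eps_i\to0^+$: these are smooth, $f_{\eps_i}\rightrightarrows f$ on compacta because $f$ is continuous, and for every compact $K\subset\Omega$ and every compact neighborhood $K'\subset\Omega$ of $K$ one has $[\pi\circ f_{\eps_i}]_{C^{0,\gamma}(K;\R^{2n})}\le[\pi\circ f]_{C^{0,\gamma}(K';\R^{2n})}<\infty$ for $\eps_i$ small (mollification does not increase the H\"older seminorm, cf.\ \Cref{T3}). Thus hypothesis (b) and the first half of (a) in \Cref{T65} are automatic, and the whole problem reduces to showing that $f^*_\eps\upalpha\rightrightarrows0$ uniformly on compact subsets of $\Omega$ as $\eps\to0^+$.

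\emph{Using the hypothesis.} Here the assumption $f^*\upalpha=0$ does its work. For $p\in\Omega_\eps=\{p:\dist(p,\partial\Omega)>\eps\}$ the mollified current $(f^*\upalpha)_\eps(p)$ of \eqref{eq131} is just the value of the distribution $f^*\upalpha$ on the compactly supported test forms $\eta_\eps(p-\cdot)\,(*dz_j)$, so $f^*\upalpha=0$ forces $(f^*\upalpha)_\eps\equiv0$ on $\Omega_\eps$. Using $(df^t)_\eps=df^t_\eps$ and the identities displayed just before the statement, for $p$ in a fixed compact $K$ and $\eps$ small enough that $K\subset\Omega_\eps$,
\[
f^*_\eps\upalpha(p)=f^*_\eps\upalpha(p)-(f^*\upalpha)_\eps(p)
=2\sum_{i=1}^n\Big[\big(f^{x_i}_\eps\,df^{y_i}_\eps-(f^{x_i}\,df^{y_i})_\eps\big)(p)-\big(f^{y_i}_\eps\,df^{x_i}_\eps-(f^{y_i}\,df^{x_i})_\eps\big)(p)\Big];
\]
note that $f^t$ has cancelled, so only the $2n$ horizontal components of $f$ occur. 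Each summand is a Constantin--E--Titi commutator, and the inequality to be proved is: for real-valued $g,h\in C^{0,\gamma}_{\rm loc}(\Omega)$ with $\gamma>\tfrac12$ and a compact neighborhood $K'$ of $K$,
\[
\big\|\,g_\eps\,dh_\eps-(g\,dh)_\eps\,\big\|_{L^\infty(K)}\ \lesssim\ [g]_{\gamma,2\eps,K'}\,[h]_{\gamma,2\eps,K'}\,\eps^{2\gamma-1}\ \le\ [g]_{\gamma,K'}\,[h]_{\gamma,K'}\,\eps^{2\gamma-1}\,.
\]
Since $2\gamma-1>0$, the right-hand side tends to $0$; applying this with $g,h$ ranging over $\{f^{x_i},f^{y_i}\}$ (whose seminorms are all dominated by $[\pi\circ f]_{C^{0,\gamma}(K';\R^{2n})}$) gives $f^*_\eps\upalpha\rightrightarrows0$ on $K$, and \Cref{T65} then yields $f\in C^{0,\gamma}_{\rm loc}(\Omega;\bbbh^n)$.

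\emph{The main obstacle.} The technical heart is the commutator estimate above, and I expect this to be the only real work. I would prove it in the style of the opening computation of the proof of \Cref{T45}: for smooth mollifications write $g_\eps\,dh_\eps-(g\,dh)_\eps$ as a double convolution against $\eta_\eps$ and $(\nabla\eta)(\,\cdot\,/\eps)$, subtract constants (legitimate because $\int\nabla\eta=0$), and estimate the resulting ``cross'' kernel --- schematically $\big(g(p-w)-g(p-z)\big)\big(h(p-z)-h(p)\big)$ with $|z|,|w|\le\eps$ --- by $[g]_{\gamma,2\eps,K'}[h]_{\gamma,2\eps,K'}\eps^{2\gamma}$, the extra $\eps^{-1}$ coming from $(\nabla\eta)(\,\cdot\,/\eps)$. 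The delicate step is to pass to the limit along a Lipschitz approximation $g_\delta,h_\delta\to g,h$ and identify the limit with the current $(g\,dh)_\eps$ of \eqref{eq131}: the auxiliary terms that a priori involve derivatives of $g_\delta$ (and hence blow up as $\delta\to0$ when $\gamma<1$) must be shown to reorganize into convergent renormalized products, and it is exactly here that $\gamma>\tfrac12$ is indispensable --- it is the threshold (\Cref{T100}) under which $g\,dh$ is a well-defined current in the first place. As the authors note, this whole circle of estimates mirrors the argument for the affirmative direction of the Onsager conjecture in \cite{CET94}.
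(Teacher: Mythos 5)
Your architecture coincides with the paper's: reduce to $f_\eps^*\upalpha\rightrightarrows 0$ on compacta via \Cref{T64}/\Cref{T65}, use $(f^*\upalpha)_\eps\equiv 0$ on $\Omega_\eps$ together with $(df^t)_\eps=df^t_\eps$ to cancel the vertical component, and then win by a commutator estimate of size $\eps^{2\gamma-1}$. The target inequality you isolate, $\Vert g_\eps\,dh_\eps-(g\,dh)_\eps\Vert_{L^\infty(K)}\lesssim[g]_{\gamma,2\eps,K'}[h]_{\gamma,2\eps,K'}\eps^{2\gamma-1}$, is exactly what the paper proves. The forward implication and the verification of the hypotheses of \Cref{T65} for the mollifications are fine.

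The gap is in the proof of that commutator estimate, which you leave as a sketch with an acknowledged unresolved step, and the sketch as described would not close it. A pure cross-kernel (CET-style) bound only handles the piece $A(p)=(g_\eps(p)-g(p))\,dh_\eps(p)$, where both H\"older increments can be placed inside a double convolution against $\eta_\eps\otimes\nabla\eta_\eps$. The remaining piece, $B(p)=g(p)\,dh_\eps(p)-(g\,dh)_\eps(p)=\int\eta_\eps(p-z)\big(g(p)-g(z)\big)\,dh(z)\wedge(\ast dz_j)$, cannot be rewritten with the derivative on the kernel and both increments in the integrand: integrating by parts to move $d$ off $h$ produces $dg_\delta$-terms along any Lipschitz approximation, which blow up like $\delta^{\gamma-1}$ for fixed $\eps$ — this is precisely the ``renormalization'' you flag, and no amount of subtracting constants removes it. What closes it is not a kernel estimate but the bilinear compensation estimate already available in the paper: rescale $z=p-\eps\zeta$ to $\bbbb^m$, note $dh(p-\eps\zeta)=\eps^{-1}d\big(h(p-\eps\zeta)-h(p)\big)$, and apply \Cref{T35} with $k=1$ (in effect the Young estimate \Cref{T75}) to $\lambda=(g(p)-g(p-\eps\zeta))\eta(-\zeta)(\ast d\zeta_j)$ and $\gamma_1=h(p-\eps\zeta)-h(p)$, giving $|B(p)|\lesssim\eps^{-1}\cdot\eps^{\gamma}\cdot\eps^{\gamma}\,[g]_{\gamma,B(p,\eps)}[h]_{\gamma,B(p,\eps)}$. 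Since the distributional pairing in \Cref{T35} is by construction continuous under approximation in $C^{0,\gamma'}$, this also disposes of your limit-identification worry automatically; nothing needs to be ``reorganized into renormalized products.'' So the missing idea is the $A+B$ splitting together with the invocation of \Cref{T35}/\Cref{T75} for the $B$-term; with that inserted, your argument becomes the paper's proof.
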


\begin{remark}
Let us stress that for H\"older maps, if we replace $f^*\upalpha=0$ in the distributional sense, by the condition $f^*\upalpha=0$ in the pointwise a.e. sense, we get nothing. Indeed, for any $0<\gamma<1$, Salem \cite{salem}, constructed a surjective homeomorphism $g\in C^{0,\gamma}(\R;\R)$ that is differentiable a.e. and satisfies $g'=0$ a.e. Thus,
$$
f(x_1,y_1,\ldots,x_n,y_n,t)=(g(x_1),g(y_1),\ldots,g(x_n),g(y_n),g(t)),
\quad
f\in C^{0,\gamma}(\R^{2n+1};\R^{2n+1})
$$
is a surjective homeomorphism that is differentiable a.e. and satisfies $Df=0$ a.e. Hence, $f^*\upalpha=0$ a.e. On the other hand $f\not\in C^{0,\frac{1}{2}}_{\rm loc}(\R^{2n+1};\bbbh^n)$, because $g$ is not locally Lipschitz continuous and $d_K\big((z,t),(z,t')\big)=|t-t'|^{1/2}$. 

Another, more complicated construction \cite[Theorem~3.2]{HMR}, gives a continuous function $u:\R^2\to\R$ that is differentiable a.e., and such that the tangent spaces to the graph of $u$ (the tangent spaces exist a.e.) are horizontal a.e. That is, the parametrization of the graph $f(x,y):=(x,y,u(x,y))$ satisfies $f^*\upalpha=0$ a.e. Moreover, $u$ can have modulus of continuity arbitrarily close to the Lipschitz one.

See also \cite{CDLH,CHT,Cerny,FMCO18,Hencl,LiuM} for rather difficult constructions of homeomorphisms with Sobolev regularity that have derivatives of low rank. Some of these examples were obtained using convex integration.
\end{remark}

\begin{proof}[Proof of \Cref{T99}]
The implication $\Rightarrow$, as previously explained, follows straightforwardly from Theorem~\ref{T64}(b). Therefore, it remains to prove the implications $\Leftarrow$.
In view of Theorem~\ref{T64} it suffices to show that
$f^*_\eps\upalpha=(f*\eta_\eps)^*\upalpha\rightrightarrows 0$ as $\eps\to 0$, uniformly on every
$\Omega'\Subset\Omega$.

Fix $\Omega'\Subset\Omega$ and take $0<\eps<\frac{1}{2}\dist(\Omega',\partial\Omega)$. We can
find $\Omega'\Subset\Omega''\Subset\Omega$ such that $\dist(\Omega',\partial\Omega'')>\eps$.
We fix such an $\Omega''$.

Since $f^*\upalpha=0$, it follows that $(f^*\upalpha)_\eps(p)=0$ for all $p\in\Omega'$. Therefore, for $p\in\Omega'$ we have
\begin{equation}
\label{eq132}
\begin{split}
&
(f_\eps^*\upalpha)(p)
= 
(f_\eps^*\upalpha)(p)- (f^*\upalpha)_\eps(p)\\
&=
2\sum_{i=1}^n
\big(f_\eps^{x_i}(p)df_{\eps}^{y_i}(p)-(f^{x_i}df^{y_i})_\eps(p)\big)
-
2\sum_{i=1}^n
\big(f_\eps^{y_i}(p)df_{\eps}^{x_i}(p)-(f^{y_i}df^{x_i})_\eps(p)\big).
\end{split}
\end{equation}
Note that the terms involving $f^t$ canceled out, because $(df^t)_\eps(p)=df_\eps^t(p)$. 
We will estimate the terms in the first sum only, because the estimates of the second sum are essentially the same. We have
\[
\begin{split}
&
f_\eps^{x_i}(p)df_{\eps}^{y_i}(p)-(f^{x_i}df^{y_i})_\eps(p)\\
&=
\big((f_\eps^{x_i}(p)-f^{x_i}(p))df_{\eps}^{y_i}(p)\big)+
\big((f^{x_i}(p)df_{\eps}^{y_i}(p)-(f^{x_i}df^{y_i})_\eps(p)\big)=A(p)+B(p).
\end{split}
\]
The following estimate of $A(p)$ easily follows from the definition of convolution (see \eqref{eq129} and \eqref{eq5})
\[
\begin{split}
|A(p)|
&=
|f_\eps^{x_i}(p)-f^{x_i}(p)|\, |df_{\eps}^{y_i}(p)|\\
&\leq
\eps^\gamma [f^{x_i}]_{\gamma,B(p,\eps)}
\cdot
\eps^{\gamma-1}[f^{y_i}]_{\gamma,B(p,\eps)}\Vert\nabla\eta\Vert_1
\lesssim
\eps^{2\gamma-1}[f]^2_{\gamma,B(p,\eps)}.
\end{split}
\]
It remains to estimate $B(p)$. We have
\[
\begin{split}
&
B(p)=
f^{x_i}(p)df_{\eps}^{y_i}(p)-(f^{x_i}df^{y_i})_\eps(p)
=
f^{x_i}(p)(df^{y_i})_\eps(p)-(f^{x_i}df^{y_i})_\eps(p)\\
&=
\sum_{j=1}^m
\bigg(\int_{B(p,\eps)}
\eta_\eps(p-z)
\big(f^{x_i}(p)df^{y_i}(z)-f^{x_i}(z)df^{y_i}(z)\big)
\wedge (*dz_j)\bigg)\, dp_j.
\end{split}
\]
We could restrict this (generalized, distributional) integral to $B(p,\eps)$, because
$\eta_\eps(p-\cdot)$ is supported in $B(p,\eps)$. For simplicity of notation assume that $p=0$. We will apply a change of variables to rescale the integral to $\bbbb^m=B(0,1)$. Note that the (distributional) exterior derivative with respect to $z$ satisfies
$$
df^{y_i}(\eps z)=\eps^{-1}d(f^{y_i}(\eps z)-f^{y_i}(0)). 
$$
Therefore, for $p=0$ we have
$$
|B(0)|\leq
\sum_{j=1}^m \eps^{-1}
\bigg|\int_{\bbbb^m}\underbrace{\big(f^{x_i}(0)-f^{x_1}(\eps z)\big)\eta(-z)(*dz_j)}_{\lambda}
\wedge d\underbrace{\big(f^{y_i}(\eps z)-f^{y_i}(0)\big)}_{\gamma_1}\bigg|
$$
Now Corollary~\ref{T35} with $k=1$, $\ell=m-1$, $\ell_1=1$ and $\alpha=\beta=\gamma$ yields
\[
|B(0)|\lesssim\eps^{-1}\Vert\lambda\Vert_{C^{0,\gamma}}\Vert\gamma_1\Vert_{C^{0,\gamma}}
\lesssim
\eps^{-1}\cdot\eps^\gamma [f^{x_i}]_{\gamma,B_\eps}\cdot\eps^\gamma[f^{y_i}]_{\gamma,B_\eps}
\leq
\eps^{2\gamma-1}[f]^2_{\gamma,B_\eps}.
\]
We used there the fact that if $g(0)=0$, then 
$\Vert g\Vert_{C_{\rm b}^{0,\gamma}(\bbbb^m)}\leq 2[g]_{\gamma,\bbbb^m}$, and we used Lemma~\ref{T27} to take care of $\eta(-z)$ in the estimate of $\lambda$.

For any $p\in\Omega'$ the above estimate reads as
$|B(p)|\lesssim\eps^{2\gamma-1}[f]^2_{\gamma,B(p,\eps)}$.
In concert, the estimates for $A(p)$ and $B(p)$ and similar estimates for the second sum in \eqref{eq132} yield
$$
\big|(f_\eps^*\upalpha)(p)\big|\lesssim
\eps^{2\gamma-1}[f]^2_{\gamma,B(p,\eps)}\leq
\eps^{2\gamma-1}[f]^2_{\gamma,\Omega''}
\quad
\text{whenever } p\in\Omega'.
$$
Therefore $f^*_\eps\rightrightarrows 0$ uniformly on $\Omega'$ as $\eps\to 0$. The proof is complete.
\end{proof}

\subsection{H\"older continuous horizontal curves}
\label{HM2}
If $\gamma$ is an $\alpha$-H\"older continuous curve defined on $[0,1]$ with values in $\R^{2n+1}$ or $\bbbh^{n}$, then we can assume that $\gamma$ is defined on $\R$ since we can extend it to a curve that is constant in $(-\infty,0]$ and in $[1,\infty)$. Clearly, the extended curve is also $\alpha$-H\"older continuous. The extension of $\gamma$ to $\R$ allows us to define the $\eps$-mollification $\gamma_\eps(t)$ for all $t\in [0,1]$.
\begin{theorem}
\label{T67}
Let $\gamma:[0,1]\to\R^{2n+1}$ be a curve and $\alpha\in (\frac{1}{2},1]$. Then $\gamma\in C^{0,\alpha}([0,1];\bbbh^n)$ if and only if
$\pi\circ\gamma\in C^{0,\alpha}([0,1];\R^{2n})$, and
\begin{equation}
\label{eq116}
\gamma^t(b)-\gamma^t(a)=-2\sum_{j=1}^n \int_a^b\gamma^{x_j}d\gamma^{y_j}-\gamma^{y_j}d\gamma^{x_j}
\quad
\text{for all } 0\leq a<b\leq 1,
\end{equation}
where the integral in \eqref{eq116} is understood in the sense of \eqref{eq50}. Moreover, 
$$
d_K(\gamma(b),\gamma(a))\lesssim_{n,\alpha}|b-a|^\alpha[\pi\circ\gamma]_\alpha
\quad\text{for all } 0\leq a<b\leq 1.
$$
\end{theorem}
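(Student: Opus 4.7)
The plan is to prove the two implications separately, first extending $\gamma$ to $\R$ by constants (which preserves $\alpha$-H\"older continuity into $\bbbh^n$) so that the mollifications $\gamma_\eps$ are well defined on $[0,1]$.

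For the implication ($\Rightarrow$), the statement $\pi\circ\gamma\in C^{0,\alpha}([0,1];\R^{2n})$ is immediate from \eqref{eq28}, which gives $d_K(p,q)\geq |\pi(p)-\pi(q)|$. To derive the integral identity \eqref{eq116}, I would pull back $\upalpha$ by the smooth approximation and integrate:
\[
\int_a^b \gamma_\eps^*\upalpha=\gamma_\eps^t(b)-\gamma_\eps^t(a)+2\sum_{j=1}^n\int_a^b\gamma_\eps^{x_j}\,d\gamma_\eps^{y_j}-\gamma_\eps^{y_j}\,d\gamma_\eps^{x_j},
\]
and then let $\eps\to 0$. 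The left-hand side tends to $0$ uniformly in $a,b$ by Theorem~\ref{T64}(b). On the right, $\gamma_\eps^t\rightrightarrows\gamma^t$ uniformly (since $\gamma^t$ is at least $\alpha/2$-H\"older by Lemma~\ref{T40}), while Proposition~\ref{T3} yields $\gamma_\eps^{x_j}\to\gamma^{x_j}$ and $\gamma_\eps^{y_j}\to\gamma^{y_j}$ in $C^{0,\alpha'}$ for any $\alpha'\in(1/2,\alpha)$; hence Proposition~\ref{T66} (applicable because $\alpha'+\alpha'>1$) gives convergence of each mollified integral to the Young integral on the right-hand side of \eqref{eq116}.

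For the reverse implication ($\Leftarrow$), the equivalence \eqref{eq28} reduces matters to showing
\[
|\varphi(\gamma(a),\gamma(b))|\lesssim |b-a|^{2\alpha},
\]
with $\varphi$ as in \eqref{eq42}, since $|\pi\gamma(b)-\pi\gamma(a)|\leq [\pi\circ\gamma]_\alpha|b-a|^\alpha$ already handles the horizontal part. Substituting \eqref{eq116} into the definition of $\varphi$ yields
\[
\varphi(\gamma(a),\gamma(b))=2\sum_{j=1}^n\Big(\int_a^b\gamma^{x_j}\,d\gamma^{y_j}-\gamma^{y_j}\,d\gamma^{x_j}+\gamma^{x_j}(b)\gamma^{y_j}(a)-\gamma^{x_j}(a)\gamma^{y_j}(b)\Big).
\]
Now introduce the translated curves $\tilde{\gamma}^{x_j}(s)=\gamma^{x_j}(s)-\gamma^{x_j}(a)$ and $\tilde{\gamma}^{y_j}(s)=\gamma^{y_j}(s)-\gamma^{y_j}(a)$; a direct expansion for smooth curves, passed to the general case via the continuity estimate \eqref{eq49} applied to a Lipschitz approximation, shows that each bracketed expression equals $\int_a^b\tilde{\gamma}^{x_j}\,d\tilde{\gamma}^{y_j}-\tilde{\gamma}^{y_j}\,d\tilde{\gamma}^{x_j}$. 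Since $\tilde{\gamma}^{x_j}(a)=\tilde{\gamma}^{y_j}(a)=0$, the sharp estimate \eqref{eq113} from Corollary~\ref{T75} applies and yields $\big|\int_a^b\tilde{\gamma}^{x_j}\,d\tilde{\gamma}^{y_j}\big|\lesssim |b-a|^{2\alpha}[\gamma^{x_j}]_\alpha[\gamma^{y_j}]_\alpha$, and analogously for the other integral. Summing over $j$ produces the required bound.

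The main technical point I expect to have to verify carefully is the translation identity for the Young integral: for smooth curves it reduces to a one-line Leibniz manipulation, but for H\"older data it must be passed through a smooth approximation using Proposition~\ref{T66}. Once that identity is secured, the decisive role of the vanishing $\tilde\gamma_j(a)=0$ in upgrading the naive bound $|b-a|^\alpha$ to $|b-a|^{2\alpha}$ via \eqref{eq113} closes the proof.
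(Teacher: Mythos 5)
Your proposal is correct and follows essentially the same route as the paper: the forward direction is \Cref{T64}(b) plus \Cref{T66} applied to $\int_a^b\gamma_\eps^*\upalpha$, and the reverse direction hinges on rewriting $\varphi(\gamma(a),\gamma(b))$ as a Young integral of curves translated to vanish at $a$ so that the sharp bound \eqref{eq113} produces the needed $|b-a|^{2\alpha}$. The paper reaches the translated-integral identity by subtracting a reindexed form of \eqref{eq116} from \eqref{eq118} rather than by expanding $\int\tilde\gamma^{x_j}d\tilde\gamma^{y_j}$ directly, but both manipulations rest on exactly the same linearity and constant-integration properties of the Young integral, so the difference is cosmetic.
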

\begin{remark}
Identity \eqref{eq116} is an extension of \eqref{eq60} beyond the class of differentiable curves. The result completely fails if $\alpha\leq\frac{1}{2}$ since according to \Cref{T40}, any smooth curve $\gamma:[a,b]\to\R^{2n+1}$ is $C^{0,1/2}$-H\"older continuous as a mapping into $\bbbh^n$.
\end{remark}
\begin{remark}
The implication from left to right was proved in \cite[Lemma~3.1]{LZ}.
\end{remark}
\begin{proof}
Assume that $\gamma\in C^{0,\alpha}([0,1];\bbbh^n)$. Clearly, $\pi\circ\gamma\in C^{0,\alpha}([0,1];\R^{2n})$.
Let $\gamma_\eps$ be an $\eps$-mollification of $\gamma$. By \Cref{T64}(b),
$$
\gamma_\eps^*\upalpha=
d\gamma_\eps^t+2\sum_{j=1}^n 
(\gamma_\eps^{x_j}d\gamma_\eps^{y_j}-\gamma_\eps^{y_j}d\gamma_\eps^{x_j})\rightrightarrows 0
\quad
\text{uniformly on $[0,1]$.}
$$
Integrating this form on $[a,b]$, letting $\eps\to 0$ and applying \Cref{T66}, yields \eqref{eq116}.

Suppose now that $\gamma:[0,1]\to \R^{2n+1}$ satisfies
$\pi\circ\gamma\in C^{0,\alpha}$ and \eqref{eq116}. We will prove that $\gamma\in C^{0,\alpha}([0,1];\bbbh^n)$. In order to do so, we need to estimate (cf.\ \eqref{eq32})
$$
d_K(\gamma(b),\gamma(a))\approx
|\pi(\gamma(b))-\pi(\gamma(a))|+|\varphi(\gamma(b),\gamma(a))|^{1/2}.
$$
Clearly,
\begin{equation}
\label{eq117}
|\pi(\gamma(b))-\pi(\gamma(a))|\leq |b-a|^\alpha[\pi\circ\gamma]_\alpha,
\end{equation}
so it remains to estimate 
\begin{equation}
\label{eq118}
\varphi(\gamma(b),\gamma(a))=\gamma^t(b)-\gamma^t(a)+
2\sum_{j=1}^n (\gamma^{x_j}(a)\gamma^{y_j}(b)-\gamma^{x_j}(b)\gamma^{y_j}(a)).
\end{equation}
If we rewrite \eqref{eq116} as
$$
0=\gamma^t(b)-\gamma^t(a)-2\sum_{j=1}^n \int_a^b \gamma^{y_j}d\gamma^{x_j}-\gamma^{x_j}d\gamma^{y_j},
$$
and subtract it from \eqref{eq118}, we have
\begin{equation}
\label{eq119}
\varphi(\gamma(b),\gamma(a))=
2\sum_{j=1}^n \int_a^b (\gamma^{y_j}-\gamma^{y_j}(a))d\gamma^{x_j}-(\gamma^{x_j}-\gamma^{x_j}(a))d\gamma^{y_j},
\end{equation}
and \eqref{eq113} in \Cref{T75} yield
$$
|\varphi(\gamma(b),\gamma(a))|\lesssim_{n,\alpha} |b-a|^{2\alpha}[\pi\circ\gamma]^2_{\alpha},
$$
which together with \eqref{eq117} implies that
$$
d_K(\gamma(b),\gamma(a))\lesssim_{n,\alpha} |b-a|^\alpha[\pi\circ\gamma]_\alpha.
$$
We used here \eqref{eq113} which was proved for Lipschitz functions, but it applies to \eqref{eq119}, because the integral in \eqref{eq119} is defined through smooth approximations.
\end{proof}
\begin{corollary}
\label{T90}
Let $\gamma:[a,b]\to\bbbh^n$ be an $\alpha$-H\"older continuous curve for some $\alpha\in(\frac{1}{2},1]$. Then the change in height along $\gamma$ equals the sum of the oriented areas enclosed by the closed curves $\bar{\gamma}_j$ (see \Cref{d6}):
$$
\gamma^t(b)-\gamma^t(a)=-4\sum_{j=1}^n\int_{\R^2} w(\bar{\gamma}_j,z)\, dz.
$$
In particular, if $\gamma$ is closed i.e., $\gamma(a)=\gamma(b)$, the sum or areas enclosed by the closed curves $\gamma_j$ equals zero
$$
\sum_{j=1}^n \int_{\R^2} w(\gamma_j,z)\, dz=0.
$$
\end{corollary}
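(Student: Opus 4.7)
The plan is to combine the height formula of \Cref{T67} with the oriented-area identity \Cref{T91}. Since $\alpha>\tfrac12$, \Cref{T67} gives
$$
\gamma^t(b)-\gamma^t(a) = -2\sum_{j=1}^n \int_a^b \gamma^{x_j}\, d\gamma^{y_j} - \gamma^{y_j}\, d\gamma^{x_j},
$$
where each one-dimensional integral is understood in the Young sense of \Cref{T66}. Thus it suffices to prove, for each fixed $j\in\{1,\dots,n\}$, the identity
$$
\tfrac12\int_a^b \gamma^{x_j}\, d\gamma^{y_j} - \gamma^{y_j}\, d\gamma^{x_j} = \int_{\R^2} w(\bar{\gamma}_j,y)\, dy.
$$

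For each $j$, form the concatenated closed curve $\bar{\gamma}_j:[a-1,b+1]\to\R^2$ of \Cref{d6}. A three-case triangle-inequality argument (both points in $[a,b]$, both in one of the appended intervals, or one in each) shows $\bar{\gamma}_j\in C^{0,\alpha}([a-1,b+1];\R^2)$, and after reparametrization it defines an $\alpha$-H\"older closed curve on $\Sph^1$, so that \Cref{T91} is applicable. Splitting the Young integral over $[a-1,b+1]$ into the three subintervals $[a-1,a]$, $[a,b]$, $[b,b+1]$ -- a legitimate operation since the Young integral defined by smooth approximation in \Cref{T66} is additive under concatenation of the interval -- the contribution on each appended segment vanishes: parametrizing the initial segment as $s\mapsto s\gamma_j(a)$, $s\in[0,1]$, a direct computation gives $x\, dy - y\, dx = s\gamma^{x_j}(a)\gamma^{y_j}(a)\, ds - s\gamma^{y_j}(a)\gamma^{x_j}(a)\, ds \equiv 0$, and similarly on the terminal segment. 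Hence \Cref{T91} yields
$$
\tfrac12\int_a^b \gamma^{x_j}\, d\gamma^{y_j} - \gamma^{y_j}\, d\gamma^{x_j} = \tfrac12\int_{\bar{\gamma}_j} x\, dy-y\, dx = \int_{\R^2} w(\bar{\gamma}_j,y)\, dy,
$$
and summing over $j$ proves the first formula of the statement.

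For the ``in particular'' part, the hypothesis $\gamma(a)=\gamma(b)$ forces $\gamma^t(b)-\gamma^t(a)=0$ and also $\gamma_j(a)=\gamma_j(b)$, so each $\gamma_j$ is already closed and $\bar{\gamma}_j$ is obtained from $\gamma_j$ by prepending and appending the same line segment traversed in opposite directions. By additivity of the winding number under concatenation, the net contribution of the two segments is zero for every $y$ outside the line segment $[0,\gamma_j(a)]$, so $w(\bar{\gamma}_j,y)=w(\gamma_j,y)$ for almost every $y\in\R^2$. The first formula then reads $\sum_j\int_{\R^2} w(\gamma_j,y)\, dy = 0$. The only mildly technical points -- uniform H\"older control at the gluing points $s=a$ and $s=b$, and the vanishing of the segment integrals in the Young sense -- are elementary, and no serious obstacle is anticipated; the whole argument is essentially a bookkeeping exercise once \Cref{T67} and \Cref{T91} are in hand.
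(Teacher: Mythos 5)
Your proof is correct and follows essentially the same route as the paper: apply \Cref{T67} to express the height increment, then pass to the closed extension $\bar{\gamma}_j$ and invoke \Cref{T91} to convert the Young integral into the winding-number integral. The paper compresses this to a single displayed chain of equalities, while you have supplied the details it leaves implicit -- the $\alpha$-H\"older regularity of $\bar{\gamma}_j$, additivity of the Young integral of \Cref{T66} under interval splitting, the vanishing of $x\,dy-y\,dx$ along the radial segments, and the identification $w(\bar{\gamma}_j,\cdot)=w(\gamma_j,\cdot)$ a.e.\ when $\gamma$ is closed -- all of which are sound.
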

\begin{proof}
\Cref{T67} and \Cref{T91} yield (cf. \eqref{eq115} and \eqref{eq62})
\[
\begin{split}
\gamma^t(b)-\gamma^t(a)
&=
-2\sum_{j=1}^n \int_a^b \gamma^{x_j}d\gamma^{y_j}-\gamma^{y_j}d\gamma^{x_j}\\
&=
-2\sum_{j=1}^n \int_{a-1}^{b+1} \bar{\gamma}^{x_j}d\bar{\gamma}^{y_j}-\bar{\gamma}^{y_j}d\bar{\gamma}^{x_j}
=
-4\sum_{j=1}^n\int_{\R^2} w(\bar{\gamma}_j,z)\, dz.
\end{split}
\]
\end{proof}

The next result shows that the $x,y$ components of a H\"older map uniquely determine the map up to a vertical translation. 
\begin{corollary}
\label{T92}
Let $\Omega\subset\R^m$ be a domain and let $f,g\in C^{0,\alpha}_{\rm loc}(\Omega;\bbbh^n)$, $\alpha\in (\frac{1}{2},1]$. If $\pi\circ f=\pi\circ g$ i.e., 
$f^{x_j}=g^{x_j}$ and $f^{y_j}=g^{y_j}$ for $j=1,2,\ldots,n$, then there is a constant $c\in\R$ such that
\begin{equation}
\label{eq120}
g^t(x)=f^t(x)+c
\quad
\text{for all } x\in\Omega.
\end{equation}
\end{corollary}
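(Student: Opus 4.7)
The strategy is to apply the height formula \eqref{eq116} of \Cref{T67} along line segments in $\Omega$ to show that the function $h := g^t - f^t$ is locally constant, and then invoke connectedness of the domain $\Omega$. Define $h(x) := g^t(x) - f^t(x)$ for $x \in \Omega$. Since $\Omega$ is a domain (in particular, connected and open), it suffices to prove that $h$ is locally constant.

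Fix $x_0 \in \Omega$ and choose $r > 0$ so that the convex ball $B(x_0, r) \subset \Omega$. For any $x_1 \in B(x_0, r)$, consider the straight-line segment $\sigma : [0,1] \to B(x_0, r)$ defined by $\sigma(s) = (1-s)x_0 + s x_1$. Since $\sigma$ is Lipschitz with $|\sigma(s)-\sigma(s')| = |x_1-x_0|\,|s-s'|$, the compositions $\tilde{f} := f \circ \sigma$ and $\tilde{g} := g \circ \sigma$ both belong to $C^{0,\alpha}([0,1]; \bbbh^n)$. Applying the height formula \eqref{eq116} of \Cref{T67} to each of $\tilde{f}$ and $\tilde{g}$ with $a=0$, $b=1$ yields
\begin{align*}
f^t(x_1) - f^t(x_0) &= -2\sum_{j=1}^n \int_0^1 \tilde{f}^{x_j}\, d\tilde{f}^{y_j} - \tilde{f}^{y_j}\, d\tilde{f}^{x_j}, \\
g^t(x_1) - g^t(x_0) &= -2\sum_{j=1}^n \int_0^1 \tilde{g}^{x_j}\, d\tilde{g}^{y_j} - \tilde{g}^{y_j}\, d\tilde{g}^{x_j},
\end{align*}
where the integrals are the Young-type integrals defined by \eqref{eq50}.

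The hypothesis $\pi \circ f = \pi \circ g$ gives $\tilde{f}^{x_j} = \tilde{g}^{x_j}$ and $\tilde{f}^{y_j} = \tilde{g}^{y_j}$ on $[0,1]$, so the right-hand sides of the two displays are identical, whence $g^t(x_1) - g^t(x_0) = f^t(x_1) - f^t(x_0)$, i.e.\ $h(x_1) = h(x_0)$. Thus $h$ is constant on $B(x_0, r)$, and since this holds at every $x_0 \in \Omega$, the function $h$ is locally constant. By connectedness of $\Omega$, $h \equiv c$ for some $c \in \R$, which is \eqref{eq120}.

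There is no serious obstacle here: the only point that requires even mild care is verifying that $\tilde{f}, \tilde{g}$ satisfy the hypotheses of \Cref{T67}, which is immediate from the Lipschitz parametrization $\sigma$. Everything else is a direct consequence of the identification of the two Young integrals, which depend only on the shared horizontal components $\pi \circ f = \pi \circ g$.
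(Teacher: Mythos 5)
Your proof is correct and takes essentially the same approach as the paper: apply the height formula \eqref{eq116} of \Cref{T67} to the restrictions of $f$ and $g$ along curves in $\Omega$, note that the resulting Young integrals depend only on the shared horizontal components $\pi\circ f=\pi\circ g$, and conclude. The only cosmetic difference is that the paper connects an arbitrary point $x$ directly to a fixed basepoint $x_0$ by a single smooth curve (using path-connectedness of the domain), whereas you first establish local constancy on convex balls via line segments and then invoke connectedness; both routes are valid and equally short.
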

\begin{proof}
Fix $x_o\in\Omega$. Any $x\in\Omega$ can be connected with $x_o$ by a smooth curve $\eta:[0,1]\to\Omega$, $\eta(0)=x_o$, $\eta(1)=x$. The curves $\gamma_f:=f\circ\eta$ and $\gamma_g:=g\circ\eta$ are $\alpha$-H\"older continuous. Observe that $\gamma_f^{x_j}=\gamma_g^{x_j}$ and $\gamma_f^{y_j}=\gamma_g^{y_j}$ and we will simply write $\gamma^{x_j}$ and $\gamma^{y_j}$. Now, \eqref{eq116} yields
$$
f^t(x)-f^t(x_o)=\gamma_f^t(1)-\gamma_f^t(0)=
-2\sum_{j=1}^n \int_0^1 \gamma^{x_j}d\gamma^{y_j}-\gamma^{y_j}d\gamma^{x_j}
=g^t(x)-g^t(x_o),
$$
so \eqref{eq120} is satisfied with $c:=g^t(x_o)-f^t(x_o)$.
\end{proof}

\subsection{Lifting of H\"older continuous mappings}
\label{HM3}
Let $\Omega\subset\R^m$ be an open domain. A smooth map $F:\Omega\to\R^{2n+1}$ is called {\em Legendrian} or {\em horizontal} if $F^*\upalpha=0$ i.e., if $F$ maps vectors tangent to $\Omega$ to horizontal vectors in $\bbbh^n$. We have seen in \Cref{T42} that every smooth $F$ that belongs to $C^{0,\frac{1}{2}+}_{\rm loc}(\Omega;\bbbh^n)$ is Legendrian.

Let $f:\Omega\to\R^{2n}$, $\Omega\subset \R^m$, be a smooth mapping. A natural question is to find conditions for the existence of a smooth function $\tau:\Omega\to\R$ such that the mapping
$F=(f,\tau):\Omega\to\R^{2n+1}$ is Legendrian. The mapping $F$ is called a \emph{Legendrian} or \emph{horizontal lift} of $f$. 
Note that if a Legendrian lift exists, $\tau$ is uniquely determined up to an additive constant.
As we will see this problem has a very simple solution.

Let
$$
\beta=\sum_{j=1}^n (x_jd y_j-y_jdx_j)
\quad
\text{so}
\quad
\upalpha=dt+2\beta.
$$
Then, the condition $F^*\upalpha=0$ is equivalent to $d\tau+2f^*\beta=0$, so existence of $\tau$ is equivalent to the condition that the from $f^*\beta$ is exact. In particular $f^*\beta$ must be closed, which is equivalent to
$$
f^*\upomega=\frac{1}{2}f^*(d\beta)=\frac{1}{2}df^*\beta=0.
$$
A smooth map $f:\Omega\to\R^{2n}$ satisfying $f^*\upomega=0$ is called \emph{Lagrangian}. Thus, a necessary condition for the existence of a Legendrian lift is that $f$ is Lagrangian. However, if $\Omega$ is simply connected, this condition is also sufficient. We proved
\begin{proposition}
\label{T68}
Let $f:\Omega\to\R^{2n}$ be a smooth map defined on a domain $\Omega\subset\R^m$. Then, the following conditions are equivalent.
\begin{itemize}
\item[(a)] There is a horizontal lift $F=(f,\tau):\Omega\to\R^{2n+1}$ of $f$,
\item[(b)] The form $f^*\beta$ is exact.
\end{itemize}
If in addition the domain $\Omega$ is simply connected, we have one more equivalent condition:
\begin{itemize}
\item[(c)] $f^*\upomega=0$.
\end{itemize}
\end{proposition}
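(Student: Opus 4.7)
The proof is essentially outlined in the discussion preceding the statement, so my plan is simply to formalize those three steps cleanly.

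First I would prove the equivalence (a) $\Leftrightarrow$ (b) by a direct computation of the pullback. Writing $F = (f,\tau) = (f^{x_1},f^{y_1},\ldots,f^{x_n},f^{y_n},\tau)$ and recalling that $\upalpha = dt + 2\beta$, we have $F^*\upalpha = d\tau + 2f^*\beta$ since $F^*dt = d\tau$ and $\beta$ only involves the first $2n$ coordinates. Thus $F^*\upalpha = 0$ is equivalent to $f^*\beta = -\tfrac{1}{2}d\tau$; this says exactly that $f^*\beta$ is exact, with primitive $-\tfrac{1}{2}\tau$ (and conversely, given any primitive $\sigma$ of $f^*\beta$, setting $\tau := -2\sigma$ gives a horizontal lift).

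Next I would compute $d\beta$ from Definition~\ref{def:alpha} of $\upalpha$. Since
\[
d\beta = \sum_{j=1}^n d(x_j\,dy_j - y_j\,dx_j) = \sum_{j=1}^n (dx_j\wedge dy_j - dy_j\wedge dx_j) = 2\sum_{j=1}^n dx_j\wedge dy_j = 2\upomega,
\]
naturality of the pullback gives $d(f^*\beta) = 2 f^*\upomega$. This identity immediately proves (b) $\Rightarrow$ (c) without any topological hypothesis, since exact forms are closed.

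For the converse (c) $\Rightarrow$ (b) under simply-connectedness of $\Omega$, I would invoke the standard fact that $H^1_{\mathrm{dR}}(\Omega) = 0$ whenever $\Omega$ is a simply connected smooth manifold (a consequence of the Poincar\'e lemma together with Hurewicz/de Rham, or more directly: a closed $1$-form on a simply connected manifold has path-independent line integrals and so admits a smooth primitive). Since $f^*\upomega = 0$ gives $d(f^*\beta) = 0$, we conclude that $f^*\beta$ is exact on $\Omega$, which is (b).

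There is no genuine obstacle here; the only point requiring a small amount of care is making explicit that the topological hypothesis is used only in the direction (c) $\Rightarrow$ (b), so that the unconditional chain (a) $\Leftrightarrow$ (b) $\Rightarrow$ (c) is clearly distinguished from the closing implication that requires simply-connectedness. I would flag this in a short remark so that the proposition can be applied in either setting.
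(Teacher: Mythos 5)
Your proposal is correct and follows exactly the argument the paper gives in the discussion preceding the proposition: the identity $F^*\upalpha = d\tau + 2f^*\beta$ settles (a) $\Leftrightarrow$ (b), the computation $d\beta = 2\upomega$ gives $d(f^*\beta) = 2f^*\upomega$, and then exactness/closedness of the $1$-form $f^*\beta$ handles the link to (c), with simple connectedness used only in (c) $\Rightarrow$ (b). Your closing remark that the unconditional implications should be distinguished from the one requiring $H^1(\Omega)=0$ is a sensible clarification, but the substance is identical to the paper's.
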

This result is well known and our purpose is to generalize it to the case of H\"older continuous maps $f\in C^{0,\alpha}_{\rm loc}(\Omega;\R^{2n})$, $1/2<\alpha\leq 1$.

Recall that if $f$ is smooth, then the $1$-form $f^*\beta$ is exact if and only if
\begin{equation}
\label{eq63}
\int_\eta f^*\beta=\int_{\Sph^1}\eta^*(f^*\beta)=0
\quad
\text{for every piecewise smooth curve $\eta:\Sph^1\to\Omega$.}
\end{equation}
Indeed, if $f^*\beta=-\frac{1}{2}d\tau$, then $\eta^*(f^*\beta)=-\frac{1}{2}d(\tau\circ\eta)$ so the integral at \eqref{eq63} equals zero. On the other hand equality \eqref{eq63}
shows that if $\eta_1,\eta_2:[0,1]\to\Omega$ are piecewise smooth curves with equal endpoints
$\eta_1(0)=\eta_2(0)$, $\eta_1(1)=\eta_2(1)$, then
$$
\int_{\eta_1} f^*\beta=\int_{\eta_2} f^*\beta,
\quad
\text{and we can define}
\quad
\tau(x):= -2\int_{\eta_x} f^*\beta,
$$
where $\eta_x$ is any piecewise smooth curve connecting a fixed point $x_o\in\Omega$ with $x\in\Omega$. Clearly, $f^*\beta=-\frac{1}{2}d\tau$.

Since we can identify $\eta:\Sph^1\to\Omega$ with a curve $\eta:[0,1]\to\Omega$ such that $\eta(0)=\eta(1)$, the reasoning presented above shows that
condition (b) in \Cref{T68} is equivalent to:

For every piecewise smooth closed curve $\eta:[0,1]\to\Omega$, $\eta(0)=\eta(1)$, we have
$$
\sum_{j=1}^n\int_0^1  \gamma^{x_j}d\gamma^{y_j}-\gamma^{y_j}d\gamma^{x_j}=0,
\quad
\text{where } \gamma=f\circ\eta:[0,1]\to\R^{2n}.
$$
Note that the integrals in the above formula are well defined if $f\in C^{0,\alpha}_{\rm loc}(\Omega;\R^{2n})$, $\alpha\in (\frac{1}{2},1]$, see \Cref{T66}. 
\begin{theorem}
\label{T69}
Let $f\in C^{0,\alpha}_{\rm loc}(\Omega,\R^{2n})$, where $\Omega\subset\R^m$ is an open domain and $\alpha\in(\frac{1}{2},1]$. Then, the following conditions are equivalent
\begin{itemize}
\item[(a)] There is a function $\tau:\Omega\to\R$ such that
$F=(f,\tau)\in C^{0,\alpha}_{\rm loc}(\Omega;\bbbh^n)$,
\item[(b)] For every piecewise smooth closed curve 
$\eta:[0,1]\to \Omega$, $\eta(0)=\eta(1)$,
\begin{equation}
\label{eq65}
\sum_{j=1}^n\int_0^1  \gamma^{x_j}d\gamma^{y_j}-\gamma^{y_j}d\gamma^{x_j}=0,
\quad
\text{where } \gamma=f\circ\eta:[0,1]\to\R^{2n},
\end{equation}
and the integral is understood as in \Cref{T66}.
\item[(c)] For every piecewise smooth closed curve 
$\eta:[0,1]\to \Omega$, $\eta(0)=\eta(1)$,
$$
\sum_{j=1}^n\int_{\R^2}  w(\gamma_j,z)\, dz=0,
\quad
\text{where } \gamma_j=(f^{x_j},f^{y_j})\circ\eta:[0,1]\to\R^{2}.
$$
\end{itemize}
\end{theorem}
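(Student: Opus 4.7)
My plan is to establish the three implications separately. First, (b) $\iff$ (c) follows directly from \Cref{T91}: for any piecewise smooth closed curve $\eta:[0,1]\to\Omega$ with $\eta(0)=\eta(1)$, each $\gamma_j=(f^{x_j},f^{y_j})\circ\eta$ is $\alpha$-Hölder continuous as a map from $\Sph^1\cong [0,1]/\{0,1\}$ into $\R^2$, so \Cref{T91} gives $\frac{1}{2}\int_0^1 \gamma^{x_j}d\gamma^{y_j}-\gamma^{y_j}d\gamma^{x_j}=\int_{\R^2}w(\gamma_j,y)\,dy$; summing in $j$ converts (b) into (c). The direction (a) $\Rightarrow$ (b) is immediate from \Cref{T67}: for a closed piecewise smooth $\eta$, $F\circ\eta:[0,1]\to\bbbh^n$ is $\alpha$-Hölder and closed, so identity \eqref{eq116} applied to $F\circ\eta$ forces $0=(F\circ\eta)^t(1)-(F\circ\eta)^t(0)=-2\sum_j\int_0^1\gamma^{x_j}d\gamma^{y_j}-\gamma^{y_j}d\gamma^{x_j}$ with $\gamma=f\circ\eta$, which is exactly (b).

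The substance lies in (b) $\Rightarrow$ (a). I would fix a basepoint $x_o\in\Omega$ (using connectedness of $\Omega$) and, for each $x\in\Omega$, choose a piecewise smooth path $\eta_x:[0,1]\to\Omega$ from $x_o$ to $x$, then define
$$
\tau(x):=-2\sum_{j=1}^n\int_0^1\gamma^{x_j}d\gamma^{y_j}-\gamma^{y_j}d\gamma^{x_j},\qquad \gamma=f\circ\eta_x,
$$
each integral being a Young integral as in \Cref{T66}, well defined because $f\circ\eta_x\in C^{0,\alpha}([0,1];\R^{2n})$ and $2\alpha>1$. Path independence is the sole role of hypothesis (b): if $\tilde\eta_x$ is a second path, the concatenation $\tilde\eta_x^{-1}*\eta_x$ is a piecewise smooth closed curve in $\Omega$, and by additivity of the Young integral under concatenation and its sign change under reversal, (b) forces the two candidate values of $\tau(x)$ to coincide.

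Next I would prove $F=(f,\tau)\in C^{0,\alpha}_{\rm loc}(\Omega;\bbbh^n)$ by a local estimate. Given $x_1\in\Omega$ and a convex neighborhood $U\Subset\Omega$ of $x_1$, for $x,y\in U$ I take $\eta_{xy}(s)=y+s(x-y)$; by path independence, $\tau(x)-\tau(y)=-2\sum_j\int_0^1\gamma^{x_j}d\gamma^{y_j}-\gamma^{y_j}d\gamma^{x_j}$ with $\gamma=f\circ\eta_{xy}$. Repeating the algebraic rearrangement from the proof of \Cref{T67}, substituting into the explicit form \eqref{eq42} of $\varphi(F(x),F(y))$ yields
$$
\varphi(F(x),F(y))=2\sum_{j=1}^n\int_0^1(\gamma^{y_j}-\gamma^{y_j}(0))d\gamma^{x_j}-(\gamma^{x_j}-\gamma^{x_j}(0))d\gamma^{y_j}.
$$
Since each left factor vanishes at $s=0$, inequality \eqref{eq113} of \Cref{T75} (with exponents $\alpha+\alpha>1$) gives $|\varphi(F(x),F(y))|\lesssim [\gamma]_\alpha^2\lesssim [f]_{\alpha,U}^2|x-y|^{2\alpha}$. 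Combined with $|\pi(F(x))-\pi(F(y))|\leq [f]_{\alpha,U}|x-y|^\alpha$ and \eqref{eq32}, this gives $d_K(F(x),F(y))\lesssim |x-y|^\alpha$ on $U$, hence $F\in C^{0,\alpha}_{\rm loc}(\Omega;\bbbh^n)$.

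The main technical step, which fails when $\alpha\leq\frac{1}{2}$, is the application of the Young-type estimate \eqref{eq113} to integrands that vanish at the initial endpoint; this upgrades the naive bound $|\varphi|\lesssim |x-y|^\alpha$ to $|\varphi|\lesssim |x-y|^{2\alpha}$ and thus produces the required $|\varphi|^{1/2}\lesssim|x-y|^\alpha$. Everything else—path independence from (b) and the reduction to a line segment inside a convex subneighborhood—is routine bookkeeping.
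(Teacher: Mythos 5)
Your proof is correct and follows essentially the same route as the paper's: (b) $\iff$ (c) via Corollary~\ref{T91}, (a) $\Rightarrow$ (b) by applying \eqref{eq116} to the closed lifted curve $F\circ\eta$, and (b) $\Rightarrow$ (a) by constructing $\tau$ as a path integral whose well-definedness is exactly hypothesis (b). The only cosmetic difference is in the final step of (b) $\Rightarrow$ (a): the paper simply cites the converse implication of \Cref{T67} applied to $(f,\tau)\circ\eta$ for every piecewise smooth $\eta$, whereas you inline the same computation — rearranging $\varphi(F(x),F(y))$ into Young integrals with vanishing left factors along a line segment $\eta_{xy}$ in a convex neighborhood, then invoking \eqref{eq113} of \Cref{T75}. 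This is a faithful re-derivation of the estimate hidden inside \Cref{T67} and yields the identical bound $|\varphi(F(x),F(y))|\lesssim [f]_{\alpha,U}^2|x-y|^{2\alpha}$, hence $d_K(F(x),F(y))\lesssim|x-y|^\alpha$; no content is gained or lost relative to the paper.
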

\begin{proof}
The result is an easy consequence of \Cref{T67}. 
First observe that the equivalence of conditions (b) and (c) follows from \Cref{T91} and it remains to prove equivalence of (a) and (b).

\noindent 
(a)$\Rightarrow$(b). Since $f$ has a horizontal lift $F=(f,\tau)$, $\Gamma=F\circ\eta$ is a horizontal lift of $\gamma=f\circ\eta$. Since the curve $\Gamma$ is closed, \eqref{eq116} implies \eqref{eq65}.

\noindent 
(b)$\Rightarrow$(a).
Fix $x_o\in \Omega$. Any $x\in\Omega$ can be connected by a piecewise smooth curve $\eta:[0,1]\to\Omega$, $\eta(0)=x_o$, $\eta(1)=x$
and we define 
$$
\tau(x):=-2\sum_{j=1}^n\int_0^1\gamma^{x_j}d\gamma^{y_j}-\gamma^{y_j}d\gamma^{x_j},
\quad
\text{where } \gamma=f\circ\eta:[0,1]\to\Omega.
$$
Condition \eqref{eq65} guarantees that the integral that defines $\tau$ does not depend on the choice of the curve $\eta$. Since $\eta|_{[0,t]}$ connects $x_o$ to $\eta(t)$, it follows that
$$
\tau(\eta(t))=-2\sum_{j=1}^n\int_0^t\gamma^{x_j}d\gamma^{y_j}-\gamma^{y_j}d\gamma^{x_j},
$$
and hence for $0\leq a<b\leq 1$ we have
$$
\tau(\eta(b))-\tau(\eta(a))=-2\sum_{j=1}^n\int_a^b\gamma^{x_j}d\gamma^{y_j}-\gamma^{y_j}d\gamma^{x_j}.
$$
\Cref{T67} implies that the curve $(f,\tau)\circ\eta:[0,1]\to\R^{2n+1}$ belongs to $C^{0,\alpha}([0,1];\bbbh^n)$. The H\"older continuity comes with estimates in terms of H\"older continuity of $f$. 
This true for every piecewise smooth $\eta$. In particular it shows that $(f,\tau)$ is $\alpha$-H\"older continuous on any interval in $\Omega$ and it follows that $(f,\tau)\in C^{0,\alpha}_{\rm loc}(\Omega;\bbbh^n)$. \end{proof}

\subsection{Pullbacks of differential forms by H\"older mappings}
\label{S3}
In \Cref{T8} and \Cref{T46},
$f_\eps=f*\eta_\eps$ stands for the  $\eps$-mollification of $f$ as described in \Cref{d2}.
Recall that the quantities $[f]_{\gamma,\eps,A}$ and $[f]_{\gamma,\eps}$ were defined in \Cref{d1}.
 
Let us start with a simple estimate for a pullback for a $k$-form under an approximation of a H\"older continuous map.
\begin{lemma}
\label{T8}
Let $f\in C^{0,\gamma}(\Omega;\bbbr^d)$, where $\Omega\subset\R^m$ is open, $B(x_o,2r)\subset\Omega$, and $\gamma\in(0,1]$.  
If $\kappa\in \Omega^k L^\infty(\bbbr^d)$, then 
$$
\Vert f_\eps^*\kappa\Vert_{L^\infty(B^m(x_o,r))}\lesssim \Vert\kappa\Vert_{\infty}[f]_{\gamma,\eps,B^m(x_o,2r)}^k \eps^{-k(1-\gamma)}
\quad
\text{for all $0<\eps<r$,}
$$
where the constant in the inequality depends on $m$, $d$, $k$, and $\eta$ only.
\end{lemma}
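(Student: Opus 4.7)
The plan is to reduce the bound on $\|f_\eps^*\kappa\|_\infty$ to a pointwise bound on $|Df_\eps|$, and to obtain the latter via the identity \eqref{eq5} that exploits the fact that the mean of $\partial\eta_\eps/\partial x_j$ vanishes. Since $\kappa = \sum_{|I|=k}\kappa_I\, dy_I$ with $\kappa_I \in L^\infty(\R^d)$, we have
\[
f_\eps^*\kappa(x) = \sum_{|I|=k}(\kappa_I\circ f_\eps)(x)\, df_\eps^{i_1}\wedge\ldots\wedge df_\eps^{i_k},
\]
and a standard multilinear bound for wedge products of $1$-forms gives the pointwise estimate $|f_\eps^*\kappa(x)| \lesssim_{d,k} \|\kappa\|_\infty |Df_\eps(x)|^k$. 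So the entire result follows once we show
\begin{equation}
\label{eq:Dfeps}
|Df_\eps(x)| \lesssim_{m,\eta}\, [f]_{\gamma,\eps,B^m(x_o,2r)}\, \eps^{\gamma-1}
\qquad\text{for every $x\in B^m(x_o,r)$ and every $0<\eps<r$.}
\end{equation}

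First I would fix $x\in B^m(x_o,r)$ and $0<\eps<r$ and note that $B(x,\eps)\subset B^m(x_o,2r)\subset\Omega$, so all the values $f(x-z)$ with $z\in B_\eps$ that appear below lie inside the set where the seminorm $[f]_{\gamma,\eps,B^m(x_o,2r)}$ controls oscillations. Then I apply identity \eqref{eq5}, which allows me to subtract the constant $f(x)$ from $f$ inside the convolution,
\[
\frac{\partial f_\eps^i}{\partial x_j}(x) = \eps^{-m-1}\int_{B_\eps}\bigl(f^i(x-z)-f^i(x)\bigr)\frac{\partial\eta}{\partial x_j}\!\left(\frac{z}{\eps}\right)dz.
\]
Using $|f^i(x-z)-f^i(x)|\leq [f]_{\gamma,\eps,B^m(x_o,2r)}\,\eps^\gamma$ for $z\in B_\eps$ and changing variables $u=z/\eps$ to produce the factor $\|\partial\eta/\partial x_j\|_1$ yields
\[
\left|\frac{\partial f_\eps^i}{\partial x_j}(x)\right| \leq \eps^{-m-1}\,[f]_{\gamma,\eps,B^m(x_o,2r)}\,\eps^\gamma\,\eps^m\,\|\partial\eta/\partial x_j\|_1 = \|\partial\eta/\partial x_j\|_1\,[f]_{\gamma,\eps,B^m(x_o,2r)}\,\eps^{\gamma-1},
\]
and summing over $i=1,\ldots,d$ and $j=1,\ldots,m$ proves \eqref{eq:Dfeps}.

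Combining \eqref{eq:Dfeps} with the pointwise multilinear bound on $|f_\eps^*\kappa(x)|$ and taking the supremum over $x\in B^m(x_o,r)$ gives
\[
\|f_\eps^*\kappa\|_{L^\infty(B^m(x_o,r))} \lesssim \|\kappa\|_\infty\,[f]_{\gamma,\eps,B^m(x_o,2r)}^k\,\eps^{k(\gamma-1)},
\]
which is the claimed inequality since $k(\gamma-1)=-k(1-\gamma)$, and the implicit constant depends only on $m$, $d$, $k$ and $\eta$. The only nontrivial point is the cancellation trick in the first displayed identity above; without it one would only obtain $|Df_\eps(x)|\lesssim \|f\|_\infty \eps^{-1}$, which does not capture the H\"older regularity. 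I do not anticipate any real obstacle beyond correctly invoking \eqref{eq5} and being careful that $B(x,\eps)$ is contained in the set where the restricted seminorm applies; this is exactly why the hypothesis $B(x_o,2r)\subset\Omega$ and $\eps<r$ is used.
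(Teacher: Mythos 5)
Your proof is correct and follows essentially the same route as the paper: decompose $\kappa$ into basis $k$-covectors, bound $|f_\eps^*\kappa|\lesssim\|\kappa\|_\infty|Df_\eps|^k$ pointwise, and then estimate $|Df_\eps|$ via the cancellation identity \eqref{eq5} together with the localized H\"older seminorm, checking that $B(x,\eps)\subset B(x_o,2r)$ for $x\in B(x_o,r)$ and $\eps<r$. The paper's argument is the same in structure and in every essential step.
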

\begin{proof}
Let $\kappa=\sum_{|I|=k}\psi_I\, dy_I=\sum_{|I|=k}\psi_I\, dy_{i_1}\wedge\ldots\wedge dy_{i_k}$. Then,
$$
f_\eps^*\kappa = \sum_{|I|=k}(\psi_I\circ f_\eps)\, df_\eps^{i_1}\wedge\ldots\wedge df_\eps^{i_k},
\quad
\Vert f_\eps^*\kappa\Vert_\infty\lesssim \Vert\kappa\Vert_\infty\Vert Df_\eps\Vert_\infty^k.
$$
Recall that (see \eqref{eq5}) 
$$
D f_\eps(x) = \eps^{-m-1}\int_{B^m(0,\eps)} (f(x-z)-f(x))D\eta\left(\frac{z}{\eps}\right)\, dz
$$
so for $0<\eps<r$ we have
\begin{equation}
\label{eq163}
\Vert D f_\eps\Vert_{L^\infty(B(x_o,r))}\leq
\eps^{-(1-\gamma)} [f]_{\gamma,\eps,B^m(x_o,2r)}\Vert D\eta\Vert_1.
\end{equation}
and the lemma follows.
\end{proof}
If $f\in C_{\rm b}^{0,\gamma}(\Omega;\bbbh^n)$, $\Omega\subset\R^m$ open, then by Lemmata~\ref{T40} and~\ref{T41} 
\begin{equation}
\label{eq30}
|f(p)-f(q)|\leq C(n,\Vert f\Vert_\infty) d_K(f(p),f(q))\leq C(n,\Vert f\Vert_\infty) [f]_{\gamma}\, |p-q|^\gamma.
\end{equation}
Therefore, \Cref{T8} yields
\begin{corollary}
\label{T46}
Let $f\in C^{0,\gamma}(\Omega;\bbbh^n)$, where $\Omega\subset\R^m$ is open, $B^m(x_o,2r)\subset\Omega$, and $\gamma\in (0,1]$.
If $\kappa\in \Omega^kL^\infty(\bbbr^{2n+1})$, then 
$$
\Vert f_\eps^*\kappa\Vert_{L^\infty(B^m(x_o,r))}\lesssim \Vert\kappa\Vert_{\infty}[f]_{\gamma,\eps,B^m(x_o,2r)}^k \eps^{-k(1-\gamma)}
\quad
\text{for all $0<\eps<r$,}
$$
where the constant in the inequality depends on $k$, $m$, $n$, $\eta$ and $\Vert f\Vert_\infty$ only.
\end{corollary}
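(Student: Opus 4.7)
The plan is to reduce Corollary~\ref{T46} directly to Lemma~\ref{T8}. The only difference between the two statements is that in Lemma~\ref{T8} the target is $\R^d$ with the Euclidean metric, while in Corollary~\ref{T46} the target is $\bbbh^n$ with the Kor\'anyi metric. Since the underlying set $\bbbh^n = \R^{2n+1}$ is the same and the convolution $f_\eps = f * \eta_\eps$ and the pullback $f_\eps^*\kappa$ are defined in coordinates (independent of which metric we put on the target), it suffices to compare the two H\"older seminorms.

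Concretely, I would first observe that $f(\Omega)$ is contained in the Euclidean ball $\overline{B}(0,\Vert f\Vert_\infty)\subset\R^{2n+1}$, which is a compact subset of $\bbbh^n$. Applying \eqref{eq30} (which in turn rests on Lemmata~\ref{T40} and~\ref{T41}) on this compact set gives, for every $p,q\in\Omega$,
\[
|f(p)-f(q)| \leq C(n,\Vert f\Vert_\infty)\, d_K(f(p),f(q)),
\]
and therefore
\[
[f]_{\gamma,\eps,B^m(x_o,2r)}^{\R^{2n+1}}
\;\leq\; C(n,\Vert f\Vert_\infty)\, [f]_{\gamma,\eps,B^m(x_o,2r)}^{\bbbh^n},
\]
where the superscripts indicate which target metric is used in the definition \eqref{eq129} of the localized seminorm.

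Now I would simply invoke Lemma~\ref{T8} with $d = 2n+1$, regarding $f$ as a map into $\R^{2n+1}$ with the Euclidean metric, to obtain
\[
\Vert f_\eps^*\kappa\Vert_{L^\infty(B^m(x_o,r))}
\;\lesssim\; \Vert\kappa\Vert_\infty \bigl([f]_{\gamma,\eps,B^m(x_o,2r)}^{\R^{2n+1}}\bigr)^k \eps^{-k(1-\gamma)},
\]
with a constant depending on $k$, $m$, $n$ and $\eta$ only. Substituting the previous comparison raised to the $k$-th power yields the claimed estimate with an additional factor of $C(n,\Vert f\Vert_\infty)^k$, absorbed into the implied constant. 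There is no real obstacle here: the whole content of the corollary is the observation that the H\"older constant on the $\bbbh^n$-side controls, up to a constant depending on $\Vert f\Vert_\infty$, the H\"older constant on the Euclidean side, after which Lemma~\ref{T8} applies verbatim.
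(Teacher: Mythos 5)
Your proof is correct and is essentially identical to the paper's argument: the paper also first records the metric comparison \eqref{eq30} (from Lemmata~\ref{T40} and~\ref{T41}) to pass from the Kor\'anyi seminorm to the Euclidean seminorm, and then invokes Lemma~\ref{T8} with $d=2n+1$. There is nothing to add.
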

For $0<\gamma<1$, the estimate in \Cref{T46} blows up as $\eps\to 0$, but the speed of the blow-up depends the exponent of H\"older continuity: 
the blow-up is slower if $\gamma$ is closer to $1$.
On the other hand, the next result shows that in the case of a pullback of the contact form, we get a much better estimate. In particular if $\gamma>1/2$, there is no blow-up and the estimate converges to zero (see also \Cref{T45}). Thus, is some generalized sense, H\"older continuous mappings with $\gamma>1/2$
preserve horizontality. This fact will play a crucial role in the remaining part of the paper.

\begin{corollary}
\label{T50}
Let $\cM$ be a smooth oriented and compact $m$-dimensional Riemannian manifold with or without boundary. If $f\in C^{0,\gamma}(\cM;\bbbh^n)$, $\gamma\in (0,1]$, then there is a smooth approximation 
$$
C^\infty(\overbar{\cM};\R^{2n+1})\ni f_\eps \xrightarrow{\eps\to 0} f
\quad
\text{in}
\quad
C_{\rm b}^{0,\gamma'}(\cM;\R^{2n+1})\ 
\text{for all $0<\gamma'<\gamma$}
$$
such that
$$
\Vert f_\eps^*\kappa\Vert_\infty \lesssim \eps^{-k(1-\gamma)}[f]^k_{\gamma,\eps}
\quad
\text{for all}\ 
\kappa\in\Omega^k \left(\R^{2n+1}\right),\ 1\leq k\leq 2n+1,
$$
and
$$
\Vert f_\eps^*\upalpha\Vert_\infty \lesssim \eps^{2\gamma-1}[f]^2_{\gamma,\eps},
\quad
\text{when $\upalpha$ is the standard contact form.}
$$
Here, we use notation $[f]_{\gamma,\eps}=[f]_{\gamma,\eps,\cM}$. 
\end{corollary}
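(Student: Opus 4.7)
The plan is to transport the Euclidean mollification theory to $\cM$ via a smooth embedding, so that both estimates reduce to direct applications of \Cref{T45} and \Cref{T46} in the ambient space. If $\partial\cM \neq \varnothing$, glue two copies of $\cM$ along $\partial\cM$ and extend $f$ by reflection in a collar neighborhood; this preserves the $C^{0,\gamma}(\,\cdot\,;\bbbh^n)$ class and reduces us to the closed case exactly as in \Cref{T71}. So assume $\partial\cM = \varnothing$.

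By the Whitney embedding theorem, view $\cM \subset \R^N$ smoothly, with the inherited Riemannian metric comparable to the original one by compactness. Let $\pi : U \to \cM$ be the smooth Lipschitz nearest-point projection from a tubular neighborhood of $\cM$, and let $\phi \in C_c^\infty(U)$ be a cutoff with $\phi \equiv 1$ on a sub-neighborhood $U' \Subset U$ of $\cM$. Set
$$
F := \phi\cdot (f\circ \pi) \in C_c^{0,\gamma}(\R^N;\R^{2n+1}),
$$
so that $F|_{U'} = f\circ \pi \in C^{0,\gamma}(U';\bbbh^n)$, and a direct computation yields the seminorm comparison
$$
[F]_{\gamma, a, U'} \ \leq\ (\lip\pi)^\gamma\,[f]_{\gamma,\, (\lip\pi)a,\,\cM}.
$$
Fix $c := 1/(2\lip\pi)$ and define the approximation
$$
f_\eps := (F*\eta_{c\eps})\big|_\cM \in C^\infty(\overbar{\cM};\R^{2n+1}).
$$
For $\eps$ small enough, the mollifier support stays inside $U'$, and convergence $f_\eps \to f$ in $C_{\rm b}^{0,\gamma'}(\cM;\R^{2n+1})$ for every $\gamma' < \gamma$ follows by applying \Cref{T3} to $F$ in the ambient space and restricting to $\cM$.

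For the pullback estimates, write $f_\eps^*\kappa = i^*\bigl((F*\eta_{c\eps})^*\kappa\bigr)$, where $i : \cM\hookrightarrow \R^N$ is the inclusion; comparability of metrics gives $\Vert f_\eps^*\kappa\Vert_{L^\infty(\cM)} \lesssim \Vert (F*\eta_{c\eps})^*\kappa\Vert_{L^\infty(\cM)}$. Cover $\cM$ by finitely many Euclidean balls $B^N(x_i,r)$ with $B^N(x_i,2r)\subset U'$, with $r$ uniform by compactness. For $c\eps < r$, \Cref{T46} applied to $F$ on each ball, combined with the seminorm comparison above, yields
$$
\Vert f_\eps^*\kappa\Vert_\infty\ \lesssim\ \Vert\kappa\Vert_\infty\,[F]_{\gamma, c\eps, U'}^k\,(c\eps)^{-k(1-\gamma)}\ \lesssim\ [f]_{\gamma, \eps/2}^k\,\eps^{-k(1-\gamma)}\ \leq\ [f]_{\gamma,\eps}^k\,\eps^{-k(1-\gamma)},
$$
since $(\lip\pi)\cdot c\eps = \eps/2$ and the scale seminorm is monotone. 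The contact-form estimate is entirely parallel, using \Cref{T45} in place of \Cref{T46}:
$$
\Vert f_\eps^*\upalpha\Vert_\infty\ \lesssim\ [F]_{\gamma, 2c\eps,U'}^2\,(c\eps)^{2\gamma - 1}\ \lesssim\ [f]_{\gamma, \eps}^2\,\eps^{2\gamma - 1},
$$
the equality $(\lip\pi)\cdot 2c\eps = \eps$ matching exactly, which is the reason for the choice $c = 1/(2\lip\pi)$.

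The argument contains no genuinely hard step: both estimates are formal transcriptions of \Cref{T45} and \Cref{T46} once the ambient extension $F$ is in place. The only delicate point is the scale bookkeeping — the ambient seminorm $[F]_{\gamma, a, U'}$ controls $[f]_{\gamma, a',\cM}$ only at the inflated scale $a' = (\lip\pi)a$, which forces us to mollify at scale $c\eps$ rather than $\eps$ so that the factor $\lip\pi$ in the seminorm comparison (together with the factor $2$ in \Cref{T45}) produces precisely $[f]_{\gamma,\eps}$ on the right-hand side.
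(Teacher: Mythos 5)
Your proof follows essentially the same path as the paper's: reduce to the closed case by doubling, embed $\cM$ into Euclidean space, extend $f$ to an ambient neighborhood via the nearest-point projection $\pi$, mollify in the ambient space, and invoke \Cref{T45} and \Cref{T46}. The only difference is a small refinement in bookkeeping: you mollify at the rescaled radius $c\eps$ with $c=1/(2\operatorname{Lip}\pi)$ so that the inflation of the seminorm scale by $\operatorname{Lip}\pi$ in passing from $f$ on $\cM$ to $f\circ\pi$ on the ambient tubular neighborhood is exactly undone, yielding $[f]_{\gamma,\eps}$ on the nose. The paper mollifies at radius $\eps$ and asserts $[\tilde f]_{\gamma,\eps,\Omega}\lesssim[f]_{\gamma,\eps}$, which — read literally — should be $[f]_{\gamma,L\eps}$ with $L$ the Lipschitz constant of $\pi$; this imprecision is harmless in every application (one always either bounds $[f]_{\gamma,\cdot}$ by the full seminorm $[f]_\gamma$, or takes $\liminf_{\eps\to 0}$), but your version is the cleaner statement, and the rescaling argument is exactly the right way to make the constant come out right if one cares about the precise scale.
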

\begin{remark}
We do not need to assume that $\kappa$ is bounded (cf.\ \Cref{T46}), because $f$ is bounded and $\kappa$ is bounded in a neighborhood of the image of $f$. Note also that the implied constants in the estimates in \Cref{T50} depend on $\Vert f\Vert_\infty$ 
\end{remark}
\begin{remark}
The construction of $f_\eps$ is similar to the construction of an approximation in \Cref{T71}.
\end{remark}
\begin{proof}
Assume first that $\partial\cM=\varnothing$.
Using the Whitney embedding theorem we may assume that $\cM$ is smoothly embedded into $\R^{2m+1}$. The new Riemannian metric on $\cM$ inherited from the embedding into $\R^{2m+1}$ will be comparable to the original one due to compactness on $\cM$.

According to the tubular neighborhood theorem, there is an open set $\Omega\subset\R^{2m+1}$ containing $\cM$ such that the nearest point projection $\pi:\Omega\to\cM$ is uniquely defined, smooth and Lipschitz. Then
$$
\tilde{f}=f\circ\pi\in C^{0,\gamma}(\Omega,\bbbh^n).
$$
Let $\Omega'\Subset\Omega$ be an open set that contains $\cM$ and let
$\eps_o=\dist(\Omega',\partial\Omega)$. Let $\tilde{f}_\eps$ be the standard approximation by mollification. It is well defined on $\Omega'$ (and hence on $\cM$) for $0<\eps<\eps_o$.
Since $\kappa$ is bounded in a neighborhood of $\tilde{f}(\Omega)=f(\cM)$,
\Cref{T46} yields
$$
\Vert\tilde{f}_\eps^*\kappa\Vert_{L^{\infty}(\cM)}\leq \Vert\tilde{f}_\eps^*\kappa\Vert_{L^{\infty}(\Omega')}\lesssim\eps^{-k(1-\gamma)}[\tilde{f}]^k_{\gamma,\eps,\Omega}
\lesssim \eps^{-k(1-\gamma)}[f]^k_{\gamma,\eps}
\quad
\text{for } 0<\eps<\frac{\eps_o}{2}\,.
$$
Similarly, \Cref{T45} yields
$$
\Vert\tilde{f}_\eps^*\upalpha\Vert_{L^{\infty}(\cM)}\lesssim\eps^{2\gamma-1}[f]^2_{\gamma,\eps},
$$
Taking $f_\eps:=\tilde{f}_\eps|_{\cM}$ for $0<\eps<\frac{1}{2}\eps_o$, yields the result since according to \Cref{T3}, for all $0<\gamma'<\gamma$ we have $\tilde{f}_\eps\to\tilde{f}$ in $C^{0,\gamma'}(\Omega',\R^{2n+1})$ as $\eps\to 0$.

If $\partial\cM\neq\varnothing$, we glue two copies of $\cM$ along the boundary to obtain a closed manifold $\widetilde{\cM}$, extend $f$ to a mapping in $C^{0,\gamma}(\widetilde{\cM},\bbbh^n)$ (by reflection) and we 
apply the above argument to $\widetilde{\cM}$.
\end{proof}

As an application of \Cref{T50} we will prove the following important result.
\begin{theorem}
\label{T47}
Let $m>n$ be positive integers. Let $\cM$ be a smooth oriented and compact $m$-dimensional manifold with or without boundary, and let
$f\in C^{0,\gamma}(\cM;\bbbh^n)$, where $\gamma\in(\frac{k}{k+1},1]$, $n<k\leq m$.
Then, for any $\kappa\in \Omega^k(\R^{2n+1})$ we have
\begin{equation}
\label{eq33}
\int_{\cM} f^*\kappa\wedge\tau = 0
\quad
\text{for all}\
\tau\in \Omega^{m-k}C_{\rm b}^{0,\gamma}\left(\cM\right).
\end{equation}
\end{theorem}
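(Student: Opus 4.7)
The approach exploits the Heisenberg-specific estimate $\Vert f_\eps^*\upalpha\Vert_\infty \lesssim \eps^{2\gamma-1}$ from Corollary~\ref{T50} --- which is dramatically better than the generic bound $\eps^{-(1-\gamma)}$ for a generic $1$-form --- and the algebraic decomposition from Corollary~\ref{T49}. Since $\Omega^k(\R^{2n+1})=0$ for $k>2n+1$, I may restrict to $n+1\leq k\leq 2n+1$, where Corollary~\ref{T49} produces smooth forms $\beta'\in\Omega^{k-1}(\R^{2n+1})$ and $\gamma'\in\Omega^{k-2}(\R^{2n+1})$ with
$$
\kappa=\beta'\wedge\upalpha+d(\gamma'\wedge\upalpha).
$$
Since $\gamma>k/(k+1)$ and $\tau\in\Omega^{m-k}C^{0,\gamma}(\cM)$, Theorem~\ref{T79} guarantees that $\int_\cM f^*\kappa\wedge\tau=\lim_{\eps\to 0}\int_\cM f_\eps^*\kappa\wedge\tau$ where $f_\eps$ is the smooth approximation from Corollary~\ref{T50}. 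I will split this $\eps$-integral as $I_\eps^{(1)}+I_\eps^{(2)}$ along the two summands and show each tends to zero.

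For $I_\eps^{(1)}=\int_\cM f_\eps^*\beta'\wedge f_\eps^*\upalpha\wedge\tau$ a direct $L^\infty$ estimate will do: combining $\Vert f_\eps^*\beta'\Vert_\infty\lesssim\eps^{-(k-1)(1-\gamma)}$ with the contact estimate $\Vert f_\eps^*\upalpha\Vert_\infty\lesssim\eps^{2\gamma-1}$ gives $\Vert f_\eps^*(\beta'\wedge\upalpha)\Vert_\infty\lesssim\eps^{(k+1)\gamma-k}$. The exponent is positive precisely because $\gamma>k/(k+1)$, so $|I_\eps^{(1)}|\lesssim\eps^{(k+1)\gamma-k}\Vert\tau\Vert_\infty\to 0$.

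For $I_\eps^{(2)}=\int_\cM d\phi_\eps\wedge\tau$ with $\phi_\eps:=f_\eps^*\gamma'\wedge f_\eps^*\upalpha$, the same mechanism yields $\Vert\phi_\eps\Vert_\infty\lesssim\eps^{k\gamma-k+1}\to 0$ (the exponent is positive because $\gamma>k/(k+1)>(k-1)/k$). Since $\tau$ is only H\"older, integration by parts must be performed in two stages. I will approximate $\tau$ by smooth $\tau_\delta\to\tau$ in $C^{0,\gamma'}$ for some $\gamma'\in(k/(k+1),\gamma)$ (Corollary~\ref{T71} with Remark~\ref{R8}), apply smooth Stokes (Lemma~\ref{T78}), and then send $\delta\to 0$ for $\eps$ fixed. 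Because $\phi_\eps$ is Lipschitz, the generalized integral $\int_\cM\phi_\eps\wedge d\tau$ is well defined by Theorem~\ref{T34} and is exactly $\lim_{\delta\to 0}\int_\cM\phi_\eps\wedge d\tau_\delta$. The limit of Stokes' identity then reads
$$
I_\eps^{(2)}=\int_{\partial\cM}\phi_\eps\wedge\tau-(-1)^{k-1}\int_\cM\phi_\eps\wedge d\tau,
$$
and the boundary term is $O(\eps^{k\gamma-k+1})$.

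The main obstacle is the generalized bulk term. Working locally in good coordinate charts, each contribution has the form $\int_U(\phi_\eps\wedge dx_I)\wedge d\tau_I$, to which Corollary~\ref{T35} applies with $\ell_1=1$, $k=1$ to give $|\int_\cM\phi_\eps\wedge d\tau|\lesssim\Vert\phi_\eps\Vert_{C^{0,\alpha}}\Vert\tau\Vert_{C^{0,\gamma}}$ for any $\alpha>1-\gamma$. The naive choice $\alpha=1$ fails: since $d\phi_\eps=f_\eps^*\kappa-f_\eps^*(\beta'\wedge\upalpha)$, the Lipschitz norm obeys only $\Vert D\phi_\eps\Vert_\infty\lesssim\eps^{-k(1-\gamma)}$, which blows up. The cure is standard interpolation between $\Vert\phi_\eps\Vert_\infty\lesssim\eps^{k\gamma-k+1}$ and $[\phi_\eps]_1\lesssim\eps^{-k(1-\gamma)}$, yielding $[\phi_\eps]_\alpha\lesssim\eps^{(k\gamma-k+1)-\alpha}$. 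This decays iff $\alpha<1-k(1-\gamma)$, and the window $\alpha\in(1-\gamma,\,1-k(1-\gamma))$ is nonempty \emph{precisely} because $\gamma>k/(k+1)$ --- that sharp relation between the H\"older exponent and the decomposition degree is where the hypothesis is used in its most delicate form. Picking any such $\alpha$ gives $\Vert\phi_\eps\Vert_{C^{0,\alpha}}\to 0$, hence $I_\eps^{(2)}\to 0$, and combining with $I_\eps^{(1)}\to 0$ finishes the proof.
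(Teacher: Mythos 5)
Your overall strategy is a legitimate variant of the paper's: both rest on the contact-form decomposition from Corollary~\ref{T49}, the smallness of $f_\eps^*\upalpha$, and Stokes' theorem, and $\gamma>\tfrac{k}{k+1}$ enters by the same arithmetic. But you use the second decomposition $\kappa=\beta'\wedge\upalpha+d(\gamma'\wedge\upalpha)$, whereas the paper uses $\kappa=\upalpha\wedge\beta+d\upalpha\wedge\delta$ and integrates $d(f_\eps^*\upalpha)\wedge f_\eps^*\delta\wedge\tau$ by parts so that the small factor $f_\eps^*\upalpha$ always survives as a standalone $L^\infty$ multiplier; for smooth $\tau$ one then needs only $\Vert f_\eps^*\upalpha\Vert_\infty\to 0$ and the $L^\infty$ bounds of Corollary~\ref{T50}, and H\"older $\tau$ is recovered afterwards by density via~\eqref{eq98}. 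Your variant instead produces $\phi_\eps\wedge d\tau$ with $\phi_\eps=f_\eps^*\gamma'\wedge f_\eps^*\upalpha$, which forces you to control a H\"older seminorm of $\phi_\eps$, and this is where there is a genuine gap.

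You assert $[\phi_\eps]_1\lesssim\eps^{-k(1-\gamma)}$ on the ground that $d\phi_\eps=f_\eps^*\kappa-f_\eps^*(\beta'\wedge\upalpha)$. This does not follow: $d\phi_\eps$ is the exterior derivative of a $(k-1)$-form and controls only the antisymmetrized combination of partial derivatives of the coefficients of $\phi_\eps$, not the full $[\phi_\eps]_1=\Vert D\phi_\eps\Vert_\infty$. What the Leibniz rule, Corollary~\ref{T50}, and the generic bound $\Vert D^2f_\eps\Vert_\infty\lesssim\eps^{\gamma-2}$ actually give is $[\phi_\eps]_1\lesssim\eps^{(k-1)\gamma-k}$, one factor $\eps^\gamma$ worse than you claim. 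Interpolating with that exponent yields $[\phi_\eps]_\alpha\lesssim\eps^{(k\gamma-k+1)-\alpha(1+\gamma)}$, and the window $\big(1-\gamma,\ \tfrac{k\gamma-k+1}{1+\gamma}\big)$ required by Corollary~\ref{T35} is empty whenever $\gamma^2+k\gamma-k\leq 0$, which occurs for every $\gamma\in\big(\tfrac{k}{k+1},\ \tfrac{-k+\sqrt{k^2+4k}}{2}\big]$ (a nonempty subinterval of the theorem's hypothesis, e.g.\ $(2/3,\sqrt 3-1]$ when $k=2$); so your proof does not cover the claimed range. The bound you want is in fact true, but it comes down to the derivative estimate $\Vert D(f_\eps^*\upalpha)\Vert_\infty\lesssim\eps^{2\gamma-2}$, a one-derivative-higher analogue of Theorem~\ref{T45} obtained by repeating the $\varphi$-cancellation argument on $\partial_l\partial_if_\eps^t+2\sum_j(f_\eps^{x_j}\partial_l\partial_if_\eps^{y_j}-f_\eps^{y_j}\partial_l\partial_if_\eps^{x_j})$; this estimate is not in the paper and cannot be obtained from the exterior-derivative identity you cite. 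Either prove that lemma separately, or, more economically, adopt the paper's two-stage scheme: treat $\tau$ smooth first (then $\Vert d\tau\Vert_\infty<\infty$ and $\Vert\phi_\eps\Vert_\infty\lesssim\eps^{k\gamma-k+1}\to 0$ already closes the bulk term, with no interpolation at all), and extend to H\"older $\tau$ by density using~\eqref{eq98}.
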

\begin{remark}\label{R47}
The `integral' in \eqref{eq33} is understood in the distributional sense 
as a limit of smooth approximations, see~\eqref{eq121}.
In the case in which $f$, $\kappa$ and $\tau$ are smooth, equality \eqref{eq33} implies that $f^*\kappa=0$ for all smooth $k$ forms $\kappa$ (cf.\ \cite[Lemma~4.3]{HST14}) which, in turn, is equivalent to $\operatorname{rank} Df<k$. Thus, in some sense \Cref{T47} says that under the given assumptions, a generalized rank of the derivative of $f$ is less than $k$ even though it does not make any sense to talk about the derivative of $f$.
\end{remark}
\begin{proof}
According to \Cref{T79} (with $\alpha=1$), the limit
\begin{equation}
\label{eq121}
\int_{\cM} f^*\kappa\wedge\tau := \lim_{\eps\to 0} \int_{\cM} f^*_\eps\kappa\wedge\tau
\end{equation}
exists and does not depend on a choice of an approximation
$C^\infty(\overbar{\cM};\R^{2n+1})\ni f_\eps\to f$ if $f_\eps\to f$ in $C^{0,\gamma'}$, $\gamma'\in (\frac{k}{k+1},\gamma]$. In what follows we take $f_\eps$ to be the approximation of $f$ from \Cref{T50}.

Assume first that $\tau\in \Omega^{m-k}(\cM)$. According to \Cref{T49}, we have a decomposition 
$$
\kappa=\upalpha\wedge\beta+d\upalpha\wedge\delta
\quad
\text{for some}
\quad
\beta\in \Omega^{k-1}\left(\R^{2n+1}\right),\ 
\delta\in \Omega^{k-2}\left(\R^{2n+1}\right).
$$
A change of the order in the wedge product with respect to that in \Cref{T49} is not essential; it will make our estimates easier to write. Also we use $\delta$ instead of $\gamma$ because $\gamma$ is reserved for the exponent of the H\"older continuity. 

Stokes' theorem yields (there is no boundary term if $\partial\cM=\varnothing$)
$$
\int_{\cM} df_\eps^*\upalpha\wedge f_\eps^*\delta\wedge\tau=
\int_{\cM} f_\eps^*\upalpha\wedge d(f_\eps^*\delta\wedge\tau)+
\int_{\partial\cM} f_\eps^*\upalpha\wedge f_\eps^*\delta\wedge\tau.
$$
Therefore,
\[
\begin{split}
&\int_{\cM} f_\eps^*\kappa\wedge\tau
=
\int_{\cM} f_\eps^*\upalpha\wedge f_\eps^*\beta\wedge\tau +
df_\eps^*\upalpha\wedge f_\eps^*\delta\wedge\tau\\
&=
\int_{\cM} f_\eps^*\upalpha\wedge \big(f_\eps^*(\beta+d\delta)\wedge\tau+(-1)^kf_\eps^*\delta\wedge d\tau\big)
+
\int_{\partial\cM}f_\eps^*\upalpha\wedge f_\eps^*\delta\wedge\tau.
\end{split}
\]
According to \Cref{T50}
$$
\Vert f_\eps^*\upalpha\Vert_\infty\lesssim\eps^{2\gamma-1},
\quad
\Vert f_\eps^*(\beta+d\delta)\Vert_\infty\lesssim\eps^{-(k-1)(1-\gamma)},
\quad
\Vert f_\eps^*\delta\Vert_\infty\lesssim\eps^{-(k-2)(1-\gamma)}.
$$
Hence,
$$
\left|\int_{\cM} f^*_\eps\kappa\wedge\tau\right|\lesssim
\eps^{2\gamma-1}\big(\eps^{-(k-1)(1-\gamma)}+\eps^{-(k-2)(1-\gamma)}\big)=
\eps^{(k+1)\gamma-k}\big(1+\eps^{1-\gamma}\big)\stackrel{\eps\to 0}{\longrightarrow} 0,
$$
because $\gamma>\frac{k}{k+1}$.

In the general case when $\tau\in \Omega^{m-k}C_{\rm b}^{0,\gamma}(\cM)$, we approximate $\tau$ smoothly in $C^{0,\gamma'}$, $\gamma'\in(\frac{k}{k+1},\gamma)$, $C^\infty\ni\tau_\eps\to\tau$, and  we have
$$
0=\int_{\cM} f^*\kappa\wedge\tau_\eps\to \int_{\cM}f^*\kappa\wedge\tau,
$$
where the convergence follows from the fact that distributional integral against $f^*\kappa$ defines a bounded functional on $\Omega^{m-k}C_{\rm b}^{0,\gamma'}(\cM)$, see
\eqref{eq98} (with $\alpha=1$ and $\gamma$ replaced by $\gamma'$).
\end{proof}

\begin{corollary}
\label{T56}
There is no topological embedding $f:\Sph^{2n}\to\bbbh^n$ such that
$f\in C^{0,\gamma}(\Sph^{2n},\bbbh^n)$, $\gamma>\frac{2n}{2n+1}$.
\end{corollary}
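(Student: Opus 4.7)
Suppose, for contradiction, that $f\in C^{0,\gamma}(\Sph^{2n};\bbbh^n)$ is a topological embedding with $\gamma>\tfrac{2n}{2n+1}$. By \Cref{T40} (equivalently \eqref{eq30}), $f$ is also $\gamma$-H\"older as a map into $\R^{2n+1}$, so $f(\Sph^{2n})$ is a topologically embedded $2n$-sphere in $\R^{2n+1}$. Jordan-Brouwer separation produces two complementary components $U\sqcup V$ with $U$ bounded, and a standard Alexander-duality computation shows that the winding number satisfies $w(f,\cdot)\equiv N\in\{+1,-1\}$ on $U$ and $w(f,\cdot)\equiv 0$ on $V$. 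The strategy is the one sketched after \eqref{eq189} in the Introduction: build a smooth test form $\sigma$ for which \Cref{T47} forces the distributional pullback to vanish while the Stokes-with-winding-number formula of \Cref{GSC} forces it to be nonzero.

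\textbf{Construction of $\sigma$.} Fix a bump $\phi\in C_c^\infty(U)$ with $\phi\geq 0$ and $c:=\int_U \phi\,dy>0$, and set $\omega:=\phi\,dy_1\wedge\cdots\wedge dy_{2n+1}\in\Omega_c^{2n+1}(\R^{2n+1})$. Since $\R^{2n+1}$ is contractible, the Poincar\'e lemma furnishes a smooth $\sigma\in\Omega^{2n}(\R^{2n+1})$ with $d\sigma=\omega$. Multiplying $\sigma$ by a cutoff $\chi\in C_c^\infty(\R^{2n+1})$ that equals $1$ on a bounded neighborhood of $\overline{U}\cup f(\Sph^{2n})$, we may assume $\sigma$ is compactly supported, hence in $\Omega^{2n}W^{1,\infty}(\R^{2n+1})$; the added term $d\chi\wedge\sigma$ in $d(\chi\sigma)$ is supported where $w(f,\cdot)\equiv 0$ (and, for small $\eps$, where $w(f_\eps,\cdot)\equiv 0$ as well), so it does not affect the integrals computed below.

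\textbf{Evaluating the limit in two ways.} Let $f_\eps\in C^\infty(\Sph^{2n};\R^{2n+1})$ be the smoothings supplied by \Cref{T50}: they converge to $f$ in $C_{\rm b}^{0,\gamma'}$ for every $\gamma'<\gamma$, and $\sup_\eps\Vert f_\eps\Vert_{C^{0,\gamma}}<\infty$. For each smooth $f_\eps$, the Stokes-with-winding-number formula \eqref{eq8}/\Cref{T87} yields
\[
\int_{\Sph^{2n}} f_\eps^*\sigma
=\int_{\R^{2n+1}} w(f_\eps,y)\,d\sigma(y)
=\int_{\R^{2n+1}} w(f_\eps,y)\phi(y)\,dy.
\]
Because $\operatorname{supp}\phi\Subset U$ and $w(f_\eps,\cdot)\to w(f,\cdot)$ in $L^1$ by \Cref{T83}(b) (whose hypothesis holds for any $\gamma''\in(\tfrac{2n}{2n+1},\gamma)$), the right-hand side converges to $N\int_U \phi\,dy=Nc\neq 0$. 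On the other hand, \Cref{T79} with $\cM=\Sph^{2n}$, $m=k=2n$, $\alpha=1$ and $\tau\equiv 1$ (the exponent condition $\gamma>\tfrac{k}{k+\alpha}=\tfrac{2n}{2n+1}$ being precisely our hypothesis) identifies the same limit with the distributional pullback $\int_{\Sph^{2n}} f^*\sigma$. Finally, \Cref{T47} applied with the same $\cM,m,k,\tau$ and $\kappa=\sigma$ (requiring $\gamma>\tfrac{k}{k+1}=\tfrac{2n}{2n+1}$, again ours) forces
\[
\int_{\Sph^{2n}} f^*\sigma=0.
\]
Hence $Nc=0$, the desired contradiction.

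\textbf{Main obstacle.} The substance of the argument is not any single estimate but the orchestration of three independently defined limits so that they coincide: the classical Stokes-with-degree limit computing $Nc$, the $L^1$-continuity of the winding number under H\"older convergence (\Cref{T83}(b)), and the distributional pullback of \Cref{T79}, whose vanishing is controlled by the Lefschetz-type decomposition at the heart of \Cref{T47}. What makes everything click is the numerical coincidence $\tfrac{k}{k+1}=\tfrac{k}{k+\alpha}=\tfrac{2n}{2n+1}$ for $k=2n$ and $\alpha=1$, so that the single hypothesis $\gamma>\tfrac{2n}{2n+1}$ simultaneously powers the vanishing side (\Cref{T47}) and the existence of the distributional pullback side (\Cref{T79}, \Cref{T83}).
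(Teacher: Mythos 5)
Your proof is correct and follows essentially the same strategy as the paper's: use the winding-number/degree formula to show a certain distributional pullback integral is nonzero, while Theorem~\ref{T47} (with $k=2n$, $\tau\equiv 1$, and precisely the threshold $\gamma>\frac{2n}{2n+1}$) forces it to vanish. The only genuine difference is the test form: you build a compactly supported primitive $\sigma$ of a bump $\phi\,dy_1\wedge\cdots\wedge dy_{2n+1}$ via the Poincar\'e lemma and a cutoff, which obliges you to track the extra term $d\chi\wedge\sigma_0$ and to invoke Jordan--Brouwer separation explicitly to extract $w(f,\cdot)=\pm 1$ on the bounded component. The paper instead takes the canonical (non-compactly-supported) primitive $\kappa=y_1\,dy_2\wedge\cdots\wedge dy_{2n+1}$ of the Euclidean volume form, so that Corollary~\ref{T25} immediately identifies the nonvanishing quantity $\int_{\Sph^{2n}}f_1\,df_2\wedge\cdots\wedge df_{2n+1}=\int_{\R^{2n+1}}w(f,y)\,dy$ as $\pm$ the volume enclosed -- no cutoff and no localization are needed, and the same Jordan--Brouwer input (that the enclosed region has positive volume and $w=\pm1$ on it) remains, but silently. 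In short: both proofs hinge on the same pair of lemmas; yours is slightly more laborious in the construction of $\sigma$, the paper's is slightly more streamlined.
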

\begin{remark}
In the next section we will prove a much deeper non-embedding theorem, \Cref{T53}.
\end{remark}
\begin{proof}
Suppose to the contrary that such an embedding exists.
Since $f\in C^{0,\gamma}(\Sph^{2n},\R^{2n+1})$ by \Cref{T40}, it follows from \Cref{T25} that
\begin{equation}
\label{eq34}
\int_{\Sph^{2n}} f_1\, df_2\wedge\ldots\wedge df_{2n+1}=
\int_{\R^{2n+1}}w(f,y)\, dy\neq 0,
\end{equation}
where the right hand side is non-zero since it equals $\pm$ the volume of the region bounded by $f(\Sph^{2n})$ (the sign depends on the orientation of $f$). On the other hand \Cref{T47} with
$$
\kappa = y_1dy_2\wedge\ldots\wedge dy_{2n+1}\in \Omega^{2n}\left(\R^{2n+1}\right)
\quad
\text{and}
\quad
\tau= 1
$$
yields
$$
\int_{\Sph^{2n}}f_1\, df_2\wedge\ldots\wedge df_{2n+1}=
\int_{\Sph^{2n}} f^*\kappa\wedge\tau=0
$$
which is a contradiction.
\end{proof}

In the next section we will need the following 
\begin{corollary}
\label{T47b}
Let $m>n$ be positive integers. Let $\cM$ be a smooth oriented and compact $m$-dimensional manifold with boundary, and let
$f\in C_{\rm b}^{0,\gamma}(\overline{\cM};\bbbh^n)$, where $\gamma\in (\frac{m}{m+1},1]$.
Then, for any $\kappa\in \Omega^{m-1}(\R^{2n+1})$ we have
$$
\int_{\partial \cM} f^*\kappa = \int_{\cM}f^*(d\kappa)=0.
$$
\end{corollary}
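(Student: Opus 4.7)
The plan is to deduce this corollary from Theorem~\ref{T47} together with a distributional Stokes theorem. First I would check that both distributional integrals appearing in the statement are well defined. The form $d\kappa\in\Omega^m(\R^{2n+1})$ is smooth, so applying \Cref{T79} on the $m$-dimensional manifold $\cM$ with $k=m$, $\alpha=1$, and $\tau\equiv 1\in\Omega^0C_{\rm b}^{0,\gamma}(\cM)$ shows that $\int_\cM f^*(d\kappa)$ is well defined provided $\gamma>\tfrac{m}{m+1}$. For the boundary integral, $\partial\cM$ is a closed $(m-1)$-manifold and $f|_{\partial\cM}\in C_{\rm b}^{0,\gamma}(\partial\cM;\bbbh^n)\subset C_{\rm b}^{0,\gamma}(\partial\cM;\R^{2n+1})$; applying \Cref{T79} on $\partial\cM$ with $k=m-1$, $\alpha=1$, and $\tau\equiv 1$ requires $\gamma>\tfrac{m-1}{m}$, which is implied by $\gamma>\tfrac{m}{m+1}$ since $\tfrac{m}{m+1}>\tfrac{m-1}{m}$.

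Next I would fix a smooth approximation $C^\infty(\overbar{\cM};\R^{2n+1})\ni f_\eps\to f$ in $C_{\rm b}^{0,\gamma'}$ for some $\gamma'\in\bigl(\tfrac{m}{m+1},\gamma\bigr]$, for instance the approximation provided by \Cref{T50}. For each $\eps>0$ the forms $f_\eps^*\kappa$ and $f_\eps^*(d\kappa)=d(f_\eps^*\kappa)$ are smooth, so the classical Stokes theorem (\Cref{T78}) gives
\[
\int_{\partial\cM} f_\eps^*\kappa=\int_{\cM} d(f_\eps^*\kappa)=\int_{\cM} f_\eps^*(d\kappa).
\]
Letting $\eps\to 0$, both sides converge to the corresponding distributional integrals, by the very definition of these integrals in \Cref{T79} (and the fact that the limits are independent of the chosen approximation). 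Therefore
\[
\int_{\partial\cM} f^*\kappa=\int_{\cM} f^*(d\kappa).
\]

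Finally, to conclude that this common value is zero, I would invoke \Cref{T47} with $k=m$, which is allowed since $m>n$ and $\gamma>\tfrac{m}{m+1}=\tfrac{k}{k+1}$. Taking the test form $\tau\equiv 1\in\Omega^{0}C_{\rm b}^{0,\gamma}(\cM)$ yields $\int_{\cM} f^*(d\kappa)\wedge 1=0$, completing the proof. There is no real obstacle here beyond correctly assembling the pieces; the one point that must be verified carefully is the numerical inequality $\tfrac{m}{m+1}>\tfrac{m-1}{m}$, so that the H\"older regularity that justifies \Cref{T47} on $\cM$ is automatically strong enough to guarantee that the boundary pullback $f^*\kappa$ on $\partial\cM$ also falls within the scope of the distributional framework of \Cref{T79}.
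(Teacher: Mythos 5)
Your proof is correct and follows essentially the same route as the paper's: well-definedness of both distributional integrals via \Cref{T79}, Stokes' theorem for the smooth mollifications $f_\eps$, passage to the limit using a single approximation valid for both $\cM$ and $\partial\cM$ (thanks to $\frac{m}{m+1}>\frac{m-1}{m}$), and finally \Cref{T47} with $k=m$, $\tau\equiv 1$ to conclude the common value is zero.
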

\begin{proof}
First, note that by \Cref{T79} with $\tau=1$, both of the integrals
$\int_{\partial\cM}f^*\kappa$ and $\int_{\cM}f^*(d\kappa)$ are well defined and independent of a smooth approximations in
$C_{\rm b}^{0,\gamma'}(\partial{\cM};\R^{2n+1})$, $\frac{m-1}{m}<\gamma'\leq\gamma$
and 
$C_{\rm b}^{0,\gamma'}(\cM;\R^{2n+1})$, $\frac{m}{m+1}<\gamma'\leq\gamma$ respectively.

Let $f_\eps$ be a $C^\infty(\overbar{\cM};\R^{2n+1})$ approximation of $f$ in $C_{\rm b}^{0,\gamma'}(\cM;\R^{2n+1})$, $\gamma'\in(\frac{m}{m+1},\gamma]$. Since $\gamma'>\frac{m}{m+1}>\frac{m-1}{m}$, we have that $f_\eps|_{\partial\cM}$ is also a good approximation for the integral $\int_{\partial\cM}f^*\kappa$.
Therefore, Stokes' theorem and \Cref{T47} with $\tau=1$ yields
$$
\int_{\partial \cM} f^*\kappa\leftarrow \int_{\partial \cM} f_\eps^*\kappa = \int_{\cM} f_\eps^*(d\kappa)\rightarrow \int_{\cM} f^*(d\kappa)=0.
$$
The proof is complete.
\end{proof}

\section{Non-embedding theorem of Gromov}
\label{S4}
Gromov \cite[3.1.A]{gromov2} proved the following non-embedding result:

\begin{theorem}
\label{T55}
Every $\gamma$-H\"older continuous embedding $f:\R^m\to\bbbh^n$, $m\geq n+1$, satisfies $\gamma\leq\frac{n+1}{n+2}$.
\end{theorem}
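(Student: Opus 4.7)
I will reduce to the case $m=n+1$ and set up a contradiction between the rank-type \Cref{T47} and a topological nontriviality coming from Alexander--Poincar\'e duality. Since $f:\R^m\to\bbbh^n$ is a topological embedding with $m\geq n+1$, restrict $f$ to a closed $(n+1)$-disc $\overline{D}\subset\R^{n+1}\subset\R^m$; the restriction $f|_{\overline{D}}\in C_{\rm b}^{0,\gamma}(\overline{D};\bbbh^n)$ is still a topological embedding and remains $\gamma$-H\"older with finite norm by compactness. After identifying $\overline{D}\cong\overline{\bbbb^{n+1}}$, set $S:=f(\Sph^n)\subset\R^{2n+1}$, a topologically embedded $n$-sphere.

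The topological input is that $H_c^{n+1}(\R^{2n+1}\setminus S)\cong\R$ by Alexander--Poincar\'e duality, exactly as in \eqref{eq189}. I will choose a closed generator $\eta\in\Omega_c^{n+1}(\R^{2n+1}\setminus S)$ whose support lies at positive distance from the compact set $S$, and extend $\eta$ by zero to a smooth closed form on $\R^{2n+1}$. Since $\R^{2n+1}$ is contractible, write $\eta=d\beta$ for some $\beta\in\Omega^n(\R^{2n+1})$; then $\beta$ is closed on $U:=\R^{2n+1}\setminus\operatorname{supp}\eta\supset S$. By choosing $\eta$ as the Poincar\'e dual of an $n$-cycle in $U$ linking $S$ once, the pairing $\int_{\Sph^n}(f|_{\Sph^n})^*\beta$ computes that linking number and equals $1$.

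Given this setup, the contradiction is quick. On the one hand, \Cref{T47} applied with $\cM=\overline{\bbbb^{n+1}}$, $k=n+1$, $\tau\equiv 1$ and $\kappa=\eta$ is legal because $\gamma>\tfrac{n+1}{n+2}=\tfrac{k}{k+1}$, and gives
$$
\int_{\bbbb^{n+1}}f^*\eta=0.
$$
On the other hand, let $f_\eps\in C^\infty(\overline{\bbbb^{n+1}};\R^{2n+1})$ be the smooth approximation from \Cref{T50}, so that $f_\eps\to f$ in $C_{\rm b}^{0,\gamma'}$ for a fixed $\gamma'\in(\tfrac{n+1}{n+2},\gamma)$. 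Then \Cref{T79} gives $\int_{\bbbb^{n+1}}f_\eps^*\eta\to\int_{\bbbb^{n+1}}f^*\eta$. For $\eps$ small, uniform closeness forces $f_\eps(\Sph^n)\subset U$, so the classical Stokes theorem applied to the smooth $f_\eps$ yields
$$
\int_{\bbbb^{n+1}}f_\eps^*\eta=\int_{\bbbb^{n+1}}d(f_\eps^*\beta)=\int_{\Sph^n}(f_\eps|_{\Sph^n})^*\beta.
$$
Because $\beta$ is closed on $U$, the right-hand side is a homotopy invariant of $f_\eps|_{\Sph^n}$ as a map into $U$; the linear homotopy $(1-t)f_\eps+tf|_{\Sph^n}$ stays in $U$ once $\eps$ is small, so this integral coincides with $\int_{\Sph^n}(f|_{\Sph^n})^*\beta=1$. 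Passing to the limit gives $\int_{\bbbb^{n+1}}f^*\eta=1\neq 0$, contradicting the previous display.

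The main obstacle will be the topological setup rather than the analysis. The extension--mollification estimate behind \Cref{T47}, the stability \Cref{T79} of the distributional pullback, and the smooth-level Stokes theorem together supply the analytic content without new work. The genuinely topological task is to represent a generator of $H_c^{n+1}(\R^{2n+1}\setminus S)$ by a smooth $\eta$ supported away from $S$ and to verify that the associated pairing $\int_{\Sph^n}(f|_{\Sph^n})^*\beta$ is nonzero; for an arbitrary (possibly wild) topological $n$-sphere $S\subset\R^{2n+1}$ this is handled exactly by the Alexander--Poincar\'e duality mentioned in the introduction, by phrasing the invariant as the pairing of $[f|_{\Sph^n}]\in H_n(U)$ with $[\beta|_U]\in H^n(U)$ for the regular neighborhood $U^c=\operatorname{supp}\eta$ of a dual cycle.
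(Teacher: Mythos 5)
Your proposal is correct and follows essentially the same route as the paper's own argument in \Cref{GroPr}: reduce to $m=n+1$, use a form dual to a cycle linking $f(\Sph^n)$ (with the integral $\int_{\Sph^n}f^*\beta$ nonzero and defined via smooth approximation and the closedness of $\beta$ away from the sphere), then derive a contradiction from \Cref{T47}. The only organizational differences are cosmetic: the paper packages your ``apply \Cref{T47} with $\tau\equiv 1$, then pass to the boundary by smooth-level Stokes and stability under approximation'' into the single \Cref{T47b}, and, more substantively, the paper proves the needed topological statement self-containedly as \Cref{T21} by induction on the dimension of the sphere, whereas you invoke Alexander--Poincar\'e duality as a black box (exactly the interpretation the paper gives of \Cref{T21} in \Cref{R13}), so your argument is logically correct but leaves the linking-form construction for a possibly wild topological $n$-sphere unestablished from first principles.
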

The original proof is quite challenging as it relies on the $h$-principle and microflexibility. For a more detailed explanation of this proof, as well as an alternative approach using the Rumin complex \cite{rumin}, see \cite{pansu}. Additionally, there is another proof by Z{\"u}st \cite{zust2015} that shows the non-existence of an embedding $f \in C^{0,\gamma}(\R^2,\bbbh^1)$ for $\gamma > 2/3$. This proof is based on a factorization through a topological tree. However, this argument cannot be extended to the general case of \Cref{T55}.
We should also mention here the work of Balogh, Kozhevnikov, and Pansu \cite{BKP} described in the introduction.
The aim of this section is to present an elementary proof of a more general result, \Cref{T53}, originally proved in \cite{HS23}.

Let us compare \Cref{T55} with \Cref{T56}. First of all, the bound for the exponent obtained in \Cref{T56} is larger than $\frac{n+1}{n+2}$. Both bounds are equal only when $n=1$. However, even in the case $n=1$, \Cref{T56} proves non-existence of an embedding $f\in C^{0,\gamma}(\Sph^2,\bbbh^1)$, $\gamma>2/3$, and it is crucial for the proof that $\Sph^2$ is a closed surface since the argument is based on the fact that the image of $\Sph^2$
bounds a region in $\R^3$ of positive volume. 
\begin{theorem}
\label{T53}
Suppose that $m\geq n+1$,
$$
\frac{1}{2}\leq\gamma\leq\frac{n+1}{n+2}
\quad
\text{and}
\quad
\theta=\frac{n+1}{n}-\frac{2\gamma}{n}.
$$
Then, there does not exist a map
$$
f\in C^{0,\gamma+}(\overbar{\bbbb}^m;\bbbh^n)\cap C^{0,\theta}(\overbar{\bbbb}^m;\R^{2n+1})
$$
or a map
$$
f\in C^{0,\gamma}(\overbar{\bbbb}^m;\bbbh^n)\cap C^{0,\theta+}(\overbar{\bbbb}^m;\R^{2n+1}),
$$
such that $f|_{\Sph^{m-1}}$ is a topological embedding. 
\end{theorem}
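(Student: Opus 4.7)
The plan is to argue by contradiction, following the strategy sketched in the introduction. First I would reduce to the case $m=n+1$: given $f:\overbar{\bbbb}^m\to \bbbh^n$ satisfying the hypotheses with $m>n+1$, fix any $(n+1)$-dimensional linear subspace $P\subset \R^m$ through the origin and set $g:=f|_{P\cap\overbar{\bbbb}^m}$. Then $P\cap\overbar{\bbbb}^m\cong \overbar{\bbbb}^{n+1}$, the restriction inherits all assumed Hölder (and Hölder-plus) regularities, and $g|_{\Sph^n}=f|_{P\cap\Sph^{m-1}}$ remains a topological embedding as a restriction of an embedding. So it suffices to treat $m=n+1$.

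Assume $m=n+1$ and that such an $f$ exists; write $K:=f(\Sph^n)$. By Alexander--Lefschetz duality (cf.\ \Cref{T21} and \Cref{R13}),
\[
H_c^{n+1}(\R^{2n+1}\setminus K)\;\cong\;\tilde H^n(K)\;\cong\;\R.
\]
Choose a closed representative $\eta\in\Omega_c^{n+1}(\R^{2n+1}\setminus K)$ of a generator and extend it by zero to a closed form $\eta\in\Omega_c^{n+1}(\R^{2n+1})$ that vanishes in a neighborhood of $K$. The topological Lefschetz pairing identifies $[\eta]$ as the dual of the relative class $[f|_{\bbbb^{n+1}}]\in H_{n+1}(\R^{2n+1},K)\cong H_n(K)\cong \Z$ (the connecting map sends this class to the fundamental class of the embedded sphere). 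Once one matches the topological pairing with the distributional pullback furnished by \Cref{T79}, this yields
\begin{equation}\label{eq:t53pairing}
\int_{\bbbb^{n+1}} f^*\eta \;\neq\; 0.
\end{equation}
Here the distributional pullback is well defined because in each admissible regularity case either $\theta>\tfrac{n+1}{n+2}$ strictly (and \Cref{T79} applies directly with $k=n+1$, $\alpha=1$) or $\theta=\tfrac{n+1}{n+2}$ and the ``$+$'' hypothesis upgrades mollification to convergence in $C^{0,\theta}$.

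The contradiction is obtained by computing $\int_{\bbbb^{n+1}} f^*\eta$ and showing it vanishes, in the spirit of the proof of \Cref{T47}. \Cref{T49} provides smooth forms $\beta\in\Omega^n(\R^{2n+1})$ and $\sigma\in\Omega^{n-1}(\R^{2n+1})$ with
\[
\eta\;=\;\beta\wedge\upalpha\;+\;d(\sigma\wedge\upalpha),
\]
so, using the smooth approximation $f_\eps$ of \Cref{T50} and Stokes' theorem,
\[
\int_{\bbbb^{n+1}} f_\eps^*\eta
\;=\;
\int_{\bbbb^{n+1}} f_\eps^*\beta \wedge f_\eps^*\upalpha
\;+\;
\int_{\Sph^n} f_\eps^*\sigma \wedge f_\eps^*\upalpha.
\]
Two estimates are now applied. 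The sharp bound \Cref{T45} gives $\|f_\eps^*\upalpha\|_\infty\lesssim \eps^{2\gamma-1}\,[f]^{\,2}_{\gamma,\eps,\bbbh^n}$, while \Cref{T8}, applied with the $\R^{2n+1}$-regularity of $f$, yields $\|f_\eps^*\beta\|_\infty\lesssim \eps^{-n(1-\theta)}\,[f]^{\,n}_{\theta,\eps}$ and $\|f_\eps^*\sigma\|_\infty\lesssim \eps^{-(n-1)(1-\theta)}\,[f]^{\,n-1}_{\theta,\eps}$. Substituting $\theta=\tfrac{n+1-2\gamma}{n}$ pins the $\eps$-exponents exactly at their critical values: the volume integrand has total $\eps$-exponent $-n(1-\theta)+(2\gamma-1)=0$, and the boundary integrand has $-(n-1)(1-\theta)+(2\gamma-1)=(2\gamma-1)/n\geq 0$. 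In each of the two admissible scenarios ($[f]_{\gamma,\eps,\bbbh^n}\to 0$ when $f\in C^{0,\gamma+}(\,\cdot\,;\bbbh^n)$, or $[f]_{\theta,\eps}\to 0$ when $f\in C^{0,\theta+}(\,\cdot\,;\R^{2n+1})$) the Hölder prefactors vanish as $\eps\to 0$, so both integrals tend to zero, contradicting \eqref{eq:t53pairing}.

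The hardest part is that the $\eps$-exponents are balanced exactly at criticality by the very definition of $\theta$, so the whole argument rests on the ``$+$'' hypothesis supplying the residual small-scale decay; without it, one would be stuck with merely bounded pullback norms and no contradiction. A secondary technical point is the rigorous verification of \eqref{eq:t53pairing}: one must check that the distributional limit defining $\int_{\bbbb^{n+1}} f^*\eta$ agrees with the classical topological pairing. This can be done by approximating $f$ in $C^{0,\gamma'}\cap C^{0,\theta'}$ by smooth maps with the same boundary trace (via a collar extension of $f|_{\Sph^n}$) and invoking the pairing in the smooth case, where $\int_{\bbbb^{n+1}} f^*\eta$ computes the classical intersection of $f(\bbbb^{n+1})$ with a Poincaré-dual $n$-cycle of $[\eta]$.
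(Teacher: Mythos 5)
Your proposal follows the same high-level strategy as the paper's proof---Alexander duality to produce a compactly supported form whose pullback integral detects the embedding, the Lefschetz decomposition of \Cref{T49}, and balancing the $\eps^{2\gamma-1}$ gain from \Cref{T45} against the $\eps^{-k(1-\theta)}$ loss from \Cref{T8}, with the ``$+$'' hypothesis supplying the residual decay at criticality. Your computation of the balanced exponents is correct. However, there is a genuine gap, and one structural difference worth noting.

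The gap is in your step \eqref{eq:t53pairing}, $\int_{\bbbb^{n+1}} f^*\eta\neq 0$. You assert this via an identification of the distributional pullback with the topological Lefschetz pairing, but this identification does not come for free. In particular, \Cref{T79} (which you invoke to make $\int_{\bbbb^{n+1}} f^*\eta$ well defined) requires $\gamma'>\frac{k}{k+\alpha}=\frac{n+1}{n+2}$ \emph{strictly}, so at the critical point $\gamma=\theta=\frac{n+1}{n+2}$ it does not apply, and your claim that ``the `$+$' hypothesis upgrades mollification to convergence in $C^{0,\theta}$'' does not reach $\gamma'>\theta$. The paper avoids this entirely: \Cref{T21} produces $\kappa\in\Omega_c^n(\R^{2n+1})$ with $d\kappa$ supported away from $f(\Sph^n)$ and $\int_{\Sph^n}f^*\kappa=1$, where the sphere integral is defined for \emph{continuous} $f$ via \Cref{T20}/\Cref{R2} (no distributional pullback needed). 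The contradiction is then reached by the chain $1=\int_{\Sph^n}f^*\kappa=\int_{\Sph^n}f_\eps^*\kappa=\int_{\bbbb^{n+1}}f_\eps^*(d\kappa)\to 0$, with Stokes applied only to the smooth maps $f_\eps$. No separate ``distributional integral over the ball'' has to be defined or identified with a topological pairing.

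The structural difference: you choose the decomposition $\eta=\beta\wedge\upalpha+d(\sigma\wedge\upalpha)$ and obtain a boundary integral $\int_{\Sph^n}f_\eps^*\sigma\wedge f_\eps^*\upalpha$, which you then estimate. Two remarks. First, since $\eta=d\kappa$ vanishes on a neighborhood of $f(\Sph^n)$, \Cref{T49} gives that $\sigma$ vanishes there too, so for small $\eps$ one has $f_\eps^*\sigma\equiv 0$ on $\Sph^n$ and the boundary term is identically zero---your estimate is unnecessary. Second, the paper uses the \emph{other} form of the decomposition, $d\kappa=\upalpha\wedge\beta+d\upalpha\wedge\delta$, exploits the vanishing of $\delta$ near $f(\Sph^n)$ to apply Stokes with no boundary term, and collapses everything to a single volume estimate $\|f_\eps^*\upalpha\|_\infty\,\|f_\eps^*(\beta+d\delta)\|_\infty$. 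Either decomposition works, but the paper's handling is cleaner. You should rewrite the nonvanishing step following the paper's chain of equalities (via \Cref{T21}, \Cref{T20} and Stokes for $f_\eps$) rather than invoking an abstract pairing argument that requires a well-definedness result (\Cref{T79}) unavailable at the critical exponent.
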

\begin{remark}
If $\gamma\in (\frac{n+1}{n+2},1]$ and $\theta=\frac{n+1}{n}-\frac{2\gamma}{n}$, then \Cref{T53} is still true, because it follows from the case $\gamma=\frac{n+1}{n+2}$ (see \Cref{T97}). Therefore, including the range $\gamma\in(\frac{n+1}{n+2},1]$ in \Cref{T53} would not provide any new information.
\end{remark}
\begin{corollary}
\label{T97}
If $m\geq n+1$, then there does not exist a mapping
\begin{equation}
\label{eq143}
f\in C^{0,\frac{n+1}{n+2}}(\overbar{\bbbb}^m;\bbbh^n)\cap C^{0,\frac{n+1}{n+2}+}(\overbar{\bbbb}^m;\R^{2n+1}).
\end{equation}
such that $f|_{\Sph^{m-1}}$ is a topological embedding. In particular, there is no topological embedding of $\overbar{\bbbb}^m$ into $\bbbh^n$ of the class \eqref{eq143}
\end{corollary}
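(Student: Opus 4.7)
The plan is to deduce Corollary~\ref{T97} as a direct specialization of Theorem~\ref{T53}. First I would compute the exponent $\theta$ appearing in Theorem~\ref{T53} for the boundary value $\gamma = \frac{n+1}{n+2}$. A short calculation gives
\[
\theta = \frac{n+1}{n} - \frac{2\gamma}{n} = \frac{n+1}{n} - \frac{2(n+1)}{n(n+2)} = \frac{(n+1)(n+2) - 2(n+1)}{n(n+2)} = \frac{n(n+1)}{n(n+2)} = \frac{n+1}{n+2},
\]
so $\theta = \gamma = \frac{n+1}{n+2}$ exactly at this endpoint. This is the only arithmetic needed, and it is the reason the statement of the corollary takes its particular form.

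Next I would invoke the second alternative of Theorem~\ref{T53}, which forbids the existence of $f \in C^{0,\gamma}(\overbar{\bbbb}^m;\bbbh^n)\cap C^{0,\theta+}(\overbar{\bbbb}^m;\R^{2n+1})$ whose restriction to $\Sph^{m-1}$ is a topological embedding. With the computed values this reads precisely as: no map in $C^{0,\frac{n+1}{n+2}}(\overbar{\bbbb}^m;\bbbh^n)\cap C^{0,\frac{n+1}{n+2}+}(\overbar{\bbbb}^m;\R^{2n+1})$ has boundary restriction a topological embedding. This yields the first (and main) assertion of the corollary verbatim.

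For the final ``in particular'' statement, I would argue by restriction. If $f\in C^{0,\frac{n+1}{n+2}}(\overbar{\bbbb}^m;\bbbh^n)\cap C^{0,\frac{n+1}{n+2}+}(\overbar{\bbbb}^m;\R^{2n+1})$ were a topological embedding of $\overbar{\bbbb}^m$ into $\bbbh^n$, then in particular its restriction $f|_{\Sph^{m-1}}$ would be a topological embedding of $\Sph^{m-1}$ into $\bbbh^n$, contradicting the first part. No other ingredient is required; there is no serious obstacle, as the corollary is essentially a restatement of the endpoint case of Theorem~\ref{T53} together with the trivial observation that embeddings restrict to embeddings on subsets.
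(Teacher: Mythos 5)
Your proof is correct and follows exactly the paper's own argument: the paper deduces Corollary~\ref{T97} from Theorem~\ref{T53} precisely by observing that $\gamma=\frac{n+1}{n+2}$ forces $\theta=\frac{n+1}{n+2}$, which is the computation you carry out. The only difference is that you write out the arithmetic and the trivial restriction argument for the ``in particular'' clause, which the paper leaves implicit.
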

\begin{proof}
If $\gamma=\frac{n+1}{n+2}$, then $\theta=\frac{n+1}{n+2}$.
\end{proof}
\begin{remark}
Since $C^{0,\gamma}\subset C^{0,\frac{n+1}{n+2}+}$ for $\gamma>\frac{n+1}{n+2}$, \Cref{T55} follows from \Cref{T97}.
\end{remark}
If $\gamma=\frac{1}{2}$, then $\theta=1$ and hence we have the following result which was proved in 
\cite[Theorem~1.11]{BHW}:
\begin{corollary}
\label{T95}
Suppose that $m\geq n+1$. Then there does not exist an embedding
$$
f\in C^{0,\frac{1}{2}+}(\overbar{\bbbb}^m;\bbbh^n)\cap C^{0,1}(\overbar{\bbbb}^m;\bbbr^{2n+1}).
$$
\end{corollary}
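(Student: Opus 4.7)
The plan is to deduce Corollary~\ref{T95} as a direct specialization of Theorem~\ref{T53} by choosing $\gamma=\frac{1}{2}$. First I verify the arithmetic: with $\gamma=\frac{1}{2}$, the formula in Theorem~\ref{T53} gives
\[
\theta=\frac{n+1}{n}-\frac{2\gamma}{n}=\frac{n+1}{n}-\frac{1}{n}=1,
\]
so $\theta=1$. Since $\gamma=\frac{1}{2}\in[\frac{1}{2},\frac{n+1}{n+2}]$ (as $n\geq 1$), the hypotheses on $\gamma$ in Theorem~\ref{T53} are satisfied, and the regularity spaces appearing in the first alternative of that theorem, namely $C^{0,\gamma+}(\overbar{\bbbb}^m;\bbbh^n)\cap C^{0,\theta}(\overbar{\bbbb}^m;\R^{2n+1})$, specialize exactly to $C^{0,1/2+}(\overbar{\bbbb}^m;\bbbh^n)\cap C^{0,1}(\overbar{\bbbb}^m;\R^{2n+1})$, which is the class appearing in Corollary~\ref{T95}.

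Next I argue by contradiction: assume that such an embedding $f\in C^{0,1/2+}(\overbar{\bbbb}^m;\bbbh^n)\cap C^{0,1}(\overbar{\bbbb}^m;\R^{2n+1})$ exists. Since $f$ is injective and continuous on the compact set $\overbar{\bbbb}^m$, it is a topological embedding of $\overbar{\bbbb}^m$ into $\bbbh^n$, and in particular the restriction $f|_{\Sph^{m-1}}$ is continuous and injective on the compact set $\Sph^{m-1}$, hence itself a topological embedding of $\Sph^{m-1}$ into $\bbbh^n$. Therefore $f$ satisfies all the hypotheses of the first alternative of Theorem~\ref{T53} with $\gamma=\frac{1}{2}$ and $\theta=1$, and its existence contradicts that theorem, completing the proof.

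There is essentially no obstacle here: the entire content is (i) the numerical check $\theta=1$, (ii) the observation that full embeddings restrict to boundary embeddings, and (iii) a direct appeal to Theorem~\ref{T53}. The real work is hidden in Theorem~\ref{T53} itself, which we are allowed to use as a black box.
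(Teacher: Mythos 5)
Your proof is correct and matches the paper's approach exactly: the paper derives Corollary~\ref{T95} as the $\gamma=\frac{1}{2}$ (hence $\theta=1$) specialization of Theorem~\ref{T53}, which is precisely what you do. The only addition you make — observing that a topological embedding of $\overbar{\bbbb}^m$ restricts to a topological embedding of $\Sph^{m-1}$ — is a small but correct detail that the paper leaves implicit.
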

\begin{remark}
In some sense, we can regard \Cref{T53} as an interpolation of \Cref{T97} and \Cref{T95}.
\end{remark}

In order to prove \Cref{T53} we need to construct a certain differential form $\kappa$. This is done in \Cref{LN}.
As a straightforward application of \Cref{T47b} to this form $\kappa$, in \Cref{GroPr}, we present a very short proof of Gromov's \Cref{T55}, a special case of \Cref{T53}. The proof of \Cref{T53} is based on the same idea, but it is more complicated and it is presented in \Cref{ProofNonEmb}.

\subsection{Alexander duality}
\label{LN}
The content of this section is based on \cite{HS23}. We included it for the sake of completeness. 
The main result of this section is \Cref{T21}. While neither the statement nor the proof requires understanding of topology, the argument in the proof is based on ideas related to the Mayer–Vietoris sequence and the Alexander duality -- see e.g. \cite[Chapter~5]{MT97} and \cite[Corollary~1.29]{Vick}.

\begin{lemma}
\label{T20}
Let $k<n$ and let $f:\cM\to\R^n$ be a continuous map defined on a $k$-dimensional oriented and closed manifold $\cM$. If $\omega\in \Omega^k(\R^n)$ satisfies $d\omega=0$ in a neighborhood of the compact set $f(\cM)$, then there is $\delta>0$ such that if 
$g_0,g_1\in C^\infty(\cM;\R^n)$ satisfy $\Vert f-g_i\Vert_\infty<\delta$, $i=0,1$, then
$$
\int_{\cM} g_0^*\omega=\int_{\cM} g_1^*\omega.
$$
\end{lemma}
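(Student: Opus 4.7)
The plan is to connect $g_0$ and $g_1$ by the straight-line homotopy $H\colon\cM\times[0,1]\to\R^n$, $H(x,t)=(1-t)g_0(x)+tg_1(x)$, choose $\delta$ small enough that the image of $H$ stays inside a neighborhood on which $d\omega=0$, and then apply Stokes' theorem on $\cM\times[0,1]$ to the smooth $k$-form $H^*\omega$.

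First I would fix an open set $U\subset\R^n$ with $f(\cM)\subset U$ and $d\omega=0$ on $U$, which exists by hypothesis. Since $f(\cM)$ is compact, I can choose $\delta>0$ so small that the closed Euclidean $\delta$-neighborhood $N_\delta(f(\cM)):=\{y\in\R^n:\dist(y,f(\cM))\leq\delta\}$ is contained in $U$. Suppose now $g_0,g_1\in C^\infty(\cM;\R^n)$ satisfy $\Vert f-g_i\Vert_\infty<\delta$ for $i=0,1$. For any $(x,t)\in\cM\times[0,1]$, the triangle inequality gives
\[
|H(x,t)-f(x)|\leq (1-t)|g_0(x)-f(x)|+t|g_1(x)-f(x)|<\delta,
\]
so $H(\cM\times[0,1])\subset N_\delta(f(\cM))\subset U$. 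Consequently $d(H^*\omega)=H^*(d\omega)=0$ on all of $\cM\times[0,1]$.

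Next, since $g_0,g_1$ are smooth, $H$ is smooth, so $H^*\omega$ is a smooth $k$-form on the smooth compact oriented $(k+1)$-manifold $\cM\times[0,1]$, whose oriented boundary is $\cM\times\{1\}-\cM\times\{0\}$ (recall $\cM$ is closed). Applying Stokes' theorem (\Cref{T78}) then gives
\[
0=\int_{\cM\times[0,1]}d(H^*\omega)=\int_{\partial(\cM\times[0,1])}H^*\omega=\int_\cM g_1^*\omega-\int_\cM g_0^*\omega,
\]
using $H(\,\cdot\,,0)=g_0$ and $H(\,\cdot\,,1)=g_1$, which is the desired identity.

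I do not foresee any genuine obstacle here: the only step requiring attention is the uniform existence of $\delta$, which follows at once from compactness of $f(\cM)$ and convexity of Euclidean balls. I also note that the case $k=n$ is degenerate but consistent, since then $d\omega\in\Omega^{n+1}(\R^n)$ vanishes automatically, and the same argument goes through.
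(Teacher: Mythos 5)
Your proof is correct and takes essentially the same approach as the paper: the straight-line homotopy $H(x,t)=(1-t)g_0(x)+tg_1(x)$, the compactness argument producing a uniform $\delta$ so that the image of $H$ stays in the region where $d\omega=0$, and Stokes' theorem on $\cM\times[0,1]$. The only cosmetic difference is that you place the closed $\delta$-neighborhood inside an a priori open set $U$, whereas the paper simply shrinks the open $\delta$-neighborhood directly; the substance is identical.
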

\begin{remark}
\label{R2}
Therefore, if $f_\eps\rightrightarrows f$ is a smooth approximation of $f$, then 
there is $\eps_0>0$ such that for all $0<\tau<\eps_0$
\begin{equation}
\label{eq301}
\int_{\cM} f^*\omega:=\lim_{\eps\to 0}\int_{\cM} f^*_\eps\omega=
\int_{\cM} f_\tau^*\omega.
\end{equation}
The limit exists and it does not depend on the choice of the approximation, allowing one to define the integral of the pullback of a form $\omega$ for any continuous map, provided $d\omega$ vanishes in a neighborhood of $f(\cM)$.
\end{remark}
\begin{proof}
Let
$$
U_\delta=\{y\in\R^n:\, \dist(y,f(\cM))<\delta\}.
$$
Assume $\delta>0$ is so small that $d\omega=0$ in $U_\delta$. Let
$$
H:\cM\times [0,1]\to\R^n,
\quad
H(x,t)=(1-t)g_0(x)+tg_1(x)
$$
be a smooth homotopy between $g_0$ and $g_1$. Since
$$
|H(x,t)-f(x)|\leq (1-t)|g_0(x)-f(x)|+t|g_1(x)-f(x)|<\delta,
$$
it follows that the image of the homotopy $H$ is contained in the open set $U_\delta$, where $d\omega=0$ and the result follows from the Stokes theorem
$$
\int_{\cM}g_1^*\omega-\int_{\cM}g_0^*\omega=
\int_{\partial(\cM\times[0,1])}H^*\omega=\int_{\cM\times [0,1]} H^*(d\omega)=0.
$$
\end{proof}
\begin{theorem}
\label{T21}
Let $k<n$ and let $f:\Sph^k\to\R^n$ be a topological embedding (i.e. $f$ is continuous and one-to-one). Then, there is $\omega\in \Omega^k_c(\R^n)$ such that
$d\omega\in\Omega_c^{k+1}(\R^n\setminus f(\Sph^k))$ and
\begin{equation}
\label{eq302}
\int_{\Sph^k} f^*\omega=1,
\end{equation}
where the integral is defined as the limit \eqref{eq301}.
\end{theorem}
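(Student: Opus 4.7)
The theorem is essentially a de Rham reformulation of Alexander duality for the topologically embedded sphere $K:=f(\Sph^k)\subset\R^n$. My plan is to construct $\omega$ as a cut-off of the pull-back of a volume form on $\Sph^k$ under a smoothed neighborhood retraction onto $K$.

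\emph{Step 1 (neighborhood retract and smoothing).} Since $K$ is homeomorphic to the finite CW complex $\Sph^k$, it is a compact ANR, and Borsuk's theorem supplies an open neighborhood $U\supset K$ in $\R^n$ together with a continuous retraction $r\colon U\to K$. Because $f\colon \Sph^k\to K$ is a continuous bijection of compact Hausdorff spaces, $f^{-1}$ is continuous, so $\rho:=f^{-1}\circ r\colon U\to \Sph^k$ is continuous and satisfies $\rho\circ f=\operatorname{id}_{\Sph^k}$. Using the standard embedding $\Sph^k\subset\R^{k+1}$ and the nearest-point projection $\pi$ from a tubular neighborhood of $\Sph^k$ onto $\Sph^k$, I smooth $\rho$ by mollifying it componentwise in $\R^{k+1}$ and then projecting back via $\pi$, obtaining a smooth $\rho_\varepsilon\colon U_\varepsilon\to\Sph^k$ (defined on a slightly smaller open set still containing $K$) which is uniformly close to $\rho$ and homotopic to $\rho$ via the straight-line homotopy composed with $\pi$, provided $\varepsilon$ is small.

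\emph{Step 2 (closed form and pull-back integral).} Fix a smooth volume form $\nu\in\Omega^k(\Sph^k)$ with $\int_{\Sph^k}\nu=1$, and set $\widetilde\omega:=\rho_\varepsilon^{\ast}\nu\in\Omega^k(U_\varepsilon)$; this form is automatically closed since $\nu$ is a top form. For a smooth mollification $f_\tau$ of $f$ so close to $f$ that $f_\tau(\Sph^k)\subset U_\varepsilon$, the composition $\rho_\varepsilon\circ f_\tau\colon\Sph^k\to\Sph^k$ is uniformly close to $\rho\circ f=\operatorname{id}_{\Sph^k}$ and is therefore homotopic to the identity and of degree one. Hence
\[
\int_{\Sph^k} f_\tau^{\ast}\widetilde\omega=\int_{\Sph^k}(\rho_\varepsilon\circ f_\tau)^{\ast}\nu=\deg(\rho_\varepsilon\circ f_\tau)\int_{\Sph^k}\nu=1,
\]
and Remark~\ref{R2} gives $\int_{\Sph^k}f^{\ast}\widetilde\omega=1$ in the sense of \eqref{eq301}.

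\emph{Step 3 (cut-off).} Pick a smaller open $U'\Subset U_\varepsilon$ with $K\subset U'$ and a smooth cut-off $\chi\in C_c^\infty(U_\varepsilon)$ with $\chi\equiv 1$ on $U'$. Set $\omega:=\chi\widetilde\omega$, extended by zero outside $U_\varepsilon$. Then $\omega\in\Omega^k_c(\R^n)$, $d\omega=d\chi\wedge\widetilde\omega$ is supported in $U_\varepsilon\setminus U'\subset\R^n\setminus K$ and so belongs to $\Omega^{k+1}_c(\R^n\setminus f(\Sph^k))$; and for $\tau$ so small that $f_\tau(\Sph^k)\subset U'$ we have $f_\tau^{\ast}\omega=f_\tau^{\ast}\widetilde\omega$, whence \eqref{eq301} yields $\int_{\Sph^k}f^{\ast}\omega=1$.

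\emph{Main obstacle.} The delicate point is Step 1: the embedding $f$ may be wild (e.g.\ the Alexander horned sphere when $k=2$, $n=3$), so no tubular neighborhood of $K$ in $\R^n$ is available and the smooth submanifold technology fails. What rescues the argument is the ANR property of $\Sph^k$, which through Borsuk's theorem guarantees a \emph{continuous} neighborhood retract $r\colon U\to K$ regardless of how $K$ sits in $\R^n$; once this retraction is in hand, the remaining smoothing, degree computation, and cut-off are standard.
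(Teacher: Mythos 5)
Your proof is correct, and it takes a genuinely different route from the paper's. The paper proceeds by induction on $k$: it writes $\Sph^k$ as the union of two hemispheres meeting along the equator $E\cong\Sph^{k-1}$, applies the inductive hypothesis to $f|_E$ to get a form $\eta\in\Omega^{k-1}_c(\R^n)$, and then uses a partition of unity subordinate to $\{\R^n\setminus f(\Sph_-),\,\R^n\setminus f(\Sph_+),\,\widetilde E_\delta\}$ to split $d\eta=\omega+\sigma$ and extract the desired $k$-form $\omega$ by a Stokes computation; this is essentially a Mayer--Vietoris argument carried out by hand with elementary tools. You instead construct $\omega$ in one shot: Borsuk's theorem gives a continuous neighborhood retraction $r\colon U\to K=f(\Sph^k)$ (this is exactly what rescues the argument for wild embeddings, where no tubular neighborhood exists), composing with $f^{-1}$ yields a continuous left inverse $\rho$ with $\rho\circ f=\operatorname{id}$, and after a routine smoothing and cut-off, $\omega:=\chi\,\rho_\varepsilon^*\nu$ works, with the normalization $\int_{\Sph^k}f^*\omega=1$ coming from the degree-one property of $\rho_\varepsilon\circ f_\tau$. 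Your construction is conceptually cleaner — it avoids induction and makes $\omega$ visibly a ``linking form'' pulled back from the retraction — at the price of invoking the classical but nontrivial fact that compact manifolds are ANRs (so that the retraction exists regardless of how $K$ sits in $\R^n$). The paper's inductive argument buys self-containedness: it needs only partitions of unity and Stokes' theorem, which fits the paper's stated goal of keeping the topological input minimal.
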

\begin{remark}
\label{R13}
While we will not need this remark in what follows, it is important to understand that \Cref{T21} has a natural interpretation in the language of de~Rham cohomology. According to \cite[Corollary 1.29]{Vick} and the Poincar\'e duality \cite[Theorem~3.35]{Hatcher}, we have that
\begin{equation}
\label{eq133}
H_c^{k+1}(\R^n\setminus f(\Sph^k))=\R.
\end{equation}
If $\tau\in\Omega_c^{k+1}(\R^n\setminus f(\Sph^k))$, $d\tau=0$, then $\tau=d\eta$ for some $\eta\in \Omega^k(\R^n)$, and the mapping
$$
H_c^{k+1}(\R^n\setminus f(\Sph^k))\ni [\tau]\mapsto\int_{\Sph^k}f^*\eta\in\R
$$
is well defined and gives the isomorphism \eqref{eq133}. Therefore, if $\omega$ is as in \Cref{T21}, then the closed form $\tau:=d\omega\in\Omega_c^{k+1}(\R^n\setminus f(\Sph^k))$ is a generator of $H_c^{k+1}(\R^n\setminus f(\Sph^k))$. 
\end{remark}

\begin{proof}
We will prove the result using induction with respect to $k$.

For $k=0$, we have $\Sph^0=\{-1,+1\}$. Since $\Omega^0_c(\mathbb{R}^n)$ consists of compactly supported smooth functions $\omega \in C^\infty_c(\mathbb{R}^n)$, it follows that for any approximation $f_\eps$ of $f$ we have
$$
\int_{\Sph^0} f^*\omega=\lim_{\eps \to 0} \int_{\Sph^0}f_\eps^*\omega=
\lim_{\eps \to 0} \left (\omega(f_\eps(+1))-\omega(f_\eps(-1))\right ).
$$
So it suffices to pick $\omega \equiv 1$ around $f(1)$ and $\omega \equiv 0$ around $f(-1)$ to conclude the claim.

We now suppose that the result is true for $k-1$ and we will prove it for $k$.

Let $\Sph_+$, $\Sph_-$ and $E=\Sph_+\cap\Sph_-$ be the northern hemisphere, southern hemisphere and the equator of $\Sph^k$. Here we assume that the hemispheres are closed.
Let
$$
\tilde{\Sph}_+=f(\Sph_+),
\quad
\tilde{\Sph}_-=f(\Sph_-)
\quad
\text{and}
\quad
\tilde{E}=f(E).
$$

Since $E=\Sph^{k-1}$, by the induction hypothesis we can find
$\eta\in \Omega^{k-1}_c(\R^n)$ such that $d\eta=0$ is a neighborhood of $\tilde{E}$ and 
\begin{equation}
\label{eq134}
\int_{\Sph^{k-1}}f^*\eta=\int_{E}f^*\eta=1.
\end{equation}

Therefore, there is $\delta>0$ such that $d\eta=0$ on
$$
\tilde{E}_\delta =\{y\in \R^n:\dist(y,\tilde{E})<\delta\}.
$$
Let $\{\psi_+,\psi_-,\psi_E\}$ be a smooth partition of unity subordinate to the open covering 
$\{\R^n\setminus\tilde{\Sph}_-,\R^n\setminus\tilde{\Sph}_+,\tilde{E}_\delta\}$ of $\R^n$.
Note that $\psi_E d\eta=0$ since $d\eta$ vanishes on $\tilde{E}_\delta$. We have
$$
d\eta=\psi_+d\eta+\psi_-d\eta+\psi_Ed\eta=\psi_+d\eta+\psi_-d\eta:=\omega+\sigma
$$
where
$$
\omega=\psi_+d\eta\in \Omega^k_c(\R^n),
\quad
\sigma=\psi_-d\eta \in \Omega^k_c(\R^n).
$$
We claim that the form $\omega$ proves the induction step. To this end, we need to show that $d\omega$ vanishes in a neighborhood of $f(\Sph^k)$ and that \eqref{eq302} is satisfied.

First observe that $0=d^2\eta=d\omega+d\sigma$, so $d\omega=-d\sigma$. 
Now, $\omega$, and hence $d\omega$ vanish in a neighborhood of $\tilde{\Sph}_-$, because $\omega=\psi_+d\eta$, and $\psi_+$ is supported in $\R^n\setminus\tilde{\Sph}_-$. For a similar reason $d\omega=-d\sigma$ vanishes in a neighborhood of $\tilde{\Sph}_+$. Therefore we proved that $d\omega$ vanishes in a neighborhood of
$\tilde{\Sph}_-\cup\tilde{\Sph}_+=f(\Sph^k)$.

Let now $f_\eps\in C^\infty(\Sph^k;\R^n)$, $\Vert f-f_\eps\Vert_\infty\to 0$ as $\eps\to 0$. Then, according to \Cref{T20}, \Cref{R2}, and \eqref{eq134}, there is $\eps_o>0$ such that for $0<\eps<\eps_o$
$$
\int_E f^*\eta=\int_E f^*_\eps\eta=1,
\quad
\text{and}
\quad
\int_{\Sph^k} f^*\omega=\int_{\Sph^k} f^*_\eps\omega.
$$
Note that $\omega=\psi_+d\eta$ vanishes in a neighborhood of $\tilde{\Sph}_-=f(\Sph_-)$ and $\sigma =\psi_-d\eta$ vanishes in a neighborhood of $\tilde{S}_+=f(\Sph_+)$.
Therefore, if $\eps>0$ is sufficiently small, say $0<\eps<\eps_1<\eps_o$, then $\omega$ and $\sigma$ will vanish in neighborhoods of $f_\eps(\Sph_-)$ and $f_\eps(\Sph_+)$ respectively. In particular,
$$
\int_{\Sph^k} f_\eps^*\omega=\int_{\Sph_+} f_\eps^*\omega
\quad
\text{and}
\quad
\int_{\Sph_+} f_\eps^*\sigma=0.
$$
Therefore for $0<\eps<\eps_1$ we have
$$
\int_{\Sph^k} f^*\omega=\int_{\Sph^k}f_\eps^*\omega=\int_{\Sph_+} f_\eps^*\omega=
\int_{\Sph_+} f_\eps^*(d\eta-\sigma)=\int_{\Sph_+} f_\eps^*(d\eta)
\stackrel{\text{\scriptsize{(Stokes)}}}{=}
\int_E f^*_\eps\eta=1.
$$
The proof is complete.
\end{proof}

\subsection{Direct proof of  Theorem~\ref{T55}}
\label{GroPr}
Suppose to the contrary that there is a $\gamma$-H\"older continuous embedding $f:\R^m\to\bbbh^n$, where $n\geq m+1$ and $\gamma>\frac{n+1}{n+2}$. In particular, $f$ restricted to $\R^{n+1}\subset\R^m$ is an embedding and we can assume that $m=n+1$.
Since $f:\Sph^n\subset\R^{n+1}\to\bbbh^n$ is an embedding, Theorem~\ref{T21} yields a form $\omega\in \Omega^n_c(\R^{2n+1})$ such that
$
1=\int_{\Sph^n} f^*\omega=\int_{\bbbb^{n+1}} f^*(d\omega)=0,
$
where the last equality follows from Corollary~\ref{T47b}.
The contradiction completes the proof. 
\hfill$\Box$

The proof of \Cref{T53} follows from the same idea, but since the assumptions about the H\"older continuity of $f$ are of different type than the assumptions in \Cref{T47} and \Cref{T47b}, instead of applying these results directly, we have to adapt the proofs of these results.

\subsection{Proof of Theorem~\ref{T53}}
\label{ProofNonEmb}
In the proof, we denote by $[f]_\gamma$ and $[f]_\theta$ the H\"older seminorms with respect to the Kor\'anyi metric in $\bbbh^n$ and the Euclidean metric in $\R^{2n+1}$, respectively.

We can assume that $m=n+1$ since $\overbar{\bbbb}^m$ contains $\overbar{\bbbb}^{n+1}$. Suppose to the contrary that there is
$$
f\in C^{0,\gamma+}(\overbar{\bbbb}^{n+1},\bbbh^n)\cap
C^{0,\theta}(\overbar{\bbbb}^{n+1},\R^{2n+1})
\ \
\text{or}
\ \
f\in C^{0,\gamma}(\overbar{\bbbb}^{n+1},\bbbh^n)\cap
C^{0,\theta+}(\overbar{\bbbb}^{n+1},\R^{2n+1}),
$$
such that $f|_{\Sph^{n}}$ is a topological embedding and $\gamma$ and $\theta$ satisfy assumptions of the theorem. We can further assume that $f$ is defined on $\R^{n+1}$, because extending $f$ to  $x\not\in\overbar{\bbbb}^{n+1}$ by $f(x):=f(x/|x|)$ preserves regularity of the mapping.

By Theorem~\ref{T21} there is 
$\kappa\in \Omega_c^n(\R^{2n+1})$ such that 
$d\kappa\in\Omega^{n+1}_c(\R^{2n+1}\setminus f(\Sph^n))$ and
$$
\int_{\Sph^n} f^*\kappa =1.
$$
According to Corollary~\ref{T49} we have a decomposition
$$
d\kappa=\upalpha\wedge\beta+d\upalpha\wedge\delta,
\quad
\text{for some}
\quad
\beta\in \Omega^n_c\left(\R^{2n+1}\setminus f(\Sph^n)\right),\
\delta\in \Omega^{n-1}_c\left(\R^{2n+1}\setminus f(\Sph^n)\right).
$$
Let $f_\eps$ be the approximation of $f:\R^{n+1}\to\R^{2n+1}$ by mollification. 
Since $\delta=0$ near $f(\Sph^n)$, we have that $f^*_\eps\delta=0$ near $\partial\bbbb^{n+1}$, provided $\eps>0$ is sufficiently small. Thus, the form $d(f^*_\eps\upalpha\wedge f_\eps^*\delta)$ has compact support in $\bbbb^{n+1}$, and Stokes' theorem yields
$$
\int_{\bbbb^{n+1}}df_\eps^*\upalpha\wedge f_\eps^*\delta=
\int_{\bbbb^{n+1}} f_\eps^*\upalpha\wedge f_\eps^*(d\delta). 
$$
Therefore, Theorem~\ref{T45} and Lemma~\ref{T8} yield
\begin{align*}
1
&=
\int_{\Sph^n} f^*\kappa = 
\lim_{\eps\to 0} \int_{\Sph^n} f_\eps^*\kappa =
\lim_{\eps\to 0} \int_{\bbbb^{n+1}} f_\eps^*(d\kappa)=
\lim_{\eps\to 0} \int_{\bbbb^{n+1}}
\big(f^*_\eps\alpha\wedge f_\eps^*\beta+df_\eps^*\alpha\wedge f_\eps^*\delta\big)\\
&=
\lim_{\eps\to 0}\int_{\bbbb^{n+1}} f_\eps^*\alpha\wedge f_\eps^*(\beta+d\delta)
\lesssim 
\liminf_{\eps\to 0} 
\Vert f_\eps^*\alpha\Vert_\infty \Vert f_\eps^*(\beta+d\delta)\Vert_\infty\\
&\lesssim
\liminf_{\eps\to 0} \eps^{2\gamma-1}[f]^2_{\gamma,2\eps,B_2^{n+1}}\cdot\eps^{-n(1-\theta)}[f]_{\theta,\eps,B_2^{n+1}}^n\\
&=
\liminf_{\eps\to 0}[f]^2_{\gamma,2\eps,B_2^{n+1}}[f]_{\theta,\eps,B_2^{n+1}}^n
=0,
\end{align*}
because $f\in C^{0,\gamma+}$ or $f\in C^{0,\theta+}$.
\hfill $\Box$

\section{Hodge decomposition}
\label{S:Hodge}
The theory of differential forms and the Hodge decomposition in Sobolev spaces was developed in \cite{ISS99}. We slightly adapt their results for our purposes. We also refer the interested reader to the monograph \cite{CDK}.

Throughout this section we assume that $\cM$ is a compact and oriented $k$-dimensional Riemannian manifold without boundary.

The {\em Laplace-de Rham operator} is defined by
$$
\Delta:\Omega^\ell(\cM)\to\Omega^\ell(\cM),
\qquad
\Delta\omega=(d\delta+\delta d)\omega.
$$
We say that a form $\omega\in\Omega^\ell(\cM)$ is {\em harmonic} if $\Delta\omega=0$.
\begin{lemma}
\label{T103}
Let $\cM$ be a compact and oriented $k$-dimensional Riemannian manifold without boundary. Then, $\omega\in\Omega^\ell(\cM)$, $0\leq\ell\leq k$, is harmonic if and only if $\omega$ is closed $d\omega=0$ and co-closed $\delta\omega=0$.
\end{lemma}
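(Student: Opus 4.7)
The plan is to handle the two directions separately, the forward implication being immediate from the definition and the reverse implication reducing to the integration by parts identity \eqref{eq137}.

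For the forward direction, assume $d\omega = 0$ and $\delta\omega = 0$. Then by the very definition of the Laplace--de Rham operator,
\[
\Delta\omega = d\delta\omega + \delta d\omega = d(0) + \delta(0) = 0,
\]
so $\omega$ is harmonic. This direction requires neither compactness nor integration by parts.

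For the reverse direction, assume $\Delta\omega = 0$ (here we implicitly use that $\cM$ is compact, since the statement appears in the setting of Hodge theory on closed manifolds; the integration by parts \eqref{eq137} is stated precisely under this assumption). Pair $\Delta\omega$ with $\omega$ in the $L^2$ inner product and use \eqref{eq137} twice:
\[
0 = \int_{\cM}\langle\Delta\omega,\omega\rangle
= \int_{\cM}\langle d\delta\omega,\omega\rangle + \int_{\cM}\langle\delta d\omega,\omega\rangle
= \int_{\cM}\langle\delta\omega,\delta\omega\rangle + \int_{\cM}\langle d\omega,d\omega\rangle
= \|\delta\omega\|_2^2 + \|d\omega\|_2^2.
\]
Here the first application of \eqref{eq137} uses $\alpha = \delta\omega \in \Omega^{\ell-1}(\cM)$ and $\beta = \omega \in \Omega^\ell(\cM)$ (or, if $\ell = 0$, the term $\delta\omega$ is absent and only the second term survives), while the second application uses $\alpha = \omega \in \Omega^\ell(\cM)$ and $\beta = d\omega \in \Omega^{\ell+1}(\cM)$ (absent if $\ell = k$). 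Since both summands are non-negative and their sum vanishes, we conclude $d\omega = 0$ and $\delta\omega = 0$.

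There is no real obstacle here; the only subtlety is the tacit compactness hypothesis needed to justify using \eqref{eq137} and to ensure the $L^2$ pairings are finite. The forward direction is purely algebraic, and the reverse direction is the standard Hodge-theoretic observation that harmonicity forces both $d\omega$ and $\delta\omega$ to vanish via a positivity argument.
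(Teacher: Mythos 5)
Your proof is correct and follows essentially the same route as the paper: the forward direction is immediate from the definition, and the reverse direction pairs $\Delta\omega$ with $\omega$ and uses the integration by parts identity \eqref{eq137} to get $\Vert d\omega\Vert_2^2 + \Vert\delta\omega\Vert_2^2 = 0$. Your explicit remark about the tacit compactness hypothesis (needed to justify \eqref{eq137}) is a fair point that the paper's proof passes over silently.
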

\begin{proof}
The implication $\Leftarrow$ is obvious. The other implication $\Rightarrow$ easily follows from the integration by parts formula \eqref{eq137}. Indeed,
$$
0=\int_\cM\langle\Delta\omega,\omega\rangle=\int_\cM\langle d\omega,d\omega\rangle
+ \int_\cM\langle \delta\omega,\delta\omega\rangle=\Vert d\omega\Vert_2^2+\Vert\delta\omega\Vert_2^2,
$$
and hence $d\omega=0$ and $\delta\omega=0$.
\end{proof}
For the proof of the next result, see \cite[Proposition~6.5]{scott}.
\begin{lemma}[$L^p$-Hodge decomposition]
\label{T104}
Let $\cM$ be a compact and oriented $k$-dimensional Riemannian manifold without boundary. Then, for any $p\in (1,\infty)$ and any $\eta\in\Omega^\ell L^p(\cM)$, $0\leq\ell\leq k$, there exist
$$
\omega_1\in \Omega^{\ell-1}W^{1,p}(\cM) 
\quad
\text{and}
\quad
\omega_2\in\Omega^{\ell+1}(\cM),
$$
such that
$$
\eta=d\omega_1+\delta\omega_2+h,
$$
where $h\in\Omega^\ell(\cM)$ is a smooth harmonic form.
\end{lemma}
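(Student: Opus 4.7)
The plan is to reduce the decomposition to solvability of the Hodge--de~Rham equation $\Delta\alpha=\eta-h$ on $\cM$, apply elliptic $L^p$-regularity, and then split the resulting equation using $\Delta=d\delta+\delta d$. I will assume $\cM$ is compact (which is the case of interest; the hypothesis of no boundary ensures the integration-by-parts formula \eqref{eq137} is available without boundary terms).

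\textbf{Step 1: the $L^2$-theory.} First I would recall the classical compact Hodge theorem: the space
$$
\cH^\ell(\cM)=\{h\in\Omega^\ell(\cM):\, \Delta h=0\}
$$
is finite-dimensional and consists of smooth closed and co-closed forms (by \Cref{T103}). The $L^2$-orthogonal projection
$H:\Omega^\ell L^2(\cM)\to\cH^\ell(\cM)$
is well defined, and there is a Green's operator
$G:\Omega^\ell L^2(\cM)\to\Omega^\ell L^2(\cM)$
with image orthogonal to $\cH^\ell$, commuting with $d,\delta,\Delta$, and satisfying $\Delta G=G\Delta=I-H$. This is standard and can be quoted from e.g.\ Warner or de~Rham.

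\textbf{Step 2: $L^p$-extension via elliptic regularity.} The main analytic input is the $L^p$-estimate for the Hodge Laplacian: if $u\in\Omega^\ell L^p(\cM)$ satisfies $\Delta u=f$ with $f\in\Omega^\ell L^p(\cM)$ in the distributional sense, and $u\perp\cH^\ell$, then $u\in\Omega^\ell W^{2,p}(\cM)$ with
$$
\Vert u\Vert_{W^{2,p}}\lesssim \Vert f\Vert_p + \Vert u\Vert_p.
$$
This is the Calder\'on--Zygmund estimate for the elliptic operator $\Delta$, and it implies in particular that $G$ extends to a bounded operator
$G:\Omega^\ell L^p(\cM)\to\Omega^\ell W^{2,p}(\cM)$
for every $p\in(1,\infty)$, still commuting with $d$ and $\delta$ (commutation passes to the $L^p$ closure by density of smooth forms). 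This is the step that I expect to be the main technical obstacle, and where a reference to \cite[Proposition~6.5]{scott} or to standard elliptic theory on compact manifolds is essential.

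\textbf{Step 3: assembling the decomposition.} Given $\eta\in\Omega^\ell L^p(\cM)$, set $h=H\eta$ and $\alpha=G\eta\in\Omega^\ell W^{2,p}(\cM)$. Since $H$ has finite-dimensional smooth range, $h\in\Omega^\ell(\cM)$ is smooth and harmonic. From $\Delta\alpha=\eta-h$ and $\Delta=d\delta+\delta d$ one obtains
$$
\eta-h=d(\delta\alpha)+\delta(d\alpha).
$$
Now define
$$
\omega_1:=\delta\alpha\in\Omega^{\ell-1}W^{1,p}(\cM),
\qquad
\omega_2:=d\alpha\in\Omega^{\ell+1}W^{1,p}(\cM),
$$
which gives $\eta=d\omega_1+\delta\omega_2+h$ as required. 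The regularity claims on $\omega_1,\omega_2$ follow directly from $\alpha\in W^{2,p}$, and the statement for $\omega_2$ in the lemma is automatic since $W^{1,p}\subset L^p$. Uniqueness of the decomposition (which is not claimed but would follow similarly) comes from $L^p$-orthogonality of the ranges of $d$, $\delta$, and $\cH^\ell$.
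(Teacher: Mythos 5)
The paper does not prove Lemma~\ref{T104} itself; it simply cites \cite[Proposition~6.5]{scott} and later (Theorem~\ref{th:poisson} and Corollary~\ref{th:hodge}) gives a self-contained proof only of the special case $p=2$, $\cM$ compact, $H^\ell(\cM)=0$, via the direct method of the calculus of variations. Your plan is the standard one and is essentially the argument in Scott: solve $\Delta\alpha=\eta-h$ with the Green's operator, upgrade to $L^p$ by Calder\'on--Zygmund elliptic estimates, and split using $\Delta=d\delta+\delta d$. That is the same structural skeleton as the paper's $L^2$ proof of Corollary~\ref{th:hodge}, with two substitutions: you quote the existence of $H$ and $G$ from classical $L^2$ Hodge theory instead of constructing the solution to $\Delta\alpha=\eta-h$ variationally, and you invoke $L^p$ elliptic regularity (which the paper avoids because it only needs $p=2$). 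Both choices are sound and consistent with the cited reference; your argument is a valid proof of the lemma.

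One small point of care. Your construction gives $\omega_2=d\alpha\in\Omega^{\ell+1}W^{1,p}(\cM)$, and you then remark that ``the statement for $\omega_2$ in the lemma is automatic since $W^{1,p}\subset L^p$.'' As written, the lemma asserts $\omega_2\in\Omega^{\ell+1}(\cM)$, which in the paper's notation means \emph{smooth} forms, not $L^p$ forms; your construction does not (and should not) produce a smooth $\omega_2$ when $\eta$ is merely $L^p$. This is almost certainly a typo in the paper: the conclusion should read $\omega_2\in\Omega^{\ell+1}W^{1,p}(\cM)$, matching Scott's statement, and that is exactly the regularity your argument delivers. Also note you silently added the hypothesis that $\cM$ is compact, which is indeed needed (and is the regime used everywhere in the paper), even though the lemma as stated only says ``without boundary.''
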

\begin{remark}
If $\ell=0$, then $\omega_1=0$, and similarly, if $\ell=k$, then $\omega_2=0$.
\end{remark}
\begin{remark}
Later, we will sketch a proof of a special case of this result, see Corollary~\ref{th:hodge}.
\end{remark}
Recall that de~Rham cohomology is denoted by $H^\ell(\cM)$. 
In particular, $H^\ell(\cM)=0$ if for every closed smooth $\ell$-form $\omega \in\Omega^\ell(\cM)$, $d\omega = 0$, there exists $\alpha \in \Omega^{\ell-1}(\cM)$ such that $\omega = d\alpha$.
For example $H^\ell(\mathbb{S}^n) = 0$ for all $\ell = 1,\ldots,n-1$, and $H^\ell(\R^n)=0$ for all $\ell\geq 1$.

\begin{lemma}
\label{T101}
Let $\cM$ be an oriented $k$-dimensional Riemannian manifold without boundary. Assume that $H^\ell(\cM)=0$ for some $\ell=1,2,\ldots,k-1$. Then, for every $\omega\in \Omega^{k-\ell}(\cM)$ satisfying $\delta\omega=0$, there is $\beta\in\Omega^{k-\ell+1}(\cM)$ such that $\delta\beta=\omega$.
\begin{proof}
Indeed, $\delta \omega = 0$ implies $d\ast \omega = 0$, thus $\ast \omega = d\alpha$ for some $\alpha \in \Omega^{\ell-1}(\cM)$, and then 
\[
\omega = (-1)^{\ell(k-\ell)} \ast \ast \omega = (-1)^{\ell(k-\ell)}\ast d\alpha = (-1)^{k(k-\ell+1)}\delta \ast \alpha.
\]
Thus $\omega = \delta \beta$ for $\beta = (-1)^{k(k-\ell+1)} \ast \alpha$.    
\end{proof}
\end{lemma}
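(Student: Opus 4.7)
The strategy is to use the Hodge star to convert the co-closed condition on $\omega$ into a closed condition, to which the vanishing cohomology hypothesis applies. Since $\dim\cM=k$, the identity $\delta=(-1)^{k(m-1)+1}\ast d\ast$ for $m$-forms, specialized to $m=k-\ell$, yields $\delta\omega=\pm\ast d(\ast\omega)$. Because $\ast$ is a linear isomorphism onto $\Omega^\ell(\cM)$, the assumption $\delta\omega=0$ is equivalent to $d(\ast\omega)=0$, so $\ast\omega$ is a smooth closed $\ell$-form on $\cM$.

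I would then invoke the hypothesis $H^\ell(\cM)=0$ to produce $\alpha\in\Omega^{\ell-1}(\cM)$ with $\ast\omega=d\alpha$, and apply $\ast$ once more. Using $\ast\ast\eta=(-1)^{p(k-p)}\eta$ on $\eta=\omega\in\Omega^{k-\ell}(\cM)$ recovers $\omega$ as a signed multiple of $\ast d\alpha$. Finally, I would define $\beta:=c\,\ast\alpha\in\Omega^{k-\ell+1}(\cM)$ for an appropriate sign $c\in\{\pm 1\}$ and verify $\delta\beta=\omega$ by applying the formula $\delta\beta=\pm\ast d(\ast\beta)$ together with $\ast\ast\alpha=\pm\alpha$ on the $(\ell-1)$-form $\alpha$.

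The only non-trivial aspect is careful bookkeeping of the signs arising from the two uses of $\ast\ast\eta=(-1)^{p(k-p)}\eta$ (in degrees $p=k-\ell$ and $p=\ell-1$) together with the sign in the definition of $\delta$ in degrees $k-\ell$ and $k-\ell+1$. Once these four signs are reconciled, the candidate $\beta$ works on the nose. There are no analytic subtleties beyond the cohomological hypothesis itself, which is invoked only in the usual de~Rham form to pass from a closed to an exact form.
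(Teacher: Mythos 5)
Your proposal follows exactly the same route as the paper's proof: convert $\delta\omega=0$ to $d(\ast\omega)=0$ via the Hodge star, use $H^\ell(\cM)=0$ to write $\ast\omega=d\alpha$, and then take $\beta$ to be a signed Hodge star of $\alpha$. The only difference is that the paper works out the sign explicitly, getting $\beta=(-1)^{k(k-\ell+1)}\ast\alpha$, whereas you leave the sign bookkeeping as a final verification; that verification does indeed go through.
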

The next result is a straightforward consequence of the Poincar\'e duality \cite[Theorem~6.13]{warner}.
\begin{lemma}
\label{T102}
Let $\cM$ be a compact and oriented $k$-dimensional Riemannian manifold without boundary.  Then for any $\ell=1,2,\ldots,k-1$ we have that $\dim H^{\ell}(\cM)=\dim H^{k-\ell}(\cM)$. In particular, if $H^\ell(\cM)=0$, then $H^{k-\ell}(\cM)=0$.
\end{lemma}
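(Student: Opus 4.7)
The plan is to invoke the Poincar\'e duality theorem directly, as suggested by the paper. Poincar\'e duality, in the form cited from \cite[Theorem~6.13]{warner}, asserts that for a compact oriented $k$-dimensional manifold $\cM$ without boundary, the bilinear pairing
$$
H^\ell(\cM)\times H^{k-\ell}(\cM)\to \R,
\qquad
([\omega],[\eta])\mapsto \int_\cM \omega\wedge\eta,
$$
is well-defined and non-degenerate. First I would briefly record why the pairing descends to cohomology: if $\omega\in\Omega^\ell(\cM)$ and $\eta\in\Omega^{k-\ell}(\cM)$ are both closed, then $\omega\wedge\eta$ is a top form, and replacing $\omega$ by $\omega+d\alpha$ changes the integral by $\int_\cM d(\alpha\wedge\eta)$, which vanishes by Stokes' theorem (Lemma~\ref{T78}) since $\partial\cM=\varnothing$. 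A symmetric argument handles the $\eta$ entry.

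Next I would use compactness of $\cM$ to note that the de~Rham cohomology groups $H^\ell(\cM)$ and $H^{k-\ell}(\cM)$ are finite-dimensional (this is the standard consequence of Hodge theory, compatible with Lemma~\ref{T104}). Non-degeneracy of the pairing then identifies $H^\ell(\cM)$ with the dual space $H^{k-\ell}(\cM)^*$ as real vector spaces, and therefore
$$
\dim H^\ell(\cM)=\dim H^{k-\ell}(\cM)^*=\dim H^{k-\ell}(\cM).
$$

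The ``in particular'' clause is then immediate: if $H^\ell(\cM)=0$, then $\dim H^{k-\ell}(\cM)=\dim H^\ell(\cM)=0$, so $H^{k-\ell}(\cM)=0$ as well. There is no genuine obstacle here, since the substance of the argument is entirely encapsulated in the Poincar\'e duality theorem already cited; the proof amounts to unpacking the non-degeneracy of the cup-product pairing and applying it in the obvious way.
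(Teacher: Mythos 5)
Your proof is correct and follows the same route as the paper, which simply cites Poincar\'e duality (\cite[Theorem~6.13]{warner}) and observes that the dimension equality follows immediately. You merely unpack the non-degeneracy of the pairing and the finite-dimensionality of de~Rham cohomology, both of which are standard and implicit in the paper's one-line justification.
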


For the proof of the next result, see \cite[Theorem~4.8]{ISS99}.
\begin{lemma}[Gaffney's inequality]
\label{la:gaffney}
Let $\cM$ be a compact and oriented $k$-dimensional Riemannian manifold without boundary. 
Then, for any $\ell=0,1,2,\ldots,k$ we have
\[
 \|\omega\|_{1,2}  \approx 
 \|\omega\|_{2} + \|d\omega\|_{2} + \|\delta\omega\|_{2}
 \quad \text{for all } \omega \in\Omega^\ell W^{1,2}(\cM),
\]
with constants dependent on $\cM$ only.
\end{lemma}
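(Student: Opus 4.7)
The easy direction $\|d\omega\|_2 + \|\delta\omega\|_2 + \|\omega\|_2 \lesssim \|\omega\|_{1,2}$ follows at once from the coordinate expressions for $d$ and $\delta$: in a good coordinate system on $\cM$, both operators are first-order linear differential operators whose leading coefficients are bounded by the Riemannian metric and whose zeroth-order contributions involve bounded Christoffel symbols; summing over a finite atlas and invoking the definition \eqref{eq38} of $\|\omega\|_{1,2}$ yields the bound. So the content is the reverse inequality.

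My plan is to pass to smooth forms by density (the analogue of \Cref{T58} for forms, cf. \Cref{R11}) and then invoke the Bochner--Weitzenb\"ock identity
\[
(d\delta+\delta d)\omega = \nabla^{*}\nabla\,\omega + \mathcal R(\omega),
\]
where $\nabla$ is the Levi-Civita connection on $\Ep^\ell T^{*}\cM$ and $\mathcal R$ is a pointwise linear operator built from the Riemann curvature tensor. Since $\cM$ is compact, $\|\mathcal R\|_{L^\infty(\cM)}$ is bounded by some constant $C=C(\cM)$. Pairing the Weitzenb\"ock identity with $\omega$ in $L^{2}$ and using that $\cM$ has no boundary, the integration-by-parts formula \eqref{eq137} gives
\[
\int_{\cM}|d\omega|^{2}+\int_{\cM}|\delta\omega|^{2}
= \int_{\cM}\langle\nabla^{*}\nabla\omega,\omega\rangle+\int_{\cM}\langle\mathcal R\omega,\omega\rangle
= \int_{\cM}|\nabla\omega|^{2}+\int_{\cM}\langle\mathcal R\omega,\omega\rangle,
\]
and rearranging yields
\[
\|\nabla\omega\|_{2}^{2} \leq \|d\omega\|_{2}^{2}+\|\delta\omega\|_{2}^{2} + C\|\omega\|_{2}^{2}.
\]
Finally, to return from the covariant norm $\|\nabla\omega\|_{2}$ to the coordinate norm \eqref{eq38}, I observe that in any good chart the coefficients $\omega_I\circ x^{-1}$ satisfy $D(\omega_I\circ x^{-1})=\nabla\omega$ (expressed in the chart) minus bounded Christoffel contributions applied to the $\omega_I$, so $\sum_I\|D(\omega_I\circ x^{-1})\|_{2}\lesssim \|\nabla\omega\|_{2}+\|\omega\|_{2}$. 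A partition-of-unity sum over a finite good atlas gives $\|\omega\|_{1,2}\lesssim \|\nabla\omega\|_{2}+\|\omega\|_{2}$, and combining this with the previous display completes the proof for smooth $\omega$; density then gives the inequality for $\omega\in\Omega^\ell W^{1,2}(\cM)$.

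The main obstacle is the Weitzenb\"ock identity itself, which is not purely elementary but is standard (see e.g.\ Petersen, \emph{Riemannian Geometry}, Ch.~7, or Jost, \emph{Riemannian Geometry and Geometric Analysis}); once it is accepted, the rest of the argument is routine. A secondary technical point is the density of $\Omega^\ell(\cM)$ in $\Omega^\ell W^{1,2}(\cM)$, which is established by the standard partition-of-unity and mollification procedure described in \Cref{R11}. Both steps are well documented in the literature, so we only outline them here and refer to \cite[Theorem~4.8]{ISS99} for the full argument in the $W^{1,p}$ setting.
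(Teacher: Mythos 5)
The paper does not prove \Cref{la:gaffney} itself; it simply cites \cite[Theorem~4.8]{ISS99}. Your argument is a correct, self-contained proof of the boundaryless case via the standard Bochner--Weitzenb\"ock route, so it does not conflict with the paper but rather fills in what the paper leaves to the literature. The key steps are all sound: the easy direction follows from $d$ and $\delta$ being first-order operators with coefficients bounded on the compact manifold; the identity $\Delta = \nabla^{*}\nabla + \mathcal R$ for the Hodge Laplacian on $\ell$-forms is the Weitzenb\"ock formula; pairing with $\omega$ and using \eqref{eq137} (twice, once for $d\delta+\delta d$ and once for $\nabla^{*}\nabla$) gives $\|\nabla\omega\|_{2}^{2} \leq \|d\omega\|_{2}^{2}+\|\delta\omega\|_{2}^{2}+C\|\omega\|_{2}^{2}$; and the pointwise bound $|\partial_j(\omega_I\circ x^{-1})|\lesssim|\nabla\omega|+|\omega|$ in any good chart, together with the finiteness of the atlas, converts the covariant norm back to the coordinate norm of \eqref{eq38}. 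Density of smooth forms in $\Omega^\ell W^{1,2}(\cM)$ then extends the estimate from $\Omega^\ell(\cM)$ to $\Omega^\ell W^{1,2}(\cM)$, using that $d,\delta:W^{1,2}\to L^{2}$ are bounded. One remark worth keeping in mind: the reason \cite{ISS99} is the canonical reference is that it treats manifolds \emph{with} boundary, where the Weitzenb\"ock argument produces boundary terms and a more delicate analysis (tangential/normal boundary conditions, a trace estimate) is needed; in the boundaryless setting of the present lemma, your integration-by-parts steps are genuinely free of boundary contributions and the proof closes cleanly as you describe.
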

Here $\Vert\cdot\Vert_{1,2}$ and $\Vert\cdot\Vert_{2}$ denote the Sobolev $W^{1,2}$ and the $L^2$ norms of the forms.
\begin{remark}
Usually Gaffney's inequality is stated in the form of the inequality $\lesssim$, but since the opposite inequality is obvious, we have the norm equivalence $\approx$.
\end{remark}

\begin{lemma}
\label{la:noharm}
Let $\cM$ be a compact and oriented $k$-dimensional Riemannian manifold without boundary. 
Assume that $H^\ell(\mathcal{M})=0$ for some $1 \leq \ell \leq k-1$.
If $\omega \in\Omega^\ell W^{1,2}(\cM)$ satisfies $d\omega=0$ and $\delta\omega=0$, then $\omega \equiv 0$.
\end{lemma}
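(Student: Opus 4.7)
The plan is to apply the $L^2$ Hodge decomposition to $\omega$ and show that its harmonic component is forced to vanish by the cohomological hypothesis. First, I would invoke Lemma~\ref{T104} with $p=2$ to write
\[
\omega \;=\; d\omega_1 + \delta\omega_2 + h,
\]
where $\omega_1,\omega_2$ have $W^{1,2}$ regularity (sufficient for the integration-by-parts formula \eqref{eq137} after a standard density argument) and $h\in\Omega^\ell(\cM)$ is smooth and harmonic.

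Next, I would test $\omega$ against this decomposition. Using $\delta\omega=0$ eliminates the term with $d\omega_1$, since $\int_\cM\langle\omega,d\omega_1\rangle=\int_\cM\langle\delta\omega,\omega_1\rangle=0$; similarly, $d\omega=0$ eliminates the term with $\delta\omega_2$. This leaves $\|\omega\|_2^2=\int_\cM\langle\omega,h\rangle$. Substituting the decomposition once more into this surviving cross-term and using that $dh=\delta h=0$ by Lemma~\ref{T103} annihilates the pairings of $h$ with $d\omega_1$ and $\delta\omega_2$, yielding $\|\omega\|_2^2=\|h\|_2^2$. So it suffices to show $h\equiv 0$.

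For this final step, the hypothesis $1\le\ell\le k-1$ together with $H^\ell(\cM)=0$ and $dh=0$ yield a smooth $(\ell-1)$-form $\alpha$ with $h=d\alpha$. A direct integration by parts then gives
\[
\|h\|_2^2 \;=\; \int_\cM\langle d\alpha,h\rangle \;=\; \int_\cM\langle\alpha,\delta h\rangle \;=\; 0,
\]
so $h\equiv 0$, and therefore $\omega\equiv 0$.

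The only step requiring care is the justification of the integration-by-parts formula~\eqref{eq137} when the forms are only of Sobolev class $W^{1,2}$; this is however routine via approximation by smooth forms in the $W^{1,2}$-norm, since Gaffney's inequality (Lemma~\ref{la:gaffney}) guarantees that this topology controls both $d$ and $\delta$. I do not expect a substantive obstacle beyond this density argument. The conceptual core is simply that the assumption $H^\ell(\cM)=0$ is used only once—to kill the harmonic piece $h$—while everything else reduces to the orthogonality built into the Hodge decomposition.
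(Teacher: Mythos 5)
Your proof is correct, but it takes a genuinely different route from the paper's. You invoke the full $L^p$-Hodge decomposition (Lemma~\ref{T104}) to split $\omega=d\omega_1+\delta\omega_2+h$, use orthogonality to reduce the claim to $h\equiv 0$, and then apply $H^\ell(\cM)=0$ to the \emph{smooth} harmonic piece $h$. The paper instead applies the cohomological hypothesis to $\omega$ itself: it writes $\omega=d\alpha$ using $H^\ell(\cM)=0$ and, separately, $\omega=\delta\beta$ using $H^{k-\ell}(\cM)=0$ (obtained from Poincar\'e duality via Lemma~\ref{T102} and Lemma~\ref{T101}), and then concludes immediately from $\Vert\omega\Vert_2^2=\int_\cM\langle d\alpha,\delta\beta\rangle=0$. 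What the paper's argument buys: it needs no Hodge decomposition at all, which matters architecturally, since the paper then uses this lemma (through Lemma~\ref{pr:Poincare} and Theorem~\ref{th:poisson}) to give its own self-contained derivation of a special case of the Hodge decomposition in Corollary~\ref{th:hodge}. Your route would reintroduce the Hodge decomposition as an external input at the bottom of that chain. What your argument buys: you need $H^\ell(\cM)=0$ only to kill a smooth form $h$, so you avoid the paper's reliance on Poincar\'e duality and, more importantly, on the Sobolev-regularity version of ``closed plus zero cohomology implies exact'' (for which the paper has to cite~\cite[Proposition 4.5]{HST14}); your cohomological step is entirely at the level of smooth forms, and the only Sobolev issue is the routine density argument for the integration by parts \eqref{eq137}, which you correctly flag.
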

\begin{remark}
In fact, a more general result of Hodge says that in every class of the cohomology group, there is a unique (up to a multiplicative constant) harmonic form, see \cite[Theorem~6.11]{warner}.  
\end{remark}
\begin{proof}
If  $\omega \in\Omega^\ell(\cM)$  satisfies $d\omega = 0$ and $\delta \omega = 0$ (i.e., if $\omega$ is a smooth harmonic form), then $\omega = d\alpha = \delta \beta$ for some $\alpha\in \Omega^{\ell-1}(\cM)$ and $\beta\in\Omega^{\ell+1}(\cM)$. Indeed, existence of $\alpha$ follows from $H^{\ell}(\cM)=0$. Since $H^{k-\ell}(\cM)=0$ by Lemma~\ref{T102}, existence of $\beta$ follows from Lemma~\ref{T101}.
This is also true when $\omega \in \Omega^\ell W^{1,2}(\cM)$ satisfies $d\omega=0$ and $\delta\omega=0$, but now the forms $\alpha$ and $\beta$ have Sobolev regularity, see \cite[Proposition 4.5.]{HST14}.
The integration by parts \eqref{eq137} yields 
\[
\|\omega\|_{2}^2 = \int_{\mathcal{M}} \langle\omega,\omega\rangle = \int_{\mathcal{M}} \langle d\alpha,\delta \beta\rangle = \int_{\mathcal{M}} \langle d^2\alpha,\beta\rangle=0.
\]
Consequently $\omega \equiv 0$.
\end{proof}

\begin{lemma}[Gaffney-Poincar\'e inequality]
\label{pr:Poincare}
Let $\cM$ be a compact and oriented $k$-dimensional Riemannian manifold without boundary. 
If $H^\ell(\mathcal{M})=0$ for some $1 \leq \ell \leq k-1$, then
we have
\[
\|\omega \|_{1,2} \approx \|d\omega \|_{2} + \|\delta \omega \|_{2} 
\quad
\text{for all } \omega\in\Omega^\ell W^{1,2}(\cM),
\]
with constants dependent on $\cM$ only.
\end{lemma}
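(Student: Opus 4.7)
The inequality $\|d\omega\|_2 + \|\delta\omega\|_2 \lesssim \|\omega\|_{1,2}$ is immediate from the definition of $d$ and $\delta$ (or from Gaffney's inequality in \Cref{la:gaffney}). The content is therefore the reverse inequality, and in view of \Cref{la:gaffney} it suffices to establish the Poincaré-type bound
\begin{equation*}
\|\omega\|_2 \lesssim \|d\omega\|_2 + \|\delta\omega\|_2
\quad \text{for all } \omega \in \Omega^\ell W^{1,2}(\cM).
\end{equation*}

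The plan is a standard compactness-contradiction argument. Suppose the inequality fails; then there exists a sequence $(\omega_n) \subset \Omega^\ell W^{1,2}(\cM)$ with $\|\omega_n\|_2 = 1$ and $\|d\omega_n\|_2 + \|\delta\omega_n\|_2 \to 0$. By \Cref{la:gaffney}, the sequence is bounded in $\Omega^\ell W^{1,2}(\cM)$. Since $\cM$ is compact, the Rellich-Kondrachov theorem (applied coefficient-wise in good coordinate systems, cf.\ \Cref{GCS}) yields a subsequence, still denoted $(\omega_n)$, that converges weakly in $W^{1,2}$ to some $\omega \in \Omega^\ell W^{1,2}(\cM)$ and strongly in $L^2$ to the same limit. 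In particular $\|\omega\|_2 = 1$.

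Next I would identify the limit as a harmonic form. The operators $d$ and $\delta$ are continuous linear maps from $\Omega^\ell W^{1,2}(\cM)$ to $\Omega^{\ell+1} L^2(\cM)$ and $\Omega^{\ell-1}L^2(\cM)$ respectively, hence weakly continuous. Therefore $d\omega_n \rightharpoonup d\omega$ and $\delta\omega_n \rightharpoonup \delta\omega$ in $L^2$. But $d\omega_n \to 0$ and $\delta\omega_n \to 0$ strongly in $L^2$ by hypothesis, so by uniqueness of weak limits $d\omega = 0$ and $\delta\omega = 0$.

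Finally, \Cref{la:noharm}, which requires exactly the cohomological assumption $H^\ell(\cM) = 0$, forces $\omega \equiv 0$. This contradicts $\|\omega\|_2 = 1$, completing the proof. The only subtle point is the identification $\delta\omega_n \rightharpoonup \delta\omega$ in $L^2$: it relies on the fact that $\delta$ is a first-order differential operator with smooth coefficients (in local charts $\delta = \pm * d\, *$ with the smooth Hodge star of the Riemannian metric), so it maps $W^{1,2}$ continuously, and hence weakly continuously, into $L^2$; this is the expected workhorse step but presents no real obstacle.
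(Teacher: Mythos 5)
Your proof is correct and follows essentially the same route as the paper: reduce to the Poincar\'e bound via Gaffney, run a compactness–contradiction argument using Rellich–Kondrachov, identify the limit as a closed and co-closed form, and invoke \Cref{la:noharm} to kill it. The only cosmetic difference is how you show $d\omega=0$, $\delta\omega=0$: you appeal to weak continuity of the bounded linear operators $d,\delta: W^{1,2}\to L^2$ and uniqueness of weak limits, whereas the paper passes to the limit by testing against smooth forms in local coordinate patches—both are valid and interchangeable.
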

\begin{proof}
According to Lemma~\ref{la:gaffney}, it suffices to prove the Poincar\'e type estimate
\[
\|\omega \|_{2} \lesssim \|d\omega \|_{2} + \|\delta \omega \|_{2}.
\]
The claim follows from a typical blowup-type argument used to prove the  Poincar\'e inequality: Assume the claim is false, then there exists a sequence $\omega_i$ with
\[
 \|\omega_i \|_{2} = 1
\]
and
\begin{equation}
\label{eq:poinc:1}
 \|d\omega_i \|_{2} + \|\delta \omega_i \|_{2} \leq \frac{1}{i}.
\end{equation}
By Gaffney's inequality, Lemma~\ref{la:gaffney},
\[
 \sup_{i} \|\omega_i\|_{1,2} < \infty.
\]
From the Rellich-Kondrachov theorem (we can apply the Euclidean version on every coordinate patch of $\mathcal{M}$), we obtain a subsequence (still denoted by $\omega_i$) convergent in $L^2$ to a limit $\omega \in \Omega^\ell W^{1,2}(\cM)$,
\begin{equation}
\label{eq:poinc:2}
\|\omega \|_{2} = \lim_{i \to \infty} \|\omega_i \|_{2} = 1.
\end{equation}
Moreover, using a partition of unity and arguing in coordinate-patches, we have weak convergence: for any $\eta \in \Omega^{k-\ell-1}(\cM)$
\[
\int_{\mathcal{M}} d\omega \wedge \eta = \lim_{i\to\infty}  \int_{\mathcal{M}} d\omega_i \wedge \eta=0,
\]
and  for any $\tau \in \Omega^{k-\ell+1}(\cM)$
\[
\int_{\mathcal{M}} \delta \omega \wedge \tau =\lim_{i\to\infty}  \int_{\mathcal{M}} \delta \omega_i \wedge \tau=0.
\]
The fact that the limits equal zero follows from \eqref{eq:poinc:1}.
Therefore, 
$d\omega = 0$ and $\delta \omega = 0$. Hence, Lemma~\ref{la:noharm} yields $\omega \equiv 0$, which contradicts \eqref{eq:poinc:2}.
\end{proof}

\begin{theorem}
\label{th:poisson}
Let $\cM$ be a compact and oriented $k$-dimensional Riemannian manifold without boundary. 
Assume also that $H^\ell(\mathcal{M})=0$ for some $1 \leq \ell \leq k-1$.
Then, for any $\ell$-form $\omega \in\Omega^\ell(\cM)$ there exists a unique $\eta \in\Omega^\ell(\cM)$ such that 
\begin{equation}
\label{eq:poisson:pde}
\Delta \eta=(d\delta+\delta d)\eta = \omega. 
\end{equation}
Moreover,
\begin{equation}
\label{eq145}
\Vert\eta\Vert_{1,2}\leq C(\cM)\Vert\omega\Vert_2.
\end{equation}
\end{theorem}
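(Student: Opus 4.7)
The plan is to solve \eqref{eq:poisson:pde} via a standard variational approach, using the Gaffney-Poincar\'e inequality as the coercivity ingredient. Consider the bilinear form
$$
B(\eta,\phi) := \int_\cM \langle d\eta, d\phi\rangle + \int_\cM \langle \delta\eta, \delta\phi\rangle
$$
on the Hilbert space $\Omega^\ell W^{1,2}(\cM)$. Continuity $|B(\eta,\phi)| \lesssim \|\eta\|_{1,2}\|\phi\|_{1,2}$ is immediate. Coercivity $B(\eta,\eta) \gtrsim \|\eta\|_{1,2}^2$ is precisely \Cref{pr:Poincare}, which applies thanks to the assumption $H^\ell(\cM) = 0$. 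The linear functional $\phi \mapsto \int_\cM \langle \omega,\phi\rangle$ is bounded by $\|\omega\|_2 \|\phi\|_2 \leq \|\omega\|_2\|\phi\|_{1,2}$, so the Lax--Milgram theorem yields a unique $\eta \in \Omega^\ell W^{1,2}(\cM)$ with
$$
B(\eta,\phi) = \int_\cM\langle \omega,\phi\rangle
\qquad \text{for all } \phi \in \Omega^\ell W^{1,2}(\cM),
$$
together with the estimate \eqref{eq145}.

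Next I would promote this weak solution to a smooth solution of $\Delta\eta = \omega$. Since $\omega$ is smooth, this is a matter of elliptic regularity: the Laplace--de Rham operator $\Delta = d\delta + \delta d$ is elliptic (in local coordinates it is, on each component, a perturbation of the componentwise Laplacian by lower-order terms), so writing the weak identity in a good coordinate system (\Cref{GCS}) and applying interior elliptic regularity to each coefficient bootstraps $\eta$ from $W^{1,2}$ to $W^{k,2}$ for all $k$, hence to $C^\infty$ by Sobolev embedding. Once $\eta$ is smooth, integration by parts in the weak identity gives $\int_\cM \langle \Delta\eta - \omega, \phi\rangle = 0$ for every smooth test form $\phi$, whence $\Delta\eta = \omega$ pointwise.

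For uniqueness, suppose $\eta_1,\eta_2$ are two smooth solutions and let $\eta = \eta_1 - \eta_2$, so $\Delta \eta = 0$. By \Cref{T103}, $\eta$ is harmonic in the strong sense $d\eta = 0$ and $\delta\eta = 0$, and then \Cref{la:noharm} (valid because $H^\ell(\cM) = 0$) forces $\eta \equiv 0$.

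The main obstacle is the regularity step: verifying that $\Delta$ is elliptic as a system on forms and invoking interior $W^{k,2}$-regularity in each coordinate patch, which is routine but requires working with local coordinate expressions. Everything else---existence, the a priori estimate, and uniqueness---follows cleanly from the Hilbert-space machinery once \Cref{pr:Poincare} and \Cref{la:noharm} are in hand.
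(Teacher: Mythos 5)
Your proof is correct and follows essentially the same route as the paper: the paper minimizes the energy $\mathcal{E}(\eta)=\frac{1}{2}(\Vert d\eta\Vert_2^2+\Vert\delta\eta\Vert_2^2)-\int_\cM\langle\omega,\eta\rangle$ by the direct method, which for this symmetric coercive quadratic functional is interchangeable with your Lax--Milgram argument and leads to the identical weak formulation. The coercivity input (\Cref{pr:Poincare}), the elliptic regularity step, and the uniqueness argument via \Cref{T103} and \Cref{la:noharm} all match the paper's proof.
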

\begin{proof}
Consider the energy functional
\[
\mathcal{E}(\eta) 
:= 
\frac{1}{2}\int_\cM\langle d\eta,d\eta\rangle+\langle\delta\eta,\delta\eta\rangle-2\langle\omega,\eta\rangle
=
\frac{1}{2}\brac{\Vert d\eta\Vert_2^2+\Vert\delta\eta\Vert_2^2}-\int_\cM \langle\omega,\eta\rangle.
\]
We can minimize $\mathcal{E}$ over $\eta \in\Omega^\ell W^{1,2}(\cM)$ by the direct method of the calculus of variations. Indeed, \Cref{pr:Poincare}, yields
\[
\begin{split}
\|\eta\|_{1,2}^2  
&\leq 
C(\mathcal{M}) \frac{1}{2}\brac{\Vert d\eta\Vert_2^2+\Vert\delta\eta\Vert_2^2}=
C(\cM)\bigg(\mathcal{E}(\eta)+\int_\cM\langle\omega,\eta\rangle\bigg)\\
&\leq 
C(\cM)\brac{\mathcal{E}(\eta) + \|\omega\|_{2}\, \|\eta\|_{2}}. 
\end{split}
\]
By Young's inequality, 
\[
 \|\eta\|_{1,2}^2  \leq C(\mathcal{M})\mathcal{E}(\eta) + C'(\mathcal{M}) \|\omega\|_{2}^2 +\frac{1}{2} \|\eta\|_{2}^2. 
\]
Absorbing $\frac{1}{2}\|\eta\|_{2}^2$ to the left-hand side we obtain 
\begin{equation}
\label{eq136}
\Vert\eta\Vert_{1,2}^2\leq C(\cM)(\mathcal{E}(\eta)+\Vert\omega\Vert_2^2),
\end{equation}
and hence $\mathcal{E}$ is coercive. 
Since $\mathcal{E}$ is sequentially weakly lower semicontinuous on $W^{1,2}$, the direct method of calculus of variations gives a minimizer $\eta \in \Omega^\ell W^{1,2}(\cM)$ of $\mathcal{E}$. 

To derive the Euler-Lagrange equation for the minimizer $\eta$, observe that for any $\phi\in\Omega^\ell W^{1,2}(\cM)$, we have
$\frac{d}{dt}\big|_{t=0}\mathcal{E}(\eta+t\phi)=0$, and hence
\begin{equation}
\label{eq135}
\int_\cM \langle d\eta,d\phi\rangle+\langle\delta\eta,\delta\phi\rangle=\int_\cM\langle\omega,\phi\rangle
\quad
\text{for all } \phi\in \Omega^\ell W^{1,2}(\cM).
\end{equation}
The integration by parts formula \eqref{eq137}
shows that \eqref{eq135} is the weak formulation of \eqref{eq:poisson:pde}.
Observe now that $\Delta\eta=\omega$ restricted to every coordinate patch is an elliptic equation \cite[Section~6.35]{warner}, and by elliptic regularity theory (\cite[Theorem 6.30]{warner}), $\eta \in C^\infty$.

For uniqueness, we need to show that if $\eta \in\Omega^\ell (\cM)$ is a harmonic form i.e., $\lap\eta=0$ on $\cM$, then $\eta \equiv 0$. This, however follows from Lemma~\ref{T103} and Lemma~\ref{la:noharm}. 

It remains to prove inequality \eqref{eq145}. Since $\eta$ is a minimizer over $\Omega^\ell W^{1,2}(\cM)$, $\mathcal{E}(\eta)\leq\mathcal{E}(0)=0$, and hence estimate \eqref{eq136} proves \eqref{eq145}.
\end{proof}

As a consequence of Theorem~\ref{th:poisson} we obtain the following Hodge decomposition. 
\begin{corollary}
\label{th:hodge}
Let $\cM$ be a compact and oriented $k$-dimensional Riemannian manifold without boundary. 
Assume that $H^\ell(\cM)=0$ for some $1\leq \ell\leq k-1$.
If $\omega \in\Omega^\ell(\cM)$, then there are forms $\omega_1 \in \Omega^{\ell-1}(\cM)$, $\omega_2 \in \Omega^{\ell+1}(\cM)$ such that 
\begin{equation}
\label{eq:hodge:1}
\omega = d\omega_1 + \delta \omega_2 
\quad 
\text{and}
\quad
\delta \omega_1 = 0, \
d\omega_2 = 0,
\end{equation}
\begin{equation}
\label{eq156}
\Vert\omega_1\Vert_2\lesssim_\cM \Vert d\omega_1\Vert_2,
\qquad
\Vert\omega_2\Vert_2\lesssim_\cM \Vert \delta\omega_2\Vert_2,
\end{equation}
\begin{equation}
\label{eq138}
\Vert\omega_1\Vert_2+\Vert\omega_2\Vert_2\lesssim_\cM \Vert\omega\Vert_2.
\end{equation}
Moreover, the forms $d\omega_1$ and $\delta\omega_2$ satisfying \eqref{eq:hodge:1} are unique.
\end{corollary}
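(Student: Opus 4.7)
I would obtain $(\omega_1,\omega_2)$ directly from the Poisson problem for the Hodge Laplacian already solved in \Cref{th:poisson}, and then verify the three claims in turn.

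\textbf{Construction.} Apply \Cref{th:poisson} to produce $\eta\in\Omega^\ell(\cM)$ with $\Delta\eta=\omega$ and $\|\eta\|_{1,2}\lesssim_\cM\|\omega\|_2$, and set
\[
\omega_1:=\delta\eta\in\Omega^{\ell-1}(\cM),\qquad \omega_2:=d\eta\in\Omega^{\ell+1}(\cM).
\]
The identities $d^2=\delta^2=0$ yield at once $\delta\omega_1=0$, $d\omega_2=0$, and $d\omega_1+\delta\omega_2=d\delta\eta+\delta d\eta=\Delta\eta=\omega$. The bound \eqref{eq138} is immediate from $\|\omega_1\|_2+\|\omega_2\|_2\le 2\|\eta\|_{1,2}\lesssim_\cM\|\omega\|_2$.

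\textbf{Main obstacle: the Poincar\'e bound \eqref{eq156}.} The Gaffney-Poincar\'e inequality \Cref{pr:Poincare} cannot be quoted verbatim, because $\omega_1$ has degree $\ell-1$ and $\omega_2$ has degree $\ell+1$, while we are only assuming $H^\ell(\cM)=0$. I would instead adapt the blow-up argument from the proof of \Cref{pr:Poincare}, leveraging the \emph{coexactness} of $\omega_1$ (and the exactness of $\omega_2$). Suppose the bound fails along a sequence $\omega_1^{(i)}=\delta\eta_i$ normalized by $\|\omega_1^{(i)}\|_2=1$ with $\|d\omega_1^{(i)}\|_2\to 0$. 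Since $\delta\omega_1^{(i)}=0$, Gaffney's inequality \Cref{la:gaffney} yields a uniform $W^{1,2}$ bound, so after passing to a subsequence, $\omega_1^{(i)}\to\omega_1^\infty$ strongly in $L^2$ and weakly in $W^{1,2}$, with $\|\omega_1^\infty\|_2=1$, $d\omega_1^\infty=0$ and $\delta\omega_1^\infty=0$; elliptic regularity for $\Delta$ makes $\omega_1^\infty$ smooth and harmonic. The coexact structure now forces $\omega_1^\infty\equiv 0$: for every smooth closed $\phi\in\Omega^{\ell-1}(\cM)$,
\[
\int_\cM\langle\omega_1^{(i)},\phi\rangle=\int_\cM\langle\delta\eta_i,\phi\rangle=\int_\cM\langle\eta_i,d\phi\rangle=0,
\]
and passing to the $L^2$ limit with the admissible choice $\phi=\omega_1^\infty$ gives $\|\omega_1^\infty\|_2^2=0$, a contradiction. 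The estimate $\|\omega_2\|_2\lesssim_\cM\|\delta\omega_2\|_2$ is obtained by the dual argument, testing the exact forms $\omega_2^{(i)}=d\eta_i$ against smooth coclosed $(\ell+1)$-forms.

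\textbf{Uniqueness of $d\omega_1$ and $\delta\omega_2$.} If $(\tilde\omega_1,\tilde\omega_2)$ is another admissible pair, put $\alpha:=d(\omega_1-\tilde\omega_1)$ and $\beta:=\delta(\omega_2-\tilde\omega_2)$, so that $\alpha+\beta=0$. The integration by parts formula \eqref{eq137} yields $\langle\alpha,\beta\rangle=\langle d^2(\omega_1-\tilde\omega_1),\omega_2-\tilde\omega_2\rangle=0$, whence $\|\alpha\|_2^2=-\langle\alpha,\beta\rangle=0$; hence $d\omega_1=d\tilde\omega_1$, and then $\delta\omega_2=\delta\tilde\omega_2$ follows from the equation $d\omega_1+\delta\omega_2=\omega$. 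The only real technical point in the whole plan is the Poincar\'e-type bound in Step 3, where the missing cohomological hypothesis in the neighboring degrees has to be replaced by orthogonality of coexact (exact) forms to closed (coclosed) ones.
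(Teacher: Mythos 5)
Your construction of $\omega_1=\delta\eta$, $\omega_2=d\eta$ from the single Poisson solve $\Delta\eta=\omega$ is the same initial step as the paper, the verification of \eqref{eq:hodge:1} and \eqref{eq138} is essentially identical (the paper derives \eqref{eq138} at the end from \eqref{eq156} plus $L^2$-orthogonality of $d\omega_1$ and $\delta\omega_2$, which is equivalent), and the uniqueness argument matches the paper's. You also correctly spot that \Cref{pr:Poincare} cannot be quoted off the shelf because of the degree shift.

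Where you genuinely diverge from the paper is in proving \eqref{eq156}. The paper does not attempt to show directly that $\delta\eta$ satisfies the bound; instead it solves two further Poisson problems $\Delta\eta'=d(\delta\eta)$, $\Delta\eta''=\delta(d\eta)$, checks that $d\delta\eta'=d(\delta\eta)$ and $\delta d\eta''=\delta(d\eta)$, replaces $\omega_1$ by $\delta\eta'$, $\omega_2$ by $d\eta''$, and reads off \eqref{eq156} from the energy estimate \eqref{eq145}: $\|\delta\eta'\|_2\lesssim\|\eta'\|_{1,2}\lesssim\|d(\delta\eta)\|_2=\|d\omega_1\|_2$. You instead keep the original $\omega_1=\delta\eta$ and prove a Poincar\'e-type inequality for the subspace of \emph{coexact} $(\ell-1)$-forms by a contradiction/compactness argument: normalize, invoke Gaffney, extract an $L^2$-strong, $W^{1,2}$-weak limit, deduce the limit is a smooth harmonic form, and kill it via $L^2$-orthogonality of coexact forms to closed forms. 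This is correct, and in fact the two approaches must produce the same $\omega_1$: since the paper's $\delta\eta'$ and your $\delta\eta$ are both coexact $(\ell-1)$-forms with the same exterior derivative, their difference is simultaneously coexact and harmonic, hence zero. What the paper's extra Poisson solves buy is a proof of \eqref{eq156} that avoids any compactness/blow-up step and stays entirely within the elliptic estimate already proved; what your route buys is economy -- one Poisson solve instead of three -- at the cost of reproving a Poincar\'e inequality from scratch. Both are sound.
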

\begin{remark}
Uniqueness of the forms $d\omega_1$ and $\delta\omega_2$ means that if
$$
\omega = d\tau_1 + \delta \tau_2 
\quad 
\text{and}
\quad
\delta \tau_1 = 0, \
d\tau_2 = 0,
$$
then $d\tau_1=d\omega_1$ and $\delta\tau_2=\delta\omega_2$. However, in general, the forms $\omega_1$ and $\omega_2$ need not be unique, because we can add to $\omega_1$ and to $\omega_2$ any harmonic form in $\Omega^{\ell-1}(\cM)$ and $\Omega^{\ell+1}(\cM)$ respectively.
\end{remark}
\begin{proof}
To prove uniqueness of a decomposition satisfying \eqref{eq:hodge:1} we need to show that if
$$
\omega = d\omega_1 + \delta \omega_2 =d\tau_1+ \delta\tau_2,
\quad
\delta\omega_1=\delta\tau_1=0,
\quad
d\omega_2=d\tau_2=0,
$$
then $d\omega_1=d\tau_1$ and $\delta\omega_2=\delta\tau_2$.
Since $d(\omega_1-\tau_1)+\delta(\omega_2-\tau_2)=\omega-\omega=0$, integration by parts \eqref{eq137} yields
\[
0=
\int_\cM\langle d(\omega_1-\tau_1),d(\omega_1-\tau_1)\rangle+\langle\delta(\omega_2-\tau_2),d(\omega_1-\tau_1)\rangle
=
\Vert d(\omega_1-\tau_1)\Vert_2^2,
\]
and hence $d\omega_1=d\tau_1$. Similar argument shows that $\Vert\delta(\omega_2-\tau_2)\Vert_2^2=0$, so $\delta\omega_2=\delta\tau_2$.

Therefore, it remains to prove existence of the decomposition \eqref{eq:hodge:1} satisfying \eqref{eq156} and \eqref{eq138}.

Let $\eta\in\Omega^\ell(\cM)$ be a unique solution to $\Delta\eta=\omega$, see \Cref{th:poisson}. Let $\lambda_1:=\delta\eta$ and $\lambda_2:=d\eta$. Clearly,
\begin{equation}
\label{eq157}
d\lambda_1+\delta\lambda_2=\omega,
\quad
\delta\lambda_1=0,
\quad
d\lambda_2=0,
\quad
\Vert\lambda_1\Vert_2+\Vert\lambda_2\Vert_2\lesssim\Vert\eta\Vert_{1,2}\lesssim\Vert\omega\Vert_2.
\end{equation}
That is, conditions \eqref{eq:hodge:1} and \eqref{eq138} are satisfied, but \eqref{eq156} need not be true.

Let $\eta',\eta''\in\Omega^\ell(\cM)$ be unique solutions to
$$
\Delta\eta'=d\lambda_1
\quad
\text{and}
\quad
\Delta\eta''=\delta\lambda_2.
$$
We claim that
\begin{equation}
\label{eq158}
d\delta\eta'=d\lambda_1
\quad
\text{and}
\quad\delta d\eta''=\delta\lambda_2.
\end{equation}
Indeed, since
$$
d\delta\eta'+\delta d\eta'=\Delta\eta'=d\lambda_1,
$$
we need to show that $\delta d\eta'=0$ and it easily follows from the integration by parts \eqref{eq137}
$$
\Vert\delta d\eta'\Vert_2^2=\int_\cM\langle\delta d\eta',\delta d\eta'\rangle=\int_\cM\langle\delta d\eta',d\lambda_1-d\delta\eta'\rangle=0, 
$$
so $\delta d\eta'=0$. This proves the first equality in \eqref{eq158}. The proof of the second equality is similar and left to the reader.
Let 
$$
\omega_1:=\delta\eta'
\quad
\text{and}
\quad
\omega_2:=d\eta''.
$$
Then \eqref{eq158} and \eqref{eq157} yield
$$
d\omega_1+\delta\omega_2=d\lambda_1+\delta\lambda_2=\omega,
\quad\delta\omega_1=0,
\quad
d\omega_2=0.
$$
Moreover, \eqref{eq145} gives
$$
\Vert\omega_1\Vert_2\lesssim\Vert\eta'\Vert_{1,2}\lesssim\Vert d\lambda_1\Vert_2=\Vert d\omega_1\Vert_2
\quad
\text{and}
\quad
\Vert\omega_2\Vert_2\lesssim\Vert\eta''\Vert_{1,2}\lesssim\Vert\delta\lambda_2\Vert_2=\Vert\delta\omega_2\Vert_2.
$$
This proves \eqref{eq156}. Finally,
$$
\Vert\omega_1\Vert_2^2+\Vert\omega_2\Vert_2^2\lesssim\Vert d\omega_1\Vert_2^2+\Vert\delta\omega_2\Vert_2^2=
\Vert\omega\Vert_2^2,
$$
where the last equality follows from the fact that $\omega=d\omega_1+\delta\omega_2$ is a decomposition of $\omega$ into elements that are orthogonal in $\Omega^\ell L^2(\cM)$.
\end{proof}

As a consequence of \Cref{th:hodge}  we obtain the following Hodge decomposition result for pullbacks:
\begin{theorem}
\label{th:hodgepullback}
Let $\cM$ be a compact and oriented $k$-dimensional Riemannian manifold without boundary and assume that
$H^\ell(\mathcal{M})=0$ for some $1 \leq \ell \leq k-1$. 

If $\kappa \in\Omega_c^\ell(\R^N)$ and $f \in C^\infty(\mathcal{M};\R^N)$, then there are forms
$\omega_f\in\Omega^{\ell-1}(\cM)$ and $\beta_f\in\Omega^{\ell+1}(\cM)$, such that
\begin{equation}
\label{eq146}
f^\ast(\kappa) = d\omega_f + \delta \beta_f 
\quad 
\text{and}
\quad
\delta\omega_f=0,\ d\beta_f=0,
\end{equation}
and
\begin{equation}
\label{eq159}
\Vert\omega_f\Vert_2\lesssim_\cM\Vert d\omega_f\Vert_2,
\qquad
\Vert\beta_f\Vert_2\lesssim_\cM\Vert\delta\beta_f\Vert_2,
\end{equation}
\begin{equation}
\label{eq151}
\Vert\omega_f\Vert_2+\Vert\beta_f\Vert_2\lesssim_\cM \Vert f^*(\kappa)\Vert_2.
\end{equation}
In the case when $N = 2n+1$, $H^n(\cM)=0$, and $\kappa \in\Omega_c^n(\R^{2n+1})$, we additionally have
\begin{equation}
\label{eq160}
 \|\delta \beta_f\|_{2} \lesssim_{\kappa,\cM} \Vert f^\ast(\upalpha)\Vert_{\infty}\ \brac{1+\Vert Df\Vert_{\infty}^{n}},
\end{equation}
where $\upalpha$ is the standard contact form from Definition~\ref{def:alpha}. 
\end{theorem}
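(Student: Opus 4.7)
The plan is to derive the first three conclusions directly from the $L^2$-Hodge decomposition (Corollary~\ref{th:hodge}) applied to $\omega = f^*\kappa \in \Omega^\ell(\cM)$: this immediately produces $\omega_f := \omega_1$ and $\beta_f := \omega_2$ satisfying \eqref{eq146}, \eqref{eq159}, and \eqref{eq151}. So everything through \eqref{eq151} is essentially free.

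The substance lies in the sharp bound \eqref{eq160}. First I would exploit the $L^2$-orthogonality $d\omega_f \perp \delta\beta_f$ (which follows from $\delta^2=0$ and the integration-by-parts formula \eqref{eq137}) together with $f^*\kappa = d\omega_f + \delta\beta_f$ to write
\[
\|\delta\beta_f\|_2^2 = \int_\cM \langle \delta\beta_f, f^*\kappa\rangle = \int_\cM \langle \beta_f, f^*(d\kappa)\rangle,
\]
using $d(f^*\kappa) = f^*(d\kappa)$ and one further integration by parts. The critical observation, and the main obstacle to spot, is that although $\kappa$ itself is an $n$-form on $\R^{2n+1}$ (too low in degree for Corollary~\ref{T49}), $d\kappa$ is a compactly supported $(n+1)$-form, in exactly the range of that corollary. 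Applying it produces compactly supported smooth forms $\beta'\in\Omega^n(\R^{2n+1})$ and $\gamma'\in\Omega^{n-1}(\R^{2n+1})$ with
\[
d\kappa = \beta'\wedge\upalpha + d(\gamma'\wedge\upalpha).
\]

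Pulling back and invoking Stokes' theorem on the exact piece (no boundary, since $\cM$ is closed) to move $d$ onto $\beta_f$ will give
\[
\|\delta\beta_f\|_2^2 = \int_\cM \langle\beta_f, f^*\beta'\wedge f^*\upalpha\rangle + \int_\cM \langle\delta\beta_f, f^*\gamma'\wedge f^*\upalpha\rangle,
\]
in which every term now carries an explicit factor of $f^*\upalpha$. Estimating by Cauchy--Schwarz with the pointwise bounds $|f^*\beta'|\lesssim_\kappa\|Df\|_\infty^n$, $|f^*\gamma'|\lesssim_\kappa\|Df\|_\infty^{n-1}$, and the already-established $\|\beta_f\|_2\lesssim_\cM\|\delta\beta_f\|_2$ from \eqref{eq159}, I expect
\[
\|\delta\beta_f\|_2^2 \lesssim_{\kappa,\cM} \|\delta\beta_f\|_2\,\|f^*\upalpha\|_\infty\bigl(\|Df\|_\infty^n + \|Df\|_\infty^{n-1}\bigr).
\]
Dividing by $\|\delta\beta_f\|_2$ (trivial if it vanishes) and absorbing via $x^{n-1}\leq 1+x^n$ yields \eqref{eq160}. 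The only real obstacle is choosing the correct algebraic decomposition---specifically, applying Corollary~\ref{T49} to $d\kappa$ rather than to $\kappa$; once the contact factor has been exposed in every piece, the remaining estimates are routine bookkeeping.
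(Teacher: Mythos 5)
Your proposal is correct and follows essentially the same route as the paper: Corollary~\ref{th:hodge} immediately gives \eqref{eq146}--\eqref{eq151}, and for \eqref{eq160} you reduce $\|\delta\beta_f\|_2^2$ to $\int_\cM\langle\beta_f, f^*(d\kappa)\rangle$ via orthogonality and integration by parts, then apply Corollary~\ref{T49} to the $(n+1)$-form $d\kappa$ (exactly the paper's decomposition, with your $\beta',\gamma'$ playing the roles of the paper's $\tau,\gamma$), and close the estimate with Cauchy--Schwarz, the pointwise pullback bounds, and \eqref{eq159}. The only cosmetic difference is that you express the first identity as $\int_\cM\langle\delta\beta_f,f^*\kappa\rangle$ directly, whereas the paper writes it as $\int_\cM\langle f^*(\kappa)-d\omega_f,\delta\beta_f\rangle$; these are the same after invoking orthogonality.
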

\begin{proof}
Let $\omega:=f^*(\kappa)$ and let $\omega_f:=\omega_1$, $\beta_f:=\omega_2$ be the forms from \Cref{th:hodge}. Then \eqref{eq146}, \eqref{eq159} and \eqref{eq151} follow immediately from \Cref{th:hodge}.

It remains to prove \eqref{eq160}. From the integration by parts \eqref{eq137} we have
$$
\Vert\delta\beta_f\Vert_2^2=\int_\cM\langle f^*(\kappa)-d\omega_f,\delta\beta_f\rangle=
\int_\cM\langle f^*(d\kappa),\beta_f\rangle.
$$
By assumption $\R^N = \R^{2n+1}$, and $d\kappa$ is an $(n+1)$-form. From Corollary~\ref{T49} we find 
$\tau \in \Omega_c^n(\R^{2n+1})$ and $\gamma \in\Omega_c^{n-1}(\R^{2n+1})$ such that 
\[
d\kappa = \tau \wedge \upalpha + d(\gamma \wedge \upalpha).
\]
Note that
\begin{equation}
\label{eq161}
\Vert f^*(\tau)\Vert_2\lesssim_\cM\Vert f^*(\tau)\Vert_\infty\lesssim_{\kappa,\cM}\Vert Df\Vert_\infty^n,
\quad
\Vert f^*(\gamma)\Vert_2\lesssim_\cM\Vert f^*(\gamma)\Vert_\infty\lesssim_{\kappa,\cM}\Vert Df\Vert_\infty^{n-1}.
\end{equation}
The last constants depend on $\kappa$ and $\cM$ only, because the forms $\tau$ and $\gamma$ are uniquely determined by $\kappa$ and they are bounded.

Now, integration by parts and \eqref{eq159} yield
\[
\begin{split}
\Vert\delta\beta_f\Vert_2^2
&=
\int_\cM \langle f^*(\tau)\wedge f^*(\upalpha),\beta_f\rangle+
\langle f^*(\gamma)\wedge f^*(\upalpha),\delta\beta_f\rangle\\
&\leq\Vert f^*(\upalpha)\Vert_\infty\big(\Vert f^*(\tau)\Vert_2\Vert\beta_f\Vert_2+\Vert f^*(\gamma)\Vert_2\Vert\delta\beta_f\Vert_2\big)\\
&\lesssim_\cM 
\Vert f^*(\upalpha)\Vert_\infty\big(\Vert f^*(\tau)\Vert_2+\Vert f^*(\gamma)\Vert_2\big)\Vert\delta\beta_f\Vert_2.
\end{split}
\]
This and \eqref{eq161} give 
$$
\Vert\delta\beta_f\Vert_2\lesssim_{\kappa,\cM}\Vert f^*(\upalpha)\Vert_\infty
\big(\Vert Df\Vert_\infty^n+\Vert Df\Vert_\infty^{n-1}\big)
\lesssim
\Vert f^*(\upalpha)\Vert_\infty
\big(1+\Vert Df\Vert_\infty^n\big).
$$
The proof is complete. 
\end{proof}
\section{Nontriviality of H\"older homotopy groups}
\label{NHG}

\subsection{H\"older homotopy groups}
H\"older homotopy groups are defined as homotopy groups with the additional assumptions that all maps and homotopies involved are H\"older continuous. 

More precisely, for a pointed metric space $(X,x_0)$, $k\in\bbbn$, and $0<\gamma\leq 1$, we define the group $\pi_k^\gamma(X,x_0)$ as follows. Let $I^k = [0,1]^k$ be the $k$-dimensional cube, then we set 
\[
\pi_k^\gamma(X,x_0) = \left \{f \in C^{0,\gamma}(I^k; X):\, \quad f \equiv x_0 \quad \mbox{in a neighborhood of $\partial I_k$} \right \}/ \sim
\]
where $\sim$ is the equivalence relation given by
\[
f \sim g \quad \Longleftrightarrow \mbox{there is $H \in C^{0,\gamma}([0,1] \times I^k; X)$ with} 
\begin{cases} 
H(0,\cdot) = f(\cdot)\\
H(1,\cdot) = g(\cdot)\\
H(t,\cdot) \equiv x_0\ \mbox{near $\partial I_k$.}
\end{cases}
\]
Although we will make no further mention of it, the group structure of $\pi_k^\gamma(X,x_0)$ is then defined as for the usual homotopy group $\pi_k(X,x_0)$. 

We say that a metric space $X$ is {\em rectifiably connected} if any two points in $X$ can be connected by a curve of finite length or equivalently, by a Lipschitz curve. 
Equivalence follows from a well known fact that any curve of finite length admits an arc-length parametrization that is $1$-Lipschitz, see e.g., \cite[Theorem~3.2]{Haj2003}.

If $X$ is rectifiably connected, then for any $x_1,x_2\in X$, the groups $\pi_k^\gamma(X,x_1)$ and $\pi_k^\gamma(X,x_2)$ are isomorphic and we will simply wite $\pi_k^\gamma(X)$. Note that the Heisenberg group $\bbbh^n$ is rectifiably connected.

We will be mainly interested in determining whether $\pi_k^\gamma(\bbbh^n)\neq 0$. To do so the following observation will be useful. The proof is standard and left to the reader.
\begin{lemma}
\label{T105}
Assume that a metric space $X$ is rectifiably connected and let $0<\gamma\leq 1$. Then the following conditions are equivalent:
\begin{enumerate}
\item $\pi_k^\gamma(X)\neq 0$;
\item For any $f\in C^{0,\gamma}(\Sph^k;X)$ there is $H\in C^{0,\gamma}(\Sph^k\times [0,1];X)$ and $x_0\in X$ such that $H(\cdot,1)=f(\cdot)$ and $H(\cdot,0)\equiv x_0$;
\item For any $f\in C^{0,\gamma}(\Sph^k;X)$ there is $F\in C^{0,\gamma}(\overline{\bbbb}^{k+1};X)$ such that $F(\theta)=f(\theta)$ for all $\theta\in\Sph^k=\partial\overline{\bbbb}^{k+1}$.
\end{enumerate}
\end{lemma}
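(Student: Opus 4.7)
The plan is to establish the equivalence in three parts: (3) $\Rightarrow$ (2), (2) $\Rightarrow$ (3), and (1) $\iff$ (2), the last via the standard cube-to-sphere correspondence defining $\pi_k^\gamma$.

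For (3) $\Rightarrow$ (2), given an extension $F$, I would set $H(\theta, t) := F(t\theta)$ and $x_0 := F(0)$. The map $(\theta, t) \mapsto t\theta$ from $\Sph^k \times [0,1]$ (with product metric) to $\overline{\bbbb}^{k+1}$ (Euclidean) is $1$-Lipschitz, using $|t_1\theta_1 - t_2\theta_2| \leq |t_1 - t_2| + |\theta_1 - \theta_2|$, so $H$ inherits $\gamma$-H\"older continuity from $F$ with $H(\cdot, 1) = f$ and $H(\cdot, 0) \equiv x_0$.

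For (2) $\Rightarrow$ (3), the naive radial formula $F(x) := H(x/|x|, |x|)$ fails to be H\"older near the origin, because the angular variation on $\Sph^k$ contributing to the product metric can far exceed the Euclidean distance $|x - y|$ when $|x|$ is small (one sees this with $|x| = |y| = r \to 0$ and fixed angular gap). To repair this I would first precompose $H$ in the time variable: take a $2$-Lipschitz cutoff $\phi \colon [0, 1] \to [0, 1]$ with $\phi \equiv 0$ on $[0, 1/2]$ and $\phi(1) = 1$, and set $H'(\theta, t) := H(\theta, \phi(t))$. Then $H' \in C^{0,\gamma}$ with constant at most $2^\gamma [H]_\gamma$, $H'(\cdot, 0) \equiv x_0$, $H'(\cdot, 1) = f$, and crucially $H'(\theta, t) = x_0$ for all $t \leq 1/2$. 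Now define $F(x) := H'(x/|x|, |x|)$ for $x \neq 0$ and $F(0) := x_0$. On the half-ball $\{|x| \leq 1/2\}$, $F \equiv x_0$; on the annulus $\{1/2 < |x| \leq 1\}$, the radii stay bounded below, so the polar coordinate map $x \mapsto (x/|x|, |x|)$ is bi-Lipschitz with constants depending only on this lower bound, and the H\"older regularity of $H'$ transfers to $F$. The mixed case (one point in the half-ball, one in the annulus) is handled by the triangle inequality through $x_0$: if $|x| > 1/2 \geq |y|$, then $d(F(x), F(y)) = d(H'(x/|x|, |x|), H'(x/|x|, 1/2)) \leq [H']_\gamma (|x| - 1/2)^\gamma \leq [H']_\gamma |x - y|^\gamma$, since $|x - y| \geq |x| - |y| \geq |x| - 1/2$.

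For (1) $\iff$ (2), I would invoke the bi-H\"older correspondence between H\"older maps $I^k \to X$ constant in a neighborhood of $\partial I^k$ and based H\"older maps $(\Sph^k, *) \to X$, obtained by collapsing $\partial I^k$ to the basepoint via a Lipschitz quotient map (H\"older regularity on each side transfers because the collapsed region is constant). The same argument applied one dimension higher translates H\"older homotopies between representatives, so this identifies the condition in (2) with the homotopy-theoretic content of $\pi_k^\gamma(X)$; rectifiable connectedness enters to let us move the basepoint freely. The main technical obstacle throughout is the degeneracy of polar coordinates near the origin that breaks the naive radial extension; introducing the dead zone $\{|x| \leq 1/2\}$ via the reparametrization $\phi$ is the key device that sidesteps it and keeps the bookkeeping linear.
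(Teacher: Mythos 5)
The paper itself offers no proof of this lemma (it is declared standard and left to the reader), so there is nothing to compare your argument against; judged on its own terms, your treatment of (2)$\Leftrightarrow$(3) is correct. The cone map $(\theta,t)\mapsto t\theta$ is $1$-Lipschitz, so (3)$\Rightarrow$(2) is immediate, and in (2)$\Rightarrow$(3) the dead-zone reparametrization $\phi$ together with your three-case estimate (inner ball, annulus, mixed case through $x_0$) is exactly the right way to defeat the degeneracy of polar coordinates at the origin.

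Two things still need repair. First, as printed, condition (1) reads $\pi_k^\gamma(X)\neq 0$, while (2) and (3) assert that \emph{every} H\"older sphere is null-homotopic, respectively extendable, i.e.\ they express triviality; the way the lemma is invoked in \Cref{T106} and \Cref{T108} shows the intended condition is $\pi_k^\gamma(X)=0$, and your proof (which implicitly proves the ``$=0$'' version) should say so. Second, your (1)$\Leftrightarrow$(2) step has a genuine gap: condition (2) only provides a \emph{free} null-homotopy, with no control on the track $t\mapsto H(\ast,t)$ of a basepoint, whereas triviality of a class in $\pi_k^\gamma(X,x_0)$ demands a homotopy equal to $x_0$ on a neighborhood of $\partial I^k$ for all times; saying that ``rectifiable connectedness lets us move the basepoint freely'' names the difficulty without resolving it. The cleanest patch is already inside your own scheme: route through (3). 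Given $g\in C^{0,\gamma}(I^k;X)$ with $g\equiv x_0$ near $\partial I^k$, collapse a collar of $\partial I^k$ by a Lipschitz quotient $q:I^k\to\Sph^k$; for $u=q(x)$, $v=q(y)$ the estimate $d(\bar g(u),\bar g(v))\lesssim [g]_\gamma\min\{|x-y|,\,\dist(x,\partial I^k)+\dist(y,\partial I^k)\}^\gamma$ (using that $g$ is constant on the collar) shows the induced map $\bar g$ lies in $C^{0,\gamma}(\Sph^k;X)$. If (3) holds, extend $\bar g$ to $F\in C^{0,\gamma}(\overline{\bbbb}^{k+1};X)$ and set $G(x,t):=F\big((1-t)\,q(x)+t\,e\big)$ with $e:=q(\partial I^k)$; the chord map is Lipschitz, $G(\cdot,0)=g$, $G(\cdot,1)\equiv \bar g(e)=x_0$, and $G(x,t)=x_0$ for all $t$ whenever $x$ lies in the collapsed collar, so $[g]=0$ in $\pi_k^\gamma(X,x_0)$. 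Conversely, $\pi_k^\gamma(X)=0$ applied to $f\circ q$ with basepoint $f(e)$ (basepoint independence is where rectifiable connectedness actually enters) yields a based homotopy, which descends to $\Sph^k\times[0,1]$ by the same quotient estimate one dimension higher, giving (2). With these insertions your argument is complete.
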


Let us start with a simple observation.
\begin{proposition}
\label{T106}
If $k,n\geq 1$ and $0<\gamma\leq\frac{1}{2}$, then $\pi_k^\gamma(\bbbh^n)=0$.
\end{proposition}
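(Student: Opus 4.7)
The plan is to apply \Cref{T105}: since $\bbbh^n$ is rectifiably connected, triviality of $\pi_k^\gamma(\bbbh^n)$ reduces to showing that every $f\in C^{0,\gamma}(\Sph^k;\bbbh^n)$ admits an extension $F\in C^{0,\gamma}(\overline{\bbbb}^{k+1};\bbbh^n)$ with $F|_{\Sph^k}=f$. The natural candidate is the ``Heisenberg cone'' built out of the dilations \eqref{eq147}:
$$
F(x):=\delta_{|x|}\bigl(f(x/|x|)\bigr)\ \text{for }x\neq 0,\qquad F(0):=0.
$$
Since $\delta_1=\operatorname{id}$, the boundary condition $F|_{\Sph^k}=f$ is automatic; the whole argument reduces to verifying that $F$ is $\gamma$-H\"older into $(\bbbh^n,d_K)$.

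First I would establish two separate one-parameter estimates. The \emph{spherical} estimate is immediate from \eqref{eq142}: for $r\in[0,1]$ and $\theta_1,\theta_2\in\Sph^k$,
$$
d_K(F(r\theta_1),F(r\theta_2))=r\,d_K(f(\theta_1),f(\theta_2))\leq r^{1-\gamma}[f]_\gamma\,|r\theta_1-r\theta_2|^\gamma\leq [f]_\gamma\,|r\theta_1-r\theta_2|^\gamma,
$$
using $r\leq 1$ and $\gamma\leq 1$. The \emph{radial} estimate I would obtain from a direct computation with \eqref{eq28}: for $p=(z,t)\in\bbbh^n$ and $r_1,r_2\in[0,1]$, since $(\delta_{r_2}p)^{-1}*\delta_{r_1}p=((r_1-r_2)z,(r_1^2-r_2^2)t)$ (the symplectic correction vanishes because $\upomega(z,z)=0$), one has
$$
d_K(\delta_{r_1}p,\delta_{r_2}p)^4=(r_1-r_2)^4|z|^4+(r_1^2-r_2^2)^2 t^2\leq 4(r_1-r_2)^2\|p\|_K^4,
$$
whence $d_K(\delta_{r_1}p,\delta_{r_2}p)\lesssim |r_1-r_2|^{1/2}\|p\|_K$.

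Next I would combine the two estimates by a triangle inequality with an intermediate point. Given $x,y\in\overline{\bbbb}^{k+1}$ with $|x|\geq|y|>0$, write $\theta_1=x/|x|$, $\theta_2=y/|y|$. Then
$$
d_K(F(x),F(y))\leq d_K(F(|x|\theta_1),F(|y|\theta_1))+d_K(F(|y|\theta_1),F(|y|\theta_2)).
$$
The radial bound controls the first term by $C\|f\|_\infty\bigl||x|-|y|\bigr|^{1/2}\leq C\|f\|_\infty|x-y|^{1/2}$, using $\bigl||x|-|y|\bigr|\leq|x-y|$. The spherical bound controls the second term by $[f]_\gamma|y|\,|\theta_1-\theta_2|^\gamma=[f]_\gamma|y|^{1-\gamma}(|y||\theta_1-\theta_2|)^\gamma\leq [f]_\gamma|x-y|^\gamma$, after noting the polar identity $|x-y|^2=(|x|-|y|)^2+|x||y||\theta_1-\theta_2|^2\geq |y|^2|\theta_1-\theta_2|^2$. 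The degenerate case $y=0$ is treated directly from $d_K(F(x),0)=|x|\,\|f(\theta_1)\|_K$.

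The main obstacle, and the point at which the hypothesis $\gamma\leq\tfrac12$ enters in an essential way, is the radial estimate: it produces only $\tfrac12$-H\"older behaviour along rays (which is sharp, by the fractal nature of $d_K$). However, since $\overline{\bbbb}^{k+1}$ has bounded diameter and $\gamma\leq\tfrac12$, we may absorb $|x-y|^{1/2}\leq C|x-y|^\gamma$, which converts the radial contribution into the desired $\gamma$-H\"older bound. Putting the estimates together yields $d_K(F(x),F(y))\leq C(f,\gamma)|x-y|^\gamma$ for all $x,y\in\overline{\bbbb}^{k+1}$, so $F\in C^{0,\gamma}(\overline{\bbbb}^{k+1};\bbbh^n)$ and \Cref{T105} gives $\pi_k^\gamma(\bbbh^n)=0$.
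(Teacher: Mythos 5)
Your proof is correct and follows essentially the same idea as the paper's: both cone off $f$ to a constant using the Heisenberg dilations $\delta_r$ and exploit that $d_K(\delta_{r_1}p,\delta_{r_2}p)\lesssim|r_1-r_2|^{1/2}$, which is exactly what the hypothesis $\gamma\leq\frac12$ compensates for. The only difference is presentational: the paper works on the cylinder $\Sph^k\times[0,1]$ via \Cref{T105}(2), whereas you work directly on $\overline{\bbbb}^{k+1}$ via \Cref{T105}(3), which costs you the extra polar identity $|x-y|^2=(|x|-|y|)^2+|x||y||\theta_1-\theta_2|^2$ but is otherwise equivalent.
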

\begin{proof}
Recall that $\delta_t:\bbbh^n\to\bbbh^n$ is the dilation defined in \eqref{eq147}.

If $f\in C^{0,\gamma}(\Sph^k;\bbbh^n)$, then
$F\in C^{0,\gamma}(\Sph^k\times[0,1];\bbbh^n)$, where
$$
F(\theta,t)=\delta_t f(\theta)
\quad
\text{for }
(\theta,t)\in\Sph^k\times[0,1].
$$
Indeed, \eqref{eq28} and \eqref{eq142} yield
\[
\begin{split}
&
d_K(F(\theta,t),F(\xi,s))
\leq d_K(\delta_t f(\theta),\delta_tf(\xi))+d_K(\delta_t f(\xi),\delta_sf(\xi))\\
&\lesssim
td_K(f(\theta),f(\xi))+|t-s|\, \Vert f\Vert_\infty +|t^2-s^2|^{1/2}\Vert f\Vert_\infty^{1/2}
\lesssim
(|\theta-\xi|+|t-s|)^\gamma.
\end{split}
\]
Here $\Vert f\Vert_\infty$ denotes the supremum norm of $f$ with respect to the Euclidean norm in $\R^{2n+1}$.
Since $F(\cdot,1)=f(\cdot)$ and $F(\cdot,0)\equiv0$, the result follows from Lemma~\ref{T105}.
\end{proof}

When $\gamma > \frac{1}{2}$ the situation is more complicated. If $\gamma = 1$, we have Lipschitz homotopy groups which we shall denote by $\pi_k^{\lip}(\mathbb{H}^n)$. The following result is known.
\begin{theorem}${}$
\label{T111}
\begin{enumerate}
 \item $\pi^{\lip}_m(\mathbb{H}^n)=0$ for all $1\leq m<n$;
\item $\pi_m^{\lip}(\mathbb{H}^1)=0$ for all $m\geq 2$;
 \item $\pi^{\lip}_n(\mathbb{H}^n)\neq 0$;
 \item $\pi^{\lip}_{4n-1}(\mathbb{H}^{2n}) \neq 0$; 
 \item $\pi^{\lip}_{n+1}(\mathbb{H}^{n}) \neq 0$. 
 \item $\pi^{\lip}_{n+k(n-1)}(\mathbb{H}^n) \neq 0$ for $k = 0,1,2,\ldots$
\end{enumerate}
 \end{theorem}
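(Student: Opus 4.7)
The six parts split into vanishing statements (1)--(2) and nontriviality statements (3)--(6), requiring different machinery.

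For (1) the plan is to invoke the $(n-1)$-Lipschitz connectedness of $\bbbh^n$: any $f \in \lip(\Sph^m;\bbbh^n)$ with $m \leq n-1$ admits a Lipschitz extension to $\overbar{\bbbb}^{m+1}$, which by \Cref{T105} yields $\pi_m^{\lip}(\bbbh^n)=0$. This is established by explicit horizontal filling constructions (Dejarnette--Haj\l{}asz--Lukyanenko--Tyson, Wenger--Young), exploiting the fact that horizontal Lagrangian $n$-planes are Euclidean in the Kor\'anyi metric. Statement (2) is due to Wenger--Young, who prove a quadratic isoperimetric inequality for $\bbbh^1$ that yields Lipschitz fillings of all Lipschitz spheres of dimension $\geq 2$.

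For (3), the plan is to run the Alexander-duality argument of \Cref{LN} at Lipschitz regularity. Let $f \in \lip(\Sph^n;\bbbh^n)$ be any topological embedding; such embeddings exist, and \Cref{T55} only prohibits embeddings in the higher range $m \geq n+1$. Suppose toward contradiction that $F \in \lip(\overbar{\bbbb}^{n+1};\bbbh^n)$ extends $f$. By \Cref{T110}, $\rank DF \leq n$ almost everywhere, so $F^\ast\kappa = 0$ pointwise a.e.\ for every $\kappa \in \Omega^{n+1}(\R^{2n+1})$. \Cref{T21} supplies $\omega \in \Omega_c^n(\R^{2n+1})$ with $d\omega$ compactly supported in $\R^{2n+1}\setminus f(\Sph^n)$ and $\int_{\Sph^n} f^\ast\omega = 1$. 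Stokes' theorem for Lipschitz forms (\Cref{T78}), together with the a.e.\ identity $d(F^\ast \omega) = F^\ast(d\omega)$, then yields
\[
1 = \int_{\Sph^n} f^\ast\omega = \int_{\overbar{\bbbb}^{n+1}} d(F^\ast\omega) = \int_{\overbar{\bbbb}^{n+1}} F^\ast(d\omega) = 0,
\]
a contradiction. By \Cref{T105}, $f$ represents a nontrivial class in $\pi_n^{\lip}(\bbbh^n)$.

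For (4), the plan is to specialize \Cref{th:hopfgamma} to $\gamma = 1$. The invariant is a generalized Hopf number: for $f \in \lip(\Sph^{4n-1};\bbbh^{2n})$, fix $\kappa \in \Omega^{2n}(\R^{4n+1})$ representing a suitable cohomology class, apply the Hodge decomposition of \Cref{th:hodgepullback} to write $f^\ast\kappa = d\omega_f + \delta\beta_f$, and set $h(f) := \int_{\Sph^{4n-1}} \omega_f \wedge f^\ast\kappa$. The main obstacle, and the hardest step of the entire theorem, is to show that $h$ is a Lipschitz homotopy invariant and that it evaluates nontrivially on an explicit test map built from a classical Hopf map $\Sph^{4n-1}\to\Sph^{2n}$. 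The estimate \eqref{eq160} is essential here: combined with the horizontality $f^\ast\upalpha = 0$, it controls the $\delta\beta_f$ correction, so $h(f)$ reduces to a multiple of the classical Hopf invariant of the underlying smooth map. Parts (5) and (6) invoke further Lipschitz constructions of Wenger--Young and collaborators that detect nontrivial classes via filling-volume obstructions in the specified dimensions $n+1$ and $n+k(n-1)$; these arguments are geometric-combinatorial and independent of the pullback machinery developed in the present paper.
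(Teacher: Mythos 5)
The paper does not prove Theorem~\ref{T111}: it is a survey of known results, and what follows the statement is just a list of citations---(1) to Wenger--Young \cite{WengerY1} and Dejarnette--Haj\l{}asz--Lukyanenko--Tyson \cite{DHLT}, (2) to Wenger--Young \cite{WengerY2}, (3) to Balogh--F\"assler \cite{Balogh-Faessler-2009} with alternative proofs in \cite{DHLT,HST14}, (4) to Haj\l{}asz--Schikorra--Tyson \cite{HST14}, (5) to Haj\l{}asz \cite{H18}, and (6) to the forthcoming \cite{EHPS}, with a related result of Manin \cite{manin}. Your proposal is therefore more ambitious than the source, but it has a few issues worth flagging.

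Your argument for (3) is the Gromov non-embedding argument of Section~\ref{GroPr} and Corollary~\ref{T97} specialized to $\gamma=1$, and in spirit it is correct. However, the invocation of Lemma~\ref{T78} is not legitimate as stated: when $F$ is merely Lipschitz, the coefficients of $F^*\omega$ are products of smooth functions of $F$ with $n\times n$ minors of $DF$, hence only $L^\infty$, not Lipschitz, so \Cref{T78} does not apply to $F^*\omega$. Likewise ``$d(F^*\omega)=F^*(d\omega)$ a.e.'' is not a pointwise statement---there is no pointwise exterior derivative of an $L^\infty$ form---but a distributional one. The clean fix is the approximation argument: mollify $F$, apply the smooth Stokes theorem to $F_\eps^*\omega$, and pass to the limit using $DF_\eps\to DF$ in $L^p$ for all $p<\infty$. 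This is exactly what Corollary~\ref{T47b} packages (with $\gamma=1$, $m=n+1$), and you would be better served by citing it directly, as the proof in \Cref{GroPr} does.

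For (4), deriving the statement from \Cref{th:hopfgamma} at $\gamma=1$ is mathematically valid, but be aware that the logical dependency in the paper runs the other way: \Cref{th:hopfgamma} is obtained by pushing the Lipschitz proof of \cite{HST14} into the H\"older regime, so (4) is the source, not the corollary. Your attributions for (5) and (6) are off: (5) is due to Haj\l{}asz alone \cite{H18}, not to Wenger--Young, and (6) is from the forthcoming \cite{EHPS}, with Manin \cite{manin} proving a related result by a different technique. Describing these as ``filling-volume obstructions'' does not accurately characterize the method of \cite{H18}, which is based on a biLipschitz model for the Lipschitz homotopy group rather than isoperimetric estimates.
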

(1) was proven in \cite[Theorem~1.1]{WengerY1}, see also \cite[Theorem~4.11]{DHLT}. (2) is due to \cite[Theorem~5]{WengerY2}. (3) is due to \cite{Balogh-Faessler-2009}. For other proofs, see \cite[Theorem~4.11]{DHLT} and \cite[Theorem~1.7]{HST14}. (4) was the main result of \cite{HST14}. (5) is due to \cite{H18}. (6) is proven in the forthcoming paper \cite{EHPS}. Currently the authors of \cite{EHPS} are working on extending the result to the H\"older category. Manin \cite{manin} proved a similar result to (6) in the Lipschitz category, but with a different technique.

In the remaining part of \Cref{NHG}, we will extend (3) and (4) to the H\"older case, see Theorems~\ref{T108} and~\ref{th:hopfgamma} respectively. 

The proofs of (3) and (4) in the Lipschitz case were based on the fact that if $f \in C^{0,1}(\Sph^k;\mathbb{H}^n)$ then $\rank Df \leq n$ almost everywhere and this was combined with the fact that intgrals of pullbacks of differential forms (degree or Hopf invariant) could detect non-trivial elements of the homotopy groups of spheres. 
In \Cref{T47} we have found an alternative formulation for the notion of $\rank Df \leq n$, for H\"older continuous maps, and we combine this notion with the previous arguments to obtain our results.

To link the homotopy groups of spheres with the H\"older homotopy groups of $\bbbh^n$ we will need the followings result whose proof can be found in \cite[Section~4]{Balogh-Faessler-2009}, \cite[Theorem~3.2]{DHLT}, \cite[Example~3.1]{EkholmEtnyreSullivan05}.
\begin{lemma}
\label{T107}
For each $n\geq 1$, there is a smooth embedding $\phi:\Sph^n\to\bbbr^{2n+1}$, such that $\phi^*(\upalpha)=0$. As a consequence $\phi$ is a bi-Lipschitz embedding of $\Sph^n$ into $\bbbh^n$.
\end{lemma}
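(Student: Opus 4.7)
\medskip

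The plan is to construct the embedding $\phi$ explicitly (or by recalling a classical construction) and then deduce the bi-Lipschitz conclusion as a soft consequence of smoothness plus horizontality.

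First I would handle the existence of the Legendrian embedding. For $n=1$ the construction is classical: pick any smooth immersion $\tilde\gamma\colon\Sph^1\to\R^2$ whose image is a figure-eight with the two lobes enclosing equal areas with opposite orientations, so that $\int_{\Sph^1}\tilde\gamma^x\,d\tilde\gamma^y-\tilde\gamma^y\,d\tilde\gamma^x=0$. By \Cref{T69}, the horizontal lift $\gamma=(\tilde\gamma,\gamma^t)\colon\Sph^1\to\R^3$ obtained by integrating $d\gamma^t=2(\tilde\gamma^y\,d\tilde\gamma^x-\tilde\gamma^x\,d\tilde\gamma^y)$ is a well-defined closed curve satisfying $\gamma^*\upalpha=0$. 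At the single self-intersection point of the figure-eight, the two heights $\gamma^t$ differ (each height equals twice the signed area swept so far, which is nonzero at the crossing provided the two lobes are chosen with nonzero area), so $\gamma$ is an embedding.

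For $n\geq 2$, I would start with a smooth Lagrangian immersion $\tilde\phi\colon\Sph^n\to\R^{2n}$ with only transverse double points (for instance Whitney's classical Lagrangian immersion of $\Sph^n$ into $\R^{2n}$ with a single transverse double point). Since $\tilde\phi^*\upomega=0$, the $1$-form $\tilde\phi^*\bigl(\sum_j x_j\,dy_j-y_j\,dx_j\bigr)$ is closed on $\Sph^n$; because $H^1(\Sph^n)=0$ for $n\geq 2$, it is exact and equals $-\tfrac12 d\phi^t$ for some smooth $\phi^t\colon\Sph^n\to\R$. Then $\phi:=(\tilde\phi,\phi^t)$ satisfies $\phi^*\upalpha=0$ by construction (this is exactly \Cref{T68}). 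To upgrade $\phi$ from an immersion to an embedding, one checks that at each (isolated) double point of $\tilde\phi$, the difference of heights $\phi^t$ on the two sheets is computed by integrating $\tilde\phi^*(\sum x_j\,dy_j-y_j\,dx_j)$ along a loop joining the two preimages, and this integral can be arranged to be nonzero by a generic choice of $\tilde\phi$ (or else by a small Hamiltonian perturbation). I expect this step---producing a Legendrian embedding rather than merely a Legendrian immersion---to be the main obstacle, and I would rely on the cited references \cite{Balogh-Faessler-2009,DHLT,EkholmEtnyreSullivan05} for the detailed perturbation/transversality argument.

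Finally, for the bi-Lipschitz conclusion, I would argue as follows. For the upper bound, any smooth curve $c\colon[0,1]\to\Sph^n$ yields a curve $\phi\circ c$ in $\R^{2n+1}$; because $\phi^*\upalpha=0$, the curve $\phi\circ c$ is horizontal in $\bbbh^n$, and since the basis $X_1,Y_1,\dots,X_n,Y_n$ is orthonormal for the left-invariant Riemannian metric which pulls back to the Euclidean one on horizontal directions, the horizontal length of $\phi\circ c$ equals its Euclidean length in $\R^{2n+1}$. Applied to arcs joining $p,q\in\Sph^n$ of Euclidean length $\lesssim|p-q|$ (available by compactness of $\Sph^n$ and smoothness of $\phi$), this gives $d_{cc}(\phi(p),\phi(q))\leq C|p-q|$, hence $d_K(\phi(p),\phi(q))\lesssim|p-q|$ by \Cref{T41}. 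For the lower bound, \Cref{T40} gives $d_K(\phi(p),\phi(q))\gtrsim|\phi(p)-\phi(q)|$, and since $\phi$ is a smooth embedding of the compact manifold $\Sph^n$ into $\R^{2n+1}$ we have $|\phi(p)-\phi(q)|\gtrsim|p-q|$. Combining the two directions yields the bi-Lipschitz claim and completes the proof.
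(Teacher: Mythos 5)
The paper does not give its own proof of \Cref{T107}; it points to three references. Your proposal reconstructs the strategy used in those references (Legendrian lift of a Lagrangian immersion, then separate the double points in the $t$-coordinate), so there is no alternative internal argument to compare against, and your plan is the right one.

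One claim in the bi-Lipschitz step is wrong as stated, though the conclusion is salvageable. You assert that the horizontal length of $\phi\circ c$ \emph{equals} its Euclidean length in $\R^{2n+1}$. This is false: for a horizontal vector $v=\sum_j a_jX_j+b_jY_j$ one has $|v|_H^2=\sum_j(a_j^2+b_j^2)$, whereas the Euclidean norm satisfies $|v|^2=\sum_j(a_j^2+b_j^2)+\big(\sum_j 2y_ja_j-2x_jb_j\big)^2$, which is generically strictly larger because $X_j,Y_j$ are not orthonormal with respect to the Euclidean metric. The correct statement is the inequality $|v|_H\leq|v|$ (equivalently, $|v|_H$ equals the Euclidean norm of the projection $\pi(v)\in\R^{2n}$), and that inequality is all you need: it gives $\ell_H(\phi\circ c)\leq$ (Euclidean length of $\phi\circ c$) $\lesssim \operatorname{length}(c)\lesssim|p-q|$, which combined with \Cref{T41}, \Cref{T40}, and the lower bound $|\phi(p)-\phi(q)|\gtrsim|p-q|$ (valid since $\phi$ is a smooth embedding of a compact manifold) yields bi-Lipschitzness. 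As for $n\geq 2$: you correctly identify that ensuring the heights separate at the double points of the Lagrangian immersion is the crux; that is precisely the content of the cited constructions, so deferring to \cite{Balogh-Faessler-2009,DHLT,EkholmEtnyreSullivan05} is consistent with what the paper itself does. Your $n=1$ figure-eight lift is self-contained and correct.
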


The next result is a straightforward consequence of Lemma~\ref{T107} and Corollary~\ref{T97}.

\begin{theorem}
\label{T108}
For any  $n \geq 1$ we have $\pi^{\gamma}_n(\mathbb{H}^n)\neq 0$ for all $\gamma \in \big(\frac{n+1}{n+2},1\big]$.
\end{theorem}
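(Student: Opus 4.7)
The plan is to argue by contradiction using Lemma~\ref{T107} to produce a rigid ``test'' sphere inside $\bbbh^n$, and then to rule out any H\"older extension of that sphere using the non-embedding result in Corollary~\ref{T97}.

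First, I invoke Lemma~\ref{T107} to obtain a smooth embedding $\phi:\Sph^n\to\R^{2n+1}$ with $\phi^*\upalpha=0$. Because $\phi$ is smooth with this annihilation property, it is actually bi-Lipschitz as a map $\phi:\Sph^n\to\bbbh^n$ (and so in particular a topological embedding into $\bbbh^n$, since $d_K$ induces the Euclidean topology on $\R^{2n+1}$). Hence $\phi\in C^{0,1}(\Sph^n;\bbbh^n)\subset C^{0,\gamma}(\Sph^n;\bbbh^n)$ for every $\gamma\in\big(\tfrac{n+1}{n+2},1\big]$.

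Next, suppose for contradiction that $\pi_n^\gamma(\bbbh^n)=0$. Since $\bbbh^n$ is rectifiably connected, Lemma~\ref{T105}(3) applied to $f=\phi$ yields an extension
$$
F\in C^{0,\gamma}(\overbar{\bbbb}^{n+1};\bbbh^n)
\quad\text{with}\quad F|_{\Sph^n}=\phi.
$$
I now verify that $F$ satisfies the hypotheses of Corollary~\ref{T97} with $m=n+1$. On the one hand, $F\in C^{0,\gamma}(\overbar{\bbbb}^{n+1};\bbbh^n)\subset C^{0,\frac{n+1}{n+2}}(\overbar{\bbbb}^{n+1};\bbbh^n)$ since $\gamma\geq\tfrac{n+1}{n+2}$. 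On the other hand, Lemma~\ref{T40} together with boundedness of $F$ shows $|F(p)-F(q)|\lesssim d_K(F(p),F(q))\lesssim|p-q|^\gamma$, so $F\in C^{0,\gamma}(\overbar{\bbbb}^{n+1};\R^{2n+1})$. Because $\gamma>\tfrac{n+1}{n+2}$, the elementary estimate
$$
\frac{|F(p)-F(q)|}{|p-q|^{(n+1)/(n+2)}}\leq [F]_{\gamma}\,|p-q|^{\gamma-\frac{n+1}{n+2}}
\xrightarrow[|p-q|\to 0]{}0
$$
shows that $F$ in fact lies in $C^{0,\frac{n+1}{n+2}+}(\overbar{\bbbb}^{n+1};\R^{2n+1})$. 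Finally, $F|_{\Sph^n}=\phi$ is a topological embedding. Hence $F$ meets every assumption of Corollary~\ref{T97}, which rules out the existence of such an $F$. This contradiction proves $\pi_n^\gamma(\bbbh^n)\neq 0$.

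The argument is essentially a matter of chaining together the tools already developed: the rigid model embedding in Lemma~\ref{T107}, the extension criterion in Lemma~\ref{T105}, the metric comparison in Lemma~\ref{T40}, and the non-embedding obstruction in Corollary~\ref{T97}. The only ``step of substance'' is the verification that the $\gamma$-H\"older extension $F$ automatically upgrades to the $C^{0,\frac{n+1}{n+2}+}$ regularity (into $\R^{2n+1}$) required to feed it into Corollary~\ref{T97}, which is just the strict inequality $\gamma>\tfrac{n+1}{n+2}$ and so presents no real obstacle.
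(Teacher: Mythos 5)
Your proof is correct and takes essentially the same approach as the paper: produce the rigid horizontal sphere from Lemma~\ref{T107}, use Corollary~\ref{T97} to rule out any $C^{0,\gamma}(\overbar{\bbbb}^{n+1};\bbbh^n)$ extension of it, and conclude nontriviality via Lemma~\ref{T105}. The only difference is cosmetic — you spell out the contradiction framing and the verification that a $C^{0,\gamma}(\overbar{\bbbb}^{n+1};\bbbh^n)$ map with $\gamma>\frac{n+1}{n+2}$ lands in the two regularity classes required by Corollary~\ref{T97} (which the paper compresses into a one-line remark following that corollary).
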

\begin{proof}
If $\phi:\Sph^n\to\bbbh^n$ is a bi-Lipschitz embedding from Lemma~\ref{T107}, then according to Corollary~\ref{T97},
$f$ cannot be extended to a map $\tilde{\phi}\in C^{0,\gamma}(\overbar{\bbbb}^{n+1};\bbbh^n)$ whenever 
$\gamma \in \big(\frac{n+1}{n+2},1\big]$. Hence Lemma~\ref{T105} yields that $0\neq[\phi]\in\pi_n^\gamma(\bbbh^n)$.
\end{proof}
\begin{remark}
A straightforward proof of Theorem~\ref{T108}, that does not refer to Theorem~\ref{T53}, follows also from the argument used in Section~\ref{GroPr}.
\end{remark}

The main result of \Cref{NHG} is, however, \Cref{th:hopfgamma}. It is much more difficult than \Cref{T108}. The remaining part of the paper is solely devoted to its proof.
\begin{theorem}
\label{th:hopfgamma}
For any  $n \geq 1$, 
we have $\pi^{\gamma}_{4n-1}(\mathbb{H}^{2n})\neq 0$ for all $\gamma \in \big(\frac{4n+1}{4n+2},1\big]$.
\end{theorem}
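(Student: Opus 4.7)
The plan is to adapt the argument of \cite{HST14} (which settled the Lipschitz case) by replacing the pointwise rank bound $\rank DF \leq n$ with the distributional pullback identity of \Cref{T47} and by substituting Lipschitz pullback estimates with the sharper Hölder ones from \Cref{T45}, \Cref{T50}, and \Cref{th:hodgepullback}. Pick a map $\phi\colon S^{4n-1}\to S^{2n}$ with nonzero classical Hopf invariant $H(\phi)$ (for $n=1$ the Hopf fibration), compose with the Legendrian bi-Lipschitz embedding $\iota\colon S^{2n}\to \bbbh^{2n}$ from \Cref{T107}, and set $f:=\iota\circ\phi \in \lip(S^{4n-1};\bbbh^{2n})$. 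By \Cref{T105}, the conclusion $\pi_{4n-1}^\gamma(\bbbh^{2n})\neq 0$ reduces to showing that $f$ admits no extension $F\in C^{0,\gamma}(\overbar{\bbbb}^{4n};\bbbh^{2n})$ for $\gamma\in\bigl(\tfrac{4n+1}{4n+2},1\bigr]$; assume such an $F$ exists, aiming at a contradiction.

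Using \Cref{T21} for the embedding $\iota\colon S^{2n}\to\R^{4n+1}$, select $\kappa\in\Omega_c^{2n}(\R^{4n+1})$ whose exterior derivative $d\kappa$ is compactly supported in $\R^{4n+1}\setminus\iota(S^{2n})$ and with $\int_{S^{2n}}\iota^*\kappa=1$. Let $F_\eps$ denote the smooth approximation of $F$ furnished by \Cref{T50}, satisfying $\Vert F_\eps^*\upalpha\Vert_\infty\lesssim\eps^{2\gamma-1}$ and $\Vert DF_\eps\Vert_\infty\lesssim\eps^{\gamma-1}$. Double $\overbar{\bbbb}^{4n}$ across its boundary to form a closed oriented $4n$-manifold $\widetilde\cM$ (homeomorphic to $S^{4n}$, in particular with $H^{2n}(\widetilde\cM)=0$), extend $F_\eps$ smoothly, and invoke \Cref{th:hodgepullback} to decompose
\begin{equation*}
F_\eps^*\kappa=d\Omega_\eps+\delta B_\eps,\qquad \delta\Omega_\eps=0,\quad dB_\eps=0,
\end{equation*}
with the Heisenberg-specific control $\Vert\delta B_\eps\Vert_2\lesssim\Vert F_\eps^*\upalpha\Vert_\infty(1+\Vert DF_\eps\Vert_\infty^{2n})\lesssim\eps^{(2n+2)\gamma-(2n+1)}$ and $\Vert\Omega_\eps\Vert_2+\Vert d\Omega_\eps\Vert_2\lesssim \Vert F_\eps^*\kappa\Vert_2\lesssim\eps^{2n(\gamma-1)}$.

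Since $d\kappa$ vanishes in a neighborhood of $\iota(S^{2n})=f(S^{4n-1})$, for small $\eps$ the form $f_\eps^*\kappa:=F_\eps^*\kappa|_{S^{4n-1}}$ is closed, and the trace $\omega_\eps:=\Omega_\eps|_{S^{4n-1}}$ serves as a primitive of $f_\eps^*\kappa$ up to an $L^2$-error of order $\Vert\delta B_\eps\Vert_2$. The classical formula $H(\phi)=\int_{S^{4n-1}}\omega\wedge d\omega$ together with the homotopy of $f_\eps$ to $f$ inside the neighborhood where $d\kappa=0$ and the normalization $\int_{S^{2n}}\iota^*\kappa=1$ identifies
\begin{equation*}
\int_{S^{4n-1}}\omega_\eps\wedge f_\eps^*\kappa\ \xrightarrow{\eps\to 0}\ H(\phi)\neq 0.
\end{equation*}
On the other hand, Stokes' theorem on $\overbar{\bbbb}^{4n}$ gives
\begin{equation*}
\int_{S^{4n-1}}\omega_\eps\wedge f_\eps^*\kappa=\int_{\bbbb^{4n}} F_\eps^*(\kappa\wedge\kappa)-\int_{\bbbb^{4n}}\delta B_\eps\wedge F_\eps^*\kappa+(-1)^{2n-1}\int_{\bbbb^{4n}}\Omega_\eps\wedge F_\eps^*d\kappa.
\end{equation*}
The first term tends to $0$ by \Cref{T47} applied to the $4n$-form $\kappa\wedge\kappa$ with $\tau\equiv 1$, since $\gamma>\tfrac{4n+1}{4n+2}>\tfrac{4n}{4n+1}$. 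The second term is bounded by $\Vert\delta B_\eps\Vert_2\Vert F_\eps^*\kappa\Vert_2\lesssim \eps^{(4n+2)\gamma-(4n+1)}\to 0$, which is precisely where the critical threshold $\gamma>\tfrac{4n+1}{4n+2}$ enters. The third term, after invoking the Lefschetz-type decomposition of \Cref{T49} on $d\kappa=\upalpha\wedge\beta+d\upalpha\wedge\delta'$ and integrating by parts as in the proof of \Cref{T47}, is reduced to a combination of terms governed by the same critical scaling $\eps^{(4n+2)\gamma-(4n+1)}$ and therefore vanishes in the limit. Passing to $\eps\to 0$ forces $H(\phi)=0$, a contradiction.

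The hard part will be the bookkeeping required to identify $\lim_{\eps\to 0}\int_{S^{4n-1}}\omega_\eps\wedge f_\eps^*\kappa$ with the classical $H(\phi)$: one must verify that the trace $\Omega_\eps|_{S^{4n-1}}$ differs from any smooth primitive of $f_\eps^*\kappa$ only by an exact form plus an $L^2$-small coexact perturbation (so that the pairing with the closed form $f_\eps^*\kappa$ is unaffected), then invoke continuity of the Hopf invariant along the uniform Hölder approximation $f_\eps\to f$ near $\iota(S^{2n})$. This is essentially the computation carried out in \cite{HST14}, with the sole novelty that every quantitative estimate appeals to the $\eps$-quantitative Hölder bounds of \Cref{T45} and \Cref{T50} rather than to Lipschitz ones; the exponent $\tfrac{4n+1}{4n+2}$ is sharp for this scheme because it is precisely the threshold at which the cross-term $\Vert\delta B_\eps\Vert_2\Vert F_\eps^*\kappa\Vert_2$ decays to zero.
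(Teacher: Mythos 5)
Your overall framework is right and the quantitative bookkeeping near the critical exponent is correct: the critical cross-term $\Vert\delta B_\eps\Vert_2\,\Vert F_\eps^*\kappa\Vert_2 \lesssim \eps^{(4n+2)\gamma-(4n+1)}$ does decay precisely when $\gamma>\frac{4n+1}{4n+2}$, and the estimates for $F_\eps^*\upalpha$ and $DF_\eps$ match the paper's. But the step you flag as ``the hard part'' --- identifying $\lim_{\eps\to 0}\int_{\Sph^{4n-1}}\omega_\eps\wedge f_\eps^*\kappa$ with $\HI(\phi)$ --- is not mere bookkeeping; it fails as stated. The form $\omega_\eps=\Omega_\eps|_{\Sph^{4n-1}}$ is the trace to a codimension-one hypersurface of the $L^2$-controlled form in the Hodge decomposition on the doubled $4n$-manifold. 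The error $f_\eps^*\kappa - d\omega_\eps = \imath^*\delta B_\eps$ is the trace of $\delta B_\eps$, and the trace operator is \emph{not} bounded from $L^2(\widetilde\cM)$ to $L^2(\Sph^{4n-1})$ (one needs $W^{1/2,2}$ regularity at least). So the bound $\Vert\delta B_\eps\Vert_{L^2(\widetilde\cM)}\lesssim\eps^{(2n+2)\gamma-(2n+1)}$ does not give any control on $\Vert\imath^*\delta B_\eps\Vert_{L^2(\Sph^{4n-1})}$, and your claim that $\omega_\eps$ ``serves as a primitive of $f_\eps^*\kappa$ up to an $L^2$-error of order $\Vert\delta B_\eps\Vert_2$'' has no justification. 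Citing the Lipschitz computation of \cite{HST14} does not repair this: in the Lipschitz case $\rank DF\leq 2n$ forces $F^*\kappa$ to be closed, hence $\delta B=0$ \emph{identically}, so the trace issue simply never arises there. It is a genuinely new obstacle in the H\"older setting.

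The paper circumvents the trace problem in two ways that your proposal does not. First, it never takes a Hodge decomposition on a $4n$-dimensional ball and restricts it to the boundary sphere; instead, the generalized Hopf invariant $\HI_\kappa(f)$ is defined directly through the Hodge decomposition of $f^*\kappa$ \emph{on} $\Sph^{4n-1}$, and \Cref{th:hodgepullback} applied there yields a \emph{coexact} defect $\delta\beta_f$ that is intrinsically controlled by $\Vert f^*\upalpha\Vert_\infty(1+\Vert Df\Vert_\infty^{2n})$ with no trace loss. Second, the homotopy invariance (\Cref{pr:homotopy}) is proved on the closed manifold $\R/2\Z\times\Sph^{4n-1}$, and every place where a trace $\imath_t^*$ of an $L^2$-controlled quantity would appear, the paper \emph{integrates over the slice variable $t$} (see the identity \eqref{eq152} and the crucial cancellation \eqref{eq149}): this turns a trace estimate into a Fubini/Cauchy--Schwarz estimate $\int_{-\delta}^\delta\int_{\Sph^{4n-1}}|\imath_t^*\omega_{H_\eps}|\,|\delta\beta_{f_\eps}|\,dt \leq (2\delta)^{1/2}\Vert\omega_{H_\eps}\Vert_2\Vert\delta\beta_{f_\eps}\Vert_2$, which only uses the $L^2$ bounds on the product manifold. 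Without some analogue of this averaging in your argument --- for instance averaging the Stokes identity over spheres $\Sph^{4n-1}_r$, $r\in[1-\delta,1]$, and combining with a separately-established intrinsic Hodge estimate on the spheres --- the proposal has a genuine gap at the identification step.
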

In order to prove \Cref{th:hopfgamma} we need to extend the strategy from \cite{HST14} and use the Hopf invariant combined with the new notion of the ``rank of the derivative'' for H\"older maps, \Cref{T47}. Classically, the Hopf invariant $\HI(f)$ is a homotopy invariant defined on maps $f: \mathbb{S}^{4n-1} \to \mathbb{S}^{2n}$, but we need to extend it to maps $f:\Sph^{4n-1}\to\R^{4n+1}$.

\subsection{Generalized Hopf invariant}
Let us start with recalling the definition of the classical Hopf invariant (cf.\ \cite{BT82}).

Let $\nu_{\Sph^{2n}}\in\Omega^{2n}(\Sph^{2n})$ be the volume form on $\Sph^{2n}$. For any smooth map $f:\Sph^{4n-1}\to\Sph^{2n}$, we have $df^*(\nu_{\Sph^{2n}})=f^*(d\nu_{\Sph^{2n}})=0$. Since $H^{2n}(\Sph^{4n-1})=0$, it follows that $f^*(\nu_{\Sph^{2n}})=d\omega$ for some $\omega\in \Omega^{2n-1}(\Sph^{4n-1})$. Then, the {\em Hopf invariant} is defined by
$$
\HI(f):=\int_{\Sph^{4n-1}}\omega\wedge d\omega=\int_{\Sph^{4n-1}}\omega\wedge f^*(\nu_{\Sph^{2n}}).
$$
If $f,g\in C^\infty(\Sph^{4n-1};\Sph^{2n})$ are homotopic, then $\HI(f)=\HI(g)$.

Hopf~\cite[Satz II, Satz II']{Hopf} proved the following important result which along with the homotopy invariance of $\HI(\cdot)$ implies that $\pi_{4n-1}(\Sph^{2n})\neq 0$.
\begin{lemma}
\label{la:hopffibration}
For any $n \in \mathbb{N}$ there exists a smooth map
$g: \mathbb{S}^{4n-1} \to \mathbb{S}^{2n}$ such that $\HI (g) \neq 0$.
\end{lemma}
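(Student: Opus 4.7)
The plan is to construct, for every $n$, an explicit smooth map $g:\mathbb{S}^{4n-1}\to\mathbb{S}^{2n}$ as a (smoothed) Whitehead square $[\iota_{2n},\iota_{2n}]$, and then compute $\HI(g)=\pm 2$ via a cup-product calculation on the mapping cone. The computation gives a single unified argument valid for all $n$, at the cost of producing the non-optimal value $\pm 2$ rather than $\pm 1$ (which would anyway fail for most $n$ by Adams' theorem).

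To construct $g$, I would decompose
\[
\mathbb{S}^{4n-1}=\partial(D^{2n}\times D^{2n})=(\mathbb{S}^{2n-1}\times D^{2n})\cup(D^{2n}\times\mathbb{S}^{2n-1}),
\]
and fix a smooth quotient map $q:D^{2n}\to\mathbb{S}^{2n}$ collapsing $\partial D^{2n}$ to a basepoint $p_0$ and restricting to an orientation-preserving diffeomorphism of the open disk onto $\mathbb{S}^{2n}\setminus\{p_0\}$. Define $\widetilde g:\mathbb{S}^{4n-1}\to \mathbb{S}^{2n}\vee\mathbb{S}^{2n}$ by $\widetilde g(x,y)=(q(y),p_0)$ on $\mathbb{S}^{2n-1}\times D^{2n}$ and $\widetilde g(x,y)=(p_0,q(x))$ on $D^{2n}\times\mathbb{S}^{2n-1}$; the two rules agree on the corner $\mathbb{S}^{2n-1}\times\mathbb{S}^{2n-1}$, where both send to $(p_0,p_0)$. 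Standard smoothing in a collar of the corner submanifold yields a smooth map homotopic to $\widetilde g$, and I set $g=\nabla\circ\widetilde g$, with $\nabla:\mathbb{S}^{2n}\vee\mathbb{S}^{2n}\to\mathbb{S}^{2n}$ the fold.

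The Hopf invariant computation goes through the mapping cones. By construction, the cofiber of $\widetilde g$ is homeomorphic to $\mathbb{S}^{2n}\times\mathbb{S}^{2n}$ with its standard CW structure (two $2n$-cells wedged at a point, one $4n$-cell attached by $\widetilde g$), and the fold map extends over the cones to $F:\mathbb{S}^{2n}\times\mathbb{S}^{2n}\to C_g$ restricting to a degree-one map on the top cells, hence inducing an isomorphism on $H^{4n}$. Writing $\alpha_1,\alpha_2\in H^{2n}(\mathbb{S}^{2n}\times\mathbb{S}^{2n};\mathbb{Z})$ for the two projection classes (so $\alpha_i^2=0$ and $\alpha_1\cup\alpha_2$ generates $H^{4n}$) and $\alpha\in H^{2n}(C_g)$, $\beta\in H^{4n}(C_g)$ for the generators, the naturality of $F$ gives $F^*\alpha=\alpha_1+\alpha_2$ and $F^*\beta=\pm\alpha_1\cup\alpha_2$; consequently
\[
F^*(\alpha\cup\alpha)=(\alpha_1+\alpha_2)^2=2\,\alpha_1\cup\alpha_2=\pm 2\,F^*\beta,
\]
so $\alpha\cup\alpha=\pm 2\beta$ in $C_g$, which is the integer Hopf invariant. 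The de Rham formula $\HI(g)=\int_{\mathbb{S}^{4n-1}}\omega\wedge d\omega$ with $d\omega=g^*\nu_{\mathbb{S}^{2n}}$ (for $\nu_{\mathbb{S}^{2n}}$ a unit-volume form) represents this integer class under the de Rham isomorphism applied to $C_g$, so $\HI(g)=\pm 2\neq 0$.

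The main obstacle is cleanly translating between the algebraic cup-product Hopf invariant on $C_g$ and the de Rham integral appearing in the definition preceding the lemma: both conventions are standard, but matching signs and normalizations requires some bookkeeping. A technically lighter alternative, which I would mention if only the existence of some $g$ with $\HI(g)\neq 0$ is needed for a particular $n$, is to use the quaternionic or complex Hopf fibrations for $n=1,2$ (and the octonionic one for $n=4$), where the Hopf invariant equals $1$ and can be verified by a direct computation with explicit forms $\omega$ on $\mathbb{S}^{4n-1}$; the Whitehead-product construction above then handles the remaining values of $n$ uniformly.
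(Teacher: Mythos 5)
Your argument is correct, but it takes a genuinely different route from the paper, which in fact offers no proof of this lemma at all: it simply quotes Hopf's classical theorem (Satz II, II$'$ of \cite{Hopf}) asserting the existence of maps $\Sph^{4n-1}\to\Sph^{2n}$ of nonzero Hopf invariant. Your construction --- compose the attaching map of the top cell of $\Sph^{2n}\times\Sph^{2n}$ with the fold map to get the Whitehead square, and compute its Hopf invariant on the mapping cone --- is the standard self-contained argument and is sound in all its steps: the cofiber of $\widetilde g$ is indeed $\Sph^{2n}\times\Sph^{2n}$, the induced $F$ satisfies $F^*\alpha=\alpha_1+\alpha_2$ and is an isomorphism on $H^{4n}$, and $(\alpha_1+\alpha_2)^2=2\,\alpha_1\cup\alpha_2$ precisely because the sphere dimension $2n$ is even (this is where evenness enters, and why the value is $\pm2$ rather than $\pm1$, which by Adams is unavailable outside $n=1,2,4$ anyway); smoothing the corner changes nothing since the invariant is a homotopy invariant. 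What your write-up buys is a uniform, citation-free treatment of all $n$; what it still leans on is the bridge you flag yourself, namely the identification of the analytic quantity $\int_{\Sph^{4n-1}}\omega\wedge d\omega$ (which is how $\HI$ is defined in the paper, with the unnormalized volume form, so the value is the square of the volume of $\Sph^{2n}$ times the integer invariant --- harmless for nonvanishing) with the cup-product invariant on the cone. That identification is Whitehead's integral formula; your phrase about the ``de Rham isomorphism applied to $C_g$'' should be replaced by an appeal to that formula or by a direct fiber-integration/linking-number computation, since $C_g$ is not a manifold. Your fallback via the complex, quaternionic and octonionic Hopf fibrations handles only $n=1,2,4$; Hopf's own general construction, which the paper cites, produces Hopf invariant $\pm2$ in every dimension $\Sph^{4n-1}\to\Sph^{2n}$ and is essentially equivalent to your Whitehead-square maps, so the two approaches land on the same classical fact.
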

Our aim is to generalize the Hopf invariant to the case of mappings $f:\Sph^{4n-1}\to\R^{4n+1}$. The construction presented below is a modification of that given in \cite{HST14}.

Fix any form $\kappa\in\Omega_c^{2n}(\R^{4n+1})$. Let $f\in C^\infty(\Sph^{4n-1};\R^{4n+1})$. Then $f^*(\kappa)\in\Omega^{2n}(\Sph^{4n-1})$. Since $H^{2n}(\Sph^{4n-1})=0$, Theorem~\ref{th:hodgepullback} gives the Hodge decomposition
\begin{equation}
\label{eq162}
f^*(\kappa)=d\omega_f+\delta\beta_f,
\quad
\omega_f\in\Omega^{2n-1}(\Sph^{4n-1}),\ 
\beta_f\in\Omega^{2n+1}(\Sph^{4n-1}),
\quad
\delta\omega_f=0,\ d\beta_f=0.
\end{equation}
Then, we define the {\em generalized Hopf invariant} by
$$
\HI_\kappa(f):=\int_{\Sph^{4n-1}}\omega_f\wedge d\omega_f.
$$
Note that
\begin{equation}
\label{eq169}
\HI_\kappa(f)=\int_{\Sph^{4n-1}} \omega_f\wedge f^*(\kappa).
\end{equation}
Indeed, $d\omega_f=f^*(\kappa)-\delta\beta_f$ and the equality follows from integration by parts, because $\delta\omega_f=0$.

Observe also that $\HI_\kappa(f)$ does not depend on a particular choice of the Hodge decomposition in \eqref{eq162}. That is, if
$$
f^*(\kappa)=d\tau+\delta\gamma,
\quad
\delta\tau=0,
\quad
d\gamma=0,
$$
then
\begin{equation}
\label{eq170}
\HI_\kappa(f)=\int_{\Sph^{4n-1}}\tau\wedge d\tau=\int_{\Sph^{4n-1}}\tau\wedge f^*(\kappa).
\end{equation}
Indeed, the second equality follows from the same argument as in the proof of \eqref{eq169}. As for the first equality, in \Cref{th:hodge} we proved that $d\tau=d\omega_f$ and hence integration by parts gives
$$
\HI_\kappa(f)-\int_{\Sph^{4n-1}}\tau\wedge d\tau=\int_{\Sph^{4n-1}}\omega_f\wedge d\omega_f -
\int_{\Sph^{4n-1}}\tau\wedge d\tau=
\int_{\Sph^{4n-1}}(\omega_f-\tau)\wedge d\omega_f=0,
$$
because $d(\omega_f-\tau)=0$.
However, in what follows, we will use the Hodge decomposition \eqref{eq162} from \Cref{th:hodgepullback}, because we will need to refer to estimates proved in \Cref{th:hodgepullback}.

It is important to be aware that $\HI_\kappa(\cdot)$ depends on the choice of the form $\kappa$.

We want to emphasize that $\HI_\kappa(f)$ is {\em not} an invariant of homotopies of $f:\Sph^{4n-1}\to\R^{4n+1}$, unless $\HI_\kappa(\cdot)\equiv 0$. 
It was proved in \cite{HST14} that $\HI_{\kappa}(\cdot)$ restricted to the class of Lipschitz maps
$f\in C^{0,1}(\Sph^{4n-1};\R^{4n+1})$ satisfying 
\[
\rank Df \leq 2n \quad \text{almost everywhere in $\Sph^{4n-1}$,}                                                      \]
is invariant under Lipschitz homotopies satisfying the same rank condition,
$\rank DH\leq 2n$ almost everywhere in $\Sph^{4n-1}\times [0,1]$. Since the rank of the derivative of any Lipschitz mapping into $\bbbh^{2n}$ is less than or equal $2n$, $\HI_\kappa(\cdot)$ is an invariant of Lipschitz homotopies of Lipschitz maps $f\in C^{0,1}(\Sph^{4n-1};\bbbh^{2n})$.
On the other hand, H\"older continuous mappings into $\bbbh^{2n}$ (and hence into $\R^{4n+1})$ need not be differentiable at any point, but \Cref{T47} (see also \Cref{R47}) is a substitute for the condition about the rank of the derivative. Using this idea,  we will prove in \Cref{pr:homotopy} that $\HI_\kappa(\cdot)$
is an invariant of H\"older homotopies of H\"older maps into the Heisenberg group $\bbbh^{2n}$. 
For a precise statement, see \Cref{pr:homotopy} and \Cref{R12}.
This will be a far reaching and non-trivial generalization of the results from \cite{HST14}.

While it can happen that $\HI_\kappa(\cdot)\equiv 0$ (for example if $\kappa=0$), in order to prove that $\pi_{4n-1}^\gamma(\bbbh^{2n})\neq 0$ (see \Cref{th:hopfgamma}), we will need to select a suitable form $\kappa$ such that $\HI_\kappa(f)\neq 0$ for some smooth map $f:\Sph^{4n-1}\to\bbbh^{2n}=\R^{4n+1}$. This and \Cref{pr:homotopy} will imply that $0\neq[f]\in\pi^\gamma_{4n-1}(\bbbh^{2n})$. The form $\kappa$ will be a compactly supported extension of the volume form of the embedded sphere $\Sph^{2n}$ into $\R^{4n+1}$ discussed in \Cref{T107}. For details, see \Cref{LastProof}.

\begin{remark}
\label{R3}
Note that if $f$ is smooth and $\rank Df\leq 2n$, then $d(f^*(\kappa))=f^*(d\kappa)=0$, because $d\kappa$ is a $(2n+1)$-form. Hence $f^*(\kappa)=d\omega_f$ i.e., $\delta\beta_f=0$. For any smooth map $f$, the term $\delta\beta_f$ measures how far $f^*(\kappa)$ is from being exact. If $f:\Sph^{4n-1}\to\bbbh^{2n}$ is in a suitable H\"older class, then ``$\rank Df\leq 2n$'' in the distributional sense of \Cref{T47}. Therefore, one should expect that if $f_\eps$ is an approximation of $f$ by mollification, then suitable integrals involving $\delta\beta_{f_\eps}$ would converge to zero as $\eps\to 0$, and the key estimate to achieve that is \eqref{eq160} combined with \Cref{T45}. This idea is used in the proof of the next result, see for example \eqref{eq149}. It allows us to extend the proof of \cite[Proposition~5.8]{HST14} to the H\"older case. We hope that this explanation will help understand the proof.
\end{remark}
\begin{remark}
\label{R12}
\Cref{pr:homotopy} allows us to define $\HI_\kappa(f)$ when 
$\gamma\in\big(\frac{4n+1}{4n+2},1\big]$ and 
$f\in C^{0,\gamma}(\Sph^{4n-1};\bbbh^{2n})$ is $C^{0,\gamma}$-homotopic to a map in $C^\infty\cap C^{0,\gamma}(\Sph^{4n-1};\bbbh^{2n})$.
 
If $\gamma\in\big(\frac{4n-1}{4n},1\big]$, then one can prove that
\begin{equation}
\label{eq173}
|\HI_\kappa(f)-\HI_\kappa(g)|\lesssim
\big(1+\Vert f\Vert_{C^{0,\gamma}}^{4n+1}+\Vert g\Vert_{C^{0,\gamma}}^{4n+1}\big)\Vert f-g\Vert_{C^{0,\gamma}}
\text{ for }
f,g\in C^\infty(\Sph^{4n-1};\R^{4n+1}).
\end{equation}
This estimate allows one to define $\HI_\kappa(f):=\lim_{\eps\to 0}\HI_\kappa(f_\eps)$ for all $f\in C^{0,\gamma}(\Sph^{4n-1};\R^{4n+1})$, and hence for all $f\in C^{0,\gamma}(\Sph^{4n-1};\bbbh^{4n+1})$. However, regarding the $C^{0,\gamma}$-homotopy invariance of maps in $f\in C^{0,\gamma}(\Sph^{4n-1};\bbbh^{4n+1})$, we could prove it only for $\gamma\in\big(\frac{4n+1}{4n+2},1\big]$, which is the same range as in \Cref{pr:homotopy}. 

One can prove \eqref{eq173} using hard harmonic analysis related to the paraproduct estimates in \cite{SY1999,LS19}. Alternatively, one can prove \eqref{eq173} using methods from \cite{SVS}.

However, the proof of \Cref{pr:homotopy} presented below is substantially easier, and arguably more elegant; although we do not prove \eqref{eq173}, \Cref{pr:homotopy} suffices to prove \Cref{th:hopfgamma}.
\end{remark}
\begin{theorem}
\label{pr:homotopy}
Let $\gamma\in\big(\frac{4n+1}{4n+2},1\big]$ and let $\kappa\in \Omega_c^{2n}(\R^{4n+1})$. The generalized Hopf invariant $\HI_\kappa(\cdot)$ 
is a $C^{0,\gamma}$-homotopy invariant of mappings in the class $C^\infty\cap C^{0,\gamma}(\Sph^{4n-1};\mathbb{H}^{2n})$.

More precisely, assume that $f,g \in C^\infty\cap C^{0,\gamma}(\mathbb{S}^{4n-1};\mathbb{H}^{2n})$ are $C^{0,\gamma}(\mathbb{S}^{4n-1};\mathbb{H}^{2n})$-homotopic. That is, assume there exists $H \in C^{0,\gamma}([0,1] \times \mathbb{S}^{4n-1}; \mathbb{H}^{2n})$ such that $H(0,\cdot) = f(\cdot)$ and $H(1,\cdot) = g(\cdot)$. Then
\[
 \mathcal{H}_\kappa(f) = \mathcal{H}_\kappa(g).
\]
\end{theorem}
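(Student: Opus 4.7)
The plan is to mollify $H$ so that its endpoints converge smoothly to $f$ and $g$, reduce the claim to the vanishing of $\HI_\kappa(g_\eps)-\HI_\kappa(f_\eps)$ along the smooth approximation, and then prove this via a cylinder identity combined with the compensated pullback bounds of \Cref{T45} and \Cref{T50}. First I extend $H$ by setting $\tilde H(t,x)=f(x)$ for $t\in[-1,0]$, $\tilde H(t,x)=H(t,x)$ for $t\in[0,1]$, and $\tilde H(t,x)=g(x)$ for $t\in[1,2]$. Since $f,g$ are smooth and glue continuously to $H$, the extension lies in $C^{0,\gamma}([-1,2]\times\Sph^{4n-1};\bbbh^{2n})$. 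Applying the construction behind \Cref{T50} on the product manifold yields smooth approximations $\tilde H_\eps\in C^\infty([-1+\eps,2-\eps]\times\Sph^{4n-1};\R^{4n+1})$ satisfying
\[
\|\tilde H_\eps^*\upalpha\|_\infty\lesssim \eps^{2\gamma-1},\qquad \|D\tilde H_\eps\|_\infty\lesssim \eps^{\gamma-1}.
\]
For sufficiently small $\eps$ the mollification defining $\tilde H_\eps(-2\eps,\cdot)$ samples only the flat region $\tilde H\equiv f$, so $f_\eps:=\tilde H_\eps(-2\eps,\cdot)$ is the ordinary $\eps$-mollification of the smooth $f$ and $f_\eps\to f$ in $C^\infty(\Sph^{4n-1};\R^{4n+1})$; analogously $g_\eps:=\tilde H_\eps(1+2\eps,\cdot)\to g$ in $C^\infty$. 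Standard elliptic regularity for the Hodge primitive then gives $\HI_\kappa(f_\eps)\to\HI_\kappa(f)$ and $\HI_\kappa(g_\eps)\to\HI_\kappa(g)$, so it suffices to prove $\HI_\kappa(g_\eps)-\HI_\kappa(f_\eps)\to 0$ as $\eps\to 0$.

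For the smooth family $F_t:=\tilde H_\eps(t,\cdot)$ on $t\in[-2\eps,1+2\eps]$, let $V_t$ denote the vector field along $F_t$ given by $\partial_t\tilde H_\eps$. Differentiating $\HI_\kappa(F_t)=\int\omega_t\wedge d\omega_t$ and using that $\partial_t\omega_t=\delta\Delta^{-1}\partial_t F_t^*\kappa$ is co-closed (hence $L^2$-orthogonal to $\delta\beta_t$) yields $\tfrac{d}{dt}\HI_\kappa(F_t)=2\int\partial_t\omega_t\wedge F_t^*\kappa$. Inserting the Cartan-type identity $\partial_tF_t^*\kappa=d(F_t^*\iota_{V_t}\kappa)+F_t^*(\iota_{V_t}d\kappa)$, integrating in $t$, and carrying out all integrations by parts on slices and across the cylinder $M_\eps:=[-2\eps,1+2\eps]\times\Sph^{4n-1}$, I arrive at
\[
\HI_\kappa(g_\eps)-\HI_\kappa(f_\eps)=\int_{M_\eps}\tilde H_\eps^*(\kappa\wedge\kappa)-\int_{M_\eps}\Omega_\eps\wedge \tilde H_\eps^*(d\kappa)+R_\eps,
\]
where $\Omega_\eps$ is a Hodge primitive of $\tilde H_\eps^*\kappa$ on $M_\eps$ controlled via \Cref{th:hodge} by $\|\Omega_\eps\|_{L^2}\lesssim \|\tilde H_\eps^*\kappa\|_{L^2}\lesssim \eps^{-2n(1-\gamma)}$, and $R_\eps$ is a boundary remainder on $\{-2\eps,1+2\eps\}\times\Sph^{4n-1}$ whose only small factors are $f_\eps^*\upalpha$ and $g_\eps^*\upalpha$. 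Since \Cref{T99} gives $f^*\upalpha\equiv g^*\upalpha\equiv 0$ pointwise for the smooth $f,g$, both $f_\eps^*\upalpha,g_\eps^*\upalpha\to 0$ in $C^\infty$, so $R_\eps\to 0$.

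I estimate the two bulk terms using the Lefschetz decompositions \Cref{T48} and \Cref{T49}. Writing $\kappa\wedge\kappa=\beta_1\wedge\upalpha+\gamma_1(d\upalpha)^{2n}$ with $\beta_1\in\Omega^{4n-1}(\R^{4n+1})$ and $\gamma_1\in C^\infty(\R^{4n+1})$, the first summand pulls back with $L^\infty$-bound $\|\tilde H_\eps^*\beta_1\|_\infty\|\tilde H_\eps^*\upalpha\|_\infty\lesssim \eps^{(4n+1)\gamma-4n}$; the second, via $(d\upalpha)^{2n}=d(\upalpha\wedge(d\upalpha)^{2n-1})$ and integration by parts, gives an interior term of the same order and a boundary term absorbed into $R_\eps$. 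Writing $d\kappa=\beta_2\wedge\upalpha+d(\gamma_2\wedge\upalpha)$, the critical term is the Cauchy--Schwarz estimate
\[
\Big|\int_{M_\eps}\Omega_\eps\wedge \tilde H_\eps^*\beta_2\wedge \tilde H_\eps^*\upalpha\Big|\lesssim \|\Omega_\eps\|_{L^2}\,\|\tilde H_\eps^*\beta_2\wedge \tilde H_\eps^*\upalpha\|_{L^2}\lesssim \eps^{-2n(1-\gamma)}\cdot \eps^{(2n+2)\gamma-(2n+1)}=\eps^{(4n+2)\gamma-(4n+1)},
\]
which is precisely the term forcing the threshold $\gamma>\tfrac{4n+1}{4n+2}$. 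The exact part $d(\gamma_2\wedge\upalpha)$ is handled by integrating by parts against $\Omega_\eps$ and using the accompanying $L^2$-bound $\|d\Omega_\eps\|_{L^2}\lesssim\eps^{-2n(1-\gamma)}$, producing contributions of no larger order. All exponents being positive under the hypothesis, the right-hand side vanishes as $\eps\to 0$.

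The main obstacle is justifying the cylinder identity above: Hodge theory on the manifold-with-boundary $M_\eps$ is more delicate than on a closed manifold, and the global primitive $\Omega_\eps$ must be chosen so that its restrictions to the two boundary slices are compatible with the slicewise Hodge primitives $\omega_{f_\eps}$ and $\omega_{g_\eps}$ entering $\HI_\kappa$, up to errors that can be absorbed into $R_\eps$. Once the identity is secured, every remaining estimate is a mechanical combination of $\|\tilde H_\eps^*\upalpha\|_\infty\lesssim \eps^{2\gamma-1}$ with the generic pullback bounds of \Cref{T8}, and $\gamma>\tfrac{4n+1}{4n+2}$ appears sharply as the infimum for which each such combination produces a positive power of $\eps$.
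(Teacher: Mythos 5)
Your approach is genuinely different from the paper's in mechanism (you differentiate $\HI_\kappa(F_t)$ in $t$ via a Cartan-type identity and a cylinder integral, while the paper uses a slicewise averaging argument), but it has a gap that you yourself point out and never close, and that gap is precisely the heart of the matter.

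You write ``The main obstacle is justifying the cylinder identity above: Hodge theory on the manifold-with-boundary $M_\eps$ is more delicate than on a closed manifold, and the global primitive $\Omega_\eps$ must be chosen so that its restrictions to the two boundary slices are compatible with the slicewise Hodge primitives~$\omega_{f_\eps}$ and $\omega_{g_\eps}$.'' This is not a detail to be deferred: none of the Hodge machinery developed in Section~\ref{S:Hodge} (in particular Lemma~\ref{T104}, Lemma~\ref{la:gaffney}, Lemma~\ref{pr:Poincare}, Corollary~\ref{th:hodge}, Theorem~\ref{th:hodgepullback}) is available on a manifold with boundary, so your $\Omega_\eps$, its $L^2$ bound $\|\Omega_\eps\|_{L^2}\lesssim\eps^{-2n(1-\gamma)}$, and the compatibility of its traces with $\omega_{f_\eps},\omega_{g_\eps}$ are all unproved. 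The paper sidesteps the difficulty at the outset: it first reparametrizes the homotopy by a Lipschitz $\varphi:\R/2\Z\to[0,1]$ so that $H$ is constant (equal to $f$) near $t=0$ and constant (equal to $g$) near $t=1$, which lets one regard $H$ as a map from the \emph{closed} manifold $\R/2\Z\times\Sph^{4n-1}$. By the K\"unneth formula $H^{2n}(\R/2\Z\times\Sph^{4n-1})=0$, so the closed-manifold Hodge decomposition applies directly to $H_\eps$. Then, rather than differentiating in $t$, the paper writes $\HI_\kappa(f)$ as an \emph{average} of slice integrals $\mvint_{[-\delta,\delta]}\int_{\Sph^{4n-1}}\imath_t^*(\omega_{H_\eps}\wedge H_\eps^*\kappa)\,dt$, compares the slicewise primitive $\omega_{f_\eps}$ to the pullback $\imath_t^*\omega_{H_\eps}$ of the global primitive via two correction integrals $\int\imath_t^*\omega_{H_\eps}\wedge\delta\beta_{f_\eps}$ and $\int\omega_{f_\eps}\wedge\imath_t^*\delta\beta_{H_\eps}$, and shows these vanish as $\eps\to 0$ using the key estimate \eqref{eq160} together with Theorem~\ref{T45}; the averaging is used precisely so that boundary-slice integrals involving $\omega_{H_\eps}$ can be bounded via the global $L^2$ norm rather than a trace. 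Your scheme would need an analogue of all of this on $M_\eps$, and without the closed-manifold trick the required trace compatibility is the genuine difficulty.

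Two smaller points. First, your $C^\infty$ convergence $\HI_\kappa(f_\eps)\to\HI_\kappa(f)$ via ``standard elliptic regularity for the Hodge primitive'' is more than is needed and is in fact how the paper argues too (it shows $L^2$ convergence of a chosen primitive using \eqref{eq170}); this part is fine. Second, the orthogonality step in your differentiation formula (the wedge integral of $\partial_t\omega_t$ against $\delta\beta_t$ vanishing) does go through once one rewrites the wedge pairing as the $L^2$ pairing against $*\delta\beta_t=\pm d(*\beta_t)$, but you would also need to justify smooth $t$-dependence of the Hodge decomposition, which is another (routine but nontrivial) point left implicit. The exponent bookkeeping you present is correct and would indeed give the threshold $\gamma>\frac{4n+1}{4n+2}$; it is the cylinder identity, not the arithmetic, that is missing.
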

\begin{remark}
While $f,g\in C^\infty$, we require only H\"older continuity for $H$.
\end{remark}
\begin{proof}
Let $\varphi: \R /2\Z \to [0,1]$ be a Lipschitz map such that $\varphi(t) = 0$ for all $t \approx 0$ and $\varphi(t) = 1$ for all $t \approx 1$. Considering $H(\varphi(t),x)$ instead of $H(t,x)$ we can assume without lost of generality that for $\delta =\frac{1}{10}$ we have $H \in C^{0,\gamma}(\R/2\Z\times \Sph^{4n-1};\mathbb{H}^{2n})$ with 
\begin{equation}
\label{eq:hom:1}
H(t,x) = f(x) \quad \mbox{for $|t| < 2\delta$} \quad \text{and}\quad  H(t,x) = g(x) \quad \text{for $|t - 1| < 2\delta$}
\end{equation}
Denote by $f_\eps$, $g_\eps$, and $H_\eps$, the mollifications of $f$, $g$, and $H$, as maps on $\Sph^{4n-1}$ and $\R /2\Z \times \Sph^{4n-1}$ respectively. Such mollifications were used in \Cref{S3}. In particular we have
\begin{equation} 
\label{eq:fepsgepsest}
\Vert Df_\eps\Vert_{\infty} + \Vert Dg_\eps\Vert_{\infty} + \Vert DH_\eps\Vert_{\infty} \leq \eps^{\gamma-1} C(f,g,H).
\end{equation}
The proof follows from \eqref{eq163} combined with the arguments used in the proof of \Cref{T50}.

In view of \eqref{eq:hom:1}, for small enough $\eps > 0$, we still have
\begin{equation}
\label{eq:hom:2}
H_\eps(t,x) = f_\eps(x) \quad \mbox{for $|t| < \delta$} \quad \text{and}\quad  H_\eps(t,x) = g_\eps(x) \quad \text{for $|t-1|<\delta$}.
\end{equation}
Since $\R/2\mathbb{Z}$ is diffeomorphic to $\Sph^1$, K\"unneth formula, \cite[p.\ 47]{BT82}, yields 
\[
 H^{2n}(\R / 2\Z \times \Sph^{4n-1})=0
\quad
\text{for all } n\geq 1. 
\]
Thus, we may apply Hodge decomposition, \Cref{th:hodgepullback},
\[
 f_\eps^\ast (\kappa) = d\omega_{f_\eps} + \delta \beta_{f_\eps} \quad \mbox{in $\mathbb{S}^{4n-1}$},
\]
\[
 g_\eps^\ast (\kappa) = d\omega_{g_\eps} + \delta \beta_{g_\eps} \quad \mbox{in $\mathbb{S}^{4n-1}$}.
\]
\[
 H_\eps^\ast (\kappa) = d\omega_{H_\eps} + \delta \beta_{H_\eps} \quad \mbox{in $\R / 2\Z \times \mathbb{S}^{4n-1}$}.
\]
Note that since $f\in C^\infty$, we have
\begin{equation}
\label{eq164}
 \HI_{\kappa}(f) 
=
\lim_{\eps \to 0} \int_{\Sph^{4n-1}} \omega_{f_\eps} \wedge f_{\eps}^\ast(\kappa).
\end{equation}
Indeed, according to \Cref{th:hodge}, we have a Hodge decomposition
$$
f_\eps^*(\kappa)-f^*(\kappa)=d\tau_\eps+\delta\gamma_\eps,
\quad
\delta\tau_\eps=0,
\quad
d\gamma_\eps=0,
$$
satisfying
\begin{equation}
\label{eq171}
\Vert\tau_\eps\Vert_2\lesssim\Vert f_\eps^*(\kappa)-f^*(\kappa)\Vert_2.
\end{equation}
Hence,
$$
f^*_\eps(\kappa)=d(\tau_\eps+\omega_f)+\delta(\gamma_\eps+\beta_f).
$$
Although, there is no reason to claim that $\tau_\eps+\omega_f=\omega_{f_\eps}$ or $\gamma_\eps+\beta_f=\beta_{f_\eps}$, \eqref{eq170} yields
$$
\HI_\kappa(f_\eps)=\int_{\Sph^{4n-1}}(\tau_\eps+\omega_f)\wedge f_\eps^*(\kappa).
$$
Since
$f_\eps^*(\kappa)\rightrightarrows f^*(\kappa)$ uniformly, \eqref{eq171} yields that $\tau_\eps+\omega_f\to\omega_f$ in $L^2$, and hence $\HI_\kappa(f_\eps)\to\HI_\kappa(f)$.

Let
\[
\imath_t:\mathbb{S}^{4n-1}\to \{ t\}\times \mathbb{S}^{4n-1}\subset \R / 2\Z\times\mathbb{S}^{4n-1}.
\]
Then $H_\eps \circ \imath_t = f_\eps$ for $|t|<\delta$ and $H_\eps \circ \imath_t = g_\eps$ for $|t-1|<\delta$. 
In particular
$$
f_\eps^*(\kappa)=\imath_t^*H_\eps^*(\kappa) \text{ if } |t|<\delta
\qquad
\text{and}
\qquad
g_\eps^*(\kappa)=\imath_t^*H_\eps^*(\kappa) \text{ if } |t-1|<\delta.
$$
We claim that 
\begin{equation}
\label{eq152}
\HI_{\kappa}(f) 
= 
\lim_{\eps\to 0} \mvint_{[-\delta,\delta]}\Bigg(\, \int_{\Sph^{4n-1}} \imath^\ast_{t} \brac{\omega_{H_{\eps}} \wedge H_{\eps}^\ast(\kappa)}\Bigg)\, dt,
\end{equation}
and similarly
\begin{equation}
\label{eq153}
\begin{split}
\HI_{\kappa}(g) 
&= 
\lim_{\eps\to 0} \mvint_{[1-\delta,1+\delta]}\bigg(\, \int_{\Sph^{4n-1}} \imath^\ast_{t} \brac{\omega_{H_{\eps}} \wedge H_{\eps}^\ast(\kappa)}\bigg)\, dt\\
&=
\lim_{\eps\to 0} \mvint_{[-\delta,\delta]}\bigg(\, \int_{\Sph^{4n-1}} \imath^\ast_{t+1} \brac{\omega_{H_{\eps}} \wedge H_{\eps}^\ast(\kappa)}\bigg)\, dt.
\end{split}
\end{equation}
Recall that the barred integral stands for the integral average, see \eqref{eq172}.
We will prove \eqref{eq152}; the proof of \eqref{eq153} is the same.
Note that \eqref{eq164} trivially gives
\begin{equation}
\label{eq165}
\HI_{\kappa}(f) 
=
\lim_{\eps \to 0} \mvint_{[-\delta,\delta]}\Bigg(\, \int_{\Sph^{4n-1}} \omega_{f_\eps} \wedge f_{\eps}^\ast(\kappa)\Bigg)\, dt,
\end{equation}
because the integral over $\Sph^{4n-1}$ does not depend on $t$.

In the calculations presented below one has to be careful: While $d\imath_t^*=\imath^*_td$, the co-differential does not commute with pullback, $\delta \imath_t^*\neq\imath_t^*\delta$.

For every $t \in (-\delta,\delta)$ we have
\[
\begin{split}
&
\int_{\Sph^{4n-1}} \omega_{f_\eps} \wedge f_{\eps}^\ast(\kappa)
= 
\int_{\Sph^{4n-1}} \omega_{f_\eps} \wedge \imath_t^\ast H_\eps^\ast(\kappa)\\
&= 
\int_{\Sph^{4n-1}} \omega_{f_\eps} \wedge \imath_t^\ast d\omega_{H_\eps} + \int_{\Sph^{4n-1}} \omega_{f_\eps} \wedge \imath_t^\ast \delta \beta_{H_\eps}\\
&\overset{\text{Stokes}}{=} 
\int_{\Sph^{4n-1}} \imath_t^\ast \omega_{H_\eps} \wedge d\omega_{f_\eps} + \int_{\Sph^{4n-1}} \omega_{f_\eps} \wedge \imath_t^\ast \delta \beta_{H_\eps}\\
&= 
\int_{\Sph^{4n-1}} \imath_t^\ast \omega_{H_\eps} \wedge f_\eps^\ast(\kappa) 
-\int_{\Sph^{4n-1}} \imath_t^\ast \omega_{H_\eps} \wedge \delta \beta_{f_\eps}
+ \int_{\Sph^{4n-1}} \omega_{f_\eps} \wedge \imath_t^\ast \delta \beta_{H_\eps}\\
&= 
\int_{\Sph^{4n-1}} \imath_t^\ast \brac{\omega_{H_\eps} \wedge H_\eps^\ast(\kappa)} 
-\int_{\Sph^{4n-1}} \imath_t^\ast \omega_{H_\eps} \wedge \delta \beta_{f_\eps}
+ \int_{\Sph^{4n-1}} \omega_{f_\eps} \wedge \imath_t^\ast \delta \beta_{H_\eps}.
\end{split}
\]
Therefore, in order to conclude \eqref{eq152} from \eqref{eq165}, it suffices to show that
\begin{equation} 
\label{eq149}
\lim_{\eps\to 0}\int_{-\delta}^\delta 
\Bigg|\int_{\Sph^{4n-1}} \imath_{t}^\ast \omega_{H_\eps} \wedge \delta \beta_{f_\eps}\Bigg|\, dt
=
\lim_{\eps\to 0}\int_{-\delta}^\delta 
\Bigg|\int_{\Sph^{4n-1}} \omega_{f_\eps} \wedge \imath_{t}^\ast \delta \beta_{H_\eps}\Bigg|\, dt = 0.
\end{equation}
We have
\[
\begin{split}
\int_{-\delta}^\delta 
\Bigg|\int_{\Sph^{4n-1}} \imath_{t}^\ast \omega_{H_\eps} \wedge \delta \beta_{f_\eps}\Bigg|\, dt
&\leq
\int_{[-\delta,\delta]\times\Sph^{4n-1}} |\omega_{H_\eps}|\, |\delta\beta_{f_\eps}|
\leq (2\delta)^{1/2}\Vert\omega_{H_\eps}\Vert_2\Vert\delta\beta_{f_\eps}\Vert_2\\
&\lesssim
\Vert H_\eps^*(\kappa)\Vert_2(1+\Vert Df_\eps\Vert_\infty^{2n})\Vert f_\eps^*(\upalpha)\Vert_\infty =:\heartsuit.
\end{split}
\]
In the last inequality we used \eqref{eq151} and  \eqref{eq160} respectively. The factor $(2\delta)^{1/2}$ appears, because $\delta\beta_{f_\eps}$ is defined on $\Sph^{4n-1}$ and does not depend on the variable $t\in[-\delta,\delta]$.

Clearly,
$\Vert H_\eps^*(\kappa)\Vert_2\lesssim\Vert DH_\eps\Vert_\infty^{2n}$, so \eqref{eq:fepsgepsest} and \Cref{T45} give
$$
\heartsuit\lesssim\eps^{2n(\gamma-1)}\cdot\eps^{2n(\gamma-1)}\cdot\eps^{2\gamma-1}\xrightarrow{\eps\to 0} 0,
$$
because $4n(\gamma-1)+2\gamma-1>0$ for $\gamma>\frac{4n+1}{4n+2}$.
The proof that 
$$
\lim_{\eps\to 0}\int_{-\delta}^\delta 
\Bigg|\int_{\Sph^{4n-1}} \omega_{f_\eps} \wedge \imath_{t}^\ast \delta \beta_{H_\eps}\Bigg|\, dt = 0
$$
is similar and left to the reader. This completes the proof of \eqref{eq149} and hence that of \eqref{eq152}. The proof of \eqref{eq153} is the same.

Since,
\begin{equation}
\label{eq154}
\begin{split}
&
\HI_{\kappa}(f) - \HI_{\kappa}(g)\\ 
&= \lim_{\eps\to 0}\mvint_{[-\delta,\delta]}\Bigg(\, \int_{\Sph^{4n-1}} \imath^\ast_{t} \brac{\omega_{H_{\eps}} \wedge H_{\eps}^\ast(\kappa)} -\int_{\Sph^{4n-1}} \imath^\ast_{t+1} \brac{\omega_{H_{\eps}} \wedge H_{\eps}^\ast(\kappa)}\Bigg)\, ,
\end{split}
\end{equation}
in order to prove that $\HI_\kappa(f)=\HI_\kappa(g)$, it suffices to show that the above limit equals zero.
We have
\[
\begin{split}
&\int_{\Sph^{4n-1}} \imath^\ast_{t+1} \brac{\omega_{H_{\eps}} \wedge H_{\eps}^\ast(\kappa)}-
\int_{\Sph^{4n-1}} \imath^\ast_{t} \brac{\omega_{H_{\eps}} \wedge H_{\eps}^\ast(\kappa)}\\
\overset{\text{Stokes}}{=}&
\int_{[t,t+1] \times \Sph^{4n-1}} d\brac{\omega_{H_{\eps}} \wedge H_{\eps}^\ast(\kappa)}\\
=&
\int_{[t,t+1] \times \Sph^{4n-1}} d\omega_{H_{\eps}} \wedge H_{\eps}^\ast(\kappa) - \int_{[t,t+1] \times \Sph^{4n-1}} \omega_{H_{\eps}} \wedge H_{\eps}^\ast(d\kappa) \\
=&
\int_{[t,t+1] \times \Sph^{4n-1}} H_{\eps}^\ast(\kappa\wedge\kappa)
- \int_{[t,t+1] \times \Sph^{4n-1}} \delta \beta_{H_{\eps}} \wedge H_{\eps}^\ast(\kappa)
- \int_{[t,t+1] \times \Sph^{4n-1}} \omega_{H_{\eps}} \wedge H_{\eps}^\ast(d\kappa).
\end{split}
\]
Therefore, it suffices to show that each of the following integrals converges to zero:
\begin{equation}
\label{eq166}
\begin{split}
&
\lim_{\eps\to 0}\mvint_{[-\delta,\delta]}\bigg(\,\int_{[t,t+1] \times \Sph^{4n-1}} H_{\eps}^\ast(\kappa\wedge\kappa)\Bigg)\, dt
= 
\lim_{\eps\to 0}\mvint_{[-\delta,\delta]}\Bigg(\, \int_{[t,t+1] \times \Sph^{4n-1}} \delta \beta_{H_{\eps}} \wedge H_{\eps}^\ast(\kappa)\Bigg)\, dt\\
&= 
\lim_{\eps\to 0}\mvint_{[-\delta,\delta]}\Bigg(\int_{[t,t+1] \times \Sph^{4n-1}} \omega_{H_{\eps}} \wedge H_{\eps}^\ast(d\kappa)\Bigg)\, dt=0.
\end{split}
\end{equation}

As for the first integral, from Corollary~\ref{T49} we find $\eta_1 \in \Omega_c^{4n-1}(\R^{4n+1})$ and $\eta_2 \in \Omega_c^{4n-2}(\R^{4n+1})$ such that 
\[
 \kappa\wedge\kappa = \eta_1 \wedge \upalpha + d(\eta_2 \wedge \upalpha).
\]
The Stokes theorem yields
\[
\begin{split}
\Bigg| \int_{[t,t+1] \times \Sph^{4n-1}} H_{\eps}^\ast(\kappa\wedge\kappa) \Bigg| 
&\leq 
\int_{[t,t+1] \times \Sph^{4n-1}} |H_{\eps}^*(\eta_1)|\, |H_{\eps}^*(\upalpha)|
+
\int_{\partial([t,t+1] \times \Sph^{4n-1})} |H_{\eps}^*(\eta_2)|\, |H_{\eps}^*(\upalpha)|\\
&\lesssim
\big(\Vert DH_{\eps}\Vert_\infty^{4n-1}+\Vert DH_{\eps}\Vert_\infty^{4n-2}\big)\Vert H_{\eps}^*(\upalpha)\Vert_\infty\\
&\lesssim
\big(\eps^{(4n-1)(\gamma-1)} + \eps^{(4n-2)(\gamma-1)}\big) \eps^{2\gamma-1} 
\xrightarrow{\eps\to 0} 0.
\end{split}
\]
Since the estimates {\em do not} depend on $t$, the first integral in \eqref{eq166} converges to zero.

Regarding the second integral in \eqref{eq166} we have
\[
\begin{split}
&\Bigg| \int_{[t,t+1] \times \Sph^{4n-1}} \delta \beta_{H_{\eps}} \wedge H_{\eps}^\ast(\kappa) \Bigg| 
\leq 
\Vert\delta \beta_{H_{\eps}}\Vert_{2}\ \Vert H_{\eps}^*(\kappa)\Vert_2\\
&\lesssim
\Vert H_{\eps}^*(\upalpha)\Vert_\infty(1+\Vert DH_{\eps}\Vert_\infty^{2n})\Vert DH_{\eps}\Vert_\infty^{2n}
\lesssim
\eps^{2\gamma-1}\cdot\eps^{4n(\gamma-1)}
\xrightarrow{\eps\to 0} 0.
\end{split}
\]
Again, the estimates do not depend on $t$ and hence the second integral in \eqref{eq166} converges to zero.

It remains to show that the third integral in \eqref{eq166} converges to zero. This time the estimates will depend on $t$.

From \Cref{T49} we find $\eta_3 \in \Omega^{2n}_c(\R^{4n+1})$ and $\eta_4 \in \Omega^{2n-1}_c(\R^{4n+1})$ such that 
\[
 d\kappa = \eta_3 \wedge \upalpha + d(\eta_4 \wedge \upalpha).
\]
Stokes' theorem yields
\[
\begin{split}
&
\int_{[t,t+1] \times \Sph^{4n-1}} \omega_{H_{\eps}} \wedge H_{\eps}^\ast(d\kappa)
=
\int_{[t,t+1] \times \Sph^{4n-1}} \omega_{H_{\eps}}\wedge H_{\eps}^*(\eta_3)\wedge H_{\eps}^*(\upalpha)\\
&+
\int_{[t,t+1] \times \Sph^{4n-1}} d\omega_{H_{\eps}}\wedge H_{\eps}^*(\eta_4)\wedge H_{\eps}^*(\upalpha)
-
\int_{\partial([t,t+1] \times \Sph^{4n-1})} \omega_{H_{\eps}}\wedge H_{\eps}^*(\eta_4)\wedge H_{\eps}^*(\upalpha)\\
&=
I_{1,\eps}(t)+I_{2,\eps}(t)-I_{3,\eps}(t),
\end{split}
\]
and it remains to show that
\begin{equation}
\label{eq167}
\lim_{\eps\to 0}\mvint_{[-\delta,\delta]} I_{1,\eps}(t)\, dt
=
\lim_{\eps\to 0}\mvint_{[-\delta,\delta]} I_{2,\eps}(t)\, dt
=
\lim_{\eps\to 0}\mvint_{[-\delta,\delta]} I_{3,\eps}(t)\, dt
=0.
\end{equation}
Since \eqref{eq151} gives
\begin{equation}
\label{eq168}
\Vert\omega_{H_{\eps}}\Vert_1 \lesssim \Vert\omega_{H_{\eps}}\Vert_2\lesssim
\Vert H_{\eps}^*(\kappa)\Vert_2\lesssim \Vert DH_{\eps}\Vert_\infty^{2n},
\end{equation}
we have
$$
|I_{1,\eps}(t)|\lesssim\Vert\omega_{H_{\eps}}\Vert_1\Vert H_{\eps}^*(\eta_3)\Vert_\infty\Vert H_{\eps}^*(\upalpha)\Vert_\infty
\lesssim
\Vert DH_{\eps}\Vert_\infty^{4n}\Vert H_{\eps}
^*(\upalpha)\Vert_\infty
\lesssim
\eps^{4n(\gamma-1)}\cdot\eps^{2\gamma-1}\to 0.
$$
The estimates do not depend on $t$ so the first integral in \eqref{eq167} converges to zero.

Note that
$$
|I_{2,\eps}(t)|\leq\Vert d\omega_{H_\eps}\Vert_1 \Vert H_\eps^*(\eta_4)\Vert_\infty\Vert H_\eps^*(\upalpha)\Vert_\infty
\lesssim
\Vert d\omega_{H_\eps}\Vert_2\Vert DH_\eps\Vert_\infty^{2n-1}\Vert H_\eps^*(\upalpha)\Vert_\infty.
$$
Since
\[
\Vert d\omega_{H_{\eps}}\Vert_2=
\Vert H^*_{\eps}(\kappa)-\delta\beta_{H_{\eps}}\Vert_2
\lesssim
\Vert DH_{\eps}\Vert_\infty^{2n}+\Vert H_{\eps}^*(\upalpha)\Vert_\infty\big(1+\Vert DH_{\eps}\Vert_\infty^{2n}\big),
\]
we estimate
\[
\begin{split}
|I_{2,\eps}(t)|
&\lesssim
\big(\Vert DH_{\eps}\Vert_\infty^{2n}+\Vert H_{\eps}^*(\upalpha)\Vert_\infty
(1+\Vert DH_{\eps}\Vert_\infty^{2n})\big)\Vert DH_{\eps}\Vert_\infty^{2n-1}\Vert H_{\eps}^*(\upalpha)\Vert_\infty\\
&\lesssim
\big(\eps^{2n(\gamma-1)}+\eps^{2\gamma-1}\cdot\eps^{2n(\gamma-1)}\big)\cdot\eps^{(2n-1)(\gamma-1)}
\cdot\eps^{2\gamma-1}\to 0.
\end{split}
\]
Again, the estimates do not depend on $t$ so the second integral in \eqref{eq167} converges to zero.

While we could estimate $I_{1,\eps}(t)$ and $I_{2,\eps}(t)$ for each $t\in [-\delta,\delta]$, with estimates in terms of $\eps$ that are independent of $t$, this is not possible for $I_{3,\eps}(t)$, because we do not have good estimates for $\omega_{H_\eps}$ on individual slices $\{t\}\times\Sph^{4n-1}$ and $\{t+1\}\times\Sph^{4n-1}$. However, we can easily estimate the integral $\int_{-\delta}^\delta I_{3,\eps}(t)\, dt$, where instead of estimating $\omega_{H_\eps}$ on slices we can use the global estimate \eqref{eq168} on $\R/2\mathbb{Z}\times \Sph^{4n-1}$. We have
\[
\begin{split}
\Bigg|\int_{-\delta}^\delta I_{3,\eps}(t)\, dt\bigg|
&\leq
\int_{\R/2\mathbb{Z}\times\Sph^{4n-1}} |\omega_{H_\eps}|\, |H_\eps^*(\eta_4)|\, |H_\eps^*(\upalpha)|
\leq
\Vert\omega_{H_\eps}\Vert_1\Vert H_\eps^*(\eta_4)\Vert_\infty\Vert H_\eps^*(\upalpha)\Vert_\infty \\
&\lesssim 
\Vert DH_{\eps}\Vert_\infty^{2n}\Vert DH_{\eps}\Vert_\infty^{2n-1}\Vert H_{\eps}(\upalpha)\Vert_\infty
\lesssim
\eps^{2n(\gamma-1)}\cdot\eps^{(2n-1)(\gamma-1)}\cdot\eps^{2\gamma-1}\to 0.
\end{split}
\]
This proves convergence to zero of the last integral in \eqref{eq167} and completes the proof of
\Cref{pr:homotopy}.
\end{proof}

\subsection{Proof of \Cref{th:hopfgamma}}
\label{LastProof}
We start with constructing $\kappa\in\Omega_c^{2n}(\R^{4n+1})$ such that $\HI_\kappa(f)\neq 0$ for some smooth map $f:\Sph^{4n-1}\to\R^{4n+1}$ that is horizontal in the sense that $f^*(\upalpha)=0$; such a mapping necessarily belongs to $f\in C^{0,1}(\Sph^{4n-1};\bbbh^{2n})$.

Let $\phi:\Sph^{2n}\to\R^{4n+1}$ be the smooth embedding from \Cref{T107}; $\phi$ is a bi-Lipschitz embedding of $\Sph^{2n}$ to $\bbbh^{2n}$. Let $\psi\in C_c^\infty(\R^{4n+1};\R^{2n+1})$ be a smooth and compactly supported extension of $\phi^{-1}:\phi(\Sph^{2n})\to\Sph^{2n}\subset\R^{2n+1}$.

Recall that
$$
\nu_{\Sph^{2n}}=\sum_{j=1}^{2n+1} (-1)^{j-1}x_j\, dx_1\wedge\ldots\wedge\widehat{dx_j}\wedge\ldots\wedge dx_{2n+1}\in\Omega^{2n}(\R^{2n+1})
$$
restricted to $\Sph^{2n}$ is the volume form on $\Sph^{2n}$.
Now we define
\begin{equation}
\label{eq150}
\kappa:=
\psi^*(\nu_{\Sph^{2n}})\in\Omega_c^{2n}(\R^{4n+1}).
\end{equation}
\begin{lemma}
\label{T109}
If $\kappa\in\Omega_c^{2n}(\R^{4n+1})$ is defined by \eqref{eq150}, then there is a smooth map $f\in C^\infty(\Sph^{4n-1};\R^{4n+1})$, such that $f\in C^{0,1}(\Sph^{4n-1};\bbbh^{2n})$ and
$\HI_\kappa(f)\neq 0$.
\end{lemma}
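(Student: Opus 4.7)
The plan is to construct $f$ as the composition $f:=\phi\circ g$, where $\phi:\Sph^{2n}\to\R^{4n+1}$ is the smooth Legendrian embedding from Lemma~\ref{T107} (so $\phi^*(\upalpha)=0$ and $\phi$ is bi-Lipschitz into $\bbbh^{2n}$), and $g:\Sph^{4n-1}\to\Sph^{2n}$ is a smooth map with nonzero classical Hopf invariant, provided by Lemma~\ref{la:hopffibration}. Smoothness of $f$ is then immediate, and its Lipschitz continuity into $\bbbh^{2n}$ follows from composing a Lipschitz map with a smooth map between compact manifolds; hence $f\in C^\infty(\Sph^{4n-1};\R^{4n+1})\cap C^{0,1}(\Sph^{4n-1};\bbbh^{2n})$.

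The first key step will be to establish the identity $f^*(\kappa)=g^*(\nu_{\Sph^{2n}})$. Since $\psi$ extends $\phi^{-1}$, we have $\psi\circ\phi=\mathrm{id}_{\Sph^{2n}}$, so $\psi\circ f=g$ and consequently
\[
f^*(\kappa)=f^*\psi^*(\nu_{\Sph^{2n}})=(\psi\circ f)^*(\nu_{\Sph^{2n}})=g^*(\nu_{\Sph^{2n}}).
\]
Because $\nu_{\Sph^{2n}}$ restricted to $\Sph^{2n}$ is a top-degree form on $\Sph^{2n}$, it is closed there, so $f^*(\kappa)$ is closed on $\Sph^{4n-1}$; and since $H^{2n}(\Sph^{4n-1})=0$, it is also exact.

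Next, I will apply the Hodge decomposition of Theorem~\ref{th:hodgepullback} to write $f^*(\kappa)=d\omega_f+\delta\beta_f$ with $\delta\omega_f=0$ and $d\beta_f=0$, and I will show that exactness of $f^*(\kappa)$ forces $\delta\beta_f=0$. Indeed, writing $f^*(\kappa)=d\tau_0$ gives $\delta\beta_f=d(\tau_0-\omega_f)$, and the integration by parts formula \eqref{eq137} on the closed manifold $\Sph^{4n-1}$ together with $\delta^2=0$ yields
\[
\|\delta\beta_f\|_2^2=\int_{\Sph^{4n-1}}\langle d(\tau_0-\omega_f),\delta\beta_f\rangle=\int_{\Sph^{4n-1}}\langle\tau_0-\omega_f,\delta^2\beta_f\rangle=0.
\]
Thus $d\omega_f=f^*(\kappa)=g^*(\nu_{\Sph^{2n}})$, so $\omega_f$ is itself a primitive of $g^*(\nu_{\Sph^{2n}})$.

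Finally, the generalized Hopf invariant reduces to the classical one:
\[
\HI_\kappa(f)=\int_{\Sph^{4n-1}}\omega_f\wedge d\omega_f=\int_{\Sph^{4n-1}}\omega_f\wedge g^*(\nu_{\Sph^{2n}})=\HI(g),
\]
the last equality holding because the classical Hopf invariant is independent of the chosen primitive (a standard Stokes-type argument on the closed manifold $\Sph^{4n-1}$, using $H^{2n-1}(\Sph^{4n-1})=0$ for $n\geq 1$). By Lemma~\ref{la:hopffibration}, $\HI(g)\neq 0$, and hence $\HI_\kappa(f)\neq 0$. I do not anticipate any serious obstacle: $f$ is smooth, all pullbacks make classical sense, and the only nontrivial point is the vanishing of the co-exact component in the Hodge decomposition, which is a routine $L^2$-orthogonality argument particular to the closed-manifold setting.
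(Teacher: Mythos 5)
Your proposal is correct and follows the same approach as the paper: you take $f=\phi\circ g$ with $g$ from Lemma~\ref{la:hopffibration} and $\phi$ from Lemma~\ref{T107}, establish $f^*(\kappa)=g^*(\nu_{\Sph^{2n}})$ via $\psi\circ\phi=\operatorname{id}_{\Sph^{2n}}$, and reduce $\HI_\kappa(f)$ to the classical $\HI(g)$. You actually fill in the step the paper dismisses as ``it easily follows'' by showing that exactness of $f^*(\kappa)$ forces $\delta\beta_f=0$ (a correct $L^2$-orthogonality argument) and then invoking independence of the classical Hopf invariant on the choice of primitive, using $H^{2n-1}(\Sph^{4n-1})=0$; this is a welcome elaboration, not a deviation.
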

\begin{proof}
Let $g\in C^\infty(\Sph^{4n-1};\Sph^{2n})$, $\HI(g)\neq 0$, be as in \Cref{la:hopffibration}, and let $\phi:\Sph^{2n}\to\R^{4n+1}$ be the embedding from \Cref{T107}. Clearly,
$f:=\phi\circ g\in C^\infty(\Sph^{4n-1};\R^{4n+1})$ satisfies $f\in C^{0,1}(\Sph^{4n-1};\bbbh^{2n})$. We claim that
$\HI_\kappa(f)\neq 0$. Indeed, since $\psi\circ\phi=\operatorname{id}_{\Sph^{2n}}$, we have
$$
f^*(\kappa)=g^*\phi^*(\kappa)=g^*\phi^*\psi^*(\nu_{\Sph^{2n}})=g^*(\psi\circ\phi)^*(\nu_{\Sph^{2n}})
=g^*(\nu_{\Sph^{2n}}),
$$
and it easily follows that $\HI_\kappa(f)=\HI(g)\neq 0$.
\end{proof}
If $f$ is as in \Cref{T109}, then \Cref{pr:homotopy} implies that $0\neq [f]\in\pi^\gamma_{4n-1}(\bbbh^{2n})$. This completes the proof of \Cref{th:hopfgamma}.
\hfill $\Box$

\end{document}